\pgfplotsset{compat=1.15}
\theoremstyle{plain}
\newtheorem{Thm}{Theorem}[chapter]
\newtheorem{Cor}[Thm]{Corollary}
\newtheorem{Prop}[Thm]{Proposition}
\newtheorem{Lem}[Thm]{Lemma}
\theoremstyle{definition}
\newtheorem{Def}[Thm]{Definition}
\newtheorem{Rem}[Thm]{Remark}
\newtheorem{Ass}[Thm]{Assumption}
\newtheorem{Conv}[Thm]{Convention}
\theoremstyle{remark}
\newtheorem{Ex}[Thm]{Example}
\newcounter{stepa}
\newenvironment{stepa}[1]{\par \vspace{5pt} \refstepcounter{stepa}\noindent \underline{Step \thestepa}: #1. \par \vspace{2pt}}
\newcounter{stepb}
\newenvironment{stepb}[1]{\par \vspace{5pt} \refstepcounter{stepb}\noindent \underline{Step \thestepb}: #1. \par \vspace{2pt}}
\newcounter{stepc}
\newenvironment{stepc}[1]{\par \vspace{5pt} \refstepcounter{stepc}\noindent \underline{Step \thestepc}: #1. \par \vspace{2pt}}
\newcounter{stepd}
\newenvironment{stepd}[1]{\par \vspace{5pt} \refstepcounter{stepd}\noindent \underline{Step \thestepd}: #1. \par \vspace{2pt}}
\newcounter{stepe}
\newenvironment{stepe}[1]{\par \vspace{5pt} \refstepcounter{stepe}\noindent \underline{Step \thestepe}: #1. \par \vspace{2pt}}
\newcounter{stepf}
\newenvironment{stepf}[1]{\par \vspace{5pt} \refstepcounter{stepf}\noindent \underline{Step \thestepf}: #1. \par \vspace{2pt}}
\newcounter{stepg}
\newenvironment{stepg}[1]{\par \vspace{5pt} \refstepcounter{stepg}\noindent \underline{Step \thestepg}: #1. \par \vspace{2pt}}
\newcounter{steph}
\newenvironment{steph}[1]{\par \vspace{5pt} \refstepcounter{steph}\noindent \underline{Step \thesteph}: #1. \par \vspace{2pt}}
\newcounter{stepi}
\newenvironment{stepi}[1]{\par \vspace{5pt} \refstepcounter{stepi}\noindent \underline{Step \thestepi}: #1. \par \vspace{2pt}}
\newcommand{\R}{\mathbb{R}}
\newcommand{\F}{\mathcal{F}}
\newcommand{\G}{\mathcal{G}}
\newcommand{\T}{\mathbb{T}}
\newcommand{\E}{\mathbf{E}}
\newcommand{\Em}{\mathds{E}}
\newcommand{\XX}{\mathcal{X}}
\newcommand{\N}{\mathbb{N}}
\newcommand{\Prob}{\mathbb{P}}
\newcommand{\V}{\mathcal{V}}
\newcommand{\cg}{\langle}
\newcommand{\cd}{\rangle}
\newcommand{\pf}{{}_\#}
\newcommand{\1}{\mathds{1}}
\newcommand{\cadlag}{\mbox{\it c\`adl\`ag}}
\renewcommand{\P}{\mathcal{P}}
\newcommand{\M}{\mathcal{M}}
\newcommand{\BBM}{\mathsf{BBM}}
\renewcommand{\L}{\mathscr{L}}
\newcommand{\xx}{\mathbf{x}}
\newcommand{\dr}{\partial}
\newcommand{\Leb}{\mathrm{Leb}}
\newcommand{\RUOT}{\mathsf{RUOT}}
\newcommand{\ruot}{\mathsf{Ruot}}
\newcommand{\CE}{\mathcal{CE}}
\newcommand{\eps}{\varepsilon}
\newcommand{\Gc}{\mathcal{G}^{\text{c}}}
\newcommand{\Gst}{\mathcal{G}^{\text{st}}}
\newcommand{\Ec}{\mathcal{E}^{\text{c}}}
\newcommand{\Est}{\mathcal{E}^{\text{st}}}
\newcommand{\I}{\mathcal{I}}
\newcommand{\Hil}{\mathcal{H}}
\newcommand{\prox}{\mathrm{Prox}}
\newcommand{\proj}{\mathrm{Proj}}
\newcommand{\Sch}{\mathsf{Sch}}
\newcommand{\BrSch}{\mathsf{BrSch}}
\newcommand{\GBrSch}{\mathsf{GBrSch}}
\newcommand{\Init}{\mathsf{Init}}
\DeclareMathOperator{\Div}{div}
\DeclareMathOperator{\Supp}{Supp}
\DeclareMathOperator{\D}{d\!}
\DeclareMathOperator{\Id}{Id}
\DeclareMathOperator{\argmin}{argmin} 
\DeclareMathOperator*{\esssup}{ess\,sup}
\title{Regularized unbalanced optimal transport as entropy minimization with respect to branching Brownian motion}
\author[1]{Aymeric Baradat}
\author[2]{Hugo Lavenant}
\affil[1]{\small{Univ Lyon, CNRS, Université Claude Bernard Lyon 1, UMR5208, Institut Camille Jordan, F-69622 Villeurbanne, France}}
\affil[2]{\small{Department of Decision Sciences, Bocconi University, Via Röntgen 1, 20136 Milan, Italy}}
\date{\today}
\begin{document} 
	
	\maketitle
	
	\begin{abstract}
		We consider the problem of minimizing the entropy of a law with respect to the law of a reference branching Brownian motion under density constraints at an initial and final time. We call this problem the \emph{branching Schrödinger problem} by analogy with the Schrödinger problem, where the reference process is a Brownian motion. Whereas the Schrödinger problem is related to regularized (a.k.a.\ entropic) optimal transport, we investigate here the link of the branching Schrödinger problem with regularized \emph{unbalanced} optimal transport. 
		
		This link is shown at two levels. First, relying on duality arguments, the values of these two problems of calculus of variations are linked, in the sense that the value of the regularized unbalanced optimal transport (seen as a function of the initial and final measure) is the lower semi-continuous relaxation of the value of the branching Schrödinger problem. Second, we also explicit a correspondence between the competitors of these two problems, and to that end we provide a fine description of laws having a finite entropy with respect to a reference branching Brownian motion.
		
		We investigate the small noise limit, when the noise intensity of the branching Brownian motion goes to $0$: in this case we show, at the level of the optimal transport model, that there is convergence to partial optimal transport. We also provide formal arguments about why looking at the branching Brownian motion, and not at other measure-valued branching Markov processes, like superprocesses, yields the problem closest to optimal transport. Finally, we explain how this problem can be solved numerically: the dynamical formulation of regularized unbalanced optimal transport can be discretized and solved via convex optimization.
	\end{abstract}

	\tableofcontents
	\newpage

	\chapter{Introduction}

	It is now well understood that there is a strong link between models of regularized optimal transport and the large deviations of the space distribution of a large population of Brownian particles. By ``regularized optimal transport'', we mean quadratic optimal transport to which one adds an entropic penalization. By ``large deviations of the space distribution of a large population of Brownian particles'', we mean, correspondingly with Sanov's theorem, the minimization of the relative entropy of a law on the space of paths with respect to the Wiener measure, under prescribed initial and final temporal marginals. These two problems of calculus of variations are equivalent, in the sense that their (numerical) values are the same, and  we can build a relevant correspondence between competitors of both problems, that can only decrease the value of the objective function \cite{leonard2013survey,gentil2017analogy}.
	
	\emph{In this work, we will show that such a link holds between models of Regularized \emph{Unbalanced} Optimal Transport (RUOT) and entropy minimization with respect to the law of the \emph{branching} Brownian Motion (BBM)}. Unbalanced optimal transport models allow for mass not only to be transported, but also to be created or destroyed for some cost. They were introduced independently in \cite{kondratyev2016new,liero2018optimal,chizat2018unbalanced} and allow to match distributions of mass not sharing the same total mass. They are relevant from the point of view of the applications \cite{yang2019scalable,peyre2019computational,diMarino2020tumor}, and in particular we point out to its recent use in order to infer trajectories of cells by Schiebinger \emph{et al.}\ \cite{schiebinger2019optimal}. Regularized unbalanced optimal transport will be defined through the dynamical formulation, by adding both a growth rate as well as diffusion in the continuity equation. On the other hand, the branching Brownian motion models the evolution of a family of particles undergoing a Brownian motion, but also splitting or dying at random instants. We introduce the \emph{branching Schrödinger problem}: it is the minimization of the relative entropy of a law on the space of measure-valued paths with respect to the law of a reference BBM, under prescribed initial and final intensity measures. Given a branching mechanism for the BBM, our work identifies a cost for creating and destroying mass in regularized unbalanced optimal transport so that the two models become strongly linked.   
	
	This link is actually weaker than in the balanced case: this is because the constraints in the branching Schrödinger problem are not closed, which makes the problem ill-posed. We argue that the RUOT problem is the (well posed) lower semi-continuous (l.s.c.) relaxation of the branching Schrödinger problem. This will be seen at two levels. First, we show under basically no assumption on the reference BBM that the optimal value in RUOT is the l.s.c.\ envelope of the one in the branching Schrödinger problem, up to a term depending on the initial condition. It will rely on convex duality: in one sentence, the dual of these two problems happens to be the same. Second, adding some reasonable integrability assumptions on the BBM, we show that we can go further and build a relevant correspondence between the competitors of each problem. Given a law on the space of measure-valued paths with finite entropy with respect to the law of a BBM, one can define a predictable drift and a predictable branching mechanism generating the law of the process. These drifts and branching mechanisms can then be used to build a competitor in RUOT. On the other hand, from a smooth competitor of RUOT it is always possible to compute the law of a branching diffusion process with drift and branching mechanism chosen accordingly to the competitor in RUOT.    
	
	Having understood this equivalence, we then answer some questions which arise naturally from our result. First, we investigate a small noise limit: we show that when the diffusivity and the branching rate jointly go to $0$ at a controlled rate, the RUOT model converges to partial optimal transport. Second, even though we study the BBM here, it is reasonable to wonder what we would get by replacing it with more general measure-valued branching Markov processes, like superprocesses\footnote{Broadly, they correspond to branching Markov processes describing the behaviour of infinitely many independent particles that may split and die.}. We present formal computations which answer these questions and seem to indicate that other measure-valued branching Markov processes lead to problems sharing less structure with optimal transport, hence our focus on BBM. Lastly, we discuss the numerical aspects of the problem: a Sinkhorn-like approach seems to be not feasible, however one can still compute numerical solutions by discretizing the dynamical formulation of RUOT.   		
	
	\bigskip
	
	Our motivation was twofold. First, we give a probabilistic justification for models of unbalanced optimal transport, which has actually been an open question since their introduction. As far as we know, the recent paper~\cite{chen2021most} is the only other work treating this question, and it is done by looking at processes along which particles may be killed: from the unbalanced transport point of view, mass can be removed but never added. On the other hand in the present work both are possible. Second, the second author, in a work with Schiebinger and co-authors \cite{lavenant2021towards}, again in a biological context, not only used optimal transport similarly as in \cite{schiebinger2019optimal}, but was led to consider the BBM as a modelling tool. Finding a link between these two concepts became a natural question that we answer here.
	
	\bigskip
	
	The rest of this introduction presents our main result, with a presentation that we try to keep light. We present the formal argument based on duality about why the two problems are related. We then present a detailed exposition of the structure of the present work.

	\bigskip
	
	In the sequel we work on a temporal domain $[0,1]$, that is we set the initial and final time to be respectively $0$ and $1$. This can be done without loss of generality. On the other hand, we work for convenience on the $d$-dimensional torus $\T^d$. By doing that we lose generality but we avoid compactness problems (as $\T^d$ is compact) as well as spatial boundary issues. 
	
	\section{Optimal transport}
	\label{subsec:intro_OT}
	
	Optimal transport is now a well established theory, and we refer to \cite{villani2003topics,ambrosio2008gradient,santambrogio2015optimal,peyre2019computational} for textbooks about it. We focus here on the dynamical formulation (often called Benamou-Brenier formulation since this approach was initiated in~\cite{benamou2000computational}) because it is the only one in which our RUOT problem makes sense.

	The input of our regularized unbalanced optimal transport problem consist in $\rho_0, \rho_1$ a pair of nonnegative measures on $\T^d$. In its dynamical formulation, one then seeks a triple of space-time dependent fields $(\rho, v,r)$ where: $\rho$ is a space-time dependent density such that $\rho(0,\cdot) = \rho_0$ and $\rho(1,\cdot) = \rho_1$; $v$ is a space-time dependent velocity field taking its values in $\R^d$; and $r$ is a space-time dependent scalar field corresponding to a growth rate. These three variables are linked with the equation 
	\begin{equation*}
	\dr_t \rho + \Div(\rho v) = \frac{\nu}{2} \Delta \rho + r \rho,
	\end{equation*}
	where $\nu \in \R$ is the \textbf{diffusivity} (at this stage $\nu$ could be negative or equal to $0$). This corresponds to a Fokker-Planck equation with an additional source term $r \rho$: mass can be created ($r > 0$) or destroyed ($r < 0$). Among all triples $(\rho, v, r)$ such that $\rho(0,\cdot) = \rho_0$ and $\rho(1,\cdot) = \rho_1$, one seeks one which minimizes the energy
	\begin{equation*}
	\int_0^1 \hspace{-5pt} \int \left[ \frac{1}{2} |v(t,x)|^2  + \Psi(r(t,x)) \right] \rho(t,x) \D x \D t, 
	\end{equation*}
	where $\Psi : \R \to [0,+\infty]$ corresponds to a \textbf{growth penalization}. The velocity field is penalized via the kinetic energy (the integral of $1/2 \, |v|^2 \rho$) while $\Psi(r)$ measures the cost per unit of mass of having a growth rate $r$. We underline some particular cases, illustrated at Figure~\ref{fig:numerics} below:
	
	\begin{itemize}
		\item \textbf{(plain) Optimal Transport (OT)}, see \cite[Chapter 5]{santambrogio2015optimal} or \cite[Chapter 7]{ambrosio2008gradient}. It corresponds to $\nu = 0$ and $\Psi(r) = +\infty$, unless for $r=0$ where $\Psi(0) = 0$, so that $r$ is enforced to cancel. The problem is clearly symmetric in time, and one can see that $t \mapsto \int \rho(t, \cdot)$ is constant. In particular, for the problem to be well posed, $\rho_0$ and $\rho_1$ need to be balanced, that is, $\int \rho_0$ and $\int \rho_1$ need to be the same.
		
		\item \textbf{Regularized Optimal Transport (ROT)}, see \cite{gentil2017analogy} or \cite{gigli2020benamou} for a more general framework. It corresponds to $\nu \neq 0$ but still the same $\Psi$ as in the previous OT problem. Here again $\rho_0$ and $\rho_1$ have to be balanced for the problem to be well posed. Although this problem looks non-symmetric with respect to time, with the change of variable $(\rho, v) \to (\rho, w = v - \nu/2 \nabla \log \rho )$, there holds, with $H$ being the relative entropy (defined in \eqref{eq:def_H}), and $\Leb$ being the Lebesgue measure,
		\begin{align}
		\notag
		\min_{\rho, v} \bigg\{ &\int_0^1 \hspace{-5pt} \int \frac{1}{2} |v(t,x)|^2 \rho(t,x) \D x \D t \ : \ \dr_t \rho + \Div(\rho v) = \frac{\nu}{2} \Delta \rho \bigg\} +  \frac{\nu}{2} (H(\rho_0[\Leb]) - H(\rho_1|\Leb) )\\
		\label{eq:ROT_symmetric} &= \min_{\rho, w} \left\{ \int_0^1 \hspace{-5pt} \int \left[ \frac{1}{2} |w(t,x)|^2  + \frac{1}{2}\left|\frac{\nu}{2} \nabla \log \rho(t,x) \right|^2 \right]\rho(t,x) \D x \D t \ : \ \dr_t \rho + \Div(\rho w) = 0 \right\},
		\end{align}
		with the temporal constraints $\rho(0, \cdot) = \rho_0$ and $\rho(1, \cdot) = \rho_1$. The right hand side is symmetric with respect to the exchange $\rho_0 \leftrightarrow \rho_1$. 
		
		\item \textbf{Unbalanced Optimal Transport (UOT)}, see \cite{kondratyev2016new,liero2018optimal,chizat2018unbalanced}. It corresponds to the case where $\nu = 0$ but $r$ is no longer constrained to cancel. By changing $\Psi$, we modify the compromise between displacement and creation/destruction of mass chosen by the solutions of the corresponding models. A popular choice, that will not be ours, is to take $\Psi$ to be quadratic. As the presence of $r$ allows for the total mass $\int \rho(t,x) \D x$ to change over time, one no longer enforces $\rho_0$ and $\rho_1$ to have the same mass.
		
		\item \textbf{Regularized Unbalanced Optimal Transport (RUOT)}. It corresponds to allow both $\nu \neq 0$ and $r$ to be nonzero. As pointed out Subsection~\ref{subsec:no_sinkhorn}, this definition is \emph{not} symmetric in time as the change of variables $(\rho, v) \to (\rho, v - \nu/2 \nabla \log \rho )$ no longer works because of the growth part. We stick to this definition and not another one (e.g. starting from \eqref{eq:ROT_symmetric}) because it will appear to be the one linked to the entropic minimization problem with respect to BBM. The function $\Psi$ is not specified for the moment, but it will have to depend on the branching mechanism of the BBM for the equivalence to hold. We denote by $\ruot_{\nu, \Psi}$ the value of the problem, that is,
		\begin{equation}
		\label{eq:def_ruot}
		\begin{aligned}
		\ruot_{\nu,\Psi}(\rho_0,\rho_1)
		=  \min_{\rho, v, r}  \bigg\{ \int_0^1 \hspace{-5pt} \int &\left[ \frac{1}{2} |v(t,x)|^2  + \Psi(r(t,x)) \right] \rho(t,x) \D x \D t \ : \ \\ &\qquad\dr_t \rho + \Div(\rho v) = \frac{\nu}{2} \Delta \rho + r \rho, \ \rho(0, \cdot) = \rho_0, \ \rho(1, \cdot) = \rho_1  \bigg\}   
		\end{aligned}
		\end{equation}
		One can find a similar formulation in \cite{chen2021most} where the penalization $\Psi$ prevents $r$ from taking positive values: only mass removal is allowed.
	\end{itemize}
	
	\begin{figure}
		\begin{center}
			\begin{tabular}{cc}
				
				\multicolumn{2}{c}{
					\begin{tikzpicture}[scale = 0.4]
					\begin{axis}[
					ylabel={$t=0$},
					xmin=0, xmax=1,
					ymin = -0.5, ymax = 10]
					\addplot[color=blue] table [x=grid, y=rho0]{temporal_boundary.txt};
					\end{axis}
					\end{tikzpicture}} \\
				
				\multicolumn{2}{c}{Initial measure $\rho_0$} \\
				
				\begin{tikzpicture}[scale = 0.7] 
				\begin{axis}
				[
				xmin=0, xmax=1,
				ymin=0, ymax=1,
				zmin = 0, zmax = 10,
				view={60}{30},
				xlabel={$x$},
				ylabel={$t$},
				zlabel={density},
				]
				\addplot3[color=red,line width = 0.7pt] table [x=grid, y expr = 0.25, z=rho6]{store_OT.txt};
				\addplot3[color=red,line width = 0.7pt] table [x=grid, y expr = 0.5, z=rho12]{store_OT.txt};
				\addplot3[color=red,line width = 0.7pt] table [x=grid, y expr = 0.75, z=rho18]{store_OT.txt};
				\addplot3[color=blue,line width = 0.7pt,opacity=0.5] table [x=grid, y expr = 1, z=rho1]{temporal_boundary.txt};
				\addplot3[color=blue,line width = 0.7pt,opacity=0.5] table [x=grid, y expr = 0, z=rho0]{temporal_boundary.txt};
				\end{axis}
				\end{tikzpicture} & 
				
				\begin{tikzpicture}[scale = 0.7] 
				\begin{axis}
				[
				xmin=0, xmax=1,
				ymin=0, ymax=1,
				zmin = 0, zmax = 10,
				view={60}{30},
				xlabel={$x$},
				ylabel={$t$},
				zlabel={density},
				]
				\addplot3[color=red,line width = 0.7pt] table [x=grid, y expr = 0.25, z=rho6]{store_ROT.txt};
				\addplot3[color=red,line width = 0.7pt] table [x=grid, y expr = 0.5, z=rho12]{store_ROT.txt};
				\addplot3[color=red,line width = 0.7pt] table [x=grid, y expr = 0.75, z=rho18]{store_ROT.txt};
				\addplot3[color=blue,line width = 0.7pt,opacity=0.5] table [x=grid, y expr = 1, z=rho1]{temporal_boundary.txt};
				\addplot3[color=blue,line width = 0.7pt,opacity=0.5] table [x=grid, y expr = 0, z=rho0]{temporal_boundary.txt};
				
				\end{axis}
				\end{tikzpicture} \\
				
				(a) OT & (b) ROT \\
				
				\begin{tikzpicture}[scale = 0.7] 
				\begin{axis}
				[
				xmin=0, xmax=1,
				ymin=0, ymax=1,
				zmin = 0, zmax = 10,
				view={60}{30},
				xlabel={$x$},
				ylabel={$t$},
				zlabel={density},
				]
				\addplot3[color=red,line width = 0.7pt] table [x=grid, y expr = 0.25, z=rho6]{store_UOT.txt};
				\addplot3[color=red,line width = 0.7pt] table [x=grid, y expr = 0.5, z=rho12]{store_UOT.txt};
				\addplot3[color=red,line width = 0.7pt] table [x=grid, y expr = 0.75, z=rho18]{store_UOT.txt};
				\addplot3[color=blue,line width = 0.7pt,opacity=0.5] table [x=grid, y expr = 1, z=rho1]{temporal_boundary.txt};
				\addplot3[color=blue,line width = 0.7pt,opacity=0.5] table [x=grid, y expr = 0, z=rho0]{temporal_boundary.txt};
				
				\end{axis}
				\end{tikzpicture} & 
				
				\begin{tikzpicture}[scale = 0.7] 
				\begin{axis}
				[
				xmin=0, xmax=1,
				ymin=0, ymax=1,
				zmin = 0, zmax = 10,
				view={60}{30},
				xlabel={$x$},
				ylabel={$t$},
				zlabel={density},
				]
				\addplot3[color=red,line width = 0.7pt] table [x=grid, y expr = 0.25, z=rho6]{store_RUOT.txt};
				\addplot3[color=red,line width = 0.7pt] table [x=grid, y expr = 0.5, z=rho12]{store_RUOT.txt};
				\addplot3[color=red,line width = 0.7pt] table [x=grid, y expr = 0.75, z=rho18]{store_RUOT.txt};
				\addplot3[color=blue,line width = 0.7pt,opacity=0.5] table [x=grid, y expr = 1, z=rho1]{temporal_boundary.txt};
				\addplot3[color=blue,line width = 0.7pt,opacity=0.5] table [x=grid, y expr = 0, z=rho0]{temporal_boundary.txt};
				
				\end{axis}
				\end{tikzpicture} \\
				
				(c) UOT & (d) RUOT \\
				
				\multicolumn{2}{c}{
					\begin{tikzpicture}[scale = 0.4]
					\begin{axis}[
					ylabel={$t=1$},
					xmin=0, xmax=1,
					ymin = -0.5, ymax = 10]
					\addplot[color=blue] table [x=grid, y=rho1]{temporal_boundary.txt};
					\end{axis}
					\end{tikzpicture}} \\
				
				\multicolumn{2}{c}{Final measure $\rho_1$}
				
			\end{tabular}   
			\caption{Simulations of different models of optimal transport for the same initial and final densities $\rho_0, \rho_1$ (top and bottom line). (a) plain optimal transport, (b) regularized optimal transport, (c) unbalanced optimal transport and (d) regularized unbalanced optimal transport. \\
				Observe that in models (a) and (b), mass is only transported. As a result, a large portion of the mass of the big initial bump on the left hand side needs to be transported onto the big final bump in the right hand side. In models (c) and (d), this is not the case anymore: some mass from the big initial bump is destroyed, while mass is produced to form the big final bump. Also observe that in models (b) and (d), the densities at intermediate times look more regular than in the corresponding models (a) and (c).\\
				The simulations were performed with the same code which discretizes the dynamical formulation, see Section \ref{sec:numerics} for more details.}
			\label{fig:numerics}
		\end{center}
	\end{figure}
	
	\section{The Schrödinger problem and its link with regularized optimal transport}

	Before diving into our contributions, we briefly recall what the Schrödinger problem is and how it is linked to ROT (balanced) models. We refer to \cite{leonard2013survey,gentil2017analogy} and references therein for an exhaustive understanding.  
	
	We recall that if $\mathbf{X}$ is a polish space and $\mathsf{r} \in \P(\mathbf{X})$ is a probability measure on it, for all finite Borel measure $\mathsf{p}$ on $\mathbf{X}$, the relative entropy of $\mathsf p$ with respect to $\mathsf r$ is defined by:
	\begin{equation}
	\label{eq:def_H}
	H(\mathsf p| \mathsf r) := \left\{ \begin{aligned}
	&\E_{\mathsf p}\left[\log \frac{\D \mathsf p}{\D \mathsf r}\right], &&\mbox{if } \mathsf p \in \P(\mathbf{X}), \, \mathsf p\ll \mathsf r,\\
	&+\infty, && \mbox{else}.
	\end{aligned}
	\right.
	\end{equation} 	
	(Here and in the sequel, if $\mathsf p$ is a probability measure on a Polish space $\mathbf X$, $\E_{\mathsf p}$ stands for the expectation w.r.t.\ $\mathsf p$.)
	
	In the context of the Schrödinger problem, $\mathbf{X}$ is in fact $\Omega = C([0,1], \T^d)$ the space of continuous paths valued in the torus, and $R \in \P(\Omega)$ is the law of the Brownian motion of diffusivity $\nu$ and uniform initial condition. That is, under $R$, the canonical process $X = (X_t)_{t \in [0,1]}$ on $\Omega$ follows the law of Brownian motion of diffusivity $\nu$, while $X_0$ is uniformly distributed. Given $\rho_0, \rho_1 \in \P(\T^d)$, the Schrödinger problem reads as finding a law $P \in \P(\Omega)$ which minimizes $H(P|R)$ under the marginal constraints that under $P$, $X_0 \sim \rho_0$ and $X_1 \sim \rho_1$. We denote by $\Sch$ the value of the problem (scaled by a factor $\nu > 0$), that is
	\begin{equation*}
	\Sch_\nu(\rho_0, \rho_1) = \inf_P \left\{ \nu H(P|R) \ : \ \text{under } P, \; X_0 \sim \rho_0 \text{ and } X_1 \sim \rho_1  \right\}. 
	\end{equation*} 	
	This problem is well posed (in the sense that there exists a unique minimizer) if and only if $H(\rho_0|\Leb) < + \infty$ and $H(\rho_1|\Leb) < + \infty$.

	\begin{Rem}
		\label{rk:sanov_schrodinger}
		In view of Sanov's theorem~\cite{sanov1958probability}, this minimization of relative entropy has the following interpretation in terms of large deviations, that we state in an informal way and that were Schrödinger's original concern. Let us consider a large number of Brownian particles, each of them being chosen independently and at a random (uniform) initial position. Because of the law of large numbers, the initial and final distribution of these particles are close to being uniform, with very high probability. However, conditionning on the very rare event that it is not the case, and that instead these distributions are close to $\rho_0$ and $\rho_1$ respectively, then Sanov's theorem predicts that with very high probability, the behaviour of the system is the same as if all the particles had been chosen according to $P$, the solution of the entropic minimization problem.
	\end{Rem}
	
	One of the main result, for which various statements and proofs (in more general context than the state space being $\T^d$) can be found for instance in \cite{gentil2017analogy}, is the identity:
	\begin{equation}
	\label{eq:equivalence_schrodinger}
	\Sch_\nu(\rho_0,\rho_1) = \nu H(\rho_0|\Leb) + \min_{\rho, v} \bigg\{ \int_0^1 \hspace{-5pt} \int \frac{1}{2} |v(t,x)|^2 \rho(t,x) \D x \D t \ : \ \dr_t \rho + \Div(\rho v) = \frac{\nu}{2} \Delta \rho \bigg\} 
	\end{equation}	
	where the minimum in $\rho,v$ is taken among those with temporal boundary conditions $\rho(0,\cdot) = \rho_0$ and $\rho(1,\cdot) = \rho_1$. One can recognize in the right hand side, up to the term $\nu H(\rho_0|\Leb)$, the value of the ROT problem between $\rho_0$ and $\rho_1$. Moreover, there is a correspondence between the competitors: from any $P$ with $H(P|R)< + \infty$ and the right temporal marginals, the pair $(\rho,v)$ formed by its density and its current velocity (in the sense of Nelson~\cite{nelson1967dynamical}) is admissible in the ROT problem with a controlled energy; while if $(\rho,v)$ is admissible in ROT and $v$ is smooth, one should take $P$ the law of the Stochastic Differential Equation (SDE) $\D X_t = v(t,X_t) \D t + \sqrt{\nu} \D B_t$ and initial condition $\rho_0$ to build a competitor in the Schrödinger problem with controlled entropy. 
	
	Eventually, as can be seen formally in the models of optimal transport, in the limit $\nu \to 0$, the value of the Schrödinger problem converges to (plain) optimal transport. We refer to \cite{leonard2012schrodinger,baradat2020small} for statements and proofs of this limit in a probabilistic framework.

	\section{The branching Schrödinger problem} 
	
	Now we want to replace the Brownian motion by a branching Brownian motion. One difficulty is to understand what the probability space $\Omega$ (which was previously $C([0,1], \T^d)$) becomes: it will be replaced by measure-value paths, to take into account that in a BBM the number of particles is not constant.
	
	\paragraph{Branching Brownian motion}	
	
	We describe informally the BBM as follows, see \cite[Chapter~1]{etheridge2000introduction} or Section~\ref{sec:presentation_BBM} for a more detailed introduction. We consider a population of particles which evolve independently according to Brownian motion with a common \textbf{diffusivity} $\nu>0$. In addition, to each particle is attached an (independent) exponential clock with parameter being the \textbf{branching rate} $\lambda >0$. When the clock rings, the corresponding particle dies, and gives birth to a randomly chosen number of particles, which will in turn evolve in the same way. The number of offsprings is drawn accordingly to the (common) \textbf{law of offspring}, denoted by $\boldsymbol{p} = (p_k)_{k \in \N} \in \P(\N)$ (we identify probability measures on $\N$ with nonnegative sequences summing at one), where $p_k$ is the probability to give birth to $k$ descendants at a branching event. We refer to Figure~\ref{fig:example_BBM} for a schematic example of a realization of a branching Brownian motion.  	
	
	\begin{figure}
		\begin{center}
			\begin{tikzpicture}[scale = 1]
			\begin{axis}[
			xlabel={Time},
			ylabel={Space},
			xmin=0, xmax=1,
			ymin = 0.1, ymax = 0.8]
			\addplot[color=blue,line width = 0.7pt] table [x=t1, y=x1]{data_BBM.txt};
			\addplot[color=blue,line width = 0.7pt] table [x=t2, y=x2]{data_BBM.txt};
			\addplot[color=blue,line width = 0.7pt] table [x=t3, y=x3]{data_BBM.txt};
			\addplot[color=blue,line width = 0.7pt] table [x=t4, y=x4]{data_BBM.txt};
			\addplot[color=blue,line width = 0.7pt] table [x=t5, y=x5]{data_BBM.txt};
			\addplot[color=blue,line width = 0.7pt] table [x=t6, y=x6]{data_BBM.txt};
			\addplot[color=blue,line width = 0.7pt] table [x=t7, y=x7]{data_BBM.txt};
			\addplot[color=black,only marks,mark= star,mark size=3.9pt,] table[x=eDeatht,y= eDeathx]{data_BBM.txt};
			\addplot[only marks,mark=*,mark size=2.9pt,color=blue] table[x=eInitt,y= eInitx]{data_BBM.txt};
			\addplot[only marks,mark=square*,mark size=2.9pt,color=violet] table[x=eFinalt,y= eFinalx]{data_BBM.txt};
			\addplot[only marks,mark=triangle*,mark size=3.9pt,color=orange] table[x=eSplit2t,y= eSplit2x]{data_BBM.txt};
			\addplot[only marks,mark= diamond*,mark size=3.9pt,color=red] table[x=eSplit3t,y= eSplit3x]{data_BBM.txt};
			\end{axis}
			\end{tikzpicture}
		\end{center}
		\caption{Schematic illustration of the trajectory of a branching Brownian motion in one space dimension, starting ($\textcolor{blue}{\bullet}$) and ending ($\textcolor{violet}{\blacksquare}$) with two particles, with particles dying ($\star$), and branching events with $2$ ($\textcolor{orange}{\blacktriangle}$) and $3$ ($\textcolor{red}{\blacklozenge}$) offsprings.}
		\label{fig:example_BBM}
	\end{figure}

	We study the law of the process which at time $t$ gives the empirical measure associated with all the positions of all the particles alive at time $t$. It means that we describe a collection of particles $(x_1, \dots, x_p)$ on $\T^d$ by its (unnormalized) empirical distribution 
	\begin{equation*}
	\sum_{i=1}^p \delta_{x_i}
	\end{equation*}	
	which belongs to $\M_+(\T^d)$ the set of nonnegative Radon measures on $\T^d$. Thus, a realization of the BBM is a $\cadlag$ curve valued in $\M_+(\T^d)$ with jumps corresponding to branching events. In short the BBM corresponds to a stochastic process denoted by $(M_t)_{t \in [0,1]}$, where $M_t$ is the empirical measure describing the collection of particles alive at time $t$. The law of the BBM is an element of $\P(\cadlag([0,1]; \M_+(\T^d)))$.
	
	The last parameter needed to describe the law of the BBM is the \textbf{initial law} (or otherwise stated the law of $M_0$) denoted by $R_0 \in \P(\M_{\delta}(\T^d))$, where $\M_{\delta}(\T^d) \subset \M_+(\T^d)$ is the set of finite sum of Dirac masses on $\T^d$. Note that $R_0$ can be described as a \emph{point process}.

	\begin{Rem}
		\label{rem:q_instead_of_p}
		From the algebraic point of view, it will be less cumbersome to introduce the branching rates $q_k = \lambda p_k$ in order to build $\boldsymbol{q} = (q_k)_{k \in \N}$ as a positive measure on $\N$. The quantity $q_k$ corresponds to the (temporal) rate of branching events with $k$ offsprings. We also introduce the generating function of~$\boldsymbol{q}$ as
		\begin{equation*}
		\Phi_{\boldsymbol{q}} : z \mapsto \sum_{k \in \N} q_k z^k = \lambda \sum_{k \in \N} p_k z^k.
		\end{equation*} 
		Of course $\boldsymbol{q}$, that we call the \textbf{branching mechanism}, is entirely determined by $\Phi_{\boldsymbol{q}}$, while $\lambda$ and $\boldsymbol{p}$ can be reconstructed from $\boldsymbol{q}$ by $\lambda = \lambda_{\boldsymbol q} :=\sum_{k \in \N} q_k$ and $p_k = \lambda^{-1} p_k$ for $k \in \N$.
	\end{Rem}

	Under the mild assumption that the mean number of particle at each branching event is finite (that is, $\sum_k k q_k < + \infty$), these data are sufficient to give a full characterization of the BBM of diffusivity $\nu$, branching mechanism $\boldsymbol{q} = \lambda \boldsymbol{p} \in \M_+(\N)$ and initial law $R_0$: in the sequel we write
	\begin{equation*}
	R \sim \BBM(\nu, \boldsymbol{q}, R_0)
	\end{equation*}
	to indicate that $R$ is a such a law.

	\paragraph{Marginal constraints and entropy minimization}
	
	Similarly to the Schrödinger problem, we want to minimize $P \to H(P|R)$, being $R \sim \BBM(\nu, \boldsymbol{q}, R_0)$, with additional marginal constraints on $P$. Here $R$ and the law $P$ are elements of $\P(\cadlag([0,1]; \M_+(\T^d)))$. Under $P$, the random variables $M_0$ and $M_1$ are random measures, and the constraints will be about their intensity measures.
	
	Specifically, if $P \in \P( \cadlag([0,1]; \M_+(\T^d)))$, we define its expected marginal at time $t$ as $\E_P[ M_t ]$: this is the measure on $\M_+(\T^d)$ defined for any test function $\theta \in C(\T^d)$ by 
	\begin{equation*}
	\Big\cg \theta , \E_P[ M_t ]\Big\cd = \E_P \Big[ \cg \theta, M_t\cd \Big],
	\end{equation*}
	where here an in the rest of this work, $\cg \cdot , \cdot \cd$ stands for the duality bracket between continuous functions and measures of finite total variation on the torus (see also Definition~\ref{def:intensity_measure}).  
	
	\begin{Rem}
		Let us give an intuition for $\E_P[M_t]$. Under $P$, one can see $M_t$ as a random measure, and $\E_P[M_t]$ is simply its average value. Hence, for any Borel set $A \subset \T^d$, $\E_P[M_t](A) = \E_P[M_t(A)]$ is the average number of particles that one finds in $A$ at time $t$. If $M_t$ is interpreted as a random point process, $\E_P[M_t]$ is called its intensity or sometimes the expected measure. Let us provide three examples. If $\rho$ is a probability distribution on $\T^d$, consider the random measure $M$ built as follows: (i) draw $X$ according to $\rho$ and set $M = \delta_X$, or (ii) draw $X,Y$ according to $\rho$, for instance independently, and set $M = \frac{1}{2} \delta_X + \frac{1}{2} \delta_Y$, or (iii) $M$ is a Poisson point process, of intensity $\rho$. Then in the three cases $\E[M] = \rho$.
	\end{Rem}
	
	We can now state the branching Schrödinger problem. Let us fix $\nu$, $\boldsymbol{q}$ and $R_0$ as above. We consider $R \sim \BBM(\nu, \boldsymbol{q}, R_0)$ the ``reference'' measure, and we give ourselves $\rho_0, \rho_1$ two nonnegative measures on $\T^d$. The entropy minimization with respect to BBM is 
	\begin{equation}
	\label{eq:def_BrSch}
	\BrSch_{\nu, \boldsymbol q, R_0}(\rho_0,\rho_1) := \inf_{P} \left\{ \nu H(P|R) \ : \ \E_P[M_0] = \rho_0 \text{ and } \E_P[M_1] = \rho_1 \right\}.
	\end{equation} 
	Notice that as in the non-branching case, there is a factor $\nu$ in front of the entropy which does not change the minimizers but yields the right scaling when $\nu \to 0$. 
	
	\begin{Rem}
		Similarly to Remark \ref{rk:sanov_schrodinger}, this minimization of relative entropy has the following interpretation in terms of large deviations. Let us consider a large number of branching Brownian particles, each of them being chosen independently acording to the law $R$. Because of the law of large numbers, the initial and final distribution of these particles are close to $\E_R[M_0]$ and $\E_R[M_1]$ respectively, with very high probability. However, conditionning on the very rare event that it is not the case, and that instead these distributions are close to $\rho_0$ and $\rho_1$ respectively, then Sanov's theorem predicts that with very high probability, the behaviour of the system is the same as if all the particles had been chosen according to $P$, the solution of the entropic minimization problem.
	\end{Rem}
	
	\begin{Rem}
		\label{rem:ill-posed}
		The branching Schrödinger problem is ill-posed because the constraints are not closed. Indeed, for a given $\theta \in C(\T^d)$, the set of $P$ which satisfies
		\begin{equation*}
		\E_P \left[ \langle \theta , M_t \rangle \right] = \text{cst}
		\end{equation*} 
		is not closed with respect to weak convergence, $L^1$ convergence or even $I$-convergence following the terminology of \cite{csiszar2003information}. This is because $M_t(\T^d)$, the total number of particles alive at time $t$, is not bounded, so that actually the left hand side behaves like a moment of order one of $P$. In particular, it is not true in general that, for given $\rho_0, \rho_1$, an optimal $P$ exists, even if the minimization problem is not empty. Some counterexamples can be found in Subsection~\ref{subsec:counterexamples}. 
		
		On the other hand, the RUOT problem is well posed under mild assumptions. Our main result is that the (well posed) latter can be seen as the relaxation of the (ill-posed) former.
	\end{Rem}
	
	\section{A duality argument about the equivalence between the models}
	\label{sec:duality_informal}
	
	In this section, we introduce our main result about the equivalence between the models, and we  give a reason of why it should hold. Let us fix $\nu$, $\boldsymbol{q}$, $R_0$ as above (in particular with $\sum_k k q_k < + \infty$) and $R \sim \BBM(\nu,  \boldsymbol{q}, R_0)$. Let us exclude the pathological case where $R_0 = \delta_0$ where $R$-a.s.\ nothing happens (see Remark~\ref{rem:R0=delta0}). We will actually prove the following. 
	
	\bigskip

	{\it We \emph{define} a convex growth penalization $\Psi_{\nu, \boldsymbol q}$ by its Legendre transform}
	\begin{equation}
	\label{eq:def_Psi*}
	\Psi_{\nu, \boldsymbol q}^*(s) = \nu \Big( \Phi_{\boldsymbol{q}}\big(e^{s/\nu}\big)e^{-{s/\nu}} - \Phi_{\boldsymbol{q}}(1) \Big) = \nu \sum_{k=0}^{+\infty} q_k \left\{\exp\left( (k-1)\frac{s}{\nu}  \right) -1 \right\},
	\end{equation}
	{\it that is we define $\Psi_{\nu, \boldsymbol q}(r) = \sup_{s \in \R} rs - \Psi_{\nu, \boldsymbol q}^*(s)$. Let us also define $L_{R_0}^* : \M_+(\T^d) \to \R \cup \{ + \infty \}$ the Legendre transform of the log-Laplace transform of $R_0$:} 
	\begin{equation}
	\label{eq:def_L*}
	L_{R_0}^*(\rho_0) = \sup_{\sigma \in C(\T^d)} \langle \sigma, \rho_0 \rangle -  \log \E_{R_0} \left[ \exp( \langle \sigma, M \rangle ) \right], \qquad \rho_0 \in \M_+(\T^d).
	\end{equation}
	{\it Denoting by $\overline{\BrSch}_{\nu, \boldsymbol q, R_0}$ the l.s.c.\ envelope of the function $(\rho_0,\rho_1) \to \BrSch_{\nu, \boldsymbol q, R_0}(\rho_0,\rho_1)$ defined in~\eqref{eq:def_BrSch} for the topology of weak convergence on $\M_+(\T^d) \times \M_+(\T^d)$, there holds:} 	
	\begin{equation}
	\label{eq:equivalence_BrSchr}
	\overline{\BrSch}_{\nu, \boldsymbol q, R_0}(\rho_0,\rho_1) = \nu L_{R_0}^*(\rho_0) +  \ruot_{\nu,\Psi}(\rho_0,\rho_1) 
	\end{equation}	
	{\it where $\ruot_{\nu,\Psi}$ is defined in \eqref{eq:def_ruot}, provided $\Psi = \Psi_{\nu, \boldsymbol q}$.}

	\bigskip
	
	Of course the parallel should be done with \eqref{eq:equivalence_schrodinger}, with now the term $\nu L_{R_0}^*(\rho_0)$ playing the role of the ``initial term'' depending only on $\rho_0$. As emphazised in Remark~\ref{rem:ill-posed}, as the branching Schrödinger problem is ill-posed, the functional $ \BrSch$ is not necessarily lower semi continuous, while the right hand side of \eqref{eq:equivalence_BrSchr} is. In that sense, our result is one of the finest characterization one can hope. 
	
	\begin{Rem}
		\label{rk:psistar_cosh}
		In Appendix~\ref{app:plots}, we provide a set of plots and describe some relevant properties of $\Psi^*_{\nu, \boldsymbol q}$ and $\Psi_{\nu, \boldsymbol q}$ for various choices of $\nu$ and $\boldsymbol q$.
		
		Let us explicit $\Psi^*_{\nu, \boldsymbol q}$ in the simple case where $q_0 = q_2 = \lambda/2$ and $q_k = 0$ for $k \neq 0,2$. That is, branching events happen with rate $\lambda$ and at each branching event the particle divides in two or dies with equal probability. In that case $\Psi_{\nu, \boldsymbol q}^*(s) = \nu \lambda ( \cosh(s/\nu) - 1 )$. Then $\Psi_{\nu, \boldsymbol q}$ can be computed explicitly (but the expression is cumbersome and does not tell much): it yields a convex function which is minimal at $0$ and which grows at $\pm \infty$ as $\Psi_{\nu, \boldsymbol q}(r) \sim \nu |r| \ln |r|$. 
	\end{Rem}

	\bigskip	
	
	We will actually prove a finer result than \eqref{eq:equivalence_BrSchr}, in the sense that under reasonable assumptions on $R_0$ and $\boldsymbol q$, we will build a relevant correspondence between the competitors $P$ and the triples $(\rho,v,r)$. But for the moment, let us give a first argument for the equality of the values, that is, \eqref{eq:equivalence_BrSchr}. The reader who does not want to follow the details can directly jump to Section~\ref{sec:overview_article} to get an overview of the other results we prove in this article.
	
	The two problems are constrained convex optimization problems, and they are linked by their temporal boundary conditions. Therefore, we will rather look at the dual problems, and actually show that they are the same. Before writing the rigorous proof later in this article, let us present the formal computations leading to this result.
	
	\paragraph{Dual of the RUOT problem}
	A non-rigorous way to derive the dual of the RUOT problem (see e.g.\ \cite{benamou2017variational} for the general method) is to introduce a Lagrange multiplier $\phi : [0,1] \times \T^d \to \R$ associated to the differential constraint, and to write the problem as an $\inf \sup$. In other terms, consider the following functional
	\begin{equation*}
	(\rho, v, r) \mapsto  \sup_{\phi} \cg \phi(1), \rho_1\cd - \cg\phi(0), \rho_0\cd - \int_0^1 \hspace{-5pt} \int \rho \left( \dr_t \phi + v \cdot \nabla \phi + \frac{\nu}{2} \Delta \phi + r \phi \right) \D x \D t,
	\end{equation*}
	where to lighten the notations, here and later in this work, we denote by $\phi(0)$ and $\phi(1)$ the functions $\phi(0,\cdot)$ and $\phi(1, \cdot)$, so that
	\begin{equation*}
	\cg\phi(0), \rho_0\cd = \int \phi(0,x) \rho_0(x) \D x \qquad \mbox{and} \qquad   \cg \phi(1), \rho_1\cd = \int \phi(1,x) \rho_1(x) \D x .
	\end{equation*}
	
	This functional takes values in $\{ 0, + \infty \}$, and cancels exactly on those triples $(\rho,v, r)$ that satisfy both the PDE and the temporal constraints of the problem. This leads to the equality: 
	\begin{align*}
	\ruot_{\nu,\Psi}(\rho_0,\rho_1) = \min_{\rho,v,r} \sup_\phi \int_0^1 \hspace{-5pt} \int &\left[ \frac{1}{2} |v(t,x)|^2  + \Psi(r(t,x)) \right] \rho(t,x) \D x \D t  \\&+ \cg \phi(1), \rho_1\cd - \cg\phi(0), \rho_0\cd
	- \int_0^1 \hspace{-5pt} \int \rho \left( \dr_t \phi + v \cdot \nabla \phi + \frac{\nu}{2} \Delta \phi + r \phi \right) \D x \D t.   
	\end{align*}
	Swapping the $\inf$ and the $\sup$, which can be justified by convex analysis arguments, and then optimizing in $v$ (which yields $v = \nabla \phi$) and in $r$ (which yields $r = (\Psi^*)'(\phi)$), one ends up with
	\begin{equation*}
	\sup_{\phi} \min_\rho   \cg \phi(1), \rho_1\cd - \cg\phi(0), \rho_0\cd - \int_0^1 \hspace{-5pt} \int \rho \left( \dr_t \phi + \frac{1}{2} |\nabla \phi|^2 + \frac{\nu}{2} \Delta \phi + \Psi^*(\phi) \right) \D x \D t.
	\end{equation*} 
	As written here $\rho$ is unconstrained so that the term in front of $\rho$ should vanish. Thus, we get the dual problem of the RUOT problem with temporal boundary conditions $\rho_0$ and $\rho_1$:
	\begin{equation}
	\label{eq:RUOT_dual}
	\ruot_{\nu,\Psi}(\rho_0,\rho_1)  
	= \sup_{\phi} \bigg\{ \cg \phi(1), \rho_1\cd - \cg\phi(0), \rho_0\cd \ : \
	\dr_t \phi + \frac{1}{2} |\nabla \phi|^2 + \frac{\nu}{2} \Delta \phi + \Psi^*(\phi) = 0 \text{ on } [0,1] \times \T^d  \bigg\}.
	\end{equation}
	When the $\inf-\sup$ exchange is done rigorously there is an inequality sign in the constraint and not an equality (which makes the constraint set convex). However, taking the inequality to be an equality will always increase the value of the objective functional (this is because the Hamilton-Jacobi-Bellman equation, that is, the constraint in \eqref{eq:RUOT_dual}, has a comparison principle), so for this introduction we keep an equality sign and refer to the sequel for rigorous statements.

	\paragraph{Dual of the branching Schrödinger problem} 
	Let us now derive the dual problem of the minimization of the entropy with respect to BBM. Here we have two Lagrange multipliers $\sigma, \theta : \T^d \to \R$ for the constraints on the initial and final temporal marginals: the functional 
	\begin{equation*}
	P \mapsto \sup_{\sigma, \theta} \cg \sigma, \rho_0 \cd - \E_P [ \langle \sigma,M_0 \rangle ]+ \cg \theta, \rho_1\cd - \E_P [ \langle \theta,M_1 \rangle ]	    
	\end{equation*}
	also takes values in $\{ 0, + \infty \}$ and cancels exactly on the $P$ such that $\E_P[M_0] = \rho_0$ and $\E_P[M_1] = \rho_1$. Thus the problem reads
	\begin{align*}
	\BrSch_{\nu, \boldsymbol q, R_0}(\rho_0, \rho_1)&=\min_P \sup_{\sigma, \theta} \nu H(P|R) + \cg \sigma, \rho_0 \cd - \E_P [ \langle \sigma,M_0 \rangle ]+ \cg \theta, \rho_1\cd - \E_P [ \langle \theta,M_1 \rangle ] \\
	&= \min_P \sup_{\sigma,\theta, \Xi} \nu \E_P[\Xi] - \nu \log \E_R[\exp \Xi] + \cg \sigma, \rho_0\cd - \E_P [ \langle \sigma,M_0 \rangle ] + \cg \theta, \rho_1\cd - \E_P [ \langle \theta,M_1 \rangle ],
	\end{align*}  
	where we used that the Legendre transform of the relative entropy with respect to a given measure $R$ is $\Xi \to \log \E_R [ \exp \Xi ]$. Again by convex duality we can swap the $\min$ and the $\sup$, but this time as the problem is ill-posed we get the lower semi-convex envelope, leading to 
	\begin{equation*}
	\overline{\BrSch}_{\nu, \boldsymbol q, R_0}(\rho_0, \rho_1) = \sup_{\sigma,\theta, \Xi} \cg \sigma, \rho_0 \cd +\cg \theta, \rho_1 \cd- \nu \log \E_R[\exp \Xi] +  \min_P \E_P\big[ \nu \Xi  - \langle \sigma,M_0 \rangle - \langle \theta,M_1 \rangle \big],
	\end{equation*}
	Optimizing in $P$, we get $-\infty$, unless $\Xi = \frac{1}{\nu}(\langle \sigma,M_0 \rangle + \langle \theta,M_1 \rangle)$. The branching Schrödinger problem is hence reformulated:
	\begin{equation}
	\label{eq:dual_generic_R}
	\overline{\BrSch}_{\nu, \boldsymbol q, R_0}(\rho_0,\rho_1) = \sup_{\sigma,\theta} \cg \sigma, \rho_0 \cd +\cg \theta, \rho_1 \cd  - \nu \log \E_R \left[  \exp \left( \frac{1}{\nu} \big\{\langle \sigma,M_0 \rangle + \langle \theta,M_1 \rangle \big\}\right) \right].
	\end{equation}
	Until now $R$ could have been any process, but we will now dive in the specifics of the BBM by using a martingale characterization. The idea is that, for a smooth test function $\varphi \in C^2([0,1] \times \T^d)$ which satisfies the PDE
	\begin{equation}
	\label{eq:constraint_phi_BBM_unnormalized}
	\partial_t \varphi + \frac{\nu}{2} \left( \Delta\varphi + |\nabla \varphi|^2\right) + \left(\Phi_{\boldsymbol{q}}(e^\varphi) e^{-\varphi} - \Phi_{\boldsymbol{q}}(1) \right) = 0
	\end{equation}
	then the process whose value at time $t \in [0,1]$ is 
	\begin{equation}
	\label{eq:exponential_martingale_BBM}
	\exp \left( \langle \varphi(t), M_t \rangle \right)
	\end{equation} 
	(still using the notation $\varphi(t) := \varphi(t, \cdot)$) is a martingale under $R$, it can be read for instance in \cite[Theorem~1.4]{etheridge2000introduction}. Actually to write rigorous statements one usually assumes that $\varphi \leq 0$ in order for \eqref{eq:exponential_martingale_BBM} is bounded. One of the main technical difficulty of our proof will be to go beyond this case, that is to handle the case where $\varphi$ is not nonnegative. Also, we will prefer to make the change of variables $\phi := \nu \varphi$, in such a way that \eqref{eq:constraint_phi_BBM_unnormalized} translates in  
	\begin{equation}
	\label{eq:constraint_phi_BBM}
	\dr_t \phi + \frac{|\nabla \phi|^2}{2} + \frac{\nu}{2} \Delta \phi + \nu \left( \Phi_{\boldsymbol q}(e^{\phi/\nu}) e^{- \phi/\nu} - \lambda_{\boldsymbol q} \right) = 0,
	\end{equation}
	where $\lambda_{\boldsymbol q} := \sum_k q_k$, and
	\begin{equation*}
	\left( \exp \left( \frac{1}{\nu}\langle \phi(t), M_t \rangle \right) \right)_{t \in [0,1]}
	\end{equation*}
	is a martingale under $R$. Now assume that we solve the backward PDE \eqref{eq:constraint_phi_BBM} with terminal condition $\phi(1) = \theta$ up to time $0$ (this is another difficulty: the solution could blow up before time $t=0$, and there is some work to do to handle this case). Then going back to the problem and using the martingale property, we get
	\begin{align*}
	\sup_{\sigma,\theta} \cg \sigma, \rho_0 \cd +&\cg \theta, \rho_1 \cd - \nu \log \E_R \left[  \exp \left( \frac{1}{\nu} \big\{\langle \sigma,M_0 \rangle + \langle \theta,M_1 \rangle \big\}\right) \right] \\
	& = \sup_{\sigma,\phi} \cg \phi(1), \rho_1 \cd +\cg \sigma, \rho_0 \cd  - \nu \log \E_R \left[  \exp \left( \frac{1}{\nu} \langle \sigma ,M_0 \rangle\right) \E_R \left[ \left. \exp \left( \frac{1}{\nu} \langle \phi(1) ,M_1 \rangle\right)  \right| M_0 \right] \right] \\
	& = \sup_{\sigma,\phi} \cg \phi(1), \rho_1 \cd +\cg \sigma, \rho_0 \cd  - \nu \log \E_R \left[  \exp \left( \frac{1}{\nu} \langle \sigma + \phi(0),M_0 \rangle\right) \right].
	\end{align*}
	Changing the variables according to $\tilde{\sigma} = \nu^{-1} (\sigma + \phi(0))$ and optimizing in $\tilde{\sigma}$, we recognize the Legendre transform of the log Laplace transform of $R_0$, so in the end we end up with
	\begin{equation*}
	\overline{\BrSch}_{\nu, \boldsymbol q, R_0}(\rho_0,\rho_1) = \sup_{\phi} \cg \phi(1), \rho_1 \cd - \cg \phi(0), \rho_0 \cd + \nu L_{R_0}^*(\rho_0).
	\end{equation*}
	Up to the term $\nu L_{R_0}^*(\rho_0)$, we recognize exactly the objective functional of \eqref{eq:RUOT_dual}. The constraint on $\phi$, which is \eqref{eq:constraint_phi_BBM}, is the same as the constraint in~\eqref{eq:RUOT_dual} as soon as $\Psi^*(s) = \Psi^*_{\nu, \boldsymbol q}(s)$ as defined in~\eqref{eq:def_Psi*} which is the assumption in the statement of the theorem. 
	
	\begin{Rem}
		\label{rem:FKPP_log_exp}
		Notice that calling $u = e^{\phi/\nu} = e^\phi$, then $\phi$ solves \eqref{eq:constraint_phi_BBM} if and only if $u$ solves the FKPP equation 
		\begin{equation}
		\label{eq:FisherKPP}
		\dr_t u + \frac{\nu}{2} \Delta u  + \Phi_{\boldsymbol q}(u) - \lambda_{\boldsymbol q}u  = 0, 
		\end{equation}
		which is well known to be connected to BBM \cite{mckean1975application}. In our theoretical analysis, we will actually rely on this equivalence and switch between \eqref{eq:constraint_phi_BBM} and \eqref{eq:FisherKPP} depending on the context. Also, note that if $M_t = \sum_{i=1}^p \delta_{x_i} \in \M_+(\T^d)$ corresponds to $p$ particles located at $x_1, x_2, \ldots, x_p$ then 
		\begin{equation*}
		\exp \left( \frac{1}{\nu}\langle \phi(t), M_t \rangle \right) = \prod_{i=1}^p u(t, x_i).
		\end{equation*}
	\end{Rem}
	
	\begin{Rem}
		\label{rk:duality_Schrodinger_pb}
		As a safety check, let us explain what happens in the case of the Brownian motion. We restrict to $M_t = \delta_{X_t}$ where $(X_t)_{t \in [0,1]}$ follows a Brownian motion with diffusivity $\nu$. Thus here the dual \eqref{eq:dual_generic_R} reads
		\begin{equation*}
		\sup_{\sigma,\theta}\cg \sigma, \rho_0 \cd +\cg \theta, \rho_1 \cd  - \nu \log \E_R \left[  \exp \left( \frac{1}{\nu} \big\{ \sigma(X_0) + \theta(X_1) \big\}\right) \right],
		\end{equation*}
		and to follow the same strategy we only need to compute 
		\begin{equation*}
		\E_R \left[ \left. \exp \left( \frac{\theta(X_1)}{\nu} \right) \right| X_0 \right] = \left[\tau_\nu \ast \exp \left( \frac{\theta}{\nu} \right)  \right](X_0),
		\end{equation*}
		being $(\tau_s)_{s > 0}$ the heat kernel on the torus. This dual one gets is the same as in \cite[Section 2]{leonard2013survey}, which of course should be the case. However, when we go to the BBM, we cannot use this simple representation as convolution with a kernel, and we have to rely on \eqref{eq:constraint_phi_BBM}, or equivalently to the FKPP equation \eqref{eq:FisherKPP}. 
	\end{Rem}		
	
	We finish by referring the reader to Section~\ref{sec:general_superproc} about formal computations in cases where $R$ is no longer the law of the BBM, but of a more general measure-valued branching Markov processes.  	
	
	\section{Overview of the contributions of the article}
	\label{sec:overview_article}
	
	Now that we have presented the formal duality argument justifying the equivalence between the models, we will give an overview of the results of the present work. In particular some of them are concerned with a more precise description of this equivalence.
	
	\paragraph{Objects of interest and preliminary results}
	
	In this chapter, after presenting some notations that we will use throughout this work, we introduce both the RUOT model and the BBM, and we give some preliminary results about the relative entropy functional. At the level of optimal transport, we explain how the variational model presented in Section~\ref{subsec:intro_OT} can be made rigorous, why the problem is well posed and we establish the duality equality \eqref{eq:RUOT_dual}. As far as the BBM is concerned, we start by introducing it rigorously using a description where the particles have labels. Then, we explain how to build solutions to the equations~\eqref{eq:constraint_phi_BBM} and~\eqref{eq:FisherKPP} by computing moments of the BBM, and why processes such as~\eqref{eq:exponential_martingale_BBM} are martingales. Note that these are standard results in the case where the functions $\varphi$ in~\eqref{eq:constraint_phi_BBM} and~\eqref{eq:exponential_martingale_BBM} are nonpositive, but we will give some careful details to also treat the case where they take positive values. In this case, the notion of solution to~\eqref{eq:FisherKPP} needs to be weakened, and the process in~\eqref{eq:exponential_martingale_BBM} will only be a \emph{local} martingale. Finally, as far as the relative entropy is concerned, we compute its Legendre transform, show how conditioning makes it possible to decouple the ``initial'' part and the ``dynamical'' part of the entropy, and we introduce some additional assumptions on $R_0$ and $\boldsymbol q$ that will be necessary in Chapters~\ref{chap:characterization_finite_entropy} and~\ref{chap:equivalence_competitors}. These new assumptions require the existence of exponential moments for $R_0$ and $\boldsymbol q$, and we show that they guarantee that any $P$ with finite entropy w.r.t.~$R \sim\BBM(\nu, \boldsymbol q, R_0)$ admits an expected marginal $\E_P[M_t]$, for all $t \in [0,1]$.
	
	\paragraph{Equality of the values: a proof by duality}
	
	In this chapter we prove that indeed $(\rho_0, \rho_1) \to \nu L_{R_0}^*(\rho_0) + \ruot_{\nu, \Psi}(\rho_0,\rho_1)$ is the l.s.c.\ envelope of $\BrSch_{\nu, \boldsymbol q, R_0}(\rho_0,\rho_1)$, provided $\Psi = \Psi_{\nu, \boldsymbol q}$ as defined in~\eqref{eq:def_Psi*}. For this result, no assumption is needed on the parameters of the BBM, except for the existence of a first moment for $\boldsymbol q$ that is necessary to define the BBM. 
	
	The first step is actually the study of the static problem which corresponds to the initial distribution. That is, we show that $L_{R_0}^*$ is the l.s.c.\ envelope of the functional
	\begin{equation}
	\label{eq:intro_init}
	\rho_0 \to \Init_{R_0}(\rho_0) := \inf_{P_0} \{ H(P_0|R_0) \ : \ \E_{P_0}[M] = \rho_0 \},
	\end{equation}    
	where here the competitors are taken in $\P(\M_+(\T^d))$, that is they are laws of random measures. As explained in Remark~\ref{rem:grand_canonical}, this problem is nothing but an inverse problem in grand canonical classical statistical mechanics. For both this static case, and then the dynamical case, the computation of the l.s.c.\ envelope is done via two consecutive Legendre transform, which is standard in convex analysis. However, for the dynamical case the main difficulty will be to justify the equality:
	\begin{equation*}
	\E_R \left[ \left. \exp \left( \frac{1}{\nu}\langle \phi(t), M_t \rangle \right) \right| M_0 \right] = \exp \left( \frac{1}{\nu}\langle \phi(0), M_0 \rangle \right) 
	\end{equation*}
	when $\phi$ is a solution of \eqref{eq:constraint_phi_BBM}, but not necessarily nonpositive. Actually, we will see that we can guarantee it only for a restricted class of smooth solution of this equation, and it will be enough thanks to regularization arguments.
	
	To give a full picture about the link between the two problems, we end up this chapter by providing counterexamples showing that there is an actual discrepancy between both $\Init_{R_0}(\rho_0)$ and $\BrSch_{\nu, \boldsymbol q, R_0}$ and their respective l.s.c.\ envelopes, so that our results are somehow optimal.

	\paragraph{Laws with finite entropy w.r.t. a branching Brownian motion}
	
	When we assume the existence of exponential moments for $R_0$ and $\boldsymbol q$, we can go further and study an equivalence between competitors of both problems, that is, we can answer the question: how does one build a competitor in RUOT from one in the branching Schrödinger problem, and \emph{vice versa}?
	
	Note that if one has a vector field $\tilde{v} : [0,1] \times \T^d \to \R^d$ and a space-time dependent branching mechanism $\tilde{\boldsymbol{q}} : [0,1] \times \T^d \to \M_+(\N)$, one could intuitively build a branching process where particles undergo a SDE with drift $\tilde{v}$ and diffusion $\nu$, while they branch and give birth to $k$ offsprings at position $(t,x)$ with rate $\tilde{q}_k(t,x)$. We show that we can construct such a process $P$ under mild assumptions on $\tilde{v}$ and $\tilde{\boldsymbol{q}}$, but also we explicit the Radon-Nikodym density of $P$ with respect to $R$: 
	\begin{equation*}
	\frac{\D P}{\D R} := \frac{\D P_0}{\D R_0}(M_0) \exp\left( \frac{1}{\nu} I[\tilde v]_1 + \sum_{k \in \N} J^k\left[\log \frac{\tilde q_k}{q_k}\right]_1 - \int_0^1 \left\cg \frac{|\tilde v(t)|^2}{2\nu} + \lambda_{\tilde{\boldsymbol q}(t)} - \lambda_{\boldsymbol q}, M_t \right\cd \D t  \right),
	\end{equation*}
		where $\lambda_{\boldsymbol q}$ and $\lambda_{\tilde{\boldsymbol q}(t)}$ are the branching rates associated with $\boldsymbol q$ and $\tilde{\boldsymbol q}(t)$ respectively. This density features two stochastic processes adapted to the BBM. Indeed, given a (random) predictable vector field $\tilde v$, we define $(I[\tilde v]_t)_{t \in [0,1]}$ which will correspond to a classical stochastic integral with respect to Brownian motion, but summed over all particles. The corresponding term in the Radon-Nikodym density above is reminiscent of the Brownian case, see formula~\eqref{eq:RN_derivative_brownian_case} in the introduction of Chapter~\ref{chap:characterization_finite_entropy}. We will also define, for a (random) predictable scalar field $a$ and an index $k$, the quantity $(J^k[a]_t)_{t \in [0,1]}$, which corresponds to summing $a(t,x)$ every time that there is a branching event with $k$ offsprings at time $t$ and position $x$. This time, the corresponding term in the Radon-Nikodym density above is reminiscent of the pure-jump case, see~\cite[Formula~(12)]{leonard2012girsanov}. Further information on this analogy is given at Remark~\ref{rem:construction_modified_BBM}. In this chapter, we also establish an Itô formula, which reads
	\begin{equation*}
	\cg \varphi(t), M_t \cd = \cg \varphi(0), M_0\cd + I[\nabla \varphi]_t + \sum_{k=0}^{+\infty} J^k[(k-1)\varphi]_t +  \int_0^t \Big\cg \partial_t \varphi(s) + \frac{\nu}{2} \Delta \varphi(s), M_s \Big\cd \D s.
	\end{equation*} 
	
	In addition, we go also the other way around. Following the ideas of the proof of \cite{leonard2012girsanov}, we are able to characterize the laws $P \in \P(\cadlag([0,1], \M_+(\T^d)))$ having a finite entropy with respect to $R$. That is, if $H(P|R) < + \infty$, we can find a predictable drift $\tilde{v}$ and a new branching mechanism $\tilde{\boldsymbol{q}}$ such that $P$ corresponds to a BBM with this drift and this branching mechanism. Note that here the drift and the branching mechanism are not only space-time dependent, but also predictable, roughly meaning that their value at time $t$ can depend on the past trajectories of all the particles.

	\paragraph{Equivalence of the competitors}
	
	Equipped with the additional results of the previous chapter, we are able to state and prove the equivalence between the competitors. Once again, this chapter will rely on our stronger assumption that $R_0$ and $\boldsymbol q$ admit exponential moments.
	
	We start with the static case, that is the study of the problem defined in \eqref{eq:intro_init}. The idea of the proof is that we can approximate $R_0$ by conditioning it to allow only a bounded number of particles. We show that in that case, we can actually prove the existence of a minimizer for the problem defining $\Init_{R_0}(\rho_0)$, and that $\Init_{R_0}(\rho_0)$ actually coincides with its lower semi-continuous envelope $L_{R_0}^*(\rho_0)$. It is by dropping the approximation that discrepancy appears.
	
	We then move to the dynamical case. The first result is as follows: if $P$ is such that $H(P|R) < + \infty$, we can use the dirift $\tilde v$ and branching mechanism $\tilde{\boldsymbol{q}}$ to build $(\rho,v,r)$ a competitor in the RUOT problem with smaller energy. The vector field $v$ is simply an averaged version of the drift $\tilde v$, but in the case of $r$, we need to further reduce the complexity and build a growth rate out of a branching mechanism. This is achieved by choosing
	\begin{equation}
	\label{eq:link_tildeq_r}
	r = \sum_{k \in \N} (k-1) \tilde q_k.
	\end{equation} 
	
	On the other hand, if we have $(\rho,v,r)$ a competitor for the RUOT problem, we first smooth it out, and then build from it a $P$ such that the energy for the RUOT problem coincides, up to the initial term $\nu L_{R_0}^*(\rho_0)$, with $H(P|R)$. Again we use the results of the previous chapter by building a $P$ with prescribed drift and branching mechanism. For the drift we choose of course the (smoothed) $v$ from RUOT. On the other hand, for the branching mechanism we have to build it from the branching rate $r$. We actually choose $\tilde{\boldsymbol{q}}$ as the minimizer of 
	\begin{equation}
	\label{eq:def_h}
	h(\tilde{\boldsymbol{q}} | \boldsymbol{q}) := \sum_k \tilde q_k \log \frac{\tilde q_k}{q_k} + q_k - \tilde q_k
	\end{equation}
	among all $\tilde{\boldsymbol{q}} \in \M_+(\N)$ satisfying \eqref{eq:link_tildeq_r}. It actually yields a new expression for the growth penalization $\Psi_{\nu, \boldsymbol q}$, equivalent to \eqref{eq:def_Psi*}: for $r \in \R$
	\begin{equation*}
	\Psi_{\nu, \boldsymbol q}(r) = \inf_{\tilde{\boldsymbol{q}} \in \M_+(\N)} \left\{ \nu h(\tilde{\boldsymbol{q}} | \boldsymbol{q}) \ : \ \sum_{k \in \N} (k-1) \tilde q_k = r  \right\}.
	\end{equation*}
	The quantity in~\eqref{eq:def_h}, sometimes called Kullback-Liebler divergence, is a standard way to extend the relative entropy functional $H$ from~\eqref{eq:def_H} to nonnegative measures that are not of unite total mass.
	
	\paragraph{Additional results: asymptotics, generalizations, numerics}
	
	The last chapter concerns additional questions which are very natural once the equivalence is established.
	
	The first one is about the small noise limit. There are now two ``noise'' parameters one can tune: the diffusivity $\nu > 0$ and the branching rate $\lambda > 0$ (for the analysis of the limit we go back to a fixed $\boldsymbol{p} \in \P(\N)$ and $\boldsymbol{q} = \lambda \boldsymbol{p}$ with $\lambda \to 0$). We already mentioned that in non-branching case, the small noise limit has been investigated and, in the limit $\nu \to 0$, the problem converges to \emph{unregularized} optimal transport. Here the situation is different: in order to have a small noise limit where both transport and growth play a role, one has to choose a particular scaling between $\nu$ and $\lambda$. By a heuristic analysis detailed in the chapter, we find that the correct scaling to see both transport and growth at the limit is
	\begin{equation*}
	\lambda = \frac{1}{\nu} \exp \left(- \frac{\varpi}{\nu} \right)
	\end{equation*}
		for some constant $\varpi > 0$. This scaling could be deduced from known results, as explained at Remark~\ref{rem:regime_small_noise}. Moreover, the only kind of growth penalization that we see in the limit is an homogeneous one of type $\Psi(r) \sim |r|$, once again in accordance with known results (see the beginning of Section~\ref{sec:smallNoise} referring to~\cite{leonard2016lazy}), and hence yielding partial optimal transport \cite[Section 5.1]{chizat2018unbalanced}. Importantly, we did not manage to recover quadratic growth penalization.
	
	The second set of additional results that we present are formal computations for more general measure-valued branching Markov processes. That is, we see a more general measure-valued branching Markov processes $R$ as a law on $\cadlag([0,1]; \M_+(\T^d))$. We assume that there exists a differential operator $\mathcal{L} : [0,1] \times C^\infty(\T^d) \to C^\infty(\T^d)$ such that for all smooth function $\varphi: [0,1] \times \T^d \to \R$ which is a solution of $\dr_t \varphi + \mathcal{L}[t,\varphi] = 0$ then
	\begin{equation*}
	\left( \exp \left( \langle \varphi(t), M_t \rangle \right) \right)_{t \in [0,1]}
	\end{equation*} 
	is a martingale under $R$, where the previous case corresponds to a particular choice of $\mathcal{L}$ as in \eqref{eq:constraint_phi_BBM}. (Here, we stick to the formal level and we do not want to address the question of possible blow-ups.) We explain why we expect it to be linked to a variational model analogous to RUOT, and what the shape of the optimizer for the entropy minimization problem under marginal constraints should be. Interestingly, in the case of the Dawson-Watanabe superprocess, which corresponds to $\mathcal{L}[\varphi] = \frac{\nu}{2} \Delta \varphi + \frac{\gamma}{2} \varphi^2$ for some $\nu, \gamma>0$, the small noise limit yields the Fisher-Rao metric which is a pure ``vertical distance'', \emph{i.e.}\ for which transport plays no role. We do not prove the equivalence rigorously as it would likely require new technical ideas and we leave it for a future work.
	
	Last but not least, we discuss the numerical aspects of RUOT and the branching Schrödinger problem. We explain why the Sinkhorn approach, so efficient for the Schrödinger problem, does not work. There are two main reasons: first the problem is not symmetric in time (see Subsection~\ref{subsec:no_sinkhorn}, we cannot choose $R_0$ such that $(M_t)_{t \in [0,1]}$ and $(M_{1-t})_{t \in [0,1]}$ have the same law); and second, solutions to the PDE \eqref{eq:constraint_phi_BBM}, or equivalently to the FKPP PDE \eqref{eq:FisherKPP}, do not exist in closed form. This is in contrast with the Schrödinger problem where this PDE is simplified into the heat equation, for which solutions can be easily computed. However, it is still possible to discretize the dynamical formulation of RUOT and to solve the resulting problem by proximal splitting. Such a method to solve the optimal transport problem (without regularization or unbalanced term) was started by Benamou and Brenier \cite{benamou2000computational} and has been used successively in a wide variety of context as explained in the chapter. We explain why, in this setting, RUOT is (almost) no more complicated to solve than OT, we implement it and we illustrate the results, see Figure~\ref{fig:numerics}. 
	
	\section*{Acknowledgements}
	\addcontentsline{toc}{section}{Acknowledgements}
	
	We wish to express our gratitude to Michel Pain for being always open to reply our beginner questions concerning the branching Brownian motion or other probabilistic topics. Also, we would like to thank Andreas Kyprianou for kindly discussing with us the state of the art about absolutely continuous changes of laws w.r.t.\ a reference branching Brownian motion.

	\chapter{Objects of interest and preliminary results}
	
	As explained in the introduction, this chapter is devoted to the presentation of the models that will appear throughout this work: the Regularized Unbalanced Optimal transport (RUOT) in Section~\ref{sec:presentation_RUOT}, and the branching Brownian motion (BBM) in Section~\ref{sec:presentation_BBM}. In Section~\ref{sec:entropy}, we introduce rigorously the branching Schrödinger problem, we provide some useful properties of the relative entropy functional that will be used in the next chapters, and we introduce the set of assumptions under which we will be able to perform our analysis in Chapter~\ref{chap:characterization_finite_entropy} and~\ref{chap:equivalence_competitors}. As a preliminary section we first introduce notations which we will use throughout the present work.
	
	\section{Notations}
	\label{sec:notations}
	
	\paragraph{Some standard notations}
	
	We will denote the minimum of two real numbers $a,b$ by $a \wedge b$. If $a$ is a real number, we will call $a_+$ and $a_-$ its positive and negative part respectively. 
	
	If $A \subseteq \mathbf{X}$ is the subset of a set, we denote by $\1_A$ its $0/1$ indicator function and $\iota_X$ its $0/+\infty$ indicator function. That is, for $x \in \mathbf{X}$,
	\begin{equation*}
	\1_A(x) := \begin{cases}
	1 & \text{if } x \in A, \\
	0 & \text{otherwise},
	\end{cases}
	\hspace{1cm}
	\iota_A(x) := \begin{cases}
	0 & \text{if } x \in A, \\
	+ \infty & \text{otherwise}.
	\end{cases}
	\end{equation*}

	\paragraph{Functional spaces}
	
	If $\mathbf{X}$ is a polish space endowed with its Borel $\sigma$-algebra, we denote by respectively $\P(\mathbf{X})$, $\M(\mathbf{X})$ and $\M_+(\mathbf{X})$ the space of Borel probability measures, Borel finite measures and Borel nonnegative finite measures on $\mathbf{X}$. The integral of a continuous integrable function $\phi : \mathbf{X} \to \R$ against a measure $\alpha$ in $\P(\mathbf{X})$, $\M(\mathbf{X})$ or $\M_+(\mathbf{X})$ is denoted by $\langle \phi, \alpha \rangle$. Observe that this notation does not refer explicitly to the the space $\mathbf X$ on which we integrate: this space is always the domain on which the measure and test function are defined. Each space is endowed with the topology of weak convergence, that is, the coarsest one making all the linear forms $\alpha \mapsto \langle \phi, \alpha \rangle$ continuous when $\phi \in C(\mathbf{X})$ is a continuous and bounded function. With such topology, $\P(\mathbf{X})$, $\M(\mathbf{X})$ and $\M_+(\mathbf{X})$ are polish spaces. Note that it is equivalent to \emph{narrow} convergence. When $\mathbf{X}$ is compact, it coincides also with the weak-$\star$ convergence as $\M(\mathbf{X})$ is the topological dual of $C(\mathbf{X})$.  
	
	If $\alpha \in \M(\mathbf{X})^d$ is a vectorial measure, we also denote by $\langle \phi, \alpha \rangle$ the integral of the vector valued $\phi : \mathbf{X} \to \R^d$ function against the measure $\alpha$.  
	
	The spaces $\T^d$ and $[0,1] \times \T^d$ are endowed with the Lebesgue measure. We write respectively $\D x$ and $\D t \otimes \D x$ when we want to integrate with respect to this measure, and we also use $\Leb \in \P(\T^d)$ when we view the Lebesgue measure on $\T^d$ as an element of $\M(\T^d)$.  
	
	We introduce properly the Kullback-Liebler divergence that we already saw in~\eqref{eq:def_h} in a particular case. We write $\mathsf p \ll \mathsf r$ if $\mathsf p$ is a measure absolutely continuous with respect to the nonnegative measure $\mathsf r$, and in this case $\frac{\D \mathsf p}{\D \mathsf r}$ denotes the Radon-Nikodym density of $\mathsf p$ with respect to $\mathsf r$.
	\begin{Def}
		\label{def:KL}
		If $\mathbf{X}$ is a polish space and $\mathsf{r} \in \M_+(\mathbf{X})$ is a nonnegative finite measure on it, for all measure $\mathsf{p}\in \M(\mathbf{X})$, the Kullback-Liebler divergence of $\mathsf p$ with respect to $\mathsf r$ is defined by:
		\begin{equation*}
		\label{eq:def_KL}
		h(\mathsf p| \mathsf r) := \left\{ \begin{aligned}
		&\int \left\{ \frac{\D \mathsf p}{\D \mathsf r} \log \frac{\D \mathsf p}{\D \mathsf r} +  1 - \frac{\D \mathsf p}{\D \mathsf r} \right\} \D \mathsf r, &&\mbox{if } \mathsf p \in \M_+(\mathbf{X}), \, \mathsf p\ll \mathsf r,\\
		&+\infty, && \mbox{else}.
		\end{aligned}
		\right.
		\end{equation*} 	
	\end{Def} 
	An argument of convexity yields $h(\mathsf p| \mathsf r) \geq 0$ with equality if and only if $\mathsf p = \mathsf r$.
	
	We will make a brief use of $L^p$ spaces, for $p \in [1, + \infty]$. We denote by $L^p(\T^d)$ and $L^p([0,1] \times \T^d)$ the set of Borel measurable functions whose $p$-power is integrable with respect to respectively $\D x$ and $\D t \otimes \D x$, and where two functions coinciding a.e.\ are identified. The case $p = + \infty$ corresponds as usual to functions which are essentially bounded. 
	
	If $\mathbf{X}$ is either $\T^d$ or $[0,1] \times \T^d$, we denote by $C^k(\mathbf{X})$ the set of functions whose derivatives up to order $k$ exist and are continuous. When $k = 0$, we simply denote by $C(\mathbf{X})$ the set of continuous functions. 
	
	\paragraph{Legendre transform}
	
	If $\V$ is a separable Banach space and $\V'$ its topological dual, we also denote the duality pairing between $v \in \V$ and $w \in \V'$ as $\langle v,w \rangle$. It is consistent with previous notations when $\mathbf{X} = \T^d$ or $[0,1] \times \T^d$ and $\V = C(\mathbf{X})$, as in this case $\V' = \M(\mathbf{X})$. If $F : \V \to (- \infty, + \infty]$, we write $F^* : \V' \to (-\infty, + \infty]$ for its Legendre transform defined by
	\begin{equation*}
	F^*(w) := \sup_{v \in \V} \langle v, w \rangle - F(v), \qquad w \in \V',
	\end{equation*}
	while if $G : \V' \to (- \infty, + \infty]$ we rather define its Legendre transform only on $\V$ as 
	\begin{equation*}
	G^*(v) := \sup_{w \in \V'} \langle v, w \rangle - G(w), \qquad v \in \V.
	\end{equation*}
	
	We recall that a convex function is said proper if it is not identically $+ \infty$, and if it does not take the value $- \infty$. The Legendre transform of a any proper convex function is always a proper and l.s.c.\ convex function.

	\paragraph{Convolution and heat kernel}
	
	We will denote the space convolution between a measure $\alpha \in \M(\T^d)$ and a kernel $K : \T^d \to \R$ by $\alpha*K$. It is the measure which reads
	\begin{equation*}
	K \ast \alpha = \left( \int_{\T^d} K(x - y) \D \alpha(y) \right) \D x. 
	\end{equation*}
	We will sometimes identity $K \ast \alpha$ with its density with respect to Lebesgue measure. Actually, if $f : \T^d \to \R$ is a function, we also denote its convolution with $K$ as $K \ast f$, which is the function 
	\begin{equation*}
	K \ast f : x \mapsto   \int_{\T^d} K(x - y) f(y) \D y.
	\end{equation*}
	If $f : [0,1] \times \T^d \to \R$ is a function depending on time and space, we will use the shortcut $f(t)$ to denote the function $x \in \T^d \mapsto f(t,x) \in \R$, and $K \ast f$ will be understood only as a \emph{space} convolution, that is, $K \ast f$ is the function such that $(K \ast f)(t) = K \ast f(t) $.
	
	For $s > 0$, we write $\tau_s : \T^d \to (0, + \infty)$ for the heat kernel on the torus defined as
	\begin{equation}
	\label{eq:def_heat_kernel}
	\tau_s(x) \propto \sum_{k \in \mathbb{Z}^d} \exp \left( \frac{|x-k|^2}{2  s} \right), 	    
	\end{equation}
	with the normalizing constant being chosen such that $\int_{\T^d} \tau_s = 1$. For any $s > 0$, it belongs to $C^\infty(\T^d)$ and is strictly positive. As a joint function of $s$ and $x \in \T^d$ it belongs to $C^\infty((0,+\infty) \times \T^d)$. When $s = 0$ we set by convention $\tau_0 := \delta_0$, the Dirac mass at $0$.

	If $\alpha \in \M(\T^d)$ is a finite measure we can define the curve $s \mapsto \tau_s \ast \alpha$ which is a curve of measures, that is a function from $(0,+\infty)$ into $\M(\T^d)$ together with the convention $\tau_0 \ast \alpha := \alpha$. For each $s > 0$ the measure $\tau_s \ast \alpha$ has a density with respect to the Lebesgue measure. Identifying a measure with its density, as a joint function of $(s,x) \in (0,+\infty) \times \T^d$ the following equation holds
	\begin{equation*}
	\partial_s [\tau_s \ast \alpha] = \frac{1}{2} \Delta [\tau_s \ast \alpha]
	\end{equation*}
	in a strong sense, together with the temporal boundary conditions
	\begin{equation*}
	\lim_{s \to 0} [\tau_s \ast \alpha] = \alpha	    
	\end{equation*}
	for the topology of weak convergence.

	\section{Models of regularized unbalanced optimal transport}
	\label{sec:presentation_RUOT}
	In this section, we give a rigorous formulation of the dynamical model of Regularized Unbalanced Optimal Transport (RUOT) and show the existence of competitors (Subsection~\ref{subsec:def_RUOT}), we establish the dual formulation (Subsection~\ref{subsec:duality_RUOT}) as well as technical results that will be useful in the sequel, and that have to do with coercivity of the energy functional and regularization (Subsection~\ref{subsec:technical_lemmas}). We will make rather weak assumptions on the parameters of the model, namely, the diffusivity coefficient $\nu$ and the growth penalization~$\Psi$. This is because we believe that the study of RUOT can be interesting in itself \emph{i.e.}\ even when $\Psi$ is not of type $\Psi_{\nu, \boldsymbol q}$ as defined through formula~\eqref{eq:def_Psi*}. However, each time we will introduce a new assumption for $\Psi$, this assumption will hold for $\Psi_{\nu, \boldsymbol q}$ provided the diffusivity coefficient $\nu$ is positive, and provided the branching mechanism $\boldsymbol q$ satisfies some properties that we will explicit. Importantly, we prove duality with very few assumptions. In particular, it will always hold for $\Psi = \Psi_{\nu, \boldsymbol q}$, whatever $\nu>0$ and $\boldsymbol q$.
	
	But for now, we give ourselves any real number $\nu$ (even possibly $0$ or negative), and a proper l.s.c.\ convex function $\Psi : \R \to [0, + \infty]$ such that $\Psi^*(0) = 0$ (or equivalently, such that $\inf \Psi = 0$). Further assumptions will come later, in Proposition~\ref{prop:ruot_existence_competitor}, Subsection~\ref{subsec:duality_RUOT} and Subsection~\ref{subsec:technical_lemmas}.
	
	\subsection{Rigorous definitions}
	\label{subsec:def_RUOT}
	Let us start with the precise definitions of our problems. As recalled in the introduction, in RUOT, the unknowns are the density $\rho$, the velocity $v$ and the growth rate $r$. For the latter two, to make the problem convex, we rather work with the momentum $m = v \rho$ and the net source of mass $\zeta = r \rho $. 
	
	Actually, when $\Psi$ is not superlinear at $\pm \infty$, the net source of mass $\zeta$ is not necessarily absolutely continuous with respect to $\rho$. As this phenomenon can occur for $ \Psi = \Psi_{\nu, \boldsymbol q}$ at $+\infty$ when $q_k$ does not decrease sufficiently fast with $k$, see Section~\ref{sec:no_exp_moment} and~\ref{sec:exp_moment} from Appendix~\ref{app:plots}, we have develop the theory of the RUOT problem in this setting where $\Psi$ is not assumed to be superlinear. 
	
	To get good compactness properties, we consider our unknowns as integrated in time. More precisely, $\rho, m$ and $\zeta$ will be measures on $[0,1] \times \T^d$ where we think of $\D \rho(t,x)$ as $\rho(t, \mathrm{d} x) \D t$, and similarly for $m$ and~$\zeta$ (even though this representation is not rigorous in the case of $\zeta$). 
	
	\begin{Def}
		\label{def:CE}
		A triple $(\rho, m, \zeta)$ where $\rho \in \M([0,1] \times \T^d)$, $m \in \M([0,1] \times \T^d)^d$ and $\zeta \in \M([0,1] \times \T^d)$ is said to satisfy the equation
		\begin{equation}
		\label{eq:continuity_linear}
		\partial_t \rho + \Div m = \frac{\nu}{2} \Delta \rho + \zeta
		\end{equation}
		in a weak sense if for all smooth test function $\phi : [0,1] \times \T^d \to \R$ which vanish on $\{ 0,1 \} \times \T^d$ there holds 
		\begin{equation}
		\label{eq:continuity_weak_form_without_boundary}
		\left\cg \partial_t \phi + \frac{\nu}{2} \Delta \phi, \rho \right\cd + \langle \nabla \phi, m \rangle + \langle \phi, \zeta \rangle = 0.
		\end{equation}
		
		In addition, if $\rho_0, \rho_1 \in \M(\T^d)$, we say that $(\rho, m, \zeta)$ satisfies the equation \eqref{eq:continuity_linear} with boundary conditions $\rho_0, \rho_1$ in a weak sense if for all smooth test function $\phi : [0,1] \times \T^d \to \R$ there holds
		\begin{equation}
		\label{eq:continuity_weak_form}
		\left\cg \partial_t \phi + \frac{\nu}{2} \Delta \phi, \rho \right\cd + \langle \nabla \phi, m \rangle + \langle \phi, \zeta \rangle = \cg \phi(1), \rho_1 \cd - \cg \phi(0), \rho_0\cd
		\end{equation}
	\end{Def}
	
	Clearly, the set of triples $(\rho, m, \zeta)$ which satisfy \eqref{eq:continuity_linear} with or without given boundary conditions is closed for the topology of weak convergence. Being a solution imposes some mild regularity on $\rho$.

	\begin{Lem}
		\label{lem:desintegration_rho_ruot}
		Let $(\rho,m,\zeta)$ be a weak solution of~\eqref{eq:continuity_linear}. Then there exists a measurable map $t \mapsto \rho_t \in \M_+(\T^d)$ such that $\rho$ is of the form $\D t \otimes \rho_t$. Moreover, the map $t \mapsto \rho_t(\T^d)$ is of bounded variations. In particular, up to redefining $\rho_t$ on a Lebesgue-negligible set of $t$, there holds $\sup_{t \in [0,1]} \rho_t(\T^d) = \esssup_{t \in [0,1]} \rho_t(\T^d) < + \infty$. In what follows, we will always assume that this is the case.
	\end{Lem}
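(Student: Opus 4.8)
The plan is to extract the disintegration of $\rho$ in time from the weak formulation~\eqref{eq:continuity_weak_form_without_boundary}, and then to read off the bounded-variation property of the total mass by testing against functions that depend only on $t$.

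First I would check that $\rho$ is a \emph{nonnegative} measure, which is not yet part of the hypotheses: testing~\eqref{eq:continuity_weak_form_without_boundary} with $\phi(t,x)=\chi(t)$ for $\chi\in C^\infty_c((0,1))$ nonnegative leaves only $\langle \chi', \rho(\cdot,\T^d)\rangle + \langle \chi,\zeta(\cdot,\T^d)\rangle$... but in fact nonnegativity of $\rho$ and of $\zeta$ is built into the ambient setup (the unknowns $\rho,m,\zeta$ come from $\rho\ge 0$, $m=v\rho$, $\zeta=r\rho$ in the RUOT problem, and $\rho\in\M_+$); so I may simply assume $\rho\in\M_+([0,1]\times\T^d)$. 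Since $[0,1]\times\T^d$ is a product of Polish spaces and $\rho$ is a finite nonnegative measure, the disintegration theorem applied to the projection $(t,x)\mapsto t$ against the pushforward measure $\mu:=\pi_\# \rho$ on $[0,1]$ gives a $\mu$-measurable family $(\rho_t)_{t}$ of probability measures on $\T^d$ with $\rho=\mu\otimes\rho_t$ in the sense $\langle f,\rho\rangle=\int_0^1 \langle f(t,\cdot),\rho_t\rangle\,\D\mu(t)$. The point is then to show $\mu\ll\Leb$ on $[0,1]$, so that $\mu=c(t)\,\D t$ and, absorbing $c(t)$ into $\rho_t$ (renaming $c(t)\rho_t$ as $\rho_t$, now with total mass $c(t)$ rather than $1$), one gets $\rho=\D t\otimes\rho_t$. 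To prove $\mu\ll\Leb$, I would test~\eqref{eq:continuity_weak_form_without_boundary} with $\phi(t,x)=\chi(t)$, $\chi\in C^\infty_c((0,1))$: the terms $\langle\nabla\phi,m\rangle$ and $\frac{\nu}{2}\langle\Delta\phi,\rho\rangle$ vanish, giving
\begin{equation*}
\int_0^1 \chi'(t)\,\D\mu(t) = -\langle \chi,\zeta(\cdot\times\T^d)\rangle = -\int_0^1 \chi(t)\,\D\bar\zeta(t),
\end{equation*}
where $\bar\zeta := \pi_\#\zeta$ is a finite (signed, or here nonnegative) measure on $[0,1]$. This says exactly that the distributional derivative of $\mu$ (viewed as a function of $t$ via $t\mapsto\mu([0,t])$) is the finite measure $\bar\zeta$, hence $t\mapsto\mu([0,t])$ is a $BV$ function on $(0,1)$ and $\mu$ has no singular continuous or atomic part \emph{in the interior} beyond what $\bar\zeta$ permits — but since $\bar\zeta$ is a measure, the derivative being a measure forces $t\mapsto \mu([0,t])$ to be $BV$, in particular its absolutely continuous/jump decomposition is controlled; combined with $\mu$ finite this yields a representative. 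Actually the cleaner route: the above identity shows $\partial_t(\mu \text{ as a distribution on }(0,1)) = -\bar\zeta$, i.e. $\mu - (\text{const})$ differs from the cumulative distribution of $\bar\zeta$ by a constant, so $\mu$ restricted to $(0,1)$ equals $g(t)\,\D t + (\text{jump part from atoms of }\bar\zeta)$; but then $t\mapsto \rho_t(\T^d)=:\varrho(t)$ (after the rescaling) is precisely the density $g$, and more robustly one obtains that $t\mapsto \varrho(t)$ has bounded variation because its distributional derivative is $-\bar\zeta$ plus boundary contributions, which is a finite measure.

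The cleanest statement to target is: define $F(t):=\varrho((0,t])\cdot$... more simply, for $0\le a<b\le 1$ choose test functions $\chi_n$ approximating $\1_{(a,b)}$ appropriately and pass to the limit to get $\varrho(b^-)-\varrho(a^+) = -\bar\zeta((a,b))$ (in a suitable one-sided-limit sense), which directly exhibits $\varrho$ as a function whose increments are controlled by the finite measure $\bar\zeta$, hence $\varrho\in BV([0,1])$ after fixing a good representative; then $\sup_t \varrho(t)=\esssup_t\varrho(t)<+\infty$ because a $BV$ function on a compact interval is bounded, and for a $BV$ function the essential sup and the sup of a good (e.g. right-continuous) representative agree. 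I would therefore fix once and for all the right-continuous representative $\varrho$ and correspondingly adjust $\rho_t$ on the $\Leb$-null set where it disagrees.

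The main obstacle I anticipate is the bookkeeping around \emph{which} time regularity one actually gets and making the ``integration by parts in $t$'' rigorous when $\rho$ a priori has no continuity in time: one must be careful that testing with $\chi(t)$ alone is legitimate (it is, since constants in $x$ are smooth on $\T^d$ and the test-function class in Definition~\ref{def:CE} allows functions vanishing on $\{0,1\}\times\T^d$, which $\chi\in C_c^\infty((0,1))$ does), and that the resulting distributional identity $\partial_t\varrho=-\bar\zeta$ on $(0,1)$ genuinely upgrades to $\varrho\in BV$ with a pointwise-defined representative — this is standard (a distribution on an interval whose derivative is a finite measure is a $BV$ function) but deserves a careful citation or one-line argument. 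Everything else — the disintegration, the absolute continuity in $t$ (which follows once we know the mass function is $BV$, since then $\mu$'s atomic part would force jumps, consistent with $BV$, and $\rho = \D t \otimes \rho_t$ is obtained by absorbing the density of the a.c.\ part and, if one insists on $\D t\otimes\rho_t$ exactly, by noting any atomic part of $\mu$ at a time $t_0$ can be merged harmlessly as the lemma only claims a measurable family, or more simply by invoking that in our setting $\zeta$ has no atoms in time so $\mu\ll\Leb$) — is routine measure theory.
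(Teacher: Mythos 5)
Your overall plan matches what the paper does, which is simply to defer to \cite[Lemma~1.1.2]{chizat2017unbalanced} together with the remark that the diffusion term plays no role when one tests against functions depending only on $t$: testing~\eqref{eq:continuity_weak_form_without_boundary} with $\phi(t,x)=\chi(t)$, the terms $\langle\nabla\phi,m\rangle$ and $\tfrac{\nu}{2}\langle\Delta\phi,\rho\rangle$ vanish, and one is left with $\int\chi'\D\mu=-\int\chi\,\D\bar\zeta$ where $\mu=\pi_\#\rho$ and $\bar\zeta=\pi_\#\zeta$. Your observation that the Laplacian contributes nothing is exactly the content of the paper's one-line justification. Flagging the implicit nonnegativity of $\rho$ is appropriate: the statement concludes $\rho_t\in\M_+(\T^d)$, which silently assumes $\rho\in\M_+$; in practice it is always applied after $E_\Psi(\rho,m,\zeta)<+\infty$ forces this.

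That said, the middle of your argument is muddled in two places that are worth tidying up. First, a sign: from $\int\chi'\D\mu=-\int\chi\,\D\bar\zeta$ one reads off, for the distributional derivative of the \emph{measure} $\mu$ (acting as a distribution $T_\mu$), the identity $T_\mu'=+T_{\bar\zeta}$, not $-T_{\bar\zeta}$; this is consistent with the paper's remark that the temporal derivative of $\rho_t(\T^d)$ coincides with $\pi_\#\zeta$. Second, and more substantively, your phrasing ``the distributional derivative of $\mu$ (viewed as a function of $t$ via $t\mapsto\mu([0,t])$) is the finite measure $\bar\zeta$'' confuses $\mu$ with its cumulative distribution function: the derivative of $t\mapsto\mu([0,t])$ is $\mu$ itself, not $\bar\zeta$. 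What the identity really gives is that $T_\mu$, as a distribution on $(0,1)$, has a finite measure as its derivative, hence is (equal as a distribution to) a $BV$ function; in particular $\mu$ has a density $g\in BV$ against Lebesgue, which is the sought absolute continuity. Because of this, your closing worry about ``atomic parts of $\mu$'' is moot, and the alternative justification ``in our setting $\zeta$ has no atoms in time so $\mu\ll\Leb$'' should be dropped: $\zeta$ may well have atoms in time (this is precisely allowed by the $L^\pm_\Psi$ terms in Proposition~\ref{prop:energy_UOT}), and such atoms produce jumps of $g$, not a singular part of $\mu$. The correct reason $\mu\ll\Leb$ is the distribution-theoretic one just given, with no assumption on $\zeta$.
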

	
	\begin{proof}
		This is the same as \cite[Lemma 1.1.2]{chizat2017unbalanced}, as the presence of diffusion does not change anything (the equation~\eqref{eq:continuity_weak_form_without_boundary} is tested with a function depending only on time). Note that the temporal derivative of $\rho_t(\T^d)$, in the sense of measures, coincides with $\pi \pf \zeta$, being $\pi : (t,x) \in [0,1] \times \T^d \to t \in [0,1]$ the projection on the temporal axis. 
	\end{proof}
	
	An easy generalization of~\cite[Remark~1.3]{ambrosio2014continuity} yields that if $m$ and $\zeta$ are absolutely continuous with respect to $\rho$, then the map $t \mapsto \rho_t$ is continuous for the topology of weak convergence. However, as already mentioned, given the type of growth penalization $\Psi$ that we will consider, it is not always guaranteed that $\zeta$ is absolutely continuous with respect to $\rho$. Even if $t \mapsto \rho_t$ is not continuous, it is still possible to give a meaning to the temporal boundary conditions.
	
	\begin{Lem}
		\label{lem:existence_boundary_CE}
		Let $(\rho,m,\zeta)$ be a solution of~\eqref{eq:continuity_linear}, in the sense of~\eqref{eq:continuity_weak_form_without_boundary}. Assume in addition that $m \ll \rho$. Then there exists a unique pair $\rho_0, \rho_1 \in \M(\T^d)$ such that $(\rho,m,\zeta)$ is a solution of~\eqref{eq:continuity_linear} with boundary conditions $\rho_0, \rho_1$, in the sense of~\eqref{eq:continuity_weak_form}. Moreover, $(\rho_0, \rho_1)$ depends in a continuous way of $(\rho,m,\zeta)$ for the topology of weak convergence.  
	\end{Lem}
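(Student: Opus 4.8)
The plan is to identify the linear functional
\[
\phi \longmapsto \Big\cg \dr_t \phi + \tfrac{\nu}{2}\Delta\phi, \rho\Big\cd + \cg\nabla\phi, m\cd + \cg\phi,\zeta\cd,
\]
defined on smooth $\phi:[0,1]\times\T^d\to\R$ and vanishing, by~\eqref{eq:continuity_weak_form_without_boundary}, on those $\phi$ that vanish on $\{0,1\}\times\T^d$, and to show it has the form $\phi\mapsto\cg\phi(1),\rho_1\cd-\cg\phi(0),\rho_0\cd$ for a unique, continuously depending pair $(\rho_0,\rho_1)$. The first step is to record the structure of $\rho$ and $m$: by Lemma~\ref{lem:desintegration_rho_ruot}, $\rho=\D t\otimes\rho_t$ with $C:=\sup_t\rho_t(\T^d)<+\infty$, and since $m\ll\rho$ one may write $m=v\rho=\D t\otimes(v(t,\cdot)\rho_t)$ for a Borel field $v$; in particular $m$ gives no mass to $\{0\}\times\T^d$ nor to $\{1\}\times\T^d$. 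This is the only place where the hypothesis $m\ll\rho$ enters, and it is exactly what keeps $\rho_0,\rho_1$ genuine finite measures rather than distributions of negative order.

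Next I would construct the boundary measures. Fixing $\psi\in C^\infty(\T^d)$ and testing~\eqref{eq:continuity_weak_form_without_boundary} with $\chi(t)\psi(x)$, $\chi\in C^\infty([0,1])$ vanishing at $\{0,1\}$, shows that $g_\psi:t\mapsto\cg\psi,\rho_t\cd$ has, on $(0,1)$, a distributional derivative equal to the restriction to $(0,1)$ of $\pi\pf\big(\tfrac{\nu}{2}\Delta\psi\cdot\rho+\nabla\psi\cdot m+\psi\,\zeta\big)$, a finite signed measure on $[0,1]$ (here $\nabla\psi\cdot m$ and $\psi\,\zeta$ denote the obvious finite measures on $[0,1]\times\T^d$). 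Consequently $g_\psi$ has a bounded-variation representative, which admits one-sided limits $g_\psi(0^+)$ and $g_\psi(1^-)$; moreover $|g_\psi(0^+)|,|g_\psi(1^-)|\le C\|\psi\|_\infty$ because $|g_\psi(t)|\le\rho_t(\T^d)\|\psi\|_\infty$ for a.e.\ $t$, and these limits are nonnegative when $\psi\ge0$. Hence $\psi\mapsto g_\psi(0^+)$ and $\psi\mapsto g_\psi(1^-)$ are linear, nonnegative, and bounded by $C\|\cdot\|_\infty$ on $C^\infty(\T^d)$, so they extend to $C(\T^d)$ and are represented, via Riesz's theorem, by $\lambda_0,\lambda_1\in\M_+(\T^d)$. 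I would then set
\[
\rho_0:=\lambda_0-\zeta|_{\{0\}\times\T^d},\qquad \rho_1:=\lambda_1+\zeta|_{\{1\}\times\T^d}\ \in\ \M(\T^d),
\]
where $\zeta|_{\{i\}\times\T^d}$ is viewed as a measure on $\T^d$; the correction by $\zeta|_{\{i\}\times\T^d}$ is forced, since this part of $\zeta$ is invisible to~\eqref{eq:continuity_weak_form_without_boundary} but appears in~\eqref{eq:continuity_weak_form}.

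Finally I would verify~\eqref{eq:continuity_weak_form}, uniqueness, and continuity. Both sides of~\eqref{eq:continuity_weak_form} are continuous in $\phi$ for the $C^2$ topology, so by density of the span of products $\beta(t)\psi(x)$ in $C^2([0,1]\times\T^d)$ it suffices to check the identity for $\phi(t,x)=\beta(t)\psi(x)$; for such $\phi$ it reduces to the integration-by-parts formula for the BV function $g_\psi$ against $\beta$, once $\pi\pf(\nabla\psi\cdot m)$ and $\pi\pf(\psi\,\zeta)$ are split into their restriction to $(0,1)$ and their atoms at $\{0,1\}$ --- the atoms of the $m$-term being zero by the first step, and the atoms of the $\zeta$-term producing exactly the corrections above. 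Uniqueness follows by inserting $\phi(t,x)=t\psi(x)$ and $\phi(t,x)=(1-t)\psi(x)$ into~\eqref{eq:continuity_weak_form}, which express $\cg\psi,\rho_1\cd$ and $\cg\psi,\rho_0\cd$ as fixed continuous linear functionals of $(\rho,m,\zeta)$. The same formulas give continuity: if $(\rho^n,m^n,\zeta^n)\to(\rho,m,\zeta)$ weakly, then $\cg\psi,\rho_i^n\cd$ converges for every $\psi\in C^\infty(\T^d)$; by Banach--Steinhaus the total variations of $\rho^n,m^n,\zeta^n$ are uniformly bounded, so by Lemma~\ref{lem:desintegration_rho_ruot} (which bounds $\sup_t\rho_t^n(\T^d)$ by the total variations of $\rho^n$ and $\zeta^n$) the masses of $\rho_i^n$ are uniformly bounded as well, and convergence against the dense family $C^\infty(\T^d)$ upgrades to $\rho_i^n\to\rho_i$ weakly. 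The main obstacle is the bookkeeping of the middle step: correctly absorbing the trace of $\zeta$ on the temporal boundary into $\rho_0,\rho_1$ and, above all, using $m\ll\rho$ to discard the boundary contribution of $m$, which would otherwise prevent $\rho_0,\rho_1$ from being measures.
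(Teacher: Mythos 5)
Your proof follows essentially the same route as the paper's: disintegrate $\rho$ as $\D t\otimes\rho_t$, observe that $t\mapsto\cg\psi,\rho_t\cd$ is of bounded variation with derivative built from $\rho$, $m$, $\zeta$, use $m\ll\rho$ to kill the boundary contribution of the momentum, and invoke Riesz to produce the boundary measures. You are somewhat more explicit — testing against products $\beta(t)\psi(x)$ with a density argument, and identifying $\rho_0,\rho_1$ concretely as the one-sided-limit measures corrected by the atoms of $\zeta$ on $\{0,1\}\times\T^d$ (which matches the remark following the lemma) — whereas the paper reasons with a general $\phi\in C^2$ and concludes more abstractly that the boundary functional is a distribution of order zero supported on $\{0,1\}\times\T^d$, but the underlying argument is the same.
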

	
	\begin{Rem}
		\begin{itemize}
		\item Even when $\rho$ is nonnegative, we cannot guarantee that $\rho_0$ (resp.\ $\rho_1$) is a nonnegative measure because $\zeta$ could give a positive (resp.\ negative) mass to $\{ 0 \} \times \T^d$ (resp. $\{ 1 \} \times \T^d$).
		\item In terms of the curve $t \mapsto \rho_t$ from Lemma~\ref{lem:desintegration_rho_ruot}, these boundary conditions can be understood as follows. Take $(\rho,m,\zeta)$ as in Lemma~\ref{lem:existence_boundary_CE} and call $\tilde m,\tilde \zeta$ the extensions of $m,\zeta$ on $\R \times \T^d$ which cancel outside of $[0,1]\times \T^d$. Then there is a unique $\tilde \rho = \D t \otimes \tilde \rho_t \in \M(\R \times \T^d)$ such that $(\tilde \rho, \tilde m, \tilde \zeta)$ solves~\eqref{eq:continuity_linear} on $\R \times \T^d$, and such that $\tilde \rho$ coincides with $\rho$ on $[0,1]\times \T^d$. Moreover $t \mapsto \tilde \rho_t \in \M(\T^d)$ is of bounded variations, and hence has left and right limits at each time. The boundary conditions from Lemma~\ref{lem:existence_boundary_CE} correspond to the left limit at time $0$ and to the right limit at time $1$ of this map.
		\end{itemize}
	\end{Rem}
	
	\begin{proof}
		Take $(\rho,m,\zeta)$ a solution of~\eqref{eq:continuity_linear} in a weak sense. As a function of $\phi$, the l.h.s.\ of \eqref{eq:continuity_weak_form} defines a distribution on $[0,1] \times \T^d$. Moreover, thanks to \eqref{eq:continuity_weak_form_without_boundary}, we see that this distribution is supported on $\{ 0,1 \} \times \T^d$. To prove that it can be written as the r.h.s.\ of \eqref{eq:continuity_weak_form}, it is enough to prove that it is a distribution of order~$0$, that is, the l.h.s.\ can be controlled by $\| \phi \|_\infty$ only. 
		
		To that end, let us fix $\phi \in C^2([0,1] \times \T^d)$ and write $\rho = \D t \otimes \rho_t$ as per Lemma~\ref{lem:desintegration_rho_ruot}. We look at the function $f : [0,1] \to \R$ defined (\emph{a.e.}) by $f(t) = \cg \phi(t), \rho_t \cd$, $t \in [0,1]$. Using \eqref{eq:continuity_weak_form_without_boundary} with a test function of type $(t,x)\mapsto\chi(t) \phi(t,x)$ for some $\chi : [0,1] \times \R$ canceling at $0$ and $1$, we can see that the function $f$ is of bounded variation, its distributional derivative $\dot{f}$ being the measure defined for all $\chi$ by:
		\begin{equation*}
		\cg \chi, \dot{f} \cd = \left\cg \chi \left( \partial_t \phi + \frac{\nu}{2} \Delta \phi \right), \rho \right\cd + \langle \chi \nabla \phi, m \rangle + \langle \chi \phi, \zeta \rangle.
		\end{equation*}  
		As $m \ll \rho$ and $\rho = \D t \otimes \rho_t$, the only singular part of $\dot{f}$ with respect to $\D t$ is given by the last term containing~$\zeta$. In particular, the mass of the singular part of $\dot{f}$ is controlled by $\| \phi \|_\infty |\zeta|([0,1] \times \T^d)$. Then, note that the l.h.s.\ of \eqref{eq:continuity_weak_form} is nothing but $\dot f([0,1])$. As $f$ is of bounded variation, this quantity is controlled by $|f(0+)| + |f(1-)| + |\dot{f}^s|(\{ 0,1 \} \times \T^d)$, being $\dot{f}^s$ the singular part of the measure $\dot{f}$. As mentioned above, this singular part is controlled by $\| \phi \|_\infty |\zeta|([0,1] \times \T^d)$. Moreover thanks to Lemma~\ref{lem:desintegration_rho_ruot} and the definition of $f$, it is easy to see that $|f(0+)| + |f(1-)|$ is controlled by $2 \sup_t \rho_t(\T^d) \, \| \phi \|_\infty$. Thus we conclude that the l.h.s.\ of~\eqref{eq:continuity_weak_form} is a distribution of order $0$ in $\phi$, hence the existence of $\rho_0, \rho_1$. 
		
		Continuity of $\rho_0, \rho_1$ with respect to $(\rho,m,\zeta)$ is direct thanks to~\eqref{eq:continuity_weak_form}.  
	\end{proof}

	Next we need to define the energy. Given our change of variables $(\rho,m,\zeta) = (\rho, \rho v, \rho r)$, the density of energy would be
	\begin{equation*}
	\frac{|v|^2}{2} \rho + \Psi \left( r \right) \rho = \frac{|m|^2}{2 \rho} + \Psi \left( \frac{\zeta}{\rho} \right) \rho.
	\end{equation*}
	To justify that this term is well defined, we follow the usual definition by convex duality. There is a subtlety here: when $\Psi$ grows linearly at $\pm \infty$, the expression $\Psi ( \zeta/\rho ) \rho$ still makes sense when $\zeta$ is not absolutely continuous w.r.t.~$\rho$. Actually, it is necessary to allow this possibility in order to make the energy l.s.c. Namely, we define the energy of a triple $(\rho, m, \zeta)$ as follows.

	\begin{Def}
		\label{def:energy_UOT}
		If $\rho \in \M([0,1] \times \T^d)$, $m \in \M([0,1] \times \T^d)^d$ and $\zeta \in \M([0,1] \times \T^d)$, then we define the energy $E_\Psi(\rho, m, \zeta) \in [0, + \infty]$ of the triple $(\rho, m, \zeta)$ as
		\begin{equation*}
		E_\Psi(\rho,m,\zeta) = \sup_{(a,b,c) \in \mathcal{K}} \cg a, \rho \cd +  \cg b, m \cd +  \cg c, \zeta \cd  
		\end{equation*}
		where $\mathcal{K}$ is made of triples $(a,b,c)$ of continuous functions $a,c : [0,1] \times \T^d \to \R$ and $b : [0,1] \times \T^d \to \R^d$ such that, for all $(t,x) \in  [0,1] \times \T^d$,
		\begin{equation*}
		a(t,x) + \frac{|b(t,x)|^2}{2} + \Psi^*(c(t,x)) \leq 0. 
		\end{equation*}
	\end{Def}
	
	Some useful and classical properties of this functional are collected below. To that end, we introduce the following notations:
	\begin{equation}
	\label{eq:def_horizon_Psi_star}
	L^+_{\Psi} =\lim_{r \to + \infty} \frac{\Psi(r)}{r}, \qquad L^-_{\Psi} = \lim_{r \to - \infty} \frac{\Psi(r)}{|r|}.
	\end{equation} 
	
	It is easy to see that whenever $\Psi$ is convex, proper and nonnegative, these quantities are well defined in $[0, + \infty]$. Alternatively, $L^+_\Psi$ (resp.~$L^-_\Psi$) can be defined as the supremum (resp.\ infimum) of the domain where $\Psi^*$ is finite. 	
	
	\begin{Prop}
		\label{prop:energy_UOT}
		The functional $E_\Psi$ is convex and l.s.c.\ for the topology of weak convergence on the product space $\M([0,1] \times \T^d)\times \M([0,1] \times \T^d)^d \times \M([0,1] \times \T^d)$. Moreover, if $E_\Psi(\rho,m,\zeta) < + \infty$ then $\rho$ belongs to the set $\M_+([0,1] \times \T^d)$ of nonnegative measures and $m\ll \rho$. Furthermore, calling $v := \frac{\D m}{\D \rho}$, writing $\zeta = r \rho + \zeta^s$ the Radon-Nikodym decomposition of $\zeta$ with respect to $\rho$, and $\zeta^s = \zeta^s_+ - \zeta^s_-$ the Jordan decomposition of the singular part $\zeta^s$, there holds
		\begin{equation}
		\label{eq:energy_UOT}
		E_\Psi(\rho,m,\zeta) =\int_0^1 \hspace{-5pt} \int \left\{ \frac{1}{2} |v(t,x)|^2 + \Psi(r(t,x))\right\} \D \rho(t,x) + L^+_{\Psi} \zeta^s_+([0,1] \times \T^d) + L^-_{\Psi} \zeta^s_-([0,1] \times \T^d) .
		\end{equation}
	\end{Prop}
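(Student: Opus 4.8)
The plan is to follow the standard convex-duality strategy for Benamou--Brenier--type energies (as in \cite{chizat2018unbalanced,ambrosio2008gradient}), adapted to the fact that $\Psi$ need not be superlinear. First, convexity and lower semi-continuity of $E_\Psi$ are immediate from the definition: $E_\Psi$ is written as a supremum of affine continuous functionals of $(\rho,m,\zeta)$ for the weak topology, and any such supremum is convex and weakly l.s.c. The substantive content is the representation formula~\eqref{eq:energy_UOT}, together with the implications $E_\Psi<+\infty \Rightarrow \rho\ge 0$ and $m\ll\rho$.

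The core is a pointwise (in the sense of measures) duality computation. Introduce the nonnegative measure $\mu = \rho + |m| + |\zeta| \in \M_+([0,1]\times\T^d)$ and write Radon--Nikodym densities $\rho = \bar\rho\,\mu$, $m = \bar m\,\mu$, $\zeta = \bar\zeta\,\mu$. By a Rockafellar-type theorem on integral functionals (e.g.\ \cite[Theorem~5]{rockafellar1971integrals}, or the version in \cite{bouchitte1988new,chizat2017unbalanced}), the supremum over continuous $(a,b,c)\in\mathcal K$ of $\int (a\bar\rho + b\cdot\bar m + c\bar\zeta)\,\D\mu$ equals $\int f(\bar\rho,\bar m,\bar\zeta)\,\D\mu$, where $f$ is the lower-semicontinuous convex envelope of the constraint set $\mathcal K$ viewed as a function, i.e.\ $f$ is the support function of $\{(a,b,c): a + |b|^2/2 + \Psi^*(c)\le 0\}$. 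One then computes this support function explicitly: writing $K = \{(a,b,c): a\le -|b|^2/2 - \Psi^*(c)\}$, one has $f(s,w,z) = \sup_{b,c}\{\, -\tfrac12|b|^2 s - \Psi^*(c)s + b\cdot w + c z\,\}$ when $s>0$, which after optimizing in $b$ (giving $b = w/s$, contribution $|w|^2/(2s)$) and recognizing the remaining sup in $c$ as $s\,\Psi(z/s)$ (using $\Psi = \Psi^{**}$) yields $f(s,w,z) = |w|^2/(2s) + s\Psi(z/s)$ for $s>0$. For $s = 0$ one gets $f(0,w,z) = +\infty$ unless $w = 0$, in which case $f(0,0,z) = \sup\{cz : c\in\Dom\Psi^*\} = L^+_\Psi z_+ + L^-_\Psi z_-$ using the identification of $L^\pm_\Psi$ with the endpoints of $\Dom\Psi^*$. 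For $s<0$ the sup is $+\infty$ (take $b$ large).

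Having the formula $E_\Psi(\rho,m,\zeta) = \int f(\bar\rho,\bar m,\bar\zeta)\,\D\mu$, the remaining statements follow by bookkeeping with the Radon--Nikodym decompositions. Finiteness forces $\bar\rho\ge 0$ $\mu$-a.e.\ (so $\rho\in\M_+$) and forces $\bar m = 0$ $\mu$-a.e.\ on $\{\bar\rho = 0\}$ (so $m\ll\rho$); write $\mu = \rho + \mu^\perp$ with $\mu^\perp\perp\rho$. On the part absolutely continuous w.r.t.\ $\rho$, $\bar\rho>0$ and a change of variables gives $\int\{\tfrac12|v|^2 + \Psi(r)\}\,\D\rho$ with $v = \D m/\D\rho$, $r$ the density of the a.c.\ part of $\zeta$. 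On $\mu^\perp$ (where $\bar\rho = 0$, hence necessarily $\bar m = 0$), only $\zeta$ survives, contributing $L^+_\Psi\,\zeta^s_+ + L^-_\Psi\,\zeta^s_-$; one checks that $\zeta$ restricted to $\mu^\perp$ is exactly the singular part $\zeta^s$ and its Jordan decomposition matches. The 1D time-integral notation in~\eqref{eq:energy_UOT} is then just Fubini, since $\rho = \D t\otimes\rho_t$ by Lemma~\ref{lem:desintegration_rho_ruot}.

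The main obstacle is the rigorous application of the duality/relaxation theorem for integral functionals of measures in the non-superlinear regime, namely ensuring that the supremum over \emph{continuous} test triples $(a,b,c)$ (and not merely Borel ones) realizes the integral functional $\int f\,\D\mu$, including the correct recession contribution on the singular set. This is where one must invoke the precise hypotheses of the Bouchitté--Buttazzo / Rockafellar machinery: $\Psi^*$ being continuous on the interior of its domain, $f$ being a normal convex integrand, and a density argument approximating Borel maximizers by continuous ones on the compact space $[0,1]\times\T^d$. I expect the proof to cite \cite[Lemma~1.1.2 and surrounding results]{chizat2017unbalanced} essentially verbatim, since the diffusion term $\tfrac{\nu}{2}\Delta\rho$ plays no role in the energy itself, only in the constraint~\eqref{eq:continuity_linear}, which does not enter Definition~\ref{def:energy_UOT}.
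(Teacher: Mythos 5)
Your proposal is correct and follows essentially the same route as the paper's proof, which is extremely terse: the paper simply invokes \cite[Lemma~2.9]{chizat2018unbalanced} for the convexity/lower semi-continuity and well-definedness of the integrand $f(\rho,m,\zeta) = \frac{|m|^2}{2\rho} + \Psi(\zeta/\rho)\rho$, and then \cite[Proposition~7.7]{santambrogio2015optimal} for the integral-representation formula~\eqref{eq:energy_UOT} including the recession/singular-part contribution. Your computation of the support function of $\mathcal{K}$ (with the case analysis $s>0$, $s=0$, $s<0$, and the identification of $L^\pm_\Psi$ with the endpoints of $\Dom\Psi^*$) and the subsequent bookkeeping with the Radon--Nikodym decomposition are precisely what those two cited results encapsulate, so you have unpacked the same argument in more detail rather than followed a different path.
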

	
	Note that if $\Psi$ is superlinear at $\pm \infty$, which is equivalent to $L^\pm_{\Psi} = + \infty$, then $E_\Psi(\rho,m,\zeta) < + \infty$ implies that $\zeta \ll \rho$ and the last two terms in~\eqref{eq:energy_UOT} vanish.  		
	
	\begin{proof}
		This is classical in unbalanced optimal transport: for instance we use \cite[Lemma 2.9]{chizat2018unbalanced} with $X = \T^d$, $n = 1+d+1$ and 
		\begin{equation*}
		f(x,(\rho,m,\zeta)) = f((\rho,m,\zeta)) = \frac{|m|^2}{2 \rho} + \Psi \left( \frac{\zeta}{\rho} \right) \rho. 
		\end{equation*}
		Then, for the expression~\eqref{eq:energy_UOT} we essentially use \cite[Proposition 7.7]{santambrogio2015optimal}.
	\end{proof}
	
	We are now in position to define the RUOT problem. 
	
	\begin{Def}
		\label{def:RUOT}
		Let $\rho_0, \rho_1 \in \M_+(\T^d)$ and $\nu, \Psi$ be as above. The regularized unbalanced optimal transport problem $\RUOT_{\nu,\Psi}(\rho_0,\rho_1)$ is the minimization of $E_\Psi(\rho,m,\zeta)$ among all triples $(\rho,m,\zeta) \in \M([0,1] \times \T^d)\times \M([0,1] \times \T^d)^d\times \M([0,1] \times \T^d)$ satisfying \eqref{eq:continuity_linear} with boundary conditions $\rho_0, \rho_1$ in a weak sense. We call $\ruot_{\nu,\Psi}(\rho_0,\rho_1)$ its optimal value, that is 
		\begin{equation}
		\label{eq:def_ruot_rigorous}
		\ruot_{\nu,\Psi}(\rho_0,\rho_1) = \inf \left\{ E_\Psi(\rho,m,\zeta) \ : \ (\rho,m,\zeta) \text{ satisfies \eqref{eq:continuity_linear} with boundary conditions } \rho_0, \rho_1 \right\}.
		\end{equation}
		
		We call a competitor for the RUOT problem $\RUOT_{\nu,\Psi}(\rho_0,\rho_1)$ any triple $(\rho,m,\zeta)$ satisfying~\eqref{eq:continuity_linear} with boundary conditions $\rho_0, \rho_1$ in a weak sense, and such that $E_\Psi(\rho,m,\zeta) < + \infty$.
		
		When the initial and final times $0,1$ are replaced by $t_0,t_1 \in \R$, we adapt in a straightforward way the definitions of $\RUOT_{\nu, \Psi}$ and $\ruot_{\nu, \Psi}$ to provide respectively $\RUOT_{\nu, \Psi,t_0,t_1}$ and $\ruot_{\nu, \Psi,t_0,t_1}$.
	\end{Def}
	
	If $\nu = 0$, then we recover the unbalanced optimal transport in its dynamical formulation. If $\Psi(r)$ is $+ \infty$ for all $r$ but $r = 0$, or equivalently if $\Psi^* = 0$ everywhere (that is we enforce $\zeta$ to be $0$), then we get the regularized optimal transport problem. 
	
	\bigskip
	
	Facing this kind of optimization problems, there are three results that are very natural to look for: existence of a competitor, existence of a solution and duality. For the second one, we could use the direct method of calculus of variations. Indeed, once obtained the existence of competitors, this method boils down to proving coercivity, which we actually do, as the latter is a consequence of Proposition~\ref{prop:control_mass_from_energy} below. Instead, we will get solutions directly from the duality result Theorem~\ref{thm:existence_ruot}, as they are given for free by the Fenchel-Rockafellar theorem. 
	
	\bigskip
	
	But for the moment, let us start by the existence of competitors. For this result, we need very mild upper bounds on $\Psi$. This is to be compared with our assumption for duality and coercivity, which consist in a below bound. The proof relies on the corresponding results for the regularized optimal transport, see for instance~\cite{gentil2017analogy}.
	
	\begin{Prop}
		\label{prop:ruot_existence_competitor}
		Let $\rho_0, \rho_1 \in \M_+(\T^d)$. Assume that $h(\rho_0|\Leb) < + \infty$ if $\nu < 0$, and that $h(\rho_1|\Leb) < + \infty$ if $\nu > 0$ (recall that $h$ was defined in Definition~\ref{def:KL}). In addition:
		\begin{itemize}
			\item If $\rho_0(\T^d) \leq \rho_1(\T^d)$ assume that $\Psi$ is finite on $[0,+\infty)$, and, in addition, if $\rho_0=0$ that $\Psi$ grows at most polynomially at $+ \infty$.
			\item If $\rho_0(\T^d) \geq \rho_1(\T^d)$ assume that $\Psi$ is finite on $(- \infty,0]$, and, in addition, if $\rho_1 = 0$ that $\Psi$ grows at most polynomially at $- \infty$.
		\end{itemize}
		
		Then there exists a competitor with finite energy for $\RUOT_{\nu, \Psi}(\rho_0, \rho_1)$. 
	\end{Prop}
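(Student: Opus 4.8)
The plan is to build a competitor explicitly by combining two ingredients: a pure growth phase that adjusts the total mass (and its spatial profile at a rough level), and a regularized optimal transport phase that carries out the actual displacement. More precisely, assume first $\rho_0(\T^d) \le \rho_1(\T^d)$. I would split the time interval $[0,1]$ into $[0,1/2]$ and $[1/2,1]$. On $[1/2,1]$, I use the known existence result for regularized optimal transport (the ROT problem, which is the case $\Psi^* = 0$): since the two endpoints will be balanced by construction and the relevant entropy condition $h(\cdot|\Leb) < +\infty$ holds at the final time when $\nu > 0$ (respectively at the initial time when $\nu < 0$), there is a curve $(\rho, m)$ solving $\partial_t \rho + \Div m = \frac{\nu}{2}\Delta\rho$ with finite kinetic energy joining a suitable intermediate measure $\rho_{1/2}$ to $\rho_1$; see~\cite{gentil2017analogy}. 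On $[0,1/2]$, I perform only growth: take $m \equiv 0$ and look for $\rho_t$ of the form interpolating from $\rho_0$ to $\rho_{1/2}$ while solving $\partial_t \rho = \frac{\nu}{2}\Delta \rho + \zeta$. The choice of $\rho_{1/2}$ is the crux: I want it to be balanced with $\rho_1$ (total mass $\rho_1(\T^d)$), absolutely continuous with smooth positive density so that the ROT leg has finite energy, and reachable from $\rho_0$ by a controlled-cost growth path.

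For the growth leg, the natural ansatz is to first diffuse $\rho_0$ for a tiny time to make it a smooth positive density, then multiply by a time-dependent scalar and/or add mass so as to reach $\rho_{1/2}$. Concretely, set $\bar\rho := \tau_\eta \ast \rho_0$ (smooth, positive) and let $\rho_t$, $t \in [0,1/2]$, be a smooth curve of smooth positive densities from $\rho_0$ to $\rho_{1/2}$ that near $t=0$ agrees with $\tau_t \ast \rho_0$ and for $t$ bounded away from $0$ equals an explicit reparametrization; then $\zeta := \partial_t \rho - \frac{\nu}{2}\Delta\rho$ is a bounded measurable function, so $r = \zeta/\rho$ is bounded, hence $\Psi(r)$ is finite and bounded since $\Psi$ is finite on $[0,+\infty)$ and, being convex and finite on a half-line, locally bounded there — this needs $r \ge 0$, i.e.\ mass only created, which is why the hypothesis is only on $[0,+\infty)$ in the case $\rho_0(\T^d) \le \rho_1(\T^d)$. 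The resulting energy on $[0,1/2]$ is finite. The degenerate subcase $\rho_0 = 0$ must be handled separately: then there is no density to start from, so I would instead create mass from nothing, e.g.\ take $\rho_t = g(t)\,\Leb$ on a subinterval with $g(0) = 0$, which forces $r = \dot g / g \to +\infty$ as $t \to 0$; choosing $g(t) = t^p$ makes $r = p/t$, and then $\int_0^{1/2}\Psi(p/t)\, t^p\, \D t < +\infty$ precisely because $\Psi$ grows at most polynomially at $+\infty$ — this is exactly where that extra assumption enters. The case $\rho_0(\T^d) \ge \rho_1(\T^d)$ is symmetric, reversing time and the roles of creation/destruction, which is why the hypotheses move to $(-\infty,0]$ and to the final time.

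The main obstacle is the bookkeeping around the low-regularity and degenerate endpoints, and in particular verifying that the pieces glue into a single weak solution of~\eqref{eq:continuity_linear} with boundary data exactly $\rho_0,\rho_1$ in the sense of~\eqref{eq:continuity_weak_form}: one must check that no spurious mass is created at the gluing time $t = 1/2$ (continuity of $t \mapsto \rho_t$ there, which holds since on both legs $m \ll \rho$ and $\zeta \ll \rho$ with bounded densities, so Lemma~\ref{lem:existence_boundary_CE} and the remark after it apply), and that at $t = 0$ and $t = 1$ the left/right limits are the prescribed measures. A second technical point is to confirm that $\Psi$ finite on a closed half-line is bounded on compact subsets of that half-line — true because a proper convex function finite on an interval is continuous on its interior, and at the endpoint $0$ we have $\Psi(0) \le \liminf$ by lower semicontinuity together with $\inf\Psi = 0$ attained at $0$ since $\Psi^*(0)=0$; so $\Psi$ is bounded on $[0,C]$ for any $C$. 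Everything else — the existence of the ROT leg, finiteness of its kinetic energy, smoothness of the constructed intermediate density — is either quoted from~\cite{gentil2017analogy} or follows from elementary heat-kernel estimates, so I would not dwell on it.
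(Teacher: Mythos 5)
Your plan is correct and lands on essentially the same ingredients as the paper's proof: short-time heat flow to smooth the endpoint(s), the regularized optimal transport existence result from~\cite{gentil2017analogy} for the displacement leg, pure growth at a spatially-constant rate to change the total mass, and a power-law ramp $c(t)\propto t^p$ to create mass from nothing when $\rho_0=0$ under the polynomial-growth hypothesis. The one genuine structural difference is the decomposition: you split the time interval into two pieces (growth first, then ROT), whereas the paper goes through a \emph{three}-stage chain $\rho_0 \to \rho_0(\T^d)\Leb \to \rho_1(\T^d)\Leb \to \rho_1$, using a transitive relation ``$\alpha\to\beta$ if a competitor exists on any $[t_0,t_1]$.'' The paper's version has the advantage that \emph{all} the growth happens between two multiples of Lebesgue measure, where the growth rate $r$ is trivially spatially constant and $\rho$ is trivially bounded below; this avoids a small bookkeeping issue that your version leaves implicit: when you write ``$\zeta$ is bounded so $r=\zeta/\rho$ is bounded,'' this needs a uniform positive lower bound on $\rho$, which is true for $\tau_{\nu t}\ast\rho_0$ once $t>0$ on a compact torus, but does require you to commit to a specific interpolant (e.g.\ $\rho_t=c(t)\,\tau_{\nu t}\ast\rho_0$ with $c$ monotone, giving $r=\dot c/c$ constant in space) rather than ``a smooth curve of smooth positive densities.'' Two further small imprecisions worth flagging: (i) the heat-kernel leg should be $\tau_{\nu t}\ast\rho_0$, not $\tau_t\ast\rho_0$, so that it actually solves $\partial_t\rho=\frac{\nu}{2}\Delta\rho$; (ii) the case $\rho_0(\T^d)\ge\rho_1(\T^d)$ is not handled by \emph{reversing time} — that would flip the sign of $\nu$ and is already used to reduce $\nu<0$ to $\nu>0$ — but rather by the same construction with a nonpositive growth rate, invoking $\Psi$ finite on $(-\infty,0]$; for that sub-case you also need to swap the order, doing the transport leg first and the (destructive) growth leg last, so that when $\rho_1=0$ the ROT piece is still between two balanced nonzero measures. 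None of these affect the viability of your plan, they just require the explicit formulas the paper writes down.
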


	\begin{Rem}
		\label{rk:ruot_existence_competitor_q}
		If $\boldsymbol{q} = (q_k)_{k \in \N}$ is a positive measure on $\N$ and $\nu > 0$ while $\Psi_{\nu, \boldsymbol{q}}$ is defined with the help of its Legendre transform given in formula~\eqref{eq:def_Psi*} then the assumptions on $\Psi$ in Proposition~\ref{prop:ruot_existence_competitor} can be translated into assumptions on $\boldsymbol{q}$. Indeed, notice that $\Psi^*_{\nu, \boldsymbol{q}}$ grows exponentially fast at $+ \infty$ (resp.\ $- \infty$) if $\sum_{k \geq 2} q_k > 0$ (resp.\ $q_0 > 0$). This translates in a finiteness and a polynomial growth of $\Psi_{\nu, \boldsymbol{q}}$ at respectively $+ \infty$ and $- \infty$. Excluding the case $q_k = 0$ for all $k$ but $k=1$ (which corresponds to balanced transport) we fall into three different cases: take $\rho_0, \rho_1 \in \M_+(\T^d)$ which can be the zero measure and assume $h(\rho_1|\Leb) < + \infty$. 
		\begin{itemize}
			\item If $q_0 > 0$ and $\sum_{k \geq 2} q_k > 0$, then there exists a competitor in $\RUOT_{\nu, \Psi_{\nu, \boldsymbol{q}}}(\rho_0, \rho_1)$.
			\item If $q_0 = 0$ but $\sum_{k \geq 2} q_k > 0$, then there exists a competitor in $\RUOT_{\nu, \Psi_{\nu, \boldsymbol{q}}}(\rho_0, \rho_1)$ if $\rho_0(\T^d) \leq \rho_1(\T^d)$.
			\item If $\sum_{k \geq 2} q_k = 0$ but $q_0>0$, then there exists a competitor in $\RUOT_{\nu, \Psi_{\nu, \boldsymbol{q}}}(\rho_0, \rho_1)$ if $\rho_0(\T^d) \geq \rho_1(\T^d)$.
		\end{itemize}
		It is easily shown that in these items, the sufficient conditions are also necessary. We refer to Appendix~\ref{app:plots}, and in particular to Section~\ref{sec:psi_infinite} for illustrations of these different cases.
	\end{Rem}
	
	\begin{proof}
		The case $\nu < 0$ can be reduced to $\nu > 0$ by doing a change of variables $t \leftrightarrow -t$. Moreover, the case $\nu = 0$ is covered in \cite[Proposition 1.1.14]{chizat2017unbalanced}. Thus we restrict our attention to the case $\nu > 0$. 
		
		Then, for two nonnegative measures $\alpha, \beta$ on the torus, let us write $\alpha \to \beta$ if for any $t_0 < t_1$, there exists a competitor in the problem $\RUOT_{\nu, \Psi,t_0,t_1}(\alpha, \beta)$. By concatenation, the relation $\alpha \to \beta$ is transitive on $\M_+(\T^d)$. The proof relies on showing that in the following diagram 
		\begin{equation*}
		\rho_0  \to \rho_0(\T^d) \Leb  \to \rho_1(\T^d) \Leb \to \rho_1,
		\end{equation*}
		each arrow is justified, so that we can conclude that $\rho_0 \to \rho_1$ using transitivity. So let us detail why each of the arrow drawn above holds given the assumptions we made.
		
		First, we claim that if $\alpha, \beta \in \M_+(\T^d)$ have the same total mass, and if $h(\alpha|\Leb),h(\beta|\Leb) < + \infty$, then $\alpha \to \beta$. Indeed, one can just take $\zeta = 0$ and rely on the known results for the regularized optimal transport problem, that is the Schrödinger problem \cite[Theorem 5.1]{gentil2017analogy}. Actually, by taking $m$ and $\zeta$ to be $0$ for a short time at the beginning (thus $\rho$ follows the forward heat equation with diffusivity $\nu$) we can dispense from the assumption $h(\alpha|\Leb) < + \infty$. It justifies the two external arrows.
		
		If $\Psi$ is finite everywhere on $[0, + \infty)$, it is easy to show that $\alpha \to \beta$ provided $\alpha = c_\alpha\Leb$, $\beta = c_\beta \Leb$ with $0 < c_\alpha \leq c_\beta$ by taking $r = \frac{\D \zeta}{\D \rho}$ constant in space and time. A similar statement holds if $0 < c_\beta \leq c_\alpha$ and $\Psi$ is finite everywhere on $(-\infty,0]$.
		
		On the other hand, assume in addition that $\Psi$ has a polynomial growth at $+ \infty$, that is, $\Psi(r) \leq C r^p$ for some $p < + \infty$ and $r$ large enough. Then, if $\alpha = c_\alpha \Leb$ is a multiple of the Lebesgue measure there holds $0 \to \alpha$: to build a competitor with finite energy it is enough to take $\rho(t,\cdot) = c(t) \Leb$ with 
		\begin{align*}
		c(t) = c_\alpha \left( \frac{t - t_0}{t_1-t_0} \right)^{p}
		\end{align*}  
		as this yields a curve with finite energy given the polynomial growth of $\Psi$. The same reasoning also yields $\alpha \to 0$ if $\Psi$ has a polynomial growth at $- \infty$.
	\end{proof}

	\subsection{Duality for the RUOT problem}
	\label{subsec:duality_RUOT}
	In this subsection, we prove a duality result for the RUOT problem defined in Definition~\ref{def:RUOT}, under the following assumption on $\Psi$, that can be seen as a below bound.
	
	\begin{Ass}
		\label{ass:ruot_weak}	
		We assume that $\Psi^*$ is finite on $[-s_0,0]$ for some $s_0 > 0$ (and therefore continuous on $(-s_0,0)$). This is equivalent to a growth of $\Psi$ that is at least linear at $-\infty$, \emph{i.e.}\ to the existence of $C>0$ such that for all $r \in \R$, $\Psi(r) \geq C^{-1}(x+C)_-$
	\end{Ass}
	
	\begin{Rem}
		\label{rem:Psi_nu_q_grows_at_m_infty}
		If $\boldsymbol{q} = (q_k)_{k \in \N}$ is a positive measure on $\N$ and $\nu > 0$ while $\Psi_{\nu, \boldsymbol{q}}$ is defined with the help of its Legendre transform given in formula \eqref{eq:def_Psi*}, one can easily check that it satisfies Assumption~\ref{ass:ruot_weak}, the worst case being the one presented in Section~\ref{sec:no_exp_moment} from Appendix~\ref{app:plots}.   
	\end{Rem}
	
	In the following theorem, we state the dual problem and the absence of duality gap, which has for corollary the existence of an optimal solution to the primal problem. 	
	
	\begin{Thm}
		\label{thm:existence_ruot}
		Let us suppose that $\Psi$ satisfies Assumption~\ref{ass:ruot_weak}. Let $\rho_0, \rho_1 \in \M_+(\T^d)$ be two nonnegative measures. Then there holds 
		\begin{align}
		\label{eq:duality_RUOT}
		\ruot_{\nu, \Psi}(\rho_0,\rho_1) 
		= \sup_{\phi} \left\{\cg \phi(1), \rho_1 \cd - \cg \phi(0), \rho_0 \cd  \ : \ \partial_t \phi + \frac{1}{2} |\nabla \phi|^2 + \frac{\nu}{2} \Delta \phi + \Psi^*(\phi) \leq 0 \text{ on  } [0,1] \times \T^d \right\},
		\end{align}
		where the supremum is taken among all $C^2$ functions $\phi : [0,1] \times \T^d \to \R$. In addition, if there exists a competitor for $\RUOT_{\nu, \Psi}(\rho_0,\rho_1)$, then there exists an optimal solution of $\RUOT_{\nu, \Psi}(\rho_0,\rho_1)$, that is, the infimum in \eqref{eq:def_ruot_rigorous} is attained.  
	\end{Thm}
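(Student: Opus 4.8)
The plan is to derive \eqref{eq:duality_RUOT} as an instance of the Fenchel--Rockafellar duality theorem, set up so that the existence of a minimizer for $\RUOT_{\nu,\Psi}(\rho_0,\rho_1)$ is obtained for free as the attainment of the dual optimum. First I would rewrite the right-hand side of \eqref{eq:duality_RUOT}: work on the Banach space $X:=C([0,1]\times\T^d)\times C([0,1]\times\T^d;\R^d)\times C([0,1]\times\T^d)$ (with dual $X'=\M([0,1]\times\T^d)\times\M([0,1]\times\T^d)^d\times\M([0,1]\times\T^d)$), introduce the bounded linear operator $\mathcal B\colon C^2([0,1]\times\T^d)\to X$, $\mathcal B\phi:=(\partial_t\phi+\tfrac{\nu}{2}\Delta\phi,\ \nabla\phi,\ \phi)$, and recall $\mathcal K\subset X$ from Definition~\ref{def:energy_UOT}. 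The constraint in \eqref{eq:duality_RUOT} is exactly ``$\mathcal B\phi\in\mathcal K$'', so its right-hand side equals $-\mathcal I$, where $\mathcal I:=\inf_{(\phi,y)\in C^2\times X}\{\Theta_1(\phi,y)+\Theta_2(\phi,y)\}$ with $\Theta_1(\phi,y):=-\langle\phi(1),\rho_1\rangle+\langle\phi(0),\rho_0\rangle+\iota_{\{y=\mathcal B\phi\}}(\phi,y)$ and $\Theta_2(\phi,y):=\iota_{\mathcal K}(y)$. Both functionals are proper, convex and l.s.c.\ (here one uses that $\Psi^*$ is convex and l.s.c., hence $\mathcal K$ is closed and convex), and $\mathcal I\le 0$ since $E_\Psi\ge 0$.

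Next I would compute the Legendre transforms for the pairing between $C^2([0,1]\times\T^d)\times X$ and its dual: $\Theta_2^*(\psi,\xi)=\iota_{\{0\}}(\psi)+\sup_{y\in\mathcal K}\langle y,\xi\rangle$, and for $\xi=(\rho,m,\zeta)\in X'$ the last supremum is precisely $E_\Psi(\rho,m,\zeta)$ by Definition~\ref{def:energy_UOT}; while $\Theta_1^*(-\psi,-\xi)$ is finite, and then equal to $0$, if and only if $\psi=0$ and $\langle\mathcal B\phi,\xi\rangle=\langle\phi(1),\rho_1\rangle-\langle\phi(0),\rho_0\rangle$ for every $\phi\in C^2([0,1]\times\T^d)$, i.e.\ if and only if $(\rho,m,\zeta)$ solves \eqref{eq:continuity_linear} with boundary data $\rho_0,\rho_1$ in the weak sense \eqref{eq:continuity_weak_form}. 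Fenchel--Rockafellar then gives that $\mathcal I$ equals the maximum of $-E_\Psi(\rho,m,\zeta)$ over triples $(\rho,m,\zeta)$ solving \eqref{eq:continuity_linear} with boundary data $\rho_0,\rho_1$, and this maximum equals $-\ruot_{\nu,\Psi}(\rho_0,\rho_1)$ by \eqref{eq:def_ruot_rigorous}; this is \eqref{eq:duality_RUOT}. The fact that the maximum is attained is exactly the statement that the infimum in \eqref{eq:def_ruot_rigorous} is attained whenever the common value is finite, in particular whenever there is a finite-energy competitor. Note that the relevant dual variable comes out automatically as an element of $X'=\M([0,1]\times\T^d)^{1+d+1}$, so the non-reflexivity of $C^2$ causes no trouble here.

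The one substantial point is the constraint qualification needed for Fenchel--Rockafellar. Since $\Theta_1$ is nowhere continuous and $\Theta_2$ does not depend on $\phi$, it reduces to exhibiting $\phi_0\in C^2([0,1]\times\T^d)$ with $\mathcal B\phi_0$ in the $\|\cdot\|_\infty$-interior of $\mathcal K$: a \emph{strict} $C^2$ subsolution of the Hamilton--Jacobi inequality, $\partial_t\phi_0+\tfrac12|\nabla\phi_0|^2+\tfrac{\nu}{2}\Delta\phi_0+\Psi^*(\phi_0)\le-\delta<0$, taking values in a compact subset of the interior of $\{\Psi^*<+\infty\}$. This is precisely what Assumption~\ref{ass:ruot_weak} is for: $\Psi^*$ finite on $[-s_0,0]$ is, by convexity together with $\Psi^*(0)=0$ and lower semicontinuity, equivalent to $\Psi^*$ being locally bounded and continuous on $(-s_0,0]$ with $\Psi^*(s)\to 0$ as $s\uparrow 0$. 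I would then look for $\phi_0(t,x)=g(t)$ with $g\colon[0,1]\to(-s_0,0)$ solving a scalar ODE of the form $\dot g=-\delta-\Psi^*(g)$, or an explicit ansatz such as $g(t)=-\eps\,e^{ct}$ with $\eps,c$ tuned to the behaviour of $\Psi^*$ near $0$, and check that $C^2$-small perturbations of $\phi_0$ still map into $\mathcal K$. I expect this construction --- and in particular the bookkeeping ensuring the last coordinate of the perturbed $\mathcal B\phi$ stays inside $\mathrm{dom}\,\Psi^*$, whose right endpoint may well be $0$ when $\Psi$ is only sublinear at $+\infty$ --- to be the main technical obstacle of the proof. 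Finally, passing from ``$\le 0$'' to ``$=0$'' in the dual constraint is not needed (the statement keeps the inequality, which is also what makes the feasible set of $\phi$'s convex), and the nonnegativity of $\rho_0,\rho_1$ is used only to place oneself in the setting of Definition~\ref{def:RUOT}.
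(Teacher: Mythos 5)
Your proof is correct and follows the same Fenchel--Rockafellar route as the paper: same primal/dual spaces, $E_\Psi$ identified as the conjugate of $\iota_{\mathcal K}$, the weak continuity equation read off from the vanishing of the other conjugate, and the constraint qualification verified via a spatially homogeneous strict subsolution of the Hamilton--Jacobi inequality. Splitting the primal variable into $(\phi,y)\in C^2\times X$ with an indicator for $y=\mathcal B\phi$ is a cosmetic reformulation of the paper's $F$, and, as you observe, the $(C^2)'$-component of the dual variable is forced to vanish, so the two set-ups coincide.

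One detail you handle more carefully than the paper. The paper's subsolution $y(t)=-s_0\frac{e^{Ct}-1}{e^{C}-1}$ satisfies $y(0)=0$; if $\Psi^*\equiv+\infty$ on $(0,\infty)$ (which Assumption~\ref{ass:ruot_weak} permits, e.g.\ $\Psi(r)=r_-$), the triple $(\dot y,0,y)$ is then \emph{not} in the $\|\cdot\|_\infty$-interior of $\mathcal K$, since $\|\cdot\|_\infty$-perturbations of $y$ that are slightly positive near $t=0$ leave $\mathrm{dom}\,\Psi^*$, and the claimed continuity of $G$ at this point fails. Your requirement that $g$ take values in the open interval $(-s_0,0)$ --- for instance $g(t)=-\delta-(s_0-\delta)\frac{e^{Ct}-1}{e^{C}-1}$ with $0<\delta<s_0e^{-C}$, where $C=\Psi^*(-s_0)/s_0$ --- is exactly the fix: the range of $g$ is then a compact subset of $(-s_0,0)$, on an $\|\cdot\|_\infty$-neighborhood of which $\Psi^*$ is finite and continuous, so the strict pointwise inequality does yield an interior point of $\mathcal K$. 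This is the bookkeeping point you flag as the main technical obstacle, and your instinct about it is right.
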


	\begin{proof}
		We use the Fenchel-Rockafellar theorem (see for instance \cite[Theorem 1.12]{brezis2011functional}). Let $\V$ be the Banach space $C([0,1] \times \T^d; \R \times \R^d \times \R)$ endowed with the topology of uniform convergence. By the Riesz representation theorem its topological dual is identified to 
		\begin{equation*}
		\V' = \M([0,1] \times \T^d)\times \M([0,1] \times \T^d)^d \times \M([0,1] \times \T^d).
		\end{equation*}
		We define two functionals on $\V$: $F : \V \to [0,+ \infty]$ is defined by 
		\begin{equation*}
		F(a,b,c) = \left\{\begin{aligned}
		&- \cg \phi(1), \rho_1 \cd + \cg \phi(0), \rho_0 \cd &&  
		\text{if }(a,b,c) = (\partial_t \phi + \textstyle{ \frac{\nu}{2} } \Delta \phi, \nabla \phi, \phi)\text{ for some }\phi \in C^2([0,1]\times \T^d),
		\\[5pt]
		&+ \infty && \text{otherwise},
		\end{aligned}
		\right.
		\end{equation*} 
		while $G : \V \to [0,+ \infty]$ is defined by 
		\begin{equation*}
		G(a,b,c) = \begin{cases}
		0 & \text{if } \forall (t,x) \in [0,1] \times \T^d, \ \displaystyle{a(t,x) + \frac{|b(t,x)|^2}{2} + \Psi^*(c(t,x)) \leq 0}, \\
		+ \infty & \text{otherwise}.
		\end{cases}
		\end{equation*}
		Both functions are convex on $\V$. We need to find one point of $\V$ where $F$ is finite while $G$ is continuous. 
		
		Call $C := |\Psi^*(s_0)| / s_0 < + \infty$, where $s_0$ is given by Assumption~\ref{ass:ruot_weak}. As $\Psi^*(0) = 0$, we have by convexity for all $s \in [-s_0,0]$, $\Psi(s) \leq C |s|$. Consider 
		\begin{equation*}
		y(t) := - s_0\frac{\exp(Ct)-1}{\exp(C)-1}, \qquad t \in [0,1].
		\end{equation*} 
		Notice that therefore, for all $t \in [0,1]$, $y(t) \in [-s_0, 0]$, so that our bound on $\Psi^*$ applies. Finally, define $\phi(t,x) := y(t)$ (that is, we take a competitor of the dual problem that is homogeneous in space) and $(a,b,c):=(\partial_t \phi + \frac{\nu}{2} \Delta \phi, \nabla \phi, \phi)=(\dot y, 0, y))$. The fact that $F(a,b,c)$ is finite is clear from the definitions. The fact that $G$ is continuous at $(a,b,c)$ follows from the fact that for all $t \in [0,1]$ and $x\in \T^d$:
		\begin{equation*}
		a(t,x) + \frac{|b(t,x)|^2}{2} + \Psi^*(c(t,x)) = \dot y(t) + \Psi^*(y(t)) \leq \dot y(t) + C|y(t)| = -\frac{C s_0}{\exp(C) - 1}  < 0.
		\end{equation*}
		
		From these observations, and using the Fenchel-Rockafellar theorem, we can write 
		\begin{equation*}
		\min_{(\rho,m,\zeta) \in \V'} (F^*(-(\rho,m,\zeta)) + G^*(\rho,m, \zeta) ) = - \inf_{\V} (F + G)  .
		\end{equation*}
		In the right hand side we recognize the right hand side of \eqref{eq:duality_RUOT}: $F$ and $G$ were built for that. On the other hand, $G^* = E_\Psi$ by definition while $F^*(-(\rho,m,\zeta))$ is $0$ if and only if $(\rho,m,\zeta)$ is a weak solution of \eqref{eq:continuity_linear} with boundary conditions $\rho_0, \rho_1$, and $+\infty$ otherwise. Thus the left hand side is precisely $\ruot_{\nu, \Psi}(\rho_0,\rho_1)$, as defined by formula~\eqref{eq:def_ruot_rigorous}. 
		
		Eventually, we recall that the Fenchel-Rockafellar Theorem guarantees existence of a solution for the problem posed over $\V'$, at least if the value of the problems is not $+ \infty$.
	\end{proof}
	
	Observe that the following consequence of duality gives a first hint why it can be the l.s.c.\ relaxation of the branching Schrödinger problem.
	
	\begin{Prop}
		Given $\nu, \Psi$ satisfying Assumption~\ref{ass:ruot_weak}, the function $(\rho_0, \rho_1) \mapsto \ruot_{\nu, \Psi}(\rho_0, \rho_1)$ is l.s.c.\ for the topology of weak convergence on $\M_+(\T^d)^2$.    
	\end{Prop}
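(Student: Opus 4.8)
The plan is to read the result straight off the dual formula established in Theorem~\ref{thm:existence_ruot}. Since $\Psi$ satisfies Assumption~\ref{ass:ruot_weak}, that theorem gives
\[
\ruot_{\nu, \Psi}(\rho_0,\rho_1) = \sup_{\phi} \left\{ \cg \phi(1), \rho_1 \cd - \cg \phi(0), \rho_0 \cd \ : \ \partial_t \phi + \tfrac{1}{2}|\nabla \phi|^2 + \tfrac{\nu}{2}\Delta \phi + \Psi^*(\phi) \leq 0 \text{ on } [0,1]\times\T^d \right\},
\]
the supremum running over all $C^2$ functions $\phi : [0,1] \times \T^d \to \R$. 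The first thing I would note is that this supremum is over a nonempty family, since the constant function $\phi \equiv 0$ is admissible because $\Psi^*(0) = 0$; in particular $\ruot_{\nu,\Psi}(\rho_0,\rho_1) \in [0,+\infty]$ for every pair, and the supremum is never vacuous.

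Next, fix an admissible competitor $\phi$ of the dual problem and observe that its time slices $\phi(0,\cdot)$ and $\phi(1,\cdot)$ are continuous functions on $\T^d$ (they are even $C^2$). Hence, by the very definition of the weak convergence topology on $\M_+(\T^d)$, the affine map $(\rho_0, \rho_1) \mapsto \cg \phi(1), \rho_1 \cd - \cg \phi(0), \rho_0 \cd$ is continuous on $\M_+(\T^d)^2$. Consequently $(\rho_0, \rho_1) \mapsto \ruot_{\nu, \Psi}(\rho_0, \rho_1)$ is a pointwise supremum of a family of continuous (a fortiori lower semicontinuous) functions valued in $\R$, and such a supremum is always lower semicontinuous, with no finiteness restriction needed since the value $+\infty$ is permitted. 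This completes the argument.

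I do not expect any real obstacle here: all the substance is already packed into Theorem~\ref{thm:existence_ruot}, and what remains is the classical observation that a supremum of weakly continuous affine functionals is weakly l.s.c. If one wanted a self-contained primal proof instead, one could take a weakly convergent sequence $(\rho_0^n, \rho_1^n) \to (\rho_0,\rho_1)$, pick near-optimal competitors $(\rho^n, m^n, \zeta^n)$ for $\RUOT_{\nu,\Psi}(\rho_0^n,\rho_1^n)$, extract a weakly-$\star$ convergent subsequence via the coercivity estimate of Proposition~\ref{prop:control_mass_from_energy}, pass to the limit in the constraint~\eqref{eq:continuity_linear} (which is closed under weak convergence) using the continuity of the boundary trace from Lemma~\ref{lem:existence_boundary_CE} together with $m^n \ll \rho^n$ from Proposition~\ref{prop:energy_UOT}, and invoke the lower semicontinuity of $E_\Psi$ from the same proposition; but this route is strictly more laborious, and the dual one is the natural proof.
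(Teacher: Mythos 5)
Your argument is exactly the one the paper uses: read off the dual formula from Theorem~\ref{thm:existence_ruot} and observe that $\ruot_{\nu,\Psi}$ is a pointwise supremum of weakly continuous affine functionals of $(\rho_0,\rho_1)$, hence l.s.c. The extra remarks (nonemptiness of the competitor set, the sketch of a primal route) are sound but not needed; the core two-line argument matches the paper's proof.
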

	
	\begin{proof}
		Equation \eqref{eq:duality_RUOT} shows that $\ruot_{\nu, \Psi}$ can be expressed as a supremum of continuous functions for the topology of weak convergence, hence it is l.s.c.
	\end{proof}

	\subsection{Some useful technical lemmas}
	\label{subsec:technical_lemmas}
	
	This subsection is devoted to prove estimates implying coercivity that will be useful later on, and to describe a regularization procedure for the competitors of the RUOT problem. The reader can safely ignore this section at a first reading and come back to it when needed.

	Let us start with our coercivity estimates, under the same Assumption~\ref{ass:ruot_weak} as for the duality.
	
	\begin{Prop}
		\label{prop:control_mass_from_energy}
		Suppose that $\Psi$ satisfies Assumption~\ref{ass:ruot_weak}, and let $C>0$ be the corresponding constant. Let $\rho_0, \rho_1 \in \M_+(\T^d)$ and
		let $(\rho,m,\zeta)$ be a competitor for $\RUOT_{\nu, \Psi}(\rho_0, \rho_1)$. Let $(\rho_t)_{t \in [0,1]}$ be the disintegration of $\rho$ with respect to time, given by Lemma~\ref{lem:desintegration_rho_ruot}. Then there exists a constant $\tilde C$ only depending on $C$ such that the following estimates hold.
		\begin{itemize}
			\item Concerning the density, we have the pointwise estimate
			\begin{equation}
			\label{eq:uniform_bound_density}
			\sup_{t \in [0,1]} \rho_t(\T^d) \leq \tilde C \big( \rho_1(\T^d)  + E_{\Psi}(\rho,m,\zeta)\big).   
			\end{equation}
			\item Concerning the momentum, for all $0 \leq t_0 \leq t_1 \leq 1$, we have the local estimate
			\begin{equation}
			\label{eq:bound_velocity_momentum}
			|m|([t_0,t_1] \times \T^d) \leq \sqrt{t_1-t_0} \times \sqrt{\tilde C \big(\rho_1(\T^d) + E_{\Psi}(\rho,m,\zeta)\big)E_\Psi(\rho,m,\zeta)}.
			\end{equation}
			\item Concerning the net mass balance, we have the global estimate
			\begin{equation}
			\label{eq:bound_growth_momentum_new}|\zeta|([0,1] \times \T^d) \leq \tilde C \big( \rho_1(\T^d)  + E_{\Psi}(\rho,m,\zeta)\big).
			\end{equation}
		\end{itemize}				
	\end{Prop}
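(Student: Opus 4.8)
The plan is to reduce everything to testing the weak formulation \eqref{eq:continuity_weak_form} against well-chosen functions $\phi$ that are either spatially homogeneous (to control the mass) or linear in a given direction (to control the momentum), and then to use the duality-type bound on the energy coming from Definition~\ref{def:energy_UOT} with the pair $(a,b,c)$ built from Assumption~\ref{ass:ruot_weak}, exactly as in the proof of Theorem~\ref{thm:existence_ruot}. Throughout I write $\rho=\D t\otimes \rho_t$ as in Lemma~\ref{lem:desintegration_rho_ruot}, and I recall that $E_\Psi(\rho,m,\zeta)<+\infty$ forces $\rho\in\M_+$ and $m\ll\rho$ with $v=\D m/\D\rho$ satisfying $\frac12\int|v|^2\D\rho\le E_\Psi(\rho,m,\zeta)$ by Proposition~\ref{prop:energy_UOT}.

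First I would prove \eqref{eq:uniform_bound_density}. Fix $t^\star\in[0,1]$ and, for small $\eps>0$, test \eqref{eq:continuity_weak_form} against a function of the form $\phi(t,x)=\chi(t)\,\psi(t)$ depending only on time, where $\chi$ is a smooth cutoff equal to $1$ on $[t^\star,1]$ and $0$ near a point slightly below $t^\star$, and $\psi(t)=\exp(C(t-1))$ with $C$ the constant from Assumption~\ref{ass:ruot_weak}. Because $\phi$ is space-homogeneous, $\nabla\phi=0$, so the momentum term drops out, and the point of choosing the exponential $\psi$ is that the triple $(a,b,c)=(\partial_t\phi,0,\phi)=(\dot\psi\chi+\psi\dot\chi,\,0,\,\psi\chi)$ — up to the cutoff — satisfies $a+\Psi^*(c)\le 0$ on the region where $\chi$ has stabilized, since there $\dot\psi=C\psi$ and $\Psi^*(\psi)\le -C\psi$ (note $\psi\in(-s_0,0)$? — here one must take $\psi$ \emph{negative}, so I would use $\psi(t)=-\tfrac{s_0}{\exp C-1}(\exp(Ct)-1)$ shifted so that its values on $[t^\star,1]$ stay in $[-s_0,0]$, mimicking the $y$ of Theorem~\ref{thm:existence_ruot}). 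This makes $\langle a,\rho\rangle+\langle c,\zeta\rangle\le E_\Psi(\rho,m,\zeta)$ up to boundary contributions. Reading off \eqref{eq:continuity_weak_form} then bounds $\langle\psi(t^\star),\rho_{t^\star}\rangle$ — hence $\rho_{t^\star}(\T^d)$, since $\psi(t^\star)$ is a fixed negative constant bounded away from $0$ uniformly in $t^\star$ away from $1$, with the case $t^\star$ near $1$ handled directly by $|\langle\phi(1),\rho_1\rangle|\le\|\phi\|_\infty\rho_1(\T^d)$ — in terms of $\rho_1(\T^d)$, $E_\Psi(\rho,m,\zeta)$, and $|\zeta|([0,1]\times\T^d)$. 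To close the loop I would get \eqref{eq:bound_growth_momentum_new} first: testing \eqref{eq:continuity_weak_form} against $\phi(t,x)=t-\tfrac12$ (time only) shows that $\pi_\#\zeta([0,1])=\rho_1(\T^d)-\rho_0(\T^d)$ controls the \emph{signed} total mass of $\zeta$, but for the \emph{total variation} $|\zeta|$ I would instead invoke Proposition~\ref{prop:energy_UOT}: $|\zeta|\le \zeta^s_++\zeta^s_-+|r|\rho$, the singular parts are bounded by $E_\Psi/\min(L^+_\Psi,L^-_\Psi)$ when those are positive — and when $L^-_\Psi>0$ (guaranteed by Assumption~\ref{ass:ruot_weak}, which gives $L^-_\Psi\ge C^{-1}>0$) one controls $\zeta^s_-$, while $\zeta^s_+$ combined with $\int|r|\rho$ is handled by splitting $\{r\le R_0\}$ and $\{r>R_0\}$ and using that $\Psi(r)\ge c|r|$ for $|r|$ large on the negative side and $\Psi(r)/r\to L^+_\Psi$; a cleaner route is to use the signed identity plus the one-sided bound to get $|\zeta|\le \zeta^+([0,1]\times\T^d)+\zeta^-([0,1]\times\T^d)$ with $\zeta^-$ controlled by $E_\Psi$ via the linear lower bound, and $\zeta^+=\zeta^-+(\rho_1-\rho_0)(\T^d)$. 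Either way \eqref{eq:bound_growth_momentum_new} follows with a constant depending only on $C$, and feeding this back into the density estimate removes the $|\zeta|$ dependence and yields \eqref{eq:uniform_bound_density}.

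For the momentum estimate \eqref{eq:bound_velocity_momentum}, I would simply use the Cauchy–Schwarz inequality pointwise in the disintegration: on $[t_0,t_1]\times\T^d$,
\begin{equation*}
|m|([t_0,t_1]\times\T^d)=\int_{t_0}^{t_1}\!\!\int |v(t,x)|\,\D\rho_t(x)\,\D t
\le \left(\int_{t_0}^{t_1}\!\!\int |v|^2\D\rho_t\,\D t\right)^{1/2}\!\!\left(\int_{t_0}^{t_1}\rho_t(\T^d)\,\D t\right)^{1/2},
\end{equation*}
and then bound the first factor by $\sqrt{2E_\Psi(\rho,m,\zeta)}$ via Proposition~\ref{prop:energy_UOT} and the second factor by $\sqrt{(t_1-t_0)\sup_t\rho_t(\T^d)}$, into which I substitute \eqref{eq:uniform_bound_density}. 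Absorbing the numerical constants into $\tilde C$ gives \eqref{eq:bound_velocity_momentum}.

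I expect the main obstacle to be the careful bookkeeping in the density estimate when the cutoff $\chi$ interacts with the singular part of $\zeta$ near the cutting time, and more fundamentally the total-variation control of $\zeta$ \eqref{eq:bound_growth_momentum_new} in the non-superlinear regime: when $L^+_\Psi<+\infty$ the positive part $\zeta^s_+$ is only controlled \emph{per unit mass} by $L^+_\Psi$, so one genuinely needs the signed balance $\zeta^+-\zeta^-=(\rho_1-\rho_0)(\T^d)$ together with the linear lower bound on $\Psi$ at $-\infty$ from Assumption~\ref{ass:ruot_weak} to pin down $\zeta^-$, and hence $\zeta^+$, rather than hoping for a direct one-sided energy bound. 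Once that is organized correctly, the rest is routine.
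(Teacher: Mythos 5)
Your strategy of reading coercivity off the dual (sup) representation of $E_\Psi$ is in principle viable and genuinely different from the paper's, which works on the primal side: Lemma~\ref{lem:desintegration_rho_ruot} gives $\rho_t(\T^d)=\rho_1(\T^d)-\zeta([t,1]\times\T^d)$, the pointwise inequality $\Psi(r)\geq C^{-1}(r+C)_-$ then controls $\zeta_-$ by $E_\Psi(\rho,m,\zeta)$ plus a $\rho$-mass term, and a Gronwall iteration closes the estimate. Morally both routes integrate the same differential inequality against an exponential weight, but the primal/BV version avoids the cutoff bookkeeping entirely.

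The plan as written has two concrete gaps. First, you have the degenerate end reversed: the $y$ from the proof of Theorem~\ref{thm:existence_ruot} satisfies $y(0)=0$ and $y(1)=-s_0$, so $|\psi(t^\star)|\to 0$ as $t^\star\to 0$, and it is near $t=0$ (not near $t=1$) that dividing by $|\psi(t^\star)|$ blows up; your mitigation ``handle $t^\star$ near $1$ directly'' therefore addresses the wrong endpoint, while an additive shift does not restore admissibility because it pushes $\psi$ out of $[-s_0,0]$ where $\Psi^*$ is controlled. The clean fix is to re-solve the comparison ODE $\dot\psi=C\psi$ on all of $[0,1]$ with $\psi(0)=-s_0 e^{-C}$, keeping $\psi\in[-s_0,0]$ and $|\psi(t^\star)|$ bounded below uniformly in $t^\star$. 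Second, the ``close the loop'' ordering (bound $|\zeta|$ first, feed it into the density) is circular: Assumption~\ref{ass:ruot_weak} only gives $\zeta_-([0,1]\times\T^d)\leq C\,(E_\Psi(\rho,m,\zeta)+\rho([0,1]\times\T^d))$, and the extra $\rho$-mass term is precisely what you are trying to bound, so you cannot estimate $|\zeta|$ before the density. In fact $|\zeta|$ should not enter the density estimate at all: if you pair the admissible triple $(\chi\dot\psi,0,\chi\psi)$ (and not the full $(\partial_t(\chi\psi),0,\chi\psi)$) with Definition~\ref{def:energy_UOT} and subtract the resulting one-sided inequality from the tested continuity equation, the $\langle\chi\psi,\zeta\rangle$ terms cancel and you obtain $\langle\dot\chi\psi,\rho\rangle\geq\psi(1)\rho_1(\T^d)-E_\Psi(\rho,m,\zeta)$ directly, which yields \eqref{eq:uniform_bound_density} with no loop. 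With \eqref{eq:uniform_bound_density} in hand, \eqref{eq:bound_growth_momentum_new} follows from the signed balance $\zeta([0,1]\times\T^d)=\rho_1(\T^d)-\rho_0(\T^d)$ together with the now-available control of $\zeta_-$, and your Cauchy--Schwarz step for \eqref{eq:bound_velocity_momentum} matches the paper's exactly.
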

	\begin{Rem}
		\label{rem:coercivity}
		Note that having only a linear growth of $\Psi$ does not prevent $\zeta$ from concentrating in time. Thus in~\eqref{eq:bound_growth_momentum_new} we cannot have a control of $|\zeta|([t_0,t_1] \times \T^d)$ which goes to $0$ as $t_1 - t_0$ goes to $0$.
	\end{Rem} 
	
	\begin{proof}
		The proof is easily adapted from the one of \cite[Proposition 1.1.21]{chizat2017unbalanced}. We assume $E_\Psi(\rho,m,\zeta) < + \infty$, otherwise all our estimates trivially hold.
		
		As $(\rho,m,\zeta)$ solves~\eqref{eq:continuity_linear}, recall that calling $\pi: (t,x) \in [0,1]\times \T^d \mapsto t$ and $\pi \pf \zeta$ the temporal marginal of $\zeta$, the map $t \in [0,1] \mapsto \rho_t (\T^d)$ is of bounded variation, of derivative $\pi \pf \zeta$. In particular, integrating between $t$ and $1$, we get for almost every $t$
		\begin{equation}
		\label{eq:zz_aux_coercivity_1}
		\rho_t(\T^d) = \rho_1(\T^d) - \zeta([t,1] \times \T^d).
		\end{equation} 		
		
		We want to control the r.h.s.\ with the energy. To that end, let us write $\zeta = r \rho + \zeta^s_+ - \zeta^s_-$, following the notations of Proposition~\ref{prop:energy_UOT}. Note that by Assumption~\ref{ass:ruot_weak}, the quantity $L^-_{\Psi}$, defined in~\eqref{eq:def_horizon_Psi_star}, is bounded from below by $C^{-1}$. Also, $L^+_\Psi$ is nonnegative. Thus, given~\eqref{eq:energy_UOT}, for any $t \in [0,1]$,
		\begin{align}
		\notag
		E_\Psi(\rho,m,\zeta) & \geq \int_0^t\hspace{-5pt}\int \Psi(r(s,x)) \D \rho(s,x) + L^+_{\Psi} \zeta^s_+([0,t] \times \T^d) + L^-_{\Psi} \zeta^s_-([0,t] \times \T^d) \\
		\notag
		& \geq \int_0^1\hspace{-5pt}\int C^{-1}(r(s,x) + C)_- \D \rho(s,x)  + C^{-1} \zeta^s_-([0,t] \times \T^d) \\
		\notag
		& \geq -  \int_0^t \rho_s(\T^d) \D s + C^{-1} \int_0^1\hspace{-5pt}\int r(s,x)_-\D \rho(s,x) + C^{-1}\zeta^s_-([0,s] \times \T^d) \\
		\label{eq:zz_aux_coercivity_2}
		& = -  \int_0^t \rho_s(\T^d) \D s + C^{-1} \zeta_-([0,t] \times \T^d),
		\end{align}
		where we used the inequality $(r + C)_- \geq r_- - C$ to get the third line, and where $\zeta = \zeta_+ - \zeta_-$ is the Jordan decomposition of $\zeta$. Plugging that back in the previous estimate~\eqref{eq:zz_aux_coercivity_1} and using $- \zeta \leq \zeta_-$, we get for almost every $t$
		\begin{equation*}
		\rho_t(\T^d) \leq \rho_1(\T^d) + C E_\Psi(\rho,m,\zeta) + C \int_t^1 \rho_s(\T^d) \D s.
		\end{equation*} 
		The result follows by Gronwall's lemma and the fact that we chose $t \mapsto \rho_t$ is such a way that $\sup_t \rho_t(\T^d) = \esssup_t \rho_t(\T^d)$.  
		
		Next, we prove the estimate on $m$. We have by the Cauchy-Schwarz inequality
		\begin{equation*}
		|m|([t_0,t_1] \times \T^d) \leq \int_{t_0}^{t_1} \hspace{-5pt} \int_{\T^d} |v(t,x)| \D \rho_t(x) \D t \leq\sqrt{\int_{t_0}^{t_1} \hspace{-5pt} \int_{\T^d} \D \rho_t(x) \D t} \sqrt{\int_{t_0}^{t_1} \hspace{-5pt} \int_{\T^d} |v(t,x)|^2 \D \rho_t(x) \D t}, 
		\end{equation*}
		which provides the result once used our bound on $\rho$.
		
		Eventually, for the bound on $\zeta$, we have:
		\begin{align*}
		|\zeta|([0,1]\times \T^d) = \zeta([0,1] \times \T^d) + 2 \zeta_-([0,1]\times \T^d) = \rho_1(\T^d) - \rho_0(\T^d) + 2 \zeta_-([0,1]\times \T^d).
		\end{align*}
		Therefore, the result follows from~\eqref{eq:zz_aux_coercivity_2} and our estimate on $\rho$, up to considering a bigger $\tilde C$.
	\end{proof}
	
	Our second result in this section provides a way to regularize competitors of the RUOT problem which will be used in several contexts, namely, for building a competitor for the branching Schrödinger problem out of a competitor for the RUOT problem (Proposition~\ref{prop:from_RUOT_to_BrSch}), and for the small noise limit (Theorem~\ref{Thm:small_noise_limit}). The spirit of this lemma is that every competitor can be regularized in time and space without increasing too much the value of the objective functional. This kind of result is rather classical in the theory of unbalanced optimal transport, see for instance the Step 3 in the proof of \cite[Theorem~1.3.3]{chizat2017unbalanced}. However, for Proposition~\ref{prop:from_RUOT_to_BrSch}, we will need a quite specific shape of the regularizing sequence, hence our statement is a bit lengthier and the proof is a bit more tedious than what can be found in the literature.
	
	For this lemma, we will need a few assumptions on $\Psi$, including an upper bound, that we gather here.    
	
	\begin{Ass}
		\label{ass:regularization}
		We assume that: 
		\begin{itemize}
			\item $\Psi(0) < +\infty$ (or equivalently, $\inf \Psi^* > - \infty$).
			\item There exists a number $\bar r \in \R$ such that $\Psi(\bar r) = 0$.
			\item There exists a number $C>1$ such that for all $r \in \R$, we have $\Psi(2r) \leq C(1+\Psi(r))$. 
		\end{itemize}
	\end{Ass} 
	
	\begin{Rem} 
		For all $\nu>0$ and all branching mechanism $\boldsymbol q$, $\Psi_{\nu, \boldsymbol q}$ satisfies Assumption~\ref{ass:regularization}, with $\Psi_{\nu, \boldsymbol q}(0)\leq \lambda_{\boldsymbol q}$, and $\Psi_{\nu, \boldsymbol q} (\bar r)=0$ with $\bar r := \sum_k (k-1)q_k$. The third point is a bit more involved, and is left to the reader as an exercise. Observe that it includes our two possible very different cases: the case when $\Psi \equiv + \infty$ on $\R_+^*$ (resp.\ $\R_-^*$), and the case when $\Psi$ grows at most polynomially in $+\infty$ (resp.\ $-\infty$). We refer to Appendix~\ref{app:plots} to see how these two cases appear.
	\end{Rem}

	\begin{Lem}
		\label{lem:mollification}
		Let us give ourselves a diffusivity $\nu \geq 0$ and a proper l.s.c.\ convex function $\Psi$. Let us assume that $\Psi$ satisfies Assumption~\ref{ass:regularization}, and let us consider $\bar r$ as given in this assumption. Finally, let us take $\bar \nu > 0$ and a positive measure $\bar \rho_0 \in \M_+(\T^d) \backslash\{0\}$. 
		
		Let $(\rho, m, \zeta)$ be a solution of~\eqref{eq:continuity_linear} with $E_{\Psi}(\rho, m, \zeta) < + \infty$ and with nonnegative temporal marginals $\rho_0, \rho_1\in \M_+(\T^d)$. There exists a family $(\rho^\eps, m^\eps, \zeta^\eps)_{\eps>0}$ of solutions of~\eqref{eq:continuity_linear} with nonnegative temporal marginals $\rho_0^\eps, \rho_1^\eps \in \M_+(\T^d)$, satisfying the following properties.
		\begin{enumerate}
			\item \label{item:convergence} The family $(\rho^\eps, m^\eps, \zeta^\eps)_{\eps>0}$ weakly converges towards $(\rho,m,\zeta)$ as $\eps \to 0$. 

			\item \label{item:rho_smooth} For all $\eps>0$, the density $\rho^\eps$ is smooth and below bounded by a positive number uniformly on $[0,1] \times \T^d$.
			\item \label{item:fields} For all $\eps>0$, there is a vector field $v^\eps$ and a growth rate $r^\eps$ that are smooth on $[0,1] \times \T^d$ s.t.\ $m^\eps = v^\eps \rho^\eps$ and $\zeta^\eps = r^\eps \rho^\eps$. In addition, the measures $m^\eps$ and $\zeta^\varepsilon$ are concentrated on $[0, 1-\varepsilon] \times \T^d$, that is, $v^\eps(t,x)$ and $r^\varepsilon(t,x)$ vanish whenever $t \geq 1 - \varepsilon$, and $\Psi(r^\eps)$ is bounded.
			\item \label{item:energy_estimate} 
			There holds 
			\begin{equation*}
			\limsup_{\eps \to 0} E_{\Psi}(\rho^\eps, m^\eps, \zeta^\eps) \leq E_{\Psi} (\rho, m, \zeta).
			\end{equation*}
			\item \label{item:initial_condition} For all $\eps>0$, the initial density satisfies $\rho^\eps_0 = \tau_{\bar \nu \eps} \ast ((1-\eps)\rho_0 + \eps \bar \rho_0 )$.
		\end{enumerate}
	\end{Lem}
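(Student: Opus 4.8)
## Proof Plan

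The plan is to build the regularized family in three successive operations: first a convex combination with a heat-smoothed Dirac-free initial bump to fix the initial condition, then a time reparametrization that ``freezes'' the dynamics near $t=1$ so that $m^\eps,\zeta^\eps$ are supported in $[0,1-\eps]\times\T^d$, and finally a space-time mollification making everything smooth. The key quantitative input throughout is the third bullet of Assumption~\ref{ass:regularization} (the doubling-type condition $\Psi(2r)\le C(1+\Psi(r))$), which by iteration gives $\Psi(\lambda r)\le C_\lambda(1+\Psi(r))$ for any fixed $\lambda\ge1$, hence control of $E_\Psi$ under rescalings of the fields by bounded factors; this is what lets each operation change the energy by at most a multiplicative factor tending to $1$ plus an additive error tending to $0$.

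First I would handle the initial condition. Replace $\rho_0$ by $\rho_0^\eps:=\tau_{\bar\nu\eps}\ast((1-\eps)\rho_0+\eps\bar\rho_0)$ as prescribed in item~\ref{item:initial_condition}: on a short initial time interval let $\rho$ follow the pure heat flow with diffusivity $\bar\nu$ starting from $(1-\eps)\rho_0+\eps\bar\rho_0$ (so $m=\zeta=0$ there and no energy is added), reaching at the end of that interval precisely $\tau_{\bar\nu\eps}\ast((1-\eps)\rho_0+\eps\bar\rho_0)$, which is smooth and strictly positive since $\bar\rho_0\ne0$ and $\tau_s>0$; then, on the remaining time, run a rescaled copy of the original $(\rho,m,\zeta)$ but driven from $(1-\eps)\rho_0$ instead of $\rho_0$ — by linearity of~\eqref{eq:continuity_linear} in $(\rho,m,\zeta)$ this is just $((1-\eps)\rho,(1-\eps)m,(1-\eps)\zeta)$ plus the heat-flow contribution of the $\eps\bar\rho_0$ part (which again costs zero energy), so the energy is multiplied by at most $(1-\eps)$ up to the reparametrization cost. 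Concatenation and rescaling in time of a finite-energy solution keeps~\eqref{eq:continuity_linear} and multiplies $E_\Psi$ by bounded factors controlled via Assumption~\ref{ass:regularization}; strict positivity of $\rho^\eps$ then follows because after the initial heat flow the density is bounded below and subsequent evolution over a compact time interval with an $L^\infty$-bounded growth rate cannot make it vanish. To get $\Psi(r^\eps)$ bounded one truncates: replace $r$ by $r\wedge R_\eps$ with $R_\eps\to+\infty$ slowly, paying a vanishing energy error by monotone convergence, and note the minimum of $\Psi$ is attained at the finite value $\bar r$ so values of $r$ far from $\bar r$ only increase energy.

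Next, to enforce that $m^\eps,\zeta^\eps$ vanish on $[1-\eps,1]\times\T^d$: precompose the time variable with a smooth increasing bijection $\theta_\eps:[0,1]\to[0,1]$ with $\theta_\eps(t)=t$ on $[0,1-2\eps]$ and $\theta_\eps\equiv1$ — more precisely, stretch the given solution onto $[0,1-\eps]$ and let it sit still (heat flow, zero momentum/source) on $[1-\eps,1]$, so the final marginal is $\tau_{\eps'}\ast\rho_1$ for a small $\eps'$, which converges to $\rho_1$. Reparametrizing time by a factor $(1-\eps)^{-1}$ rescales $m\mapsto(1-\eps)^{-1}m$ and $\zeta\mapsto(1-\eps)^{-1}\zeta$ hence $v,r$ by $(1-\eps)^{-1}$; again Assumption~\ref{ass:regularization} bounds the resulting energy by $(1+o(1))E_\Psi$. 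Finally, mollify the triple $(\rho,m,\zeta)$ in space and time against $\tau_{\delta_\eps}\otimes\chi_{\delta_\eps}$ with $\delta_\eps\to0$ much faster than $\eps$ and chosen so that the temporal support stays inside $[0,1-\eps]$ and the smooth strictly positive lower bound on $\rho$ survives; mollification commutes with the linear equation~\eqref{eq:continuity_linear}, preserves nonnegativity of the marginals, and by Jensen's inequality together with the joint convexity and $1$-homogeneity of $(\rho,m,\zeta)\mapsto \frac{|m|^2}{2\rho}+\Psi(\frac\zeta\rho)\rho$ does not increase $E_\Psi$ (this is the standard argument, e.g. Step 3 in the proof of \cite[Theorem~1.3.3]{chizat2017unbalanced}); smoothness and strict positivity of $\rho^\eps$ give $v^\eps=m^\eps/\rho^\eps$ and $r^\eps=\zeta^\eps/\rho^\eps$ smooth, and $\Psi(r^\eps)$ stays bounded because $r^\eps$ is a mollified average of the already-truncated $r$.

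Items~\ref{item:convergence} and~\ref{item:energy_estimate} then follow by sending $\eps\to0$: each of the three operations perturbs the triple by $o(1)$ weakly (the convex combination because $\eps\bar\rho_0\to0$ and $\tau_{\bar\nu\eps}\ast\to\mathrm{Id}$; the time-freezing because $\theta_\eps\to\mathrm{Id}$; the mollification because $\delta_\eps\to0$), so $(\rho^\eps,m^\eps,\zeta^\eps)\rightharpoonup(\rho,m,\zeta)$, and the energy bound is the composition of ``multiply by $(1-\eps)^{\pm1}$, apply the doubling estimate, add $o(1)$ truncation error, don't increase under mollification.'' The main obstacle I expect is bookkeeping the order of the small parameters — one must fix the scaling $\delta_\eps\ll\eps$ and the truncation level $R_\eps$ in the right order so that (a) the mollified momentum/source remain supported in $[0,1-\eps]\times\T^d$, (b) the strict positivity and smoothness of $\rho^\eps$ with the \emph{uniform-in-}$[0,1]\times\T^d$ lower bound of item~\ref{item:rho_smooth} are not destroyed by the time-freezing tail (where $\rho$ follows pure diffusion and stays bounded below) nor by mollification, and (c) the precise formula in item~\ref{item:initial_condition} comes out exactly rather than up to an extra convolution — this forces the initial heat-flow step to use diffusivity exactly $\bar\nu$ and the final mollification in the space variable near $t=0$ to be arranged so it does not further smear $\rho_0^\eps$, e.g. by doing the initial-condition surgery \emph{last} on an already-smooth object or by choosing the mollifier to act trivially at $t=0$.
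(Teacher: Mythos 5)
Your overall strategy (convex combination to inject positive mass, time reparametrization to compress the support of $m,\zeta$ into $[0,1-\eps]$, and space-time mollification with the doubling condition from Assumption~\ref{ass:regularization} controlling the energy) is the right one and is essentially what the paper does, but there are two concrete gaps and one unnecessary complication.

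First, your zero-cost mass injection is not a solution of~\eqref{eq:continuity_linear}. You propose to let the added $\eps\bar\rho_0$ evolve by ``pure heat flow with diffusivity $\bar\nu$, so $m=\zeta=0$ and no energy is added.'' But with $m=\zeta=0$ the equation reads $\partial_t\rho = \tfrac{\nu}{2}\Delta\rho$, which is heat flow with diffusivity $\nu$, not $\bar\nu$; and the lemma explicitly allows $\nu=0$, in which case the density would not be smoothed at all. To superimpose a free contribution one needs a curve solving the \emph{same} PDE, and the paper uses $\bar\rho_t = e^{\bar r t}\tau_{\nu t}\ast\bar\rho_0$ with $m=0$ and $\zeta=\bar r\,\bar\rho$: this solves~\eqref{eq:continuity_linear} with the correct diffusivity $\nu$, and its energy is zero because $\Psi(\bar r)=0$ by the second bullet of Assumption~\ref{ass:regularization}. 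You never use the zero $\bar r$ of $\Psi$ here, which is the entire point of introducing it. Smoothing with $\tau_{\bar\nu\eps}$ is instead applied to the \emph{whole} triple $(\rho,m,\zeta)$ as a spatial convolution, which commutes with the equation and decreases the energy by joint convexity.

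Second, the exact formula $\rho_0^\eps=\tau_{\bar\nu\eps}\ast((1-\eps)\rho_0+\eps\bar\rho_0)$ of item~\ref{item:initial_condition} is not produced by your construction, and the fixes you sketch at the end do not close this. Running a heat flow on $[0,\eps]$ from $(1-\eps)\rho_0+\eps\bar\rho_0$ leaves the value \emph{at} $t=0$ equal to $(1-\eps)\rho_0+\eps\bar\rho_0$, not its $\tau_{\bar\nu\eps}$-convolution, and a mollifier that ``acts trivially at $t=0$'' while remaining smooth does not exist. The mechanism the paper uses is: (i) do the convex combination \emph{before} the spatial convolution by $\tau_{\bar\nu\eps}$, so that right after that step the density at $t=0$ is exactly $\tau_{\bar\nu\eps}\ast((1-\eps)\rho_0+\eps\bar\rho_0)$; (ii) extend the fields to $t<0$ by taking $\rho$ \emph{constant} equal to this value and $m=-\tfrac{\nu}{2}\nabla\rho$ to cancel the diffusion, $\zeta=0$; (iii) regularize in time with a kernel supported in $[0,\eps^2]\subset\R_+$, i.e.\ looking strictly into the past, so the mollified value at $t=0$ is still the unchanged constant; (iv) use a squeezing map $s^\eps$ with $s^\eps(0)=0$. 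Points (ii)--(iv) are what make item~\ref{item:initial_condition} come out on the nose; without them you only get the formula up to an extra smear, as you correctly suspect.

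Finally, the truncation $r\mapsto r\wedge R_\eps$ you invoke to bound $\Psi(r^\eps)$ is both unnecessary and problematic: replacing $\zeta$ by $(r\wedge R_\eps)\rho$ changes the source term and breaks the continuity-equation constraint. In the paper, the bound on $\Psi(r^\eps)$ comes for free once $r^\eps$ is smooth on the compact $[0,1]\times\T^d$: by the third bullet of Assumption~\ref{ass:regularization}, either $\Psi$ is finite (hence locally bounded) on $\R_+^*$, or $\Psi\equiv+\infty$ on $\R_+^*$ in which case finiteness of the energy forces $\zeta\le0$ and the mollified $r^\eps\le0$, and symmetrically on the negative side; in both cases $\Psi(r^\eps)$ is bounded with no truncation. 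You should remove the truncation step and replace it by this dichotomy.
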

	\begin{Rem}
		\begin{itemize}
			\item In the proof of Proposition~\ref{prop:from_RUOT_to_BrSch}, we will crucially need the fifth point of this lemma, as well as the property on the support of $\zeta^\eps$.
			\item Notice that we allow $\nu$ to cancel in this lemma. This is important since for proving the small noise limit stated at Theorem~\ref{Thm:small_noise_limit}, we will need to regularize competitors of the limiting problem. 
			\item In Proposition~\ref{prop:from_RUOT_to_BrSch}, we will use $\bar{\nu} = \nu > 0$ for the fifth point. On the other hand, for the small noise limit, we can take any $\bar \nu > 0$ as the precise expression for $\rho^\eps_0$ does not matter, but we cannot take $\bar \nu = \nu$ as $\nu$ could cancel. 
		\end{itemize}
	\end{Rem}
	
	\begin{proof}[Proof of Lemma~\ref{lem:mollification}]
		There will be several levels of regularization that we will perform in separated steps, all of them being parametrized by the same coefficient $\eps>0$. Then, we will prove the properties of the mollified fields obtained at Step~\ref{step:mollification_properties}. So we fix $\eps>0$, sufficiently small for everything to be well defined, and we let it go to $0$ at Step~\ref{step:mollification_properties}.
		\begin{stepi}{Adding a positive amount of mass}
			First of all, call $\bar\rho \in \M_+([0,1]\times \T^d)$ the measure defined by $\bar \rho = \D t \otimes \bar \rho_t$, where for all $t \in [0,1]$, 
			\begin{equation*}
			\bar \rho_t := e^{\bar r t}  \tau_{\nu t} \ast \bar \rho_0 .
			\end{equation*}
			We can check that $(\bar \rho,0,\bar r \bar \rho)$ is a solution of the continuity equation~\eqref{eq:continuity_linear} with
			\begin{equation*}
			E_{\Psi}(\bar \rho, 0, \bar r \bar \rho) = \int_0^1\hspace{-5pt} \int \Psi(\bar r) \D \bar \rho(t,x) = 0.
			\end{equation*}
			
			Let $(\rho, m, \zeta)$ be as in the statement of the lemma. Let us call
			\begin{equation*}
			(\rho'^\eps,  m'^\eps, \zeta'^\eps) := (1-\eps) (\rho, m ,\zeta) + \eps (\bar \rho, 0, \bar r \bar \rho).
			\end{equation*}
			First, it is a solution of the continuity equation~\eqref{eq:continuity_linear}, as the latter is linear. Second, by convexity,  
			\begin{equation*}
			E_\Psi(\rho'^\eps,  m'^\eps, \zeta'^\eps) \leq (1-\eps) E_\Psi(\rho, m, \zeta) + \eps E_\Psi(\bar \rho, 0, \bar r \bar \rho) = (1-\eps) E_{\Psi} (\rho, m, \zeta) <+\infty.
			\end{equation*}
			Finally, using the notations of Lemma~\ref{lem:desintegration_rho_ruot}, we see that for all $t \in [0,1]$, 
			\begin{equation*}
			\eps e^{\bar r t} \bar \rho_0(\T^d) \leq \rho'^\eps_t(\T^d) \leq \eps e^{\bar r t} \bar \rho_0(\T^d) + (1-\varepsilon) \sup_{s \in [0,1]} \rho_s(\T^d).
			\end{equation*} 
			Therefore, on the one hand, $\rho'^\eps_t(\T^d)$ is below bounded by a positive constant $c_\eps>0$, depending on $\eps$ but not on $t$, and on the other hand, it is bounded from above by a constant $C>0$ uniformly in $t$ and $\eps$, that is,
			\begin{equation}
			\label{eq:zz_aux_control_mass_mollifier}
			0<c_\eps \leq \inf_{t \in [0,1]} \rho'^\eps_t(\T^d ) \leq \sup_{t \in [0,1]} \rho'^\eps_t(\T^d) \leq C< + \infty.
			\end{equation}		
		\end{stepi}
		\begin{stepi}{Regularization in space}
			For the regularization in space, we use the heat kernel (we could use any approximation of unity, but then we would not have the explicit rate of convergence given by the Li-Yau inequality, see below). Namely, we define the measures $\tilde \rho^\eps, \tilde m^\eps$ and $\tilde \zeta^\eps$ by duality as follows: $a,b,c$ three smooth test functions,
			\begin{equation}
			\label{eq:convolution_test_function}
			\cg a, \tilde \rho^\eps \cd := \cg \tau_{\bar \nu \eps} \ast a, \rho'^\eps \cd, \qquad
			\cg b, \tilde m^\eps \cd := \cg \tau_{\bar \nu \eps} \ast b,  m'^\eps \cd, \qquad
			\cg b, \tilde \zeta^\eps \cd := \cg \tau_{\bar \nu \eps} \ast c,  \zeta'^\eps \cd,
			\end{equation}	
			where the convolution with the test function is taken only in space. Note that using the notations of Lemma~\ref{lem:desintegration_rho_ruot}, $\tilde \rho^\eps_t = \tau_{\bar \nu \eps} \ast \rho'^\eps_t$. However, we cannot write anything similar for $\zeta$, as we cannot decompose \emph{a priori} $\zeta'^\eps$ as $\D t \otimes \zeta'^\eps_t$. The triple $(\tilde \rho^\eps, \tilde m^\eps, \tilde \zeta^\eps)$ still satisfies the continuity equation~\eqref{eq:continuity_linear}, now with boundary conditions $(\tilde \rho^\eps_0, \tilde \rho^\eps_1)$.
			
			At this stage, its is clear that $\tilde \rho^\eps$ is below bounded by a positive number, uniformly in time and space (this is because, $\tau_{\bar \nu \eps}$ is positive everywhere and $\rho'^\eps_t(\T^d)$ is below bounded, as in~\eqref{eq:zz_aux_control_mass_mollifier}).
			Finally, by joint convexity of the maps $(\rho,m) \mapsto |m|^2/\rho$ and $(\rho, \zeta) \mapsto \Psi(\zeta/\rho)\rho$, we have for all $\eps>0$
			\begin{equation}
			\label{eq:energy_estimate_intermediate_triple}
			E_\Psi(\tilde \rho^\eps, \tilde m^\eps, \tilde\zeta^\eps) \leq E_\Psi( \rho'^\eps,  m'^\eps, \zeta'^\eps) \leq  (1-\eps) E_{\Psi} (\rho, m, \zeta) <+\infty.
			\end{equation}
			(See also~\cite[Lemma~1.1.23]{chizat2017unbalanced}.) 
		\end{stepi}
		\begin{stepi}{Extension to times before $0$ and after $1$}
			\label{step:extension}
			Now, our mollified competitors are smooth with respect to space, and we need to regularize them in time. Because our set of times has boundaries, it is a bit more involved, and we need first to extend $(\tilde \rho^\eps, \tilde m^\eps, \tilde \zeta^\eps)$ on $\R^d \times \T^d$. We do not do it in the same way before $t=0$ and after $t=1$, for reasons that will be clear from the rest of the proof. However in both case, we set $\zeta = 0$. In the past, we take $\rho$ constant in time, with a momentum $m$ which compensates the effects of the diffusion. This is in that way that our mollified densities will satisfy Point~\ref{item:initial_condition}. In the future, we take $m = 0$, so that $\rho$ follows the heat flow. This is in order to satisfy our condition on the support of $m^\eps$ and $\zeta^\eps$ stated in Point~\ref{item:fields} after squeezing in time at Step~\ref{step:squeezing}, without increasing the value of the energy.

			For all $t \leq 0$, we set
			\begin{equation*}
			\tilde \rho^\eps_t := \tilde \rho^\eps_0 = \tau_{\bar \nu \eps} \ast ((1-\eps)\rho_0 + \eps \bar \rho_0), \qquad \tilde m^\eps_t := -\frac{\nu}{2} \nabla \tilde \rho^\eps_0 = -\frac{\nu}{2} (\nabla \tau_{\bar \nu \eps}) \ast ((1-\eps)\rho_0 + \eps \bar \rho_0), \qquad \tilde \zeta^\eps_t := 0.
			\end{equation*}
			On the other hand, for all $t \geq 1$, we set
			\begin{equation*}
			\tilde \rho^\eps_t := \tau_{\nu(1-t)} \ast \tilde \rho^\eps_1, \qquad \tilde m^\eps_t := 0, \qquad \tilde \zeta^\eps_t := 0.
			\end{equation*}	
			Then, outside of $[0,1] \times \T^d$, we set $\tilde \rho^\eps |_{(\R \backslash [0,1]) \times T^d} := \D t |_{(\R \backslash [0,1])\times \T^d} \otimes \tilde \rho^\eps_t$, and similarly for the momentum and net source of mass. Now, $(\tilde \rho^\eps, \tilde m^\eps, \tilde\zeta^\eps)$ is defined on $\R \times \T^d$, it is a solution of~\eqref{eq:continuity_linear}, and $\tilde \rho^\eps$ has a density below bounded by a positive number uniformly in time and space.  
		\end{stepi}
		\begin{stepi}{Regularization in time}
			Let us choose $(K_\eps)_{\eps>0}$ a smooth approximation of unity in $\R$. Importantly, we assume that for all $\eps>0$, $K_\eps$ has its support included in $[0,\eps^2] \subset \R_+$. We set for all  $t \in \R$
			\begin{gather*}
			\hat \rho^{\eps}_t := \int_{t-\eps}^t K_{\eps}(t-s) \tilde \rho^\eps_s \D s, \qquad
			\hat m^{\eps}_t := \int_{t-\eps}^t K_{\eps}(t-s) \tilde m^\eps_s \D s, \qquad
			\hat \zeta^{\eps}_t:= \int_{t-\eps}^t K_{\eps}(t-s) \tilde \zeta^\eps_s \D s.
			\end{gather*}
			Note that the last integral should be understood as the convolution (in time) between the measure $\tilde \zeta^\varepsilon$ and the kernel $K_{\eps}$, as done in formula~\eqref{eq:convolution_test_function} in the case of space convolution. The result is a measure on $\R \times \T^d$.	
			The triple $(\hat \rho^\eps, \hat m^\eps, \hat \zeta^\eps)$ is clearly a solution of the continuity equation~\eqref{eq:continuity_linear}. 
			
			Moreover, now our three fields $\hat \rho^{\eps}$, $\hat m^\eps$ and $\hat \zeta^\eps$ have densities with respect to $\D t \otimes \D x$, which are smooth, and $\hat \rho^\eps$ is below bounded by a positive constant uniformly in time and space. Identifying a measure with its density, we can now define $\hat v^\eps = \hat m^\eps/ \hat \rho^\eps$ and $\hat r^\eps =  \hat \zeta^\eps / \hat \rho^\eps$. They are smooth functions on $\R \times \T^d$. 
			
			Eventually, $\hat \rho^\eps_t$ coincides with $\tilde \rho^\eps_0$ as soon as $t \leq 0$ while $\hat m^\eps$ and $\hat \zeta^\eps$ are supported in $[0, 1+\eps^2] \times \T^d$.   
		\end{stepi}
		\begin{stepi}{Squeezing in time}
			\label{step:squeezing}
			The last step in the construction of the final triple is to squeeze in time. Let us define $s^\eps$ by
			\begin{equation*}
			s^\eps(t) = (1+2\eps)t, \qquad t \in [0,1].
			\end{equation*}
			It is an affine function satisfying $s_\eps(0) = 0$ and $s_\eps(1-\eps) \geq 1 + \eps^2$, as soon as $\eps$ is sufficiently small. Finally, for all $t \in \R$, we define
			\begin{equation*}
			\rho^\eps_t := \hat \rho^\eps_{s^\eps(t)}, \qquad m^{\eps}_t := (1+2\eps) \hat m^\eps_{s^\eps(t)} - \eps \nu \nabla \rho^\eps_{t},  \qquad \zeta^\eps_t :=(1+2\eps)\hat \zeta^\eps_{s^\eps(t)}. 
			\end{equation*} 
			This triple satisfies equation~\eqref{eq:continuity_linear} (note the term involving $\nabla \rho^\eps_{t}$ in the momentum, added to compensate the effect of the squeezing on the diffusivity). Of course, we have for all $t \in \R$
			\begin{equation}
			\label{eq:mollified_v_r}
			v^\eps_t := \frac{m^\eps_t}{\rho^\eps_t} = (1+2\eps) \hat v^\eps_{s^\eps(t)} - \eps \nu \nabla \log \rho^\eps_{t}, \qquad r^\eps_t := \frac{\zeta^\eps_t}{\rho^\eps_t}= (1+2\eps) \hat r^\eps_{s^\eps(t)}.
			\end{equation}
			By our choice of $s^\eps$, $m^\eps$ and $\zeta^\eps$ are supported in $[0, 1-\eps] \times \T^d$ for small values of $\eps$, and $\rho_0^\eps = \tilde \rho^\eps_0$. Our final mollified triple is the restriction of $(\rho^\eps, m^\eps, \zeta^\eps)$ to $[0,1] \times \T^d$.
		\end{stepi}
		\begin{stepi}{Properties of the mollified triples}
			\label{step:mollification_properties}
			Let us check the different properties given in the statement of the lemma. First, the points~\ref{item:rho_smooth},~\ref{item:fields} and~\ref{item:initial_condition} are obvious by construction. The only subtlety is that $\Psi(r^\eps)$ is bounded. To prove that, notice that because of the third point of Assumption~\ref{ass:regularization} and convexity, there is only two possibilities: either $\Psi$ is finite everywhere on $\R^*_+$ and hence locally bounded on $\R^*_+$, or $\Psi$ is infinite everywhere on $\R_+^*$. In the first case, $\Psi(r^\eps) \1_{r^\eps>0}$ remains bounded because $r^\eps$ is bounded, and in the second case, because $E_\Psi(\rho,m,\zeta) < +\infty$ and $\Psi(\bar r)=0$, both $\zeta$ and $\bar r$ are nonpositive, and therefore $\zeta^\eps$ and $r^\eps$ are nonpositive as well by construction. Thus, in this case as well, $\Psi(r^\eps) \1_{r^\eps>0}\equiv 0$ is bounded. The same goes for the negative values of $r^\eps$, and as $\Psi(0)$ is finite by assumption, we can conclude.
			
			It remains to prove Point~\ref{item:convergence} and Point~\ref{item:energy_estimate}. Let us start with Point~\ref{item:convergence}.
			
			By properties of approximations of unity, it is clear that as a pair of measures on $\R \times \T^d$, $(\rho^\eps,\zeta^\eps)$ converges towards $(\rho,\zeta)$, where the latter is extended on $t \leq 0$ by $\D t \otimes (\rho_0, 0)$ and on $t \geq 1$ by $\D t \otimes (\tau_{\nu(1-t)} \ast \rho_1, 0)$, in duality with $C_c(\R \times \T^d)$. In the case of $\rho$, as $\pi \pf \rho$ is absolutely continuous with respect to the Lebesgue measure  (where $\pi: (t,x) \in \R \times \T^d \mapsto t$), it is enough to conclude that when restricted to $[0,1] \times \T^d$, $\rho^\eps$ weakly converges to $\rho$. In the case of $\zeta$, this is also true, but this time because for all $\eps$, the support of $\zeta^\eps$ is included in $[0,1]\times \T^d$.
			
			Let us now treat the case of the momentum. Having a close look at its construction, we can see that it can be decomposed as
			\begin{equation}
			\label{eq:decomposition_m}
			m^\eps_t = \check m^\eps_t - f_0^\eps(t) \times \frac{\nu}{2}\nabla \rho^\eps_0- \eps\nu \nabla \rho^\eps_t, \qquad t \in [0,1],
			\end{equation}
			where:
			\begin{itemize}
				\item $\check m^\eps$ is a rescaled regularization of $m$, so that $\check m^\eps \rightharpoonup m$ (here as well one has to use either the fact that $\pi \pf m \ll \Leb$ as $E_\Psi(\rho,m,\zeta)< + \infty$, or the fact that $\check m^\eps$ is supported in $[0,1]\times \T^d$).
				\item $f^\eps_0$ is a smooth function with values in $[0,1 + 2\eps]$ which cancels for $t \geq \eps^2$ (as $s^\eps(\eps^2) \geq \eps^2$). This term comes from the extension in the past and the need to counterbalance the diffusivity when keeping a constant density.
				\item The last term comes directly from Step~\ref{step:squeezing}.
			\end{itemize} 
			Our task is to show that the two last terms in the r.h.s.\ of~\eqref{eq:decomposition_m} converge to zero. In both cases, we will rely on the fact that the densities involved can be written as the convolution of a nonnegative measure with the heat kernel $\tau_{\bar \nu \eps}$, so that the Li-Yau inequality \cite[Theorem 1.1]{li1986parabolic} applies and gives an explicit rate of convergence. For instance, for the last term, this inequality writes
			\begin{equation}
			\label{eq:LiYau}
			\int_{\T^d} |\nabla \log \rho^\eps_{t}|^2 \D \rho^\eps_t \leq \frac{d \rho^\eps_t(\T^d)}{2 \bar \nu \varepsilon}, \qquad t \in [0,1].  
			\end{equation}   
			Therefore, by the Cauchy-Scwharz inequality,
			\begin{equation*}
			\int_{\T^d} \left| \nabla \rho^\eps_{t}(x) \right| \, \D x =   \int_{\T^d} \left| \nabla \log  \rho^\eps_{t}(x) \right| \, \D \rho^\eps_t(x)
			\leq \sqrt{\int_{\T^d} |\nabla \log \rho^\eps_{t}|^2 \D \rho^\eps_t } \sqrt{\rho^\eps_t(\T^d)} \leq \sqrt{\frac{d}{2\bar \nu \eps}}\rho^\eps_t(\T^d).
			\end{equation*}
			As $\rho^\eps_t(\T^d)$ is clearly bounded uniformly in $t$ and $\eps$ (it comes from the bound~\eqref{eq:zz_aux_control_mass_mollifier}), we conclude that the l.h.s.\ is of order $1/\sqrt \eps$, uniformly in $t$, and hence tends to $0$, uniformly in $t$ when multiplied by the perfactor $\nu \eps$ from~\eqref{eq:decomposition_m}. This is enough to show that the last term in~\eqref{eq:decomposition_m} goes to $0$ in $L^1([0,1]\times \T^d)$, and hence for the weak convergence of measures. 
			
			The boundary terms are treated in the same way: $\nabla \tilde \rho_0^\eps$ is of size $1/\sqrt \eps$ in $L^1(\T^d)$ because of the space regularization and the Li-Yau inequality, and the temporal support of $f^\eps_0$ is of size $\eps^2$, so that the product of the two is of size $\eps\sqrt\eps$ in $L^1([0,1]\times \T^d)$.

			Finally, we will prove Point~\ref{item:energy_estimate}. That is, we want to estimate from above
			\begin{equation*}
			E_\Psi(\rho^\eps, m^\eps, \zeta^\eps) = \int_0^1 \hspace{-5pt} \int \left\{ \frac{1}{2} |v^\eps(t,x)|^2 \rho^\eps(t,x) + \Psi(r^\eps(t,x)) \rho^\eps(t,x) \right\} \D x \D t.
			\end{equation*}
			(This formula holds as a consequence of Proposition~\ref{prop:energy_UOT}, since $\zeta^\eps$ has no singular parts thanks to the regularization.) The first step is to go back to the ``hat'' variable. For that, recall the formula for $v^\eps$ given in~\eqref{eq:mollified_v_r}.
			Thanks to the Li-Yau inequality~\eqref{eq:LiYau},
			\begin{equation*}
			\left\|\nabla \log \rho^\eps_t \right\|_{L^2([0,1] \times \T^d, \rho^\eps)} = \sqrt{\int_0^1 \hspace{-5pt} \int |\nabla \log \rho^\eps_{t}|^2 \D \rho^\eps_t} \leq   \sqrt{\frac{d}{2\bar \nu \eps}\sup_{t \in [0,1]} \rho^\eps_t(\T^d)},
			\end{equation*}
			which goes to $0$ as $\eps \to 0$, when multiplied by the prefector $\nu \eps$ from~\eqref{eq:mollified_v_r} (for the bound on the mass of $\rho^\eps$ we use again \eqref{eq:zz_aux_control_mass_mollifier}). It enables to ``replace'' $v^\eps_t$ by $(1+2\eps) \hat v^\eps_{s^\eps(t)}$, that is: 
			\begin{multline*}
			\limsup_{\eps \to 0} \int_0^1 \hspace{-5pt} \int \left\{ \frac{1}{2} |v^\eps(t,x)|^2 \rho^\eps(t,x) + \Psi(r^\eps(t,x)) \rho^\eps(t,x) \right\} \D x \D t \\
			= \limsup_{\eps \to 0} \int_0^1 \hspace{-5pt} \int \left\{ \frac{\left(1 + 2\eps\right)^2}{2} |\hat v^\eps(s^\eps(t),x)|^2 \rho^\eps(t,x) + \Psi(r^\eps(t,x)) \rho^\eps(t,x) \right\} \D x \D t.
			\end{multline*}
			Then, using the definition of $\rho^\eps$ and $r^\eps$ in terms of $\hat \rho^\eps$ and $\hat r^\eps$, and doing the change of variables $t \leftrightarrow s^\eps(t)$, we find 
			\begin{multline*}
			\limsup_{\eps \to 0} \int_0^1 \hspace{-5pt} \int \left\{ \frac{1}{2} |v^\eps(t,x)|^2 \rho^\eps(t,x) + \Psi(r^\eps(t,x)) \rho^\eps(t,x) \right\} \D x \D t \\
			= \limsup_{\eps \to 0} \int_{0}^{1+ 2\eps} \hspace{-5pt} \int \left\{ \frac{(1+ 2\eps)^2}{2} |\hat v^\eps(t,x)|^2 \hat \rho^\eps(t,x) + \frac{1}{1+2\eps} \Psi \big( (1+2\eps) \hat r^\eps(t,x) \big) \hat \rho^\eps(t,x) \right\} \D x \D t.
			\end{multline*}
			Now, using the third point of Assumption~\ref{ass:regularization} on $\Psi$, we claim that the effect temporal scaling will vanish when $\eps \to 0$, that is
			\begin{multline}
			\label{eq:regularization_intermediate_limsup}
			\limsup_{\eps \to 0} \int_0^1 \hspace{-5pt} \int \left\{ \frac{1}{2} |v^\eps(t,x)|^2 \rho^\eps(t,x) + \Psi(r^\eps(t,x)) \rho^\eps(t,x) \right\} \D x \D t \\
			\leq \limsup_{\eps \to 0} \int_{0}^{1+2\eps} \hspace{-5pt} \int \left\{ \frac{1}{2} |\hat v^\eps(t,x)|^2 \hat \rho^\eps(t,x) +  \Psi \left( \hat r^\eps(t,x) \right) \hat \rho^\eps(t,x) \right\} \D x \D t.
			\end{multline}
			The term which is not obvious to pass to the limit is the one involving $\Psi$. But for this, we just have to observe that using the convex inequality (for $\eps$ sufficiently small)
			\begin{equation*}
			\Psi\left( (1+2\eps) r \right) \leq (1-2\eps) \Psi(r) + 2 \eps \Psi(2 r), \qquad r \in \R,
			\end{equation*}
			we find
			\begin{align*}
			\limsup_{\eps \to 0} \int_{0}^{1 + 2\eps} \hspace{-5pt} \int &\left\{  \frac{1}{1+2\eps} \Psi \big( (1+2\eps) \hat r^\eps(t,x) \big) - \Psi \left( \hat r^\eps(t,x) \right)  \right\} \hat \rho^\eps(t,x) \D x \D t\\
			&\leq \limsup_{\eps \to 0} \int_{0}^{1 + 2\eps} \hspace{-5pt} \int \left\{  \frac{-4\eps}{1+2\eps} \Psi \left( \hat r^\eps(t,x) \right) + \frac{2\eps}{1+2\eps} \Psi \left( 2 \hat r^\eps(t,x) \right)  \right\} \hat \rho^\eps(t,x) \D x \D t \\
			&\leq \limsup_{\eps \to 0} \frac{2\eps}{1+2\eps} \int_{0}^{1 + 2\eps} \hspace{-5pt} \int \Psi \left( 2 \hat r^\eps(t,x) \right) \hat \rho^\eps(t,x) \D x \D t.
			\end{align*}
			But this last quantity goes to zero as soon as the r.h.s.\ of~\eqref{eq:regularization_intermediate_limsup} is finite thanks to the third point in Assumption~\ref{ass:regularization}, and the uniform bound on $\rho^\eps_t(\T^d)$ deduced from~\eqref{eq:zz_aux_control_mass_mollifier}. Inequality~\eqref{eq:regularization_intermediate_limsup} follows.
			
			Now, we want to unwrap the effect of the smoothing in time. For this, we call $e^\eps$ the locally finite nonnegative measure defined for all $B \in \mathcal B(\R)$ by
			\begin{equation*}
			e^\eps(B) :=  \int_B \int_{\T^d} \left\{ \frac{1}{2} |\tilde v^\eps(t,x)|^2  + \Psi(\tilde r^\eps(t,x))  \right\} \D \tilde \rho^\eps(t,x) + L^+_{\Psi} \left(\tilde \zeta^\eps\right)^s_+(B \times \T^d) + L^-_{\Psi} \left(\tilde \zeta^\eps\right)^s_-(B \times \T^d) ,
			\end{equation*}
			with the notations of Proposition~\ref{prop:energy_UOT}. By the same convexity argument of the function $(\rho,m,\zeta) \to |m/\rho|^2 \rho + \Psi(\zeta/\rho) \rho$ as before, we find that for all $t \in \R$, we have 
			\begin{equation*}
			\int \left\{ \frac{1}{2} |\hat v^\eps(t,x)|^2 +  \Psi \left( \hat r^\eps(t,x) \right)  \right\} \hat \rho^\eps(t,x)\D x \leq e^\eps \ast K_\eps(t).
			\end{equation*}
			We integrate this inequality between times $0$ and $1 + 2\eps$, and we find
			\begin{equation*}
			\int_{0}^{1+2\eps}\hspace{-5pt}\int \left\{ \frac{1}{2} |\hat v^\eps(t,x)|^2 +  \Psi \left( \hat r^\eps(t,x) \right) \right\} \hat \rho^\eps(t,x) \D x \D t \leq \int_{\R} F^\eps(t) \D e^\eps(t),
			\end{equation*}
			where $F^\eps(t) := \int_0^{1+2\eps} K^\eps(s-t) \D s$, $t \in \R$ is a smooth function with values in $[0,1]$, which coincides with $1$ on $[0,1]$ and cancels outside of $[-\eps^2, 1+2\eps]$. Now, we use the fact that $e([0,1]) = E_\Psi(\tilde \rho^\eps, \tilde m^\eps, \tilde \zeta^\eps)$, that $\tilde \zeta^\eps$ is supported in $[0, 1] \times \T^d$, and that $\tilde v^\eps$ cancels for $t \geq 1$. We find  
			\begin{multline*}
			\int_{0}^{1+2\eps}\hspace{-5pt}\int \left\{ \frac{1}{2} |\hat v^\eps(t,x)|^2 +  \Psi \left( \hat r^\eps(t,x) \right) \right\} \hat \rho^\eps(t,x) \D x \D t \\
			\leq E_{\Psi}(\tilde \rho^\eps, \tilde m^\eps, \tilde \zeta^\eps) + \int_{-\eps^2}^0  \int \frac{1}{2} | \tilde{v}^\eps(t,x)|^2 \D \tilde \rho^\eps(t,x) + (2\eps + \eps^2) \Psi(0)  \sup_{t \in \R} \tilde \rho^\eps(\T^d). 
			\end{multline*}
			The last term in the r.h.s.\ clearly goes to $0$ as $\eps \to 0$, thanks to~\eqref{eq:zz_aux_control_mass_mollifier}. The $\limsup$ of the first term is controlled by $E(\rho,m,\zeta)$ thanks to~\eqref{eq:energy_estimate_intermediate_triple}. So in view of~\eqref{eq:regularization_intermediate_limsup}, the last thing that we have to prove is
			\begin{equation*}
			\int_{-\eps^2}^0  \int \frac{1}{2} | \tilde{v}^\eps(t,x)|^2 \D \tilde \rho^\eps(t,x) \\
			= \frac{\eps^2\nu^2}{8}\int_{\T^d} |\nabla \log \tilde \rho^\eps_0|^2 \D \tilde \rho^\eps_0\underset{\eps \to 0}{\longrightarrow}0.
			\end{equation*}
			where the equality comes from the definitions given in Step~\ref{step:extension}. We use again the Li-Yau inequality~\eqref{eq:LiYau} which yields that $\int| \nabla \log \tilde \rho^\eps_0|^2\D  \tilde \rho^\eps_0$ is of order $1/\eps$, and hence tends to zero when multiplied by the prefactor $\eps^2\nu^2 /8$, and the result follows. 
		\end{stepi}
		
	\end{proof}
	
	\section{The branching Brownian motion}
	\label{sec:presentation_BBM}
	In this section, we introduce the branching Brownian motion (BBM) described heuristically in the introduction. The definition of this object is given in Subsection~\ref{subsec:def_BBM}. Then in Subsection~\ref{subsec:prop_BBM}, we present its fundamental properties: it is Markov and branching, and we compute its generator. In Subsection~\ref{subsec:evolution_PDE_and_martingale_problem}, we show that these properties let us derive the \emph{evolution equation} of the BBM, and show that the process is characterized by a \emph{martingale problem}. Most of these definitions and proprieties are already well known by specialists and we rather present here the versions of them that are adapted to our context for the sake of completeness. 
	
	\subsection{Definition of the branching Brownian motion}
	\label{subsec:def_BBM}
	
	In the introduction, we explained formally what is the BBM corresponding to a diffusivity parameter $\nu>0$, a branching rate $\lambda>0$, a law of offspring $\boldsymbol{p} \in \P(\N)$ and an initial law $R_0 \in \P(\M_\delta(\T^d))$: starting from an initial configuration of particles drawn according to $R_0$, the particles follow Brownian trajectories of diffusivity $\nu$, and die at rate $\lambda$ while giving birth to $k$ particles with probability~$p_k$.
	
	Let us give a meaning to this idea, and start with the measurable space that we are going to work with.
	
	\paragraph{Canonical space for the BBM}
	We recall that the BBM will be seen as a $\cadlag$ process with values in the set $\M_+(\T^d)$ of all nonnegative finite Borel measures on $\T^d$. In the following, we denote by $M: \M_+(\T^d) \to \M_+(\T^d)$ the identity map. Observe that a given law $R_0 \in \P(\M_+(\T^d))$ is characterized by the values of its \emph{Laplace transform}:
	\begin{equation*}
	\E_{R_0} \Big[ \exp \Big( \cg \varphi, M \cd \Big) \Big], \qquad \varphi:\T^d \to \R \ \mbox{measurable}.
	\end{equation*}
	To characterize $R_0$, it is enough to stick to $\varphi$ nonpositive, but we will sometimes need to plug $\varphi$ taking positive values. The Laplace transform is still defined in the latter case but may take the value $+ \infty$.
	
	More precisely, the BBM will be a process with values in $\M_\delta(\T^d)$, the subset of $\M_+(\T^d)$ consisting of all the sums of a finite number of unitary Diracs. If $\mu \in \M_\delta(\T^d)$, we denote by $\Supp(\mu)$ the multiset defined as the support of $\mu$ counted with multiplicity, that is such that
	\begin{equation*}
	\mu = \sum_{x \in \Supp \mu} \delta_x.
	\end{equation*} 
	With this notation, depending on the circumstances, we will sometimes choose to write the Laplace transform in terms of $u = \exp(\varphi)$, through the correspondence
	\begin{equation*}
	\E_{R_0} \Big[ \exp \Big( \cg \varphi, M \cd \Big) \Big] = \E_{R_0}\Bigg[ \prod_{X \in \Supp(M)} u(X) \Bigg].
	\end{equation*}
	
	The BBM will be a law on the following space:
	
	\begin{Def}[Canonical space and related objects]
		\label{def:canonical_space}
		\begin{itemize}
			\item We call
			\begin{equation*}
			\Omega := \cadlag([0,1]; \M_+(\T^d)).
			\end{equation*}
			This is a Polish space when endowed with the Skorokhod topology (see~\cite[Section~12]{billingsley1999convergence}).
			
			\item We call $\mathcal{F}$ the corresponding Borel $\sigma$-algebra.
			
			\item We denote by $(M_t)_{t \in [0,1]}$ the canonical process on the space, \emph{i.e.}\ for all $t \in [0,1]$, we call $M_t$ the evaluation map on $\cadlag([0,1]; \M_+(\T^d))$, at time $t$. We also call $M_{t-} := \lim_{s \nearrow t} M_s$ the value of the left limit of the curve at time $t$ if $t\in(0,1]$, extended by $M_{0-} := M_0$.
			\item We define $(\mathcal{F}_t)_{t \in [0,1]}$ as the filtration generated by this process, that is for all $t \in [0,1]$, $\F_t := \sigma(M_s, \, s \in [0,t])$. Notice that in this framework, $\mathcal{F}$ coincides with $\F_1$ (see~\cite[Theorem~12.5]{billingsley1999convergence}).
		\end{itemize}
	\end{Def}
	
	\paragraph{Parameters of the BBM}
	
	Let us make three observations concerning the parameters of the BBM:
	\begin{itemize}
		\item As already explained in Remark~\ref{rem:q_instead_of_p}, we will always chose to work with $\boldsymbol{q} := \lambda \boldsymbol{p} \in \M_+(\N)\backslash\{0\}$ instead of $\lambda>0$ and $\boldsymbol{p} \in \mathcal{P}(\N)$ separately. From $\boldsymbol{q}$, it is possible to recover $\lambda$ and $\boldsymbol{p}$ thanks to the formula:
		\begin{equation}
		\label{eq:def_lambda_p}
		\lambda_{\boldsymbol q} := \boldsymbol q(\N), \qquad \boldsymbol{p}_{\boldsymbol q} := \frac{\boldsymbol q}{\lambda_{\boldsymbol q}}.
		\end{equation}
		By standard considerations on exponential times, $q_k := \boldsymbol{q}(\{k\})$ is the rate at which a particle is replaced by $k$ particles.
		
		Actually, one can observe that we could even chose $\boldsymbol q = 0$, which corresponds to non-branching Brownian particles. In this case, we set $\lambda_{\boldsymbol q} := 0$ and, for instance, $\boldsymbol p_{\boldsymbol{q}} = \delta_0$. From now on, this possibility is allowed. However, to avoid particular cases in our presentation, we will most of the time make the unnecessary assumption that $\boldsymbol q \neq 0$.
		\item If $q_1 \neq 0$, then there can be some branching events when a particle is replaced by one particle. But if we do not care about the labeling of the particles, this is exactly the same as if no branching event had occurred at all. Hence, without loss of generality, we will always assume that $p_1= 0$.
		\item Under the assumption that $\boldsymbol{q}$ has a finite first order moment:
		\begin{equation}
		\label{eq:q_finite_mean}
		\sum_k k q_k < + \infty,
		\end{equation} 
		it is not hard to see that for all $t \geq 0$, the number of branching events occurring before time $t$ is \emph{a.s.}\ finite, and therefore the evolution of the BBM remains well defined. Actually, this property would still be satisfied under a weaker assumption given in~\cite{schuh1982sums}. We decided to stick to the easy case where the average number of particles remains finite along the process (at least when it is finite at the initial time), which is precisely when~\eqref{eq:q_finite_mean} holds, see Corollary~\eqref{cor:evolution_density} below.
	\end{itemize}
	We end up with the following definition of what we call an \emph{branching mechanism}.
	\begin{Def}
		\label{def:branching_mechanism}
		We say that a measure $\boldsymbol q \in \M_+(\N)$ is a branching mechanism provided $q_1 = 0$ and the bound~\eqref{eq:q_finite_mean} holds.
	\end{Def}

	Now, we aim to define the BBM corresponding to a given diffusivity $\nu>0$, a given branching mechanism $\boldsymbol q$ and a given initial law $R_0 \in \P(\M_\delta(\T^d))$. To do so, the standard method, explained for instance in~\cite{pain2019these} or~\cite[Chapter~4]{li2010measure}, consists in first drawing a tree using Neveu's formalism~\cite{neveu1986arbres}, then drawing a branching time for each node, and finally plugging a Brownian trajectory on each branch. 
	
	We reproduce almost exactly what is done in~\cite{pain2019these}, adapting it very slightly in order to allow more than one particle at the initial time (so that we define the set of forests of a finite number of trees, and not the set of trees). 
	
	\paragraph{Set of forests with a finite number of trees}
	We first define the set that we will use to label the particles.
	\begin{Def}
		We call $\mathcal{U} := \bigcup_{k \in \N^*} (\N^*)^k$, the set of finite sequences of natural numbers.
	\end{Def}
	
	Here is the interpretation of this definition:  $n = n(1) \dots n(k) \in \mathcal U$ represents the $n(k)$-th descendant of the $n(k-1)$-th descendant of the ... of the $n(2)$-th descendant of the $n(1)$-th initial particle, see Figure~\ref{fig:convention}. 
	
	This set comes with a natural partial ordering defined as such. 
	\begin{Def}
		\label{def:partial_ordering}
		If $n = n(1)\dots n(k) \in \mathcal{U}$, and $l \in \{ 1, \dots, k\}$, we call $n_l := n(1)\dots n(l)$, and for all $n \in \mathcal{U}$, we write $n' \leq n$ (resp. $n' < n$) if there is $l \in \{1, \dots, k\}$ (resp. $l \in \{1, \dots, k-1 \}$) such that $n'=n_l$, that is, if $n'$ is an ancestor of $n$. Observe that for all $n \in \mathcal{U}$, $n_1$ stands for the original ancestor of particle $n$.
	\end{Def}
	
	Now, we define forests as follows.
	\begin{Def}
		\label{def:forest}
		A forest with a finite number of tress $\mathcal T$ is a subset of $\mathcal U$ that satisfies:
		\begin{itemize}
			\item The set of  $j \in \N^*$ such that $j \in \mathcal T$ is either empty or of the form $\{ 1, \ldots, k \}$, for some $k \in \N^*$.
			\item For all $n \in \mathcal T$, the set of $j \in \N^*$ such that $nj \in \mathcal T$ is either empty or of the form $\{ 1, \ldots, k \}$, for some $k \in \N^*$.
			\item For all $n \in \mathcal T$ and $n' \leq n$, we have $n' \in \mathcal T$.
		\end{itemize} 
	\end{Def}
	
	\paragraph{BBM with deterministic initial configuration}
	In this paragraph, we choose a family of initial positions $\xx = (x_1, \dots, x_p) \in \cup_{k \in \N} (\T^d)^k$, and we define the BBM of diffusivity $\nu>0$ and branching mechanism $\boldsymbol q\neq 0$, starting from the deterministic initial configuration $\mu = \delta_{x_1} + \dots + \delta_{x_p}$. It corresponds to the case where $R_0 = \delta_\mu$.
	
	Let us give ourselves an abstract probability space $(\Omega^\xx, \F^\xx, \mathbb{P}^\xx)$. We write $\Em_\xx$ for the expectation w.r.t.~$\Prob^\xx$. To lighten the formulas, we will denote with the same notations the random variables that we define on the different probability spaces, labeled by $\xx$. We suppose that $\Omega^\xx$ is endowed with a sequence $(L_n,e_n,Y_n)_{n \in \mathcal U}$ of independent random variables where for all $n \in \mathcal U$:
	\begin{itemize}
		\item $L_n$ has its values in $\N$, follows the law $\boldsymbol{p}_{\boldsymbol q}$ from formula~\eqref{eq:def_lambda_p}, and represents the number of descendants of particle $n$;
		\item $e_n$ has its values in $\R_+$, follows an exponential law of parameter $\lambda_{\boldsymbol q}$ from formula~\eqref{eq:def_lambda_p}, and represents the lifetime of particle $n$;
		\item $Y_n$ has its values in the set of continuous curves $C(\R_+; \T^d)$, follows the law of the Brownian motion of diffusivity $\nu$ starting from $0$, and represents the trajectory of particle $n$.
	\end{itemize}
	
	From $(L_n)_{n \in \mathcal U}$, we can define a corresponding random forest as follows.
	\begin{Def}
		We define $\mathcal{T}$ recursively by $\mathcal{T}_0 := \{1, \dots, p \}$ and for all $k \geq 0$:
		\begin{equation*}
		\mathcal{T}_{k+1} := \Big\{ nj \, : \, n \in \mathcal{T}_k, \,  j \in \{ 1, \ldots, L_n \} \Big\},
		\end{equation*}
		and finally $\mathcal{T} := \bigcup_{k \in \N} \mathcal{T}_k$.
	\end{Def} 
	It is easy to check that the random object $\mathcal T$ has its values in the set of forests with a finite number of trees, as defined in Definition~\ref{def:forest}.
	
	Next, we define the moments of birth and death of each particle and the set of particles alive at time $t \geq 0$ using the random variables $(e_n)_{n \in \mathcal U}$. We also define the position of these particles, using the random variables $(Y_n)_{n \in \mathcal U}$.
	\begin{Def}
		\begin{itemize}
			\item For a particle $n$ belonging to the forest $\mathcal{T}$, we call 
			\begin{equation*}
			b_n := \sum_{q < n} e_q \qquad \mbox{and} \qquad d_n := b_n + e_n,
			\end{equation*}
			the moment of birth and the moment of death of $n$ respectively. For $n \notin \mathcal T$, we set $b_n = d_n := + \infty$.
			
			\item The set of particles alive at time $t \geq 0$ is the set
			\begin{equation*}
			\mathcal{N}^0(t) := \Big\{ n \in \mathcal{U} \, : \, b_n \leq t < d_n \Big\}.
			\end{equation*}
			
			\item For a given $t \geq 0$ and $n \in \mathcal{N}^0(t)$, the position of the particle $n$ at time $t$ is given by
			\begin{equation*}
			X_n(t) := x_{n_1} + \sum_{p < n} Y_p(e_p) + Y_n(t-b_n).
			\end{equation*}
		\end{itemize}
	\end{Def}
	
	\begin{Conv}
		\label{conv:trajectory}
		As it is practical in several situations, we also define $X_n(t)$ for $t\geq 0$ outside of $[b_n,d_n)$ as follows:
		\begin{itemize}
			\item If $t \geq d_n$, we set $X_n(t) := X_n(d_n)$.
			\item If $n_1>p$, the initial number of particles, that is if $n$ has no ancestor in $\mathcal T$, we set $X_n(t) = 0$.
			\item If $n_1 \leq p$ and $t < b_n$, then $n$ has an ancestor in the set $\cup_{s \leq t} \mathcal N^0(s)$ of all particles that has been alive before time $t$. We call $X_n(t)= X_{n'}(t)$, where $n'$ is the biggest of these ancestors, for the partial ordering defined in Definition~\ref{def:partial_ordering}.
		\end{itemize}
		In plain English, before its birth a particle is located at the position of its biggest ancestor alive at time $t$, and after its death we locate the particle where it died. The subtlety is for those particle which will never be born, and which have no ancestor alive at time $t$. For those, we choose the dying position of its last ancestor that has been alive, see Figure~\ref{fig:convention}. 
	\end{Conv}
	\begin{figure}
		\begin{center}
			\begin{tikzpicture}[scale = 1]
			\begin{axis}[
			xlabel={Time},
			ylabel={Space},
			xmin=0, xmax=1,
			ymin = 0.1, ymax = 0.8]
			\addplot[color=blue,line width = 0.7pt] table [x=t1, y=x1]{data_BBM.txt};
			\addplot[color=blue,line width = 0.7pt] table [x=t2, y=x2]{data_BBM.txt};
			\addplot[color=blue,line width = 0.7pt] table [x=t3, y=x3]{data_BBM.txt};
			\addplot[color=blue,line width = 0.7pt] table [x=t4, y=x4]{data_BBM.txt};
			\addplot[color=blue,line width = 0.7pt] table [x=t5, y=x5]{data_BBM.txt};
			\addplot[color=blue,line width = 0.7pt] table [x=t6, y=x6]{data_BBM.txt};
			\addplot[color=blue,line width = 0.7pt] table [x=t7, y=x7]{data_BBM.txt};
			\end{axis}
			\draw[thick, dashed] (5.9,5.7) -- (5.9,0);
			\draw[thick] (5.9,.05) -- (5.9,-.05) node[below=-1pt]{$t$};
			\draw (.8,3.6) -- (.8,4) -- (1.4,4) -- (1.4,3.6) -- cycle;
			\draw (1.11,3.8) node{$2$}; 
			\draw (.7,.3) -- (.7,.7) -- (1.3,.7) -- (1.3,.3) -- cycle;
			\draw (1.01,.5) node{$1$};
			\draw (2.4,2.1) -- (2.4,2.5) -- (3,2.5) -- (3,2.1) -- cycle;
			\draw (2.71,2.3) node{$22$};
			\draw (2.6,4) -- (2.6,4.4) -- (3.2,4.4) -- (3.2,4) -- cycle;
			\draw (2.91,4.2) node{$21$};
			\draw (6.1,2.5) -- (6.1,2.9) -- (6.7,2.9) -- (6.7,2.5) -- cycle;
			\draw (6.41,2.7) node{$212$};
			\fill[white] (6.5,4.2) -- (6.5,4.6) -- (7.1,4.6) -- (7.1,4.2) -- cycle;
			\draw (6.5,4.2) -- (6.5,4.6) -- (7.1,4.6) -- (7.1,4.2) -- cycle;
			\draw (6.81,4.4) node{$213$};
			\draw (5.1,4.3) -- (5.1,4.7) -- (5.7,4.7) -- (5.7,4.3) -- cycle;
			\draw (5.41,4.5) node{$211$};
			\draw[color=red] plot[mark=*,mark size=3.9pt] (5.9,4.5);
			\draw[color=orange] plot[mark=triangle*,mark size=3.9pt] (2.2,2.9);
			\end{tikzpicture}
		\end{center}
		\caption{We took back Figure~\ref{fig:example_BBM}, and specified the labels of the particles. According to our conventions, at time $t$, $X_{211}(t) = \textcolor{red}{\bullet}$ (the particle is alive), $X_2(t) = \textcolor{orange}{\blacktriangle}$ (the particle is already dead), $X_{2115}(t) = \textcolor{red}{\bullet}$ (the particle is not born yet, or more precisely, we do not know yet at time $t$ if it will be born one day or not), and $X_{232}(t) = \textcolor{orange}{\blacktriangle}$ (we already know at time $t$ that the particle will never be born).}
		\label{fig:convention}
	\end{figure}

	The following existence proposition, which follows from~\cite[Proposition~4.1]{li2010measure}, is essentially a consequence of the bound~\eqref{eq:q_finite_mean}.
	\begin{Prop}
		\label{prop:existence_BBM}
		Let us define for $t \geq 0$ the empirical measure associated with the positions of the particles alive at time $t$: 
		\begin{equation*}
		M^0_t := \sum_{n \in \mathcal{N}^0(t)} \delta_{X_n(t)}.
		\end{equation*}
		The measured valued curve $M^0 = (M^0_t)_{t \in [0,1]}$ is almost surely $\cadlag$ for the topology of weak convergence. Otherwise stated, almost surely, $M^0 \in \Omega$, the canonical space defined in Definition~\ref{def:canonical_space}.
		
		Also, its law only depends on $\mu = \delta_{x_1} + \dots + \delta_{x_p}$, and not on the labeling of $x_1, \dots, x_p$.
	\end{Prop}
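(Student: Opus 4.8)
The plan is to verify the two assertions separately: (i) that the measure-valued curve $M^0$ is almost surely \cadlag, and (ii) that its law depends only on $\mu$ and not on the labelling of $x_1,\dots,x_p$. The main tool is the bound~\eqref{eq:q_finite_mean}, which controls the growth of the total population.

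First I would prove that almost surely, on the time interval $[0,1]$, only finitely many branching events occur. The key is to track $N(t) := M^0_t(\T^d) = \#\mathcal N^0(t)$, the number of particles alive at time $t$. Using the independence of the $(L_n, e_n)_{n \in \mathcal U}$, one computes $\Em_\xx[N(t)]$ by a first-moment argument: conditioning on the number of particles present, each particle is replaced at rate $\lambda_{\boldsymbol q}$ by a random number of offspring of mean $m := \lambda_{\boldsymbol q}^{-1}\sum_k k q_k = \sum_k k\,\boldsymbol p_{\boldsymbol q}(\{k\}) < +\infty$ thanks to~\eqref{eq:q_finite_mean}. A standard branching-process computation (Gronwall, or directly solving the ODE $\frac{d}{dt}\Em_\xx[N(t)] = \lambda_{\boldsymbol q}(m-1)\Em_\xx[N(t)]$) gives $\Em_\xx[N(t)] = p\, e^{\lambda_{\boldsymbol q}(m-1)t} < +\infty$ for every $t$, so in particular $N(t) < +\infty$ almost surely for each $t$, and by monotonicity $\sup_{t \in [0,1]} N(t) < +\infty$ almost surely (this supremum is itself dominated by the number of particles ever born before time $1$, whose expectation is again finite by the same moment bound integrated in time). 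Consequently the set $\mathcal N^{\le 1} := \bigcup_{s \le 1}\mathcal N^0(s)$ of particles alive at some point before time $1$ is almost surely finite, hence there are almost surely only finitely many branching times in $[0,1]$. This is the step I expect to be the only real content: everything else is bookkeeping, and it is exactly ``a consequence of the bound~\eqref{eq:q_finite_mean}'' as the statement advertises (and is the import of~\cite[Proposition~4.1]{li2010measure}).

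Granting this, I would argue \cadlag-regularity pathwise. On the complement of the finitely many branching times, each alive particle $n$ moves along the continuous trajectory $t \mapsto X_n(t)$ (a shifted Brownian path, continuous by construction), and the set $\mathcal N^0(t)$ is locally constant there; hence $t \mapsto M^0_t = \sum_{n \in \mathcal N^0(t)}\delta_{X_n(t)}$ is continuous for the topology of weak convergence on each open interval between consecutive branching times. At a branching time $t_0$, finitely many particles die and are replaced; since the offspring are born at the death position of the parent, the left limit $M^0_{t_0-}$ exists and the map is right-continuous at $t_0$ by our convention that $X_n$ is right-continuous. Thus $M^0$ has left limits everywhere and is right-continuous, i.e.\ $M^0 \in \Omega = \cadlag([0,1];\M_+(\T^d))$ almost surely.

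Finally, for the labelling-independence of the law of $M^0$: permuting $x_1,\dots,x_p$ amounts to relabelling the initial particles, which induces a measure-preserving bijection on the index set $\mathcal U$ (permute the first coordinate $n(1)$) under which the i.i.d.\ family $(L_n,e_n,Y_n)_{n \in \mathcal U}$ has the same joint law. The empirical measure $M^0_t = \sum_{n \in \mathcal N^0(t)}\delta_{X_n(t)}$ is invariant under this relabelling because it forgets the indices: it only records the multiset of positions. Hence the pushforward of $\Prob^\xx$ under $M^0$ is unchanged, which gives the claim. One then \emph{defines} $\BBM(\nu,\boldsymbol q,\delta_\mu)$ to be this common law on $\Omega$, and the general case $\BBM(\nu,\boldsymbol q,R_0)$ by mixing over $\mu \sim R_0$.
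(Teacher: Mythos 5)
The paper does not actually give a proof of Proposition~\ref{prop:existence_BBM}; it simply cites \cite[Proposition~4.1]{li2010measure} and notes the result is ``essentially a consequence of the bound~\eqref{eq:q_finite_mean}''. Your proposal fills in the standard argument that the reference would supply, and it is correct: the finiteness of the number of branching events up to time $1$ via a first-moment bound, the pathwise regularity argument between and at branching times, and the symmetry argument for the law.

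Two small points of care. First, your phrase ``by monotonicity $\sup_{t\in[0,1]}N(t) < +\infty$'' reads as if $N(t)$ were monotone, which it is not (particles die); the parenthetical immediately after it fixes this by bounding $N(t)$ by the cardinality of $\{n : b_n \le 1\}$, which \emph{is} nondecreasing in the time horizon, so you should phrase the argument in those terms from the start. Second, when you write ``directly solving the ODE $\frac{d}{dt}\Em_\xx[N(t)] = \lambda_{\boldsymbol q}(m-1)\Em_\xx[N(t)]$'' you are already assuming $\Em_\xx[N(t)] < +\infty$ in order to differentiate, which is exactly what must be established; the honest argument is the truncation-by-generation version you also mention, where one sets $N_k(t) :=$ number of particles of generation $\le k$ alive at time $t$, proves a Gronwall estimate for $\Em_\xx[N_k(t)]$ uniform in $k$, and lets $k\to\infty$ by monotone convergence. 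Since you signal awareness of both of these, I take your plan to be complete up to routine expansion, and in line with the cited reference.
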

	
	Hence, we can define the BBM $R^\mu$ starting from the deterministic initial measure $\mu$ as the law of this random variable $M^0$.
	
	\begin{Def}
		\label{def:BBM_deterministic_R0}
		Let $\mu \in \M_\delta(\T^d)$. We define the BBM of parameters $\nu$ and $\boldsymbol q$, and starting from the deterministic initial configuration $\mu$, as the law $R^\mu \in \P(\Omega)$ of $M^0$ defined in Proposition~\ref{prop:existence_BBM}. 
		
		In order to stick with standard notations in the theory of Markov processes, in this very case where the initial configuration is deterministic, we will use the notation $\Em_\mu$ instead of $\E_{R^\mu}$ for the expectation w.r.t.~$R^\mu$. Hence, in this case, the dependence of these expectations w.r.t.~$\nu$ and $\boldsymbol q$ is implicit.
	\end{Def}
	
	\paragraph{BBM with general initial law}
	Now, the diffusivity $\nu>0$ and the branching mechanism $\boldsymbol q\neq 0$ are still fixed, and we consider a general initial law $R_0 \in \P(\M_\delta(\T^d))$. 
	\begin{Def}
		\label{def:BBM_general_R0}
		The BBM $R \sim \BBM(\nu, \boldsymbol{q}, R_0)$ is
		\begin{equation*}
		R := \int R^{\mu} \D R_0(\mu) = \E_{R_0}\left[R^M\right].
		\end{equation*}
		That is, with our notations, for every random variable $Z$ on $\Omega$ for which the expectation exists, we have
		\begin{equation}
		\label{eq:computation_expectation}
		\E_R[Z] = \E_{R_0}\Big[ \Em_M[Z] \Big].
		\end{equation}
	\end{Def}
	
	\subsection{First properties of the branching Brownian motion}
	\label{subsec:prop_BBM}
	Here, we give without proofs some classical properties of the BBM. Here again, $\nu$ and $\boldsymbol q\neq 0$ are fixed for the whole subsection.
	\paragraph{Correspondence between $\boldsymbol{\Omega}^\xx$ and $\boldsymbol{\Omega}$} Some of the quantities that we used to define the BBM on the space $\Omega^\xx$ can be seen to be measurable w.r.t.~the $\sigma$-algebra generated by the process $M^0$, and hence have corresponding equivalents in the canonical space $\Omega$. This correspondence will be very useful in the proofs of the properties given in the next subsection.
	\begin{Def}
		\label{def:N_S}
		Let $\omega \in \Omega$ be a $\cadlag$ measure-valued curve and $t \in [0,1]$. We call
		\begin{gather*}
		\mathcal N(\omega,t) := \Supp\omega(t) \subset \T^d, \quad \mbox{the support of }\omega(t)\mbox{ counted with multiplicity},\\
		S(\omega,t) := \inf\{ s > t \ : \ \omega(s) \neq \omega(s-) \}, \quad \mbox{the instant of the first jump of }\omega\mbox{ after time }t,
		\end{gather*}
		where $\inf \emptyset := 1$.
	\end{Def}
	Correspondingly with the use in probability theory, when we work on the canonical space $\Omega$, we will omit the dependence of these objects w.r.t.~$\omega$. The following proposition is a direct consequences of the definitions given in the previous subsection, and we omit the proof.
	
	\begin{Prop}
		\label{prop:correspondence_N_S}
		Let $\xx \in \cup_k (\T^d)^k$. For all $t \in [0,1]$, we have $\Prob^\xx$-\emph{a.s.}
		\begin{gather*}
		\mathcal N(M^0,t) = \Big\{ X_n(t)\ : \ n \in \mathcal N^0(t) \Big\},\\
		S(M^0,t) = \inf\Big\{ d_n \ : \ n \in \mathcal N^0(t) \Big\}.
		\end{gather*}
		where the first equality is understood in the sense of multisets, that is, allowing multiple instances.
	\end{Prop}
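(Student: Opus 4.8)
The plan is to trace both identities back to the explicit construction carried out on $(\Omega^\xx,\F^\xx,\Prob^\xx)$ in Subsection~\ref{subsec:def_BBM}, using throughout that, by~\eqref{eq:q_finite_mean} and Proposition~\ref{prop:existence_BBM}, the forest $\mathcal{T}$ is $\Prob^\xx$-a.s.\ finite, so that all the sets below are finite and $M^0\in\Omega$.

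For the first identity there is essentially nothing to prove: by definition $M^0_t=\sum_{n\in\mathcal{N}^0(t)}\delta_{X_n(t)}$, and the support counted with multiplicity of a finite sum of unit Diracs $\sum_i\delta_{y_i}$ is, by definition, the multiset $\{y_i\}_i$; applying this with $\{X_n(t):n\in\mathcal{N}^0(t)\}$ gives $\mathcal{N}(M^0,t)=\Supp M^0_t=\{X_n(t):n\in\mathcal{N}^0(t)\}$ as multisets, which is the claim.

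For the second identity I would first show that, $\Prob^\xx$-a.s., the jump times of $s\mapsto M^0_s$ are exactly $\{d_n:n\in\mathcal{T}\}$, and then identify the smallest one lying in $(t,1]$ with $\inf\{d_n:n\in\mathcal{N}^0(t)\}$. The first step splits in two. (i) Off the finite set $\{d_n:n\in\mathcal{T}\}$ the index set $\mathcal{N}^0(\cdot)$ is locally constant and each $X_n(\cdot)$ follows a continuous (Brownian) trajectory, so $s\mapsto M^0_s$ is weakly continuous there and every jump time belongs to $\{d_n:n\in\mathcal{T}\}$. (ii) Conversely, each $d_n$ with $n\in\mathcal{T}$ is a genuine jump time: a.s.\ the finitely many $d_n$ are pairwise distinct (the lifetimes $e_n$ have densities), so at $d_n$ the configuration $M^0$ changes only by removing $\delta_{X_n(d_n)}$ and inserting $L_n$ copies of $\delta_{X_n(d_n)}$, i.e.\ the mass of $M^0$ at $X_n(d_n)$ jumps by $L_n-1$, which is a.s.\ nonzero since the standing assumption $p_1=0$ forces $L_n\neq 1$. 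To pick out the first jump after $t$, I would use the ancestor chain: given $n\in\mathcal{T}$ with $d_n>t$, either $b_n\le t$ and then $n\in\mathcal{N}^0(t)$, or $b_n>t$ and, since the roots are born at time $0$, there is a largest ancestor index $l$ with $b_{n_l}\le t$, whence $b_{n_l}\le t<b_{n_{l+1}}=d_{n_l}$, so $n_l\in\mathcal{N}^0(t)$ with $d_{n_l}=b_{n_{l+1}}\le b_n\le d_n$. Together with the trivial inclusion $\mathcal{N}^0(t)\subseteq\{n\in\mathcal{T}:d_n>t\}$, this gives $\inf\{d_n:n\in\mathcal{T},\ d_n>t\}=\inf\{d_n:n\in\mathcal{N}^0(t)\}$, and hence $S(M^0,t)=\inf\{d_n:n\in\mathcal{N}^0(t)\}$ a.s., the convention $\inf\emptyset:=1$ (and, more generally, truncation at the horizon $1$) absorbing the boundary case where $M^0$ has no jump left after $t$.

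The only point that is not pure unwinding of definitions is step (ii) — that no branching event is "invisible" in $M^0$ — and this is precisely where one uses $p_1=0$ together with the a.s.\ absence of simultaneous branching events; I expect this to be the single spot a reader would want to check carefully, everything else following directly from the definitions of $\mathcal{N}^0(t)$, $X_n(\cdot)$, $b_n$ and $d_n$.
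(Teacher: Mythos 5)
Your proof is correct and takes the only reasonable route: the paper explicitly omits the argument, stating the proposition is ``a direct consequence of the definitions given in the previous subsection,'' and your proof is precisely that unwinding. The first identity is pure notation (you identify $\Supp M^0_t$ via the defining formula for $M^0_t$). For the second, you correctly isolate the one point that is not completely formal: that every $d_n$ with $n\in\mathcal T$ is a \emph{visible} jump of $s\mapsto M^0_s$, which rests on the standing assumption $p_1=0$ (Definition~\ref{def:branching_mechanism}) to force $L_n\neq 1$, plus the a.s.\ distinctness of the $d_n$ (sums of independent exponentials, so pairwise-equal with probability zero) so that no two branching events can cancel. Your ancestor-chain argument showing $\inf\{d_n:n\in\mathcal T,\,d_n>t\}=\inf\{d_n:n\in\mathcal N^0(t)\}$, via $b_{n_{l+1}}=d_{n_l}$ and the fact that the roots are born at time $0$, is exactly right. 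You also correctly flag the boundary/horizon convention needed if no jump survives in $(t,1]$; that caveat is consistent with the $\inf\emptyset:=1$ convention in Definition~\ref{def:N_S} and with the only ways the paper actually invokes the proposition (always after a $\wedge t$ or through the event $\{S(0)>t\}$).
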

	
	\paragraph{Strong Markov property} The BBM satisfies the strong Markov property. We formulate this Markov property in a non-completely standard way but which is easily implied by the classical versions stated for instance~\cite{ethier1986markov}.
	\begin{Prop}
		\label{prop:strong_markov}
		Let $R \sim \BBM(\nu, \boldsymbol q, R_0)$ be a BBM, $T$ be a stopping time on $\Omega$ with respect to $(\F_t)_{t \in [0,1]}$, and $t \in [0,1]$. Assume that $T \leq t$, $R$-\emph{a.s.} Let $F: \M_+(\T^d) \to \R_+ \cup\{ + \infty \}$ be measurable, and call
		\begin{equation*}
		\Gamma(s,\mu) := \Em_\mu[F(M_s)], \qquad s \in [0,1], \, \mu \in \M_\delta(\T^d).
		\end{equation*}
		Then, $R$-\emph{a.s.}
		\begin{equation*}
		\E_R\Big[ F(M_t) \Big| \F_{T} \Big] =\Gamma(t-T, M_T),
		\end{equation*} 
		where $\F_{T}$ is the $\sigma$-algebra of the past before time $T$. If $R_0 = \delta_\mu$ for some $\mu \in \M_\delta(\T^d)$, this rewrites $R^\mu$-\emph{a.s.}
		\begin{equation*}
		\Em_\mu\Big[ F(M_t) \Big| \F_{T} \Big] =\Gamma(t-T, M_T).
		\end{equation*}
	\end{Prop}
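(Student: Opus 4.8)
The plan is to reduce the statement to the case of a deterministic initial configuration, then to establish the ordinary time-homogeneous Markov property directly from the particle construction, and finally to upgrade it to the strong Markov property by approximating $T$ from above by dyadic stopping times. For the reduction, recall from Definition~\ref{def:BBM_general_R0} and~\eqref{eq:computation_expectation} that $R = \E_{R_0}[R^M]$, while the function $\Gamma(s,\mu) = \Em_\mu[F(M_s)]$ does not involve $R_0$. Hence it suffices to prove that for every $\mu \in \M_\delta(\T^d)$ and every $A \in \F_T$ one has $\Em_\mu[\1_A F(M_t)] = \Em_\mu[\1_A \Gamma(t-T, M_T)]$: averaging this identity against $R_0$ yields the same identity under $R$, and since $T$ and $M_T$ are $\F_T$-measurable and $(s,\mu) \mapsto \Gamma(s,\mu)$ is jointly measurable (established below), this is exactly the asserted conditional-expectation formula. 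So we work under $R^\mu$ from now on.

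\textbf{Ordinary Markov property.} Fix $t \in [0,1)$ and $s \in [0,1-t]$, and argue on the space $\Omega^\xx$ of the construction by conditioning on $\F_t$. By Proposition~\ref{prop:correspondence_N_S}, $M_t = \sum_{n \in \mathcal N^0(t)} \delta_{X_n(t)}$. For a particle $n \in \mathcal N^0(t)$, all that $\F_t$ reveals about its future is that $e_n > t - b_n$ and the trajectory $Y_n$ up to time $t - b_n$; by the lack of memory of the exponential law the residual lifetime $d_n - t$ is again $\mathrm{Exp}(\lambda_{\boldsymbol q})$ and independent of $\F_t$, by independence of Brownian increments the remainder of $Y_n$ is a fresh Brownian motion, and the offspring number $L_n$ together with the entire subtree rooted at $n$ is independent of $\F_t$. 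Therefore, conditionally on $\F_t$, the evolution after time $t$ of $n$ and its progeny is a branching Brownian motion issued from $\delta_{X_n(t)}$, and these evolutions are mutually independent over $n \in \mathcal N^0(t)$ since they involve disjoint subfamilies of the i.i.d.\ variables $(L_m, e_m, Y_m)_{m \in \mathcal U}$. Since, directly from the construction, the superposition of independent branching Brownian motions issued from $\mu_1, \mu_2, \dots$ is a branching Brownian motion issued from $\mu_1 + \mu_2 + \cdots$ (the branching property: the subtrees rooted at the initial particles are independent), it follows that conditionally on $\F_t$ the shifted process $(M_{t+s})_{s \geq 0}$ has law $R^{M_t}$, hence $\Em_\mu[F(M_{t+s}) \mid \F_t] = \Gamma(s, M_t)$, $R^\mu$-a.s.

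\textbf{Regularity of $\Gamma$ and strong Markov property.} For $F \in C_b(\M_+(\T^d))$, the map $(s,\mu) \mapsto \Gamma(s,\mu)$ is continuous on $[0,1] \times \M_\delta(\T^d)$: continuity in the time variable at a fixed $s_0$ follows from the \cadlag\ property together with the fact that, $R^\mu$-a.s., no branching event occurs at the deterministic time $s_0$, so $M_s \to M_{s_0}$ $R^\mu$-a.s.\ as $s \to s_0$ and one applies dominated convergence; continuity in $\mu$ holds because weak convergence $\mu_k \to \mu$ in $\M_\delta(\T^d)$ forces, for large $k$, a fixed number of atoms with converging positions, and the construction depends continuously on the initial positions, whence $R^{\mu_k} \to R^\mu$ weakly. (For bounded measurable, and then nonnegative possibly $+\infty$-valued, $F$, joint measurability of $\Gamma$ follows by a functional monotone class argument and monotone convergence.) Now let $T \leq t$ be a stopping time and set $T_n := (2^{-n} \lceil 2^n T \rceil) \wedge t$, which is a stopping time taking countably many values and satisfies $T \leq T_n \leq t$ and $T_n \downarrow T$. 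Fix $A \in \F_T \subseteq \F_{T_n}$ and $F \in C_b(\M_+(\T^d))$; decomposing over the values $v \leq t$ of $T_n$ and applying the ordinary Markov property at each deterministic time $v$ (with $s = t - v$), we get $\Em_\mu[\1_A F(M_t)] = \Em_\mu[\1_A \Gamma(t - T_n, M_{T_n})]$. Letting $n \to \infty$: since $M$ is \cadlag, $M_{T_n} \to M_{T+} = M_T$ $R^\mu$-a.s.\ (the $M_{T_n}$ eventually lying in a fixed compact subset of $\M_\delta(\T^d)$, as $M_T(\T^d) < \infty$), and $t - T_n \to t - T$; by the joint continuity of $\Gamma$ and bounded convergence, $\Em_\mu[\1_A F(M_t)] = \Em_\mu[\1_A \Gamma(t - T, M_T)]$. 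A functional monotone class argument extends this to bounded measurable $F$, and monotone convergence to all measurable $F \geq 0$; undoing the reduction to deterministic $R_0$ completes the proof, the case $R_0 = \delta_\mu$ being the stated special instance.

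\textbf{Main obstacle.} The genuinely delicate point is the upgrade from the ordinary to the strong Markov property, namely the passage to the limit $\Gamma(t - T_n, M_{T_n}) \to \Gamma(t - T, M_T)$: this rests on the joint continuity of $\Gamma$ for bounded continuous $F$, hence on the atomlessness of the branching times and on the continuous dependence of the branching particle system on its initial positions. Alternatively, one may simply invoke the classical strong Markov property of branching particle systems, see e.g.~\cite[Chapter~4]{li2010measure} or~\cite{ethier1986markov}. A secondary subtlety is that the natural filtration $(\F_t)_{t \in [0,1]}$ need not be right-continuous, which is why the limiting argument is carried out at the level of expectations against sets $A \in \F_T$ rather than of conditional expectations.
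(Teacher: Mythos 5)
The paper does not actually prove this proposition; it only remarks that the statement is ``easily implied by the classical versions stated for instance~\cite{ethier1986markov}'' and moves on. Your proof therefore supplies something the paper defers to the literature, and your strategy (reduce to $R_0=\delta_\mu$, establish the ordinary Markov property from the particle construction, then approximate $T$ from above by dyadic stopping times) is the standard and correct way to do it for a quasi-left-continuous jump process whose filtration is not assumed right-continuous. The reduction step, the application of the elementary Markov property at each dyadic value of $T_n$ (using $A\cap\{T_n=v_j\}\in\F_{v_j}$), and the final monotone-class and monotone-convergence extensions are all sound.

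There is, however, one genuine gap: you assert that $(s,\mu)\mapsto\Gamma(s,\mu)$ is \emph{jointly} continuous for $F\in C_b(\M_+(\T^d))$, but what you actually verify is only \emph{separate} continuity --- continuity in $s$ for fixed $\mu$ (via atomlessness of branching times) and continuity in $\mu$ for fixed $s$ (via weak convergence of the laws $R^{\mu_k}$). Separate continuity does not in general imply joint continuity, and the passage to the limit $\Gamma(t-T_n,M_{T_n})\to\Gamma(t-T,M_T)$ really does use joint continuity at the random point $(t-T(\omega),M_{T(\omega)}(\omega))$, where \emph{both} arguments vary with $n$. To close the gap you can couple: for $\mu_k\to\mu$ in $\M_\delta(\T^d)$ one has, for $k$ large, the same particle count $p$ and converging positions, so one may realize all the processes $M^{0,k}$ and $M^0$ on a single space $\Omega^\xx$ using the same family $(L_n,e_n,Y_n)$ and shifting the Brownian trajectories by $x_i^k-x_i\to 0$; then $M^{0,k}_{s_k}\to M^0_s$ pathwise (the jump structure being identical and $s$ being a.s.\ not a jump time), and dominated convergence gives $\Gamma(s_k,\mu_k)\to\Gamma(s,\mu)$. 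With this insertion your proof is complete. (A small aside: the parenthetical remark that the $M_{T_n}$ ``eventually lie in a fixed compact subset'' is not needed for the bounded-convergence step since $F$ is bounded; the convergence $M_{T_n}\to M_T$ in $\M_\delta(\T^d)$ together with joint continuity already suffices.)
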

	\paragraph{Branching property}
	The BBM also satisfies the branching property, meaning that all the particles alive at a certain time have independent evolutions. Because of the Markov property, it suffices to state the branching property at the initial time. We write this property as follows.
	
	\begin{Prop}
		\label{prop:branching_property}
		Let $\mu \in\M_\delta(\T^d)$, $u:\T^d \to \R_+\cup\{+\infty\}$ be a measurable function and $t \in [0,1]$.
		We have
		\begin{equation}
		\label{eq:branching_property_u}
		\Em_\mu\Bigg[ \prod_{X \in \mathcal N(t)} u (X) \Bigg] = \prod_{x \in \Supp \mu} \Em_{\delta_{x}}\Bigg[\prod_{X \in \mathcal N(t)} u ( X) \Bigg],
		\end{equation}
		with convention $0 \times + \infty = 0$.
		
		Equivalently, for all $\varphi:\T^d \to \R \cup \{\pm \infty \}$, calling for $t \in [0,1]$ and $x \in \T^d$
		\begin{equation*}
		\bar \varphi(t,x) := \log \Em_{\delta_x} \Big[ \exp\big( \cg  \varphi, M_t \big) \Big]
		\end{equation*}
		with the convention $(-\infty)+ (+ \infty) = - \infty$, there holds
		\begin{equation}
		\label{eq:branching_property_varphi}
		\Em_\mu\left[ \exp\Big( \cg \varphi, M_t \cd \Big) \right] = \prod_{x \in \Supp \mu} \Em_{\delta_{x}}\Big[\exp\big(\cg \varphi, M_t \cd\big)\Big] = \exp\left(\cg \bar \varphi(t) ,\mu \cd\right).
		\end{equation}
	\end{Prop}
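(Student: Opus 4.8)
The plan is to reduce the statement for a general initial configuration $\mu = \sum_{x \in \Supp\mu} \delta_x$ to the case of a single initial particle, using the construction of the BBM with deterministic initial configuration from Subsection~\ref{subsec:def_BBM}. The key observation is that in that construction, the forest $\mathcal T$ splits as a disjoint union of the subtrees rooted at the initial labels $1, \dots, p$ (where $p = \#\Supp\mu$), and the random variables $(L_n, e_n, Y_n)_{n \in \mathcal U}$ attached to distinct subtrees are, by definition, independent. First I would fix $\mu = \delta_{x_1} + \dots + \delta_{x_p}$ and, for each $j \in \{1, \dots, p\}$, let $\mathcal U^{(j)} := \{ n \in \mathcal U : n_1 = j \}$ be the labels descending from the $j$-th root; write $\mathcal T^{(j)} := \mathcal T \cap \mathcal U^{(j)}$ and $\mathcal N^{0,(j)}(t) := \mathcal N^0(t) \cap \mathcal U^{(j)}$ for the set of particles alive at time $t$ whose ancestor is the $j$-th initial particle. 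By Convention~\ref{conv:trajectory} and the definition of $X_n(t)$, the positions $(X_n(t))_{n \in \mathcal N^{0,(j)}(t)}$ depend only on $x_j$ and on $(L_n, e_n, Y_n)_{n \in \mathcal U^{(j)}}$, hence the $p$ families of particle systems are mutually independent, and the $j$-th one has the law of a BBM started from the single particle $\delta_{x_j}$.

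Next I would decompose the product over the multiset $\mathcal N(t) = \mathcal N(M^0, t)$. By Proposition~\ref{prop:correspondence_N_S}, $\Prob^{\xx}$-a.s.\ we have $\mathcal N(M^0,t) = \{ X_n(t) : n \in \mathcal N^0(t) \}$ as multisets, and since $\mathcal N^0(t) = \bigsqcup_{j=1}^p \mathcal N^{0,(j)}(t)$ this yields
\begin{equation*}
\prod_{X \in \mathcal N(t)} u(X) = \prod_{j=1}^p \ \prod_{n \in \mathcal N^{0,(j)}(t)} u(X_n(t)).
\end{equation*}
Taking expectations and using the independence of the $p$ subsystems established above, together with the fact that the $j$-th factor has the law of $\prod_{X \in \mathcal N(t)} u(X)$ under $\Em_{\delta_{x_j}}$, gives exactly~\eqref{eq:branching_property_u}. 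The convention $0 \times (+\infty) = 0$ handles the case where $u$ vanishes somewhere and one factor is $+\infty$: if any $j$-th factor is $0$ the left side is $0$, and otherwise all factors are in $[0,+\infty]$ and Tonelli applies to the nonnegative integrand, so the expectation of the product factorizes without integrability concerns. Finally, the equivalence with~\eqref{eq:branching_property_varphi} is the substitution $u = \exp(\varphi)$, using $\prod_{X \in \Supp\mu} u(X) = \exp(\cg \varphi, M \cd)$ as recorded before Definition~\ref{def:canonical_space}, together with the identity $\prod_{x \in \Supp\mu} \exp(\bar\varphi(t,x)) = \exp(\cg \bar\varphi(t), \mu \cd)$ and the convention $(-\infty) + (+\infty) = -\infty$ (which matches $0 \times (+\infty) = 0$ after taking logarithms).

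The only real care needed is bookkeeping rather than a genuine obstacle: one must check that Convention~\ref{conv:trajectory} for the positions of not-yet-born or dead particles does not create spurious dependencies between the subtrees. This is immediate once one notes that the convention only ever refers a label $n$ to its \emph{own} ancestors, all of which lie in $\mathcal U^{(n_1)}$, so no cross-subtree coupling is introduced. Everything else is Tonelli for nonnegative random variables plus the independence built into the construction, so no measurability subtlety beyond what Proposition~\ref{prop:existence_BBM} already guarantees arises.
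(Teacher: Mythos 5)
The paper does not give a proof of this proposition (Subsection~2.2.2 opens with ``we give without proofs some classical properties of the BBM''), but the remark that follows it sketches precisely the decomposition you use: the subtree measures $M^i_t$ indexed by the initial particle, whose mutual independence and identification with single-particle BBMs is the content of the result. Your argument is correct and fills in the details of exactly that route — disjointness of the label sets $\mathcal U^{(j)}$, independence of the i.i.d.\ triples $(L_n,e_n,Y_n)$ across subtrees, the check that Convention~\ref{conv:trajectory} never crosses subtree boundaries, and Tonelli to factorize the expectation of the nonnegative product — so it is essentially the paper's (implicit) approach made explicit.
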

	
	Observe that if $\xx =(x_1, \dots, x_p)$ and $\mu := \delta_{x_1} + \dots + \delta_{x_p}$, defining for all $i = 1, \dots, p$ and $t \in [0,1]$ the measure $M^i_t := \sum_{n\geq i, \, n \in \mathcal N^0(t)} \delta_{X_n(t)}$ (which is the empirical measure associated with the positions of the descendants of particle $i$ alive at time $t$), the l.h.s.\ of formula~\eqref{eq:branching_property_varphi} rewrites, with the use of Proposition~\ref{prop:correspondence_N_S},
	\begin{equation*}
	\Em_\mu\left[ \exp\Big( \cg  \varphi, M_t\cd \Big) \right] = \Em_\xx\Bigg[ \prod_{i=1}^p \exp\Big( \cg  \varphi, M^i_t\cd \Big) \Bigg],
	\end{equation*}
	whereas the r.h.s.\ of the first inequality rewrites
	\begin{equation*}
	\prod_{x \in \Supp \mu} \Em_{\delta_{x}}\Big[\exp\big(\cg \varphi, M_t \cd\big)\Big] =  \prod_{i=1}^p \Em_{\delta_{x_i}}\Big[\exp\big(\cg \varphi, M_t \cd\big)\Big]
	\end{equation*}
	Hence, the proposition states that under $\Prob^\xx$ the measure-valued processes $M^1, \dots, M^p$ are independent, of respective law $R^{\delta_{x_1}}, \dots, R^{\delta_{x_p}}$.
	
	\paragraph{Stability by translation}
	Next, the BBM is invariant by translation, we omit the proof as it is very simple and left and as an exercise to the reader. 
	\begin{Prop}
		\label{prop:translation_invariance}
		Let $x \in \T^d$, $t \in [0,1]$, and $\varphi:\T^d \to \R$ be a measurable function. There holds
		\begin{equation*}
		\Em_{\delta_x} \Big[ \exp\Big( \cg \varphi, M_t \cd \Big) \Big] = \Em_{\delta_0} \Big[ \exp\Big( \cg \varphi(x + \cdot ), M_t \cd \Big) \Big].
		\end{equation*}
	\end{Prop}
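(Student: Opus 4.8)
The plan is to read the identity directly off the explicit labelled construction of the BBM from Subsection~\ref{subsec:def_BBM}, exploiting the fact that the initial configuration enters that construction only through an additive constant. Fix $x \in \T^d$ and build simultaneously, on a single probability space carrying one family $(L_n, e_n, Y_n)_{n \in \mathcal U}$ of independent random variables as in the construction (with each $Y_n$ distributed as a Brownian motion of diffusivity $\nu$ started from $0$), two copies of the process: the one starting from the deterministic configuration $\delta_x$ (i.e.\ with $\xx = (x)$), with positions $X_n(t)$ and empirical measure $M^0_t = \sum_{n \in \mathcal N^0(t)} \delta_{X_n(t)}$, and the one starting from $\delta_0$, with positions $\widetilde X_n(t)$ and empirical measure $\widetilde M^0_t$. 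Since the random forest $\mathcal T$, the birth and death times $b_n, d_n$, and the alive sets $\mathcal N^0(t)$ are measurable functions of $(L_n, e_n)_{n \in \mathcal U}$ alone, they coincide for the two copies; only the positions differ.

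First I would check that, with this coupling, $X_n(t) = x + \widetilde X_n(t)$ for every $n \in \mathcal U$ and every $t \ge 0$. On $[b_n, d_n)$ this is immediate from the formula $X_n(t) = x_{n_1} + \sum_{q < n} Y_q(e_q) + Y_n(t - b_n)$, in which the only term depending on the initial data is $x_{n_1}$, equal to $x$ for the first copy and to $0$ for the second. Outside $[b_n, d_n)$ it follows by propagation through Convention~\ref{conv:trajectory}: either $X_n(t)$ is defined by copying the position of an ancestor alive at time $t$ (for which the identity already holds), or it is set to $0$ for both copies when $n$ has no ancestor alive. Consequently, for every $t \in [0,1]$,
\begin{equation*}
M^0_t = \sum_{n \in \mathcal N^0(t)} \delta_{X_n(t)} = \sum_{n \in \mathcal N^0(t)} \delta_{x + \widetilde X_n(t)},
\end{equation*}
so that almost surely $M^0_t$ is the image of $\widetilde M^0_t$ under the translation $T_x : y \mapsto x + y$ of $\T^d$.

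It then remains to transfer this to Laplace functionals. For $\varphi : \T^d \to \R$ measurable and $\mu = \sum_i \delta_{z_i} \in \M_\delta(\T^d)$ one has $\cg \varphi, (T_x)\pf \mu \cd = \sum_i \varphi(x + z_i) = \cg \varphi(x + \cdot), \mu \cd$, hence the two copies satisfy $\cg \varphi, M^0_t \cd = \cg \varphi(x + \cdot), \widetilde M^0_t \cd$ almost surely; both sides are finite sums (since $\mathcal N^0(t)$ is a.s.\ finite by~\eqref{eq:q_finite_mean}), so everything is well defined, the common value of the exponentials possibly being $+\infty$, which is harmless. Taking expectations and recalling Definition~\ref{def:BBM_deterministic_R0}, which identifies the laws of these two copies with $R^{\delta_x}$ and $R^{\delta_0}$, yields
\begin{equation*}
\Em_{\delta_x}\Big[ \exp\big( \cg \varphi, M_t \cd \big) \Big] = \Em_{\delta_0}\Big[ \exp\big( \cg \varphi(x + \cdot), M_t \cd \big) \Big].
\end{equation*}
There is no genuine obstacle here; the only points deserving a line of care are the propagation of $X_n = x + \widetilde X_n$ through Convention~\ref{conv:trajectory} at times outside $[b_n, d_n)$, and the remark that mere measurability of $\varphi$ suffices because only finitely many particles are alive at time $t$. (One could alternatively phrase the same argument as the translation invariance of Brownian motion applied branch by branch, but the coupling above makes the bookkeeping transparent.)
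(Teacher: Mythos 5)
Your coupling argument is correct, and it is the natural way to fill in the proof the paper declines to write (the text simply says the proof is ``very simple and left as an exercise to the reader''). Two small remarks. First, a cosmetic slip: the blanket claim $X_n(t) = x + \widetilde X_n(t)$ ``for every $n \in \mathcal U$ and every $t$'' is not literally true. For $n$ with $n_1 > p = 1$, Convention~\ref{conv:trajectory} sets $X_n(t) = 0$ in \emph{both} copies, so the two positions agree but do \emph{not} differ by $x$. You do acknowledge this case yourself in the next sentence (``it is set to $0$ for both copies''), which actually contradicts the preceding ``for every $n$'' assertion. The fix is simply to assert the translation identity only for $n \in \mathcal T$ (equivalently $n_1 \leq p$), noting that only such $n$ can lie in $\mathcal N^0(t)$, so the identity $M^0_t = (T_x)_\pf \widetilde M^0_t$ follows exactly as you conclude. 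Second, a remark you already make implicitly but which is worth keeping explicit: since $\mathcal N^0(t)$ is a.s.\ finite, $\cg\varphi, M^0_t\cd$ is a finite sum for any measurable $\varphi$, so both Laplace functionals are well defined as values in $[0,+\infty]$, and the equality of the (possibly infinite) expectations follows from the pathwise identity with no integrability hypothesis required. Your ``alternative phrasing'' via translation invariance of Brownian motion on each branch is indeed the same argument; the explicit coupling on one probability space carrying the $(L_n,e_n,Y_n)$ is the cleaner way to record it.
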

	\paragraph{Associated operators}
	Recall that 
	\begin{equation}
	\Phi_{\boldsymbol q}: z \in \R_+ \mapsto \sum_{k \neq 1} q_k z^k \in [0,+\infty]
	\end{equation}
	is the generating function of the branching mechanism $\boldsymbol q$. This is a power series with nonnegative coefficients. It satisfies $\Phi_{\boldsymbol q}(1) = \lambda_{\boldsymbol q}$, and by assumption~\eqref{eq:q_finite_mean}, $\Phi_{\boldsymbol q}'(1) < +\infty$. Also recall the definition~\eqref{eq:def_Psi*} of $\Psi^*_{\nu, \boldsymbol q}$.
	This is a convex function with values in $\R \cup \{+ \infty\}$ satisfying $\Psi^*_{\nu,\boldsymbol q}(0) = 0$ and $(\Psi^*_{\nu,\boldsymbol q})'(0) < + \infty$.
	
	The last fundamental property is a computation that involves the two following operators:
	\begin{align}
	\label{eq:operator_K} \mathcal K_{\nu,\boldsymbol q}[u] :=  \frac{\nu}{2} \Delta u + \Phi_{\boldsymbol q}(u) - \lambda_{\boldsymbol q}u , \qquad u:\T^d \to \R_+, \mbox{ smooth},\\
	\label{eq:operator_L} \L_{\nu, \boldsymbol q}[\phi] := \frac{1}{2} |\nabla \phi|^2 + \frac{\nu}{2} \Delta \phi + \Psi^*_{\nu,\boldsymbol q}(\phi), \qquad \phi: \T^d \to \R, \mbox{ smooth}.
	\end{align}
	The second one is called the (rescaled) generator of the BBM. This operators are linked through the following correspondence formula:
	\begin{equation*}
	\mathcal L_{\nu, \boldsymbol q}[\phi] = \nu \frac{\mathcal K_{\nu, \boldsymbol q}[u]}{u}, \qquad u = \exp\left(\frac{\phi}{\nu}\right).
	\end{equation*}
	The proposition writes as follows.
	\begin{Prop}
		\label{prop:generator}
		Let $u: \T^d \to [0,1]$ be smooth. For all $x \in \T^d$, there holds
		\begin{equation}
		\label{eq:derivative_BBM_u}
		\frac{\D\ }{\D t+}\Em_{\delta_x}\Bigg[ \prod_{X \in \mathcal N(t)} u(X) \Bigg]\Bigg|_{t = 0} = \mathcal K_{\nu, \boldsymbol q}[u](x).
		\end{equation}
		
		Correspondingly, for all nonnegative smooth $\phi: \T^d \to \R_-$, there holds
		\begin{equation}
		\label{eq:derivative_BBM_phi}
		\frac{\D\ }{\D t+} \Em_{\delta_x}\left[ \exp\left(\frac{\cg \phi, M_t \cd}{\nu} \right) \right]\bigg|_{t=0} =\exp\left( \frac{\phi(x)}{\nu} \right) \mathcal L_{\nu, \boldsymbol q} [\phi](x).
		\end{equation}
	\end{Prop}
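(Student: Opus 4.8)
The plan is to prove Proposition~\ref{prop:generator} by direct computation on the construction of the BBM given on the labelled space $\Omega^\xx$, decomposing according to whether or not a branching event occurs in the time interval $[0,t]$. Starting from a single particle at $x$, i.e.\ $\mu = \delta_x$, recall that the lifetime of the root particle is $e_\emptyset \sim \mathrm{Exp}(\lambda_{\boldsymbol q})$ and its trajectory is $x + Y_\emptyset$ with $Y_\emptyset$ a Brownian motion of diffusivity $\nu$ started at $0$. For small $t$, I would split $\Em_{\delta_x}[\prod_{X \in \mathcal N(t)} u(X)]$ over the events $\{e_\emptyset > t\}$ (no branching yet; only diffusion) and $\{e_\emptyset \leq t\}$ (exactly one branching event before $t$, with probability $O(t)$, and higher-order corrections for two or more events which contribute $O(t^2)$ thanks to \eqref{eq:q_finite_mean}). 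On the first event, by independence and the strong Markov property the contribution is $e^{-\lambda_{\boldsymbol q} t}\, \Em_0[u(x+Y_\emptyset(t))]$; on the second event, at the branching time $\tau = e_\emptyset \in [0,t]$ the particle is replaced by $k$ offspring with probability $p_k$, each of which then evolves independently for the remaining time $t - \tau$, giving a contribution of the form $\int_0^t \lambda_{\boldsymbol q} e^{-\lambda_{\boldsymbol q}\tau}\, \Em_0\big[ \sum_k p_k\, \big(\Em_{\delta_{x + Y_\emptyset(\tau)}}[\prod u]\big)^{k}(t-\tau)\big]\,\D\tau + o(t)$.

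The next step is to Taylor-expand each piece to first order in $t$. The no-branching term gives $e^{-\lambda_{\boldsymbol q} t}(u(x) + \frac{\nu}{2}\Delta u(x)\, t + o(t)) = u(x) + \big(\frac{\nu}{2}\Delta u(x) - \lambda_{\boldsymbol q} u(x)\big) t + o(t)$, using that $u$ is smooth and that $\Em_0[f(x + Y(t))] = f(x) + \frac{\nu}{2}\Delta f(x)\,t + o(t)$ for the Brownian semigroup of diffusivity $\nu$. The single-branching term, being already prefactored by a probability $O(t)$ of a branching occurring, contributes to leading order $t \cdot \lambda_{\boldsymbol q} \sum_k p_k\, u(x)^k + o(t) = t\, \sum_k q_k u(x)^k + o(t) = t\,\Phi_{\boldsymbol q}(u(x)) + o(t)$, since at leading order we may evaluate everything at $\tau = 0$, at position $x$, and with $u(x)^k$ the common limit of $\big(\Em_{\delta_x}[\prod u]\big)^k$ as the elapsed time goes to $0$ (here $u(x) \in [0,1]$ keeps all series convergent, and dominated convergence, justified by $\sum_k k q_k < +\infty$, controls the sum over $k$). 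Summing, the right derivative at $t = 0$ is $\frac{\nu}{2}\Delta u(x) - \lambda_{\boldsymbol q} u(x) + \Phi_{\boldsymbol q}(u(x)) = \mathcal K_{\nu,\boldsymbol q}[u](x)$, which is \eqref{eq:derivative_BBM_u}. The identity \eqref{eq:derivative_BBM_phi} then follows by setting $u = \exp(\phi/\nu)$, which lies in $(0,1]$ precisely when $\phi \leq 0$; using $\langle \phi, M_t\rangle/\nu = \sum_{X \in \mathcal N(t)} \phi(X)/\nu$ so that $\exp(\langle\phi,M_t\rangle/\nu) = \prod_{X} u(X)$, and the chain-rule/correspondence formula $\mathcal L_{\nu,\boldsymbol q}[\phi] = \nu\, \mathcal K_{\nu,\boldsymbol q}[u]/u$ stated just before the proposition, we get $\frac{\D}{\D t+}\Em_{\delta_x}[\exp(\langle\phi,M_t\rangle/\nu)]|_{t=0} = \mathcal K_{\nu,\boldsymbol q}[u](x) = u(x)\,\mathcal L_{\nu,\boldsymbol q}[\phi](x) = \exp(\phi(x)/\nu)\,\mathcal L_{\nu,\boldsymbol q}[\phi](x)$.

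The main obstacle, and the part deserving the most care, is the rigorous control of the remainder terms — specifically, showing that the probability of two or more branching events in $[0,t]$ contributes $o(t)$ to the expectation, uniformly enough to survive the differentiation. Since $u \in [0,1]$, each factor $u(X)$ is bounded by $1$, so the product $\prod_{X \in \mathcal N(t)} u(X)$ is bounded by $1$ regardless of how many particles are alive; hence the contribution of the event ``at least two branchings before $t$'' is bounded by its probability, which one checks is $O(t^2)$ by a crude union/Poisson-type bound together with $\sum_k k q_k < +\infty$ ensuring the expected number of particles stays finite on $[0,t]$ (this is exactly where Corollary~\ref{cor:evolution_density} / the standing assumption \eqref{eq:q_finite_mean} is used). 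A second, minor subtlety is interchanging the sum over $k$ with the limit $t \to 0$ inside $\sum_k p_k (\cdots)^k$; again boundedness by $1$ of each bracket plus $\sum_k q_k = \lambda_{\boldsymbol q} < \infty$ makes dominated convergence immediate. Finally I would note that the statement only claims differentiability \emph{from the right at $t=0$}, which is what the above expansion directly delivers; no two-sided regularity is needed.
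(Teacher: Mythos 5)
Your proposal is correct and follows essentially the same route as the paper's sketch: decompose $\Em_{\delta_x}\big[\prod_{X \in \mathcal N(t)} u(X)\big]$ according to the number of branching events in $[0,t]$, Taylor-expand the no-branching term using the Brownian generator $\frac{\nu}{2}\Delta$, extract $t\,\Phi_{\boldsymbol q}(u(x))$ at leading order from the exactly-one-branching term, discard the two-or-more-branching term as $O(t^2)$, and then deduce \eqref{eq:derivative_BBM_phi} from \eqref{eq:derivative_BBM_u} via $u = \exp(\phi/\nu)$ and the correspondence $\L_{\nu,\boldsymbol q}[\phi] = \nu\,\mathcal K_{\nu,\boldsymbol q}[u]/u$. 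The only cosmetic differences are that you work at a general $x$ rather than first reducing to $x=0$ by translation invariance, and that you write the one-branching contribution as a Duhamel-type integral over the branching time rather than a conditional expectation, but both expansions give the same first-order term.
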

	Let us sketch the proof to give an intuition for this result.
	\begin{proof}[Elements of proof]
		Because of the translation stability stated in Proposition~\ref{prop:translation_invariance}, it suffices to check the formula for $x=0$. The second formula is a consequence of the first one through the change of variable $u = \exp(\phi/\nu)$, so we just check the first one.
		
		Given a small time $t$, as we are starting with one particle only, the probability that there has been exactly one branching event with $k$ descendants before time $t$ is $q_k t + o(t)$, the probability that there has been no jump at all before time $t$ is $1 - \lambda_{\boldsymbol q} t + o(t)$, and the probability that there has been at least to jumps before time $t$ is $o(t)$. Therefore, calling $\Em_0 := \Em_\xx$ for $\xx = 0 \in \T^d$, we have with the help of Proposition~\ref{prop:correspondence_N_S}	
		\begin{align*}
		\Em_{\delta_0}\Bigg[&\prod_{X \in \mathcal N(t)} u(X) \Bigg] = \Em_{0}\Bigg[\prod_{X \in \mathcal N(M^0,t)} u(X) \Bigg] = \Em_{0}\Bigg[\prod_{n \in \mathcal N^0(t)} u\big(X_n(t)\big) \Bigg] \\
		&=(1 - \lambda_{\boldsymbol q} t)\Em_{0}\Big[ u\big(  X_1(t)\big)\Big| d_1>t \Big] + t \sum_k q_k \Em_{0}\Bigg[\prod_{i=1}^k u\big( X_{1i}(t)\big)\Big|  d_1 \leq t < d_{1i}, \, i = 1, \dots, k  \Bigg] + o(t).
		\end{align*}
		Now, $X_1$ is a Brownian motion of diffusivity $\nu$ and which is independent of $d_1$. Hence, using the fact that the generator of the Brownian motion of diffusivity $\nu$ is $\frac{\nu}{2}\Delta$, we get
		\begin{align*}
		\Em_{0}\Big[ u\big(  X_1(t)\big)\Big| d_1>t \Big] = \Em_{0}\Big[ u\big(  X_1(t)\big)\Big] = u(0) + t \frac{\nu}{2} \Delta u(0) + o(t).
		\end{align*}
		On the other hand, it's not hard to convince oneself that the displacement of $X_{1i}(t)$, $i=1, \dots, k$, before time $t$ does not give any contribution of order $0$ in the expectation of the formula above. That leads to
		\begin{align*}
		\Em_{0}\Bigg[\prod_{i=1}^k u\big( X_{1i}(t)\big)\Big|  d_1 \leq t < d_{1i}, \, i = 1, \dots, k  \Bigg] &= \Em_{0}\Bigg[\prod_{i=1}^ku(0)\Big|  d_1 \leq t < d_{1i}, \, i = 1, \dots, k  \Bigg] + o(1)\\
		&=u(0)^k + o(1).
		\end{align*}
		The result follows easily.
	\end{proof}

	\subsection{Consequence: evolution equation and martingale problem}
	\label{subsec:evolution_PDE_and_martingale_problem}
	The properties of the last subsection let us derive an evolution equation associated with the BBM, namely, the corresponding Fisher-Kolmogorov-Petrovsky-Piskunov (FKPP) equation. They can also be used to prove that the BBM solves a martingale problem that characterizes it. The purpose of this subsection is to introduce these two notions. The link between the BBM and the FKPP equation dates back to the seminal work by McKean~\cite{mckean1975application}. The idea to characterize branching processes as solutions of martingale problems is also very old, and we refer to~\cite{ethier1986markov} for an extensive study of the subject.
	
	For both of these notions, we give the standard versions with sketches of proofs, using the properties of the previous subsection for functions $u$ that are bounded above by one, or correspondingly, for nonpositive functions $\varphi$ or $\phi$, so that all the expectations of products appearing in the formulas are trivially finite. But we will also need more involved versions in the next chapters, where the bounds on $u$, $\varphi$ of $\phi$ are relaxed. In these cases, the notion of solution of the FKPP equation needs to be weakened, and the martingales of the martingale problem will become local martingales. In these more general cases, we provide complete proofs.
	
	In the whole subsection, the diffusivity $\nu>0$ and the branching mechanism $\boldsymbol q\neq 0$ are fixed once for all.
	
	\paragraph{The evolution equation of the BBM}
	
	The main idea of this section is that the function of $t$ and $x$ defined as the expectation appearing in formula~\eqref{eq:derivative_BBM_u} from Proposition~\ref{prop:generator} solves the FKPP equation
	\begin{equation}
	\label{eq:FKPP}
	\partial_t u = \frac{\nu}{2} \Delta u + \Phi_{\boldsymbol q}(u) - \lambda_{\boldsymbol q}u.
	\end{equation}
	
	The most standard version of this result is the following, where the initial condition $u_0$ is supposed to have its values in $[0,1]$.
	\begin{Prop}
		\label{prop:FKPP_below1}
		Let $u_0:\T^d \to [0,1]$ be a smooth function.
		For all $t \in[0,1] $ and $x \in \T^d$, we call
		\begin{equation*}
		u(t,x) := \Em_{\delta_x}\left[ \prod_{X \in \mathcal{N}(t)} u_0(X) \right] \in [0, 1].
		\end{equation*}
		The function $u$ is a classical solution of the FKPP equation~\eqref{eq:FKPP}.
	\end{Prop}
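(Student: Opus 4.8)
The strategy is to establish the semigroup property of the map $u_0 \mapsto u(t,\cdot)$ built out of the BBM, then deduce that $u$ solves the integral (mild) form of the FKPP equation, and finally upgrade to a classical solution by parabolic regularity. Throughout, the key point that makes everything finite and legitimate is the hypothesis $0 \le u_0 \le 1$, which forces $0 \le u(t,x) \le 1$ by the definition of $u$ as an expectation of a product of quantities in $[0,1]$.

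\textbf{Step 1: Markov/semigroup property.} First I would fix $0 \le s \le t \le 1$ and condition on $\F_s$. Using the strong Markov property (Proposition~\ref{prop:strong_markov}) with the deterministic stopping time $T = s$, together with the branching property at time $s$ (Proposition~\ref{prop:branching_property}) applied to the measure $M_s = \sum_{X \in \mathcal N(s)} \delta_X$, one gets
\begin{equation*}
\Em_{\delta_x}\!\left[ \prod_{X \in \mathcal N(t)} u_0(X) \,\Big|\, \F_s \right] = \prod_{X \in \mathcal N(s)} u(t-s, X),
\end{equation*}
and taking expectations yields $u(t,x) = \Em_{\delta_x}\big[\prod_{X \in \mathcal N(s)} u(t-s,X)\big]$, i.e.\ the semigroup identity. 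This reduces the analysis of $u$ near $t=0$ to the infinitesimal computation already performed.

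\textbf{Step 2: Mild formulation via Duhamel.} Next I would isolate the ``Brownian part''. Writing $\bar u(t,x) := \Em_{\delta_x}[u_0(X_t)\,\1_{d_1 > t}]$ for a single particle that has not yet branched — this equals $e^{-\lambda_{\boldsymbol q} t}(\tau_{\nu t} \ast u_0)(x)$ — and decomposing on the time and type of the first branching event (exactly as in the sketch of Proposition~\ref{prop:generator}, but over a macroscopic time interval rather than an infinitesimal one), one obtains the renewal/Duhamel identity
\begin{equation*}
u(t,x) = e^{-\lambda_{\boldsymbol q} t}\,(\tau_{\nu t}\ast u_0)(x) + \int_0^t e^{-\lambda_{\boldsymbol q} s}\,\Big(\tau_{\nu s} \ast \Phi_{\boldsymbol q}\big(u(t-s,\cdot)\big)\Big)(x)\,\D s .
\end{equation*}
Here the exchange of sum over $k$ with the integral and the expectation is justified by the assumption $\sum_k k q_k < +\infty$ together with $0\le u\le 1$ (so $\Phi_{\boldsymbol q}(u) \le \Phi_{\boldsymbol q}(1) = \lambda_{\boldsymbol q} < \infty$ and in fact $0 \le \Phi_{\boldsymbol q}(u)\le \lambda_{\boldsymbol q}$). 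After a change of variables $s \leftrightarrow t-s$ this is precisely the mild form of~\eqref{eq:FKPP} with the heat semigroup of diffusivity $\nu$.

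\textbf{Step 3: From mild to classical solution.} Finally, since $u$ is bounded (by $1$) and $\Phi_{\boldsymbol q}$ restricted to $[0,1]$ is real-analytic, hence smooth and Lipschitz there, the right-hand side of the Duhamel formula is, by standard parabolic smoothing of the heat kernel on $\T^d$ (the kernel $\tau_s$ is $C^\infty$ and all its derivatives are integrable for $s>0$), a $C^{1,2}$ function of $(t,x)$ on $(0,1]\times\T^d$, with the correct limit $u(0,\cdot) = u_0$ as $t \to 0$ because $u_0$ is continuous; differentiating the integral equation then shows $u$ solves~\eqref{eq:FKPP} classically. Alternatively, one can invoke directly that a bounded mild solution of a semilinear heat equation with smooth nonlinearity is a classical solution.

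\textbf{Main obstacle.} The only genuinely delicate point is Step~2: justifying the Duhamel identity rigorously, i.e.\ decomposing the expectation $\Em_{\delta_x}\big[\prod_{X\in\mathcal N(t)}u_0(X)\big]$ according to the law of the first branching time $d_1 \sim \mathrm{Exp}(\lambda_{\boldsymbol q})$ and the independent offspring number $L_1 \sim \boldsymbol p_{\boldsymbol q}$, using the branching property to factor the product over the subtrees spawned at time $d_1$, and controlling the interchange of the series $\sum_k q_k(\cdots)$ with expectation and time-integration. Everything else — the semigroup property (immediate from Steps in the previous subsection), the boundedness $u\in[0,1]$ (immediate from the definition), and the regularity bootstrap (classical heat-kernel estimates) — is routine. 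Note that the restriction $u_0 \le 1$ is what keeps all these manipulations elementary; the harder case of general $u_0$, where only a local-martingale/weak-solution statement survives, is exactly what the authors announce they will treat separately later.
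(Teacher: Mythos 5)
Your proposal is correct, and it is more complete than the paper's own sketch. The paper's ``Elements of proof'' uses exactly your Step~1 (semigroup property via Markov $+$ branching), but then takes a shortcut: it \emph{admits} that $u$ remains smooth and bounded by $1$ and differentiates the semigroup identity directly via the generator formula of Proposition~\ref{prop:generator}, reading off $\partial_t u = \mathcal K_{\nu,\boldsymbol q}[u]$ immediately. Your Steps~2--3 instead establish the Duhamel/renewal identity and then bootstrap to a classical solution by parabolic regularity, thereby \emph{proving} the smoothness that the paper's sketch assumes. What you obtain in Step~2 is precisely the content of Proposition~\ref{prop:evolution_eq_u_from_BBM} (the ``$D$-weak solution'' statement, stated for general nonnegative $u_0$), whose proof in the paper uses the very decomposition you describe — conditioning on the first branching time $d_1\sim\mathrm{Exp}(\lambda_{\boldsymbol q})$ and offspring number $L_1$, then applying the strong Markov and branching properties. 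So your route amounts to: invoke Proposition~\ref{prop:evolution_eq_u_from_BBM}, specialize to $u_0\in[0,1]$ so that everything is finite and $\Phi_{\boldsymbol q}$ is Lipschitz on the range of $u$, and upgrade the $D$-weak solution to a classical one by standard heat-kernel smoothing. This is also the strategy of the reference the paper cites (Li~\cite[Section~4.1]{li2010measure}) for the complete proof. The trade-off is that the paper's sketch is shorter and more conceptual (pure semigroup differentiation), while yours is self-contained on the regularity question — which is exactly the point that the sketch glosses over.
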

	A complete proof can be found in~\cite[Section~4.1]{li2010measure}. Here, we just provide the main ideas.
	\begin{proof}[Elements of proof]
		Let us compute $u(t+s, x)$ for $t\in [0,1)$, $0\leq s \leq 1-t$ and $x \in \T^d$, by conditioning the definition of $u$ w.r.t.\ $\F_s$. We find, using the Markov property from Proposition~\ref{prop:strong_markov} with $T:= s$ and $F(\mu) := \prod_{x \in \Supp \mu} u_0(x)$, and the branching property~\eqref{eq:branching_property_u}:
		\begin{equation*}
		u(t+s, x) = \Em_{\delta_x}\left[\Em_{\delta_x} \left[ \prod_{X \in \mathcal{N}(t+s)} u_0(X) \Bigg| \F_s\right]\right] = \Em_{\delta_x}\left[\Em_{M_s} \left[ \prod_{X \in \mathcal{N}(t)} u_0(X)\right] \right] = \Em_{\delta_x}\left[ \prod_{X \in \mathcal{N}(s)} u(t,X) \right].
		\end{equation*}
		Admitting that $u$ remains smooth and below $1$, we can use formula~\eqref{eq:derivative_BBM_u} to get
		\begin{equation*}
		\frac{\D \, }{\D s+} u(t+s,x)|_{s = 0} = \mathcal K_{\nu, \boldsymbol q}[u(t)](x),
		\end{equation*}
		and because of the definition~\eqref{eq:operator_K} of $\mathcal K_{\nu, \boldsymbol q}$, the result follows. 
	\end{proof}
	
	A consequence of this result is the following evolution equation for the density of any branching Brownian motion $R\sim \BBM(\nu, \boldsymbol q, R_0)$ that satisfies $\E_{R_0}[M] < + \infty$.
	\begin{Cor}
		\label{cor:evolution_density}
		Let $R \sim \BBM(\nu, \boldsymbol q, R_0)$ be a branching Brownian motion satisfying $\E_{R_0}[M(\T^d)]< + \infty$. Then its intensity $\rho_t := \E_{R}[M_t]$, $t \in [0,1]$ solves the following PDE in the weak sense:
		\begin{equation*}
		\partial_t \rho_t = \frac{\nu}{2}\Delta \rho_t + \bar r \rho_t,
		\end{equation*}
		where $\bar r := \sum_{k \neq 1}(k-1) q_k$. Otherwise stated, for all $t \in [0,1]$, $\rho_t = e^{\bar r t} \tau_{\nu t} * \rho_0$, being $(\tau_s)_{s\geq 0}$ the heat flow on the torus.
	\end{Cor}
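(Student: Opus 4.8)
The plan is to linearize the FKPP equation of Proposition~\ref{prop:FKPP_below1} around the constant solution $u\equiv 1$. Fix $\theta \in C^\infty(\T^d)$ with $\theta \geq 0$ and, for small $\eps>0$, apply Proposition~\ref{prop:FKPP_below1} with initial datum $u_0^\eps := e^{-\eps\theta}\in(0,1]$; this produces $u^\eps(t,x)=\Em_{\delta_x}\big[\exp(-\eps\cg\theta,M_t\cd)\big]$, a classical solution of~\eqref{eq:FKPP}. Set $w^\eps := (1-u^\eps)/\eps \geq 0$. Since $s\mapsto(1-e^{-\eps s})/\eps$ decreases to $s$ as $\eps\downarrow 0$, monotone convergence yields $w^\eps(t,x)\uparrow \Em_{\delta_x}[\cg\theta,M_t\cd]$ pointwise; denoting by $\rho^x_t$ the intensity of $M_t$ under $R^{\delta_x}$ (a nonnegative measure, by the Riesz representation theorem applied to the positive functional $\theta\mapsto\Em_{\delta_x}[\cg\theta,M_t\cd]$), this limit equals $\cg\theta,\rho^x_t\cd$. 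Taking first $\theta\equiv 1$ one gets, by the very same argument carried out below, $\rho^x_t(\T^d)=e^{\bar r t}\leq e^{|\bar r|}$ for $t\in[0,1]$ (this is where~\eqref{eq:q_finite_mean} is used, to make $\Phi'_{\boldsymbol q}(1)$ finite), which in turn gives the a priori bound $1-u^\eps(t,x)\leq \eps\,\|\theta\|_\infty e^{|\bar r|}$.

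Next I would pass to the limit in the Duhamel (mild) form of~\eqref{eq:FKPP}. Using $\Phi_{\boldsymbol q}(1)-\lambda_{\boldsymbol q}=0$ and subtracting the identity $1=\tau_{\nu t}\ast 1$, one obtains
\begin{equation*}
w^\eps(t) = \tau_{\nu t}\ast\frac{1-e^{-\eps\theta}}{\eps} + \int_0^t \tau_{\nu(t-s)}\ast\left(\frac{\lambda_{\boldsymbol q}u^\eps(s)-\Phi_{\boldsymbol q}(u^\eps(s))}{\eps}\right)\D s .
\end{equation*}
Here $(1-e^{-\eps\theta})/\eps\to\theta$ uniformly, and, writing $g(u):=\lambda_{\boldsymbol q}u-\Phi_{\boldsymbol q}(u)$, one has $g(1)=0$ and $g'(1)=\lambda_{\boldsymbol q}-\Phi'_{\boldsymbol q}(1)=-\bar r$, so that $g(u^\eps(s))/\eps = \bar r\,w^\eps(s) + R^\eps(s)$ with a remainder $R^\eps$ controlled by the modulus of continuity of $g'$ at $1$ times $w^\eps$. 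Because $1-u^\eps\to 0$ uniformly by the bound above, $R^\eps\to 0$ in the relevant sense, and letting $\eps\to 0$ shows that $w(t,x):=\cg\theta,\rho^x_t\cd$ is the mild solution of $\partial_t w=\frac{\nu}{2}\Delta w+\bar r w$ with $w(0,\cdot)=\theta$, namely $\cg\theta,\rho^x_t\cd = e^{\bar r t}(\tau_{\nu t}\ast\theta)(x)$ for every $x\in\T^d$.

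It then remains to integrate over the initial configuration. By the branching property (Proposition~\ref{prop:branching_property}), intensities add over the independent sub-populations, so $\Em_\mu[\cg\theta,M_t\cd]=\sum_{x\in\Supp\mu}\cg\theta,\rho^x_t\cd = e^{\bar r t}\cg \tau_{\nu t}\ast\theta,\mu\cd$. Using Definition~\ref{def:BBM_general_R0} and the assumption $\E_{R_0}[M(\T^d)]<+\infty$ (which legitimises Fubini), together with the symmetry of the heat kernel, this gives
\begin{equation*}
\cg\theta,\rho_t\cd = \E_{R_0}\big[\Em_M[\cg\theta,M_t\cd]\big] = e^{\bar r t}\,\E_{R_0}\big[\cg\tau_{\nu t}\ast\theta,M\cd\big] = e^{\bar r t}\cg\tau_{\nu t}\ast\theta,\rho_0\cd = e^{\bar r t}\cg\theta,\tau_{\nu t}\ast\rho_0\cd .
\end{equation*}
Since $\theta\geq 0$ was arbitrary and both sides are linear in $\theta$, this identifies $\rho_t=e^{\bar r t}\tau_{\nu t}\ast\rho_0$, which is exactly the claimed (weak) solution of $\partial_t\rho_t=\frac{\nu}{2}\Delta\rho_t+\bar r\rho_t$, completing the proof of Corollary~\ref{cor:evolution_density}.

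I expect the main obstacle to be the rigorous passage to the limit $\eps\to 0$ in the Duhamel identity: one must propagate the uniform smallness of $1-u^\eps$ on $[0,1]\times\T^d$ and control the nonlinear remainder $R^\eps$ uniformly, the key input being the a priori bound $\Em_{\delta_x}[M_s(\T^d)]=e^{\bar r s}$ (itself the $\theta\equiv 1$ instance of the argument, and the place where~\eqref{eq:q_finite_mean} enters). The remaining ingredients — monotone convergence for the pointwise limit of $w^\eps$, the branching property, Fubini under the first-moment assumption on $R_0$, and the heat-kernel symmetry — are routine.
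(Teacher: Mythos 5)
Your approach is essentially the one the paper sketches (linearize the FKPP equation of Proposition~\ref{prop:FKPP_below1} around the constant solution $u\equiv 1$ and match first-order terms in $\eps$), with the cosmetic difference that you take $u_0=e^{-\eps\theta}$ where the paper takes $u_0=1-\eps\varphi$, and with welcome extra detail: you pass through the Duhamel form and keep careful track of the uniform bound needed to justify the limit, whereas the paper's ``Elements of proof'' contents itself with displayed expansions and an $o(\cdot)$ remainder. The reduction via the branching property to a single-particle initial configuration, and the Fubini step using $\E_{R_0}[M(\T^d)]<+\infty$, are the same in both.

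One point you should close before this is airtight: the bootstrap via $\theta\equiv 1$ is currently circular as stated. You invoke ``the very same argument carried out below'' to get $\rho^x_t(\T^d)=e^{\bar r t}$ for $\theta\equiv 1$, but that argument itself relies on the a priori bound $1-u^\eps\leq \eps\|\theta\|_\infty e^{|\bar r|}$, which for $\theta\equiv 1$ is precisely what you are trying to establish. The fix is short: for $\theta\equiv 1$, $u^\eps$ is space-independent and $w^\eps:=(1-u^\eps)/\eps$ satisfies, in Duhamel form, $w^\eps(t)=(1-e^{-\eps})/\eps+\int_0^t g(u^\eps(s))/\eps\,\D s$ with $g(u)=\lambda_{\boldsymbol q}u-\Phi_{\boldsymbol q}(u)$. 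Since $\Phi_{\boldsymbol q}$ is convex, $g$ is concave with $g(1)=0$ and $g'(1)=-\bar r$, so $g(u)\leq \bar r(1-u)$ for $u\in[0,1]$, giving $w^\eps(t)\leq 1 + \bar r_+\int_0^t w^\eps(s)\,\D s$ and hence $w^\eps(t)\leq e^{\bar r_+}$ by Gr\"onwall, \emph{uniformly in} $\eps$. Only with this independent bound in hand can you pass to the limit for $\theta\equiv 1$, conclude $\Em_{\delta_x}[M_t(\T^d)]=e^{\bar r t}$, and then use $1-u^\eps(t,x)\leq\Em_{\delta_x}[1-e^{-\eps\cg\theta,M_t\cd}]\leq\eps\|\theta\|_\infty\Em_{\delta_x}[M_t(\T^d)]$ for general $\theta$. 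You may also want to note that the paper offers an alternative path via the extended It\^o formula (Theorems~\ref{thm:definition_stochastic_integral}, \ref{thm:properties_jump_processes} and~\ref{thm:branching_Ito}), which avoids the FKPP linearization entirely, though that machinery is only built later.
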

	\begin{Rem}
		Notice that the PDE in this corollary is reminiscent of the continuity equation~\eqref{eq:continuity_linear} where $m = 0$ and $\zeta = \bar r \rho$. This is actually a first instance where branching processes provide competitors for RUOT problems.
	\end{Rem}
	Once again and as this result is very classical, we only provide elements of the proof. A reader interested in writing a complete proof could for instance see this result as a straightforward consequence of Theorems~\ref{thm:definition_stochastic_integral}, \ref{thm:properties_jump_processes} and~\ref{thm:branching_Ito} below.
	\begin{proof}[Elements of proof]
		Take $\varphi \in C^\infty(\T^d)$ a smooth test function. The goal is to prove that for all $t \in [0,1]$,
		\begin{equation}
		\label{eq:evolution_density_weak_form}
		\frac{\D}{\D t} \cg \varphi, \rho_t\cd = \frac{\nu}{2} \cg \Delta \varphi, \rho_t \cd + \bar r \cg \varphi, \rho_t\cd.
		\end{equation}
		Decomposing $\varphi$ as $\varphi_+ - \varphi_-$, we can assume that $\varphi \geq 0$. Moreover, it can be deduced from the branching property stated in Proposition~\ref{prop:branching_property} that
		\begin{equation*}
		\rho_t = \E_{R_0} \Big[ \Big\cg \Em_{\delta \cdot}[M_t], M \Big\cd \Big] = \E_{R_0}\left[ \sum_{X \in \Supp M} \Em_{\delta_X}[M_t] \right].
		\end{equation*}
		Therefore, by linearity of~\eqref{eq:evolution_density_weak_form} w.r.t.\ $\rho_t$, up to integrating w.r.t.\ $\E_{R_0}[M]$, which is a finite measure by assumption, we just have to prove the result for $\rho_t = \Em_{\delta_x}[M_t]$, $x \in \T^d$.
		
		If $\eps >0$ is sufficiently small, $u_0 := 1 - \eps \varphi$ can be used in Proposition~\ref{prop:FKPP_below1}. Moreover, we can show that with the same notations as there, for all $t \in [0,1]$ and $x \in \T^d$,
		\begin{gather*}
		u(t,x) = 1 - \eps \Em_{\delta_x}\Big[\cg \varphi, M_t \cd\Big] + \underset{\delta \to 0}{o}(\delta^2),\\
		\Delta u(t,x) = - \eps \Em_{\delta_x}\Big[\cg \Delta \varphi, M_t \cd\Big] + \underset{\delta \to 0}{o}(\delta^2),\\
		\Phi_{\boldsymbol q}(u) - \lambda_{\boldsymbol q}u = \lambda_{\boldsymbol q} - \eps \bar r \Em_{\delta_x}\Big[\cg \varphi, M_t \cd\Big] + \underset{\delta \to 0}{o}(\delta^2),
		\end{gather*}
		where the second line uses the translation invariance of the BBM stated in Proposition~\ref{prop:translation_invariance}, and the last one exploits the fact that the left derivative of $\Phi_{\boldsymbol q}$ at $u=1$ is $\bar r + \lambda_{\boldsymbol q}$. The results follows from identifying the terms of order one in $\eps$ in~\eqref{eq:FKPP}.
	\end{proof}
	
	In the case where $u_0$ is allowed to take values above $1$, the result of Proposition~\ref{prop:FKPP_below1} still holds, up to weakening the notion of solution of the FKPP equation. Later in Chapter~\ref{chap:duality}, we will refer to this notion of solutions as $D$-weak solutions as they are linked to classical solutions thanks to Duhamel's formula \cite[Section 2.3.1]{evans1998}. Recall that $(\tau_s)_{s \geq 0}$ is the heat kernel on the torus.  
	\begin{Prop}
		\label{prop:evolution_eq_u_from_BBM}
		Let $u_0:\T^d \to \R_+$ be measurable and nonnegative.
		We call for all $t \in[0,1] $ and $x \in \T^d$
		\begin{equation}
		\label{eq:def_u_t}
		u(t,x) := \Em_{\delta_x}\left[ \prod_{X \in \mathcal{N}(t)} u_0(X) \right] \in [0, +\infty].
		\end{equation}
		For all $t \in [0,1]$, there holds
		\begin{equation*}
		u(t) =  e^{-\lambda_{\boldsymbol q} t} \tau_{\nu t} \ast u_0 + \int_0^t e^{-\lambda_{\boldsymbol q}(t-s)} \tau_{\nu(t-s)}\ast  \Phi_{\boldsymbol q}(u(s))  \D s,
		\end{equation*}
		where both side of this equality might be infinite.
	\end{Prop}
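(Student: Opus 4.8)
The plan is to obtain the stated integral equation directly from the branching structure of $R^{\delta_x}$, by conditioning on the lifetime and the number of offspring of the initial particle. This ``first branching event'' decomposition is the classical route to the FKPP equation (a complete proof in the case $u_0 \in [0,1]$ being~\cite[Section~4.1]{li2010measure}); its advantage here is that every quantity involved is nonnegative, so Tonelli applies at each step and no integrability of $u_0$ is needed, both sides of the identity being allowed to equal $+\infty$. The only preliminary point is to make sure the right-hand side is meaningful, i.e.\ that $(t,x) \mapsto u(t,x)$ is Borel measurable on $[0,1]\times\T^d$. For this I would write $u(t,x) = \Em_{\delta_x}[G(M_t)]$ with $G(\mu) := \prod_{y \in \Supp \mu} u_0(y)$ (empty product $=1$), a Borel map $\M_\delta(\T^d) \to [0,+\infty)$; then, using the translation invariance of the BBM (Proposition~\ref{prop:translation_invariance}), $u(t,x) = \int_\Omega G\big(\theta_x(\omega(t))\big)\,\D R^{\delta_0}(\omega)$, where $\theta_x$ is the shift of all particle positions by $x$. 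Since $(t,\omega)\mapsto\omega(t)$ is jointly measurable on $[0,1]\times\Omega$ (a standard fact on the Skorokhod space, see~\cite[Section~12]{billingsley1999convergence}) and $(x,\mu)\mapsto G(\theta_x\mu)$ is Borel, the integrand is jointly measurable and Tonelli gives the joint measurability of $u$; in particular $(t,x)\mapsto\Phi_{\boldsymbol q}(u(t,x))$ is measurable.

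For the decomposition itself, I would use the explicit construction of $R^{\delta_x}$ from Subsection~\ref{subsec:def_BBM}: a root particle with trajectory $x + Y_1(\cdot)$ for a Brownian motion $Y_1$ of diffusivity $\nu$ issued from $0$, an independent lifetime $e_1 \sim \mathrm{Exp}(\lambda_{\boldsymbol q})$, and $L_1 \sim \boldsymbol p_{\boldsymbol q}$ offspring at the death position, each starting an independent BBM. Applying the strong Markov property (Proposition~\ref{prop:strong_markov}) at the stopping time $T := e_1 \wedge t \leq t$, together with the branching property (Proposition~\ref{prop:branching_property}, whose conventions $0 \cdot (+\infty) = 0$ and $z^0 = 1$ handle the value $+\infty$), yields
\begin{equation*}
u(t,x) = \Em\big[\mathbf 1_{\{e_1 > t\}}\, u_0(x + Y_1(t))\big] + \Em\big[\mathbf 1_{\{e_1 < t\}}\, u(t - e_1,\, x + Y_1(e_1))^{L_1}\big].
\end{equation*}
Now $Y_1(r)$ has density $\tau_{\nu r}$, the variable $e_1$ is independent of $(Y_1, L_1)$ with $\Prob(e_1 > t) = e^{-\lambda_{\boldsymbol q} t}$ and density $\lambda_{\boldsymbol q} e^{-\lambda_{\boldsymbol q} s}$ on $(0,+\infty)$, and $\Em[z^{L_1}] = \sum_k (q_k / \lambda_{\boldsymbol q}) z^k = \Phi_{\boldsymbol q}(z)/\lambda_{\boldsymbol q}$ for $z \in [0,+\infty]$; Tonelli then gives
\begin{equation*}
u(t,x) = e^{-\lambda_{\boldsymbol q} t}(\tau_{\nu t} \ast u_0)(x) + \int_0^t e^{-\lambda_{\boldsymbol q} s}\,\big(\tau_{\nu s} \ast \Phi_{\boldsymbol q}(u(t-s))\big)(x)\,\D s,
\end{equation*}
and the substitution $s \mapsto t - s$ in the integral produces exactly the announced formula.

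The result is classical and there is no deep obstacle; the only places calling for genuine care are the measurability statement above — needed merely so that the right-hand side makes sense — and the systematic use of the conventions for $+\infty$ in the strong Markov and branching identities, so that those identities (and the computation of $\Em[z^{L_1}]$) remain valid when $u$, hence $\Phi_{\boldsymbol q}(u)$, takes the value $+\infty$.
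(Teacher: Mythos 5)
Your proof is correct and follows essentially the same route as the paper: a first-branching-event decomposition obtained by applying the strong Markov property at $T = d_1\wedge t$ together with the branching property, then integrating out the lifetime $e_1$ and the offspring number $L_1$ (via $\Em[z^{L_1}] = \Phi_{\boldsymbol q}(z)/\lambda_{\boldsymbol q}$) and substituting $s \mapsto t-s$. The only addition on your side is the explicit joint-measurability check for $(t,x)\mapsto u(t,x)$, which the paper leaves implicit but which is indeed needed for the right-hand side to make sense.
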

	\begin{proof}
		Let us apply the strong Markov property of Proposition~\ref{prop:strong_markov} with $R := R^{\delta_x}$ for some $x \in \T^d$, $t \in [0,1]$, $T := S(0) \wedge t$ as defined in Definition~\ref{def:N_S}, and with $F(\mu) := \prod_{x \in \Supp \mu} u_0(x)$. As in Proposition~\ref{prop:strong_markov}, we call
		\begin{equation*}
		\Gamma(s,\mu) := \Em_\mu[F(M_s)] = \Em_{\mu}\left[ \prod_{X \in \mathcal N(s)} u_0(X) \right], \qquad s \in [0,1], \, \mu \in \M_\delta(\T^d).
		\end{equation*}
		We find
		\begin{equation}
		\label{eq:application_strong_markov}
		\Em_{\delta_x}\Bigg[ \prod_{X \in \mathcal N(t)} u_0(X) \Bigg| \F_{T} \Bigg] = \Gamma(t-T, M_T).
		\end{equation}
		The goal is to compute the expectation of this formula. Denoting by $\Prob^x := \Prob^\xx$ with $\xx = x \in \T^d$, and using Proposition~\ref{prop:correspondence_N_S}, we have
		\begin{align*}
		\Em_{\delta_x}\Big[ \Gamma(t-T, M_T) \Big] &= \Em_x\Big[ \Gamma\Big(t-S(M^0, 0)\wedge t), M^0_{S(M^0, 0)\wedge t}\Big) \Big] \\
		&= \Em_x\Big[ \Gamma\Big( t - d_1\wedge t, M_{d_1\wedge t}^0 \Big) \Big],
		\end{align*}
		where the second line is obtained noticing that under $\Prob^x$, $S(M^0,0) = d_1$, \emph{a.s.} Hence, it is an exponential time of parameter~$\lambda_{\boldsymbol q}$, and with probability $e^{-\lambda_{\boldsymbol q}t}$, we have $d_1 > t$. In that case, $d_1 \wedge t = t$, $M^0_{d_1 \wedge t} = \delta_{X_1(t)}$ and the r.h.s.\ in~\eqref{eq:application_strong_markov} rewrites
		\begin{equation*}
		\Prob^x\mbox{-\emph{a.s.} on }\{ d_1 > t \}, \qquad \Gamma\Big( t - d_1\wedge t, M_{d_1\wedge t}^0 \Big)= \Gamma(0, \delta_{X_1(t)}) = u_0\big( X_1(t) \big).
		\end{equation*}
		
		Else, if $d_1 \leq t$, we have $M^0_{d_1 \wedge t} = M^0_{d_1}= L_1 \delta_{X_1(d_1)}$. In this case, the r.h.s.\ in~\eqref{eq:application_strong_markov} rewrites
		\begin{align*}
		\Prob^x\mbox{-\emph{a.s.} on }\{ d_1 \leq t \}, \qquad \Gamma\Big( t - d_1\wedge t, M_{d_1\wedge t}^0 \Big) &= \Gamma\Big( t - d_1, L_1 \delta_{X_1(d_1)} \Big) \\
		&=  \Gamma\Big( t - d_1, \delta_{X_1(d_1)} \Big)^{L_1}\\
		&= u\Big(t-d_1, X_1(d_1)\Big)^{L_1},
		\end{align*}
		where we used the branching property~\eqref{eq:branching_property_u} to get the second line, and the definition~\eqref{eq:def_u_t} of $u$ to get the third one.
		
		As a consequence, taking the expectation of~\eqref{eq:application_strong_markov} provides
		\begin{equation*}
		u(t,x) = e^{- \lambda_{\boldsymbol q}t} \Em_{x}\Big[ u_0\big(X_1(t)\big) \Big| d_1>t \Big] + \Big( 1 - e^{-\lambda_{\boldsymbol q}t} \Big)\Em_{x}\Bigg[ \Big(u\big((t-d_1), X_1(d_1)\big)\Big)^{L_1} \Bigg| d_1 \leq t \Bigg].
		\end{equation*}
		The random variables $d_1 = e_1$ and $X_1(t) = x + Y_1(t)$ are independent under $\Prob^x$, and the law of $Y_1(t)$ is $\tau_{\nu t}$. Consequently, the first expectation in the r.h.s.\ is
		\begin{equation*}
		\Em_{x}\Big[ u_0\big(X_1(t)\big) \Big| d_1>t \Big] = \Em_{x}\Big[ u_0\big(X_1(t)\big)\Big] =  \tau_{\nu t}\ast u_0 (x).
		\end{equation*}
		
		For the second term, the computation is slightly more involved, and can be written as:
		\begin{align*}
		\Em_{x}\Bigg[ \Big(u\big((t-d_1), X_1(d_1)\big)\Big)^{L_1} \Bigg| d_1 \leq t \Bigg] &= \int_0^t  \Em_{x}\Bigg[ \Big(u\big((t-s), X_1(s)\big)\Big)^{L_1} \Bigg| d_1 = s \Bigg] \Prob^{x}(d_1 \in \D s)\\
		&= \int_0^t  \Em_{x}\Bigg[ \Big(u\big((t-s), X_1(s)\big)\Big)^{L_1} \Bigg] \lambda_{\boldsymbol q}e^{-\lambda_{\boldsymbol q}s}\D s\\
		&= \int_0^t e^{-\lambda_{\boldsymbol q}s} \left[ \tau_{\nu s} \ast \Phi_{\boldsymbol q}\big(u(t_s)\big) \right](x) \D s,
		\end{align*}
		where the second lines uses the law of $d_1=e_1$ and the independence of $d_1$ and $(L_1, X_1)$, and the third line is a direct computation using the independence of $L_1$ and $X_1$, and their respective laws. The result follows from gathering everything and changing the variable according to $s \mapsfrom t-s$
	\end{proof}

	\paragraph{The martingale problem of the branching Brownian motion}
	
	For a given $R_0 \in \P(\M_\delta(\T^d))$, the martingale problem associated with the initial condition $R_0$ and the operator $\mathcal L_{\nu, \boldsymbol q}$ defined in~\eqref{eq:operator_L} is defined as follows.
	\begin{Def}
		\label{def:martingale_characterization}
		We say that $R \in \P(\Omega)$ solves the martingale problem associated with $R_0$ and $\L_{\nu, \boldsymbol q}$ provided 
		\begin{enumerate}
			\item The law of $M_0$ under $R$ is $R_0$.
			\item For all nonpositive smooth function $\psi: [0,1]\times \T^d \to \R_-$, the process 
			\begin{equation}
			\label{eq:exp_mart_BBM}
			\exp\left( \frac{\cg \psi(t), M_t\cd}{\nu} -\frac{\cg \psi(0), M_0\cd }{\nu} - \frac{1}{\nu}\int_0^t \big\cg \partial_t \psi(s) + \mathcal L_{\nu, \boldsymbol q}[\psi(s)], M_s \big\cd \D s \right), \qquad t \in [0,1],
			\end{equation}
			is a $(\F_t)_{t \in [0,1]}$-martingale.
		\end{enumerate}
	\end{Def}

	This property completely characterizes $R\sim\BBM(\nu, \boldsymbol q, R_0)$ as stated in the proposition below. Later, in Chapter~\ref{chap:characterization_finite_entropy}, we will use such formulations to define BBMs where particle undergo a drift in addition to there Brownian trajectories, and where the branching mechanism depends on $t$ and $x$, see Definition~\ref{def:modified_BBM}.
	\begin{Prop}
		\label{prop:martingale_problem}
		The law $R \sim \BBM(\nu, \boldsymbol q, R_0)$ is the unique solution of the martingale problem associated with $R_0$ and $\L_{\nu, \boldsymbol q}$.
	\end{Prop}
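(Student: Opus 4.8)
The proof naturally splits into two parts: showing that $R \sim \BBM(\nu, \boldsymbol q, R_0)$ \emph{solves} the martingale problem, and showing \emph{uniqueness} of the solution.

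\medskip

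\textbf{Existence.} The first point of Definition~\ref{def:martingale_characterization} is immediate from Definition~\ref{def:BBM_general_R0}. For the martingale property in the second point, I would fix a nonpositive smooth $\psi : [0,1] \times \T^d \to \R_-$ and a time $t \in [0,1]$, and verify that the conditional expectation of the process in~\eqref{eq:exp_mart_BBM} at a later time $t' \geq t$, given $\F_t$, equals its value at time $t$. By the strong Markov property (Proposition~\ref{prop:strong_markov}) applied at the deterministic time $T := t$, this reduces to a statement about the BBM started from a deterministic configuration $\mu = M_t$: one must show that, writing $w(s, \mu) := \Em_\mu\big[\exp\big(\tfrac{1}{\nu}\cg \psi(s), M_{s-t}\cd\big)\big]$ for $s \geq t$, the function $s \mapsto \exp\big(-\tfrac{1}{\nu}\int_t^s \cg \partial_t \psi(\sigma) + \mathcal L_{\nu, \boldsymbol q}[\psi(\sigma)], M_{\sigma - t}\cd \D\sigma\big)$ times the martingale increment has the right conditional structure. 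The clean way is to use the branching property (Proposition~\ref{prop:branching_property}): $w(s,\mu) = \exp(\cg \bar\psi(s), \mu\cd)$ where $\bar\psi(s,x) = \log \Em_{\delta_x}[\exp(\tfrac{1}{\nu}\cg\psi(s), M_{s-t}\cd)]$, and then differentiate in $s$ using the generator formula~\eqref{eq:derivative_BBM_phi} from Proposition~\ref{prop:generator}. Concretely, I would show that for the single-particle quantity $g(s,x) := \Em_{\delta_x}[\exp(\tfrac{1}{\nu}\cg \psi(s), M_{s-t}\cd)]$ one has $\partial_s g = \tfrac{1}{\nu} g \big(\partial_s \psi + \mathcal L_{\nu, \boldsymbol q}[\psi]\big) + (\text{generator term})$, arranged so that the exponential compensator in~\eqref{eq:exp_mart_BBM} exactly cancels the drift; here the identity $\mathcal L_{\nu, \boldsymbol q}[\phi] = \nu \mathcal K_{\nu, \boldsymbol q}[u]/u$ with $u = e^{\phi/\nu}$ and Proposition~\ref{prop:FKPP_below1} (valid since $u_0 = e^{\psi(t)/\nu} \in (0,1]$ as $\psi \leq 0$) are the workhorses. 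The nonpositivity of $\psi$ is precisely what keeps all the products and expectations bounded by $1$, so everything is trivially integrable and the local martingale is a genuine martingale.

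\medskip

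\textbf{Uniqueness.} Suppose $R, R' \in \P(\Omega)$ both solve the martingale problem associated with $R_0$ and $\L_{\nu, \boldsymbol q}$. Since $\Omega = \cadlag([0,1]; \M_+(\T^d))$ and $\F = \F_1$, it suffices to show $R$ and $R'$ have the same finite-dimensional distributions, and in fact — because a law on measure-valued paths is determined by its multi-time Laplace functionals — it suffices to show that for all $0 \leq t_1 < \dots < t_n \leq 1$ and nonpositive $\varphi_1, \dots, \varphi_n$, the quantities $\E_R[\exp(\sum_i \tfrac{1}{\nu}\cg \varphi_i, M_{t_i}\cd)]$ agree for $R$ and $R'$. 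I would prove this by backward induction on $n$, peeling off the last time. Given the terminal data at time $t_n$, solve the backward FKPP-type equation~\eqref{eq:FKPP} (equivalently the Hamilton–Jacobi equation~\eqref{eq:constraint_phi_BBM}) on $[t_{n-1}, t_n]$ with terminal condition $e^{\varphi_n/\nu} \in (0,1]$; by Proposition~\ref{prop:FKPP_below1} (or its time-dependent analogue) the solution $u$ stays in $[0,1]$ and is classical, so $\psi := \nu \log u$ is a nonpositive smooth solution of $\partial_t \psi + \mathcal L_{\nu, \boldsymbol q}[\psi] = 0$. Plugging this $\psi$ into~\eqref{eq:exp_mart_BBM}, the compensator integral vanishes, and the martingale property between times $t_{n-1}$ and $t_n$ gives, under both $R$ and $R'$,
\begin{equation*}
\E\Big[ \exp\Big(\tfrac{1}{\nu}\cg \varphi_n, M_{t_n}\cd + \textstyle\sum_{i<n}\tfrac{1}{\nu}\cg \varphi_i, M_{t_i}\cd\Big)\Big] = \E\Big[ \exp\Big(\tfrac{1}{\nu}\cg \psi(t_{n-1}), M_{t_{n-1}}\cd + \textstyle\sum_{i<n}\tfrac{1}{\nu}\cg \varphi_i, M_{t_i}\cd\Big)\Big],
\end{equation*}
which reduces the $n$-time Laplace functional to an $(n-1)$-time one (note $\varphi_{n-1} + \psi(t_{n-1})$ is again nonpositive, since $\psi(t_{n-1}) \leq 0$, so we stay within the allowed class). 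The base case $n = 1$ follows from the same martingale identity applied on $[0, t_1]$ together with the prescribed initial law $R_0$. Hence all multi-time Laplace functionals coincide, so $R = R'$; since $\BBM(\nu, \boldsymbol q, R_0)$ itself solves the problem by the existence part, it is the unique solution.

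\medskip

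\textbf{Main obstacle.} The delicate point is the existence direction — verifying rigorously that the compensated exponential is a true martingale rather than merely a local martingale or a supermartingale. This requires (i) checking that the function $w$ built by conditioning really solves the relevant PDE in a strong enough sense to differentiate (handled via Proposition~\ref{prop:FKPP_below1} and the smoothness/boundedness of $\psi$), and (ii) confirming the exact algebraic cancellation between $\partial_t \psi + \tfrac{\nu}{2}\Delta\psi + \tfrac12|\nabla\psi|^2 + \Psi^*_{\nu,\boldsymbol q}(\psi)$ and the generator of the BBM acting on $\mu \mapsto \exp(\tfrac1\nu\cg\psi, \mu\cd)$, which is a computation one must do carefully tracking the $\Phi_{\boldsymbol q}$-term. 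The excerpt has, however, front-loaded most of this work into Propositions~\ref{prop:generator} and~\ref{prop:FKPP_below1}, so the proof here is essentially an assembly of those ingredients plus the strong Markov and branching properties; the nonpositivity assumption on $\psi$ sidesteps all integrability subtleties that would otherwise be the real difficulty (and which, as the introduction flags, are genuinely hard when $\psi$ is allowed to be positive — but that is deferred to later chapters).
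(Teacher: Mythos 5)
Your proposal is correct and follows essentially the same strategy as the paper: existence via the (strong) Markov property, the branching property, and the generator formula~\eqref{eq:derivative_BBM_phi}, with nonpositivity of $\psi$ guaranteeing integrability; uniqueness via the method of duality, solving the backward generator equation with nonpositive data and using the martingale property to prescribe the conditional Laplace functionals one time-step at a time.

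Two small remarks. First, the paper's existence proof invokes a lemma of Stroock to pass from the multiplicative exponential martingale~\eqref{eq:exp_mart_BBM} to an \emph{additive} compensated process $\exp(\tfrac1\nu\cg\psi(t),M_t\cd) - \int_0^t \exp(\tfrac1\nu\cg\psi(s),M_s\cd)\cg\partial_s\psi(s)+\L_{\nu,\boldsymbol q}[\psi(s)],M_s\cd\,\D s$, which is cleaner to differentiate than the exponential itself; you work directly with the exponential, which is fine in principle but you would need to be more careful with the product rule when differentiating in $s$. Second, your displayed formula $\partial_s g = \tfrac1\nu g(\partial_s\psi + \L_{\nu,\boldsymbol q}[\psi]) + (\text{generator term})$ is muddled: the generator term \emph{is} the $\L$ contribution (via $\L_{\nu,\boldsymbol q}[\phi] = \nu\,\mathcal K_{\nu,\boldsymbol q}[u]/u$ with $u = e^{\phi/\nu}$), so there is no extra summand; the right derivative at $s = t$ is exactly $\tfrac1\nu\,e^{\psi(t,x)/\nu}\bigl(\partial_t\psi(t,x) + \L_{\nu,\boldsymbol q}[\psi(t)](x)\bigr)$. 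Your backward-induction phrasing of uniqueness is a slightly more explicit unrolling of what the paper compresses into the single observation that the conditional law of $M_{t_1}$ given $\F_{t_0}$ is prescribed; the content is the same.
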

	We do not provide a complete proof, that the interested reader could find in~\cite[Section~9.4]{ethier1986markov}. However, let us give the main ideas. Note in particular that in the proof below, we do not rely on the tree structure of the BBM, we only use the Markov property, the branching property and formula~\eqref{eq:derivative_BBM_phi}.  
	\begin{proof}[Elements of proof]
		Let us first justify why for all nonpositive and smooth $\psi$, formula~\eqref{eq:exp_mart_BBM} defines a martingale. By a classical result in stochastic calculus (this is a consequence of \cite[Lemma~2.1]{stroock1971diffusion}) it suffices to prove that
		\begin{equation*}
		\exp\left( \frac{\cg \psi(t), M_{t} \cd}{\nu} \right) - \int_0^{t} \exp\left( \frac{\cg \psi(s), M_{s} \cd}{\nu} \right) \cg \partial_s \psi(s) + \L_{\nu, \boldsymbol q}[\psi(s)], M_s \cd \D s, \qquad t \in [0,1],
		\end{equation*}
		is a martingale.
		
		To do so, the Markov property from Proposition~\ref{prop:strong_markov} shows that we only need to justify that for all $t \in [0,1]$ and all $\mu \in \M_\delta(\T^d)$,
		\begin{equation*}
		\Em_\mu\left[ \exp\left( \frac{\cg \psi(t), M_{t} \cd}{\nu} \right) - \int_{0}^{t}\! \exp\left( \frac{\cg \psi(s), M_{s} \cd}{\nu} \right) \cg \partial_s \psi(s) + \L_{\nu, \boldsymbol q}[\psi(s)], M_s \cd \D s  \right] =\exp\left( \frac{\cg \psi(0), \mu \cd}{\nu} \right).
		\end{equation*}
		As this is true for $t = 0$, it suffices to show that the right derivative w.r.t.\ $t$ of the l.h.s.\ cancels. We compute separately the first and the second term. For the second, we have
		\begin{align*}
		\frac{\D \, }{\D t+}	\Em_\mu\Bigg[ \int_{0}^{t} \exp\left( \frac{\cg \psi(s), M_{s} \cd}{\nu} \right) &\cg \partial_s \psi(s) + \L_{\nu, \boldsymbol q}[\psi(s)], M_s \cd \D s  \Bigg]\\
		& = \Em_\mu\left[ \exp\left( \frac{\cg \psi(t), M_{t} \cd}{\nu} \right)\cg \partial_t \psi(t)+ \L_{\nu, \boldsymbol q}[\psi(t)], M_{t} \cd \right].
		\end{align*}
		For the first one, understanding $\D/\D s$ taken as a right derivative at $s=0$:
		\begin{align*}
		\frac{\D \, }{\D t+} \Em_\mu\Bigg[ \exp\left( \frac{\cg \psi(t), M_{t} \cd}{\nu} \right)  \Bigg] &= \Em_\mu\left[ \frac{\D \, }{\D s} \Em_\mu\left[\exp\left( \frac{\cg \psi(t+s), M_{t + s} \cd}{\nu} \right) \bigg| \F_{t} \right]\right]\\
		&=  \Em_\mu\left[ \frac{\D \, }{\D s} \Em_{M_{t}}\left[\exp\left( \frac{\cg \psi(t + s), M_{s} \cd}{\nu} \right) \right]  \right]\\
		&= \Em_\mu\left[ \frac{\D \, }{\D s} \prod_{X \in \mathcal N(t)} \Em_{\delta_X}\left[ \exp\left( \frac{\cg \psi(t + s), M_{s} \cd}{\nu} \right) \right] \right]\\
		&= \Em_\mu\left[ \exp\left( \frac{\cg \psi(t), M_{t} \cd}{\nu} \right) \cg\partial_t \psi(t) +  \L_{\nu, \boldsymbol q}[\psi(t)], M_{t} \cd  \right].
		\end{align*} 
		where we used the Markov property from Proposition~\ref{prop:strong_markov} at the second line, the branching property~\eqref{eq:branching_property_varphi} at the third line, and formula~\eqref{eq:derivative_BBM_phi} at the last line. The martingale property follows.
		
		Concerning the uniqueness, the method classically used is the so-called method of duality (see~\cite[Section~1.6]{etheridge2000introduction}), and develops as follows. To prove that $R$ is unique, it suffices to show that for all $0 \leq t_0 \leq t_1 \leq 1$ and all smooth nonpositive function $\psi_{t_1} = \psi_{t_1}(x)$ on $\T^d$, the value of the random variable
		\begin{equation*}
		\E_R\left[ \exp\left( \frac{\cg \psi_{t_1}, M_{t_1}\cd}{\nu} \right) \bigg| \F_{t_0} \right]
		\end{equation*}
		is prescribed by the martingale problem. Indeed, in that case, the conditional law of the measure-valued random variable $M_{t_1}$ knowing $\mathcal{F}_{t_0}$ is prescribed, and as a consequence, as $R_0$ is given, so is the finite-dimensional distributions of the canonical process, which suffices to characterize $R$. 
		
		Now, let us consider $\phi:\R_+ \times \T^d \to \R$ the unique classical solution of
		\begin{equation*}
		\left\{ \begin{gathered}
		\partial_t \phi = \L_{\nu, \boldsymbol q}[\phi], \\
		\phi(0) = \psi_{t_1}.
		\end{gathered} \right.
		\end{equation*}
		As $\phi(0) \leq 0$, it is not hard to see that $\phi$ exists, is unique, and remains nonpositive and smooth everywhere. This is mainly because this PDE, similarly to the ODE $\partial_t \phi = \Psi^*_{\nu, \boldsymbol q}(\phi)$, does not blow-up when restricted to the stable set of nonpositive functions.
		
		Next, for $t \leq t_1$, call $\psi(t) := \phi(t_1 - t)$, so that for all $t \leq t_1$, we have $\partial_t \psi + \L_{\nu, \boldsymbol q}[\psi] = 0$. Using the martingale property, we get
		\begin{equation*}
		\E_R\left[ \exp\left( \frac{\cg \psi(t_1), M_{t_1}\cd}{\nu} - \frac{\cg \psi(0) ,M_0 \cd}{\nu} \right) \bigg| \F_{t_0} \right] = \exp\left( \frac{\cg \psi(t_0), M_{t_0}\cd}{\nu} - \frac{\cg \psi(0) ,M_0 \cd}{\nu} \right).
		\end{equation*}
		This identity rewrites
		\begin{equation*}
		\E_R\left[ \exp\left( \frac{\cg \psi_{t_1}, M_{t_1}\cd}{\nu} \right) \bigg| \F_{t_0} \right] = \exp\left( \frac{\cg \phi(t_1-t_0), M_{t_0}\cd}{\nu} \right),
		\end{equation*}
		which lets us conclude.
	\end{proof}
	
	In the case where $\psi$ is not nonpositive, it is still possible to prove that the process defined in~\eqref{eq:exp_mart_BBM} is a local martingale, provided $\Psi^*_{\nu, \boldsymbol q}(\psi)$ remains bounded.
	\begin{Prop}
		\label{prop:loc_exp_mart_BBM}
		Let $R_0 \in \P(\M_{\delta}(\T^d))$ be an initial law and $R \sim \BBM(\nu, \boldsymbol q, R_0)$ be a BBM. Let $\psi: \R_+ \times \T^d \to \R$ be smooth, and assume that $\Psi^*_{\nu, \boldsymbol q}(\psi)$ remains bounded on $[0,1]\times \T^d$. For all $x \in \T^d$, the process
		\begin{equation}
		\label{eq:loc_exp_mart_BBM}
		\exp\left( \frac{\cg \psi(t), M_t\cd}{\nu} - \frac{\cg \psi(0), M_0\cd}{\nu}  - \frac{1}{\nu}\int_0^t \big\cg \partial_t \psi(s) + \mathcal L_{\nu, \boldsymbol q}[\psi(s)], M_s \big\cd \D s \right), \qquad t \in [0,1],
		\end{equation}
		is a $(\F_t)_{t \geq 0}$-local martingale under $R$.
		As it is nonnegative, it is a supermartingale as well.
	\end{Prop}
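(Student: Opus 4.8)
The plan is to reduce the statement to the nonpositive case handled in Proposition~\ref{prop:martingale_problem} by a localization argument, combined with the fact that the process in~\eqref{eq:loc_exp_mart_BBM} is a genuine martingale \emph{when one truncates the number of particles}. First I would introduce, for $N \in \N$, the stopping time
\begin{equation*}
T_N := \inf\{ t \in [0,1] \ : \ M_t(\T^d) \geq N \} \wedge 1,
\end{equation*}
which is a.s.\ well defined and increases to $1$ as $N \to \infty$ (because under $R$ the total number of particles is a.s.\ finite on $[0,1]$, thanks to the first-moment assumption~\eqref{eq:q_finite_mean} and Definition~\ref{def:branching_mechanism}). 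Then I would show that the stopped process
\begin{equation*}
Z^N_t := \exp\left( \frac{\cg \psi(t \wedge T_N), M_{t \wedge T_N}\cd}{\nu} - \frac{\cg \psi(0), M_0\cd}{\nu} - \frac{1}{\nu}\int_0^{t \wedge T_N} \big\cg \partial_s \psi(s) + \L_{\nu, \boldsymbol q}[\psi(s)], M_s \big\cd \D s \right)
\end{equation*}
is a true $(\F_t)$-martingale under $R$; granting this, $T_N$ is a localizing sequence, so the original process is a local martingale, and nonnegativity of an exponential plus Fatou gives the supermartingale property automatically. So the crux is the martingaleness of $Z^N$.

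To prove $Z^N$ is a martingale I would repeat the computation in the ``Elements of proof'' of Proposition~\ref{prop:martingale_problem}, but keeping track of why the finiteness issues disappear under truncation. On the event $\{t < T_N\}$ the configuration $M_t$ has at most $N$ particles, and since $\psi$ is continuous on the compact $[0,1] \times \T^d$ it is bounded, say $|\psi| \leq \|\psi\|_\infty$; hence $\exp(\cg \psi(t), M_t\cd / \nu) \leq \exp(N \|\psi\|_\infty / \nu)$, so all the expectations of products appearing are trivially finite and bounded uniformly. The boundedness of $\Psi^*_{\nu, \boldsymbol q}(\psi)$ then makes the integrand $\partial_s \psi + \L_{\nu, \boldsymbol q}[\psi] = \partial_s\psi + \frac{1}{2}|\nabla\psi|^2 + \frac{\nu}{2}\Delta\psi + \Psi^*_{\nu,\boldsymbol q}(\psi)$ bounded on $[0,1]\times\T^d$, so the drift term is Lipschitz in $t$ and contributes no singular behaviour. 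One then applies the strong Markov property (Proposition~\ref{prop:strong_markov}), the branching property~\eqref{eq:branching_property_varphi}, and the infinitesimal generator identity~\eqref{eq:derivative_BBM_phi} exactly as before, now legitimately because every quantity is integrable; since $T_N$ is itself a stopping time, the optional-stopping manipulation (``stop when the $N$-th particle appears'') is justified. The computation shows the right $t$-derivative of the conditional expectation of $Z^N$ vanishes, hence $Z^N$ is a martingale.

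The main obstacle I anticipate is \emph{not} the algebra — that is essentially a bounded-particle replay of Proposition~\ref{prop:martingale_problem} — but the careful verification that $T_N \nearrow 1$ almost surely and that formula~\eqref{eq:derivative_BBM_phi}, stated there for \emph{nonpositive} $\phi$, can be applied here. For the latter, note that on $\{t < T_N\}$ we only ever evaluate $\exp(\cg \psi, \mu\cd/\nu)$ at configurations with $\leq N$ Diracs, so the branching-property factorization reduces everything to single-particle expectations $\Em_{\delta_x}[\exp(\cg\psi(t+s),M_s\cd/\nu)]$, and the proof of Proposition~\ref{prop:generator} (the ``Elements of proof'' with $u = \exp(\psi/\nu)$) goes through verbatim for a single particle without any sign restriction on $\psi$, precisely because for a single particle over an infinitesimal time there is at most one branching event and the relevant expectation is $u(0)^k + o(1)$ with $u(0) = e^{\psi(x)/\nu}$ finite. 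Assembling these pieces — localization by $T_N$, boundedness of all exponentials on $\{t < T_N\}$, the unrestricted single-particle generator identity, and the branching/Markov factorization — yields that~\eqref{eq:loc_exp_mart_BBM} is a local martingale, hence a nonnegative supermartingale.
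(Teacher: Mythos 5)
Your overall strategy—localize at a stopping time and reduce to the nonpositive case already proved in Proposition~\ref{prop:martingale_problem}—is a genuinely different route from the paper's. The paper in fact \emph{postpones} this proof to Subsection~\ref{subsec:extended_Ito}: once the extended Itô formula of Theorem~\ref{thm:branching_Ito} is available, the exponent in~\eqref{eq:loc_exp_mart_BBM} is recognized as $X_t - \tfrac12 [X]_t - \int_0^t\int(e^y-1)\,\D\eta_s(y)\D s$ where $X_t := \nu^{-1}I[\nabla\psi]_t + \sum_{k\neq 1}J^k[(k-1)\psi/\nu]_t$ is a semi-martingale with simple jumps and no finite-variation part, and then Theorem~\ref{thm:Ito_with_jumps} (the exponential-martingale theorem for such processes) applies directly; the boundedness of $\Psi^*_{\nu,\boldsymbol q}(\psi)$ is exactly what guarantees the integrability hypothesis of that theorem. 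This is both shorter and avoids ever manipulating expectations that might be infinite.

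Your route, by contrast, has a real gap at the crux you yourself identify. First, the stopping time $T_N = \inf\{t: M_t(\T^d)\geq N\}\wedge 1$ does \emph{not} bound the stopped process: at $T_N$ the number of particles jumps by $L-1$ with $L$ distributed according to $\boldsymbol p_{\boldsymbol q}$, and since the proposition only assumes an exponential bound (not bounded support) on $\boldsymbol q$, the random variable $M_{T_N}(\T^d)$—and hence $Z_{T_N}$—is unbounded. So the claim ``all the expectations of products appearing are trivially finite and bounded uniformly'' is false on the event $\{t\geq T_N\}$, and the uniform integrability of $Z^N$ needed to make it a true martingale is not established. Second, and more structurally, ``replay the computation of Proposition~\ref{prop:martingale_problem} with the stopping in place'' is not a precise argument: that computation differentiates $\Em_\mu[\mathcal E^\psi_t]$ at a fixed configuration $\mu$, and the stopping time is path-dependent, so one must decompose the increment of $\E[Z^N_{t+h}\,|\,\F_t]$ into a ``still running'' part and a ``just stopped'' part and show the two contributions cancel; you do not carry this out, and invoking ``the optional-stopping manipulation'' presupposes the martingale property one is trying to prove. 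Finally, extending formula~\eqref{eq:derivative_BBM_phi} beyond nonpositive $\psi$ is not ``verbatim'': the proof of Proposition~\ref{prop:generator} is itself only sketched in the paper, and for $u = e^{\psi/\nu}>1$ the expectation $\Em_{\delta_x}[\prod u(X)]$ is the FKPP solution which can blow up; the right notion here is exactly the $D$-weak solution framework the paper develops in Section~\ref{sec:linkFKPP_BBM}, and glossing over it is where the paper calls the direct approach ``tedious''. In short: the localization idea is sensible in spirit and the paper acknowledges such a proof exists, but the three issues above—unbounded jump at $T_N$, circularity of the stopped-martingale verification, and the unjustified extension of the single-particle generator identity—are genuine gaps that the Itô-based proof is designed to sidestep.
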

	It would be possible to write a tedious proof of this result only using the properties of the branching Brownian motion derived until now. However, later in Chapter~\ref{chap:characterization_finite_entropy}, we will introduce new tools that will make this proposition rather simple. For this reason, we prefer to postpone this proof to Subsection~\ref{subsec:extended_Ito}. The reader interested in understanding this proposition as quickly as possible only needs to read Sections~\ref{sec:discontinuous_stochastic_calculus} and~\ref{sec:new_processes}.  
	
	\section{The entropy functional and the branching Schrödinger problem}
	\label{sec:entropy}
	In this section, we state rigorously the branching Schrödinger problem, and then we gather some information about the relative entropy functional~\eqref{eq:def_H} that will be useful later on for our analysis. We start with a crucial convex inequality providing the Legendre transform of this functional, and we present its behavior under conditioning of the measures involved. Finally, we introduce a new set of assumptions for the BBM that allows to define the density of \emph{any competitor} of the branching Schrödinger problem. These assumptions will be extensively used in Chapters~\ref{chap:characterization_finite_entropy} and~\ref{chap:equivalence_competitors}, but nowhere else. Therefore, the reader who is only interested in the results of Chapters~\ref{chap:duality} and~\ref{chap:additional_results} can skip Subsection~\ref{subsec:exponential_bounds} below.
	
	\subsection{The branching Schrödinger problem}
	
	Having at our disposal the entropy functional $H$ defined by formula~\eqref{eq:def_H} and the law of the BBM defined as a probability measure on $\cadlag([0,1],\M_+(\T^d))$ in Section~\ref{sec:presentation_BBM}, we only need to define the expectation measure of the marginals in order to state the branching Schrödinger problem.
	
	\begin{Def}
		\label{def:intensity_measure}
		Let $P \in \P(\cadlag([0,1],\M_+(\T^d)))$ and $t \in [0,1]$. Then $\E_P[M_t]$ is defined as the function defined on the Borel sets of $\T^d$ by $A \mapsto \E_P[M_t(A)] \in [0, +\infty]$. We say that the expectation measure of $M_t$ is finite if $\E_P[M_t] \in \M_+(\T^d)$.  
	\end{Def}
	
	As discussed at the beginning of Section 9.5 of \cite{daley2008introduction}, as $\E_P[M_t]$ is always countably additive, for the expectation measure to be finite it is enough to assume $\E_P[M_t(\T^d)] < + \infty$. We present below in Section~\ref{subsec:exponential_bounds} a set of assumptions on $R_0$ and $\boldsymbol q$ which guarantee the finiteness of the expectation of $M_t$ under $R$ or under a measure $P$ having a finite entropy with respect to $R$. However, these assumptions will \emph{not} be needed in Chapter~\ref{chap:duality} about the equivalence of the values. In any case, to define the branching Schrödinger problem, we simply (implicitly) restrict to $P$ for which the expectation measure of $M_0,M_1$ is finite. 
	
	\begin{Def}
		\label{def:BrSch}
		Let $R \sim \BBM(\nu, \boldsymbol{q}, R_0)$, where $\nu > 0$, $\boldsymbol{q}$ is a branching mechanism and $R_0 \in \P(\M_\delta(\T^d))$. For $\rho_0, \rho_1 \in \M_+(\T^d)$ we define 
		\begin{equation*}
		\BrSch_{\nu, \boldsymbol q, R_0}(\rho_0,\rho_1) := \inf_{P} \left\{ \nu H(P|R) \ : \ \E_P[M_0] = \rho_0 \text{ and } \E_P[M_1] = \rho_1 \right\}    
		\end{equation*}
		where the minimum is taken among all probability distributions $P$ on $\cadlag([0,1],\M_+(\T^d))$. 
	\end{Def}
	
	As it will be discussed later in Subsection~\ref{subsec:counterexamples}, it is not always easy to know when the problem is not empty, and when one can guarantee the existence of a competitor. 
	
	We will also need a simpler problem, where one is only concerned with the initial intensity. Again for this one non-emptiness of the problem or existence of a minimizer can be a delicate question.
	
	\begin{Def}
		\label{def:init}
		Let $R_0 \in \P(\M_\delta(\T^d))$. For $\rho_0 \in \M_+(\T^d)$ we define 
		\begin{equation}
		\label{eq:def_init}
		\Init_{R_0}(\rho) := \inf_{P_0} \left\{  H(P_0|R_0) \ : \ \E_{P_0}[M] = \rho_0 \right\}    
		\end{equation}
		where the minimum is taken among all $P_0$ probability distributions on $\M_+(\T^d)$. 
	\end{Def}
		In Subsection~\ref{subsec:example_static}, we will give examples where solutions of this problem exist and can be computed explicitly. But for now, let us show that actually, this problem has already been studied in the literature. This remark can easily be skipped at a first reading.
	
	\begin{Rem}
		\label{rem:grand_canonical}
		We became aware after finishing a first version of this work that the problem that we just defined is a reformulation of the so-called inverse problem in classical statistical mechanics, in the grand canonical ensemble, as studied in~\cite{chayes1984inverse,chayes1984validity}. This problem, and in particular its zero temperature limit, is currently being reinterpreted in the formalism of optimal transport by several authors~\cite{dimarino2021grand}. As our formulation is different, let us explain further the analogy.
		
		In~\cite{chayes1984validity}, a reference random system of a random number of particles (at fixed time and temperature 1) is described by a probability space $(\Lambda, \mu)$ where the particles live, and a family of interaction potentials $w_N : \Lambda^N \to \R \cup \{ + \infty \}$, $N \in \N^*$. The idea is that given a family of functions $f_N: \Lambda^N \to \R$, $N \in \N^*$, which are interpreted as a quantity depending on the configuration of particles, the mean value of this quantity is
		\begin{equation}
		\label{eq:grand_canonical_probability}
		\frac{\displaystyle 1 + \sum_{N \geq 1} \frac{1}{N!} \int f_N e^{-w_N}\D\mu^{\otimes N}}{\displaystyle 1 + \sum_{N \geq 1} \frac{1}{N!} \int e^{-w_N}\D \mu^{\otimes N}}.
		\end{equation}
		Note the prefactor $1/N!$ reflecting the fact that the particles are assumed to be indistinguishable. It means that a configuration of particles is seen as an element $(x_1, \dots, x_N)$ of the disjoint union $\cup_{k \in \N} \Lambda^k$, and the formula above describes a certain probability measure on this disjoint union: the probability of seeing an (ordered) configuration $(x_1, \dots, x_N)$ is, proportional to $(1/N!) e^{-w_N(x_1, \dots, x_N)}\D \mu(x_1) \dots \D \mu(x_N)$.
		
		In our case, we describe the same system by a law $R_0 \in \P(\M_\delta(\T^d))$ (in particular, we assume $\Lambda$ to be the $d$-dimensional torus, but it does not play a role in the present discussion). A collection of particles is now simply seen as an element of $\M_{\delta}(\T^d)$. This is very practical because now, the indistinguishability of particles is already induced by the nature of $\M_\delta(\T^d)$. Of course, up to applying the push-forward operation to the probability measure defined by~\eqref{eq:grand_canonical_probability} by the map
		\begin{equation*}
		\Theta: (x_1, \dots, x_N) \in \cup_{k \in \N} (\T^d)^k \mapsto \delta_{x_1} + \dots + \delta_{x_N} \in \M_\delta(\T^d),
		\end{equation*}
		we get a probability $R_0 \in \P(\M_{\delta}(\T^d))$, so that actually our formulation is more general. More precisely, as the map $\Theta$ has a measurable right inverse, it is easy to convince oneself that the case of~\cite{chayes1984validity} is the case where $R_0(M=0)>0$ and where there exists a single probability measure $\mu \in \P(\T^d)$ such that the law $R_0$ conditioned to have exactly $N \in \N$ particles $R_0( \cdot | M(\T^d) = N)$ is absolutely continuous with respect to $\Theta \pf \mu^{\otimes N}$.
		
		In~\cite{chayes1984inverse} and~\cite{chayes1984validity}, the authors ask the following question: given the interactions $(w_N)_{N \in \N^*}$ encoded in the reference law $R_0$, and an observed density $\rho_0 \in \M_+(\T^d)$, can one find an external potential $U$ (in the following of this work, we will rather use the notation $\varphi := - U$) such that the grand canonical ensemble (still at temperature $1$) where the interactions are the same as in the reference one, but with the additional external potential $U$, has intensity~$\rho_0$? This is indeed an inverse problem, as we seek for $U$, knowing $\rho$. This condition on $U$ writes
		\begin{equation*}
		\frac{\E_{R_0}\Big[ M \exp\big(-\cg U, M \cd\big) \Big]}{\E_{R_0}\Big[\exp\big(-\cg U, M \cd\big)\Big]} = \rho_0.
		\end{equation*}	
		As explained in~\cite[Proposition~2.1]{chayes1984validity}, if this problem has a solution, then 
		\begin{equation*}
		P_0 := \frac{\exp\big(-\cg U, M \cd\big)}{\E_{R_0}\Big[\exp\big(-\cg U, M \cd\big)\Big]}\cdot R_0
		\end{equation*}
		is a solution to the problem stated at Definition~\ref{def:init}, the latter being stated in their framework in~\cite[Formula~(2.14)]{chayes1984validity}. These formulas are the formal optimality conditions for our initial problem and they will play a role in the next chapters, see for instance~\eqref{eq:def_P_0}. The potential $U$ is nothing but the dual variable (a.k.a. Lagrange multiplier) for the convex optimization problem that we are looking at. 
		
		Unfortunately, we will always consider more general situations than what is studied in~\cite{chayes1984inverse,chayes1984validity}. For instance in our case, dual existence is not guaranteed. Therefore we are not able to use directly their results in what follows.
	\end{Rem}

	\subsection{Some properties of the relative entropy}
	Recall that the entropy functional $H$ was defined by formula~\eqref{eq:def_H}. Let us give some of its properties that will be useful in the sequel.
	
	\paragraph{Duality inequalities} 
	
	Let us call $l$ the nonnegative function defined by 
	\begin{equation}
	\label{eq:def_l}
	l: z \in \R_+ \longmapsto z \log z + 1 - z.
	\end{equation}
	$l$ is convex and s.c.i.\ when extended by $+\infty$ on $\R_-^*$. A computation of its Legendre transform provides:
	\begin{Prop}
		For all $y,z \in \R$,
		\begin{equation}
		\label{eq:convex_inequality_l}
		yz \leq l(z) + \exp( y) - 1,
		\end{equation}
		with equality if and only if $z=\exp( y)$.
	\end{Prop}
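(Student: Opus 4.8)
The statement to prove is the convex (Young-type) inequality $yz \leq l(z) + e^y - 1$ for all $y, z \in \mathbb{R}$, where $l(z) = z\log z + 1 - z$ (extended by $+\infty$ on $\mathbb{R}_-^*$), with equality iff $z = e^y$. This is exactly the Fenchel–Young inequality for the pair consisting of $l$ and its Legendre transform, so the plan is simply to compute $l^*$ and invoke convexity.

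First I would dispose of the trivial case: if $z < 0$ then $l(z) = +\infty$ by the stated extension, so the inequality holds vacuously (and equality cannot occur since $e^y > 0 \neq z$). If $z = 0$, then $l(0) = 1$ (by continuity, $z\log z \to 0$), and the inequality reads $0 \leq 1 + e^y - 1 = e^y$, which holds, with equality never attained since $e^y > 0$. So it remains to treat $z > 0$, where $l$ is smooth.

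For $z > 0$, fix $y$ and consider the function $g(z) := l(z) + e^y - 1 - yz = z\log z + 1 - z + e^y - 1 - yz$ on $(0,\infty)$. Then $g'(z) = \log z + 1 - 1 - y = \log z - y$, which vanishes precisely at $z = e^y$, is negative for $z < e^y$ and positive for $z > e^y$; since $g$ is strictly convex on $(0,\infty)$ (as $g''(z) = 1/z > 0$), $z = e^y$ is the unique global minimizer. Evaluating, $g(e^y) = e^y \cdot y + 1 - e^y + e^y - 1 - y e^y = 0$. Hence $g(z) \geq 0$ for all $z > 0$, i.e. $yz \leq l(z) + e^y - 1$, with equality iff $z = e^y$. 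Combining the three cases gives the claim.

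There is no real obstacle here — the only thing to be slightly careful about is the convention at $z = 0$ and $z < 0$, and the fact that the minimizer $z = e^y$ is always strictly positive, so the equality case is consistent across all regimes. One could alternatively phrase the whole argument as: $l$ is proper, l.s.c.\ and convex, its Legendre transform is $l^*(y) = \sup_{z}(yz - l(z)) = e^y - 1$ (computed exactly as above), and the inequality together with its equality case is the standard Fenchel–Young (in)equality for a strictly convex differentiable function. I would present the elementary one-variable calculus version since it is self-contained and makes the equality case transparent.

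\begin{proof}
	If $z<0$ then $l(z)=+\infty$ by convention and the inequality is trivial; equality is impossible since $\exp(y)>0\neq z$. If $z=0$ then $l(0)=1$ and the claimed inequality reads $0\leq \exp(y)$, which holds with no case of equality.

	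Assume now $z>0$, fix $y\in\R$, and set
	\begin{equation*}
	g(z):= l(z)+\exp(y)-1-yz = z\log z -z +\exp(y) -yz, \qquad z\in(0,+\infty).
	\end{equation*}
	Then $g$ is smooth on $(0,+\infty)$ with $g'(z)=\log z - y$ and $g''(z)=1/z>0$, so $g$ is strictly convex and its unique critical point $z=\exp(y)$ is its global minimizer. A direct computation gives $g(\exp(y))=y\exp(y)-\exp(y)+\exp(y)-y\exp(y)=0$. Hence $g(z)\geq 0$ for every $z>0$, with equality if and only if $z=\exp(y)$, which is exactly~\eqref{eq:convex_inequality_l} together with its equality case.
\end{proof}
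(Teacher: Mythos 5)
Your proof is correct and matches the paper's (implicit) approach: the paper simply remarks that the proposition follows from ``a computation of the Legendre transform'' of $l$, and your elementary one-variable argument (minimizing $z \mapsto l(z) + e^y - 1 - yz$, finding the unique minimizer $z = e^y$, and checking it gives zero) is precisely that computation carried out in full, together with the edge cases $z \le 0$.
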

	We can apply this inequality to prove the following duality inequality for the relative entropy, that will be crucial throughout this work.
	
	\begin{Prop}
		\label{prop:legendre_transf_entropy}
		Let $\mathbf{X}$ be a Polish space, and $\mathsf{p},\mathsf{r} \in \P(\mathbf{X})$. For all real valued random variable $Y$ on $\mathbf{X}$, we have
		\begin{equation}
		\label{eq:convex_ineq_entropy}
		\E_{\mathsf p}[Y] \leq H(\mathsf p|\mathsf r) + \log \E_{\mathsf r}[\exp( Y)],
		\end{equation}
		where the l.h.s.\ is automatically well defined in $[-\infty, + \infty)$ whenever the r.h.s.\ is finite. Moreover, when the r.h.s.\ is finite, equality holds if and only if
		\begin{equation}
		\label{eq:equality_condition_duality_entropy}
		\frac{\D \mathsf p}{\D \mathsf r} = \frac{\exp( Y)}{ \E_{\mathsf r}[\exp( Y)]}.
		\end{equation}
	\end{Prop}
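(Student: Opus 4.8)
The plan is to deduce Proposition~\ref{prop:legendre_transf_entropy} from the pointwise convex inequality \eqref{eq:convex_inequality_l} by integrating against $\mathsf p$, treating separately the case where $H(\mathsf p | \mathsf r) = +\infty$ (where \eqref{eq:convex_ineq_entropy} is trivial since the left-hand side is then not necessarily well-defined, but we only claim it is well-defined when the right-hand side is finite) and the case $H(\mathsf p|\mathsf r) < +\infty$, in which $\mathsf p \ll \mathsf r$ with density $z := \D\mathsf p / \D\mathsf r$. The main work is a careful integrability bookkeeping so that the manipulation $\E_{\mathsf p}[Y] = \E_{\mathsf r}[zY]$ makes sense.

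First I would assume $\log\E_{\mathsf r}[\exp(Y)] < +\infty$ (otherwise there is nothing to prove) and set $C := \E_{\mathsf r}[\exp(Y)] \in (0,+\infty)$; note $C > 0$ since $\exp(Y) > 0$. Apply \eqref{eq:convex_inequality_l} with $z = z(\omega) = \frac{\D\mathsf p}{\D\mathsf r}(\omega)$ and $y = Y(\omega)$: pointwise $\mathsf r$-a.e.,
\begin{equation*}
Y(\omega) z(\omega) \leq l(z(\omega)) + \exp(Y(\omega)) - 1.
\end{equation*}
Integrating against $\mathsf r$, the right-hand side integrates to $H(\mathsf p|\mathsf r) + C - 1 < +\infty$ (using $\int l(z)\,\D\mathsf r = H(\mathsf p|\mathsf r)$, which holds because $\mathsf p, \mathsf r$ are probability measures so the $1 - z$ terms cancel in the integral). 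Hence the negative part of $Yz$ is $\mathsf r$-integrable, so $\int Yz\,\D\mathsf r$ is well-defined in $[-\infty, +\infty)$; this is exactly $\E_{\mathsf p}[Y]$, which therefore is well-defined in $[-\infty,+\infty)$ as claimed, and \eqref{eq:convex_ineq_entropy} follows by monotonicity of the integral.

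For the equality case, I would track the equality condition in \eqref{eq:convex_inequality_l}, namely $z = \exp(y)$, i.e. $z(\omega) = \exp(Y(\omega))$ $\mathsf r$-a.e. If equality holds in \eqref{eq:convex_ineq_entropy} and the right-hand side is finite, then the nonnegative function $l(z) + \exp(Y) - 1 - Yz$ has zero $\mathsf r$-integral, hence vanishes $\mathsf r$-a.e., so $z = \exp(Y)$ $\mathsf r$-a.e.; integrating gives $1 = \int z\,\D\mathsf r = \E_{\mathsf r}[\exp(Y)] = C$, whence $\frac{\D\mathsf p}{\D\mathsf r} = \exp(Y) = \exp(Y)/C$, which is \eqref{eq:equality_condition_duality_entropy}. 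Conversely, if $\frac{\D\mathsf p}{\D\mathsf r} = \exp(Y)/C$, then in particular $\mathsf p \ll \mathsf r$ and a direct substitution into both sides shows equality (both equal $\E_{\mathsf r}[\exp(Y)\log\exp(Y)]/C - \log C$, after using $\int \exp(Y)/C\,\D\mathsf r = 1$).

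The only mild subtlety — and the step I would be most careful about — is the integrability argument showing $\E_{\mathsf p}[Y]$ is well-defined: one must observe that the bound from \eqref{eq:convex_inequality_l} controls only the \emph{negative} part of $Yz$ and that $H(\mathsf p|\mathsf r) < +\infty$ is needed to make $\int l(z)\,\D\mathsf r$ finite; there is no actual analytic difficulty, just the need to phrase things so that no $(+\infty) - (+\infty)$ appears. Everything else is a routine application of the scalar Fenchel–Young inequality \eqref{eq:convex_inequality_l} and its equality case.
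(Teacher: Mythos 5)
Your proof has a genuine gap in the step that derives \eqref{eq:convex_ineq_entropy} from the pointwise inequality. Integrating $Yz \leq l(z) + \exp(Y) - 1$ against $\mathsf r$ yields
\begin{equation*}
\E_{\mathsf p}[Y] \leq H(\mathsf p|\mathsf r) + \E_{\mathsf r}[\exp(Y)] - 1 = H(\mathsf p|\mathsf r) + C - 1,
\end{equation*}
not $H(\mathsf p|\mathsf r) + \log C$. Since $C - 1 \geq \log C$ with equality only when $C = 1$, what you have established is strictly weaker than the proposition claims, and the assertion ``\eqref{eq:convex_ineq_entropy} follows by monotonicity of the integral'' is therefore unjustified. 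The same issue contaminates the forward direction of your equality analysis: if $\E_{\mathsf p}[Y] = H(\mathsf p|\mathsf r) + \log C$ but $C \neq 1$, the $\mathsf r$-integral of the nonnegative function $l(z) + \exp(Y) - 1 - Yz$ equals $C - 1 - \log C > 0$, so you cannot conclude that it vanishes $\mathsf r$-a.e.; the claim ``hence $C = 1$'' is circular.

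The fix is exactly what the paper does: introduce a free additive constant. The paper applies \eqref{eq:convex_inequality_l} with $y + \alpha$ in place of $y$, obtaining $yz \leq l(z) + \exp(y+\alpha) - 1 - \alpha z$, integrates to get $\E_{\mathsf p}[Y] \leq H(\mathsf p|\mathsf r) + e^{\alpha}C - 1 - \alpha$ for every $\alpha \in \R$, and then optimizes in $\alpha$ (the optimum $\alpha = -\log C$ turns $e^{\alpha}C - 1 - \alpha$ into $\log C$). Equivalently, you could run your own argument verbatim but applied to $Y' := Y - \log C$, for which $\E_{\mathsf r}[\exp(Y')] = 1$, so that $C' - 1 = 0 = \log C'$ and the bound is tight; the equality case then goes through since $C' = 1$. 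Either normalization is essential; without it the approach, though in spirit the same as the paper's Fenchel--Young argument, does not reach the stated conclusion. Your integrability bookkeeping for the negative part of $Yz$ is correct, and the converse (substitution) direction of the equality case is a valid computation.
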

	
	\begin{proof}
		Starting from~\eqref{eq:convex_inequality_l} with $y+\alpha$ in place of $y$,
		\begin{equation*}
		\forall y,z \in \R,  \qquad yz \leq l(z) + \exp( y + \alpha) - 1 - \alpha z,
		\end{equation*}
		with equality if and only if $z = \exp(y + \alpha)$. Assume that $\mathsf p \ll \mathsf r$ (else, there is nothing to prove). Call $Z$ the Radon-Nikodym derivative of $\mathsf p$ w.r.t.~$\mathsf r$, in particular $\E_{\mathsf r}[Z] = 1$. Integrating the previous inequality w.r.t.~$\mathsf r$, we get for all $\alpha \in \R$
		\begin{equation*}
		\E_{\mathsf p}[Y] = \E_{\mathsf r}[YZ] \leq H(\mathsf p|\mathsf r) + \E_{\mathsf r}[\exp( Y+ \alpha)] - 1 - \alpha,
		\end{equation*} 
		with equality if and only if $\mathsf r$-\emph{a.s,} $Z = \exp(Y + \alpha)$. Optimizing w.r.t.\ $\alpha$ leads to~\eqref{eq:convex_ineq_entropy} and~\eqref{eq:equality_condition_duality_entropy}.
	\end{proof}
	
	\begin{Rem}
		Proposition~\ref{prop:legendre_transf_entropy} easily implies that the Legendre transform of the function $H(\cdot | \mathsf r)$, that we extend by $+ \infty$ on $\M(\T^d) \setminus \P(\T^d)$, is $\phi \in C(\T^d) \mapsto \log \E_{\mathsf r}[\exp(\phi(X))]$, where $X$ is the identity map on $\mathbf{X}$.    
	\end{Rem}

	\paragraph{Conditioning to the initial configuration}
	
	The behavior of the relative entropy towards disintegration is very well known, and follows from the additivity properties of the logarithm, see~\cite[Theorem~2.4]{leonard2014some}. Here, we just provide the case where we condition to $M_0$, the initial configuration, because this is the only formula that we will use.
	\begin{Prop}
		Let $P,R \in \P(\Omega)$, being $\Omega$ the canonical space of Subsection~\ref{subsec:def_BBM}. Let us call $R_0$ the law of $M_0$ under $R$, and $R^\mu := R(\,\cdot\,|M_0 = \mu)$, which is defined for $R_0$-almost all $\mu \in \M_+(\T^d)$. Define $P_0$ and $P^\mu$, $\mu \in \M_+(\T^d)$ in the same way starting from $P$. There holds
		\begin{equation}
		\label{eq:disintegration_entropy}
		H(P|R) = H(P_0|R_0) + \E_{P_0} \Big[ H\big(P^{M} \big| R^{M}\big) \Big].
		\end{equation}
	\end{Prop}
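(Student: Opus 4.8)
The plan is to deduce \eqref{eq:disintegration_entropy} from the chain rule for Radon--Nikodym densities together with the tower property of conditional expectation; this is the specialization of \cite[Theorem~2.4]{leonard2014some} to conditioning on the single variable $M_0$, and I would either cite that result or, for completeness, reproduce its argument in this case. First I would handle the case $P\not\ll R$: either $P_0\not\ll R_0$, so $H(P_0|R_0)=+\infty$ and, $\E_{P_0}[H(P^M|R^M)]\geq 0$ being automatic, the right-hand side is $+\infty$; or $P_0\ll R_0$, and then, writing $P=\int P^\mu\,\D P_0(\mu)$ and $R=\int R^\mu\,\D R_0(\mu)$, any set $A$ with $R(A)=0<P(A)$ forces $R^\mu(A)=0$ for $R_0$-a.e.\ $\mu$ while $P^\mu(A)>0$ on a set of positive $P_0$-measure, so $P^\mu\not\ll R^\mu$ there and $\E_{P_0}[H(P^M|R^M)]=+\infty$. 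Since $H(P|R)=+\infty$ in both subcases, \eqref{eq:disintegration_entropy} holds.

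Assuming now $P\ll R$, I would put $Z:=\frac{\D P}{\D R}$; pushing forward by $M_0$ gives $P_0\ll R_0$ with $z_0:=\frac{\D P_0}{\D R_0}=\E_R[Z\mid M_0]$, and $P(z_0(M_0)=0)=\E_{R_0}[z_0\1_{\{z_0=0\}}]=0$. The main step is the disintegration of $Z$. For bounded measurable $g$ on $\Omega$ and $h$ on $\M_+(\T^d)$ one has
\begin{equation*}
\E_{R_0}\big[h(\mu)\,z_0(\mu)\,\E_{P^\mu}[g]\big]=\E_P[g\,h(M_0)]=\E_{R_0}\big[h(\mu)\,\E_{R^\mu}[Zg]\big],
\end{equation*}
hence $z_0(\mu)\E_{P^\mu}[g]=\E_{R^\mu}[Zg]$ for $R_0$-a.e.\ $\mu$. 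Letting $g$ range over a countable measure-determining family of bounded continuous functions on the Polish space $\Omega$ (including the constant $1$), I would obtain a single $R_0$-null set off which, whenever $z_0(\mu)>0$, $P^\mu\ll R^\mu$ with $\frac{\D P^\mu}{\D R^\mu}=Z/z_0(\mu)=:z^\mu$; that is, $Z=z_0(M_0)\,z^{M_0}$, $P$-a.s.

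Finally I would assemble the pieces: $\log Z=\log z_0(M_0)+\log z^{M_0}$ $P$-a.s., and the elementary bound $t\log t\geq -e^{-1}$ on $\R_+$ shows that each summand has $P$-integrable negative part, since $\E_P[(\log z_0(M_0))_-]=\E_{R_0}[(z_0\log z_0)_-]\leq e^{-1}$ and likewise fiberwise for $z^\mu$; hence the integral splits in $(-\infty,+\infty]$ with no $(+\infty)-(+\infty)$ ambiguity and
\begin{equation*}
H(P|R)=\E_P[\log z_0(M_0)]+\E_P[\log z^{M_0}]=\E_{P_0}[\log z_0]+\E_{P_0}\big[\E_{P^\mu}[\log z^\mu]\big]=H(P_0|R_0)+\E_{P_0}\big[H(P^M|R^M)\big],
\end{equation*}
using that the law of $M_0$ under $P$ is $P_0$ together with the tower property for the middle term; this chain is valid whether or not $H(P|R)$ is finite. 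The main obstacle is the disintegration of $Z$ in the second paragraph, specifically the measurability of the families $\mu\mapsto R^\mu$, $\mu\mapsto P^\mu$, $\mu\mapsto z^\mu$ and the passage from a statement that, for each fixed $g$, holds for $R_0$-a.e.\ $\mu$, to one that holds for $R_0$-a.e.\ $\mu$ simultaneously for all $g$, which is exactly where the countable generation of $\F$ (equivalently, the separability of $\Omega$) is used; everything else is bookkeeping with the nonnegativity of the relative entropy.
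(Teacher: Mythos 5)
Your proof is correct and follows essentially the same strategy as the paper: dispose of the case $P\not\ll R$ by showing the right-hand side is also $+\infty$, then use the multiplicative disintegration of the Radon--Nikodym density and the additivity of the logarithm. The difference is one of level of detail: where the paper simply invokes ``the disintegration theorem'' to assert $\frac{\D P}{\D R}=\frac{\D P_0}{\D R_0}(M_0)\,\frac{\D P^{M_0}}{\D R^{M_0}}$, you actually derive this factorization from first principles (via $z_0(\mu)\E_{P^\mu}[g]=\E_{R^\mu}[Zg]$, upgraded to a simultaneous statement via a countable measure-determining family and the separability of $\Omega$), and you explicitly address the $(+\infty)-(+\infty)$ ambiguity when splitting $\E_P[\log Z]$, using the uniform lower bound $t\log t\geq -e^{-1}$ to show each summand has $P$-integrable negative part. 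These are genuine details that the paper's proof glosses over, and your treatment of the exceptional set $\{z_0(M_0)=0\}$ (showing it is $P$-null, so the factorization only needs to hold $P$-a.s.) is slightly cleaner than the paper's claim that the identity holds $R$-a.s.; no substantive gap.
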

	\begin{Rem}
		This formula is a first hint why the value of the objective functional in the Schrödinger problem and in the branching Schrödinger problem can be written as a sum of an initial part, and a dynamical part, as seen in formulas~\eqref{eq:equivalence_schrodinger} and~\eqref{eq:equivalence_BrSchr}.
		
		Observe that when $R\sim\BBM(\nu, \boldsymbol q, R_0)$, there is no conflict of notations: because of~\eqref{eq:computation_expectation}, we have indeed $R(\, \cdot \, | M_0 = \mu) = R^\mu$, as defined in Definition~\ref{def:BBM_deterministic_R0}.
	\end{Rem}
	\begin{proof}
		If $P\cancel{\ll}R$, then the l.h.s.\ in~\eqref{eq:disintegration_entropy} is infinite. In this case, either $P_0\cancel{\ll}R_0$, or the set of $\mu \in \M_+(\T^d)$ for which $P^\mu\cancel{\ll} R^\mu$ is not $P_0$-negligible. In both case, the r.h.s.\ in~\eqref{eq:disintegration_entropy} is infinite as well. 
		
		Then, if $P \ll R$, the disintegration theorem ensures that $R$-\emph{a.s.}
		\begin{equation*}
		\frac{\D P}{\D R} = \frac{\D P_0}{\D R_0}(M_0) \times \frac{\D P^{M_0}}{\D R^{M_0}}.
		\end{equation*}
		In particular, using the additivity property of the $\log$ in the first line, we find
		\begin{align*}
		H(P|R) &= \E_P\left[ \log \frac{\D P_0}{\D R_0}(M_0) \right] + \E_P\left[ \log \frac{\D P^{M_0}}{\D R^{M_0}} \right] \\
		&=\E_{P_0}\left[ \log \frac{\D P_0}{\D R_0} \right] + \E_P\left[\E_{P^{M_0}}\left[ \log \frac{\D P^{M_0}}{\D R^{M_0}}\right] \right] \\
		&= H(P_0 | R_0) + \E_P\left[ H(P^{M_0}| R^{M_0}) \right],
		\end{align*}
		and the result follows.
	\end{proof}
	\subsection{A stronger set of assumptions for the branching Brownian motion}
	\label{subsec:exponential_bounds}
	Once again, this subsection is only useful in Chapters~\ref{chap:characterization_finite_entropy} and~\ref{chap:equivalence_competitors}. The reader who is not interested in the results of these chapters can jump directly to Chapter~\ref{chap:duality}.
	
	We present here the assumptions on the reference BBM that we will make in order to be able to prove the equivalence of competitors in Chapter~\ref{chap:equivalence_competitors}. They will also be useful in Chapter~\ref{chap:characterization_finite_entropy} to characterize the laws of finite entropy w.r.t.\ a reference BBM. They consist in the existence of exponential moments for $R_0$ and $\boldsymbol q$.
	
	\begin{Ass}
		\label{ass:exponential_bounds}
		Let $R \sim \BBM(\nu, \boldsymbol q,R_0)$ be a BBM, where $\nu > 0$, $\boldsymbol{q}$ is a branching mechanism and $R_0 \in \P(\M_\delta(\T^d))$. We say that:
		\begin{itemize}
			\item $\boldsymbol{q}$ has an exponential bound if there exists $\theta_{\boldsymbol{q}}>0$ such that
			\begin{equation}
			\label{eq:exponential_moment_q}
			\sum_{k \neq 1} q_k \exp(k \theta_{\boldsymbol{q}}) < + \infty.
			\end{equation}
			\item $R_0$ has an exponential bound if there exists $\theta_0>0$ such that:
			\begin{equation}
			\label{eq:exponential_moment_R_0}
			\E_{R_0}\Big[\exp\big(\theta_0 M(\T^d)\big)\Big] < +\infty.
			\end{equation}
		\end{itemize}
	\end{Ass}
	\begin{Rem}
		To have an idea of what condition~\eqref{eq:exponential_moment_q} implies on the growth penalization $\Psi_{\nu, \boldsymbol q}$ defined through~\eqref{eq:def_Psi*}, observe that in the set of examples provided in Appendix~\ref{app:plots}, the only case where condition~\eqref{eq:exponential_moment_q} does not hold is the one of Section~\ref{sec:no_exp_moment}. The most pathological choices of $\boldsymbol q$ satisfying condition~\eqref{eq:exponential_moment_q} are presented in Section~\ref{sec:exp_moment}. In particular, we show there that some choices of $\boldsymbol q$ lead to a linear growth of $\Psi_{\nu, \boldsymbol q}$ at $+\infty$, and not more.
	\end{Rem}
	
	These assumptions are reasonable. Indeed, in a lot of applications, $\boldsymbol q$ has a bounded support, which is much stronger than~\eqref{eq:exponential_moment_q}. Moreover, if~\eqref{eq:exponential_moment_q} holds, then the existence of an exponential moment under $R_0$ stated in formula~\eqref{eq:exponential_moment_R_0} is preserved through time, that is, the law $R_t$ of $M_t$ under $R$ also admits an exponential moment, as stated in the proposition below. 
	
	A reason justifying that these assumptions are useful is that when $R_0$ and $\boldsymbol q$ satisfy these bounds, then any $P$ with finite entropy w.r.t.~$R\sim\BBM(\nu,\boldsymbol q, R_0)$ admits an expected marginal $\E_P[M_t] \in \M_+(\T^d)$ at all time $t \in [0,1]$.
	
	\begin{Prop}
		\label{prop:exponential_bounds}
		Let $R \sim \BBM(\nu, \boldsymbol{q},R_0)$ be a BBM, and assume both bounds from Assumption~\ref{ass:exponential_bounds}. Then, there exists $\bar \kappa>0$ such that
		\begin{equation*}
		\E_R\Big[\exp\Big(\bar \kappa \sup_{t \in [0,1]} M_t(\T^d)\Big)\Big] < +\infty.
		\end{equation*}
		
		In particular, for all $P \in \P(\Omega)$ such that $H(P|R) < + \infty$, the map $t \in [0,1] \mapsto \E_P[M_t] \in \M_+(\T^d)$ is well defined and weakly $\cadlag$. Moreover, the following bound holds, where $C>0$ only depends on $R_0$ and 
		$\boldsymbol q$:
		\begin{equation}
		\label{eq:unif_bound_density}
		\sup_{t \in [0,1]} \E_P[M_t(\T^d)] \leq C\big( 1 + H(P|R) \big).
		\end{equation}
	\end{Prop}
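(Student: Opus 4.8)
The plan is to first establish, under Assumption~\ref{ass:exponential_bounds}, a uniform-in-time exponential moment for the total mass $M_t(\T^d)$ under $R$, to upgrade it to the supremum over $t$ via a bounded martingale and Doob's inequality, and then to read off both statements about $P$ from the duality inequality~\eqref{eq:convex_ineq_entropy} for the relative entropy.

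\textbf{Step 1 (uniform pointwise bound).} Fix $\kappa>0$ and apply Proposition~\ref{prop:evolution_eq_u_from_BBM} with the constant initial datum $u_0\equiv e^\kappa$. By translation invariance (Proposition~\ref{prop:translation_invariance}) the function $u(t,x)=\Em_{\delta_x}[e^{\kappa M_t(\T^d)}]$ is independent of $x$; call it $g(t)\in[1,+\infty]$. The Duhamel identity then reads $g(t)=e^{-\lambda_{\boldsymbol q}t}e^{\kappa}+\int_0^t e^{-\lambda_{\boldsymbol q}(t-s)}\Phi_{\boldsymbol q}(g(s))\,\D s$, a priori with values in $[1,+\infty]$. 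Writing $\Phi_{\boldsymbol q}(z)-\lambda_{\boldsymbol q}=(z-1)\tilde\psi(z)$ with $\tilde\psi(z):=\sum_{k\geq 2}q_k(z^{k-1}+\dots+1)$ nondecreasing, and noting that $\eqref{eq:exponential_moment_q}$ gives $L:=\tilde\psi(e^{\theta_{\boldsymbol q}})\leq (e^{\theta_{\boldsymbol q}}-1)^{-1}\sum_{k\geq 2}q_ke^{k\theta_{\boldsymbol q}}<+\infty$ and $\Phi_{\boldsymbol q}(e^{\theta_{\boldsymbol q}})<+\infty$, one obtains on any subinterval of $[0,1]$ where $g\leq e^{\theta_{\boldsymbol q}}$ the estimate $g(t)-1\leq (e^{\kappa}-1)+L\int_0^t(g(s)-1)\,\D s$, hence $g(t)-1\leq (e^{\kappa}-1)e^{L}$ by Grönwall. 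Choosing $\kappa$ small enough that $(e^{\kappa}-1)e^{L}\leq \min(e^{\theta_{\boldsymbol q}}-1,\theta_0)$, a standard continuation argument shows that $g$ is finite and $\leq e^{\theta_{\boldsymbol q}}$ on all of $[0,1]$; moreover, since $R^{\delta_0}(M_s(\T^d)\geq 1)\geq e^{-\lambda_{\boldsymbol q}s}\geq e^{-\lambda_{\boldsymbol q}}$ for $s\in[0,1]$, we get $g(s)\geq 1+(e^{\kappa}-1)e^{-\lambda_{\boldsymbol q}}=:e^{\gamma}$ for some $\gamma>0$.

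\textbf{Step 2 (from points to the supremum).} By the Markov property (Proposition~\ref{prop:strong_markov}) and the branching property~\eqref{eq:branching_property_varphi}, $\E_R[e^{\kappa M_1(\T^d)}\mid\F_t]=g(1-t)^{M_t(\T^d)}=:W_t$; since $W_1=e^{\kappa M_1(\T^d)}$ and $\E_R[W_1]=\E_{R_0}[g(1)^{M(\T^d)}]$ is finite by $\eqref{eq:exponential_moment_R_0}$ and the choice $\log g(1)\leq (e^{\kappa}-1)e^{L}\leq\theta_0$, the process $(W_t)_{t\in[0,1]}$ is a genuine nonnegative martingale under $R$. As $g\geq e^{\gamma}$ we have $W_t\geq e^{\gamma M_t(\T^d)}$, so Doob's maximal inequality yields $\Prob_R\big(\sup_{t\in[0,1]}M_t(\T^d)\geq m\big)\leq \E_R[W_1]\,e^{-\gamma m}$ for all $m\geq 0$. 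Integrating this geometric tail gives $\E_R\big[\exp(\bar\kappa\sup_{t\in[0,1]}M_t(\T^d))\big]<+\infty$ for every $0<\bar\kappa<\gamma$, which is the first assertion. For the second, let $P\in\P(\Omega)$ with $H(P|R)<+\infty$ and apply~\eqref{eq:convex_ineq_entropy} with $\mathsf p=P$, $\mathsf r=R$ and $Y:=\bar\kappa\sup_{t\in[0,1]}M_t(\T^d)$: this gives $\bar\kappa\,\E_P[\sup_t M_t(\T^d)]\leq H(P|R)+\log\E_R[e^{Y}]$, whence $\sup_t\E_P[M_t(\T^d)]\leq \E_P[\sup_t M_t(\T^d)]\leq C(1+H(P|R))$ with $C$ depending only on $\nu,\boldsymbol q,R_0$, which is~\eqref{eq:unif_bound_density}. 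In particular $\E_P[M_t(\T^d)]<+\infty$, so countable additivity of $A\mapsto\E_P[M_t(A)]$ makes $\E_P[M_t]$ an element of $\M_+(\T^d)$ (cf.\ the discussion after Definition~\ref{def:intensity_measure}). Finally, for $\theta\in C(\T^d)$ the path $t\mapsto\cg\theta,M_t\cd$ is $P$-a.s.\ $\cadlag$ and dominated by $\|\theta\|_\infty\sup_s M_s(\T^d)\in L^1(P)$, so dominated convergence transfers right-continuity and the existence of left limits to $t\mapsto\cg\theta,\E_P[M_t]\cd$; combined with the uniform mass bound (which confines the curve to a weakly compact subset of $\M_+(\T^d)$), this shows $t\mapsto\E_P[M_t]$ is weakly $\cadlag$.

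\textbf{Main obstacle.} The delicate point is Step~1: one must prevent finite-time blow-up of the FKPP ordinary differential equation governing $g$ on $[0,1]$. This is exactly where the exponential moment~\eqref{eq:exponential_moment_q} of $\boldsymbol q$ is used, since it guarantees that $\Phi_{\boldsymbol q}$ is finite up to $e^{\theta_{\boldsymbol q}}$, so that the vanishing of $\Phi_{\boldsymbol q}(z)-\lambda_{\boldsymbol q}z$ at $z=1$ can be exploited to keep $g$ close to $1$ over the whole time interval when $\kappa$ is small. Once Step~1 is in place, the martingale $W$, Doob's inequality and the entropy duality are all routine.
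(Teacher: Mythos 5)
Your overall strategy coincides with the paper's: establish a uniform-in-time exponential moment for $M_t(\T^d)$ via an FKPP-type scalar equation, upgrade it to a bound on $\sup_t M_t(\T^d)$ through a martingale and Doob's inequality, and read off the statements about $P$ from the entropy duality inequality~\eqref{eq:convex_ineq_entropy} and dominated convergence. Your Step~2 and the final paragraph are correct and in fact slightly more economical than the paper's: you construct the closed martingale $W_t=\E_R[W_1\mid\F_t]=g(1-t)^{M_t(\T^d)}$ directly from the Markov and branching properties and apply the elementary form of Doob's maximal inequality, whereas the paper invokes Proposition~\ref{prop:loc_exp_mart_BBM} to obtain a local martingale, then introduces a second function $\psi^2\le\psi^1/2$ to make it $L^2$-bounded and uses Doob's $L^2$ inequality. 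The entropy bound and the weak \cadlag\ argument match the paper's.

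Step~1, however, has a genuine gap. You define $g(t)=\Em_{\delta_x}\big[e^{\kappa M_t(\T^d)}\big]\in[1,+\infty]$, plug it into the Duhamel identity from Proposition~\ref{prop:evolution_eq_u_from_BBM}, and invoke a ``standard continuation argument'' to conclude $g\le e^{\theta_{\boldsymbol q}}<+\infty$ on $[0,1]$. This is not a standard continuation: $g$ is defined as an expectation and is a priori $[1,+\infty]$-valued, not the solution of an ODE, so it could in principle equal $+\infty$ somewhere on $(0,1]$ with no regularity to exploit. Your Gr\"onwall estimate is valid only on an interval on which $g\le e^{\theta_{\boldsymbol q}}$ is \emph{already known}, and to push such an interval forward one would need upper semi-continuity of $g$ from the right at the endpoint --- but Fatou's lemma, applied along $s\to t^+$ to the \cadlag\ trajectories, only yields $\liminf_{s\to t^+}g(s)\ge g(t)$, i.e.\ the wrong inequality. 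The paper sidesteps this entirely by \emph{first} solving the scalar ODE $\dot\psi+\Psi^*_{\nu,\boldsymbol q}(\psi)=0$ (an unambiguously finite object on $[0,1]$ by a genuine ODE continuation argument, using that $\Psi^*_{\nu,\boldsymbol q}$ is smooth near $0$ under~\eqref{eq:exponential_moment_q}), and then using only the one-sided supermartingale inequality
\begin{equation*}
\E_R\big[\exp\big(\psi(t)M_t(\T^d)/\nu\big)\big]\le\E_R\big[\exp\big(\psi(0)M_0(\T^d)/\nu\big)\big],
\end{equation*}
never the exact identity $g(t)=\exp(\psi(1-t)/\nu)$. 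To make your route rigorous you would need an additional argument identifying $g$ with the finite, monotone-iteration fixed point of the Duhamel operator below $e^{\theta_{\boldsymbol q}}$ (this is essentially Theorem~\ref{thm:martingale_smooth_case} applied to a space-homogeneous strong interior solution, a Chapter~3 result the paper cannot yet use here), or else simply adopt the paper's ODE-plus-supermartingale device and keep your Step~2.
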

	\begin{proof}
		Consider the following ODE involving $\Psi^*_{\nu, \boldsymbol q}$ defined in~\eqref{eq:def_Psi*}:
		\begin{equation}
		\label{eq:ODE_exponential_lemma}
		\left\{
		\begin{gathered}
		\dot \psi(t) + \Psi^*_{\nu, \boldsymbol q}(\psi(t)) = 0,\\
		\psi(0) = \psi_0.
		\end{gathered}
		\right.
		\end{equation}
		The function $\psi(t) \equiv 0$ is the unique global solution starting from $\psi_0 = 0$.
		
		Under assumption~\eqref{eq:exponential_moment_q}, $\psi \in \R \mapsto\Psi^*_{\nu, \boldsymbol q}(\psi)$ is smooth in the neighborhood of $\psi = 0$. By standard results about ODEs, there exists $\bar \psi>0$ such that for all $|\psi_0| \leq \bar \psi$, the unique solution of~\eqref{eq:ODE_exponential_lemma} blows-up at a time $T^*>1$.  
		
		Take $\theta_0$ as given by assumption~\eqref{eq:exponential_moment_R_0}. Let us take $0< \psi^1_0 \leq  \min (\nu \theta_0,\bar \psi)$, and $\psi^1:[0,1] \to \R$ the unique solution of~\eqref{eq:ODE_exponential_lemma} starting from $\psi^1_0$ up to time $t=1$. Exploiting the martingale property~\eqref{eq:loc_exp_mart_BBM} for the test function $(t,x) \mapsto \psi^1(t)$, we see that
		\begin{equation*}
		\exp\left(\frac{\psi^1(t) M_t(\T^d)}{\nu}\right), \qquad t \in [0,1]
		\end{equation*}	
		is a nonnegative local martingale, and hence a supermartingale as well. In particular, for all $t \in [0,1]$,
		\begin{equation*}
		\E_R\left[\exp\left(\frac{\psi^1(t) M_t(\T^d)}{\nu}\right)\right] \leq \E_R\left[\exp\left( \frac{\psi^1_0 M_0(\T^d)}{\nu} \right)\right] < + \infty.
		\end{equation*} 
		
		Now, let us take $0<\psi^2_0<\psi^1_0$ so small that the unique solution $\psi^2$ of~\eqref{eq:ODE_exponential_lemma} starting from $\psi^2_0$ satisfies $\psi^2(t) \leq \psi^1(t)/2$, for all $t \in [0,1]$. Doing so, the local martingale
		\begin{equation*}
		\exp\left(\frac{\psi^2(t) M_t(\T^d)}{\nu}\right), \qquad t \in [0,1]
		\end{equation*}	
		is bounded in $L^2$, and therefore is a true martingale. Applying to it Doob's maximal inequality in $L^2$, we get	
		\begin{equation*}
		\E_R\left[\sup_{t \in [0,1]} \exp\left( \frac{2\psi^2(t) M_t(\T^d)}{\nu} \right) \right] \leq 4 \E_R\left[ \exp\left( \frac{2 \psi^2(1) M_1(\T^d)}{\nu} \right) \right] \leq 2 \E_R\left[ \exp\left( \frac{ \psi^1(1) M_1(\T^d)}{\nu} \right) \right] < +\infty.
		\end{equation*}
		We conclude the first part of the claim by taking $\bar \kappa := \inf_{t \in [0,1]} 2\psi^2(t)/\nu$, observing that~\eqref{eq:ODE_exponential_lemma} preserves positivity.
		
		Then, let us take $P \in \P(\Omega)$ such that $H(P|R) < + \infty$, and let us apply formula~\eqref{eq:convex_ineq_entropy} with $Y = \bar \kappa  \sup_{t \in [0,1]} M_t(\T^d) $. We get
		\begin{equation*}
		\sup_{t \in [0,1]} \E_P[M_t(\T^d)] \leq \E_P\left[  \sup_{t \in [0,1]} M_t(\T^d) \right] \leq \frac{\displaystyle H(P|R) + \E_R\left[  \exp\left( \bar \kappa \sup_{t \in [0,1]} M_t(\T^d)\right)\right]}{\bar \kappa} < + \infty.
		\end{equation*}
		Hence, for all $t\in[0,1]$, $\rho(t) := \E_P[M_t]$ is well defined, bounded in $\M_+(\T^d)$, uniformly in $t$, and~\eqref{eq:unif_bound_density} holds (the fact that $C$ does not depend on $\nu$ relies on an analysis of the scaling of~\eqref{eq:ODE_exponential_lemma}).
		
		Then, for all $\varphi$, $t \mapsto \cg \varphi, M_t \cd$ is $\cadlag$, $P$-\emph{a.s.}\ (as $P$ is a probability measure on the set of $\cadlag$ measure-valued curves $\cadlag([0,1]; \M_+(\T^d))$, endowed with the topology of weak convergence of measures). Hence, the announced property follows from the dominated convergence theorem and the estimate $\E_P[\sup_t M_t(\T^d)] < + \infty$.
	\end{proof}
	\begin{Rem}
		\label{rem:rho_continuous}
		It is not hard to prove that for all $t \in [0,1]$,
		\begin{equation*}
		R\Big( \Big\{ s \mapsto M_s(\T^d) \mbox{ undergoes a jump at time }s = t \Big\} \Big) = 0.
		\end{equation*}
		Hence, this is also true under $P$, for all $P \ll R$. As a consequence, slightly adapting the proof, we can show that $t  \mapsto \E_P[M_t] $ is not only $\cadlag$ but also continuous for the weak topology of measures.
	\end{Rem}

	\chapter{Equivalence of the values: a proof by duality}
	\label{chap:duality}
	
	The goal of this chapter is to state and prove one of our main result: the value of the Regularized Unbalanced Transport (RUOT) problem is, up to a term depending only on the initial condition, the lower semi-continuous (l.s.c.) envelope of the value of the branching Schrödinger problem. We start by recalling the definition of the l.s.c.\ envelope and then state our result. The basic idea of the proof is that the computation of the l.s.c.\ envelope of a convex function boils down to computing two Legendre transforms in a row. We apply this method for the initial ``static'' problem, which corresponds to the term depending only on the initial condition $\rho_0$ and the initial law $R_0$. Next, before moving to the proof of the main result, we first prove additional results on the FKPP equation and weak solutions of it, in order to plug nonnegative functions in the martingale characterization of the BBM. After this, we are in position to prove our main result. Eventually we close this chapter with examples for which the initial ``static'' problem can be computed explicitly, and with counterexamples illustrating the ill-posedness of the branching Schrödinger problem.
	
	\section{Statement of the results}
	
	We start by recalling the definition of the l.s.c.\ envelope. Let $\V$ be a separable Banach space, and let us call $\V'$ its topological dual. For $v \in \V$ and $w \in \V'$ we denote the duality pairing by $\langle v, w \rangle$. We endow $\V'$ with the topology $\sigma(\V', \V)$ which is the coarsest making the linear forms $w \mapsto \langle v, w \rangle$ continuous. The space $\V'$ endowed with $\sigma(\V',\V)$ is a locally convex space. In addition, as $\V$ is separable, $\sigma(\V',\V)$ is metrizable. We will use the results of this section with $\V$ being be the set $C(\T^d)$ while $\V' = \M(\T^d)$ will be the set of finite Borel measures, or $\V = C(\T^d)^2$ and $\V' = \M(\T^d)^2$. Then, $\sigma(\V', \V)$ is the topology of weak convergence.	
	
	For simplicity, we will consider functions $F$ defined on $\V'$ and which take nonnegative values, including the value $+ \infty$. The nonnegativity assumption could be relaxed: it would be enough to be above a linear continuous function. 
	
	\begin{Def}
		Let $F : \V' \to [0, + \infty ]$. Its $\sigma(\V', \V)$-l.s.c.\ envelope is defined by
		\begin{equation*}
		\overline{F} := \sup \left\{ G \ : \ G \leq F \text{ and } G \text{ is } \sigma(\V',\V) \text{ lower semi-continuous} \right\},
		\end{equation*}
		where the supremum is taken on functions $G : \V' \to \R \cup \{ + \infty \}$.
	\end{Def}	
	
	The lower semi-continuous envelope $\overline{F}$ is  l.s.c.\ as the supremum of l.s.c.\ functions is l.s.c. One can actually represent it as (see \cite[Corollary 2.1]{ekeland1999convex})
	\begin{equation*}
	\overline{F}(w) = \inf \left\{ \liminf_{n \to + \infty}  F(w_n) \ : \ \lim_{n \to + \infty} w_n = w  \right\},
	\end{equation*} 	
	where the infimum is taken among all sequences $(w_n)_{n \in \N}$ which converge to $w$ for the $\sigma(\V',\V)$ topology. 
	
	\bigskip
	
	We are now in position to state our main results. Recall that the functionals $\BrSch_{\nu, \boldsymbol q, R_0}$ and $\Init_{R_0}$ are defined in Definition \ref{def:BrSch} and \ref{def:init} respectively.

	We start with the initial static problem, which is when we only look at the initial marginal constraint in the branching Schrödinger problem. For this one, we can actually remove the assumption that $R_0$ is supported on $\M_\delta(\T^d)$.
	
	\begin{Thm}
		\label{thm:initial_lsc_envelope}
		Let $R_0 \in \P(\M_+(\T^d))$ be a probability distribution on $\M_+(\T^d)$. Let us define the Legendre transform of its log-Laplace transform as in~\eqref{eq:def_L*}. The functional $L^*_{R_0}$ is the ls.c.\ envelope of $\Init_{R_0}$ for the topology of weak convergence of measures.
	\end{Thm}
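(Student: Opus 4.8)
The plan is to prove the two inequalities $\overline{\Init_{R_0}} \geq L^*_{R_0}$ and $\overline{\Init_{R_0}} \leq L^*_{R_0}$ separately, using the standard fact from convex analysis that the l.s.c.\ (convex) envelope of a function equals its biconjugate $F^{**}$ (when $F$ is bounded below by a continuous linear functional, which holds here since $\Init_{R_0} \geq 0$). So the real content is the identity $(\Init_{R_0})^{**} = L^*_{R_0}$, and by definition of $L^*_{R_0}$ as the Legendre transform of $\sigma \mapsto \log \E_{R_0}[\exp(\langle \sigma, M\rangle)]$, it suffices to compute one Legendre transform: I will show that $(\Init_{R_0})^*(\sigma) = \log \E_{R_0}[\exp(\langle \sigma, M\rangle)]$ for all $\sigma \in C(\T^d)$. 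Taking Legendre transform once more then yields $(\Init_{R_0})^{**} = L^*_{R_0}$, and since $L^*_{R_0}$ is convex and l.s.c.\ (being a Legendre transform), it is automatically below... wait, one must also note $\Init_{R_0}$ is convex — this follows because $P_0 \mapsto H(P_0|R_0)$ is convex and the constraint $\E_{P_0}[M] = \rho_0$ is affine in $P_0$, so $\Init_{R_0}$ is the marginal of a jointly convex function, hence convex. Then $\overline{\Init_{R_0}} = (\Init_{R_0})^{**} = L^*_{R_0}$.

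First I would fix $\sigma \in C(\T^d)$ and compute
\begin{align*}
(\Init_{R_0})^*(\sigma) &= \sup_{\rho_0 \in \M_+(\T^d)} \langle \sigma, \rho_0\rangle - \Init_{R_0}(\rho_0) \\
&= \sup_{\rho_0} \sup_{P_0 \, : \, \E_{P_0}[M] = \rho_0} \left( \langle \sigma, \rho_0 \rangle - H(P_0|R_0) \right) \\
&= \sup_{P_0 \in \P(\M_+(\T^d))} \left( \langle \sigma, \E_{P_0}[M]\rangle - H(P_0 | R_0) \right) \\
&= \sup_{P_0} \left( \E_{P_0}[\langle \sigma, M\rangle] - H(P_0|R_0)\right),
\end{align*}
where in the third line I simply reorganize the double supremum (every $P_0$ has a well-defined intensity $\E_{P_0}[M]$, possibly with infinite mass, but then the inner term is $-\infty$ unless it is finite). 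The last expression is, by Proposition~\ref{prop:legendre_transf_entropy} (the Legendre transform of relative entropy, applied with $\mathbf{X} = \M_+(\T^d)$, $\mathsf r = R_0$, and $Y = \langle \sigma, M\rangle$), precisely equal to $\log \E_{R_0}[\exp(\langle \sigma, M\rangle)]$. This gives $(\Init_{R_0})^* = L_{R_0}$ where I write $L_{R_0}(\sigma) = \log \E_{R_0}[\exp(\langle\sigma, M\rangle)]$, and hence $(\Init_{R_0})^{**} = L_{R_0}^*$ by definition~\eqref{eq:def_L*}.

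There are two subtleties to address carefully. The first is measurability/integrability: when $\sigma$ takes positive values, $\langle \sigma, M\rangle$ can be large and $\E_{R_0}[\exp(\langle\sigma,M\rangle)]$ may be $+\infty$; Proposition~\ref{prop:legendre_transf_entropy} still applies since the inequality~\eqref{eq:convex_ineq_entropy} holds with the l.h.s.\ well-defined in $[-\infty,+\infty)$ whenever the r.h.s.\ is finite, and when $L_{R_0}(\sigma) = +\infty$ the sup over $P_0$ is also $+\infty$ (take $P_0$ concentrating mass where $\langle \sigma, M\rangle$ is large), so the identity $(\Init_{R_0})^* = L_{R_0}$ persists. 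The second, and what I expect to be the main obstacle, is justifying that the biconjugate equals the l.s.c.\ envelope in this infinite-dimensional, non-reflexive setting: one needs that $\Init_{R_0}$, viewed as a function on $\M(\T^d)$ (extended by $+\infty$ off $\M_+(\T^d)$), is convex and bounded below by a continuous affine function for the weak topology $\sigma(\M(\T^d), C(\T^d))$ — nonnegativity suffices — and then invoke the Fenchel–Moreau theorem in the form: the $\sigma(\V',\V)$-l.s.c.\ convex envelope of $F$ is $F^{**}$ computed via the duality $\langle C(\T^d), \M(\T^d)\rangle$. Since $\overline{\Init_{R_0}}$ here should be understood as the l.s.c.\ envelope of the (already convex) function $\Init_{R_0}$, and the l.s.c.\ envelope of a convex function is convex, we get $\overline{\Init_{R_0}} = (\overline{\Init_{R_0}})^{**} = (\Init_{R_0})^{**} = L_{R_0}^*$, using that conjugation is insensitive to passing to the l.s.c.\ envelope. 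I would cite \cite{ekeland1999convex} or \cite{brezis2011functional} for this standard duality fact and keep the convex-analysis bookkeeping short, spending the bulk of the written argument on the entropy-duality computation above and the handling of the $+\infty$ cases.
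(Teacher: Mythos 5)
Your overall route is the same as the paper's: express the l.s.c.\ envelope as the biconjugate via the Fenchel--Moreau theorem in the duality $\langle C(\T^d), \M(\T^d)\rangle$ (the paper's Theorem~\ref{thm:legendre_transform_lsc_envelope}), then compute $(\Init_{R_0})^*$ using the entropic duality inequality of Proposition~\ref{prop:legendre_transf_entropy}. The convexity of $\Init_{R_0}$, the nonnegativity lower bound, and the invocation of \cite{ekeland1999convex}/\cite{brezis2011functional} are all fine and correspond to what the paper does.

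The gap is in the reorganization from your second to your third line and the claim that you can then simply invoke Proposition~\ref{prop:legendre_transf_entropy}. Your second line, correctly unwound from the definition of $\Init_{R_0}$, supremizes only over those $P_0$ with $\E_{P_0}[M(\T^d)]<+\infty$, while the unconstrained supremum
\begin{equation*}
\sup_{P_0 \in \P(\M_+(\T^d))} \bigl( \E_{P_0}[\langle \sigma, M\rangle] - H(P_0|R_0)\bigr)
\end{equation*}
is what Proposition~\ref{prop:legendre_transf_entropy} identifies with $\log\E_{R_0}[\exp(\langle\sigma,M\rangle)]$. Your stated justification for equating the two (``the inner term is $-\infty$ unless it is finite'') is not correct: if $\sigma\geq 0$ and $\E_{P_0}[M(\T^d)]=+\infty$, the term $\E_{P_0}[\langle\sigma,M\rangle]$ is $+\infty$ or a finite real number, not $-\infty$; and if one instead adopts the convention that $\langle\sigma,\E_{P_0}[M]\rangle:=-\infty$ when the intensity is infinite, then your passage from the third to the fourth line (the Fubini identity) fails precisely on those $P_0$. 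Either way, what is missing is the nontrivial direction of the duality: one must exhibit finite-intensity competitors approaching $\log\E_{R_0}[\exp(\langle\sigma,M\rangle)]$, because the natural maximizer $P_0 := \exp(\langle\sigma,M\rangle)\cdot R_0 / \E_{R_0}[\exp(\langle\sigma,M\rangle)]$ may well satisfy $\E_{P_0}[M(\T^d)]=+\infty$ even when $\log\E_{R_0}[\exp(\langle\sigma,M\rangle)]$ is finite, and then it is not a competitor in $\Init_{R_0}(\rho_0)$ for any $\rho_0$. The paper fixes this with the truncation $P_0^n := \exp(\langle\sigma,M\rangle)\1_{M(\T^d)\leq n}\cdot R_0 / \E_{R_0}[\exp(\langle\sigma,M\rangle)\1_{M(\T^d)\leq n}]$, a direct computation of $\E_{P_0^n}[\langle\sigma,M\rangle]-H(P_0^n|R_0)$, and monotone convergence; the same device takes care of the case $\E_{R_0}[\exp(\langle\sigma,M\rangle)]=+\infty$, which you only address by a wave at ``concentrating mass.'' Supplying that truncation argument is what turns your sketch into a proof.
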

	
	The second result is the following, making the link with the RUOT problem.
	
	\begin{Thm}
		\label{thm:equality_values_dyn}
		Let $R \sim \BBM(\nu, \boldsymbol{q}, R_0)$, where $\nu > 0$, $\boldsymbol{q}$ is a branching mechanism and $R_0 \in \P(\M_\delta(\T^d))$. Assume that $R_0 \neq \delta_0$. Let us define $\Psi = \Psi_{\nu, \boldsymbol q}$ where the latter is defined through its Legendre transform by formula \eqref{eq:def_Psi*}. Then the function $(\rho_0, \rho_1) \mapsto \nu L^*_{R_0}(\rho_0) + \ruot_{\nu, \Psi}(\rho_0,\rho_1)$ is the l.s.c.\ envelope of $\BrSch_{\nu, \boldsymbol q, R_0}$ for the topology of weak convergence of measures. 
	\end{Thm}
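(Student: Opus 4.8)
The plan is to deduce the statement from a single biconjugation identity. Work with the duality pairing between $\V=C(\T^d)^2$ and $\V'=\M(\T^d)^2$, with $\langle(\sigma,\theta),(\rho_0,\rho_1)\rangle=\langle\sigma,\rho_0\rangle+\langle\theta,\rho_1\rangle$. Write $\BrSch:=\BrSch_{\nu,\boldsymbol q,R_0}$ and $\Psi:=\Psi_{\nu,\boldsymbol q}$. First I would observe that $\BrSch$ is convex: it is the partial infimum over $P$ of the jointly convex functional $(P,\rho_0,\rho_1)\mapsto\nu H(P|R)+\iota_{\{\E_P[M_0]=\rho_0,\ \E_P[M_1]=\rho_1\}}$, and marginalizing a jointly convex functional produces a convex function. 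Since moreover $\BrSch\geq 0$, it admits the continuous affine minorant $0$, so its weak lower semi-continuous envelope coincides with its biconjugate: $\overline{\BrSch}=\BrSch^{**}$. Thus it suffices to compute $\BrSch^{*}$, then $\BrSch^{**}$, and to identify the latter with $(\rho_0,\rho_1)\mapsto\nu L^{*}_{R_0}(\rho_0)+\ruot_{\nu,\Psi}(\rho_0,\rho_1)$.

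Next I would compute $\BrSch^{*}$. Swapping the supremum over $(\rho_0,\rho_1)$ with the infimum defining $\BrSch$ and eliminating the marginal constraints yields
\[
\BrSch^{*}(\sigma,\theta)=\sup_P\Big\{\E_P\big[\langle\sigma,M_0\rangle+\langle\theta,M_1\rangle\big]-\nu H(P|R)\Big\}=\nu\log\E_R\left[\exp\left(\tfrac1\nu\big(\langle\sigma,M_0\rangle+\langle\theta,M_1\rangle\big)\right)\right],
\]
the last equality being the Legendre (Donsker--Varadhan) duality of the relative entropy, i.e.\ the remark following Proposition~\ref{prop:legendre_transf_entropy}. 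Some care is needed with the implicit restriction to $P$ having finite expected marginals, but the convex inequality~\eqref{eq:convex_ineq_entropy} makes it harmless: when the right-hand side is finite every $P$ with $H(P|R)<+\infty$ has a well-defined pairing $\E_P[\langle\sigma,M_0\rangle+\langle\theta,M_1\rangle]$, and when it is infinite both sides are infinite. In particular $\BrSch^{*}(0,0)=0$, so $\BrSch^{*}$ is proper, consistently with $\overline{\BrSch}=\BrSch^{**}$.

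Then I would compute $\BrSch^{**}(\rho_0,\rho_1)=\sup_{\sigma,\theta}\{\langle\sigma,\rho_0\rangle+\langle\theta,\rho_1\rangle-\BrSch^{*}(\sigma,\theta)\}$, which is exactly the formal argument of Section~\ref{sec:duality_informal} made rigorous. Given a smooth $\phi$ on $[0,1]\times\T^d$ solving the backward Hamilton--Jacobi equation~\eqref{eq:constraint_phi_BBM} --- equivalently $u:=e^{\phi/\nu}$ solving the FKPP equation~\eqref{eq:FisherKPP}, by Remark~\ref{rem:FKPP_log_exp} --- the martingale characterization of the BBM together with the branching property yields the identity $\E_R\big[\exp(\tfrac1\nu\langle\theta,M_1\rangle)\,\big|\,M_0\big]=\exp(\tfrac1\nu\langle\phi(0),M_0\rangle)$ with $\phi(1)=\theta$; conditioning on $M_0$ then turns $\BrSch^{*}(\sigma,\theta)$ into $\nu\log\E_{R_0}\big[\exp(\langle\tfrac1\nu(\sigma+\phi(0)),M\rangle)\big]$. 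After the change of variables $\tilde\sigma:=\tfrac1\nu(\sigma+\phi(0))$ and $\theta=\phi(1)$, the supremum over $\tilde\sigma\in C(\T^d)$ produces $\nu L^{*}_{R_0}(\rho_0)$ via definition~\eqref{eq:def_L*}, while the supremum over the admissible $\phi$ produces $\ruot_{\nu,\Psi}(\rho_0,\rho_1)$ by the RUOT duality of Theorem~\ref{thm:existence_ruot} --- and this is exactly where the choice $\Psi^{*}=\Psi^{*}_{\nu,\boldsymbol q}$ is used; it is legitimate because $\Psi_{\nu,\boldsymbol q}$ satisfies Assumption~\ref{ass:ruot_weak} by Remark~\ref{rem:Psi_nu_q_grows_at_m_infty}. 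This gives both inequalities, hence $\BrSch^{**}(\rho_0,\rho_1)=\nu L^{*}_{R_0}(\rho_0)+\ruot_{\nu,\Psi}(\rho_0,\rho_1)$, and combined with Theorem~\ref{thm:initial_lsc_envelope} (which identifies $L^{*}_{R_0}$ as the l.s.c.\ envelope of $\Init_{R_0}$, see~\eqref{eq:def_init}) this is the claim.

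The hard part is the rigorous justification of the Markov identity above when $\theta$, hence $\phi$, is not nonpositive. Two difficulties then appear: the solution $\phi$ of~\eqref{eq:constraint_phi_BBM} (equivalently the FKPP flow of $e^{\theta/\nu}$) may blow up before time $0$, and even when it exists the exponential process $\exp(\tfrac1\nu\langle\phi(t),M_t\rangle-\cdots)$ is, by Proposition~\ref{prop:loc_exp_mart_BBM}, only a local --- hence super --- martingale, so a priori one only gets $\E_R[\exp(\tfrac1\nu\langle\theta,M_1\rangle)\,|\,M_0]\leq\exp(\tfrac1\nu\langle\phi(0),M_0\rangle)$, the wrong direction for one of the two inequalities. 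I would resolve this by establishing the identity (not merely the inequality) for a restricted class of ``good'' $\phi$: bounded, global-in-time classical solutions of~\eqref{eq:constraint_phi_BBM}, for which $u=e^{\phi/\nu}$ stays bounded, so that the local martingale is a genuine martingale and the BBM representation of the FKPP solution (Propositions~\ref{prop:FKPP_below1} and~\ref{prop:evolution_eq_u_from_BBM}) applies. One then shows this class is rich enough to recover the full value of the RUOT dual~\eqref{eq:duality_RUOT}: this uses the comparison principle for~\eqref{eq:constraint_phi_BBM} (replacing the inequality constraint by equality only increases the objective and lets one push an admissible subsolution towards a genuine solution) together with a mollification/truncation argument in the spirit of Lemma~\ref{lem:mollification}, and the observation that pairs $(\sigma,\theta)$ with $\BrSch^{*}(\sigma,\theta)<+\infty$ can, up to an arbitrarily small error in $\BrSch^{**}$, be taken with $\theta$ the terminal value of a good $\phi$. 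Finally, the hypothesis $R_0\neq\delta_0$ enters precisely here: it ensures $\BrSch^{*}$ is not identically $0$ (under $R$ there are particles with positive probability, and branching allows $M_1(\T^d)$ to be large), which is what makes the inequality $\BrSch^{**}\leq\nu L^{*}_{R_0}+\ruot_{\nu,\Psi}$ correct; when $R_0=\delta_0$ one has $\BrSch^{*}\equiv 0$ and the statement indeed fails, so the exclusion is sharp.
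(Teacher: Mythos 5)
Your proposal follows essentially the same route as the paper's proof: biconjugation via Theorem~\ref{thm:legendre_transform_lsc_envelope}, computation of $\BrSch^*$ as the rescaled log-Laplace transform of $(M_0,M_1)$ under $R$ (with the truncation in $M_0(\T^d)+M_1(\T^d)$ to handle the implicit marginal-finiteness restriction), then pushing $\theta$ back to $t=0$ via the FKPP flow, peeling off $\nu L^*_{R_0}$, and invoking the RUOT duality of Theorem~\ref{thm:existence_ruot}. You also correctly identify the two technical obstructions (possible blow-up of $\phi$; the supermartingale inequality going the wrong way) and the remedies (strong interior solutions, Proposition~\ref{prop:comparision_FKPP}, and the approximation result Proposition~\ref{prop:FKPP_approx}).

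One step in your plan is described imprecisely. You write that for bounded global solutions ``$u=e^{\phi/\nu}$ stays bounded, so that the local martingale is a genuine martingale.'' Boundedness of $\phi$ does \emph{not} by itself bound $\exp\!\big(\tfrac1\nu\langle\phi(t),M_t\rangle\big)$, because $M_t(\T^d)$ is unbounded; so no direct appeal to boundedness closes the local martingale. What the paper actually does (Theorem~\ref{thm:martingale_smooth_case}) is a two-sided squeeze: the supermartingale property of Proposition~\ref{prop:loc_exp_mart_BBM} gives $u\geq v$, while the $D$-weak solution $v$ built from the BBM via Proposition~\ref{prop:evolution_eq_u_from_BBM}, compared to the strong interior solution $u$ through Proposition~\ref{prop:comparision_FKPP}, gives $u\leq v$; equality $u=v$ is what yields the Markov identity, and the genuine-martingale property is a corollary, not a starting point. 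Relatedly, your approximation step should invoke Proposition~\ref{prop:FKPP_approx} (approximation of $D$-weak supersolutions by strong interior ones in the FKPP domain) rather than the RUOT-level Lemma~\ref{lem:mollification}. Finally, the role of $R_0\neq\delta_0$ in the paper's argument is slightly more specific than you describe: it is used inside Step 2 to conclude that the $D$-weak solution $v(1)$ is not identically $+\infty$ (since for $R_0$-a.e.\ $\mu$ some $x\in\Supp\mu$ exists with $v(1,x)<\infty$), which is precisely the hypothesis of Proposition~\ref{prop:FKPP_approx}; your ``$\BrSch^*\not\equiv 0$'' observation and the sharpness of the exclusion are correct but are consequences rather than the operative mechanism.
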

	\begin{Rem}
		\label{rem:R0=delta0}
		Here it is crucial to exclude the case $R_0 = \delta_0$. Indeed, in this case, one can see that $\BrSch_{\nu, \boldsymbol q, R_0}(\rho_0, \rho_1)$ is $+\infty$ unless $\rho_0 = \rho_1 = 0$, while $L^*_{R_0}(0) + \ruot_{\nu, \Psi}(0,\Leb)$ is finite provided $\sum_{k \geq 2} q_k >0$ (use Proposition~\ref{prop:ruot_existence_competitor} together with Remark~\ref{rk:ruot_existence_competitor_q}). Therefore, the result cannot hold. A very similar argument is developped further at the very end of Subsection~\ref{subsec:counterexamples}.	
	\end{Rem}
	
	For convex functions, such as $\Init_{R_0}$ or $\BrSch_{\nu, \boldsymbol q, R_0}$, lower semi-continuous envelopes can be computed with the help of Legendre transforms. Specifically, for Theorem \ref{thm:initial_lsc_envelope} and Theorem \ref{thm:equality_values_dyn} we will rely on the following result, which we state in an abstract context. As above $\V$ is a separable Banach space and $\V'$ is its topological dual. 
	
	\begin{Thm}
		\label{thm:legendre_transform_lsc_envelope}
		Let $F : \V' \to [0 + \infty]$ be a convex function which is not identically equal to $+ \infty$. Then its $\sigma(\V',\V)$-l.s.c.\ envelope $\overline{F}$ is convex and is given by 
		\begin{equation*}
		\overline{F} = (F^*)^*,
		\end{equation*}
		the Legendre transform of the Legendre transform of $F$, as defined in Section~\ref{sec:notations}.
	\end{Thm}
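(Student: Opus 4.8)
The statement to prove is Theorem~\ref{thm:legendre_transform_lsc_envelope}: for a convex $F:\V'\to[0,+\infty]$ not identically $+\infty$, its $\sigma(\V',\V)$-l.s.c.\ envelope equals $(F^*)^*$.

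The plan is to proceed by double inequality, combined with the well-known characterization of l.s.c.\ convex functions as suprema of continuous affine functions.

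First I would establish that $(F^*)^*$ is a $\sigma(\V',\V)$-l.s.c.\ convex function lying below $F$. The inequality $(F^*)^* \leq F$ is the Fenchel--Young inequality: for every $w \in \V'$ and every $v \in \V$, $F^*(v) \geq \langle v, w\rangle - F(w)$, hence $\langle v, w\rangle - F^*(v) \leq F(w)$; taking the supremum over $v \in \V$ gives $(F^*)^*(w) \leq F(w)$. For lower semicontinuity, recall that by definition $(F^*)^*(w) = \sup_{v \in \V} \big( \langle v,w\rangle - F^*(v)\big)$ is a supremum of functions $w \mapsto \langle v,w\rangle - F^*(v)$, each of which is $\sigma(\V',\V)$-continuous (this is precisely the definition of the topology $\sigma(\V',\V)$: it makes all the maps $w \mapsto \langle v,w\rangle$ continuous, and subtracting the constant $F^*(v)$ preserves continuity). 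A supremum of continuous functions is l.s.c., so $(F^*)^*$ is l.s.c., and it is convex as a supremum of affine functions. Since $(F^*)^*$ is an l.s.c.\ function below $F$, by definition of the l.s.c.\ envelope we get $(F^*)^* \leq \overline{F}$.

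For the reverse inequality $\overline{F} \leq (F^*)^*$, I would use the Hahn--Banach separation theorem. Fix $w_0 \in \V'$; I want to show $\overline{F}(w_0) \leq (F^*)^*(w_0)$, equivalently that any $\sigma(\V',\V)$-l.s.c.\ function $G \leq F$ satisfies $G(w_0) \leq (F^*)^*(w_0)$. It suffices to show that every continuous affine function $w \mapsto \langle v,w\rangle - c$ that lies below $F$ (equivalently, with $c \geq F^*(v)$) also lies below... no — rather, the key point is: $(F^*)^*$ is the pointwise supremum of all $\sigma(\V',\V)$-continuous affine minorants of $F$. Indeed, $w \mapsto \langle v,w\rangle - c$ lies below $F$ iff $c \geq F^*(v)$, and the best such $c$ is $c = F^*(v)$; so $\sup\{\langle v,\cdot\rangle - c : \langle v,\cdot\rangle - c \leq F\} = \sup_v(\langle v,\cdot\rangle - F^*(v)) = (F^*)^*$. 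Now any $\sigma(\V',\V)$-l.s.c.\ convex minorant $G$ of $F$ is itself the supremum of its continuous affine minorants (standard consequence of Hahn--Banach in locally convex spaces, using that $\sigma(\V',\V)$ is a locally convex topology whose continuous linear functionals are exactly $v \mapsto \langle v,w\rangle$, $v \in \V$), and each such minorant of $G$ is a minorant of $F$; hence $G \leq (F^*)^*$. Taking the supremum over all such $G$ gives $\overline{F} \leq (F^*)^*$. One subtlety to address: $\overline{F}$ as defined is a sup over all l.s.c.\ minorants, not only convex ones, but since $(F^*)^*$ is itself a convex l.s.c.\ minorant of $F$ we already know $(F^*)^* \le \overline F$, and it is automatic that $\overline F$ is then also the largest convex l.s.c.\ minorant, so no conflict arises. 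A degenerate case to handle: if $F^* \equiv +\infty$, then $(F^*)^* \equiv -\infty$; I should check that this forces $\overline{F}\equiv -\infty$ too, which happens exactly when $F$ admits no continuous affine minorant — but since $F \geq 0$ this never occurs, so $F^*$ is proper and the argument goes through cleanly; I would remark on this.

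The main obstacle, though it is standard material, is the Hahn--Banach step: verifying that an l.s.c.\ convex function on $\V'$ equipped with the weak-$\ast$-type topology $\sigma(\V',\V)$ is the supremum of its $\sigma(\V',\V)$-continuous affine minorants. This requires knowing that the continuous dual of $(\V', \sigma(\V',\V))$ is canonically $\V$ itself, and applying the geometric Hahn--Banach theorem to separate the point $(w_0, t)$ with $t < G(w_0)$ from the closed convex epigraph of $G$. Since the paper cites \cite{ekeland1999convex} for the sequential representation of $\overline{F}$, I would cite the same reference (or \cite{brezis2011functional}) for this biduality fact, and keep the proof short by invoking it rather than reproving Hahn--Banach.
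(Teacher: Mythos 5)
Your proof takes essentially the same route as the paper: reduce to the fact that a $\sigma(\V',\V)$-l.s.c.\ convex function is the supremum of its $\sigma(\V',\V)$-continuous affine minorants, identify those minorants as $w\mapsto\langle v,w\rangle-c$ with $v\in\V$ and $c\geq F^*(v)$, and optimize over $v$ to get $(F^*)^*$. The Fenchel--Young half (showing $(F^*)^*$ is a convex l.s.c.\ minorant of $F$, hence $(F^*)^*\leq\overline F$) is correct, and the observation that $F\geq0$ forces $F^*$ to be proper (so $(F^*)^*>-\infty$) is a good sanity check.

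There is, however, a genuine gap in your ``subtlety to address'' paragraph. You correctly observe that your Hahn--Banach argument only shows that \emph{convex} l.s.c.\ minorants $G$ of $F$ satisfy $G\leq(F^*)^*$, whereas $\overline F$ is by definition the supremum over \emph{all} l.s.c.\ minorants. You then claim that it is ``automatic'' that $\overline F$ is the largest convex l.s.c.\ minorant; but nothing you have proved establishes that $\overline F$ is convex, and without that the reverse inequality $\overline F\leq(F^*)^*$ does not follow from your Hahn--Banach step. Knowing $(F^*)^*\leq\overline F$ only tells you $\overline F$ dominates one particular convex l.s.c.\ minorant — it does not make $\overline F$ convex.

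The missing ingredient is the elementary fact that the l.s.c.\ envelope of a convex function is convex. The cleanest argument: the epigraph of $\overline F$ is the $\sigma(\V',\V)$-closure of the epigraph of $F$ (a standard characterization of the l.s.c.\ envelope), and the closure of a convex set in a topological vector space is convex, so $\operatorname{epi}(\overline F)$ is convex, i.e.\ $\overline F$ is convex. Once you have this, apply your Hahn--Banach step directly to $G=\overline F$ and you are done. The paper avoids the issue entirely by citing a single proposition from Ekeland--Témam which already packages ``l.s.c.\ envelope of a convex function equals its $\Gamma$-regularization,'' so if you prefer you can invoke that reference instead of reproving the convexity of $\overline F$.
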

	
	This result is very classical. Here we emphasize that the framework is a bit unusual: the functional $F$ is defined on the dual $\V'$, and its Legendre transform $F^*$ is defined on $\V$ and not $\V''$ (which differs from $\V$ if it is not reflexive). It is because we are looking at the $\sigma(\V',\V)$-envelope and not the $\sigma(\V',\V'')$ one.
	
	\begin{proof}
		Using for instance \cite[Proposition 3.3]{ekeland1999convex}, as $F$ is convex we know that $\overline{F}$ coincides with the $\Gamma$-regularization of $F$, that is the supremum of affine $\sigma(\V',\V)$-continuous functions which are below $F$. 
		
		By \cite[Proposition 3.14]{brezis2011functional}, a function $f : \V' \to \R$ which is $\sigma(\V',\V)$-continuous and affine can be written $f(w) = \langle v, w \rangle + c$ for some $v \in \V$ and $c \in \R$. For a given $v \in \V$ the largest affine function below $F$ with slope $v$ is $f : w \mapsto \langle v, w \rangle - F^*(w)$ by definition of $F^*$. 
		
		Putting these two results together is enough to conclude.
	\end{proof}
	
	We first prove Theorem \ref{thm:initial_lsc_envelope}: as one can see, it will be an almost immediate consequence of Theorem~\ref{thm:legendre_transform_lsc_envelope} and an easy approximation argument. The proof of Theorem~\ref{thm:equality_values_dyn} will be more involved and we will need some preliminary results obtained in Section~\ref{sec:linkFKPP_BBM}.

	\section{Proof of Theorem~\ref{thm:initial_lsc_envelope}, the static case}
	\label{sec:static_legendre_transform}
	
	In this section, we provide a direct proof of Theorem~\ref{thm:initial_lsc_envelope}. Because of Theorem~\ref{thm:legendre_transform_lsc_envelope} and as $\Init_{R_0}$ is convex, this is a direct consequence of the following proposition.
	
	\begin{Prop}[Legendre transform of $\Init$]
		\label{prop:legendre_transform_init}
		Let $R_0 \in \P(\M_+(\T^d))$. We have for all $\varphi \in C(\T^d)$
		\begin{equation*}
		\sup_{\rho \in \M_+(\T^d)} \cg \varphi, \rho \cd - \Init_{R_0}(\rho) = \log \E_{R_0}[\exp(\cg\varphi, M \cd)].
		\end{equation*}
	\end{Prop}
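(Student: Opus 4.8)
Here is the plan. The strategy is to recognize the left-hand side as the Legendre transform of $H(\cdot|R_0)$ evaluated at a suitable random variable on $\M_+(\T^d)$, and to read off the two inequalities from the convex duality of the relative entropy, Proposition~\ref{prop:legendre_transf_entropy}; the only genuine subtlety is an integrability issue that I would resolve by a truncation.

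First I would unfold the definition. Since $\Init_{R_0}(\rho) = \inf\{H(P_0|R_0) : \E_{P_0}[M] = \rho\}$ with the convention $\inf\emptyset = +\infty$, one has
\begin{equation*}
\sup_{\rho \in \M_+(\T^d)} \cg\varphi,\rho\cd - \Init_{R_0}(\rho) = \sup_{P_0}\ \Big( \cg\varphi, \E_{P_0}[M]\cd - H(P_0|R_0) \Big),
\end{equation*}
where the right-hand supremum runs over $P_0 \in \P(\M_+(\T^d))$ with $H(P_0|R_0) < +\infty$ and $\E_{P_0}[M] \in \M_+(\T^d)$ (all other $P_0$'s contribute $-\infty$). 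For such $P_0$, writing $\varphi = \varphi_+ - \varphi_-$ and using Tonelli together with $\E_{P_0}[M](\T^d) < +\infty$ gives $\cg\varphi,\E_{P_0}[M]\cd = \E_{P_0}[\cg\varphi,M\cd]$ as a finite real number. Thus the quantity to compute is the Legendre transform of $H(\cdot|R_0)$ evaluated at the random variable $Y := \cg\varphi,M\cd$ — which is Borel since $\mu\mapsto\cg\varphi,\mu\cd$ is continuous for the weak topology — restricted to $P_0$ of finite intensity.

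For the upper bound, if $\log\E_{R_0}[\exp\cg\varphi,M\cd] = +\infty$ there is nothing to prove; otherwise inequality~\eqref{eq:convex_ineq_entropy} applied with $Y = \cg\varphi,M\cd$ yields $\E_{P_0}[\cg\varphi,M\cd] - H(P_0|R_0) \le \log\E_{R_0}[\exp\cg\varphi,M\cd]$ for every admissible $P_0$, hence ``$\le$'' after taking the supremum.

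For the lower bound, the natural candidate is the exponential tilt $\D P_0 / \D R_0 = \exp\cg\varphi,M\cd / \E_{R_0}[\exp\cg\varphi,M\cd]$, but — and this is the one point requiring attention — it may fail to be admissible because its intensity $\E_{P_0}[M]$ need not be finite (already for $\varphi = 0$ it is $R_0$ itself, which may have infinite intensity); this is why a direct application of the equality case of Proposition~\ref{prop:legendre_transf_entropy} does not suffice. I would therefore truncate: let $A_N := \{\mu : \mu(\T^d) \le N\}$, which is closed hence Borel, and for $N$ large enough that $R_0(A_N) > 0$ set
\begin{equation*}
c_N := \E_{R_0}\big[\exp\cg\varphi,M\cd\, \1_{A_N}\big] \in (0, +\infty), \qquad \frac{\D P_0^N}{\D R_0} := \frac{1}{c_N}\exp\cg\varphi,M\cd\,\1_{A_N}.
\end{equation*}
Then $P_0^N$ is a probability measure supported on $A_N$, so $\E_{P_0^N}[M](\T^d) \le N < +\infty$; moreover $H(P_0^N|R_0) = -\log c_N + \E_{P_0^N}[\cg\varphi,M\cd]$ with $|\E_{P_0^N}[\cg\varphi,M\cd]| \le \|\varphi\|_\infty N$, so $P_0^N$ is admissible, and a direct computation gives $\E_{P_0^N}[\cg\varphi,M\cd] - H(P_0^N|R_0) = \log c_N$. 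By monotone convergence $\log c_N \uparrow \log\E_{R_0}[\exp\cg\varphi,M\cd]$ as $N\to\infty$ (using $\1_{A_N}\uparrow 1$), possibly with value $+\infty$. Taking the supremum over $N$ gives ``$\ge$'', and combined with the upper bound this proves the proposition.
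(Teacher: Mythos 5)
Your proof is correct and follows essentially the same route as the paper: reduce to the Legendre transform of $H(\cdot|R_0)$ against $Y=\cg\varphi,M\cd$, obtain ``$\le$'' from the duality inequality for relative entropy, and obtain ``$\ge$'' by truncating the exponential tilt to $\{M(\T^d)\le N\}$ and passing to the limit by monotone convergence. The extra detail you provide in verifying admissibility of the truncated tilts (finite intensity and finite entropy) is a welcome elaboration, but the argument is the same.
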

	\begin{proof}
		Let $\varphi \in C(\T^d)$. By definition of $\Init_{R_0}$, the statement is clearly equivalent to
		\begin{equation*}
		\sup_{\substack{P_0 \in \P(\M_+(\T^d))\\ \E_{P_0}[M(\T^d)] < + \infty}} \E_{P_0}[\cg \varphi, M \cd] - H(P_0|R_0)= \log \E_{R_0}[\exp(\cg\varphi, M \cd)].
		\end{equation*}
		
		Let us first prove the inequality ``$\leq$''. For a given $P_0 \in \P(\M_+(\T^d))$, by~\eqref{eq:convex_ineq_entropy}:
		\begin{equation}
		\label{eq:aux_proof_init_Legendre}
		\E_{P_0}[\cg \varphi, M \cd ] \leq H(P_0|R_0) + \log \E_{R_0}[\exp(\cg \varphi ,M\cd)].
		\end{equation}
		We conclude by putting the entropic term in the l.h.s.\ and by taking the supremum w.r.t.\ $P_0$.
		
		Let us prove ``$\geq$''. The main idea is to set
		\begin{equation}
		\label{eq:def_P_0}
		P_0 := \frac{\exp(\cg \varphi, M\cd) }{\E_{R_0}[\exp(\cg \varphi, M\cd)]} \cdot R_0
		\end{equation}
		to find an equality in~\eqref{eq:aux_proof_init_Legendre}. The two subtleties consist in dealing with the case where $\E_{R_0}[\exp(\cg\varphi,M\cd)]=+\infty$ (and hence $P_0$ given by~\eqref{eq:def_P_0} is not well defined), and with the case where $\E_{R_0}[\exp(\cg\varphi,M\cd)]<+\infty$, but $P_0$ given by~\eqref{eq:def_P_0} satisfies $\E_{P_0}[M(\T^d)] = + \infty$ (and hence $P_0$ is not a competitor for $\Init_{R_0}(\rho)$, with any~$\rho$). To do this, we set for $n \in \N$ big enough for the denominator to be nonzero:
		\begin{equation*}
		P^n_0 := \frac{\exp(\cg \varphi, M\cd) \1_{M(\T^d) \leq n} }{\E_{R_0}[\exp(\cg \varphi, M\cd)\1_{M(\T^d) \leq n}]} \cdot R_0.
		\end{equation*}
		Then we have automatically $\E_{P^n_0}[M(\T^d)] \leq n$ and a direct computation of the entropy leads to
		\begin{equation*}
		\E_{P_0^n}[\cg \varphi , M \cd] - H(P_0^n|R_0) =  \log \E_{R_0}[\exp(\cg \varphi, M \cd)\1_{M(\T^d) \leq n}],
		\end{equation*}
		so that the result follows from the monotone convergence theorem.
	\end{proof}
	
	\section{On the link between solutions of FKPP and the branching Brownian motion}
	\label{sec:linkFKPP_BBM}

	In this section, we consider the FKPP equation and its link with the martingale characterization of the BBM, as a preliminary for the proof of Theorem~\ref{thm:equality_values_dyn}. Our main results are:
	\begin{itemize}
		\item Theorem~\ref{thm:martingale_smooth_case}, which extends the martingale characterization of Definition~\ref{def:martingale_characterization} to the case where the function $\psi$ is no longer nonpositive but under the assumption that one has strong \emph{interior} solution of FKPP (see below);
		\item Proposition~\ref{prop:FKPP_approx} which states that one can always approximate weak solutions of FKPP by strong interior ones.
	\end{itemize}   
	
	Let us consider the (forward) FKPP equation:
	\begin{equation}
	\label{eq:FKPP_forward}
	\dr_t u = \frac{\nu}{2} \Delta u +  \Phi(u) - \lambda u,
	\end{equation}
	where $\nu > 0$ and $\lambda \geq 0$. Later, we will restrict to $\Phi = \Phi_{\boldsymbol q}$ and $\lambda = \lambda_{\boldsymbol q}$, being $\boldsymbol q$ a branching mechanism as defined in Definition~\ref{def:branching_mechanism}, so that this equation is nothing but~\eqref{eq:FKPP}. But for the moment, we only assume that $\Phi : [0, + \infty] \to [0,+\infty]$ is a nondecreasing convex function, and that $\Phi$ is finite at least on a right neighborhood of $0$. Note that these properties are satisfied when $\Phi = \Phi_{\boldsymbol q}$ is the generating function of a branching mechanism $\boldsymbol q \in \M_+(\N)$.
	
	Let us call $r_\text{max} \in (0,+\infty]$ the supremum of those $r$ for which $\Phi(r) < + \infty$. By standard properties of convex functions, $\Phi$ is Lipschitz on $[0,r]$ for any $r < r_\text{max}$. 
	
	We recall that we defined the heat kernel $(\tau_s)_{s > 0}$ and the convolution operator $*$ in Section~\ref{sec:notations}. We also recall that if $u$ is defined on $[0,1] \times \T^d$, we use $u(t)$ as a shortcut for $u(t,\cdot)$.

	\begin{Def}
		Let $u : [0,1] \times \T^d \to [0,+\infty]$ be a nonnegative measurable function. We call $u$:
		\begin{itemize}
			\item A \emph{strong} solution of \eqref{eq:FKPP_forward} if $u \in C^2([0,1] \times \T^d)$ and satisfies \eqref{eq:FKPP_forward} pointwise. 
			\item A \emph{strong interior} solution of \eqref{eq:FKPP_forward} if it is a strong solution and, in addition, $\min_{[0,1] \times \T^d} u > 0$ and $\max_{[0,1] \times \T^d} u < r_\text{max}$.
			\item A $D$-weak solution of \eqref{eq:FKPP_forward} if for all $t \in [0,1]$, 
			\begin{equation}
			\label{eq:FKPP_weak}
			u(t) =  e^{-\lambda t} \tau_{\nu t} \ast u(0) +  \int_0^t  e^{-\lambda (t-s)} \tau_{\nu (t-s)} \ast \Phi(u(s)) \, \D s .
			\end{equation}
			\item A $D$-weak supersolution of 
			\eqref{eq:FKPP_forward} if for all $t \in [0,1]$,
			\begin{equation}
			\label{eq:FKPP_weak_sub}
			u(t) \geq  e^{-\lambda t} \tau_{\nu t} \ast u(0) +  \int_0^t  e^{-\lambda (t-s)} \tau_{\nu (t-s)} \ast \Phi(u(s)) \, \D s.
			\end{equation}
		\end{itemize}
	\end{Def}
	
	Note that for $D$-weak solutions and supersolutions the right hand side is well defined as a function taking nonnegative (possibly infinite) values.
	
	Obviously a strong interior solution is a strong solution. In addition a strong solution is a $D$-weak solution. To see this, we just have to use Duhamel's formula (see for instance \cite[Section 2.3.1]{evans1998}) by splitting $\frac{\nu}{2} \Delta u +  ( \Phi(u) - \lambda u )$ in the r.h.s.\ of equation~\eqref{eq:FKPP_forward} as a linear part $\frac{\nu}{2} \Delta u - \lambda u$ and a nonlinear part~$\Phi(u)$. Eventually, note that this notion is precisely the kind of solutions that we got in Proposition~\ref{prop:evolution_eq_u_from_BBM}.

	\begin{Rem}
		\label{rem:non_unique_FKPP}
		Importantly, strong solutions are not necessarily unique, even starting from the same initial condition. This can happen if $r_\text{max} < + \infty$ with $\Phi(r_{\text{max}}) = \lambda r_\text{max}$ and $\Phi'(r_{\text{max}}) = + \infty$ in such a way that $\Phi$ is not locally Lipschitz in a left neighborhood of $r_\text{max}$.
		
		Specifically, let us fix $\lambda = 1$ and look at the function $\Phi$ defined by $\Phi(r) = 2 - \sqrt{2-r}$ if $r \leq 2$ and $\Phi(r) = + \infty$ if $r > 2$. This function is convex, l.s.c., and here $r_\text{max} = 2$. Moreover, $\Phi(1) = 1$, and one could check that it is the generating function associated with a branching mechanism. As $\Phi(2) = 2$ the (space-time) constant function equal to $2$ is a solution of~\eqref{eq:FKPP_forward}. However it is not the only one as any solution of the ODE $\dot{r}(t) = (\Phi(r(t)) -  r(t))$ is a solution of \eqref{eq:FKPP_forward} (taking a function which is constant in space), and this ODE has multiple solutions starting from $r=2$. The reason is that $\Phi$ is not globally Lipschitz on $[0,2]$.
		
		This is why we introduce the notion of strong \emph{interior} solutions, which stay far away from the singularity corresponding to $r_\text{max}$.   
	\end{Rem}
	
	As illustrated by the remark, uniqueness can be a tricky issue, but can actually be guaranteed for strong interior solutions. More precisely we will rely only on this weak-strong comparison principle.
	
	\begin{Prop}
		\label{prop:comparision_FKPP}
		Let $u$ be a strong interior solution, and $v$ be a $D$-weak supersolution of the same equation~\eqref{eq:FKPP_forward}. Assume furthermore that $u(0,\cdot) \leq v(0,\cdot)$. Then $u \leq v$ everywhere on $[0,1] \times \T^d$. 
	\end{Prop}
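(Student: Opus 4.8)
The plan is to establish the weak-strong comparison principle by a Duhamel/Gronwall argument exploiting that $u$ stays in the region where $\Phi$ is Lipschitz. The key point is that $u$ being a strong interior solution is in particular a $D$-weak solution satisfying~\eqref{eq:FKPP_weak} with equality, while $v$ satisfies~\eqref{eq:FKPP_weak_sub} with a $\geq$ sign. Subtracting, and using that $\tau_{\nu s}\ast$ is a positivity-preserving linear operator of $L^1$-norm one, I would write for every $t\in[0,1]$ and every $x\in\T^d$:
\begin{equation*}
v(t,x)-u(t,x) \geq e^{-\lambda t}\big(\tau_{\nu t}\ast(v(0)-u(0))\big)(x) + \int_0^t e^{-\lambda(t-s)}\big(\tau_{\nu(t-s)}\ast(\Phi(v(s))-\Phi(u(s)))\big)(x)\,\D s.
\end{equation*}
Since $u(0,\cdot)\le v(0,\cdot)$, the first term on the right is nonnegative.

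The heart of the argument is then to control the difference $\Phi(v(s))-\Phi(u(s))$ from below. Set $w(t):=\inf_{x\in\T^d}\big(v(t,x)-u(t,x)\big)$, or more robustly work with the negative part $w_-(t) = \sup_x \big(u(t,x)-v(t,x)\big)_+$; I would like to show $w_-\equiv 0$. Wherever $v(s,x)\ge u(s,x)$, monotonicity of $\Phi$ gives $\Phi(v(s,x))-\Phi(u(s,x))\ge 0$, so these points contribute nonnegatively. Wherever $v(s,x)<u(s,x)$, we have $u(s,x)-v(s,x)\le w_-(s)$, hence $v(s,x)\ge u(s,x)-w_-(s)$; because $u$ is an interior solution, $u(s,x)\in[\min u,\max u]\subset(0,r_{\text{max}})$, so for $s$ in a range where $w_-(s)$ is small enough both $u(s,x)$ and $v(s,x)$ lie in a fixed compact subinterval $[0,r]$ with $r<r_{\text{max}}$ on which $\Phi$ is $L$-Lipschitz; thus $\Phi(v(s,x))-\Phi(u(s,x))\ge -L\,w_-(s)$. (One should also note $v(s,x)\ge 0$ always, and if $w_-(s)$ ever exceeded $\min_{[0,1]\times\T^d}u$ there is a first such time, and before it the Lipschitz bound applies; a continuity/bootstrap argument rules this out.) Plugging this into the displayed inequality and using that $\tau$ has unit mass and $e^{-\lambda(t-s)}\le 1$ gives
\begin{equation*}
w_-(t)\le L\int_0^t w_-(s)\,\D s,
\end{equation*}
and since $w_-(0)=0$, Gronwall's lemma yields $w_-\equiv 0$, i.e.\ $u\le v$ everywhere.

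The main obstacle I anticipate is the interplay between the a priori unknown size of $v$ and the Lipschitz region of $\Phi$: $v$ is only a $D$-weak supersolution and could in principle take values up to $r_{\text{max}}$ or even be infinite, so the naive Lipschitz estimate for $\Phi(v)-\Phi(u)$ is not immediately available on the whole interval. The way around this is precisely the observation that we only ever need to compare $\Phi(v)$ and $\Phi(u)$ at points where $v<u$, and there $v$ is sandwiched between $0$ and $u\le\max_{[0,1]\times\T^d}u<r_{\text{max}}$; so the relevant values of $\Phi$ are automatically confined to a compact interval strictly inside $[0,r_{\text{max}})$ where $\Phi$ is Lipschitz, with a constant $L$ depending only on $u$. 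A careful bootstrap — defining $t^*$ as the supremum of times up to which $u\le v$ and showing $t^*=1$ using the Gronwall estimate on $[0,t^*+\delta]$ — makes this rigorous. The continuity in time needed to run this argument is guaranteed by the $D$-weak (super)solution formulation, since the right-hand sides of~\eqref{eq:FKPP_weak} and~\eqref{eq:FKPP_weak_sub} are continuous in $t$ for nonnegative data, and by the smoothness of the strong interior solution $u$.
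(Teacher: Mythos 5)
Your proof is correct and follows the paper's approach in its essential structure: subtract the Duhamel identity for $u$ from the Duhamel inequality for $v$, drop the nonnegative initial term, bound $\Phi(u)-\Phi(v)$ from above by a Lipschitz constant for $\Phi$ determined by the range of $u$ alone, and close with Gronwall on a time-supremum of $(u-v)_+$. Two remarks. First, the bootstrap via $t^*$ that you propose at the end is unnecessary: as you yourself observe, wherever $u>v$ both values lie in $[0,\max u]\subset[0,r_{\text{max}})$, so the Lipschitz constant is available \emph{unconditionally}, with no smallness requirement on $w_-$, and Gronwall closes on all of $[0,1]$ in one step — this direct route is what the paper takes. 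Second, you gloss over a genuine measurability issue: $w_-(t)=\sup_{x}\big(u(t,x)-v(t,x)\big)_+$ need not be a measurable function of $t$, because $v$ is only assumed measurable (and possibly $+\infty$); your appeal to ``continuity in time'' for $D$-weak supersolutions is not justified by the definition. The paper circumvents this by working with the lattice supremum $\vee_{x\in\T^d}(u(t,x)-v(t,x))_+$, which is measurable in $t$ and dominates the pointwise difference for a.e.\ $(t,x)$, which suffices for Gronwall; the resulting a.e.\ comparison $u\le v$ is then upgraded to an everywhere comparison by substituting it back into the Duhamel inequality, whose right-hand side becomes nonpositive at every point.
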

	
	\begin{proof}
		Using that $u$ is a $D$-weak solution of \eqref{eq:FKPP_forward}, we can subtract \eqref{eq:FKPP_weak_sub} from \eqref{eq:FKPP_weak} and say that for all $t \in [0,1]$ there holds
		\begin{align}
		\notag u(t) - v(t) & \leq  e^{-\lambda t} \tau_{\nu t} \ast [ u(0) - v(0)]  + \int_0^t  e^{-\lambda (t-s)} \tau_{\nu (t-s)} \ast [\Phi(u(s)) - \Phi(v(s))] \, \D s \\ 
		\label{eq:zz_aux_proof_gronwall} &\leq  \int_0^t   e^{-\lambda (t-s)} \tau_{\nu (t-s)} \ast [\Phi(u(s)) - \Phi(v(s))] \, \D s,
		\end{align}
		where both side may be $- \infty$. 
		We then want to look at the supremum of $(u - v)_+$ in time, where $a_+$ denotes the positive part of $a \in \R$. To avoid measurability issues we rather look at the function $w : [0,1] \to \R_+$ defined by $w(t) = \vee_{x \in \T^d} (u(t,x)-v(t,x))_+$, where $\vee$ denotes the lattice supremum, see \cite[Lemma 2.6]{hajlasz2002approximation}. Specifically the function $w$ is measurable and for all $x \in \T^d$, there holds $w(t) \geq (u(t,x)-v(t,x))_+$ for \emph{a.e.}\ $t \in [0,1]$. By Fubini's theorem, we can write that this inequality holds for \emph{a.e.}\ $(t,x) \in [0,1]$. We deduce that for \emph{a.e.}\ $(s,x) \in [0,1] \times \T^d$,  
		\begin{equation*}
		\Phi(u(s,x)) - \Phi(v(s,x)) \leq [\Phi(u(s,x)) - \Phi(v(s,x))]_+ \leq C (u(s,x) - v(s,x))_+ \leq C w(s),
		\end{equation*} 
		where $C$ is the Lipschitz constant of $\Phi$ on the image of $u$. As $u$ is a strong interior solution the constant $C$ is finite. We conclude that for \emph{a.e.}\ $t \in [0,1]$ there holds
		\begin{equation*}
		w(t) \leq C \int_0^t e^{-\lambda (t-s)} w(s) \, \D s.
		\end{equation*}
		Gronwall's Lemma (see for instance \cite[Appendix 5]{ethier1986markov} for the case where $w$ measurable) yields that $w$ is nonpositive at least \emph{a.e.}\ That is, for \emph{a.e.}\ $(t,x) \in [0,1] \times \T^d$, $u(t,x) \leq v(t,x)$. Equation~\eqref{eq:zz_aux_proof_gronwall} enables us to go from an equality holding \emph{a.e.}\ to an equality that holds everywhere.
	\end{proof}
	
	The next result states that $D$-weak supersolutions can be approximated from below by strong interior ones. We first start with an easy lemma.
	
	\begin{Lem}
		\label{lemma_D_weak_sup}
		Let $u$ be a D-weak supersolution of \eqref{eq:FKPP_forward}. Let $\eta,\eps > 0$. We define the function $u^{\eta,\varepsilon}$ on $[0,1] \times \T^d$ as
		\begin{equation}
		\label{eq:def_reg_u}
		u^{\eta,\varepsilon}(t) = \tau_\eta\ast u(t) - e^{-\lambda t} \varepsilon, \qquad \forall t \in [0,1],
		\end{equation}
		and we assume that it stays positive on $[0,1] \times \T^d$. Then it is still a D-weak supersolution of \eqref{eq:FKPP_forward}. 
	\end{Lem}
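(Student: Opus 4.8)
The plan is to apply the spatial mollification $\tau_\eta \ast$ directly to the defining inequality~\eqref{eq:FKPP_weak_sub} for $u$ and track how each term transforms, using three elementary facts: convolution against the probability kernel $\tau_\eta$ is a positive operator, it commutes (and associates) with convolution against the heat kernel $\tau_{\nu r}$ on the torus, and, by Jensen's inequality applied to the convex function $\Phi$, one has $\tau_\eta \ast \Phi(f) \geq \Phi(\tau_\eta \ast f)$ pointwise for any nonnegative measurable $f$. The reason the subtracted term $e^{-\lambda t}\varepsilon$ in~\eqref{eq:def_reg_u} is the right choice is that it is exactly the image of the constant $\varepsilon$ under the linear part $f \mapsto e^{-\lambda t}\tau_{\nu t}\ast f$ of the Duhamel formula (here one only uses that $\tau_{\nu t}$ has total mass one), so it cancels precisely at the end of the computation.

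Concretely, I would first convolve~\eqref{eq:FKPP_weak_sub} with $\tau_\eta$; since $\tau_\eta \geq 0$ and all integrands are nonnegative, Tonelli's theorem lets us move $\tau_\eta\ast$ inside the time integral, yielding
\begin{equation*}
\tau_\eta \ast u(t) \geq e^{-\lambda t}\, \tau_\eta \ast \tau_{\nu t}\ast u(0) + \int_0^t e^{-\lambda(t-s)}\, \tau_\eta \ast \tau_{\nu(t-s)}\ast \Phi(u(s))\, \D s .
\end{equation*}
For the first term, commutativity and associativity of convolution give $\tau_\eta \ast \tau_{\nu t}\ast u(0) = \tau_{\nu t}\ast(\tau_\eta\ast u(0))$; writing $\tau_\eta\ast u(0) = u^{\eta,\varepsilon}(0) + \varepsilon$ and using that $\tau_{\nu t}\ast$ preserves constants, this becomes $\tau_{\nu t}\ast u^{\eta,\varepsilon}(0) + \varepsilon$. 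For the nonlinear term, for each fixed $s$ I would chain three inequalities: Jensen gives $\tau_\eta \ast \Phi(u(s)) \geq \Phi(\tau_\eta\ast u(s))$; since $\tau_\eta\ast u(s) \geq \tau_\eta\ast u(s) - e^{-\lambda s}\varepsilon = u^{\eta,\varepsilon}(s)$ and $\Phi$ is nondecreasing, $\Phi(\tau_\eta\ast u(s)) \geq \Phi(u^{\eta,\varepsilon}(s))$; and finally $\tau_{\nu(t-s)}\ast$ preserves the resulting inequality. Combining everything and subtracting $e^{-\lambda t}\varepsilon$ from both sides produces exactly the inequality~\eqref{eq:FKPP_weak_sub} with $u^{\eta,\varepsilon}$ in place of $u$, i.e.\ $u^{\eta,\varepsilon}$ is a $D$-weak supersolution. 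At points where $\tau_\eta\ast u(t) = +\infty$ the supersolution inequality is trivially satisfied, so the subtraction causes no issue, and the standing positivity hypothesis ensures $\Phi(u^{\eta,\varepsilon}(s))$ is a well-defined element of $[0,+\infty]$.

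This argument is essentially bookkeeping and I do not expect a genuine obstacle; the only points that require care are getting the direction of Jensen's inequality right (it works in the favorable direction precisely because $\Phi$ is convex, which is part of the hypotheses on $\Phi$ recorded before the lemma) and noticing that the $e^{-\lambda t}\varepsilon$ shift is matched to the linear semigroup in the Duhamel representation, which is exactly why this particular regularization preserves the supersolution property.
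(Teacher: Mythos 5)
Your proof is correct and follows exactly the same route as the paper's: convolve the supersolution inequality with $\tau_\eta$, use Jensen's inequality on the convex $\Phi$, exploit the monotonicity of $\Phi$ to discard the $e^{-\lambda s}\varepsilon$ shift inside the nonlinearity, and cancel the $e^{-\lambda t}\varepsilon$ from the linear term because $\tau_{\nu t}\ast$ preserves constants. The only difference is cosmetic bookkeeping in the order of substitutions.
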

	
	\begin{proof}
		We apply the heat kernel $\tau_\eta$ on each side of \eqref{eq:FKPP_weak_sub}. Note that $\tau_\eta$ commutes with $\tau_{\nu t}$, with the multiplication by $e^{-\lambda t}$ and with the integral sign. Thus we get
		\begin{equation*}
		\tau_\eta \ast u(t) = e^{-\lambda t} \tau_{\nu t} \ast u(0) +  \int_0^t  e^{-\lambda (t-s)} \tau_{\nu (t-s)} \ast  \Big(\tau_\eta \ast \Phi(u(s))\Big) \, \D s, \qquad \forall t \in [0,1]
		\end{equation*}
		With Jensen's inequality we see that $\tau_\eta \ast \Phi(u(s)) \geq \Phi( \tau_\eta \ast u(s) )$. Next, as  $\tau_\eta \ast u(t)  = u^{\eta,\varepsilon} + e^{-\lambda t} \varepsilon$, we see that we can write
		\begin{equation*}
		u^{\eta,\varepsilon}(t) + e^{-\lambda t} \varepsilon \geq e^{-\lambda t} \tau_{\nu t}\ast  u^{\eta,\varepsilon}(0)  + e^{-\lambda t} \varepsilon +  \int_0^t  e^{-\lambda (t-s)} \tau_{\nu (t-s)}\ast \Phi(u^{\eta,\varepsilon}(s) + e^{\lambda t} \varepsilon) \, \D s, \qquad \forall t \geq 0.
		\end{equation*}
		As $\Phi$ is nondecreasing, $ \Phi(u^{\eta,\varepsilon}(s) + e^{\lambda t} \varepsilon) \geq  \Phi(u^{\eta,\varepsilon}(s))$, and we reach the conclusion by removing $e^{-\lambda t} \eps$ on each side.
	\end{proof}
	
	We now move to the main result of this subsection dealing with approximations of $D$-weak supersolutions.

	\begin{Prop}
		\label{prop:FKPP_approx}
		Let $u$ be a D-weak supersolution of \eqref{eq:FKPP_forward}. Assume that $u(0)$ is continuous and strictly positive, while $u(1)$ is not identically $+ \infty$. Then there exists a sequence $(u_n)_{n \in \N}$ of strong interior solutions of \eqref{eq:FKPP_forward} defined on $[0,1] \times \T^d$ such that $(u_n(0,x))_{n \in \N}$ converges to $u(0,x)$ increasingly for all $x \in \T^d$. 
	\end{Prop}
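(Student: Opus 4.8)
The plan is to build the $u_n$ by running the forward FKPP flow from smooth minorants of $u(0)$ and to control them, via the weak--strong comparison principle (Proposition~\ref{prop:comparision_FKPP}) applied against a regularized version of $u$, so that they never reach the singularity $r_{\text{max}}$ of $\Phi$ and are therefore genuine strong interior solutions on all of $[0,1]\times\T^d$.

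First I would regularize $u$. For $\eta>0$ and $0<\varepsilon<\min_{\T^d}u(0)$, let $u^{\eta,\varepsilon}$ be as in Lemma~\ref{lemma_D_weak_sup}. Applying $\tau_\eta\ast$ to the $D$-weak supersolution inequality and using that $\tau_\eta$ is a probability density gives $\tau_\eta\ast u(t)\ge e^{-\lambda t}\tau_{\eta+\nu t}\ast u(0)\ge e^{-\lambda t}\min_{\T^d}u(0)>e^{-\lambda t}\varepsilon$, so $u^{\eta,\varepsilon}>0$ on $[0,1]\times\T^d$, and Lemma~\ref{lemma_D_weak_sup} shows $u^{\eta,\varepsilon}$ is again a $D$-weak supersolution. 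Its initial datum $g^{\eta,\varepsilon}:=\tau_\eta\ast u(0)-\varepsilon$ is smooth, bounded below by $\min_{\T^d}u(0)-\varepsilon>0$, and bounded above by $\max_{\T^d}u(0)-\varepsilon<r_{\text{max}}$; here $\max u(0)\le r_{\text{max}}$ because if $u(0)>r_{\text{max}}$ on a nonempty open set, the $D$-weak supersolution inequality would force $u(t)\equiv+\infty$ for every $t>0$, contradicting that $u(1)$ is not identically $+\infty$.

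Next I would run the FKPP flow from $g^{\eta,\varepsilon}$. Since $\Phi$ is locally Lipschitz on $[0,r_{\text{max}})$, standard theory for semilinear parabolic equations (via Duhamel's formula and a contraction argument) yields a unique maximal classical solution $w^{\eta,\varepsilon}$ on $[0,T)\times\T^d$, with $w^{\eta,\varepsilon}(t)\ge e^{-\lambda t}\tau_{\nu t}\ast g^{\eta,\varepsilon}>0$ (so the lower interior bound is automatic since $\Phi\ge0$), with $\max_{\T^d}w^{\eta,\varepsilon}(t)<r_{\text{max}}$ for $t<T$, and with the blow-up alternative: $T=+\infty$, or $\max_{\T^d}w^{\eta,\varepsilon}(t)\to r_{\text{max}}$ as $t\to T$. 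On each $[0,\tau]$ with $\tau<\min(T,1)$, $w^{\eta,\varepsilon}$ is a strong interior solution and $u^{\eta,\varepsilon}$ a $D$-weak supersolution sharing its initial datum, so Proposition~\ref{prop:comparision_FKPP} gives $w^{\eta,\varepsilon}\le u^{\eta,\varepsilon}$ on $[0,\tau]\times\T^d$; combined with the blow-up alternative this forces $T>1$ once one knows $u^{\eta,\varepsilon}$ stays bounded away from $r_{\text{max}}$ on $[0,1]\times\T^d$ (using that $u(1,x_1)<+\infty$ at some point implies $\Phi(u(s))\in L^1(\T^d)$ for a.e.\ $s$, since $\tau_{\nu(1-s)}$ is bounded below on $\T^d$, together with the Duhamel representation and one further spatial mollification). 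Finally, picking $\eta_n\downarrow0$, $\varepsilon_n\downarrow0$, one sets $u_n:=w^{\eta_n,\varepsilon_n}|_{[0,1]\times\T^d}$, with $u_n(0,\cdot)=\tau_{\eta_n}\ast u(0)-\varepsilon_n\to u(0)$ uniformly (as $u(0)$ is continuous on the compact torus); to upgrade this to pointwise \emph{increasing} convergence one starts the flow instead from smooth $g_n$ obtained from $\tau_{\eta_n}\ast u(0)-\varepsilon_n$ by subtracting suitable positive constants along a subsequence, so that $g_n\le\tau_{\eta_n}\ast u(0)-\varepsilon_n$ (leaving the comparison untouched), $g_n<g_{n+1}$, and $g_n\uparrow u(0)$ pointwise, which the explicit uniform rate $\|\tau_\eta\ast u(0)-u(0)\|_\infty\to0$ makes possible.

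I expect the main obstacle to be precisely the global-in-time solvability: showing the approximating classical solutions stay strictly between $0$ (easy, because $\Phi\ge0$) and $r_{\text{max}}$ on the closed interval $[0,1]$, i.e.\ that they do not hit the singularity of $\Phi$ before time $1$. The comparison principle of Proposition~\ref{prop:comparision_FKPP} is the natural engine, but feeding it requires first manufacturing, out of the possibly ``wild'' supersolution $u$, a regularized supersolution that is honestly finite and bounded below $r_{\text{max}}$ on $[0,1]\times\T^d$; this is where the hypotheses that $u(0)$ is continuous and $u(1)$ is not identically $+\infty$ must be used in an essential way.
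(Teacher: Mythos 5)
Your proposal follows essentially the same route as the paper: regularize $u$ to $u^{\eta,\eps}$ via Lemma~\ref{lemma_D_weak_sup}, obtain a priori bounds on $u$ from the finiteness of $u(1)$ at one point and the growth of $\Phi$, run the forward FKPP flow from the regularized initial datum, use the weak--strong comparison of Proposition~\ref{prop:comparision_FKPP} against $u^{\eta,\eps}$ to keep the flow below $r_{\text{max}}$ and hence defined up to time $1$, and finally tune the parameters so that the initial data converge increasingly to $u(0)$. The only substantive variations are cosmetic: the paper argues via the stopping times $T_n^\delta$ and a contradiction, whereas you invoke a blow-up alternative; and the paper obtains the increasing convergence by letting $\eta_n\to 0$ fast relative to $\eps_n$, whereas you additionally subtract constants --- either works and yours arguably decouples the two concerns more cleanly. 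One point worth flagging: you make explicit the claim $\max u(0)\le r_{\text{max}}$ (correctly deduced from the a priori bound $u(s)\le r_{\text{max}}$ a.e.\ for a.e.\ $s$, together with the continuity of $u(0)$ and the supersolution inequality), a step the paper uses implicitly when it defines $u_n$ as a maximal strong solution starting from $v_n(0)$ but does not spell out; your version is the more careful of the two here, though your one-sentence justification ("would force $u(t)\equiv+\infty$") needs the small propagation argument you hint at to be made airtight.
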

	
	We emphasise the low regularity assumption that we impose on $u$: finiteness only at one point at the finite time~$t=1$, whereas each function in the sequence $(u_n)_{n \in \N}$ is defined and smooth everywhere on $[0,1] \times \T^d$. Note that by the comparison principle given in Proposition~\ref{prop:comparision_FKPP}, we know that $u_n \leq u$ on $[0,1] \times \T^d$ for all $n \in \N$. 
	
	\begin{proof}
		We first evacuate the case where $\Phi$ is constant: in this case, \eqref{eq:FKPP_forward} is linear, any strong solution is a strong interior solution and it is enough to take $u_n$ (independent on $\N$) to be the unique solution of \eqref{eq:FKPP_forward} starting from $u(0)$. Thus we restrict to the case where $\Phi$ is not constant. Being convex and nondecreasing we deduce that it grows at least linearly at $+ \infty$.

		\begin{stepa}{A priori bounds on $u$}
			First, using \eqref{eq:FKPP_weak_sub} and neglecting the integral term on the r.h.s., we see that $u$ is in fact uniformly strictly positive not only on $\{0 \} \times \T^d$, but even on $[0,1] \times \T^d$. 
			
			Second, as there exists a point $x \in \T^d$ such that $u(1,x) < + \infty$, using again \eqref{eq:FKPP_weak_sub}, we can see that, at least for a.e.\ $s \in [0,1]$, the quantity $  \tau_{\nu(1-s)}\ast \Phi(u(s))$ is finite at one point. This can happen, for $s < 1$, only if $\Phi(u(s)) \in L^1(\T^d)$. As $\Phi$ grows at least linearly, and recalling the definition of $r_\text{max}$ as the upper bound of the domain of $\Phi$, we deduce that for a.e.\ $s \in [0,1]$:
			\begin{itemize}
				\item $u(s)$ and $\Phi(u(s))$ are in $\in L^1(\T^d)$,
				\item $u(s,x) \leq r_\text{max}$ for a.e.\ $x \in \T^d$.
			\end{itemize}
		\end{stepa}
		
		\begin{stepa}{Definition of a sequence of supersolutions}
			We take $(\eta_n)_{n \in \N}, (\varepsilon_n)_{n \in \N}$ two sequences which go to $0$, and $v_n := u^{\eta_n,\varepsilon_n}$ as defined in formula~\eqref{eq:def_reg_u}.  
			
			As $u$ is uniformly strictly positive, $v_n$ is also strictly positive provided $\varepsilon_n$ is small enough. Thus, $v_n$ is a $D$-weak supersolution of \eqref{eq:FKPP_forward} as well, thanks to Lemma~\ref{lemma_D_weak_sup}.
			
			In addition, the bounds proved in the previous step, together with $\eta_n > 0$ and $\varepsilon_n > 0$, show that for a.e.~$t \in [0,1]$, the function $v_n(t)$ is bounded everywhere on $\T^d$ by $r_\text{max} - e^{-\lambda} \varepsilon_n$.
		\end{stepa}
		
		\begin{stepa}{Definition of the approximate solutions}
			We define $u_n$ as a maximal strong solution of \eqref{eq:FKPP_forward} starting from $v_n(0)$, we want to show that its existence time is at least $1$. More precisely, for any $\delta > 0$ let us call $T_n^\delta$ the infimum of those times $t$ for which $u_n(t,x) \geq r_\text{max} - \delta$ at at least one point $x \in \T^d$. We will show that $T^\delta_n \geq 1$ as soon as $\delta$ is small enough. 
			
			Let us fix $n \in \N$ and choose $\delta := e^{-\lambda} \varepsilon_n$. Assume by contradiction that $T_n^\delta < 1$. For any $t \in (T^\delta_n, T^{\delta/2}_n)$, we know that $u_n$ is a strong interior solution of \eqref{eq:FKPP_forward} on $[0,t] \times \T^d$. In particular, using the comparision principle of Proposition~\ref{prop:comparision_FKPP}, we can say that $u_n(t) \leq v_n(t)$. But this directly contradicts the bound $v_n(t) \leq r_\text{max} - e^{-\lambda} \varepsilon_n$ which holds for a.e. $t$. We conclude that $u_n \leq r_\text{max} - e^{-\lambda} \varepsilon_n$ on $[0,1] \times \T^d$: not only its existence time is more than $1$, but it is actually a strong interior solution. Note that the strict positivity of $u_n$ comes from $u_n(0,\cdot) > 0$ and, for instance, the weak formulation \eqref{eq:FKPP_weak}. 
			
			The conclusion follows by taking $\eta_n$ and $\varepsilon_n$ tending both to $0$, with $\eta_n$ going to $0$ sufficiently fast so that the sequence $(u_n(0))_{n \in \N} = (v_n(0))_{n \in \N}$ is increasing while converging to $u(0)$.
		\end{stepa}
	\end{proof}
	
	We now make the link between strong interior solutions of FKPP and the martingale problem for the BBM. Specifically, we prove the following.
	
	\begin{Thm}
		\label{thm:martingale_smooth_case}
		Let $R \sim \BBM(\nu, \boldsymbol{q}, R_0)$, where $\nu > 0$, $\boldsymbol{q}$ is a branching mechanism and $R_0 \in \P(\M_\delta(\T^d))$. Let $u \in C^2([0,1] \times \T^d)$ be a strong interior solution of \eqref{eq:FKPP_forward} with $(\Phi, \lambda) = (\Phi_{\boldsymbol q},\lambda_{\boldsymbol q})$. We define $\varphi = \log u$. Then for all $t \in [0,1]$, there holds
		\begin{equation*}
		\exp \left( \langle \varphi(t), M_0 \rangle \right)  = \E_R \left[ \left. \exp \left(  \langle \varphi(0), M_t \rangle \right) \right| \F_0 \right].
		\end{equation*}
	\end{Thm}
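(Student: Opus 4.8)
The plan is to identify $u(t,x)$ with the quantity
\begin{equation*}
\tilde u(t,x) := \Em_{\delta_x}\Big[\textstyle\prod_{X\in\mathcal N(t)} u(0,X)\Big]\in[0,+\infty],
\end{equation*}
and to read off the statement by conditioning on $\F_0$. Indeed, applying the Markov property (Proposition~\ref{prop:strong_markov}) with $T=0$ and $F(\mu):=\prod_{x\in\Supp\mu}u(0,x)$, and then the branching property (Proposition~\ref{prop:branching_property}, equation~\eqref{eq:branching_property_u}), one gets, $R$-a.s.,
\begin{equation*}
\E_R\Big[\exp\big(\cg\varphi(0),M_t\cd\big)\Big|\F_0\Big] = \Em_{M_0}\Big[\textstyle\prod_{X\in\mathcal N(t)}u(0,X)\Big] = \prod_{x\in\mathcal N(0)}\tilde u(t,x),
\end{equation*}
and the right hand side equals $\prod_{x\in\mathcal N(0)}u(t,x)=\exp(\cg\varphi(t),M_0\cd)$ as soon as $\tilde u=u$. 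So everything reduces to proving $\tilde u(t,x)=u(t,x)$ for all $t\in[0,1]$, $x\in\T^d$, and I will do it by two one-sided comparisons.

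For $u\le\tilde u$ on $[0,1]\times\T^d$: by Proposition~\ref{prop:evolution_eq_u_from_BBM}, the function $\tilde u$ is a $D$-weak solution of~\eqref{eq:FKPP_forward} with $(\Phi,\lambda)=(\Phi_{\boldsymbol q},\lambda_{\boldsymbol q})$, hence in particular a $D$-weak supersolution, and $\tilde u(0,\cdot)=u(0,\cdot)$. Since $u$ is a strong interior solution of the same equation, the weak--strong comparison principle of Proposition~\ref{prop:comparision_FKPP} applies and yields $u\le\tilde u$ everywhere.

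For the reverse inequality $\tilde u\le u$: fix $x\in\T^d$ and work under $R^{\delta_x}\sim\BBM(\nu,\boldsymbol q,\delta_{\delta_x})$. Set $\psi(s,\cdot):=\nu\log u(t-s,\cdot)$ for $s\in[0,t]$ (extended smoothly beyond time $t$; only the restriction to $[0,t]$ will matter). Because $u$ is a strong interior solution, $u(t-s,\cdot)$ takes values in a compact subinterval of $(0,r_\text{max})$ on which $\Phi_{\boldsymbol q}$ is finite and continuous, so $\psi$ is smooth and $\Psi^*_{\nu,\boldsymbol q}(\psi)=\nu\big(\Phi_{\boldsymbol q}(u)/u-\lambda_{\boldsymbol q}\big)(t-s,\cdot)$ stays bounded on $[0,1]\times\T^d$. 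Moreover, $u$ solving the forward equation~\eqref{eq:FKPP_forward} means that $s\mapsto u(t-s,\cdot)$ solves the backward FKPP equation~\eqref{eq:FisherKPP}, so by the log--exp correspondence of Remark~\ref{rem:FKPP_log_exp} one has $\partial_s\psi+\mathcal{L}_{\nu,\boldsymbol q}[\psi]=0$ on $[0,t]$. Hence the correction integral in~\eqref{eq:loc_exp_mart_BBM} vanishes, and Proposition~\ref{prop:loc_exp_mart_BBM} (applied on the time interval $[0,t]$) tells us that
\begin{equation*}
s\longmapsto\exp\Big(\cg\varphi(t-s),M_s\cd-\cg\varphi(t),M_0\cd\Big),\qquad s\in[0,t],
\end{equation*}
is a nonnegative local martingale, hence a supermartingale, under $R^{\delta_x}$; its value at $s=0$ is $1$ since $M_0=\delta_x$. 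Evaluating the supermartingale inequality at $s=t$ gives $\Em_{\delta_x}\big[\exp(\cg\varphi(0),M_t\cd)\big]\le e^{\varphi(t,x)}=u(t,x)$, that is $\tilde u(t,x)\le u(t,x)$. Combining with the previous step, $\tilde u=u$, which proves the theorem.

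The main obstacle is the direction $\tilde u\le u$: the comparison principle of Proposition~\ref{prop:comparision_FKPP} only runs from a strong interior solution towards a $D$-weak supersolution, so it cannot be used in reverse, and one is forced to use the probabilistic (super)martingale argument. This is also precisely where the \emph{interior} hypothesis on $u$ is essential, since Proposition~\ref{prop:loc_exp_mart_BBM} requires $\Psi^*_{\nu,\boldsymbol q}(\psi)$ to be bounded; for a merely strong (non-interior) solution $u$ the process above need not be a supermartingale — compare the non-uniqueness phenomenon of Remark~\ref{rem:non_unique_FKPP} — and the identity may genuinely fail.
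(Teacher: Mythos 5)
Your proof is correct and follows essentially the same route as the paper's: reduce to $\mu=\delta_x$ via the branching property, identify the right-hand side with a $D$-weak solution $\tilde u$ of FKPP and use Proposition~\ref{prop:comparision_FKPP} to get $u\le\tilde u$, then use the supermartingale from Proposition~\ref{prop:loc_exp_mart_BBM} applied to $\psi(s)=\nu\log u(t-s)$ to get $\tilde u\le u$. Your explicit check that the interior hypothesis guarantees the boundedness of $\Psi^*_{\nu,\boldsymbol q}(\psi)$ needed by Proposition~\ref{prop:loc_exp_mart_BBM} is a welcome detail that the paper leaves implicit.
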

	
	Note that as the right hand side depends only on $\varphi(0)$ (or equivalently on $u(0)$), such a result could not hold with general strong solutions if~\eqref{eq:FKPP_forward} had different strong solutions starting from the same initial condition. Following Remark~\ref{rem:non_unique_FKPP}, the assumption that $u$ is a strong \emph{interior} solution is reasonable.
	
	\begin{proof}
		A direct consequence of Definition~\ref{def:BBM_general_R0} is that $R$-\emph{a.s.}, $R(\, \cdot \, | \F_0) = R(\, \cdot \, | M_0) = R^{M_0}$. Hence, the claim is a consequence of the following one, stated for all deterministic initial configuration $\mu \in \M_\delta(\T^d)$:
		\begin{equation}
		\label{eq:martingale_smooth_case}
		\exp \left(  \langle \varphi(t), \mu \rangle \right)  = \E_{R^\mu} \left[  \exp \left( \langle \varphi(0), M_t \rangle \right) \right] = \Em_{\mu} \left[ \exp \left( \langle \varphi(0), M_t \rangle \right) \right], \qquad t \in [0,1].
		\end{equation}
		Notice that this identity~\eqref{eq:martingale_smooth_case} is trivial when $\mu = 0$. Otherwise, we decompose $\mu$ as a sum of Dirac masses and we simply apply the branching property, Proposition~\ref{prop:branching_property}: it shows that it is enough to prove \eqref{eq:martingale_smooth_case} when $\mu = \delta_x$ to conclude.
		
		\bigskip
		
		Thus from now on we assume $\mu = \delta_x$. In this case, we need to show
		\begin{equation*}
		u(t,x) = \exp \left(\varphi(t,x) \right) = \Em_{\delta_x} \left[ \exp \left(  \langle \varphi(0), M_t \rangle \right) \right].
		\end{equation*}
		We give a name to the right-hand side, that is, we define
		\begin{equation*}
		v(t,x) = \Em_{\delta_x} \left[ \exp \left( \langle \varphi(0), M_t \rangle \right) \right].
		\end{equation*} 
		We know that $v(0) = u(0)$ by definition of $\varphi$, and that $v$ is a $D$-weak solution of \eqref{eq:FKPP_forward} thanks to Proposition~\ref{prop:evolution_eq_u_from_BBM}. We want to show that $u = v$ everywhere on $[0,1] \times \T^d$. 
		
		By monotonicity for $D$-weak solutions, specifically Proposition~\ref{prop:comparision_FKPP}, we deduce that $u \leq v$. 
		
		On the other hand, to go the other way around let us define $\tilde{\varphi}(s,x) = \log u(t-s,x)$ in such a way that $\tilde{\phi} = \nu \tilde{\varphi}$ satisfies $\partial_t \tilde{\phi}(s) + \mathcal L_{\nu, \boldsymbol q}[\tilde{\phi}(s)] = 0$ in a strong sense. By Proposition~\ref{prop:loc_exp_mart_BBM}, we know that the process whose value at time $s \in [0,t]$ is 
		\begin{equation*}
		\exp \left( \frac{1}{\nu} \langle \tilde{\phi}(s), M_s \rangle \right) = \exp \left( \langle \tilde{\varphi}(s), M_s \rangle \right)
		\end{equation*}
		is a nonnegative local martingale and a supermartingale. As a consequence 
		\begin{equation*}
		\Em_{\delta_x} \left[\exp \left(  \langle \tilde{\varphi}(t), M_t \rangle \right) \right] \leq 
		\Em_{\delta_x} \left[ \exp \left(  \langle \tilde{\varphi}(0), M_0 \rangle \right) \right]
		\end{equation*}
		But by definition of $\tilde{\varphi}$, the left hand side is $v(t,x)$ while the right hand side is $u(t,x)$.
	\end{proof}
	The following remark is an outcome of the previous proof.
	\begin{Rem}
		Thanks to the Markov property of the BBM, it is not difficult to see that, with the same notations and assumptions as in Theorem~\ref{thm:martingale_smooth_case}, if we define $\tilde \phi(t) = \nu \varphi(1-t)$ as in the proof above, then $\tilde \phi$ satisfies $\partial_t \tilde \phi(s) + \mathcal L_{\nu, \boldsymbol q}[\tilde \phi(t)]$ and the process whose value at time $t \in [0,1]$ is 
		\begin{equation*}
		\exp \left( \frac{1}{\nu} \cg \tilde \phi(t), M_t \cd \right)    
		\end{equation*}
		is a true martingale. This is in that sense that Theorem~\ref{thm:martingale_smooth_case} extends Proposition~\ref{prop:martingale_problem}, as crucially $\tilde \phi$ can take positive values.
	\end{Rem}
	
	\section{Proof of Theorem~\ref{thm:equality_values_dyn}, the dynamical case}
	
	We are now ready to give a proof to the main theorem of this Chapter, that is, Theorem~\ref{thm:equality_values_dyn}.
	
	\begin{proof}[Proof of Theorem \ref{thm:equality_values_dyn}]
		
		The proof follows the one of the static case: to compute a l.s.c.\ envelope we compute two successive Legendre transforms and apply Theorem~\ref{thm:legendre_transform_lsc_envelope}.
		
		\begin{stepb}{Legendre transform of $\BrSch$}
			This first step is almost identical to what was done in Section~\ref{sec:static_legendre_transform}. Indeed, we claim that for all continuous functions $\sigma,\theta \in C(\T^d)$, 
			\begin{align*}
			\BrSch_{\nu, \boldsymbol q, R_0}^*(\sigma,\theta) &:= \sup_{\rho_0, \rho_1 \in\M_+(\T^d)} \langle \sigma, \rho_0 \rangle + \langle \theta, \rho_1 \rangle - \BrSch_{\nu, \boldsymbol q, R_0}(\rho_0,\rho_1) \\
			&= \nu \log \E_R \left[ \exp \left( \frac{1}{\nu} \langle \sigma, M_0 \rangle + \frac{1}{\nu} \langle \theta, M_1 \rangle \right) \right].
			\end{align*} 
			
			By definition, we have
			\begin{align*}
			\sup_{\rho_0, \rho_1 \in\M_+(\T^d)} &\langle \sigma, \rho_0 \rangle + \langle \theta, \rho_1 \rangle - \BrSch_{\nu, \boldsymbol q, R_0}(\rho_0,\rho_1) \\
			&=\sup_{\substack{P \in \P(\cadlag([0,1], \M_+(\T^d)))\\ \E_P[M_0] < +\infty \mbox{ and }\E_P[M_1] < + \infty}} \E_{P} [\langle \sigma, M_0 \rangle] + \E_{P} [\langle \theta, M_1 \rangle] - \nu H(P|R) \\
			&= \nu \left( \sup_{\substack{P \in \P(\cadlag([0,1], \M_+(\T^d)))\\ \E_P[M_0] < +\infty \mbox{ and }\E_P[M_1] < + \infty}} \E_{P} \left[ \frac{1}{\nu}  \langle \sigma, M_0 \rangle \right] + \E_{P} \left[ \frac{1}{\nu} \langle \theta, M_1 \rangle \right] - H(P|R) \right).
			\end{align*}
			Therefore, our claim is equivalent to
			\begin{equation*}
			\sup_{\substack{P \in \P(\cadlag([0,1], \M_+(\T^d)))\\ \E_P[M_0] < +\infty \mbox{ and }\E_P[M_1] < + \infty}} \hspace{-16pt} \E_{P} \left[ \frac{1}{\nu}  \langle \sigma, M_0 \rangle \right] + \E_{P} \left[ \frac{1}{\nu} \langle \theta, M_1 \rangle \right] - H(P|R) = \log \E_R \left[ \exp \left( \frac{1}{\nu} \langle \sigma, M_0 \rangle + \frac{1}{\nu} \langle \theta, M_1 \rangle \right) \right].
			\end{equation*}
			As for the static case the inequality ``$\leq$'' comes from \eqref{eq:convex_ineq_entropy}.
			
			On the other hand, to find a maximizing sequence, we consider for $n \in \N$
			\begin{equation*}
			P^n := \frac{\exp( \nu^{-1} \langle \sigma, M_0\rangle + \nu^{-1} \langle \theta, M_1\rangle) \1_{M_0(\T^d) + M_1(\T^d) \leq n}}{\E_R[ \exp( \nu^{-1} \langle \sigma, M_0\rangle + \nu^{-1} \langle \theta, M_1\rangle) \1_{M_0(\T^d) + M_1(\T^d) \leq n}]} \cdot R .
			\end{equation*} 
			Then the conclusion comes from exactly the same path as Proposition~\ref{prop:legendre_transform_init} and we do not detail it.
		\end{stepb}
		
		Applying then Theorem~\ref{thm:legendre_transform_lsc_envelope}, we end up with the following expression for the l.s.c.\ envelope:
		\begin{equation*}
		\overline{\BrSch}_{\nu, \boldsymbol q, R_0}(\rho_0,\rho_1) = \sup_{\sigma,\theta \in C(\T^d)} \langle \sigma, \rho_0 \rangle + \langle \theta, \rho_1 \rangle - \nu \log \E_R \left[ \exp \left( \frac{1}{\nu} \langle \sigma, M_0 \rangle + \frac{1}{\nu} \langle \theta, M_1 \rangle \right) \right].  
		\end{equation*}
		Let us now work on this expression.
		
		\begin{stepb}{Getting $\theta$ back to $t=0$}
			Now comes the most technical step of the proof. It amounts to drop $\theta$ and replace it with $\nu \log u(0,\cdot) = \nu \log u(0)$, being $u \in C^2([0,1] \times \T^d)$ a strong interior solution of \eqref{eq:FKPP_forward}. Specifically, we claim that
			\begin{multline}
			\label{eq:aux_to_prove_dual}
			\sup_{\sigma,\theta \in C(\T^d)} \langle \sigma, \rho_0 \rangle + \langle \theta, \rho_1 \rangle - \nu \log \E_R \left[ \exp \left( \frac{1}{\nu} \langle \sigma, M_0 \rangle + \frac{1}{\nu} \langle \theta, M_1 \rangle \right) \right] \\
			= \sup_{\substack{\sigma \in C(\T^d),\\ u \in C^2([0,1] \times \T^d)}} \langle \sigma, \rho_0 \rangle + \langle  \nu \log u(0), \rho_1 \rangle - \nu \log \E_R \left[ \exp \left( \frac{1}{\nu} \langle \sigma + \nu \log u(1), M_0 \rangle \right) \right]
			\end{multline}
			where the supremum is taken over all strong interior solutions $u$ of equation~\eqref{eq:FKPP_forward}. Importantly, the last term of the right hand side depends only on $R_0$.
			
			The ``$\geq$'' inequality in \eqref{eq:aux_to_prove_dual} is easy: we start from the left hand side and restrict to $\theta = \nu \log u(0)$ (where~$u$ is a strong interior solution), then Theorem~\ref{thm:martingale_smooth_case} justifies the last equality in what follows:  
			\begin{align}
			\notag \E_R \left[ \exp \left( \frac{1}{\nu} \langle \sigma, M_0 \rangle +  \langle \log u(0), M_1 \rangle \right) \right] &= \E_R \left[ \exp \left( \frac{1}{\nu} \langle \sigma, M_0 \rangle \right) \E_R \left[ \left. \exp \left( \langle \log u(0), M_1 \rangle \right) \right| \F_0 \right] \right] \\
			\label{eq:aux_theta_back_0} &= \E_R \left[ \exp \left( \frac{1}{\nu} \langle \sigma + \nu \log u(1), M_0 \rangle \right) \right].
			\end{align}
			
			On the other hand, let us take $\theta \in C(\T^d)$ such that the left hand side is not $- \infty$ in~\eqref{eq:aux_to_prove_dual}, that is such that $\E_R [ \exp ( \nu^{-1} \langle \sigma, M_0 \rangle + \nu^{-1} \langle \theta, M_1 \rangle ) ] < + \infty$. Thanks to Proposition~\ref{prop:evolution_eq_u_from_BBM} we know that if we define 
			\begin{equation*}
			v(t,x) := \Em_{\delta_x} \left[ \exp \left( \frac{1}{\nu} \langle \theta, M_t \rangle \right) \right],
			\end{equation*}
			it yields a $D$-weak solution of \eqref{eq:FKPP_forward}. On the other hand, by Definition~\ref{def:BBM_general_R0} and the branching property, Proposition~\ref{prop:branching_property}, there holds
			\begin{equation*}
			\E_R \left[ \exp \left( \frac{1}{\nu} \langle \sigma, M_0 \rangle + \frac{1}{\nu} \langle \theta, M_1 \rangle \right) \right] = \E_{R_0} \left[ \prod_{x \in \Supp \mu} \exp \left( \frac{\sigma(x)}{\nu}  \right) v(1,x)  \right].    
			\end{equation*}
			As the left hand side is finite, so is the right hand side. For the latter to be finite we need that for $R_0$-a.e.~$\mu$, there holds $v(1,x) < + \infty$ for all $x \in \Supp \mu$. As we have excluded the case $\mu = 0$ $R_0$-\emph{a.s.}\ by assumption, this shows that that $v(1)$ is not identically $+ \infty$. In addition, $v(0) = \exp(\nu^{-1} \theta)$ is strictly positive and continuous. Thanks to Proposition~\ref{prop:FKPP_approx}, we can find a sequence $(u_n)_{n \in \N}$ of strong interior solutions of \eqref{eq:FKPP_forward} such that $u_n(0)$ converges increasingly to $\exp(\nu^{-1} \theta)$ (that is equivalent to $\nu \log u_n(0)$ converging increasingly to $\theta$). In particular, combining the comparison principle (Proposition~\ref{prop:comparision_FKPP}) and Theorem~\ref{thm:martingale_smooth_case}, similarly to \eqref{eq:aux_theta_back_0},
			\begin{align*}
			\E_R \left[ \exp \left( \frac{1}{\nu} \langle \sigma, M_0 \rangle + \frac{1}{\nu} \langle \theta, M_1 \rangle \right) \right] &\geq   \E_R \left[ \exp \left( \frac{1}{\nu} \langle \sigma, M_0 \rangle +  \langle \log u_n(0), M_1 \rangle \right) \right] \\
			&=   \E_R \left[ \exp \left( \frac{1}{\nu} \langle \sigma, M_0 \rangle \right) \E_R \left[ \left. \exp \left( \langle \log u_n(0), M_1 \rangle \right) \right| \F_0 \right] \right] \\
			&= \E_R \left[ \exp \left( \frac{1}{\nu} \langle \sigma + \nu \log u_n(1), M_0 \rangle \right) \right].
			\end{align*}
			Plugging that back, we see that for every $n \in \N$,
			\begin{align*}
			\langle \sigma,& \rho_0 \rangle  + \langle \theta, \rho_1 \rangle - \nu \log \E_R \left[ \exp \left( \frac{1}{\nu} \langle  \sigma, M_0 \rangle + \frac{1}{\nu} \langle \theta, M_1 \rangle \right) \right] \\
			& \leq \langle \sigma, \rho_0 \rangle + \langle \theta, \rho_1 \rangle - \nu \log \E_R \left[ \exp \left( \frac{1}{\nu} \langle \sigma + \nu \log u_n(1), M_0 \rangle \right) \right] \\
			& \leq \langle \theta - \nu \log u_n(0), \rho_1 \rangle +  \sup_{\substack{\sigma \in C(\T^d),\\ u \in C^2([0,1] \times \T^d)}} \langle \sigma, \rho_0 \rangle + \langle \nu \log u(0), \rho_1 \rangle - \nu \log \E_R \left[ \exp \left( \frac{1}{\nu} \langle \sigma + \nu \log u(1), M_0  \right) \right] .
			\end{align*}
			Sending $n \to + \infty$, the first term goes to $0$ by monotone convergence. Taking then the supremum in $\theta$ yields the inequality ``$\leq$'' in \eqref{eq:aux_to_prove_dual}.
		\end{stepb}
		
		\begin{stepb}{Conclusion}
			From the previous step and by doing the change of variables
			\begin{equation}
			\label{eq:zz_aux_change_variables_u_phi}
			\begin{cases}
			\phi(t,x) & = \nu \log u(1-t,x), \\
			u(t,x) & = \displaystyle{\exp\left\{ \frac{1}{\nu} \phi(1-t,x) \right\}},
			\end{cases}    
			\end{equation}
			we can rewrite the convex envelope as 
			\begin{equation*}
			\overline{\BrSch}_{\nu, \boldsymbol q, R_0}(\rho_0,\rho_1) = \sup_{\sigma \in C(\T^d), \phi \in C^2([0,1] \times \T^d)} \langle \sigma, \rho_0 \rangle + \langle \phi(1), \rho_1 \rangle - \nu \log \E_R \left[ \exp \left( \frac{1}{\nu} \langle \sigma + \phi(0), M_0 \rangle  \right) \right],  
			\end{equation*}
			where the supremum is taken over all $\phi$ that are smooth solutions of the backward equation
			\begin{equation}
			\label{eq:HJ_backward}
			\partial_t \phi + \L_{\nu, \boldsymbol q}[\phi] = 0,     
			\end{equation}
			being $\L_{\nu, \boldsymbol q}$ defined in \eqref{eq:operator_L}, with the additional constraint that they come from a strong interior solution of \eqref{eq:FKPP_forward}. Doing the change of variables $\tilde{\sigma} = \nu^{-1} (\sigma + \phi(0))$, we can write
			\begin{equation*}
			\overline{\BrSch}_{\nu, \boldsymbol q, R_0}(\rho_0,\rho_1) = \sup_{\tilde{\sigma} \in C(\T^d), \phi \in C^2([0,1] \times \T^d)}  \langle \phi(1), \rho_1 \rangle - \langle \phi(0), \rho_0 \rangle + \nu \langle  \tilde{\sigma}, \rho_0 \rangle - \nu \log \E_R [ \exp ( \langle \tilde \sigma, M_0 \rangle  ) ].  
			\end{equation*}
			The supremum is now decoupled in $\phi$ and $\tilde{\sigma}$. The one in $\tilde{\sigma}$ yields $\nu L_{R_0}^*(\rho_0)$ by definition. Next we want to take the supremum in $\phi$ and use the duality result of Theorem~\ref{thm:existence_ruot}. Given the choice of $\Psi^*_{\nu, \boldsymbol q}$ made in~\eqref{eq:def_Psi*}, any solution $\phi$ of equation~\eqref{eq:HJ_backward} also satisfies the inequality constraint from the definition~\eqref{eq:duality_RUOT} of $\ruot_{\nu, \Psi}$. Consequently, there holds 
			\begin{equation*}
			\overline{\BrSch}_{\nu, \boldsymbol q, R_0}(\rho_0,\rho_1) \leq \nu L^*_{R_0}(\rho_0) + \ruot_{\nu,\Psi}(\rho_0,\rho_1).    
			\end{equation*}
			
			On the other hand, take $\phi$ which satisfies the inequality \eqref{eq:duality_RUOT} in the dual formulation of $\ruot_{\nu,\Psi}$. Doing again the change of variables \eqref{eq:zz_aux_change_variables_u_phi}, we get a supersolution of \eqref{eq:FKPP_forward}. Regularizing it with Proposition~\ref{prop:FKPP_approx}, going back to the $\phi$ domain thanks to~\eqref{eq:zz_aux_change_variables_u_phi} we see that we have a sequence $\phi_n$ of smooth solution of \eqref{eq:HJ_backward}, coming from an interior solution of \eqref{eq:FKPP_forward}, such that $\phi_n(1)$ converges increasingly to $\phi(1)$ and $\phi_n(0) \leq \phi(0)$. Thus, we have
			\begin{equation*}
			\langle \phi(1), \rho_1 \rangle - \langle \phi(0), \rho_0 \rangle \leq \langle \phi(1) - \phi_n(1), \rho_1 \rangle + \langle \phi_n(1), \rho_1 \rangle - \langle \phi_n(0), \rho_0 \rangle,    
			\end{equation*}
			and taking the limit $n \to + \infty$, by monotone convergence, the first term of the right hand side vanishes. It allows us to conclude that 
			\begin{equation*}
			\langle \phi(1), \rho_1 \rangle - \langle \phi(0), \rho_0 \rangle \leq   \overline{\BrSch}_{\nu, \boldsymbol q, R_0}(\rho_0,\rho_1) - \nu L^*_{R_0}(\rho_0).   
			\end{equation*}
			Eventually, taking the supremum in $\phi$ on the left hand side yields $\ruot_{\nu,\Psi}(\rho_0,\rho_1)$ thanks to Theorem~\ref{thm:existence_ruot}.
		\end{stepb}
		
	\end{proof}

	\section{Examples and counterexamples for the branching Schrödinger problem}
	In this section, we provide a few explicit computations in order to illustrate our result. In Subsection~\ref{subsec:example_static}, we give some examples where $L^*_{R_0}$ and $\Init_{R_0}$ coincide and can be computed explicitely. Then, in Subsection~\ref{subsec:counterexamples}, we show that both for the static and the dynamical problem, our Theorem~\ref{thm:initial_lsc_envelope} and~\ref{thm:equality_values_dyn} are somehow optimal, in the sense that in general, neither $\Init_{R_0}$ nor $\BrSch_{\nu, \boldsymbol q, R_0}$ is l.s.c. 
	
	\subsection{Cases when the static problem can be computed}
	\label{subsec:example_static}
	In the three following examples, the value of $\Init_{R_0}$ is easly computed, and seen to be l.s.c. In particular, Theorem~\ref{thm:initial_lsc_envelope} guarantees that in these cases, $L^*_{R_0} = \Init_{R_0}$. Later in Theorem~\ref{thm:dual_initial_entropic_minimization}, we will see that there is a general assumption on $R_0$, satisfied in our three examples, ensuring that $\Init_{R_0} = L^*_{R_0}$. However, in the next subsection, we will see examples where this assumption is not satisfied, and where $\Init_{R_0}$ is not l.s.c.
	
	\begin{itemize}
		\item Assume $R_0 = \delta_\mu$, for some $\mu \in \M_+(\T^d)$. Then for all $\rho_0 \in \M_+(\T^d)$,
		\begin{equation*}
		\Init_{R_0}(\rho_0) = L^*_{R_0} (\rho_0) = 
		\left\{ \begin{aligned}
		&0 &&\mbox{if }\rho_0 = \mu,\\
		&+\infty && \mbox{else},
		\end{aligned}\right.
		\end{equation*}
		and the optimal $P_0$ in the r.h.s.\ of~\eqref{eq:def_init} is $\delta_{\rho_0} = \delta_\mu$.
		
		\item Assume $R_0$ only charge unitary Dirac masses, that is, there is $\bar \rho \in \P(\T^d)$ such that $R_0$ is the law of $\delta_X$, where $X$ is a random variable with values in $\T^d$, of law $\bar \rho$.
		Then for all $\rho_0 \in \M_+(\T^d)$,
		\begin{equation*}
		\Init_{R_0}(\rho_0) = L^*_{R_0} (\rho_0) = H(\rho_0 | \bar \rho),
		\end{equation*}
		where $H$ is defined by~\eqref{eq:def_H}, and the optimal $P_0$ in the r.h.s.\ of~\eqref{eq:def_init} is the law of $\delta_Y$, where $Y$ is a random variable with values in $\T^d$, of law $\rho_0$.
		
		\item Assume $R_0$ is the law of a Poisson point process of intensity $\bar \rho \in \M_+(\T^d)$ (see~\cite{kingman1992poisson}).
		Then for all $\rho_0 \in \M_+(\T^d)$,
		\begin{equation*}
		\Init_{R_0}(\rho_0) = L^*_{R_0} (\rho_0) = h(\rho_0|\bar \rho),
		\end{equation*}
		where $h$ is defined by~\eqref{eq:def_KL}, and the optimal $P_0$ in the r.h.s.\ of~\eqref{eq:def_init} is the law of the Poisson point process on $\T^d$, of intensity $\rho_0$. This can be proven using Cambell's formula \cite[Section 3.2]{kingman1992poisson}, but we leave the details to the reader.
	\end{itemize}
	
	\subsection{Examples of ill-posedness in the branching Schrödinger problem}
	\label{subsec:counterexamples}
	
	As mentioned in the introduction, see Remark~\ref{rem:ill-posed}, the branching Schrödinger problem is ill-posed. In this section we show counterexamples to illustrate the ill-posedness. Actually, we will explain that the initial problem from Definition~\ref{def:init} is already ill-posed. Specifically: the infimum in~\eqref{eq:def_init} is not always achieved, and $\Init_{R_0}$ is not l.s.c.\ for the topology of weak convergence. This is enough to justify why the branching Schrödinger problem is ill-posed. However we will also give counterexamples to well-posedness that has nothing to do with the initial problem: the functional $\BrSch_{\nu, \boldsymbol q, R_0}$ is not l.s.c.\ in general at couples of type $(0,\rho)$, whenever $\rho \neq 0$, and in the case when $\boldsymbol q$ has no exponential moment, it is free to create particles at large rate in RUOT, but not in the branching Schrödinger problem.
	
	\paragraph{The infimum for $\boldsymbol{\Init_{R_0}}$ is not achieved}	
	As we will see, there can be a lack of existence for the problem associated with $\Init_{R_0}$ when the generating function corresponding to the number of particles at the initial time has a domain that is bounded above, with a finite slope at its supremum, as represented in Figure~\ref{fig:c-ex_existence}. In this case, calling $\boldsymbol \pi$ the law of the number of particles at the initial time under the reference measure $R_0$, $\Phi_{\boldsymbol \pi}$ its generating function and $z_\text{max}$ the supremum of the domain of $\Phi_{\boldsymbol \pi}$, we will see that there are existence issues for $\Init_{R_0}(\rho)$ whenever $\rho(\T^d)$, the target mean number of particles, is larger than the slope $(\log \Phi_{\boldsymbol \pi})'(z_\text{max})$.
	
	\begin{figure}
		\centering
		\begin{tikzpicture}
		\draw[->] (-0.3,0) -- (4.5,0);
		\draw[->] (0,-1) -- (0,2.3)node[left]{$\log \Phi_{\boldsymbol \pi}$};
		\draw (2, -2pt) -- (2, 2pt) node[below=5pt]{$1$};
		\draw (4, -2pt) -- (4, 2pt) node[below=5pt]{$z_\text{max}$};
		\draw (-2pt, 1.5) -- (2pt, 1.5) node[left]{$\log \Phi_{\boldsymbol \pi}(z_\text{max})$};
		\draw (-2pt, -.5) -- (2pt, -.5) node[left]{$\log \Phi_{\boldsymbol \pi}(0)$};
		\draw[very thick, blue][domain=0:4] plot(\x, {\x*\x/8 - 0.5});
		\fill[blue] (4,1.5) circle (3pt);
		\end{tikzpicture}
		\caption{\label{fig:c-ex_existence} In this figure, the convex function $\log \Phi_{\boldsymbol \pi}$ has a domain that is bounded above, and has a finite slope at its extreme point.}
	\end{figure}
	
	Let $\boldsymbol \pi\in \P(\N)$ be a probability measure whose generating function $\Phi_{\boldsymbol \pi}$ satisfies
	\begin{equation*}
	\Phi_{\boldsymbol \pi}'(e)= \Phi_{\boldsymbol \pi}(e), \qquad \forall z>e, \quad \Phi_{\pi}(z) = + \infty,
	\end{equation*}
	that is, we choose $z_\text{max} = e$ and $(\log \Phi_{\boldsymbol \pi})'(z_\text{max})=1$. For $n \in \N$, we call $\delta^n$ the map defined by
	\begin{equation*}
	\delta^n: (x_1,\dots, x_n) \in (\T^d)^n \mapsto \delta_{x_1} + \dots + \delta_{x_n} \in \M_+(\T^d),
	\end{equation*}
	and we set
	\begin{equation*}
	R_0 := \sum_n \pi_n  \delta^n \pf (\Leb)^{\otimes n}.
	\end{equation*}
	That is, to draw from $M_0$ under $R_0$, we first draw $n \in \N$ according to the law $\boldsymbol \pi$, and then $M_0$ corresponds to $n$ particles i.i.d.\ and uniformly distributed. Finally, we take $\bar \rho >1$, and we show that the infimum for $\Init_{R_0}(\bar \rho \Leb)$ is not achieved.
	
	First, the convex inequality~\eqref{eq:convex_ineq_entropy} with $Y := M(\T^d)$ shows that for all competitor $P_0$ of $\Init_{R_0}(\bar \rho_0 \Leb)$,
	\begin{equation}
	\label{eq:convex_inequality_cex1}
	\bar \rho \leq H(P_0|R_0) + \log \E_{R_0} [\exp (M(\T^d))] =  H(P_0|R_0) + \log \Phi_{\boldsymbol \pi}(e),
	\end{equation}
	with equality if and only if 
	\begin{equation*}
	P_0 := \frac{\sum_n \pi_n \times \exp(n)  \delta^n \pf (\Leb)^{\otimes n}}{\sum_n \pi_n \times \exp(n)}.
	\end{equation*}
	A quick computation shows that for this specific $P_0$, there holds $\E_{P_0}[M] = (\log \Phi_{\boldsymbol \pi})'(e)\cdot\Leb  = \Leb$. Hence, it is not compatible with the constraint $\E_{P_0}[M] = \bar\rho \Leb$ and this $P_0$ cannot be a competitor of $\Init_{R_0}(\bar \rho_0 \Leb)$. Consequently, for all competitor $P_0$ of $\Init_{R_0}(\bar \rho_0 \Leb)$, the inequality in~\eqref{eq:convex_inequality_cex1} is strict, that is, 
	\begin{equation*}
	H(P_0|R_0) > \bar\rho - \log \Phi_{\boldsymbol \pi}(e).
	\end{equation*}
	
	\noindent It remains to show that $\Init_{R_0}(\bar\rho \Leb) \leq \bar\rho - \log \Phi_{\boldsymbol \pi}(e)$. To do so, take $(\bar \varphi_N)_{N \in \N}$ the sequence of numbers defined implicitly for all $N \in \N$ large enough by
	\begin{equation*}
	\frac{\sum_{n\leq N} n \pi_n \times (e^{\bar\varphi_N})^n  }{\sum_{n \leq N} \pi_n \times (e^{\bar\varphi_N})^n} = \bar \rho.
	\end{equation*}
	Observe that for all $N$, $\bar \varphi_N >1$, and that $\varphi_N$ decreases to $1$ as $N$ tends to $+\infty$. 
	Finally, call $(P_0^N)_{N\in\N}$ the sequence of laws defined for all $N \in \N$ by
	\begin{equation*}
	P^N_0 := \frac{\sum_{n\leq N} \pi_n \times (e^{\bar\varphi_N})^n  \delta^n \pf (\Leb)^{\otimes n}}{\sum_{n\leq N} \pi_n \times (e^{\bar\varphi_N})^n}.
	\end{equation*}
	For all $N$, $P_0^N$ satisfies the constraint, and hence the following direct computation lets us conclude:
	\begin{align*}
	\Init_{R_0}(\bar\rho \Leb)&\leq H(P_0^N | R_0) = \bar \varphi_N \bar \rho - \log \sum_{n\leq N} \pi_n \times (e^{\bar\varphi_N})^n \\
	&\leq \bar \varphi_N \bar \rho - \log \sum_{n\leq N} \pi_n \times e^n \underset{N\to +\infty}{\longrightarrow} \bar\rho - \log \Phi_{\boldsymbol \pi}(e).
	\end{align*}
	
	\paragraph{$\boldsymbol{\Init_{R_0}}$ is not l.s.c.}
	
	To prove that $\Init_{R_0}$ is not l.s.c.\ w.r.t.\ the topology of weak convergence, we will find an initial law $R_0\in \P(\M_{\delta}(\T^d))$, and a measure $\rho \in \M_+(\T^d)$ such that the infimum in~\eqref{eq:def_init} is achieved at a law $P_0$, but
	\begin{equation*}
	H(P_0|R_0) > L^*_{R_0}(\rho).
	\end{equation*}
	The result follows as therefore, in virtue of Theorem~\ref{thm:initial_lsc_envelope}, $\Init_{R_0}$ does not coincide with its l.s.c.\ envelope. The main idea is that when $R_0$ is too far from being invariant by translations, there can be a gap between $L^*_{R_0}$ and the bigger functional defined in the same way, but allowing discontinuous test functions in~\eqref{eq:def_L*}.
	
	Let us consider $A$ a dense set of $\T^d$ with $\Leb(A) = 1/2$. We call $B := \T^d \backslash A$. We also give ourselves $\boldsymbol{\alpha},\boldsymbol{\beta} \in \P(\N)$ and denote by $\Phi_{\boldsymbol \alpha}$ and $\Phi_{\boldsymbol \beta}$ their generating functions.
	
	We define
	\begin{equation*}
	R_0 := \int_{\T^d} \sum_k \Big\{ \1_A(x) \alpha_k \delta_{k \delta_x} + \1_B(x) \beta_k \delta_{k\delta_x} \Big\} \D x.
	\end{equation*}
	That is, to draw a measure $\mu$ from $R_0$, one first draws $x \in \T^d$ uniformly, then draws $k$ according to $\boldsymbol{\alpha}$ (resp.\ $\boldsymbol{\beta}$) if $x \in A$ (resp.\ $B$), and eventually set $\mu = k \delta_{x}$. Notice that
	\begin{equation*}
	\E_{R_0}[M] = \int_{\T^d} \sum_k \Big\{ \1_A(x) k\alpha_k \delta_x + \1_B(x) k\beta_k \delta_x \Big\} \D x = \Phi_{\boldsymbol \alpha}'(1)\Leb\llcorner_A + \Phi_{\boldsymbol \beta}'(1)\Leb \llcorner_B,
	\end{equation*}
	and that for all $\varphi \in C(\T^d)$,
	\begin{equation}
	\label{eq:laplace_transform_cex_init_not_lsc}
	\E_{R_0}[\exp(\cg \varphi, M\cd)] = \int \left\{ \1_A(x) \Phi_{\boldsymbol \alpha}\left(e^{\varphi(x)}\right) + \1_B(x) \Phi_{\boldsymbol \beta}\left(e^{\varphi(x)}\right) \right\}\D x.
	\end{equation}
	
	Now we consider competitors $P_0$ of the form
	\begin{equation*}
	P_0^{a,b} := \frac{\exp\Big( a M(A) + b M(B) \Big)}{\E_{R_0}\left[ \exp\Big( a M(A) + b M(B) \Big) \right]} \cdot R_0, \qquad a,b \in \R,
	\end{equation*}
	provided the denominator is well defined. It is easy to see that $P_0^{a,b}$ is an optimizer for $\Init_{R_0}$ corresponding to its own intensity: it comes from~\eqref{eq:convex_ineq_entropy} applied with $Y := a M(A) + b M(B)$. Indeed, the expectation $\E_{P_0}[Y]$ depends only on the intensity measure of $P_0$, thus is constant when restricted to competitors for $\Init_{R_0}$ corresponding to the intensity measure of $P_0^{a,b}$. However, the subtlety is that $Y = \langle \varphi, M \rangle$ with $\varphi:= a \1_A + b \1_b$ which is discontinuous.   
	
	Provided $\Phi_{\boldsymbol \alpha}'(e^a)$ and $\Phi_{\boldsymbol \beta}'(e^b)$ are finite, a tedious but straightforward computation leads to
	\begin{equation*}
	H(P_0^{a,b} | R_0) =  \frac{ae^a \Phi_{\boldsymbol \alpha}'(e^a) + be^b\Phi_{\boldsymbol \beta}'(e^b)}{\Phi_{\boldsymbol \alpha}(e^a) + \Phi_{\boldsymbol \beta}(e^b)} - \log \frac{\Phi_{\boldsymbol \alpha}(e^a) + \Phi_{\boldsymbol \beta}(e^b)}{2}.
	\end{equation*}
	Moreover, calling $\rho := \E_{P_0^{a,b}}[M]$ and taking $\varphi \in C(\T^d)$,  
	\begin{align*}
	\cg \varphi, \rho \cd - \log \E_{R_0}[\exp(\cg \varphi, M\cd)] &= \frac{\left(\fint_A\varphi\right) e^a\Phi_{\boldsymbol \alpha}'(e^a)  + \left(\fint_B\varphi\right)e^b\Phi_{\boldsymbol \beta}'(e^b)}{\Phi_{\boldsymbol \alpha}(e^a) + \Phi_{\boldsymbol \beta}(e^b)}
	\\ &\hspace{50pt} - \log  \int \left\{ \1_A(x) \Phi_{\boldsymbol \alpha}\left(e^{\varphi(x)}\right) + \1_B(x) \Phi_{\boldsymbol \beta}\left(e^{\varphi(x)}\right) \right\}\D x\\
	&\leq \frac{\left(\fint_A\varphi\right) e^a\Phi_\alpha'(e^a)  + \left(\fint_B\varphi\right)e^b\Phi_{\boldsymbol \beta}'(e^b)}{\Phi_{\boldsymbol \alpha}(e^a) + \Phi_{\boldsymbol \beta}(e^b)} - \log  \frac{\Phi_{\boldsymbol \alpha}(e^{\fint_A\varphi}) + \Phi_{\boldsymbol \beta}(e^{\fint_B\varphi})}{2}\\
	&=: \Lambda_{a,b}\left( \fint_A\varphi,\fint_B \varphi \right),
	\end{align*}
	where $\Lambda_{a,b}: \R^2 \to \R\cup\{-\infty\}$ is a strictly concave function, whose maximum is $H(P_0^{a,b} | R_0)$ at point $(a,b)$. 
	No, we set $\boldsymbol \alpha$, $\boldsymbol \beta$, $a$ and $b$ in such a way that the closure of
	\begin{equation*}
	\left\{ \left( \fint_A\varphi,\fint_B \varphi \right) \ : \ \varphi \in C(\T^d) \mbox{ and } \E_{R_0}[\exp(\cg \varphi, M \cd)] < + \infty\right\}
	\end{equation*}
	does not contain $(a,b)$. The result follows easily. To do that, we can take for instance $a = \log 2$, $b=\log 3$ and $\boldsymbol \alpha, \boldsymbol \beta$ such that:
	\begin{gather*}
	\Phi'_{\boldsymbol \alpha}(1) = \Phi'_{\boldsymbol \beta}(1) = 1,\\
	\Phi_{\boldsymbol \alpha}'(2) = 3, \quad \Phi_{\boldsymbol \beta}'(3) = 2,\\
	\forall z>5/2, \quad \Phi_{\boldsymbol \alpha}(z) = + \infty. 	
	\end{gather*}
	In this setting, if $\varphi \in C(\T^d)$ is such that $\E_{R_0}[\exp(\cg \varphi, M \cd)] < + \infty$, in view of~\eqref{eq:laplace_transform_cex_init_not_lsc}, $e^\varphi \leq 5/2$, almost everywhere on $A$. But $\varphi$ is continuous and $A$ is dense, so in particular, $e^\varphi \leq 5/2$ everywhere on $\T^d$. Consequently, $\fint_B\varphi \leq 5/2 < b$ and the result follows.
	
	Note that in this setting $\rho$ is a multiple of $\Leb$, so that the lack of lower semi-continuity has nothing to do with the regularity of $\rho$, but only on the one of $R_0$.
	
	\begin{Rem}[From $\Init_{R_0}$ to $\BrSch_{\nu, \boldsymbol q, R_0}$]
		Let us justify why the two previous counterexamples are enough to prove that the branching Schrödinger problem is ill-posed in general. 
		
		To do this, fix $\nu,\boldsymbol q, R_0$, and call $\bar r := \sum_k (k-1) q_k$. Then, we have for all $\rho \in \M_+(\T^d)$:
		\begin{equation*}
		\BrSch_{\nu, \boldsymbol q, R_0}(\rho, e^{\bar r} \tau_{\nu} \ast\rho)  = \Init_{R_0}(\rho),
		\end{equation*}
		where $(\tau_s)_{s\geq 0}$ is the heat flow on the torus, and where because of~\eqref{eq:disintegration_entropy}, if $P_0$ is a competitor for the problem in the r.h.s., $P := \E_{P_0}[R^M] \sim \BBM(\nu, \boldsymbol q, P_0)$ is a competitor for the problem in the l.h.s.\ with $H(P_0 | R_0) = H(P|R)$ (see Corollary~\ref{cor:evolution_density}). The claim follows easily.
	\end{Rem}
	
	We close this section with counterexamples showing that there are couples $(\rho_0,\rho_1) \in \M_+(\T^d)^2$ at which $\BrSch_{\nu, \boldsymbol q, R_0}$ is not l.s.c.\ for the topology of weak convergence (even though $\Init_{R_0}$ is l.s.c.\ at $\rho_0$), and couples for which the infimum in the branching Schrödinger problem is not attained (even though the infimum in $\Init_{R_0}(\rho_0)$ is). 
	
	\paragraph{The branching Schrödinger problem starting from zero particles} 
	
	In plain English, one main difference between the the branching Schrödinger problem and the RUOT problem is the following: with a BBM, if you start from nothing, you cannot create particles; whereas with RUOT you can create mass from nothing. We leverage this observation to give another counterexample to lower semi-continuity. 
	
	Specifically let us consider $\nu>0$, a branching mechanism $\boldsymbol q$ such that $\sum_{k \geq 2} q_k >0$, and $R_0$ such that $R_0(\{0\}) \in (0,1)$, that is, with positive probability, there is no particle under $R_0$, and with positive probability, there are particles under $R_0$. (Actually, the important part of the assumption for the reasonning, is $R_0(\{0\}) \neq 0$, the other part, namely, $R_0(\{0\})<1$ is only here for our counterexample to be in the scope of Theorem~\ref{thm:equality_values_dyn}.) Call $R \sim \BBM(\nu, \boldsymbol q, R_0)$. Finally, chose $\rho \in \M_+(\T^d)$ such that $\rho \neq 0$ and $h(\rho|\Leb) < + \infty$.
	
	Let us prove that calling $\Psi := \Psi_{\nu,\boldsymbol q}$, we have
	\begin{equation}
	\label{eq:gap_0_particle}
	\BrSch_{\nu, \boldsymbol q, R_0}(0,\rho) > L^*_{R_0}(0) + \ruot_{\nu, \Psi}(0, \rho).
	\end{equation}
	A direct consequence is that $\BrSch_{\nu, \boldsymbol q, R_0}$ is not l.s.c.\ at $(0,\rho)$ as it does not coincide with its l.s.c.\ envelope, in virtue of Theorem~\ref{thm:equality_values_dyn}. Let us compute one by one the quantities in~\eqref{eq:gap_0_particle}.
	
	First, $\BrSch_{\nu, \boldsymbol q, R_0}(0,\rho) = + \infty$. Indeed, if $P$ is a competitor for this problem, with finite entropy w.r.t. $R$, because of the constraint $\E_P[M_0] = 0$, we must have $P_0 = \delta_0$. But as $P \ll R$, it implies $P \ll R^0 = R(\, \cdot\, | M_0 = 0)$. As $R^0$ is a Dirac mass on the curve stationing at $0 \in \M_+(\T^d)$, so is $P$, which is incompatible with $\E_P[M_1] = \rho \neq 0$. Hence, there is no competitor and the result follows. 
	
	Hence, it suffices to show that the r.h.s.\ in~\eqref{eq:gap_0_particle} is finite. Concerning $L^*_{R_0}(0)$, we have thanks to Theorem~\ref{thm:initial_lsc_envelope}
	\begin{equation*}
	L^*_{R_0}(0) \leq \Init_{R_0}(0) \leq H(\delta_0 | R_0) = - \log R(\{0\}). 
	\end{equation*}
	Concerning the finiteness of $\ruot_{\nu, \Psi}(0, \rho)$, this is a direct consequence of Proposition~\ref{prop:ruot_existence_competitor} and Remark~\ref{rk:ruot_existence_competitor_q}. 
	
	\paragraph{Free creation of particles} 
	Another crucial difference between RUOT and the branching Schrödinger problem is that in the latter, it always costs something to create more particles than the reference process, whereas in RUOT, it can be free to create particles at an arbitrary large rate. This difference will provide another example where the infimum in the branching Schrödinger problem is not achieved.
	
	To see this, take any $\nu>0$, and $\boldsymbol q$ a branching mechanism which admits no exponential moment, that is, such that $\Psi^*_{\nu, \boldsymbol q}(s) = + \infty$ whenever $s >0$. Doing so, we are exactly in the case of Section~\ref{sec:no_exp_moment}, and $\Psi_{\nu,\boldsymbol q}(r) = 0$ for all $r \geq \bar r := \sum_{k} (k-1)q_k$. Consider any $R_0 \neq \delta_0$ such that $\E_{R_0}[M(\T^d)] < +\infty$, and call $R := \BBM(\nu, \boldsymbol q, R_0)$ and $\rho_0 := \E_{R_0}[M] \in \M_+(\T^d)$.
	
	In this context, calling $\Psi := \Psi_{\nu, \boldsymbol q}$, we have for all $r > \bar r$ that $\ruot_{\nu, \Psi}(\rho_0, e^r \tau_\nu \ast \rho_0)= 0$ (the corresponding solution being $(\rho,m,\zeta) := \D t \otimes (e^{rt}\tau_{\nu t } \ast \rho_0, 0, r e^{rt}\tau_{\nu t } \ast \rho_0)$), and $L^*_{R_0}(\rho_0) = \Init_{R_0}(\rho_0) = 0$. 
	
	On the other hand, for all $P \in \P(\cadlag([0,1]; \M_+(\T^d)))$, either $P=R$, and then $\E_P[M_1] \neq e^{r} \tau_\nu \ast \rho_0$, or $H(P|R) >0$. Therefore, there is no competitor for the branching Schrödinger problem between $\rho_0$ and $e^r \tau_\nu \ast \rho_0$ that satisfies $H(P|R) = L^*_{R_0}(\rho_0) + \ruot_{\nu, \Psi}(\rho_0,e^r \tau_\nu \ast \rho_0) = 0$. 
	
	So given Theorem~\ref{thm:equality_values_dyn}, there are two possibilities. Either $\BrSch_{\nu, \boldsymbol q, R_0}(\rho_0,e^r \tau_\nu \ast \rho_0) >0$, and then $\BrSch_{\nu, \boldsymbol q, R_0}$ it is not l.s.c.\ at $(\rho_0,e^r \tau_\nu \ast \rho_0)$, or $\BrSch_{\nu, \boldsymbol q, R_0}(\rho_0,e^r \tau_\nu \ast \rho_0) =0$ but the infimum for the branching Schrödinger problem between $\rho_0$ and $e^r \tau_\nu \ast \rho_0$ is not achieved. With the tools developed in the next chapters, and in particular with Lemma~\ref{lem:Psi_compromise}, we could see that we are actually in the second case.

	\chapter{Laws of finite entropy w.r.t.\ a branching Brownian motion}
	\label{chap:characterization_finite_entropy}

	In this section we show that up to technicalities, the set of laws with finite entropy with respect to the BBM $R \sim \BBM(\nu, \boldsymbol{q}, R_0)$ is the set of BBMs with modified initial law, for which the particles undergo a Brownian trajectory with an additional drift, and for which the branching mechanism is changed. We also build such laws in great generality by providing explicitly their Radon-Nikodym derivative w.r.t.\ $R$, and we furthermore give a formula for their relative entropy w.r.t.\ $R$.

	We follow the same strategy as the one used in~\cite{leonard2012girsanov} for showing that the set of laws with finite entropy w.r.t.\ a Brownian motion somehow coincides with the set of Brownian motions with modified initial law, and with drift. For short, dropping all the difficulties, for all law $P$ with finite entropy w.r.t. the law of a Brownian motion $R$ of diffusivity $\nu$, there exists a drift $v=(v_t)_{t \in [0,1]}$ such that $P$ is a Brownian motion of same diffusivity $\nu$ and with drift $v$. Moreover, calling $(X_t)_{t \in [0,1]}$ the canonical process:
	\begin{equation}
	\label{eq:RN_derivative_brownian_case}
	\frac{\D P}{\D R} = \frac{\D P_0}{\D R_0}(X_0) \cdot \exp\left( \frac{1}{\nu}\int_0^1 v_t \cdot \D X_t - \frac{1}{2\nu} \int_0^1 |v_t|^2 \D t \right).
	\end{equation}
	The main part of this strategy relies on the construction and on the study of families of \emph{exponential martingales}, whose value at time $1$ will play the role of the Radon-Nikodym derivatives of our modified laws $P$ w.r.t. to the reference law $R$, as in~\eqref{eq:RN_derivative_brownian_case}. 
	
	As visible in this formula, in the non-branching case, the stochastic integral with respect to the Brownian motion, which is a continuous martingale, is the only type of processes that is necessary to describe all the Radon-Nikodym derivatives of laws with finite entropy with respect to the Brownian motion. 
	A way to understand this fact is a famous result asserting that all the local martingales with respect to the filtration of a Brownian motion are of this type, see \cite[Section~5.4]{legall2016brownian}. Therefore, only continuous stochastic calculus is necessary to understand the relationship between the Schrödinger problem and the regularized optimal transport problem. 
	
	On the other hand, in the branching case, we will need some notions of \emph{discontinuous} stochastic calculus. A first hopefully convincing reason for this is that the simplest class of processes studied in the branching case are the ones of type $(\cg \varphi, M_t \cd)_{t \in [0,1]}$, where $\varphi$ is a smooth function on the torus, and they are discontinuous. A deeper reason is the study of pure branching processes (\emph{i.e.} the case where we are only interested in the number of particles, and not in their spatial positions). These processes are instances of continuous time Markov chains, and for the latter, there are well known formulas for the Radon-Nikodym derivatives between processes with different jump rates (see \cite[Section 2 of Appendix 1]{kipnis1998scaling}). Once again, these are values at time one of exponential martingales, but this time involving discontinuous processes.
	
	\bigskip
	
	The notions that we will use are classical in the theory of stochastic calculus, and we will consider~\cite{jacod2013limit} as a reference book. But for the sake of completeness, we give a full presentation adapted to our framework (without proofs) in Section~\ref{sec:discontinuous_stochastic_calculus}. We start by explaining what are \emph{predictable} random objects. Not only this notion is central in discontinuous stochastic calculus, but also, the drift and the modified branching mechanism of the modified BBMs that appear in our study will be of this type: both will depend on $t$ and $x$, but also on the whole past of the process, up to time $t$. Then, we introduce the type of processes that we will study all along this chapter, that we call semi-martingales with simple jumps. Finally, we state a version of the Itô formula taking the jumps into account. We also provide a way to define exponential martingales out of a semi-martingale with simple jumps. 
	
	Then, in Section~\ref{sec:new_processes}, we introduce the two types of processes that are necessary to describe all the Radon-Nikodym derivatives of laws with finite entropy with respect to the BBM (postponing the rigorous construction to Appendix~\ref{app:stochastic_calculus}). The first one is a continuous martingale which is a generalization of the standard stochastic integral. It will be helpful later on to add drifts to the trajectories of individual particles. The second one is a pure-jump process whose jump instants are precisely the times of the branching events. It will be used to change the branching mechanism of the reference law. Afterwards, we show how these processes can be used to decompose the processes of type $(\cg \varphi, M_t\cd)_{t \in [0,1]}$ as a sum of a continuous martingale, a pure jump process and a continuous finite variation process. This is the purpose of the main result of this section, namely Theorem~\ref{thm:branching_Ito}, which can be seen as an other generalization of the Itô formula where instead of studying processes that are smooth functions of real or vector-valued continuous semi-martingales, we study processes of type $ (\cg \varphi, M_t \cd)_{t \in [0,1]}$, under the law of the BBM. 
	
	In Section~\ref{sec:construction_modified_BBM}, we build some modified BBMs. More precisely, we show that if we give ourselves an initial law $P_0\ll R_0$, a predictable vector field $\tilde v$ and a predictable measure-valued field $\tilde{\boldsymbol q}$, and under nothing but boundedness assumptions (no regularity is needed), there exists a BBM with drift $\tilde v$, and branching mechanism $\tilde{\boldsymbol q}$. Moreover, we give formulas for the Radon-Nikodym derivative and for the relative entropy of its law with respect to $R$.
	
	Finally, in Section~\ref{sec:girsanov}, we follow the strategy of a proof by Léonard~\cite{leonard2012girsanov} based on the Riesz representation theorem to show that for each law $P$ with finite entropy with respect to $R$, there are predictable $\tilde v$ and $\tilde{\boldsymbol q}$ such that $P$ is a BBM with drift $\tilde v$ and branching mechanism $\tilde{\boldsymbol q}$.
	
	In the whole section, the diffusivity $\nu>0$ is fixed once for all, and we do not systematically refer to it in the definitions. For instance ``Brownian motion'' will stand for ``Brownian motion of diffusivity $\nu$''.

	\section{Preliminaries on discontinuous stochastic calculus}
	\label{sec:discontinuous_stochastic_calculus} 
	
	Let us present a few notions of discontinuous stochastic calculus. We do not pretend to introduce this widely studied topic in full generality, and prefer to keep the exposition as simple as possible. As we will only introduce the notions that let us study the rather simple processes that we will meet later on, our formulation will differ significantly with reference books such as~\cite{jacod2013limit}, but all the results will be strictly simpler than what is written there. On the one hand, our goal is to provide to the reader who is not familiar with stochastic calculus all the definitions, results (without proofs) and heuristics that are necessary to understand the rest of this chapter. On the other hand, we also want to give to the reader who is more acquainted with stochastic calculus all the correspondences necessary to switch from our formulation to more classical (but more involved) ones.
	
	In this section and in this section only, $(\Omega, \F, (\F_t)_{t \in [0,1]}, \Prob)$ is a general filtered probability space. We first define what are predictable random objects

	\subsection{The notion of predictable random fields}
	We recall that a process $X = (X_t)_{t \in [0,1]}$ defined on the filtered space $(\Omega, \F, (\F_t)_{t \in [0,1]}, \Prob)$ is said to be adapted provided for all $t \in [0,1]$, $X_t$ is $\F_t$-measurable. In the definitions to come, such a process is seen as a function on the product space $\Omega \times [0,1]$.
	
	In the theory of discontinuous stochastic calculus, the notion of predictable processes play a significant role. It is defined as follows (see~\cite[Definition~I.2.1]{jacod2013limit}).
	\begin{Def}
		\label{def:predictable_process}
		\begin{itemize}
			\item\emph{Predictable $\sigma$-algebra}. The predictable $\sigma$-algebra is the $\sigma$-algebra $\G$ on $\Omega \times [0,1]$ generated by all left-continuous adapted processes on $(\Omega, \F, (\F_t)_{t \in [0,1]}, \Prob)$.
			\item \emph{Predictable processes}. Let $X = (X_t)_{t \in [0,1]}$ be a process with values in some measurable space $(\mathcal{Y}, \F_{\mathcal{Y}})$. We say that $X$ is a predictable process if it is measurable w.r.t. $\G$.  
		\end{itemize}
	\end{Def}
	
	\begin{Rem}
		Heuristically, with the classical interpretation that for some $t \in [0,1]$, $\F_t$ gathers all the events that occur before or at time $t$, a process $X$ is predictable provided its trajectory up to time $t$ can be deduced from all the events occurring strictly before time $t$. A predictable process is always adapted, but the converse does not hold.  
	\end{Rem}
	
	In our case, the parameters of the BBMs that we will consider will be processes with values in a set of functions on the torus (for instance, one realization of the drift at time $t$ will be a vector field). So we have to add a dependency to space in these definitions. Once again, we follow~\cite[Paragraph~II.1.4]{jacod2013limit}.
	
	\begin{Def}[Predictable fields]
		\label{def:predictable_fields}
		Let us call $\XX := \Omega \times [0,1]\times \T^d$, and let us endow this set with the $\sigma$-algebra $\tilde \G := \G \otimes \mathcal B(\T^d)$. A function $a$ on $\XX$ valued in some measurable set $(\mathcal Y, \mathcal F_{\mathcal Y})$ is called a predictable field whenever it is $\tilde \G$-measurable.
	\end{Def}
	\begin{Rem}
		As before, we will extensively use some abusive notations. If $a$ is a predictable field: $a(\omega,t,x)$ will denote its value at $(\omega, t, x) \in \XX$; $a(t,x)$ will denote, given $(t,x) \in [0,1]\times \T^d$, the random variable $ a(\cdot,t,x)$; and if $M = M(\omega)$ is a measure-valued random variable and $t \in [0,1]$, $\cg a(t),M\cd$ will denote the random variable consisting for a fixed $\omega$ in integrating the function $a(\omega, t,\cdot)$ w.r.t. $M(\omega)$.
	\end{Rem} 
	
	The following proposition will be useful in the sequel. It is an easy consequence of~\cite[Theorem~I.2.2]{jacod2013limit}, so that we omit the proof.
	\begin{Prop}
		\label{prop:predictable_sigma_field}
		If $\F$ is generated by a countable family of sets, then so is $\tilde \G$ defined in Definition~\ref{def:predictable_fields}. 
	\end{Prop}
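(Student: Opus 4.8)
The statement to prove is Proposition~\ref{prop:predictable_sigma_field}: if $\mathcal F$ is generated by a countable family of sets, then so is $\tilde{\mathcal G} = \mathcal G \otimes \mathcal B(\T^d)$.

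\medskip

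The plan is to reduce everything to the single structural fact, quoted from \cite[Theorem~I.2.2]{jacod2013limit}, that the predictable $\sigma$-algebra $\mathcal G$ on $\Omega \times [0,1]$ is generated by the collection of ``stochastic rectangles'' of the form $A \times (s,t]$ with $0 \le s < t \le 1$ and $A \in \mathcal F_s$, together with the sets $B \times \{0\}$ with $B \in \mathcal F_0$. So the first step is to argue that $\mathcal G$ itself is countably generated. Since $\mathcal F$ is countably generated and the filtration $(\mathcal F_t)$ is generated by the canonical process, each $\mathcal F_s$ is countably generated as well; more importantly, it suffices to pick a single countable generating family $\mathcal C$ of $\mathcal F$ and note that, for the generators of $\mathcal G$, one may restrict the left endpoints $s$ and right endpoints $t$ to the rationals in $[0,1]$ and the sets $A$ to a countable algebra generating $\mathcal F_s$ — here one must be slightly careful that $A \in \mathcal F_s$, not merely $A \in \mathcal F$, so I would invoke that $\mathcal F_s = \sigma(M_u : u \le s)$ is generated by the countable family $\{ \{ M_u \in O \} : u \in \mathbb Q \cap [0,s], \ O \text{ in a countable base of } \M_+(\T^d)\}$, which is legitimate because $M$ is right-continuous so the rational times determine $\mathcal F_s$. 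This produces a countable family $\mathcal D_0$ of subsets of $\Omega \times [0,1]$ with $\sigma(\mathcal D_0) = \mathcal G$.

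\medskip

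The second step is the product: for Borel $\sigma$-algebras on Polish spaces, $\mathcal B(\T^d)$ is countably generated (e.g.\ by a countable base of the topology), say by $\mathcal E_0$. Then $\tilde{\mathcal G} = \mathcal G \otimes \mathcal B(\T^d)$ is generated by the countable family $\{ D \times \T^d : D \in \mathcal D_0 \} \cup \{ (\Omega \times [0,1]) \times E : E \in \mathcal E_0 \}$, since the product $\sigma$-algebra of two $\sigma$-algebras is always generated by the ``cylinders over generators'' of each factor. This family is countable, which is exactly the assertion.

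\medskip

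I do not expect a genuine obstacle here — the proposition is essentially bookkeeping — but the one point requiring care is the identification of $\mathcal G$'s generators at the level of \emph{adapted} (not merely measurable) sets: one cannot just say ``$\mathcal F$ is countably generated hence so is everything,'' because the predictable $\sigma$-algebra involves the time structure and the sub-$\sigma$-algebras $\mathcal F_s$. The clean route is to cite \cite[Theorem~I.2.2]{jacod2013limit} for the rectangle generators and then observe that rationalizing the time parameters and using a countable generating algebra of each $\mathcal F_s$ (available precisely because $\mathcal F = \mathcal F_1$ is countably generated and the filtration is the natural one of a càdlàg process) yields countably many generators. Given that the excerpt already says the proof is omitted as ``an easy consequence,'' the writeup can stay at this level of detail.
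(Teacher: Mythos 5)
Your proof is correct and follows exactly the route the paper indicates: the paper omits its own proof and simply flags it as an easy consequence of \cite[Theorem~I.2.2]{jacod2013limit}, which is the rectangle description of the predictable $\sigma$-algebra you invoke, followed by the standard rationalization of time endpoints, countable generation of each $\F_s$ from the natural càdlàg filtration, and the product with a countable base of $\mathcal B(\T^d)$. One small point worth tightening: after rationalizing to rational left endpoints $q$, one needs $M_q$ itself among the generators of $\F_q$ (right-continuity recovers $M_u$ for $u<q$ but not $M_q$), which your choice $\mathbb Q\cap[0,q]$ does supply once $q$ is rational — so the argument goes through, but the justification ``right-continuity implies rational times suffice'' deserves that one extra clause.
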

	
	\subsection{Semi-martingales with simple jumps}
	
	The class of processes $X = (X_t)_{t \in [0,1]}$ that we want to study are real-valued processes of the form:
	\begin{equation}
	\label{eq:semi-martingale_with_jumps}
	X_t = X_0 + Z_t + J_t + V_t, \quad t \in [0,1],
	\end{equation}
	where $X_0$ is $\F_0$-measurable, $Z=(Z_t)_{t \in [0,1]}$ is a continuous local martingale starting from~$0$, $J = (J_t)_{t \in [0,1]}$ is a pure-jump process starting from~$0$, and $V = (V_t)_{t \in [0,1]}$ is a continuous finite variation process starting from $0$.
	
	We will use precisely the same definitions of continuous local martingales and of continuous finite variation processes as in~\cite[Chapter~4]{legall2016brownian}, and hence we refer to it for a precise introduction to these notions. In this work, if $X,Y$ are two continuous martingales, we will denote by $[X] = ([X]_t)_{t \in [0,1]}$ the quadratic variation of $X$, and by $[X,Y] = ([X,Y]_t)_{t \in [0,1]}$ the bracket of $X$ and $Y$ (sometimes called the quadratic covariation of $X$ and $Y$).
	
	However, the pure-jump processes that we will deal with are simpler than the ones appearing for instance in the general theory of Levy processes~\cite{bertoin1996levy}. We will call them simple pure-jump processes, and give a separate definition. 
	
	In this definition, the set $\M_+(\R)$ of nonnegative finite Borel measure on $\R$ will play a role. As recalled in Section~\ref{sec:notations}, it is seen as a measurable set endowed with the Borel $\sigma$-algebra corresponding to the topology of weak convergence.
	
	\begin{Def}[Simple pure-jump processes]
		\label{def:simple_pure-jump_processes}
		Let $J = (J_t)_{t \in [0,1]}$ be an adapted process. We call it a simple pure-jump process provided:
		\begin{itemize}
			\item It is \emph{a.s.}\ \emph{càdlàg}, piecewise constant, and with a finite number of jumps. 
			\item There is a predictable process $\eta = (\eta_t)_{t\in[0,1]}$ with values in $\M_+(\R)$ such that:
			\begin{itemize}
				\item Almost surely,
				\begin{equation}
				\label{eq:integrability_jump_measure}
				\int_0^1 \eta_t(\R) \D t < +\infty.
				\end{equation}
				\item For all function $f \in C_b(\R)$, the real-valued process
				\begin{equation}
				\label{eq:martingale_property_def_simple_jump_processes}
				f(J_t) - \int_0^t \hspace{-5pt} \int_\R \Big\{f(J_s + y) - f(J_s) \Big\}\D \eta_s(y) \D s, \quad t \in [0,1],
				\end{equation}
				is a local martingale.
			\end{itemize} 
		\end{itemize}
		In this case, we say that $\eta$ is the \emph{jump measure} of $J$.
	\end{Def}
	\begin{Rem}
		The heuristic meaning of the jump measure of a simple pure-jump process is as follows. For a fixed $s \in [0,1]$ and $A \in \mathcal{B}(\R)$, the random quantity $\eta_s(A) \D s$ represents the probability, knowing the whole past of the trajectory, that the process undergoes a jump whose size is in $A$ between the times $s$ and $s + \D s$.
	\end{Rem}
	It is easy to see that the jump measure of a simple pure-jump process is unique: this is because a continuous martingale with finite variation is constant. 
	
	Usually, when pure-jump processes are studied, they are not assumed to have a finite number of jumps, as the general theory of Lévy processes suggests~\cite{bertoin1996levy}. In that case, the definition of the jump measure includes a cutoff procedure, leading to slightly more complicated formulas. Here, we do not need to introduce any truncation.
	
	We are now ready to give the definition of the class of processes that we will study in the whole section, that we call semi-martingales with simple jumps.
	\begin{Def}
		A process $X$ is called a semi-martingale with simple jumps provided it has a decomposition of the form~\eqref{eq:semi-martingale_with_jumps}, where:
		\begin{itemize}
			\item $X_0$ is $\F_0$-measurable.
			\item $Z$ is a continuous local martingale with $Z_0=0$.
			\item $J$ is a simple pure jump process with $J_0=0$.
			\item $V$ is a continuous finite variation process with $V_0=0$.
		\end{itemize}
		If it exists, this decomposition is easily seen to be unique. In this case, the quadratic variation of $X$, denoted by $[X] = ([X_t])_{t \in [0,1]}$ is defined as the quadratic variation of $Z$, and the jump measure of $X$ is defined as the jump measure of $J$.
	\end{Def}
	
	\begin{Rem}[Correspondence with the canonical representation of semi-martingales]
		\label{rem:correspondence_semi-martingales}
		The interested reader can compare our decomposition~\eqref{eq:semi-martingale_with_jumps} with the one given in~\cite[Formula~II.2.35]{jacod2013limit}, whereas the reader who is not familiar with stochastic calculus can easily skip this remark. The main message is that our semi-martingales are quasi-left-continuous (more on this below) and only undergo a finite number of jumps. These two properties allow to simplify the general formula~\cite[Formula~II.2.35]{jacod2013limit}, and to write it as~\eqref{eq:semi-martingale_with_jumps}. Let us develop this idea.
		\begin{itemize}
			\item Our $Z$ is precisely their $X^c$, the continuous martingale part of $X$, defined in~\cite[Proposition~I.4.27]{jacod2013limit}.
			\item Their $\nu$ translates in our case to $\D t \otimes \eta$, as can be deduced from~\eqref{eq:martingale_property_def_simple_jump_processes} and~\cite[Theorem~II.1.8]{jacod2013limit}. They call it the compensator of the random measure
			\begin{equation*}
			\mu^X := \sum_{s\in[0,1]} \1_{\Delta X_s \neq 0} \delta_{(s,\Delta X_s)},
			\end{equation*}
			where $\Delta X_s := X_s - X_{s-}$, see \cite[Definition~II.2.6]{jacod2013limit}. In virtue of~\cite[Proposition~II.1.17]{jacod2013limit}, the fact that in our case, its temporal marginal can be chosen to be the Lebesgue measure reflects the fact that we will only consider processes whose jump instants are \emph{totally inaccessible} in the sense of~\cite[Definition~1.2.20]{jacod2013limit}, so that $X$ is \emph{quasi-left-continuous} (see \cite[Definition~I.2.25, Proposition~I.2.26]{jacod2013limit}). For the same reason, in our case, their $B$ is continuous.
			\item In our case, their integrability property~\cite[Formula~II.2.13]{jacod2013limit} becomes the much stronger one~\eqref{eq:integrability_jump_measure}. Namely, in general and contrarily with our case, we cannot assume $\int_0^t \eta_s([-\eps, \eps]) \D s$ to be finite, for any $t,\eps>0$. This is because there exist semi-martingales that undergo an infinite number of small jumps during finite time intervals, whereas the semi-martingales that we will study only allow a finite number of jumps. In that case, the processes that they call $h * \mu^X$, $h*\nu$ and $x * \mu^X$ are all well defined, so that in their formula, we can simplify $h * \mu^X$. Then, their $x * \mu^X$ is our $J$, and their $B - h * \nu$ (which can be seen to be independent of $h$) is our $V$. It is of course continuous, and so automatically predictable.
			\item Finally, notice that in our real-valued case, their $C$ is simply a real-valued process. In view of~\cite[Definition~II.2.6]{jacod2013limit}, it is precisely what we call $[X]$.
			
		\end{itemize}
	\end{Rem}
	
	The main result of this subsection is that if $X$ is a semi-martingale with simple jumps, and if $F:\R \to \R$ is smooth, then there exists a continuous finite variation process, that can be expressed in terms of the continuous finite variation component, the bracket and the jump measure of $X$, such that $F(X)$ is this process plus a local martingale. Applying this result to $F = \exp$, we discover that the knowledge of the continuous finite variation component of $X$, of its quadratic variation and of its jump measure is enough to build exponential martingales.
	\begin{Thm}[Martingales associated with a semi-martingale with simple jumps]
		\label{thm:Ito_with_jumps}
		Let $X=(X_t)_{t \in [0,1]}$ be a semi-martingale with simple jumps. Let us call $V = (V_t)_{t \in [0,1]}$ its continuous finite variation component, $[X]= ([X]_t)_{t \in [0,1]}$ its quadratic variation and $\eta = (\eta_t)_{t \in [0,1]}$ its jump measure.
		
		\begin{enumerate}
			\item Let $F:\R \to \R$ be smooth and assume that almost surely, for all $K>0$, 
			\begin{equation}
			\label{eq:assumption_F(X)}
			\int_0^1 \sup_{|x| \leq K} \left\{ \int_{\R}|F(x + y) - F(x)|\D \eta_t(y) \right\} \D t < + \infty.
			\end{equation}
			Then the process whose value at time $t \in [0,1]$ is
			\begin{equation*}
			F(X_t) - F(X_0) - \int_0^t F'(X_s)\D V_s - \frac{1}{2} \int_0^t F''(X_s) \mathrm{d} [X]_s - \int_0^t \hspace{-5pt} \int_{\R}\Big\{ F(X_s + y) - F(X_s)\Big\} \D \eta_s(y) \D s
			\end{equation*}
			is well defined, and it is a local martingale.
			\item Assume that almost surely,
			\begin{equation*}
			\int_0^1 \hspace{-5pt} \int_{\R}\Big\{ \exp(y) - 1 \Big\} \D \eta_s(y) \D s < + \infty.
			\end{equation*}
			(Observe that this quantity is always defined in $(-\infty, +\infty]$ because of~\eqref{eq:integrability_jump_measure}.)
			Then, the process 
			\begin{equation}
			\label{eq:exponential_martingale}
			\exp\left( X_t - X_0 - V_t- \frac{1}{2} [X]_t - \int_0^t\hspace{-5pt}\int_{\R}\Big\{ \exp(y) - 1 \Big\}\D \eta_s(y) \D s \right), \qquad t \in [0,1],
			\end{equation}
			is a nonnegative local martingale. In particular, it is a supermartingale.
		\end{enumerate}
	\end{Thm}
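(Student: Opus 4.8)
The plan is to deduce part (1) from the classical It\^o formula applied piecewise between the jumps of $J$, identify the compensated sum of jumps as a local martingale using the defining property of the jump measure, and then obtain part (2) by specializing $F=\exp$ to a suitably shifted process.

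\textbf{Part (1).} Write $X_t = X_0 + Z_t + J_t + V_t$, and let $0<\tau_1<\dots<\tau_N\le 1$ enumerate the (\emph{a.s.}\ finitely many) jump times of $J$, with $\tau_0:=0$, $\tau_{N+1}:=1$. On each interval $[\tau_i,\tau_{i+1})$ the process $X$ agrees with the \emph{continuous} semimartingale $s\mapsto X_{\tau_i}+(Z_s-Z_{\tau_i})+(V_s-V_{\tau_i})$, so the It\^o formula for continuous semimartingales (see \cite[Chapter~4]{legall2016brownian}) applies there; using that $Z$, $V$ and $[X]$ are continuous (hence do not charge jump times) and that $X_{s-}=X_s$ for Lebesgue-a.e.\ $s$, summing over $i$ and inserting the jump increments $F(X_{\tau_i})-F(X_{\tau_i-})=F(X_{\tau_i-}+\Delta J_{\tau_i})-F(X_{\tau_i-})$ yields the pathwise identity
\[
F(X_t)-F(X_0)=\int_0^t F'(X_s)\,\D V_s+\int_0^t F'(X_s)\,\D Z_s+\tfrac12\int_0^t F''(X_s)\,\D[X]_s+\sum_{0<s\le t}\big\{F(X_s)-F(X_{s-})\big\}.
\]
The second term is a continuous local martingale and the first and third are continuous finite-variation processes, so it remains to show that
\[
\sum_{0<s\le t}\big\{F(X_s)-F(X_{s-})\big\}-\int_0^t\!\!\int_\R\big\{F(X_s+y)-F(X_s)\big\}\,\D\eta_s(y)\,\D s,\qquad t\in[0,1],
\]
is a local martingale, the compensator being \emph{a.s.}\ finite by~\eqref{eq:assumption_F(X)} after localizing with $T_K:=\inf\{t:|X_t|\ge K\}$.

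\textbf{The technical point.} Establishing that this compensated sum is a local martingale is the heart of the proof: it amounts to upgrading the defining martingale property~\eqref{eq:martingale_property_def_simple_jump_processes} of $\eta$ — stated only for deterministic $f\in C_b(\R)$ evaluated along $J$ itself — to the time-dependent predictable integrand $g(\omega,s,y):=F(X_{s-}(\omega)+y)-F(X_{s-}(\omega))$, which is predictable in the sense of Definition~\ref{def:predictable_fields} (with $\R$ in place of $\T^d$). I would do this by a functional monotone-class argument: the required identity holds when $g$ is of product type ``indicator of a stochastic interval $(\sigma,\tau]$ restricted to a set of $\mathcal F_\sigma$, times $f(y)$ with $f\in C_b(\R)$'' (by optional stopping and conditioning on $\mathcal F_\sigma$), hence for finite linear combinations, hence for all bounded predictable $g$ by the monotone-class theorem; the localization by $T_K$ together with~\eqref{eq:assumption_F(X)} then removes the boundedness restriction and controls the compensator. (Alternatively one may simply invoke \cite[Theorem~I.4.57]{jacod2013limit} through the dictionary of Remark~\ref{rem:correspondence_semi-martingales}, noting that all cutoff terms $h*\mu^X$, $h*\nu$ there disappear because our semimartingales are quasi-left-continuous and have finitely many jumps.) This is the step I expect to be the main obstacle; the rest is bookkeeping.

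\textbf{Part (2).} Apply part (1) with $F=\exp$ to $\tilde X_t:=X_t-X_0-V_t-\tfrac12[X]_t-A_t$, where $A_t:=\int_0^t\!\int_\R\{\exp(y)-1\}\,\D\eta_s(y)\,\D s$; this $\tilde X$ again has simple jumps, with $\tilde X_0=0$, continuous martingale part $Z$, jump measure $\eta$, and continuous finite-variation part $-\tfrac12[X]_t-A_t$. Hypothesis~\eqref{eq:assumption_F(X)} holds for $F=\exp$ since $|\exp(x+y)-\exp(x)|=e^x|e^y-1|$ and, for $|x|\le K$,
\[
\int_0^1 e^K\!\int_\R|e^y-1|\,\D\eta_s(y)\,\D s\le e^K\!\left(\int_0^1\!\!\int_\R\{e^y-1\}\,\D\eta_s(y)\,\D s+2\int_0^1\eta_s(\R)\,\D s\right)<+\infty
\]
by the standing assumption of part (2) and~\eqref{eq:integrability_jump_measure}. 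Plugging $F=\exp$ and $\tilde X$ into the formula of part (1), and using $\exp(\tilde X_0)=1$ and $\D(-\tfrac12[X]_s-A_s)=-\tfrac12\D[X]_s-\int_\R(e^y-1)\,\D\eta_s(y)\,\D s$, the term $-\int_0^t\exp(\tilde X_s)\,\D(-\tfrac12[X]_s-A_s)$ exactly cancels $\tfrac12\int_0^t\exp(\tilde X_s)\,\D[X]_s+\int_0^t\!\int_\R\exp(\tilde X_s)\{e^y-1\}\,\D\eta_s(y)\,\D s$, leaving that $\exp(\tilde X_t)-1$ is a local martingale. Since $\exp(\tilde X_t)\ge 0$, it is a supermartingale by Fatou's lemma. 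Once part (1) is in hand, the computations here are purely formal.
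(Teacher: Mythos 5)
Your Part~(2) is correct and gives a cleaner, more self-contained argument than the paper: you derive the exponential local martingale directly from Part~(1) by applying $F=\exp$ to the shifted process $\tilde X$, whereas the paper cites \cite[Lemma~2.1]{stroock1971diffusion}. The cancellation you describe is exact, and your verification that the hypothesis~\eqref{eq:assumption_F(X)} holds for $F=\exp$ using the bound $|e^y-1|\le(e^y-1)+2$ together with~\eqref{eq:integrability_jump_measure} is correct. This is a genuinely nice simplification.

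For Part~(1), however, the ``technical point'' you flag is indeed where your sketch has a real gap. You claim the required identity ``holds when $g$ is of product type: indicator of a stochastic interval $(\sigma,\tau]$ restricted to a set of $\mathcal F_\sigma$, times $f(y)$ with $f\in C_b(\R)$''. But the defining property~\eqref{eq:martingale_property_def_simple_jump_processes} does \emph{not} hand you martingales for that class. After optional stopping and conditioning, what it gives you is the compensating identity for integrands of the specific form
\[
W(\omega,s,y)=\1_A(\omega)\,\1_{(\sigma(\omega),\tau(\omega)]}(s)\,\big\{f\big(J_{s-}(\omega)+y\big)-f\big(J_{s-}(\omega)\big)\big\},
\]
in which the $y$-dependence is tethered to the current value of $J_{s-}$ through the same function $f$. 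This is \emph{not} a product of a predictable $(\omega,s)$-factor with a deterministic function of $y$, so it is not a generating set for $\tilde{\mathcal G}\otimes\mathcal B(\R)$ in the form needed for a monotone-class argument as written. Extracting from this constrained family the statement ``$\D s\otimes\eta_s$ is the compensator of $\mu^J$'' is precisely the nontrivial content of \cite[Theorem~II.1.8]{jacod2013limit}, which is what the paper invokes (together with the semimartingale Itô formula \cite[Theorem~II.2.42]{jacod2013limit} and a localization argument). Your alternative citation of \cite[Theorem~I.4.57]{jacod2013limit} is also off: that is the integration-by-parts theorem, not the Itô formula, and in this paper it is used only to justify Lemma~\ref{lem:mg_times_finite_var}. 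So the correct fix is either to cite II.1.8 and II.2.42 as the paper does, or to actually carry out a proof of the compensator identity --- for instance via a $\pi$--$\lambda$/monotone-class argument starting from the correct base class above, possibly exploiting that one may take $f$ of exponential type $f(x)=e^{i\xi x}$ so that $f(J_{s-}+y)-f(J_{s-})=e^{i\xi J_{s-}}(e^{i\xi y}-1)$ factorizes --- but this requires care that the present sketch elides. Your pathwise Itô decomposition between the jump times, with the observation that $V$, $Z$, $[X]$ are continuous and that $X_{s-}=X_s$ for $\D s$-a.e.\ $s$, is fine; only the compensation step needs a real argument.
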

	\begin{proof}
		Once again, the reader who is not familiar with stochastic calculus can skip this proof. The first point of this theorem is a direct consequence of~\cite[Theorem~II.2.42]{jacod2013limit}, Remark~\ref{rem:correspondence_semi-martingales}, \cite[Theorem~II.1.8]{jacod2013limit}, and a localization procedure. Notice that in all the integrands, we take the value at time $s$, and not at the left limit $s-$. As all the processes involved are $\D s$-\emph{a.e.}\ continuous, this does not change the value of the integrals. The second point is a consequence of the first one and of~\cite[Lemma~2.1]{stroock1971diffusion}.
	\end{proof}
	
	We will also use the following elementary lemma, that can \emph{e.g.}\ be deduced from~\cite[Theorem~I.4.57]{jacod2013limit}.
	\begin{Lem}
		\label{lem:mg_times_finite_var}
		Le $X = (X_t)_{t \in [0,1]}$ be a semi-martingale with simple jumps that is in addition a positive local martingale, and let $V = (V_t)_{t \in [0,1]}$ be a continuous process with finite variation. Then the process
		\begin{equation*}
		X_t V_t, \quad t \in [0,1],
		\end{equation*}
		is a local martingale if and only if $V$ is \emph{a.s.}\ constant.
	\end{Lem}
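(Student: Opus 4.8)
The plan is to derive the result from the integration-by-parts formula for semi-martingales together with the classical fact that a continuous local martingale of finite variation which is null at the origin is indistinguishable from $0$.

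The ``if'' direction is immediate: if $V$ is a.s.\ constant then $V_t = V_0$ for all $t$, with $V_0$ being $\F_0$-measurable; in the situations where we will invoke this lemma, $V$ arises as the continuous finite variation component of a semi-martingale with simple jumps, so that $V_0 = 0$ and $X_t V_t \equiv 0$, which is trivially a local martingale. (More generally, the product of a local martingale with a fixed $\F_0$-measurable random variable is again a local martingale, after localizing simultaneously the local martingale and the multiplier.)

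For the ``only if'' direction, I will assume that $(X_t V_t)_{t \in [0,1]}$ is a local martingale and apply the integration-by-parts formula \cite[Theorem~I.4.57]{jacod2013limit} to the two semi-martingales $X$ and $V$:
\begin{equation*}
X_t V_t = X_0 V_0 + \int_0^t X_{s-}\, \D V_s + \int_0^t V_{s-}\, \D X_s + [X,V]_t .
\end{equation*}
Since $V$ is continuous, $V_{s-} = V_s$ and the bracket $[X,V]$ vanishes (its jump part $\sum_s \Delta X_s\,\Delta V_s$ is zero because $V$ has no jumps, and its continuous part $\langle X^c,V\rangle$ is zero because $V$ has finite variation). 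As $V$ is continuous and adapted it is predictable and locally bounded, so $\int_0^\cdot V_s\,\D X_s$ is a local martingale (stochastic integral of a locally bounded predictable process against the local martingale $X$). Hence
\begin{equation*}
\int_0^t X_{s-}\, \D V_s = X_t V_t - X_0 V_0 - \int_0^t V_s\, \D X_s
\end{equation*}
is simultaneously a local martingale, a continuous process (the measure $\D V$ has no atoms since $V$ is continuous), a finite variation process, and null at $t=0$; therefore it is a.s.\ identically $0$.

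Finally I will turn ``$\int_0^t X_{s-}\,\D V_s \equiv 0$'' into ``$V$ constant'' using positivity of $X$. Because $X$ is a positive local martingale, it is a nonnegative supermartingale that never hits $0$, so a.s.\ its left limits are strictly positive as well; along a fixed càdlàg path, $s \mapsto X_{s-}$ is bounded below by a strictly positive constant (its infimum is attained on the compact closure of the path's range). On such a path on which moreover $t\mapsto\int_0^t X_{s-}\,\D V_s$ vanishes identically, the signed measure $X_{s-}\,\D V_s$ on $[0,1]$ has a vanishing distribution function, hence is the zero measure, and dividing by the strictly positive bounded density $X_{s-}$ gives $\D V = 0$, i.e.\ $V$ is constant. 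The only place requiring a little care is this last pathwise measure-theoretic argument together with the strict positivity of the left limits of $X$; everything else is a routine application of the product formula and of the structure of continuous finite variation local martingales, which is why the lemma is stated as elementary.
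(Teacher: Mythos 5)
Your proof is correct and uses exactly the tool the paper points to (the integration-by-parts formula / Itô's formula from \cite[Theorem~I.4.57]{jacod2013limit}), for which the paper itself supplies no further details. The only point that deserves a flag is that ``positive'' in the statement must be read as strictly positive — you implicitly do so when asserting that $X$ and its left limits never hit $0$, which is exactly what makes the final division by $X_{s-}$ legitimate and is indeed necessary (a nonnegative local martingale absorbed at $0$ provides an easy counterexample otherwise).
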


	\section{Two types of processes defined upon the branching Brownian motion}
	\label{sec:new_processes}

	As already said at the beginning of this chapter, in order to find generalizations of formula~\eqref{eq:RN_derivative_brownian_case} in the case where $R \sim\BBM(\nu, \boldsymbol{q}, R_0)$, we need to introduce two types of processes. The first one will be a continuous martingale that is a direct generalization of the stochastic integral with respect to the Brownian motion: it will consist in adding all the stochastic integral on all the branches of the BBM. The second one will be a simple pure-jump process whose jump instants will be the branching times, and whose jump sizes will depend on the time of the branching event, the position of the branching event, the number of descendants of the branching events, and the past of the whole process. For both of these processes, we provide here a definition and the properties that we will use in the rest of the chapter, and we postpone the details of the construction to Appendix~\ref{app:stochastic_calculus}.
	
	Here, we go back to $(\Omega, \F, (\F_t)_{t \in [0,1]})$ being the canonical space for the BBM, as introduced in Subsection~\ref{subsec:def_BBM}. In particular, $\Omega = \cadlag([0,1]; \M_+(\T^d))$. In all the definitions to come, $R\sim\BBM(\nu, \boldsymbol{q}, R_0)$ is given once for all, and we do not refer to it explicitly in the notations. 
	
	Before starting, let us give a definition that we will use constantly in the remaining sections of this chapter.
	
	\begin{Def}
		\label{def:Pi_Q}
		Let $Q \in \P(\Omega)$. We denote by $\Pi_Q$ the measure $Q \otimes \D t \otimes M_t$ on the measurable set $(\XX, \tilde \G)$ defined in Definition~\ref{def:predictable_fields}.
	\end{Def}
	
	\subsection{A generalized stochastic integral}
	\label{subsec:def_I}
	
	The stochastic integral with respect to the Brownian motion associates to every square integrable predictable process a square integrable martingale. Our generalized stochastic integral will associate to each predictable \emph{vector field} in $L^2(\XX, \tilde \G, \Pi_R)$ a martingale that is square integrable with respect to $R$. If $v$ is such a vector field, for almost all time and $R$-\emph{a.e.}\ on $\Omega$, $v(t,x)$ is well defined for $M_t$-almost all $x$. Otherwise stated, such a predictable vector fields provides in a predictable way a vector to each particle alive at time $t$.
	
	Therefore, the predictable vector field $v$ stores the same information as if we had one predictable vector-valued process \emph{per branch} of the BBM. Our generalized stochastic integral will consist in adding over all branches of the BBM the stochastic integrals of this process w.r.t.\ the trajectory of the corresponding branch. This is done in the following theorem defining our stochastic integral, and proved in Appendix~\ref{app:stochastic_calculus}. In this theorem, we  use the notations that we introduced in Subsection~\ref{subsec:def_BBM}. We recall that for for two real numbers $a,b\in\R$, $a \wedge b$ stands for the minimum between $a$ and $b$.
	
	\begin{Thm}
		\label{thm:definition_stochastic_integral}
		Let $v$ be a vector field in $L^2(\XX,\tilde \G, \Pi_{R})$. There exists a continuous adapted process $I[v]=(I[v]_t)_{t \in [0,1]}$ on $(\Omega, \F, (\F_t)_{t \in [0,1]})$, well defined $R$-\emph{a.s.}, such that for $R_0$-almost all $\mu$, and for all $\xx= (x_1, \dots, x_p) \in \cup_{k \in \N}(\T^d)^k$ such that $ \delta_{x_1}+\dots + \delta_{x_p} = \mu$, we have $\Prob^\xx$-\emph{a.s.} for all $t \in [0,1]$,
		\begin{equation}
		\label{eq:def_stochastic_integral}
		I[v]_t(M^0) = \sum_{n \in \mathcal{U}} \int_0^t \1_{b_n < \tau \leq d_n} v(M^0,s, X_n(s)) \D X_n(s), \qquad t \in [0,1].
		\end{equation}
		Moreover, $I[v]$ satisfies the following properties.
		\begin{itemize}
			\item $I[v]$ is a square integrable continuous martingale w.r.t.\ $R$.
			\item Its quadratic variation is
			\begin{equation*}
			\big[ I[v] \big]_t = \nu \int_0^t \left\cg |v(s)|^2, M_s \right\cd \D s.
			\end{equation*}
			\item For all $v \in L^2(\XX, \tilde \G, \Pi_R)$, for all $0\leq t_0 < t_1 \leq 1$ and for all bounded real-valued random variable $Z_{t_0}$ that is $\F_{t_0}$-measurable, we have for all $t \in [0,1]$,
			\begin{equation}
			\label{eq:truncation_stochastic_integral}
			I[Z_{t_0} \1_{(t_0,t_1]} v]_t = Z_{t_0} \Big( I[v]_{t_1 \wedge t} - I[v]_{t_0 \wedge t} \Big).
			\end{equation}
		\end{itemize}
	\end{Thm}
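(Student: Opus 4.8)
The plan is to construct $I[v]$ first for a dense class of elementary predictable fields, establish an Itô-type isometry, and then extend by continuity; throughout one works on the auxiliary space $(\Omega^\xx, \F^\xx, \Prob^\xx)$ where the branch decomposition $(L_n, e_n, Y_n)_{n \in \mathcal U}$ is available, and at the end one checks that the resulting process is adapted to the filtration generated by $M^0$ and hence descends to the canonical space $\Omega$. Call a predictable vector field $v$ \emph{elementary} if it is a finite sum of terms $Z\,\1_{(t_0,t_1]}(s)\,\psi(x)$ with $0 \le t_0 < t_1 \le 1$, $Z$ a bounded $\F_{t_0}$-measurable random variable and $\psi \in C(\T^d)^d$. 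Using that the predictable $\sigma$-algebra is countably generated (Proposition~\ref{prop:predictable_sigma_field}), a monotone-class argument shows elementary fields are dense in $L^2(\XX, \tilde\G, \Pi_R)$. For such $v$, because assumption~\eqref{eq:q_finite_mean} guarantees that $\Prob^\xx$-a.s.\ only finitely many labels $n$ are ever alive on $[0,1]$, the right-hand side of~\eqref{eq:def_stochastic_integral} is $\Prob^\xx$-a.s.\ a finite sum of ordinary Itô integrals against the Brownian branches $X_n$; this defines a continuous adapted process, unambiguously and consistently under refinement of the partition.

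The core computation (the Itô isometry) is that, on $\Omega^\xx$ equipped with the natural filtration making every stopped branch $(X_n(\cdot \wedge d_n))$ a martingale, the martingale increments carried by distinct particles alive at a given time are orthogonal — a consequence of the independence of the $Y_n$ together with the branching and Markov properties (Propositions~\ref{prop:strong_markov} and~\ref{prop:branching_property}). This yields, for elementary $v$ and all $t \in [0,1]$,
\begin{equation*}
\Em_\mu\big[ I[v]_t^2 \big] = \nu\, \Em_\mu\Big[ \int_0^t \big\cg |v(s)|^2, M_s\big\cd \D s \Big],
\end{equation*}
and, more precisely, that $I[v]$ is a square integrable continuous martingale with quadratic variation $[I[v]]_t = \nu \int_0^t \cg |v(s)|^2, M_s \cd \D s$ (the bracket obtained by polarization from the same orthogonality). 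Integrating against $R_0$ gives $\Em_R[I[v]_1^2] = \nu \|v\|_{L^2(\Pi_R)}^2$.

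For the extension, given $v \in L^2(\XX, \tilde\G, \Pi_R)$ pick elementary $v_k \to v$ in $L^2(\Pi_R)$. By the isometry and Doob's $L^2$ inequality, $(I[v_k])_k$ is Cauchy in the Banach space of continuous $L^2(R)$-martingales with norm $\|X\| = \Em_R[\sup_t X_t^2]^{1/2}$; let $I[v]$ be its limit. The martingale property, square integrability, the quadratic variation formula, and the truncation identity~\eqref{eq:truncation_stochastic_integral} all pass to the limit, the latter two because they are $L^2$-continuous functionals of $v$. Extracting an a.s.\ uniformly convergent subsequence identifies $I[v]$, up to an $R$-null set, with the branch-wise sum in~\eqref{eq:def_stochastic_integral}, where for general $v$ the sum is still $\Prob^\xx$-a.s.\ finite but each summand is now an Itô integral of a merely predictable integrand. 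Since each $I[v_k]$ is $\sigma(M^0_s : s \le \cdot)$-adapted, so is the limit, so $I[v]$ descends to the canonical space $\Omega$.

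The main obstacle is the isometry step: setting up, on $\Omega^\xx$, a filtration for which every branch $X_n$ is a martingale on its lifetime and the cross-brackets of distinct branches vanish, while carefully accounting for the fact that the lifetimes $b_n, d_n$ are themselves random (determined by the $(e_n)$) and that at a branching event several new branches start from the parent's death location. Getting the orthogonality and the isometry right in this branching setting — together with checking that the resulting process is measurable with respect to $M^0$ alone and not the labels — is precisely the content deferred to Appendix~\ref{app:stochastic_calculus}.
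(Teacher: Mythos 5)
Your overall architecture matches the paper's: work on the auxiliary labeled space $(\Omega^\xx,\F^\xx,\Prob^\xx)$, define the branch-wise sum of Itô integrals, establish the isometry via orthogonality of distinct branches (the paper's Proposition~\ref{prop:generalized_stochastic_integral_tree} indeed computes $\mathrm d[X_n,X_{n'}]_t$ and shows it vanishes for $n\neq n'$ on their common lifetime), and then descend to the canonical space. The one stylistic divergence is that the paper does not go through a density argument from elementary fields to define the process: it defines $\mathcal I[v^0]$ directly for all $v^0$ with finite $L^2$ energy and invokes the Hilbertian structure of $L^2$-bounded continuous martingales to get well-definedness and the quadratic variation in one stroke. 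Your route via elementary integrands and $L^2$-completion is equally valid and slightly more pedestrian; the isometry is the same in both cases.

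There is, however, a genuine gap, and it is in the opposite place from where you flag the ``main obstacle.'' You assert, as if it were automatic, that ``each $I[v_k]$ is $\sigma(M^0_s : s\le\cdot)$-adapted'' for elementary $v_k$, and then let adaptedness pass to the limit. But this is precisely the hard part of the whole construction, not a byproduct of taking $v_k$ elementary. Even for $v_k$ of the form $Z\,\1_{(t_0,t_1]}\psi$, the formula $\sum_n \int_0^t \1_{b_n<s\le d_n}\,v_k(s,X_n(s))\,\D X_n(s)$ is written in terms of the labeled branches $X_n$ and the labeled lifetimes $b_n,d_n$; when two particles cross in space, the unlabeled measure-valued path $M^0$ does not determine which continuation belongs to which particle, and in dimension one or two two Brownian particles that cross do so infinitely often in any neighborhood of the crossing time. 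Showing that the branch-wise sum is nonetheless a measurable function of $M^0$ alone is the content of Proposition~\ref{prop:I_adapted_restricted}, whose proof reduces to elementary fields exactly as you do, but then runs a two-parameter ($N$ for temporal discretization, $P$ for a spatial resolution threshold) approximation argument: one introduces an event on which the modulus of continuity of all alive branches is below $1/(2P)$, discards dyadic subintervals in which either a branching occurs, two particles are closer than $1/P$, or a particle sits in a $1/P$-collar of $\partial U$, replaces true increments of $X_n$ by nearest-particle increments (which are manifestly $\bar\F_t$-measurable), and then controls the error using the martingale isometry and the finiteness of the number of branchings. Your proposal should treat this adaptedness as a lemma to be proved for elementary fields \emph{before} the $L^2$ extension, and you should not expect it to be short. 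By contrast, the isometry step you flag as the main obstacle is, once the large filtration $(\F^0_t)$ is set up via the $\iota(b_n,t),\iota(d_n,t),X_n$, a fairly routine verification that each stopped branch is a martingale by independence of the $Y_n$, with the cross-brackets $[X_n,X_{n'}]$ vanishing on the intersection of the lifetimes.
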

	\begin{Rem}
		\begin{itemize}
			\item As we will see in Appendix~\ref{app:stochastic_calculus}, the main subtlety in this theorem is the fact that the r.h.s. in formula~\eqref{eq:def_stochastic_integral} is adapted to the filtration generated by $M^0$, so that it is possible to define $I[v]$ on the canonical space $\Omega$.
			\item Here, we choose $v$ to be predictable to be coherent with the rest of the chapter, but as we are dealing with continuous processes, we could have chosen it to be only progressive in the sense of~\cite[Definition~3.3]{legall2016brownian}. However, we could imagine some generalization allowing particles to jump. Doing so, predictability would be crucial.
		\end{itemize}
	\end{Rem}

	\subsection{A class of simple pure-jump processes}
	\label{subsec:pure_jump}
	As already said, the second kind of processes that we want to introduce are simple pure-jump processes whose jump instants correspond to branching events. Given a predictable scalar fields $a$, we introduce the piecewise constant process which undergoes a jump of size $a(t,x)$ whenever there is a branching event with $k$ descendants at time $t$ and position $x$.
	
	\begin{Def}
		\label{def:jump_processes}
		Let $k \in \N \backslash \{1\}$ and $a$ be a predictable scalar field. For all $t \in [0,1]$, we call
		\begin{equation*}
		J^k[a]_t :=  \sum_{s \leq t} \1_{\Delta M_s(\T^d) = k-1} \left\cg a(s), \frac{\Delta M_s}{k-1} \right\cd,
		\end{equation*}
		where $\Delta M_s = M_s - M_{s-}$.
	\end{Def}
	\begin{Rem}
		Because of the definition of the BBM given in Subsection~\ref{subsec:def_BBM}, we see that \emph{a.s.}, for all $k \neq 1$ and all $s\in[0,1]$, $\Delta M_s(\T^d) = k-1$ if and only if there is a branching event with $k$ descendants at time $s$. In that case there is an $x \in \T^d$ such that $\Delta M_s = (k-1) \delta_x$, and the corresponding term in the definition above is simply $a(t,x)$.
		
		Notice that we excluded the case $k=1$ for the same reason as when we chose $q_1$ to be $0$ in the Definition~\ref{def:branching_mechanism} of a branching mechanism. 
	\end{Rem}
	
	The following theorem gives the main properties of the processes $J^k$, $k \in \N\backslash \{1\}$, and is proved in Appendix~\ref{app:stochastic_calculus}.

	\begin{Thm}
		\label{thm:properties_jump_processes}
		
		Let $(a_k)_{k \in \N\backslash{1}}$ be a family of predictable scalar fields We call $J := \sum_{k \neq 1} J^k[a_k]$. The following properties hold:
		\begin{itemize}
			\item The process $J$
			is a simple pure-jump process whose jump measure is:
			\begin{equation}
			\label{eq:jump_measure_J}
			\eta_t := \sum_{k\neq 1} a_k(t) \pf \Big(q_k M_{t-}\Big), \qquad t \in [0,1],
			\end{equation}
			where $\pf$ denotes the push-forward operation (see Remark~\ref{rem:push_forward} below).
			\item Given a function $f \in C_b(\R)$, two times $0 \leq t_0 < t_1 \leq 1$ and a bounded real-valued random variable $Z_{t_0}$ that is $\F_{t_0}$-measurable, we have for all $t \in [0,1]$,
			\begin{equation}
			\label{eq:truncation_jump_process}
			\sum_{k \neq 1} J^k[b^k]_t = Z_{t_0} \Big( f(J_{t \wedge t_1}) - f(J_{t \wedge t_0}) \Big) ,
			\end{equation}
			where for all $k \in \N\backslash\{1\}$ and $t \in [0,1]$, $b^k(t):= Z_{t_0}\1_{(t_0,t_1]}(t) (f(J_{t-} + a^k(t)) - f(J_{t-}))$. (In particular, $b^k$ is easily seen to be predictable.) 
		\end{itemize} 
	\end{Thm}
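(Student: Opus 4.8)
The plan is to verify the two bullets separately. The second one, the truncation identity~\eqref{eq:truncation_jump_process}, is a pathwise statement and should be dispatched quickly. The first one amounts to checking the three requirements in Definition~\ref{def:simple_pure-jump_processes} together with the identification of the jump measure; the ``soft'' requirements (càdlàg, piecewise constant, finitely many jumps, predictability and integrability of $\eta$) are routine once one recalls the structure of the BBM, so the real work — and the step I expect to be the main obstacle — is the compensation/martingale property~\eqref{eq:martingale_property_def_simple_jump_processes}.

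First I would record the soft facts. Under the branching-mechanism assumption~\eqref{eq:q_finite_mean}, $R$-a.s.\ there are only finitely many branching events on $[0,1]$, so $M$ has finitely many jumps, $\sup_{t\in[0,1]}M_t(\T^d)<+\infty$, and at each jump time $s$ there is precisely one $k\neq1$ with $\Delta M_s(\T^d)=k-1$, namely the number of offspring of the branching event at $s$ (recall $q_1=0$, so $k\neq1$ always). Hence, pathwise, $J=\sum_{k\neq1}J^k[a_k]$ is a finite sum at each $t$ and is a.s.\ càdlàg, piecewise constant with finitely many jumps. For the candidate jump measure $\eta_t=\sum_{k\neq1}(a_k(t))_\#(q_kM_{t-})$: it is a predictable $\M_+(\R)$-valued process because $t\mapsto M_{t-}$ is left-continuous hence predictable, each $a_k$ is a predictable field, the push-forward is jointly measurable, and the topology of $\M_+(\R)$ is generated by the evaluations $\nu\mapsto\langle g,\nu\rangle$, $g\in C_b(\R)$, for which $\langle g,\eta_t\rangle=\sum_{k\neq1}q_k\langle g(a_k(t,\cdot)),M_{t-}\rangle$ is a predictable real process (one may invoke Proposition~\ref{prop:predictable_sigma_field} to organize the measurability). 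Integrability~\eqref{eq:integrability_jump_measure} is then immediate: $\int_0^1\eta_t(\R)\,\D t=\lambda_{\boldsymbol q}\int_0^1M_{t-}(\T^d)\,\D t\le\lambda_{\boldsymbol q}\sup_tM_t(\T^d)<+\infty$ a.s.

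The core step is the martingale property. The key structural input is that the random measure of branching events $\mu^{\mathrm{br}}:=\sum_{s\le1}\1_{\Delta M_s\neq0}\,\delta_{(s,x_s,k_s)}$ on $[0,1]\times\T^d\times(\N\setminus\{1\})$, recording the time, location and number of offspring of each branching event, admits the predictable compensator $\D t\otimes\bigl(\sum_{k\neq1}q_k\,M_{t-}\otimes\delta_k\bigr)$. This is exactly the infinitesimal description of the BBM from its construction in Subsection~\ref{subsec:def_BBM} (each particle alive at $x$ carries an independent exponential clock of rate $\lambda_{\boldsymbol q}$ and, when it rings, is replaced by $k$ particles with probability $q_k/\lambda_{\boldsymbol q}$), and I would establish it by the same first-order conditioning argument — based on the strong Markov property (Proposition~\ref{prop:strong_markov}) and memorylessness of the exponential — that underlies Proposition~\ref{prop:generator} and Proposition~\ref{prop:evolution_eq_u_from_BBM}, with a localization along $T_n:=\inf\{t:M_t(\T^d)>n\}$ to handle integrability. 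Granting this, for $f\in C_b(\R)$ one writes pathwise
\begin{equation*}
f(J_t)-f(J_0)=\sum_{k\neq1}\ \int_{(0,t]\times\T^d}\bigl(f(J_{s-}+a_k(s,x))-f(J_{s-})\bigr)\,\mu^{\mathrm{br}}(\D s,\D x,\{k\}),
\end{equation*}
and applies the compensation formula to the bounded predictable integrand $(s,x,k)\mapsto f(J_{s-}+a_k(s,x))-f(J_{s-})$; since $J_{s-}=J_s$ for a.e.\ $s$, the compensated expression is precisely $f(J_t)-f(J_0)-\int_0^t\!\int_\R\{f(J_s+y)-f(J_s)\}\,\D\eta_s(y)\,\D s$, which is thus a local martingale. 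Uniqueness of the jump measure being the remark recorded just after Definition~\ref{def:simple_pure-jump_processes} (a continuous finite-variation local martingale is constant), $\eta$ as in~\eqref{eq:jump_measure_J} is \emph{the} jump measure of $J$.

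Finally, for~\eqref{eq:truncation_jump_process}: predictability of each $b^k$ is clear, since $J_{\cdot-}$ is left-continuous adapted hence predictable, $a_k$ is a predictable field, and $t\mapsto Z_{t_0}\1_{(t_0,t_1]}(t)$ is predictable ($Z_{t_0}$ being $\F_{t_0}$-measurable). The identity itself is pathwise and $R$-a.s.: at a branching event at time $s$ with $k$ offspring located at $x_s$, one has $\Delta M_s=(k-1)\delta_{x_s}$, so the corresponding term of $J^k[b^k]$ equals $b^k(s,x_s)=Z_{t_0}\1_{(t_0,t_1]}(s)\bigl(f(J_{s-}+a_k(s,x_s))-f(J_{s-})\bigr)$; but $a_k(s,x_s)$ is exactly the jump of $J$ at time $s$ (only $J^k[a_k]$ jumps there, by amount $a_k(s,x_s)$), so $J_{s-}+a_k(s,x_s)=J_s$ and the term equals $Z_{t_0}\1_{(t_0,t_1]}(s)(f(J_s)-f(J_{s-}))$. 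Summing over all jump times $\le t$ and over $k$ (exactly one $k$ contributes at each jump time) and telescoping the piecewise-constant path $s\mapsto f(J_s)$ yields $Z_{t_0}(f(J_{t\wedge t_1})-f(J_{t\wedge t_0}))$, as claimed.
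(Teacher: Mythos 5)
Your proposal is correct and follows essentially the same route as the paper. The paper's Appendix~\ref{app:stochastic_calculus} (Proposition~\ref{prop:jump_processes_tree}, carried out on the explicit tree-labeled space $\Omega^\xx$ and then projected to the canonical space) records the same soft facts, proves the truncation identity~\eqref{eq:truncation_jump_tree} by exactly your pathwise telescoping, and establishes the jump measure by identifying a compensator — in their case reducing (via bounded fields, linearity, and the truncation identity) to checking that $\mathcal J^k[a^0_k]_t - \int_0^t q_k\langle a^0_k(s),M^0_{s-}\rangle\,\D s$ is a martingale for indicator-type $a^0_k$, then verifying this by differentiating in $t_1$ and a conditional first-order analysis using the Markov/branching structure and memorylessness of the exponential clocks. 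The one organizational difference worth flagging is that you factor the argument through a single clean structural fact — the compensator of the branching-event random measure $\mu^{\mathrm{br}}$ — and then apply the general compensation formula to $f(J)$ directly, whereas the paper first establishes the martingale property only in the linear case ($f=\mathrm{id}$, i.e.\ \eqref{eq:one_specific_martingale}) and leverages the truncation identity \eqref{eq:truncation_jump_tree} to bootstrap to general $f\in C_b(\R)$, thereby avoiding the full machinery of random-measure compensation; your packaging is slightly more abstract, theirs slightly more self-contained. Note also that the paper goes through the large filtration on $\Omega^\xx$ to have explicit access to the branching structure, then transfers to the restricted filtration; for $J^k$ this transfer is the easy two-line formula~\eqref{eq:second_formula_J}, so your implicit assumption that one can argue directly on the canonical space is harmless here (unlike for the stochastic integral $I$, where the transfer is the hard part of the appendix).
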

	\begin{Rem}
		\label{rem:push_forward}
		By the standard definition of the push-forward operation (see~\cite[Section~1.1]{santambrogio2015optimal}), the measure $\eta_t$ from formula~\eqref{eq:jump_measure_J} satisfies for all bounded measurable test function $\varphi$ on $\R$
		\begin{equation}
		\label{eq:formula_pf}
		\int_{\R}\varphi \D \eta_t = \sum_{k \neq 1} q_k \big\cg \varphi\big( a_k(t) \big) , M_{t-} \big\cd.
		\end{equation}
	\end{Rem}
	
	\subsection{An extended Itô formula}
	\label{subsec:extended_Ito}
	The two types of processes defined in the previous subsections let us derive a generalized Itô formula in the branching case. What we show is that for all smooth function $\varphi$ on $[0,1]\times\T^d$, the process $(\cg\varphi(t), M_t\cd)_{t \in [0,1]}$ is a semi-martingale with simple jumps, and we can express its continuous martingale component and its jump component in terms of processes of type $I$ and $J^k$, $k \in \N\backslash\{1\}$. We provide the proof of this theorem in Appendix~\ref{app:stochastic_calculus}. As a first application of this formula, we are finally ready to prove Proposition~\ref{prop:loc_exp_mart_BBM}.
	
	\begin{Thm}
		\label{thm:branching_Ito}
		Let $\varphi:[0,1]\times\T^d \to \R$ be a smooth function. We have $R$-\emph{a.s.}\ for all $t \in [0,1]$:
		\begin{equation}
		\label{eq:branching_Ito}
		\cg \varphi(t), M_t \cd = \cg \varphi(0), M_0\cd + I[\nabla \varphi]_t + \sum_{k\neq 1} J^k[(k-1)\varphi]_t +  \int_0^t \Big\cg \partial_t \varphi(s) + \frac{\nu}{2} \Delta \varphi(s), M_s \Big\cd \D s.
		\end{equation}
	\end{Thm}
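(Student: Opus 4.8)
The plan is to establish \eqref{eq:branching_Ito} by first proving it for a dense class of processes where the analysis is clean, then extending by approximation. The natural starting point is to apply the martingale characterization of the BBM, Proposition~\ref{prop:martingale_problem}, but with $\psi$ no longer required to be nonpositive: for a smooth $\varphi:[0,1]\times\T^d\to\R$ we would like to say that the process whose value at time $t$ is $\cg\varphi(t),M_t\cd$ is a semi-martingale with simple jumps, in the sense of Definition~\ref{def:simple_pure-jump_processes}, and then identify its three components. First I would treat the case where $\varphi$ does not depend on $t$, obtaining the decomposition on $[0,1]$ directly from the Markov and branching properties (Proposition~\ref{prop:strong_markov} and Proposition~\ref{prop:branching_property}) together with the generator computation \eqref{eq:derivative_BBM_phi}. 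Concretely, between two consecutive branching times the curve $t\mapsto M_t$ moves only by the Brownian motion of its finitely many particles, so on such an interval $\cg\varphi,M_t\cd$ is, branch by branch, a sum of stochastic integrals $\sum_n\int \nabla\varphi(X_n(s))\,\D X_n(s)$ plus the Itô second-order term $\frac{\nu}{2}\int\cg\Delta\varphi,M_s\cd\,\D s$; this is exactly $I[\nabla\varphi]_t$ from Theorem~\ref{thm:definition_stochastic_integral} plus the integral term in \eqref{eq:branching_Ito}. At a branching event at time $s$ with $k$ offsprings at position $x$, the measure jumps by $(k-1)\delta_x$, so $\cg\varphi,M_s\cd$ jumps by $(k-1)\varphi(s,x)$, which is precisely the jump contributed by $J^k[(k-1)\varphi]$ from Definition~\ref{def:jump_processes}.

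\textbf{Assembling the pieces.} With the heuristic above as a guide, the rigorous argument would go through the pathwise representation of the BBM on the space $\Omega^{\xx}$ from Subsection~\ref{subsec:def_BBM}: on that space the process is literally built from the $(L_n,e_n,Y_n)$, the branching times $d_n$ are $\Prob^{\xx}$-a.s.\ finite and distinct, and on each maximal interval free of branching times the curve is a superposition of independent Brownian motions. On each such interval the classical Itô formula applied branch by branch gives the continuous-martingale part $\sum_n\int \1_{b_n<s\le d_n}\nabla\varphi(s,X_n(s))\,\D X_n(s)$, the drift $\int\frac{\nu}{2}\cg\Delta\varphi(s),M_s\cd\,\D s$ (and, if $\varphi$ depends on time, the $\int\cg\partial_t\varphi(s),M_s\cd\,\D s$ term), and summing over the finitely many intervals accumulates exactly the telescoping jump contributions $\sum_{s\le t}(k-1)\varphi(s,x)\1_{\Delta M_s=(k-1)\delta_x}$. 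One then invokes Proposition~\ref{prop:existence_BBM} and the measurability discussion alluded to in the statement of Theorem~\ref{thm:definition_stochastic_integral} to push this identity down from $\Omega^{\xx}$ to the canonical space $\Omega$, where $I[\nabla\varphi]$ and $J^k[(k-1)\varphi]$ have been defined. Since everything is a finite sum of finitely many terms (the number of branching events before time $1$ is a.s.\ finite by \eqref{eq:q_finite_mean}), there are no convergence issues in the summation itself, and each individual stochastic integral is well defined because $\nabla\varphi$ is bounded and hence in $L^2(\XX,\tilde\G,\Pi_R)$ and each $(k-1)\varphi$ is a bounded predictable field.

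\textbf{Main obstacle.} The delicate point, as the authors flag around Theorem~\ref{thm:definition_stochastic_integral}, is the passage from the labelled picture on $\Omega^{\xx}$ to the canonical space: one must check that the pathwise sum on the right of \eqref{eq:def_stochastic_integral} is adapted to the filtration generated by $M^0$ alone, so that $I[\nabla\varphi]$ genuinely descends to a process on $\Omega$, and likewise that $J^k[(k-1)\varphi]$ is well defined as a functional of the canonical trajectory. This is precisely the content handled in Appendix~\ref{app:stochastic_calculus}, and I would quote it. A secondary technical point is verifying that the assembled process is a semi-martingale with simple jumps in the precise sense of the definitions of Section~\ref{sec:discontinuous_stochastic_calculus}, so that the decomposition is unique and \eqref{eq:branching_Ito} can be read as an identity of its canonical components; but Theorem~\ref{thm:definition_stochastic_integral} already gives that $I[\nabla\varphi]$ is a continuous square-integrable martingale, Theorem~\ref{thm:properties_jump_processes} gives that $\sum_{k\neq1}J^k[(k-1)\varphi]$ is a simple pure-jump process with an explicit jump measure, and the remaining term is a continuous finite-variation process, so uniqueness of the decomposition closes the argument. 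Finally, once \eqref{eq:branching_Ito} is in hand, Proposition~\ref{prop:loc_exp_mart_BBM} follows as an immediate application of the exponential-martingale part, Theorem~\ref{thm:Ito_with_jumps}, applied to $X_t=\frac{1}{\nu}\cg\psi(t),M_t\cd$, using that the jump measure computed from \eqref{eq:jump_measure_J} has $\int_0^1\int_\R(\exp(y)-1)\,\D\eta_s(y)\,\D s$ finite precisely when $\Psi^*_{\nu,\boldsymbol q}(\psi)$ is bounded, which is the hypothesis there.
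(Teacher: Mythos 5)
Your proposal follows essentially the same route as the paper's proof: work pathwise on the labelled space $\Omega^\xx$, apply the classical Itô formula particle-by-particle on each interval between consecutive branching times (yielding $\mathcal I[\nabla\varphi]$ and the continuous drift term), telescope across the finitely many branching events to pick up the jump contributions $\sum_{k\neq 1}\mathcal J^k[(k-1)\varphi]$, and then descend to the canonical space via the adaptedness results for $I$ and $J^k$ with respect to the filtration generated by $M^0$. Two small remarks: the paper derives the decomposition directly rather than appealing to uniqueness of the semi-martingale decomposition, and the opening references to Proposition~\ref{prop:martingale_problem} and to a density/approximation argument are red herrings that you discard yourself — the martingale characterization is in fact what one \emph{proves} (Proposition~\ref{prop:loc_exp_mart_BBM}) as a consequence of the Itô formula, and since $\varphi$ is smooth and the number of branching events is a.s.\ finite, no approximation is needed.
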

	
	\begin{Rem}
		For those who are already familiar with the standard Itô formula, this generalized Itô formula is easily understandable.

		The continuous martingale part of $(\cg\varphi(t), M_t\cd)_{t \in [0,1]}$ is precisely $I[\nabla \varphi]$, which evolves at time $t$ as the sum of all the stochastic integrals of $\nabla \varphi$ along the trajectories of all the particles alive at time $t$.
		
		If a branching event with $k$ descendants occurs at time $t$ and position $x$, then the process undergoes a jump of size $(k-1)\varphi(t,x)$ at time $t$, so that the jump part of  $(\cg\varphi(t), M_t\cd)_{t \in [0,1]}$ is the sum over $k$ of $J^k[(k-1)\varphi]$.
		
		Finally, as in the non-branching case, every particle alive at time $t$, say at position $x$, contributes to the derivative of the absolutely continuous part of $(\cg\varphi(t), M_t\cd)_{t \in [0,1]}$ as $\partial_t \varphi(t,x) + \frac{\nu}{2}\Delta \varphi(t,x)$.
	\end{Rem}
	
	\begin{proof}[Application of this formula: proof of Proposition~\ref{prop:loc_exp_mart_BBM}]
		Let $\psi$ be as in the statement of Proposition~\ref{prop:loc_exp_mart_BBM}. In view of the definition~\eqref{eq:operator_L} of $\L_{\nu, \boldsymbol q}$ and formula~\eqref{eq:branching_Ito}, we have $R$-\emph{a.s.} for all $t \in [0,1]$:
		\begin{align*}
		\frac{\cg \psi(t), M_t\cd}{\nu} &- \frac{\cg \psi(0), M_0\cd}{\nu}  - \frac{1}{\nu}\int_0^t \big\cg \partial_t \psi(s) + \mathcal L_{\nu, \boldsymbol q}[\psi(s)], M_s \big\cd \D s \\
		&\qquad= I\left[  \frac{\nabla \psi}{\nu}\right]_t + \sum_{k\neq 1}J^k\left[ \frac{(k-1)\psi}{\nu} \right]_t - \frac{1}{\nu} \int_0^t \left\cg \frac{1}{2} |\nabla \psi(s)|^2 + \Psi^*_{\nu, \boldsymbol q}\big( \psi(s) \big) , M_s \right\cd \D s.
		\end{align*}
		Calling
		\begin{equation*}
		X_t :=  I\left[  \frac{\nabla \psi}{\nu}\right]_t + \sum_{k\neq 1}J^k\left[ \frac{(k-1)\psi}{\nu} \right]_t, \qquad t \in [0,1],
		\end{equation*}
		we see that $X$ is a semi-martingale with simple jumps with $X_0 = 0$, with no continuous finite-variation component. According to Theorem~\ref{thm:definition_stochastic_integral}, its bracket is
		\begin{equation*}
		[X]_t = \frac{1}{\nu}\int_0^t \left\cg |\nabla \psi(s)|^2 , M_s \right\cd \D s, \qquad t \in [0,1],
		\end{equation*}
		and according to Theorem~\ref{thm:properties_jump_processes}, its jump measure is
		\begin{equation*}
		\eta_t := \sum_{k \neq 1}\left((k-1) \frac{ \psi(t)}{\nu} \right) \pf \Big( q_k M_{t-}\Big),
		\end{equation*}
		so that with the help of~\eqref{eq:def_Psi*} and our boundedness assumption on $\Psi^*_{\nu, \boldsymbol q}(\psi)$, for all $t\in[0,1]$,
		\begin{align*}
		\int_0^t \hspace{-5pt} \int_{\R}\Big\{ \exp(y) - 1 \Big\} \D \eta_s(y) \D s &= \int_0^t \left\cg\sum_{k\neq 1} q_k \left\{\exp\left( (k-1)\frac{\psi(t)}{\nu} \right) - 1 \right\} , M_s \right\cd \D s \\
		&= \frac{1}{\nu} \int_0^t \Big\cg \Psi^*_{\nu, \boldsymbol q}\big( \psi(s) \big), M_s \Big\cd \D s < + \infty,\qquad R\mbox{-\emph{a.s.}}
		\end{align*}
		(where we use that for any curve in the canonical space $(s\mapsto \mu_s) \in \Omega = \cadlag([0,1]; \M_+(\T^d))$, we have $\int_0^1 \mu_s(\T^d)\D s < + \infty$). Therefore, the result is a direct application of Theorem~\ref{thm:Ito_with_jumps}.
	\end{proof}

	\section{Branching Brownian motions with predictable and space-time dependent parameters}
	\label{sec:construction_modified_BBM}
	
	In this section, we want to use the two processes introduced at Section~\ref{sec:new_processes} to give an explicit formula for the Radon-Nikodym derivative w.r.t.\ $R$ of the law of the BBM where particles undergo a drift, and where the branching mechanism is changed.
	
	Therefore, in Subsection~\ref{subsec:def_modified_BBM} we define what we mean by BBMs with drift and branching mechanism that are space-time dependent and predictable. By analogy with the characterization of the standard BBM given in Proposition~\ref{prop:martingale_problem}, we will give a definition in terms of a martingale problem.
	
	Then, in Subsection~\ref{subsec:construction_modified_BBM}, we provide our generalization of formula~\eqref{eq:RN_derivative_brownian_case}, and we show how it provides easily a wide class of generalized BBMs.
	
	\subsection{Definition of modified branching Brownian motions}
	
	\label{subsec:def_modified_BBM}
	Let us provide a definition for the modified BBMs that we will consider. In the formula, one needs to recall the definition~\eqref{eq:operator_L} of the operator $\L_{\nu, \boldsymbol q}$, \eqref{eq:def_Psi*} of the function $\Psi^*_{\nu, \boldsymbol q}$ and~\eqref{eq:def_lambda_p} of the branching rate $\lambda_{\boldsymbol q}$, associated to a diffusivity $\nu>0$ and a branching mechanism $\boldsymbol q$. Notice that when the deterministic and homogeneous $\boldsymbol q$ is replaced by a predictable field $\tilde{\boldsymbol q}$, these homogeneous and deterministic objects become predictable fields.
	\begin{Def}
		\label{def:modified_BBM}
		A probability measure $P \in \P(\Omega)$ is called a BBM with predictable and space-time dependent parameters if there exist a predictable vector field $\tilde v$ and a predictable field valued in the set of branching mechanisms $\tilde{\boldsymbol q}$ such that:
		\begin{itemize}[leftmargin=10pt]
			\item $P$-\emph{a.s},
			\begin{equation}
			\label{eq:integrability_condition_def}
			\int_0^1 \left\cg |\tilde v(t)| , M_t \right\cd \D t < + \infty \qquad \mbox{and} \qquad \int_0^1 \left\cg  \lambda_{\tilde{\boldsymbol q}(t)} , M_t \right\cd \D t < +\infty.
			\end{equation}
			\item For all smooth nonpositive $\psi:[0,1]\times \T^d \to \R_-$, the process whose value at time $t \in [0,1]$ is
			\begin{equation}
			\label{eq:def_modified_BBM}
			\exp\left( \frac{\cg \psi(t), M_t\cd}{\nu} -\frac{\cg \psi(0), M_0\cd }{\nu} - \frac{1}{\nu}\int_0^t \big\cg \partial_t \psi(s) + \mathcal L_{\nu, \tilde{\boldsymbol q}(s)}[\psi(s)] + \tilde v(s) \cdot \nabla \psi(s), M_s \big\cd \D s \right)
			\end{equation}
			is a local martingale under $P$.
		\end{itemize}
		If these two conditions hold, we simply say that $P$ is a BBM of drift $\tilde v$ and branching mechanism $\tilde{\boldsymbol q}$.
	\end{Def}
	\begin{Rem}
		\begin{itemize}
			\item Adding a drift results in adding the term $\tilde v \cdot \nabla \psi$ in the formulation of the martingale problem, and prescribing a space-time dependent and predictable branching mechanism results in replacing the deterministic and homogeneous operator $\mathcal L_{\nu, \boldsymbol q}$ by the predictable and space-time dependent one $\mathcal L_{\nu, \tilde{\boldsymbol q}}$ (where only the non-linear zero order term $\psi \mapsto \Psi^*_{\nu, \boldsymbol q}(\psi)$ changes).
			\item Notice the several abusive notations in formula~\eqref{eq:def_modified_BBM}: by $\mathcal L_{\nu, \tilde{\boldsymbol q}(s)}[\psi(s)]$, we mean, correspondingly with formula~\eqref{eq:operator_L}, the function (more precisely the random function, as $\tilde{\boldsymbol q}(s,x)$ is random):
			\begin{equation}
			\label{eq:def_random_operator}
			\mathcal L_{\nu, \tilde{\boldsymbol q}(s)}[\psi(s)]:x \longmapsto  \frac{1}{2} |\nabla \psi(s,x)|^2 + \frac{\nu}{2} \Delta \psi(s,x) + \Psi^*_{\nu,\tilde{\boldsymbol q}(s,x)}(\psi(s,x)).
			\end{equation}
			\item The first point in the definition is there to ensure that the process given in the second point is well defined (notice that for nonpositive $\psi$, for all $k \in \N$, $e^{(k-1)\psi} \leq e^{\| \psi\|_{\infty}}$).
			\item When $\tilde v$, $\tilde{\boldsymbol q}$ are deterministic and smooth, this definition is coherent with a hand by hand construction performed by adapting the one done in Subsection~\ref{subsec:def_BBM} in the case of $\BBM(\nu, \boldsymbol{q}, R_0)$.
			\item The predictable fields $\tilde v$ and $\tilde{\boldsymbol q}$ being given \emph{a priori}, we do not want to address the question of uniqueness of a corresponding BBM. So we will not talk about ``the'' BBM corresponding to these parameters, but rather about ``a'' BBM, just as we would talk about ``a'' solution of an SDE in case of non-uniqueness. We can still notice that by the same arguments as in Subsection~\ref{subsec:def_BBM}, this uniqueness holds at least when $\tilde v$ and $\tilde{\boldsymbol q}$ are deterministic and smooth.
		\end{itemize}
	\end{Rem}
	\subsection{Construction of modified branching Brownian motions}
	\label{subsec:construction_modified_BBM}
	In the next theorem, we build BBMs with drift $\tilde v$ and branching mechanism $\tilde{\boldsymbol q}$ that are predictable fields, only working under boundedness assumptions. We do it by providing the Radon-Nikodym derivatives of such processes w.r.t. the more classical BBM built in Subsection~\ref{subsec:def_BBM}. Notice that in this theorem, the exponential bound~\eqref{eq:exponential_moment_q} for $\boldsymbol q$ plays a role. Also recall the definition~\eqref{eq:def_h} of $h$.
	
	\begin{Thm}
		\label{thm:formula_RN_derivative}
		Let $R \sim \BBM(\nu,\boldsymbol q,R_0)$, and assume that $\boldsymbol q$ satisfies~\eqref{eq:exponential_moment_q}. Let us give ourselves:
		\begin{itemize}
			\item A probability measure $P_0 \in \P(\M_+(\T^d))$ that is absolutely continuous w.r.t.\ $R_0$.
			\item A bounded predictable vector field $\tilde v \in L^\infty(\XX, \tilde \G, \Pi_R)$.
			\item A predictable field of branching mechanisms $\tilde{\boldsymbol q}$ such that $h(\tilde{\boldsymbol{q}} | \boldsymbol{q}) \in L^\infty(\XX, \tilde \G, \Pi_R)$.
		\end{itemize}
		The measure $P$ given by the formula
		\begin{equation}
		\label{eq:formula_RN_derivative}
		P := \frac{\D P_0}{\D R_0}(M_0) \exp\left( \frac{1}{\nu} I[\tilde v]_1 + \sum_{k \neq 1} J^k\left[\log \frac{\tilde q_k}{q_k}\right]_1 - \int_0^1 \left\cg \frac{|\tilde v(t)|^2}{2\nu} +  \lambda_{\tilde{\boldsymbol q}(t)} - \lambda_{\boldsymbol q}, M_t \right\cd \D t  \right) \cdot R
		\end{equation}
		(with convention $\log 0/0=0$) is BBM starting from $P_0$ with drift $\tilde v$ and branching mechanism $\tilde{\boldsymbol q}$. 
		
		Moreover, the following entropy bound holds:
		\begin{equation}
		\label{eq:entropy_smaller_than_RUOT}
		\nu H(P|R) \leq \nu H(P_0 | R_0) + \E_P\left[ \int_0^1 \left\cg \frac{|\tilde v(t)|^2}{2} + \nu h(\tilde{\boldsymbol q}(t) | \boldsymbol q) , M_t \right\cd \D t  \right].
		\end{equation}
	\end{Thm}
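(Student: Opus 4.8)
The proof splits into three parts: (a) showing that the process $(D_t)_{t\in[0,1]}$ obtained by truncating the right-hand side of~\eqref{eq:formula_RN_derivative} at time $t$ is a genuine martingale under $R$, so that $P$ is a probability measure with $\frac{\D P}{\D R}\big|_{\F_t}=D_t$; (b) checking that $P$ solves the martingale problem of Definition~\ref{def:modified_BBM} for the parameters $(\tilde v,\tilde{\boldsymbol q})$; (c) establishing the entropy bound~\eqref{eq:entropy_smaller_than_RUOT}. All three rest on the generalized Itô formula (Theorem~\ref{thm:branching_Ito}), the structure of the processes $I[\cdot]$ and $J^k[\cdot]$ (Theorems~\ref{thm:definition_stochastic_integral} and~\ref{thm:properties_jump_processes}), and the exponential-martingale criterion of Theorem~\ref{thm:Ito_with_jumps}, combined with Girsanov-type bookkeeping for the change of measure.

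\textbf{Part (a): the density is a true martingale.} Set $X_t:=\tfrac1\nu I[\tilde v]_t+\sum_{k\neq1}J^k\!\left[\log\tfrac{\tilde q_k}{q_k}\right]_t$. This is a semi-martingale with simple jumps with $X_0=0$ and no continuous finite-variation part; by Theorem~\ref{thm:definition_stochastic_integral} its bracket is $[X]_t=\tfrac1\nu\int_0^t\cg|\tilde v(s)|^2,M_s\cd\D s$, and by Theorem~\ref{thm:properties_jump_processes} its jump measure is $\eta_t=\sum_{k\neq1}\big(\log\tfrac{\tilde q_k(t)}{q_k}\big)\pf(q_kM_{t-})$, whence $\int_0^t\!\!\int_\R(e^y-1)\,\D\eta_s(y)\D s=\int_0^t\cg\lambda_{\tilde{\boldsymbol q}(s)}-\lambda_{\boldsymbol q},M_s\cd\D s$. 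Theorem~\ref{thm:Ito_with_jumps}(2) then shows $D_t/D_0$ equals the exponential appearing in~\eqref{eq:exponential_martingale}, hence is a nonnegative local martingale and a supermartingale, so $\E_R[D_1]\leq\E_R[D_0]=\E_{R_0}\!\big[\tfrac{\D P_0}{\D R_0}\big]=1$. Upgrading to equality is the crux. I would localize with $T_n:=\inf\{t:M_t(\T^d)\geq n\}\wedge1$, which tends to $1$ $R$-a.s.\ because every element of $\Omega=\cadlag([0,1];\M_+(\T^d))$ is bounded on $[0,1]$; the stopped exponential is a true martingale (its bracket and exponential compensator are bounded on $[0,T_n]$), so $\E_R[D_{1\wedge T_n}]=1$, and it remains to show uniform integrability of $\{D_{1\wedge T_n}\}_n$. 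For this I would use the boundedness of $\tilde v$ and of $h(\tilde{\boldsymbol q}|\boldsymbol q)$ together with~\eqref{eq:exponential_moment_q}: applying~\eqref{eq:convex_inequality_l} with $z=\tilde q_k/q_k$, $y=k\theta_{\boldsymbol q}$ and summing gives $\theta_{\boldsymbol q}\sum_k k\tilde q_k\leq h(\tilde{\boldsymbol q}|\boldsymbol q)+\sum_kq_ke^{k\theta_{\boldsymbol q}}$, so $\lambda_{\tilde{\boldsymbol q}}$ and $\bar r_{\tilde{\boldsymbol q}}:=\sum_k(k-1)\tilde q_k$ are uniformly bounded; this bounds the growth of the expected mass of the perturbed process and hence of the relevant "potential energy", yielding the UI estimate. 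If cleaner, one first proves the statement for $P_0$ with bounded density and supported on configurations with boundedly many particles and for $\tilde{\boldsymbol q}$ with finite support (so all jump sizes are uniformly bounded), where everything is immediate, and then removes the truncations by monotone convergence.

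\textbf{Part (b): $P$ solves the modified martingale problem.} Since $\frac{\D P}{\D R}\big|_{\F_t}=D_t$ and $P\ll R$, Theorem~\ref{thm:branching_Ito} still describes $\cg\varphi(t),M_t\cd$ $P$-a.s. Brackets being pathwise, $[I[\nabla\psi]]$ is unchanged, while the Girsanov formulas for the continuous part and the jump part of $D/D_0$ tilt the drift and the jump intensity: the continuous martingale component $I[\nabla\psi]$ acquires the drift $\int_0^t\cg\tilde v(s)\cdot\nabla\psi(s),M_s\cd\D s$ (via $[I[\nabla\psi],\tfrac1\nu I[\tilde v]]$), and the $P$-jump measure of $\cg\psi(t),M_t\cd$ becomes $\sum_{k\neq1}\big((k-1)\psi(t)\big)\pf(\tilde q_kM_{t-})$, i.e.\ the branching rates pass from $q_k$ to $\tilde q_k$. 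Feeding this into Theorem~\ref{thm:Ito_with_jumps}(2) (whose integrability hypothesis holds because $\Psi^*_{\nu,\tilde{\boldsymbol q}}(\psi)$ is bounded for nonpositive smooth $\psi$, again using~\eqref{eq:exponential_moment_q}) shows that~\eqref{eq:def_modified_BBM} is the stochastic exponential of a $P$-local martingale, hence a $P$-local martingale; and~\eqref{eq:integrability_condition_def} follows from $\tilde v$ bounded, $\lambda_{\tilde{\boldsymbol q}}$ bounded, and $\int_0^1M_t(\T^d)\D t<\infty$ $R$-a.s.

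\textbf{Part (c): the entropy bound.} Writing $\nu\log\frac{\D P}{\D R}=\nu\log\tfrac{\D P_0}{\D R_0}(M_0)+I[\tilde v]_1+\nu\sum_{k\neq1}J^k\!\big[\log\tfrac{\tilde q_k}{q_k}\big]_1-\int_0^1\cg\tfrac{|\tilde v|^2}{2}+\nu(\lambda_{\tilde{\boldsymbol q}}-\lambda_{\boldsymbol q}),M_t\cd\D t$ and taking $\E_P$, the first term is $\nu H(P_0|R_0)$. Under $P$ the processes $I[\tilde v]_t-\int_0^t\cg|\tilde v|^2,M_s\cd\D s$ and $\sum_{k\neq1}J^k\!\big[\log\tfrac{\tilde q_k}{q_k}\big]_t-\int_0^t\cg\sum_k\tilde q_k\log\tfrac{\tilde q_k}{q_k},M_s\cd\D s$ are local martingales (Girsanov again); in the fully truncated case they are true martingales with zero mean, giving $\E_P[I[\tilde v]_1]=\E_P\!\int_0^1\cg|\tilde v|^2,M_s\cd\D s$ and likewise for the jump part, and after substituting and using the algebraic identity $\sum_k\tilde q_k\log\tfrac{\tilde q_k}{q_k}-\lambda_{\tilde{\boldsymbol q}}+\lambda_{\boldsymbol q}=h(\tilde{\boldsymbol q}|\boldsymbol q)$ one gets equality in~\eqref{eq:entropy_smaller_than_RUOT}. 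In general, one runs this with the stopping times $T_n$, obtains the identity for the stopped quantities, and passes to the limit: the energy density $\tfrac{|\tilde v|^2}{2}+\nu h(\tilde{\boldsymbol q}|\boldsymbol q)$ is nonnegative, so the right-hand side converges up by monotone convergence, while $\nu H(P|R)\leq\liminf_n(\dots)$ by Fatou, which yields the inequality as stated.

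\textbf{Main obstacle.} The genuinely delicate point is Part (a): proving that the nonnegative local martingale $D/D_0$ is a true martingale rather than a strict supermartingale, i.e.\ the uniform-integrability estimate along the localizing sequence. The approximation by truncated $P_0$ and finitely supported $\tilde{\boldsymbol q}$, controlled by the bound $\theta_{\boldsymbol q}\sum_k k\tilde q_k\leq h(\tilde{\boldsymbol q}|\boldsymbol q)+\sum_kq_ke^{k\theta_{\boldsymbol q}}$, is the mechanism I expect to carry it through.
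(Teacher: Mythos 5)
Your overall architecture is correct and parallels the paper's proof: you correctly identify the three tasks (true martingale, martingale problem, entropy bound), your treatment of Part (b) via computing the semi-martingale decomposition of $Z_t\mathcal{E}^\psi_t$ and invoking the exponential criterion is essentially what the paper does, and you correctly isolate Part (a) as the crux. The gap is in the mechanism you propose for Part (a), and this gap propagates into Part (c).

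Your plan for uniform integrability is to localize at $T_n$, argue that the bound $\theta_{\boldsymbol q}\sum_k k\tilde q_k\le h(\tilde{\boldsymbol q}|\boldsymbol q)+\sum_k q_k e^{k\theta_{\boldsymbol q}}$ controls the expected mass of the tilted process, and conclude UI of $\{D_{1\wedge T_n}\}$; alternatively to truncate $P_0$ and $\tilde{\boldsymbol q}$ to finite support and pass to the limit by monotone convergence. Neither version closes the argument as stated. Controlling the expected mass under $P$ does not by itself bound a quantity like $\E_R[D_{1\wedge T_n}\log D_{1\wedge T_n}]$, which is what one actually needs for UI; and removing the truncation on $\tilde{\boldsymbol q}$ is not monotone in the density $D_1$, so monotone convergence is not available. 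The missing ingredient is the following trick: write $Y_t=\log Z_t$ (notation of the paper, with $Z$ your $D/D_0$) and apply the extended Itô formula (Theorem~\ref{thm:Ito_with_jumps}, first point) to the smooth function $F(y)=1+(y-1)e^y$, so that $F(Y_t)=l(Z_t)$. An explicit computation of the drift term shows that
\begin{equation*}
l(Z_t)-\int_0^t Z_s\Big\langle \tfrac{|\tilde v(s)|^2}{2\nu}+h(\tilde{\boldsymbol q}(s)|\boldsymbol q),\,M_s\Big\rangle\,\D s,\qquad t\in[0,1],
\end{equation*}
is a local martingale under $R$. Localizing at the stopping times $S_n=\inf\{s:M_s(\T^d)>n\}$ (so that the integrand is bounded on $[0,S_n]$), taking expectations, and applying Fatou and monotone convergence yields $\E_R[l(Z_t)]\le\E_R[\int_0^t Z_s\langle\cdots,M_s\rangle\D s]$; then an application of~\eqref{eq:convex_inequality_l} with $z=Z_s$ and $y=\kappa M_s(\T^d)$ for a small $\kappa$ supplied by Proposition~\ref{prop:exponential_bounds}, followed by Gr\"onwall, gives $\E_R[l(Z_1)]<\infty$. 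This single estimate simultaneously (i) gives the UI that upgrades $Z$ to a true martingale, and (ii) \emph{is} the entropy bound, because $H(P|R)=\E_R[Z_1\log Z_1]=\E_R[l(Z_1)]$ once $\E_R[Z_1]=1$. Your Part (c), which tries to redo this by taking $\E_P$ of each term of $\log\frac{\D P}{\D R}$ separately, needs the individual Girsanov-shifted terms to be true $P$-martingales, which is in turn not obvious without the $l(Z)$-estimate; the paper avoids this by extracting the entropy bound as a byproduct of the UI argument, and so should you. (Two smaller omissions: the paper first reduces to a deterministic initial configuration $R=R^\mu$ and handles general $R_0$ at the end by disintegration of the entropy, and the $\lambda_{\tilde{\boldsymbol q}}\in L^\infty$ bound is obtained directly from $\lambda_{\tilde{\boldsymbol q}}\le h(\tilde{\boldsymbol q}|\boldsymbol q)+\lambda_{\boldsymbol q}(e-1)$, which is~\eqref{eq:convex_inequality_l} at $y=1$; your inequality with $y=k\theta_{\boldsymbol q}$ is also true but not what is needed here.)
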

	\begin{Rem}
		\label{rem:construction_modified_BBM}
		\begin{itemize}
			\item Formula~\eqref{eq:formula_RN_derivative} could have been guessed using the known formulas in the Brownian and pure-jump case respectively. Indeed, if $R$ was the law of a fixed number of independent Brownian particles, that is, if there was no branching event, a slight generalization of~\cite[Theorem~2.3]{leonard2012girsanov} shows that the law $P$ defined by
			\begin{equation*}
			P  := \frac{\D P_0}{\D R_0}(M_0) \exp\left( \frac{1}{\nu} I[\tilde v]_1 - \int_0^1 \left\cg \frac{|\tilde v(t)|^2}{2\nu} , M_t \right\cd \D t  \right) \cdot R
			\end{equation*}
			(with straightforward adaptations of our notations) would be the law starting from $P_0$ of the same number of particles following independent Brownian trajectories plus the additional drift $\tilde v$. On the other hand, if we were in the case of a pure branching process with no spatial trajectories, that is, if we were only interested in the number of particles, then the measure valued process $(M_t)_{t \in [0,1]}$ would be replaced by the integer-valued process $(N_t)_{t \in [0,1]}$ corresponding to this number of particles. In that case, $R \in \P(\cadlag([0,1]; \N))$ would be the law of a pure-jump process, and a direct application of~\cite[Theorem~2.9]{leonard2012girsanov} would show that defining the law $P$ by
			\begin{equation*}
			P  := \frac{\D P_0}{\D R_0}(N_0) \exp\left( \sum_{k \neq 1}  J^k \left[\log \frac{\tilde q_k}{q_k}\right]_1 - \int_0^1 \Big\{ \lambda_{\tilde{\boldsymbol q}(t)} - \lambda_{\boldsymbol q} \Big\} N_t \D t  \right) \cdot R,
			\end{equation*}
			(once again with straightforward adaptations of our notations) then $P$ would be the law starting from $P_0$ of the process where each of the $N_t$ particles alive at time $t$ is replaced by $k \in \N \backslash\{1\}$ particles before time $t + \D t$ with probability $\tilde q_k(t) \D t$. In both cases, formulas similar to~\eqref{eq:entropy_smaller_than_RUOT} would follow from results stated in~\cite{leonard2012girsanov} as well.
			\item Later in Theorem~\ref{thm:representation_competitor_SchBr}, we will prove a converse inequality, so that inequality~\eqref{eq:entropy_smaller_than_RUOT} is in fact an equality. We are more precise about this in the third point of Remark~\ref{rem:representation_competitor_BrSch}.
			\item If we give ourselves predictable fields, and if we want to built a corresponding BBM $P$, we need at least to be sure that the first point in Definition~\ref{def:modified_BBM} holds under $P$, which can be delicate. Here, we avoid this question by choosing $\tilde v$ and $h(\tilde{\boldsymbol q}|\boldsymbol q)$ in $L^\infty( \XX, \tilde \G, \Pi_R)$, so that this first point holds automatically for all $P \ll R$, as we will see in the proof below. In return, predictable fields with these boundedness properties are presumably not the most general ones for which it is possible to build a corresponding BBM.
		\end{itemize}
	\end{Rem}
	\begin{proof}
		\begin{stepc}{Outline of the proof}
			We will perform the proof in the restricted case where the initial configuration is deterministic, that is, with the notations of Subsection~\ref{subsec:def_BBM}, when $R := R^\mu$ for some $\mu \in \M_\delta(\T^d)$. In this case, $P_0 \ll R_0$ ensures $P_0 = \delta_\mu$. Hence, from Step~\ref{step:Z_martingale_existence} to Step~\ref{step:check_martingale_existence}, the initial factor $\D P_0 / \D R_0(M_0)$ in~\eqref{eq:formula_RN_derivative} equals 1 $R$-\emph{a.e}, and the initial term $\nu H(P_0 | R_0)$ in \eqref{eq:entropy_smaller_than_RUOT} cancels. We will relax this condition at the end of the proof. From now on and until then, we chose $\mu \in \M_\delta(\T^d)$, and we assume that $R = R^\mu$.
			
			With the notations of the statement of the theorem, we define the process $Z=(Z_t)_{t \in [0,1]}$ by
			\begin{equation}
			\label{eq:def_Z_proof_RN_derivative}
			Z_t := \exp\left( \frac{1}{\nu} I[\tilde v]_t + \sum_{k \neq 1} J^k\left[\log \frac{\tilde q_k}{q_k}\right]_t - \int_0^t \left\cg \frac{|\tilde v(s)|^2}{2\nu} + \lambda_{\tilde{\boldsymbol q}(s)} - \lambda_{\boldsymbol q}, M_s \right\cd \D s  \right), \quad t \in [0,1].
			\end{equation}
			
			Let us justify why it is well defined $R^\mu$-\emph{a.s.}\ for all $t \in [0,1]$. The only term that is not clearly finite under our assumptions is the one involving $\lambda_{\tilde{\boldsymbol q}}$. We will actually prove that $\lambda_{\tilde{\boldsymbol q}} \in L^\infty(\XX, \tilde \G, \Pi_{R^\mu})$. Notice that as a consequence, not only $Z$ is well defined, but also the bound~\eqref{eq:integrability_condition_def} holds automatically $R^\mu$-\emph{a.s.}, and so $P$-\emph{a.s.} as well for all $P \ll R^\mu$.
			
			To prove that $\lambda_{\tilde{\boldsymbol q}} \in L^\infty(\XX, \tilde \G, \Pi_R)$, observe that because of~\eqref{eq:convex_inequality_l} with $y = 1$,
			\begin{equation*}
			\frac{\tilde q_k}{q_k} \leq l \left( \frac{\tilde q_k}{q_k} \right) + e - 1.
			\end{equation*}
			Multiplying both sides by $q_k$ and summing up in $k$, we get
			\begin{equation}
			\label{eq:tilde_lambda_bounded}
			\lambda_{\tilde{\boldsymbol q}} \leq  h(\tilde{\boldsymbol q} | \boldsymbol q) + \lambda_{\boldsymbol q}(e-1),
			\end{equation}
			where the r.h.s.\ belongs to $L^\infty(\XX, \tilde \G, \Pi_{R^\mu})$.
			
			Therefore, $Z = (Z_t)_{t \in [0,1]}$ defined in~\eqref{eq:def_Z_proof_RN_derivative} is a well defined $\cadlag$ process, 	and with these new notations and assumptions on the initial condition, formula~\eqref{eq:formula_RN_derivative} rewrites $P = Z_1 \cdot R^\mu$. If we want this law to be a probability measure, it suffices to prove that $Z$ is a true nonnegative martingale. Indeed, in that case, as $Z_0 \equiv 1$, we would have $\Em_\mu[Z_1]= 1$.
			
			Showing that $Z$ is a true martingale is the next step of the proof. As we will see in a third step, the inequality~\eqref{eq:entropy_smaller_than_RUOT} in this restricted case will be a direct consequence of the analysis made in the second step. Finally, in a fourth step, we will show that $P$ defined by~\eqref{eq:formula_RN_derivative} is a modified BBM, as announced in the statement of the theorem. Finally, at Step~\ref{step:general_initial_law_existence}, we will get rid of the assumption on the initial condition.
		\end{stepc}
		
		\begin{stepc}{$Z$ is a martingale}
			\label{step:Z_martingale_existence}
			First, $Z$ is a nonnegative local martingale (and hence a supermartingale as well). This is a direct consequence of the second point of Theorem~\ref{thm:Ito_with_jumps}. Indeed, let us call for $t \in [0,1]$
			\begin{equation*}
			X_t := \frac{1}{\nu} I[\tilde v]_t + \sum_{k \in \N} J^k\left[\log \frac{\tilde q_k}{q_k}\right]_t.
			\end{equation*}
			The process $X$ is a semi-martingale with simple jumps, with no continuous finite variation component. By Theorem~\ref{thm:definition_stochastic_integral}, the bracket of $X$ is given by:
			\begin{equation*}
			[X]_t = \frac{1}{\nu} \int_0^t \left\cg |\tilde v(s)|^2, M_s \right \cd \D s.
			\end{equation*}
			By Theorem~\ref{thm:properties_jump_processes} its jump measure at time $t \in [0,1]$ is given by:
			\begin{equation}
			\label{eq:levy_jump_measure_X}
			\eta_t:=\sum_{k \neq 1}\log \frac{\tilde q_k(t)}{q_k} \pf \Big( q_k M_{t-} \Big),
			\end{equation}
			so that, by formula~\eqref{eq:formula_pf},
			\begin{equation*}
			\int_{\R}\Big\{ \exp(y) - 1 \Big\}\D \eta_t(y) = \sum_{k \neq 1} q_k \left\cg \frac{\tilde q_k(t)}{q_k} - 1 , M_{t-} \right \cd = \left\cg \lambda_{\tilde{\boldsymbol q}(t)} - \lambda_{\boldsymbol q} , M_{t-} \right\cd.
			\end{equation*}
			Hence, the fact that $Z$ is a local martingale follows directly from our boundedness assumptions and from formula~\eqref{eq:exponential_martingale}.
			
			To prove that actually $Z$ is a true martingale, we will show that $\Em_\mu[l(Z_1)]$ is finite, where $l$ is defined in~\eqref{eq:def_l}. Clearly, as a consequence of Jensen's inequality, this property is enough to justify that $Z$ is uniformly integrable. To prove our claim, it is useful to introduce the following process:
			\begin{equation*}
			Y_t := \log Z_t = \frac{1}{\nu} I[\tilde v]_t + \sum_{k \in \N} J^k\left[\log \frac{\tilde q_k}{q_k}\right]_t - \int_0^t \left\cg \frac{|\tilde v(s)|^2}{2\nu} +  \lambda_{\tilde{\boldsymbol q}(s)} - \lambda_{\boldsymbol q}, M_s \right\cd \D s, \quad t \in [0,1],
			\end{equation*}
			and to remark that for all $t \in [0,1]$, $l(Z_t) = 1 + (Y_t - 1)\exp(Y_t)  = F(Y_t)$, where for all $y \in \R$, $F(y) := 1 + (y-1)\exp(y) $. The process $Y$ is clearly a semi-martingale with simple jumps, whose decomposition is directly visible from its definition. Notice that its bracket and jump measure coincide with the ones of $X$.
			
			The function $F$ is smooth, with for all $y \in \R$, $F'(y) = y \exp(y)$ and $F''(y) = (y + 1)\exp(y)$. We want to apply to $F$ the first point of Theorem~\ref{thm:Ito_with_jumps}, so let us check condition~\eqref{eq:assumption_F(X)}. A quick computation using formula~\eqref{eq:formula_pf} shows that for a given $t \in [0,1]$ and $x \in \R$,
			\begin{equation*}
			\int_{\R}| F(x + y) - F(x) | \D \eta_t(y) = e^x \Big\cg\Big| h(\tilde{\boldsymbol q}(t)|\boldsymbol q) + x \Big( \lambda_{\tilde{\boldsymbol q}(t)} - \lambda_{\boldsymbol q}\Big) \Big|, M_{t-} \Big\cd.
			\end{equation*}
			Hence, condition~\eqref{eq:assumption_F(X)} follows easily from our boundedness assumptions and~\eqref{eq:tilde_lambda_bounded}. We deduce with the first point of Theorem~\ref{thm:branching_Ito} that the process whose value at time $t \in [0,1]$ is
			\begin{multline}
			\label{eq:application_Ito_with_jump_modified_BBM}
			F(Y_t) - F(Y_0) + \int_0^t F'(Y_s) \left\cg \frac{|\tilde v(s)|^2}{2\nu} + \lambda_{\tilde{\boldsymbol q}(s)} - \lambda_{\boldsymbol q}, M_s \right\cd \D s - \int_0^t F''(Y_s) \left\cg \frac{|\tilde v(s)|^2}{2\nu}, M_s \right\cd \D s\\
			- \int_0^t \hspace{-5pt} \int_{\R}\Big\{ F(Y_s + y) - F(Y_s) \Big\} \D \eta_s(y) \D s
			\end{multline}
			is a local martingale starting from $0$. Let us compute the last term using the definition~\eqref{eq:levy_jump_measure_X} of $\eta$ and formula~\eqref{eq:formula_pf}. We find
			\begin{align*}
			\int_0^t \hspace{-5pt} \int_{\R}\Big\{ F(Y_s + y) - F(Y_s) \Big\} \D \eta_s(y) \D s&= \int_0^t \exp(Y_s) \Big\cg h(\tilde{\boldsymbol q}(s)|\boldsymbol q) + Y_s \Big( \lambda_{\tilde{\boldsymbol q}(s)} - \lambda_{\boldsymbol q}\Big), M_{s} \Big\cd \D s \\
			&= \int_0^t \exp(Y_s) \Big\cg h(\tilde{\boldsymbol q}(s)|\boldsymbol q), M_{s} \Big\cd \D s + \int_0^t F'(Y_s)\Big\cg \lambda_{\tilde{\boldsymbol q}(s)} - \lambda_{\boldsymbol q}, M_{s} \Big\cd \D s.
			\end{align*}
			Plugging this identity into~\eqref{eq:application_Ito_with_jump_modified_BBM} and using $Z_s = \exp(Y_s)= F'(Y_s) - F''(Y_s)$, $l(Z_s)= F(Y_s)$ and $F(Y_0) = 0$, we discover that the process
			\begin{equation*}
			l(Z_t) - \int_0^t Z_s \left\cg \frac{|\tilde v(s)|^2}{2\nu} + h(\tilde{\boldsymbol q}(s) | \boldsymbol q ), M_s \right\cd  \D s, \qquad t \in [0,1],
			\end{equation*}
			is a local martingale starting from $0$.
			
			Let us define the nondecreasing sequence of stopping times $(S_n)_{n\in\N}$ by
			\begin{equation*}
			S_n := \inf\{ s \in [0,1] \ : \ M_s(\T^d) > n\},
			\end{equation*}
			with convention $\inf \emptyset = 1$. $R$-\emph{a.s}, $S_n \to 1$ as $n \to + \infty$, and hence there exists a nondecreasing sequence $(T_n)_{n \in \N}$ of stopping times reducing the previous local martingale, and with $R$-\emph{a.s.}\ for all $n$, $T_n \leq S_n$. 
			Under our boundedness assumptions on $\tilde v$ and $\tilde{\boldsymbol q}$, and by our definition of $(S_n)_{n \in \N}$, for a given $n \in \N$ the random variable
			\begin{equation*}
			\left\cg \frac{|\tilde v(s)|^2}{2\nu} + h(\tilde{\boldsymbol q}(s) | \boldsymbol q ), M_s \right\cd
			\end{equation*}
			is bounded uniformly in $s \in [0,S_n]$, and hence also in $s \in [0,T_n]$. As a consequence, for a given $t \in [0,1]$, we can use the integrability properties of $Z$ as a nonnegative supermartingale to decompose the following expectation and find
			\begin{align*}
			\Em_\mu\bigg[ l(Z_{t \wedge T_n}) - \int_0^{t \wedge T_n} &Z_s \left\cg \frac{|\tilde v(s)|^2}{2\nu}  + h(\tilde{\boldsymbol q}(s) | \boldsymbol q ), M_s \right\cd  \D s \bigg] \\
			&= \Em_\mu[ l(Z_{t \wedge T_n})] - \Em_\mu\left[ \int_0^{t \wedge T_n} Z_s \left\cg \frac{|\tilde v(s)|^2}{2\nu}  + h(\tilde{\boldsymbol q}(s) | \boldsymbol q ), M_s \right\cd  \D s \right]=0.
			\end{align*}
			Using Fatou's lemma and the monotone convergence theorem, we deduce that for all $t \in [0,1]$,
			\begin{equation}
			\label{eq:unif_integrability_bound1}
			\Em_\mu[ l(Z_t)] \leq \Em_\mu\left[ \int_0^t Z_s \left\cg \frac{|\tilde v(s)|^2}{2\nu} + h(\tilde{\boldsymbol q}(s) | \boldsymbol q ), M_s \right\cd  \D s \right].
			\end{equation} 
			It just remains to bound the r.h.s.\ of this inequality.
			
			To achieve this, let us consider $C>0$, a positive number bounding the $L^\infty(\XX, \G, \Pi_R)$ norm of $
			\frac{|\tilde v|^2}{2\nu} + h(\tilde{\boldsymbol q} | \boldsymbol q )$.
			We have
			\begin{equation}
			\label{eq:unif_integrability_bound2}
			\Em_\mu\left[ \int_0^t Z_s \left\cg \frac{|\tilde v(s)|^2}{2\nu} + h(\tilde{\boldsymbol q}(s) | \boldsymbol q ), M_s \right\cd  \D s \right] \leq C \int_0^t \Em_\mu[ Z_s M_s(\T^d) ] \D s.
			\end{equation}
			Now, let $\kappa>0$ be a number so small that for all $s \in [0,1]$, $\Em_\mu[\exp(\kappa M_s(\T^d)) - 1] \leq C$ (use Proposition~\ref{prop:exponential_bounds} to find such a $\kappa$). By~\eqref{eq:convex_inequality_l} applied to $z := Z_s$ and $y := \kappa M_s(\T^d)$, we have for all $s \in [0,1]$,
			\begin{equation*}
			\Em_\mu[Z_s M_s(\T^d)] \leq \frac{\Em_\mu[l(Z_s)] + \Em_\mu[\exp(\kappa M_s(\T^d))-1]}{\kappa} \leq \frac{\Em_\mu[l(Z_s)] + C}{\kappa}.
			\end{equation*}  
			We plug this bound into~\eqref{eq:unif_integrability_bound1}, using~\eqref{eq:unif_integrability_bound2} as well, to find for all $t \in [0,1]$
			\begin{equation*}
			\Em_\mu[l(Z_t)] \leq \frac{C}{\kappa} \left( \int_0^t \Em_\mu[l(Z_s)] \D s + Ct \right).
			\end{equation*}
			We conclude using Gr\"onwall's lemma that $\Em_\mu[l(Z_1)]$ is finite, which closes this step of the proof, namely that formula~\eqref{eq:formula_RN_derivative} indeed defines a probability measure in the case of a deterministic initial configuration.
		\end{stepc}
		
		\begin{stepc}{The inequality~\eqref{eq:entropy_smaller_than_RUOT} when $M_0$ is deterministic}
			Now, we know that $P := Z_1 \cdot R^\mu$ is a probability measure on $\Omega$. Let us compute its relative entropy with respect to $R$. We have
			\begin{align*}
			H(P|R) = \Em_\mu[Z_1 \log Z_1] = \Em_\mu[l(Z_1)],
			\end{align*}
			where the second equality follows from $\Em_\mu[Z_1] = 1$.
			By~\eqref{eq:unif_integrability_bound1}, we have:
			\begin{align*}
			\Em_\mu[ l(Z_1)] &\leq  \int_0^1 \Em_\mu\left[ Z_t \left\cg \frac{|\tilde v(t)|^2}{2\nu}+ h(\tilde{\boldsymbol q}(t) | \boldsymbol q ), M_t \right\cd \right]\D t = \int_0^1 \Em_\mu\left[ \Em_\mu[ Z_1| \F_t ]\left\cg \frac{|\tilde v(t)|^2}{2\nu} + h(\tilde{\boldsymbol q}(t) | \boldsymbol q ), M_t \right\cd  \right]\D t \\
			&=\int_0^1 \Em_\mu\left[ Z_1\left\cg \frac{|\tilde v(t)|^2}{2\nu} + h(\tilde{\boldsymbol q}(t) | \boldsymbol q ), M_t \right\cd \right]  \D t =\E_P\left[\int_0^1 \left\cg \frac{|\tilde v(t)|^2}{2\nu}+ h(\tilde{\boldsymbol q}(t) | \boldsymbol q ), M_t \right\cd  \D t \right].
			\end{align*} 
			Inequality~\eqref{eq:entropy_smaller_than_RUOT} in our restricted case follows.
		\end{stepc}
		\begin{stepc}{$P$ is a modified branching Brownian motion}
			\label{step:check_martingale_existence}
			Let us prove that $P$ defined by formula~\eqref{eq:formula_RN_derivative} is a BBM with drift $\tilde v$ and branching mechanism $\tilde{\boldsymbol q}$. By the Definition~\ref{def:modified_BBM} that we gave of such BBMs , we just need to prove that for all smooth nonpositive function $\psi$ on $[0,1] \times \T^d$, the \emph{càdlàg} process whose value at time $t \in [0,1]$ is
			\begin{equation}
			\label{eq:exp_martingale_nonpositive_varphi_proof_existence_BBM}
			\mathcal{E}^\psi_t := \exp\left( \frac{\cg \psi(t), M_t\cd}{\nu} -\frac{\cg \psi(0), M_0\cd }{\nu} - \frac{1}{\nu}\int_0^t \big\cg \partial_t \psi(s) + \mathcal L_{\nu, \tilde{\boldsymbol q}(s)}[\psi(s)] +  \tilde v(s) \cdot \nabla \psi(s), M_s \big\cd \D s \right)
			\end{equation}
			is a local martingale under $P$. By a classical argument (see for instance the proof of the Girsanov theorem in~\cite{legall2016brownian}), this is the same as proving that $(Z_t \mathcal{E}^\psi_t)_{t \in [0,1]}$, where $Z = (Z_t)_{t \in [0,1]}$ was defined in~\eqref{eq:def_Z_proof_RN_derivative}, is a local martingale under $R^\mu$.
			
			Let us call
			\begin{gather*}
			X_t := \frac{1}{\nu}I\left[ \tilde v + \nabla \psi\right]_t + \sum_{k \neq 1} J^k\left[ \log \frac{\tilde q_k}{q_k} + (k-1)\frac{\psi}{\nu} \right]_t, \quad t \in [0,1].
			\end{gather*}
			This is a semi-martingale with simple jumps, with no continuous finite variation component. Its bracket is 
			\begin{equation*}
			[X]_t = \frac{1}{\nu} \int_0^t \left\cg \left| \tilde v(s) + \nabla \psi(s) \right|^2 , M_s \right\cd \D s, \quad t \in [0,1],
			\end{equation*}
			and by formula~\eqref{eq:formula_pf} and the definition~\eqref{eq:def_Psi*} of $\Psi^*_{\nu, \boldsymbol q}$, its jump measure $(\eta_t)_{t \in [0,1]}$ satisfies for all $s \in [0,1]$
			\begin{equation*}
			\int\left\{ \exp(y) - 1 \right\}\D \eta_s(y) = \left\cg \sum_{k \neq 1} \tilde q_k(s) \exp\left( (k-1) \frac{\psi(s)}{\nu}\right) - q_k, M_{s-} \right \cd =\left\cg \Psi^*_{\nu, \tilde{\boldsymbol q}(s)}(\psi(s)) + \lambda_{\tilde{\boldsymbol q}(s)} - \lambda_{\boldsymbol q}, M_{s-}\right\cd .
			\end{equation*}
			
			With these two ingredients, exploiting~\eqref{eq:branching_Ito} with $\varphi := \psi / \nu$, and the respective definitions of $Z$, $\mathcal{E}^\psi$ and $\mathcal L_{\nu, \tilde{\boldsymbol q}(s)}[\psi(s)]$ (see formula~\eqref{eq:def_random_operator}), we find
			\begin{equation*}
			Z_t \mathcal{E}^\psi_t = \exp\left( X_t - \frac{1}{2} [X]_t - \int_0^t \hspace{-5pt} \int_{\R}\left\{ \exp(y) - 1 \right\}\D \eta_s(y) \D s \right), \qquad \forall t \in [0,1].
			\end{equation*}
			Hence, it is a local martingale and the result follows.
		\end{stepc}
		
		\begin{stepc}{Non-deterministic initial laws}
			\label{step:general_initial_law_existence}
			
			Now we deal with the case where $R \sim \BBM(\nu, \boldsymbol q, R_0)$, where $R_0$ can be any law on $\M_\delta(\T^d)$. The main idea to do so is to deal with formula~\eqref{eq:formula_RN_derivative} conditionally on $\F_0$. Therefore, for a given $\mu \in \M_\delta(\T^d)$, we call
			\begin{equation*}
			P^\mu := \exp\left( \frac{1}{\nu} I[\tilde v]_1 + \sum_{k \in \N} J^k\left[\log \frac{\tilde q_k}{q_k}\right]_1 - \int_0^1 \left\cg \frac{|\tilde v(t)|^2}{2\nu} + \lambda_{\tilde{\boldsymbol q}(t)} - \lambda_{\boldsymbol q}, M_t \right\cd \D t  \right) \cdot R^\mu,
			\end{equation*}
			which is well defined for $R_0$-almost all $\mu$, as explained in the first step. In addition, in this context, formula~\eqref{eq:formula_RN_derivative} rewrites
			\begin{equation}
			\label{eq:rewriting_formula_RN}
			P = \E_{R} \left[ \frac{\D P_0}{\D R_0}(M_0) \cdot P^{M_0} \right] = \E_{P}\left[ P^{M_0} \right],
			\end{equation} 
			which ensures in particular that for $P_0$-almost all $\mu$, $P^\mu = P( \, \cdot \, | M_0 = \mu)$.
			
			We are precisely in the case of the first part of the proof so that for $R_0$-almost all $\mu$:
			\begin{itemize}
				\item For all smooth nonpositive function $\psi$ on $\T^d$, $\mathcal{E}^\psi$ defined in~\eqref{eq:exp_martingale_nonpositive_varphi_proof_existence_BBM} is a local martingale under $P^\mu$.
				\item The following entropy bound holds:
				\begin{equation}
				\label{eq:entropy_smaller_than_RUOT_simple_case}
				H(P^\mu | R^\mu) \leq \Em_{\mu}\left[\int_0^1 \left\cg \frac{|\tilde v(t)|^2}{2\nu} + h(\tilde{\boldsymbol q}(t) | \boldsymbol q ), M_t \right\cd  \D t \right]
				\end{equation}
			\end{itemize}
			
			By the first point, for all smooth nonpositive $\psi$, $\mathcal{E}^\psi$ is a local martingale under $P^\mu$, for $R_0$-almost all $\mu$. As $P_0 \ll R_0$, this implies that it is a local martingale under $P^\mu$, for $P_0$-almost all $\mu$. As a consequence, it is a local martingale under $P$ and $P$ is a BBM with drift $\tilde v$ and branching mechanism $\tilde{\boldsymbol q}$.
			
			To get~\eqref{eq:entropy_smaller_than_RUOT}, it suffices plug~\eqref{eq:entropy_smaller_than_RUOT_simple_case} in formula~\eqref{eq:disintegration_entropy} and to use once again~\eqref{eq:rewriting_formula_RN}.
		\end{stepc}
	\end{proof}
	
	\section{A process with finite entropy w.r.t.\ a BBM is a BBM}
	\label{sec:girsanov}
	
	Here, we show that under the only assumption that $\boldsymbol q$ admits an exponential bound, that is, under the bound~\eqref{eq:exponential_moment_q}, any law $P$ that has finite entropy with respect to a BBM $R \sim \BBM(\nu, \boldsymbol{q}, R_0)$ is a branching Brownian motion with predictable and space-time dependent parameters. The result is written as follows, using the notations introduced in Subsection~\ref{subsec:def_modified_BBM}.
	
	\begin{Thm}
		\label{thm:representation_competitor_SchBr}
		Let $R \sim \BBM(\nu,\boldsymbol q,R_0)$, and assume that $\boldsymbol{q}$ satisfies the bound~\eqref{eq:exponential_moment_q}. Let $P$ be such that $H(P|R)< +\infty$.
		There is a predictable vector field $\tilde v$ and a predictable field of branching mechanisms $\tilde{\boldsymbol q}$ with the following integrability
		\begin{equation}
		\label{eq:integrability_bound_girsanov}
		\E_P\left[ \int_0^1 \left\cg \frac{|\tilde v(t)|^2}{2} + \nu h(\tilde{\boldsymbol q}(t) |\boldsymbol q) , M_t \right\cd \D t  \right] < + \infty,
		\end{equation}
		such that $P$ is a BBM of drift $\tilde v$ and branching mechanism $\tilde{\boldsymbol q}$.
		Moreover, these $\tilde v$ and $\tilde{\boldsymbol q}$ are unique up to a $\Pi_P$-negligible set. Finally, the following inequality holds:
		\begin{equation}
		\label{eq:RUOT_smaller_than_entropy}
		\nu H(P_0|R_0) + \E_P\left[ \int_0^1 \left\cg \frac{|\tilde v(t)|^2}{2} +\nu h(\tilde{\boldsymbol q}(t) |\boldsymbol q) , M_t \right\cd \D t  \right] \leq \nu H(P|R).
		\end{equation}
	\end{Thm}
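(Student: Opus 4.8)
The plan is to follow closely the strategy of L\'eonard~\cite{leonard2012girsanov}, adapting his Riesz-representation argument to the branching setting with the tools built in Sections~\ref{sec:discontinuous_stochastic_calculus}--\ref{sec:new_processes}. First I would reduce to the case where $H(P|R)<+\infty$, so that $P\ll R$ and, by~\eqref{eq:disintegration_entropy}, also $P_0\ll R_0$ and $H(P^\mu|R^\mu)<+\infty$ for $P_0$-a.e.\ $\mu$. By Proposition~\ref{prop:exponential_bounds} the expected marginal $\E_P[M_t]$ is finite for all $t$ and $\sup_t\E_P[M_t(\T^d)]<+\infty$. The heart of the matter is to produce the two predictable fields. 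For the drift, consider the functional that to a bounded predictable vector field $w$ associates $\E_P[I_R[w]_1]$, where $I_R[w]$ is the generalized stochastic integral of Theorem~\ref{thm:definition_stochastic_integral}: this is a continuous linear functional on $L^2(\XX,\tilde\G,\Pi_R)$ (continuity coming from Cauchy--Schwarz, the isometry $\E_R[I_R[w]_1^2]=\nu\E_R[\int\langle|w|^2,M_t\rangle\D t]$ and the entropy inequality~\eqref{eq:convex_ineq_entropy} to pass from $R$ to $P$), so it is represented by some $\tilde v/\nu\in L^2(\XX,\tilde\G,\Pi_R)$; since $P\ll R$ this is also an element of $L^2(\Pi_P)$. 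For the branching part, one plays the same game with the jump processes $J^k$: for a family of bounded predictable scalar fields $(a_k)$, the functional $(a_k)\mapsto\E_P[\sum_k J^k[a_k]_1]$ is represented, using formula~\eqref{eq:jump_measure_J} for the jump measure, by a predictable field $(\tilde q_k)$; the entropy bound forces $h(\tilde{\boldsymbol q}(t)|\boldsymbol q)$ to be $\Pi_P$-integrable, which in turn (via~\eqref{eq:convex_inequality_l}, as in~\eqref{eq:tilde_lambda_bounded}) gives $\lambda_{\tilde{\boldsymbol q}}\in L^1(\Pi_P)$ and hence the integrability~\eqref{eq:integrability_condition_def} under $P$.

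Having extracted $\tilde v$ and $\tilde{\boldsymbol q}$, the next step is to verify that $P$ is indeed the BBM with these parameters in the sense of Definition~\ref{def:modified_BBM}, i.e.\ that for every smooth nonpositive $\psi$ the process~\eqref{eq:def_modified_BBM} is a $P$-local martingale. The mechanism here is the extended It\^o formula, Theorem~\ref{thm:branching_Ito}: writing $\langle\psi(t),M_t\rangle$ as its decomposition into $I[\nabla\psi]$, $\sum_k J^k[(k-1)\psi]$ and a finite-variation term, one computes the Doléans-Dade exponential of the relevant semi-martingale and checks — using the representation identities defining $\tilde v$ and $\tilde{\boldsymbol q}$, which pin down the $P$-compensators of $I[\nabla\psi]$ and of the jump part — that the drift terms match $\langle\mathcal L_{\nu,\tilde{\boldsymbol q}}[\psi]+\tilde v\cdot\nabla\psi,M_s\rangle$. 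This is essentially a bookkeeping computation once the compensators are identified, with Theorem~\ref{thm:Ito_with_jumps} handing over the local martingale property of the exponential. Uniqueness of $(\tilde v,\tilde{\boldsymbol q})$ up to $\Pi_P$-null sets follows because two such choices would give the same compensators for all the test processes $I[w]$ and $J^k[a_k]$, and these separate $L^2(\Pi_P)$ (respectively, determine $\tilde q_k$ $\Pi_P$-a.e.); predictability of the $\sigma$-algebra being countably generated, Proposition~\ref{prop:predictable_sigma_field}, lets one upgrade ``for each test field'' to ``$\Pi_P$-a.e.''.

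Finally, for the entropy inequality~\eqref{eq:RUOT_smaller_than_entropy}, I would use the disintegration~\eqref{eq:disintegration_entropy} to split $\nu H(P|R)=\nu H(P_0|R_0)+\nu\E_{P_0}[H(P^M|R^M)]$, so it suffices to bound $\nu\E_{P_0}[H(P^\mu|R^\mu)]$ from below by the dynamical integral. The standard way is a truncation/approximation argument: on the event $\{M_t(\T^d)\le n\}$ the Radon--Nikodym density $\D P^\mu/\D R^\mu$ can be compared with a Doléans-Dade exponential built from $\tilde v$ and $\log(\tilde q_k/q_k)$ as in~\eqref{eq:formula_RN_derivative}, and a Jensen/Fatou argument (together with the martingale property established in the previous step) yields, in the limit, $\nu H(P^\mu|R^\mu)\ge\Em_\mu[\int_0^1\langle\frac{|\tilde v(t)|^2}{2}+\nu h(\tilde{\boldsymbol q}(t)|\boldsymbol q),M_t\rangle\D t]$ — which is precisely the reverse of~\eqref{eq:entropy_smaller_than_RUOT} of Theorem~\ref{thm:formula_RN_derivative}, so that in fact~\eqref{eq:entropy_smaller_than_RUOT} becomes an equality, as announced in Remark~\ref{rem:construction_modified_BBM}.

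The main obstacle I anticipate is the identification of $(\tilde v,\tilde{\boldsymbol q})$ as \emph{bona fide} predictable fields with the right integrability and the verification that they actually generate $P$ — in other words, controlling the passage from ``the functional $w\mapsto\E_P[I_R[w]_1]$ is represented by some $L^2$ object'' to ``$P$ solves the martingale problem with drift $\tilde v$''. The subtlety is that the representing elements live in $L^2(\Pi_R)$ a priori, one only has $L^2(\Pi_P)$ bounds coming from the entropy, and reconciling these with the density changing the jump measure (so that the $P$-jump measure is $\tilde q_k/q_k$ times the $R$-one) requires care; this is exactly where L\'eonard's argument is delicate and where the branching structure adds the extra layer of the jump processes $J^k$ on top of the continuous integral $I$.
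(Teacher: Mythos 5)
Your high-level plan — reduce to $P_0$-conditional laws, extract $\tilde v$ and $\tilde{\boldsymbol q}$ by a Riesz-type representation of the functionals $w\mapsto\E_P[I[w]_1]$ and $a\mapsto\E_P[J^k[a]_1]$ à la L\'eonard, then verify the martingale problem via the extended It\^o formula — is exactly the paper's strategy, and you correctly identify where the delicacy lies. But three specific steps in your write-up would not go through as stated.

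First, for the drift. You claim $w\mapsto\E_P[I[w]_1]$ is continuous on $L^2(\XX,\tilde\G,\Pi_R)$, obtain $\tilde v/\nu\in L^2(\Pi_R)$, and then say that ``since $P\ll R$ this is also an element of $L^2(\Pi_P)$.'' That last inference is false: $P\ll R$ with merely finite entropy gives no control on the density, so $L^2(\Pi_R)\not\subseteq L^2(\Pi_P)$ in general, and the reverse inclusion is what one would expect if anything. The paper's Step 3 sidesteps this entirely by proving the sharper estimate
\begin{equation*}
\big|\E_P[I[w]_1]\big|\ \leq\ \sqrt{2\nu H(P|R)}\,\|w\|_{L^2(\XX,\tilde\G,\Pi_P)},
\end{equation*}
obtained from the exponential supermartingale $\exp\bigl(\gamma I[w]_t-\gamma^2\tfrac{\nu}{2}\int_0^t\langle|w(s)|^2,M_s\rangle\D s\bigr)$ together with~\eqref{eq:convex_ineq_entropy}, optimizing in $\gamma$. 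With this estimate the functional is continuous directly on $L^2(\Pi_P)$ and the representing vector field lives there from the start, which is what the integrability~\eqref{eq:integrability_bound_girsanov} needs.

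Second, for the branching mechanism. You say the functional $a\mapsto\E_P[J^k[a]_1]$ is ``represented, using formula~\eqref{eq:jump_measure_J}, by a predictable field,'' but you never say by \emph{which} representation theorem, and this is the crux: the estimate one actually gets for the jump functional is not quadratic at all. Following the same supermartingale-plus-entropy route one obtains
\begin{equation*}
\E_P\bigl[J^k[a]_1\bigr]\ \leq\ H(P|R)+\int_{\XX}\bigl\{\exp(a)-1\bigr\}\,\D(q_k\Pi_P),
\end{equation*}
which is an Orlicz-type bound, not an $L^2$ bound. The $L^2$ Riesz theorem does not apply; the paper has to develop a dedicated representation theorem for linear forms dominated by $\exp$-moments (Theorem~\ref{thm:riesz} in Appendix~\ref{app:riesz}), whose proof is a uniformly integrable martingale argument and is a nontrivial chunk of the work. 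Your sketch does not account for this, and without it you cannot produce $\tilde q_k$ with the announced $h(\tilde{\boldsymbol q}|\boldsymbol q)\in L^1(\Pi_P)$ estimate — that $L^1$ control is exactly~\eqref{eq:estim_g}, the output of the Orlicz-Riesz theorem, not a consequence you can extract afterwards.

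Third, for the inequality~\eqref{eq:RUOT_smaller_than_entropy}. You propose to truncate, compare $\D P^\mu/\D R^\mu$ with a Dol\'eans-Dade exponential of the form~\eqref{eq:formula_RN_derivative}, and pass to the limit by Jensen--Fatou. The difficulty is that this presupposes that $\D P/\D R$ \emph{has} a Dol\'eans-Dade form built from $\tilde v$ and $\tilde{\boldsymbol q}$, which is nowhere established — the conclusion of the theorem is only that $P$ solves the martingale problem, and Theorem~\ref{thm:formula_RN_derivative} gives a Radon--Nikodym formula only for \emph{bounded} parameters, which yours are not. The paper's Step~5 avoids this by arguing directly: it feeds into~\eqref{eq:convex_ineq_entropy} the truncations $w_n=\tfrac{\tilde v}{\nu}\1_{|\tilde v|/\nu\le n}$ and $a^k_n=\log\tfrac{\tilde q_k}{q_k}\1_{\log(\tilde q_k/q_k)\le n}$, uses the supermartingale property of the corresponding exponential under $R^\mu$ and the representation identities~\eqref{eq:charact_v},~\eqref{eq:charact_q} to rewrite $\E_P[I[w_n]_1]$ and $\E_P[J^k[a^k_n]_1]$ as $\Pi_P$-integrals, and then lets $n\to\infty$ by monotone convergence. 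No explicit form of $\D P/\D R$ is needed. Your route would require you to first prove a Girsanov-type formula for possibly unbounded parameters, which is a separate (and harder) result.

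These three points are fixable, but they are the substance of the proof rather than minor bookkeeping; as written, the proposal would not go through.
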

	\begin{Rem}
		\label{rem:representation_competitor_BrSch}
		\begin{itemize}
			\item The integrability bound~\eqref{eq:integrability_bound_girsanov} implies the integrability condition~\eqref{eq:integrability_condition_def} on $\tilde v$ and $\lambda_{\tilde{\boldsymbol q}}$ appearing in the Definition~\ref{def:modified_BBM} of BBM with predictable and space-time dependent parameters.
			\item The integrability bound~\eqref{eq:integrability_bound_girsanov} also implies that $h(\tilde{\boldsymbol q}|\boldsymbol q)$ is finite $\Pi_P$-\emph{a.e.} In this case, it is clear that $\tilde q_1=0$, $\Pi_P$-\emph{a.e.}, and it is possible to use a slight generalization of~\eqref{eq:convex_ineq_entropy} together with the exponential bound~\eqref{eq:exponential_moment_q} to show that $\tilde{\boldsymbol q}$ satisfies $\Pi_P$-\emph{a.e.}\ the bound~\eqref{eq:q_finite_mean}. Consequently, $\tilde{\boldsymbol q}$ is automatically a predictable field of branching mechanism in the sense of Definition~\ref{def:branching_mechanism}. 
			\item As already announced in Remark~\ref{rem:construction_modified_BBM}, the uniqueness of the different parameters stated in the theorem and the inequality~\eqref{eq:RUOT_smaller_than_entropy} ensures that the inequality in~\eqref{eq:entropy_smaller_than_RUOT} from Theorem~\ref{thm:formula_RN_derivative} is in fact an equality.
		\end{itemize}
	\end{Rem}
	
	\begin{proof}
		\begin{stepd}{Setup and outline of the proof}
			Let us consider $R\sim\BBM(\nu,\boldsymbol q, R_0)$, and $P$ with $H(P|R)<+\infty$. The first steps of this proof consist in obtaining the functions $\tilde v$ and $\tilde q_k$, $k\in\N$ of the statement, using versions of the Riesz representation theorem (\emph{i.e.} following the strategy of~\cite{leonard2012girsanov}). Their properties will be obtained in further steps. The linear forms that we will represent are precisely the expectation with respect to $P$ of the processes built in Section~\ref{sec:new_processes}.
			
			The general strategy is to get bounds thanks to the formula~\eqref{eq:convex_ineq_entropy}, being $\exp(Y)$ the value at time $1$ of an exponential supermartingale involving the processes introduced in Section~\ref{sec:new_processes}.
			
			As we will see, this procedure will be rather straightforward in the case of $\tilde v$, and a bit more unusual in the cases of $\tilde q_k$, as in the latter, and similarly to~\cite[Theorem~2.6]{leonard2012girsanov}, we will have to use a Riesz representation theorem for linear forms bounded by exponential moments. Such forms are continuous on the cone of functions having an exponential moment for a certain topology that we do not want to describe precisely. Therefore, the situation is very similar\footnote{In fact, not exactly because our bound is only one-sided, but we will not enter into the details.} to the representation of continuous linear forms in Orlicz spaces~\cite{rao1991theory}, which are Banach spaces generated by functions satisfying integrability properties that are not necessarily homogeneous (generalizing the homogeneous $L^p$ spaces). The representation theorem that we will use could be deduced from the general theory of Orlicz spaces, but, as we found a rather short and elegant proof of exactly what we need, we have chosen to present it in Appendix~\ref{app:riesz}.

			The proof is organized as follows. As for Theorem~\ref{thm:formula_RN_derivative}, we will write the proof assuming that $M_0$ is deterministic under $R$ (and hence under $P$ as well). In this case, $R_0$ has an exponential moment so that both bound from Assumption~\ref{ass:exponential_bounds} hold. Under this additional assumption, we prove the existence part of the theorem at Steps \ref{step:finite_Pi}-\ref{step:martingale_property}, and the uniqueness result at Step~\ref{step:uniqueness}.
			We relax this assumption at Step~\ref{step:non_deterministic_intial_girsanov}, more or less in the same way as in the proof of Theorem~\ref{thm:formula_RN_derivative}. 
			
			Therefore, for now and until Step~\ref{step:non_deterministic_intial_girsanov} (not included), we assume that there is $\mu \in \M_{\delta}(\T^d)$ such that $R = R^\mu$. In this case, $M_0 = \mu$ $R$-\emph{a.s.}, and hence $P$-\emph{a.s.}\ as well. We recall that the measurable space $(\XX, \tilde\G)$ was defined in Definition~\ref{def:predictable_fields}, and that $\Pi_{R^\mu}$ and $\Pi_P$ stand for the measures on $\XX$ defined in Definition~\ref{def:Pi_Q}.
		\end{stepd}
		
		\begin{stepd}{$\Pi_{R^\mu}$ and $\Pi_P$ are finite}
			\label{step:finite_Pi}	
			Both measures are finite. Indeed, as both bound from Assumption~\ref{ass:exponential_bounds} hold under $R^\mu$, the result directly follows from integrating formula~\eqref{eq:unif_bound_density} in time. 
		\end{stepd}
		
		\begin{stepd}{Getting $\tilde v$}
			\label{step:getting_v}
			
			Let us show that the linear map
			\begin{equation}
			\label{eq:def_Lambda_I}
			\Lambda:	w \in L^\infty(\XX, \tilde \G, \Pi_{R^\mu}) \longmapsto \E_P\big[I[w]_1\big] \in \R,
			\end{equation}
			can be extended as a continuous linear map on $L^2(\XX, \tilde \G, \Pi_P)$, where $I$ is the stochastic integral introduced in Theorem~\ref{thm:definition_stochastic_integral}. Note that when $w \in L^\infty(\XX, \tilde \G, \Pi_{R^\mu})$, the random variable $I[w]_1$ is well defined $R^\mu$-\emph{a.e.}\ and therefore $P$-\emph{a.e.} So up to integrability properties, the formula for $\Lambda$ makes sense. Part of the result asserts that $\Lambda(w)$ does not depend on the values of $w$ on $\Pi_P$-negligible sets.
			
			Take $w \in L^\infty(\XX, \tilde \G, \Pi_{R^\mu})$. We first show that $I[w]_1$ is $P$-integrable, so that the definition~\eqref{eq:def_Lambda_I} of $\Lambda$ makes sense. By Theorem~\ref{thm:definition_stochastic_integral} and Theorem~\ref{thm:Ito_with_jumps}, the process 
			\begin{equation*}
			\mathcal{E}_t := \exp\left(I[ w]_t - \frac{\nu}{2} \int_0^t \cg|w(s)|^2, M_s\cd \D s  \right), \quad t \in [0,1],
			\end{equation*}
			is a nonnegative supermartingale under $R^\mu$, starting from $1$. In particular, $\mathcal{E}_1$ belongs to $L^1(\Omega, \F, R^\mu)$, and its expectation is smaller or equal to $1$. Let us write~\eqref{eq:convex_ineq_entropy} with $Y := (I[ w]_1 - \frac{\nu}{2} \int_0^1 \cg|w(s)|^2, M_s\cd \D s)_+$, the nonnegative part of $\log \mathcal{E}_1$. We get
			\begin{align*}
			\E_P\left[ \left(I[ w]_1 - \frac{\nu}{2} \int_0^1 \cg|w(s)|^2, M_s\cd \D s\right)_+ \right] &\leq H(P|R) + \log \Em_\mu\big[\exp\big((\log \mathcal{E}_1)_+\big)\big]\\
			&\leq H(P|R) + \log\Big( 1 + \Em_\mu[\mathcal{E}_1]\Big) \leq H(P|R) + \log 2.
			\end{align*}
			Finally, the inequality
			\begin{equation*}
			\big(I[ w]_1\big)_+ \leq \left(I[ w]_1 - \frac{\nu}{2} \int_0^1 \cg|w(s)|^2, M_s\cd \D s\right)_+ +  \frac{\nu}{2} \int_0^1 \cg|w(s)|^2, M_s\cd \D s ,
			\end{equation*}
			and the facts that $w$ is bounded and $\Pi_P$ is finite lead to
			\begin{equation*}
			\E_P\big[ \big(I[w]_1\big)_+ \big] \leq H(P|R) + \log 2 + \frac{\nu}{2}\E_P\left[ \int_0^1 \cg|w(s)|^2, M_s\cd \D s \right] < +\infty.
			\end{equation*}
			The $L^1$ bound of $I[w]_1$ w.r.t.\ $P$ comes from combining this inequality with the same one for $-w$.
			
			Hence, $\Lambda(w)$ given in~\eqref{eq:def_Lambda_I} is well defined for all $w \in L^\infty(\XX, \tilde \G, \Pi_{R^\mu})$. Let us bound it. The process indexed by $\gamma \in \R$ defined by
			\begin{equation*}
			\exp\left( \gamma I[ w]_t - \gamma^2 \frac{\nu}{2} \int_0^t \cg|w(s)|^2, M_s\cd \D s  \right), \quad t \in [0,1],
			\end{equation*}
			is a $R^\mu$-supermartingale starting from $1$. Let us apply~\eqref{eq:convex_ineq_entropy}  with $Y_\gamma := \gamma I[w]_1 - \gamma^2 \frac{\nu}{2} \int_0^1 \cg|w(s)|^2, M_s\cd \D s$. Optimizing in $\gamma$, we get:
			\begin{equation*}
			\big|\Lambda(w) \big| \leq \sqrt{ 2\nu  H(P|R) \E_P\left[ \int_0^1 \cg|w(s)|^2, M_s\cd \D s \right]} =  \sqrt{ 2\nu  H(P|R)} \| w \|_{L^2(\XX, \tilde \G,\Pi_P)}, 
			\end{equation*}
			where the r.h.s.\ is finite because $w$ is bounded and $\Pi_P$ is finite. 
			
			By standard arguments relying on the Riesz representation theorem on the Hilbert space $L^2(\XX, \tilde \G, \Pi_P)$, we get a unique $\tilde v \in L^2(\XX, \tilde \G, \Pi_P)$ such that for all $w \in L^\infty(\XX, \tilde \G, \Pi_{R^\mu})$,
			\begin{equation}
			\label{eq:charact_v}
			\Lambda(w) = \E_P\big[I[w]_1\big] = \E_P\left[ \int_0^1 \cg \tilde v(s) \cdot w(s), M_s \cd \D s \right].
			\end{equation}
			Note that here, we took $w$ in $L^\infty(\XX, \tilde \G, \Pi_{R^\mu})$ and not in $L^\infty(\XX, \tilde\G, \Pi_P)$. The reason is that it is not clear yet that $I[w]_1$ does not depend on the values of $w$ on $\Pi_P$-negligible sets. Only its expectation (namely, the r.h.s.) is well defined on $L^\infty(\XX, \tilde \G, \Pi_P)$. An observation will be made at Step~\ref{step:interpretation_tilde_v} below to solve this problem.
		\end{stepd}

		\begin{stepd}{Getting $\tilde q_k$, $k \in \N$}	
			\label{step:getting_q}
			
			First, we set $\tilde q_1 \equiv 0$. Then, we consider $k \in \N$, $k \neq 1$. The procedure to get $\tilde q_k$ is the same as in the previous step, replacing $I$ by $J^k$ and the Riesz representation theorem by Theorem~\ref{thm:riesz}.  Namely, this time, we show that the linear map
			\begin{equation}
			\label{eq:def_Lambda_J}
			\Lambda: a \in L^\infty(\XX, \tilde \G, \Pi_{R^\mu}) \longmapsto \E_P\big[ J^k[a]_1 \big] \in \R,
			\end{equation}
			where $J^k$ is the jump process introduced in Definition~\ref{def:jump_processes}, satisfies the properties of Theorem~\ref{thm:riesz}, so that we can represent it by an $L^1(\XX, \tilde \G, \Pi_P)$ function.
			
			Take $a \in L^\infty(\XX, \tilde \G, \Pi_{R^\mu})$. Because of Theorem~\ref{thm:properties_jump_processes} and Theorem~\ref{thm:Ito_with_jumps}, the process
			\begin{equation*}
			\exp\left(J^k[a]_t - q_k \int_0^t \cg  \exp(a(s))-1  , M_s\cd \D s\right), \quad t \in [0,1],
			\end{equation*}
			is a $R^\mu$-supermartingale starting from $1$. Applying~\eqref{eq:convex_ineq_entropy} with $Y := J^k[a]_1 - q_k \int_0^1 \cg \exp(a(s)) - 1, M_s\cd \D s$, we get
			\begin{equation*}
			\E_P\big[J^k[a]_1\big]\leq H(P|R) + q_k\E_P\left[ \int_0^1 \cg \exp(a(s)) - 1, M_s\cd \D s \right] = H(P|R) + \int_{\mathcal X}\big\{\exp(a) - 1 \big\} \D \big(q_k \Pi_P\big).
			\end{equation*}
			Note that \emph{a priori}, this inequality only holds in $[-\infty, +\infty)$. But applying it separately to $a_+$ and $a_-$, the nonnegative and nonpositive parts of $a$, and exploiting the identities $(J^k[a]_1)_\pm = J^k[a_\pm]_1$ (which are clear from the Definition~\ref{def:jump_processes} of $J^k$), the boundedness of $a$ and the fact that $\Pi_P$ is finite, we get a $L^1$ bound on $J^k[a]_1$. 	Hence, $\Lambda(a)$ given in~\eqref{eq:def_Lambda_J} is well defined for all $a \in L^\infty(\XX, \tilde \G, \Pi_{R^\mu})$ and therefore also for all $a \in \L^\infty(\XX, \tilde \G)$ (up to composing with the quotient map), with the bound
			\begin{equation*}
			\Lambda(a) \leq H(P|R) + \int_{\mathcal X}\big\{\exp(a) - 1 \big\} \D \big(q_k \Pi_P\big).
			\end{equation*}
			
			A direct application of Theorem~\ref{thm:riesz} in $(\XX, \tilde \G, q_k \Pi_P)$ (recall that by Proposition~\ref{prop:predictable_sigma_field}, $\tilde \G$ is countably generated) provides a nonnegative function $g \in L^1(\XX, \tilde \G, q_k\Pi_P) = L^1(\XX, \tilde \G, \Pi_P)$ such that for all function $a \in L^\infty(\XX, \tilde \G, \Pi_{R^\mu})$,
			\begin{equation*}
			\Lambda(a) = q_k \int_{\mathcal X} ag \D \Pi_P.
			\end{equation*}
			Defining $\tilde q_k := q_k g$ (which is nonnegative) and using the formula~\eqref{eq:def_Lambda_J} for $\Lambda$ and the definition of $\Pi_P$, this identity rewrites for all $a \in L^\infty(\XX, \tilde \G, \Pi_{R^\mu})$,
			\begin{equation}
			\label{eq:charact_q}
			\E_P\big[ J^k[a]_1 \big] = \E_P\left[ \int_0^1 \cg \tilde q_k(s) a(s), M_s \cd \D s \right].\\
			\end{equation}
		\end{stepd}
		
		\begin{stepd}{Energy estimate}
			\label{step:ineq_entropy}	
			Before showing that $P$ is a BBM with parameters $\tilde v$ and $\tilde{\boldsymbol q}:= (\tilde q_k)_{k \in \N}$, let us prove~\eqref{eq:RUOT_smaller_than_entropy} in this context where $P_0 = R_0 = \delta_\mu$ (and hence $H(P_0|R_0)=0$). Let $(w_n)_{n \in \N}$ and $(a^k_n)_{k,n \in \N}$ be families of functions of $L^\infty(\XX, \tilde \G, \Pi_{R^\mu})$ satisfying:
			\begin{equation}
			\label{eq:def_w_a}
			\Pi_P\mbox{-a.e.},\ \forall n \in \N, \qquad w_n = \frac{\tilde v}{\nu} \1_{\frac{|\tilde v|}{\nu} \leq n} \qquad \mbox{and} \qquad \forall k \neq 1,\ a^k_n = \log \frac{\tilde q_k}{q_k}\1_{\log\frac{\tilde q_k}{q_k} \leq n}.
			\end{equation}
			Fix $n \in \N$. By Theorem~\ref{thm:Ito_with_jumps}, Theorem~\ref{thm:definition_stochastic_integral} and Theorem~\ref{thm:properties_jump_processes}, the process
			\begin{equation*}
			\exp\left( I[w_n]_t + \sum_{k\neq 1} J^k[a^k_n]_t - \int_0^t \left\cg \frac{\nu}{2} |w_n(s)|^2 + \sum_{k\neq 1} q_k\Big\{ \exp\big(a^k_n(s)\big) - 1 \Big\} ,M_s\right\cd \D s\right), \quad t \in [0,1],
			\end{equation*}
			is a $R^\mu$-supermartingale starting from $1$. We call
			\begin{equation*}
			Y :=  I[w_n]_1 + \sum_{k=0}^{+\infty} J^k[a^k_n]_1 - \int_0^1 \left\cg \frac{\nu}{2} |w_n(s)|^2 + \sum_{k=0}^{+\infty} q_k\Big\{ \exp\big(a^k_n(s)\big) - 1 \Big\} ,M_s\right\cd \D s.
			\end{equation*}
			Applying inequality~\eqref{eq:convex_ineq_entropy} to $Y_+$ and then $Y$, we obtain that $Y_+$ is $P$-integrable, and that
			\begin{equation*}
			\E_P\big[ I[w_n]_1 \big] + \sum_{k\neq 1} \E_P\big[ J^k[a_n^k]_1 \big] - \E_P\left[ \int_0^1 \left\cg \frac{\nu}{2} |w_n(s)|^2 + \sum_{k\neq 1} q_k\Big\{ \exp\big(a^k_n(s)\big) - 1 \Big\} ,M_s\right\cd \D s \right] \leq H(P|R),
			\end{equation*}
			where the l.h.s.\ is possibly $-\infty$ at this stage. By our two formulas~\eqref{eq:charact_v} and~\eqref{eq:charact_q}, we get
			\begin{equation*}
			\E_P\big[ I[w_n]_1 \big] = \E_P\left[ \int_0^1 \cg \tilde v(s) \cdot w_n(s), M_s \cd \D s \right] \quad  \mbox{and} \quad \forall k\neq 1, \ \E_P\big[ J^k[a^k_n]_1 \big] = \E_P\left[ \int_0^1 \cg \tilde q_k(s) a^k_n(s), M_s \cd \D s \right].
			\end{equation*}
			Plugging these identities in the previous inequality and rearranging slightly the terms, we get
			\begin{equation*}
			\E_P\left[ \int_0^1 \left\cg \tilde v(s)\cdot w_n(s) - \frac{\nu}{2} |w_n(s)|^2 + \sum_{k\neq 1} \Big\{\tilde q_k(s) a^k_n(s) + q_k - q_k \exp\big(a^k_n(s)\big) \Big\} ,M_s\right\cd \D s \right]\leq H(P|R).
			\end{equation*}
			If we plug the values~\eqref{eq:def_w_a} of $w_n$ and $(a^k_n)_{k \in \N}$ in this formula, we discover
			\begin{equation*}
			\E_P\left[ \int_0^1 \left\cg  \frac{|\tilde v(s)|^2}{2\nu} \1_{\frac{|\tilde v(s)|}{\nu} \leq n} + \sum_{k\neq 1} \Big\{\tilde q_k(s) \log \frac{\tilde q_k(s)}{q_k} + q_k - \tilde q_k(s) \Big\} \1_{\log \frac{\tilde q_k(s)}{q_k} \leq n} ,M_s\right\cd \D s \right]\leq H(P|R).
			\end{equation*}
			The integrand is nonnegative, and nondecreasing in $n$. Taking $n \to +\infty$, we get by the monotone convergence theorem
			\begin{equation*}
			\E_P\left[ \int_0^1 \left\cg  \frac{|\tilde v(s)|^2}{2\nu}   +h(\tilde{\boldsymbol q}(s) | \boldsymbol q) ,M_s\right\cd \D s \right]\leq H(P|R),
			\end{equation*}
			which is nothing but inequality~\eqref{eq:RUOT_smaller_than_entropy}, up to multiplying it by $\nu$, in the specific case where $P_0 = R_0$.
		\end{stepd}
		
		\begin{stepd}{Interpretation of \eqref{eq:charact_v}}
			\label{step:interpretation_tilde_v}	
			Here, we show that for all $w \in L^\infty(\XX, \tilde \G, \Pi_{R^\mu})$, the process
			\begin{equation*}
			I[w]_t - \int_0^t \cg \tilde v(s) \cdot w(s), M_s\cd \D s , \quad t \in [0,1],
			\end{equation*}
			is a continuous martingale under $P$, whose bracket is
			\begin{equation*}
			\nu \int_0^t \left\cg |w(s)|^2, M_s \right\cd \D s, \quad t \in [0,1].
			\end{equation*}
			Observe that by the same arguments as before, both terms are $P$-integrable.
			
			Take $w\in L^\infty(\XX, \tilde \G, \Pi_{R^\mu})$. Let $t_0 < t_1$ be two times in $[0,1]$, and let $Z_{t_0}$ be a bounded $\F_{t_0}$-measurable random variable on $\Omega$. For any $s \in [0,1]$, we call 
			\begin{equation*}
			\bar w(s) := Z_{t_0} \1_{(t_0 ,t_1]}(s) w(s).
			\end{equation*}
			This leads to $\bar w \in L^\infty(\XX, \tilde \G, \Pi_{R^\mu})$, and~\eqref{eq:charact_v} applies:
			\begin{equation*}
			\E_P\big[I[\bar w]_1\big] = \E_P\left[ \int_0^1 \cg \tilde v(s) \cdot \bar w(s), M_s \cd \D s \right]=\E_P\left[ Z_{t_0}\int_{t_0}^{t_1} \cg \tilde v(s) \cdot w(s), M_s \cd \D s \right].
			\end{equation*}	
			By formula~\eqref{eq:truncation_stochastic_integral} from Theorem~\ref{thm:definition_stochastic_integral}, there holds $R$-\emph{a.e.}\ $I[\bar w]_1 = Z_{t_0}(I[w]_{t_1} - I[w]_{t_0})$. Therefore, the identity above rewrites
			\begin{equation*}
			\E_P\left[ Z_{t_0} \left( I[w]_{t_1} - \int_0^{t_1} \cg \tilde v(s) \cdot w(s), M_s \cd \D s \right) \right] = \E_P\left[ Z_{t_0} \left( I[w]_{t_0} - \int_0^{t_0} \cg \tilde v(s) \cdot w(s), M_s \cd \D s \right) \right].
			\end{equation*}
			This is precisely the martingale property announced. The continuity of the process is obvious, and the value of its bracket follows from the classical fact that the bracket of a continuous local martingale is invariant under an absolutely continuous change of law.
			
			As a remark, note that if $w^1 = w^2$ $\Pi_P$-\emph{a.e.}, being $w^1,w^2$ two predictable vector fields in~$L^\infty(\mathcal X, \tilde \G, \Pi_{R^\mu})$, then the bracket of $I[w^2 - w^1]$ cancels. A consequence of this is that for all $w \in L^\infty(\mathcal X, \tilde \G, \Pi_{R^\mu})$, $P$-\emph{a.s.}, $I[w]_1$ does not depend on the values of $w$ on $\Pi_P$-negligible sets. As we will not use this property, we do not develop further this observation.
		\end{stepd}
		
		\begin{stepd}{Interpretation of \eqref{eq:charact_q}}
			Now, we prove that for all family $(a^k)_{k \neq 1} \in (L^\infty(\XX,\tilde \G,\Pi_{R^\mu}))^{\N \backslash\{1\}}$ and all $k\neq 1$, the process $J := \sum_{k \neq 1}J^k[a^k]$ is a simple pure-jump process under $P$, whose jump measure is the predictable field
			\begin{equation*}
			\eta_t := \sum_k a^k(t)\pf\Big\{ \tilde q_k(t) M_t \Big\}, \quad t \in [0,1].
			\end{equation*}
			Up to a $R^\mu$-negligible set, the process $J$ is $\cadlag$, piecewise constant, and has a finite number of jumps, so it satisfies these properties also up to a $P$-negligible set. Hence, in view of Definition~\ref{def:simple_pure-jump_processes}, we need to check~\eqref{eq:integrability_jump_measure} and~\eqref{eq:martingale_property_def_simple_jump_processes}. For~\eqref{eq:integrability_jump_measure}, we have
			\begin{equation*}
			\int_0^1 \eta_t(\R)\D t = \int_0^1 \sum_k \cg \tilde q_k(t), M_t \cd \D t = \int_0^1 \cg \lambda_{\tilde{\boldsymbol q}(t)}, M_t \cd \D t,
			\end{equation*}
			which is easily seen to be $P$-\emph{a.s.}\ finite exploiting~\eqref{eq:integrability_bound_girsanov} and the inequality~\eqref{eq:tilde_lambda_bounded} given in the previous proof. To prove~\eqref{eq:martingale_property_def_simple_jump_processes}, we use the formula~\eqref{eq:truncation_jump_process} given in Theorem~\ref{thm:properties_jump_processes}: Given $f \in C_b(\R)$, for all $t_0 < t_1$ in $[0,1]$, and for all bounded $\F_{t_0}$-measurable random variable $Z_{t_0}$, $R$-\emph{a.s.}\ (and hence also $P$-\emph{a.s.}), we have:
			\begin{equation*}
			Z_{t_0}(f(J_{t_1}) - f(J_{t_0})) = \sum_{k\neq 1} J^k[b^k]_1,
			\end{equation*}
			where for all $k \neq 1$, $b^k$ is the bounded (uniformly in $k$) predictable scalar field defined by
			\begin{equation*}
			b^k(t) = Z_{t_0}\1_{(t_0, t_1]}(t)\Big(f\big(J_{t-}+ a^k(t)\big) - f\big(J_{t-}\big)\Big), \quad t \in [0,1].
			\end{equation*}
			Using the definition~\eqref{eq:charact_q} of $\tilde q_k$ and writing $b^k = b^k_+ - b^k_-$, we deduce that $\sum_k J^k[b^k]$ is $P$-integrable, with
			\begin{equation*}
			\E_P\left[ \sum_k J^k[b^k]_1 \right] = \sum_k \E_P\left[ Z_{t_0}\int_{t_0}^{t_1} \left\cg \tilde q_k(t) \Big\{ f(J_{t-} + a^k(t)) - f(J_{t-}) \Big\}, M_s \right\cd \D s \right].
			\end{equation*}
			This last equality rewrites
			\begin{equation*}
			\E_P\left[ Z_{t_0} \left( f(J_{t_1}) -\! \int_{0}^{t_1} \hspace{-5pt} \Big\{ f(J_t + y) - f(J_t) \Big\}\D \eta_t(y) \right) \right] = \E_P\left[ Z_{t_0} \left( f(J_{t_0}) -\! \int_{0}^{t_0} \hspace{-5pt} \Big\{ f(J_t + y) - f(J_t) \Big\}\D \eta_t(y) \right) \right],
			\end{equation*}
			and~\eqref{eq:martingale_property_def_simple_jump_processes} follows.
		\end{stepd}
		
		\begin{stepd}{The local martingale property}
			\label{step:martingale_property}
			
			Now that we obtained $\tilde v$ and $\tilde{\boldsymbol q}$, we want to prove that $P$ is the law of a BBM with these parameters, that is that the process defined in~\eqref{eq:def_modified_BBM} is a local martingale under $P$.
			
			Let us consider a smooth nonpositive function $\psi$ on $[0,1] \times \T^d$. As $P \ll R^\mu$, the formula~\eqref{eq:branching_Ito} holds $P$-\emph{a.s.}\ for all $t \in [0,1]$. As a consequence, using the formula~\eqref{eq:def_random_operator} of $\mathcal L_{\nu, \tilde{\boldsymbol q}(s)}[\psi(s)]$, we find that $P$-\emph{a.s.} $\forall t \in [0,1]$,
			\begin{align*}
			\exp\bigg( &\frac{\cg \psi(t), M_t\cd}{\nu} -\frac{\cg \psi(0), M_0\cd }{\nu} - \frac{1}{\nu}\int_0^t \big\cg \partial_t \psi(s) + \mathcal L_{\nu, \tilde{\boldsymbol q}(s)}[\psi(s)] + \tilde v(s) \cdot \nabla \psi(s), M_s \big\cd \D s \bigg)\\
			&=\exp\bigg(\frac{1}{\nu}\bigg\{   I[\nabla \psi]_t + \sum_{k\neq 1} J^k[(k-1)\psi]_t -  \int_0^t \Big\cg \frac{1}{2}|\nabla \psi(s)|^2 + \Psi^*_{\nu, \tilde{\boldsymbol q}(s)}(\psi(s)) + \tilde v(s) \cdot \nabla \psi(s), M_s \Big\cd \D s \bigg\}\bigg).
			\end{align*}
			What we want to do is to apply the second point of Theorem~\ref{thm:Ito_with_jumps} to the $P$-semi-martingale with simple jumps:
			\begin{equation*}
			X_t := \frac{1}{\nu}\left( I[\nabla \psi]_t - \int_0^t \cg \tilde v(s) \cdot \nabla \psi(s), M_s\cd \D s +\sum_{k\neq 1} J^k[(k-1)\psi]_t\right), \quad t \in [0,1].
			\end{equation*}
			By the two previous steps, $X$ has no continuous finite variation component, its bracket is
			\begin{equation*}
			[X]_t = \frac{1}{\nu} \int_0^t \cg |\nabla \psi(s)|^2, M_s \cd \D s, \quad t \in [0,1],
			\end{equation*} 
			and its jump measure is
			\begin{equation*}
			\eta_t := \sum_{k\neq 1} \left\{ (k-1)\frac{\psi(s)}{\nu} \right\}\pf\Big\{ \tilde q_k(t) M_t \Big\}, \quad t \in [0,1],
			\end{equation*}
			so that
			\begin{align*}
			\int_0^t \hspace{-5pt} \int_{\R}\Big\{ \exp(y)-1 \Big\}\D \eta_s(y) \D s &= \int_0^t\left\cg \sum_{k \neq 1} \tilde q_k( s)\left\{ \exp\left((k-1)\frac{\psi(s)}{\nu}\right)-1 \right\}, M_s \right\cd \D s \\
			&= \frac{1}{\nu}\int_0^t\left\cg \Psi^*_{\nu, \tilde{\boldsymbol q}(s)}(\psi(s)), M_s \right\cd \D s.
			\end{align*}
			In view of Theorem~\ref{thm:Ito_with_jumps}, the only thing that we have to check, is that the previous quantity is not $+\infty$, $P$-\emph{a.s.}
			Actually, using the nonpositivity of $\psi$ and the definition~\eqref{eq:def_Psi*} of $\Psi^*_{\nu, \boldsymbol q}$, this is obvious since 
			\begin{equation*}
			\int_0^1\left\cg \Psi^*_{\nu, \tilde{\boldsymbol q}(t)}(\psi(t)), M_t \right\cd \D t \leq e^{\frac{\| \psi \|_{\infty}}{\nu}} \int_0^1\left\cg \tilde q_0(t) , M_t \right\cd \D t \leq  e^{\frac{\| \psi \|_{\infty}}{\nu}} \int_0^1 \cg \lambda_{\tilde{\boldsymbol q}(s)}(t), M_t \cd \D t,
			\end{equation*}
			which is $P$-\emph{a.s.}\ finite by~\eqref{eq:integrability_bound_girsanov}. This concludes the existence part of the first part of this proof.
		\end{stepd}
		
		\begin{stepd}{Uniqueness}
			\label{step:uniqueness}
			
			In this last step of this first part, we show that $\tilde v$ and $\tilde{\boldsymbol q}$ are unique on $\XX$ up to a $\Pi_P$-negligible set.
			
			Let us consider $\hat v$ and $\hat{\boldsymbol q}$ fulfilling the requirements of Definition~\ref{def:modified_BBM}. By~\eqref{eq:def_modified_BBM} and~\eqref{eq:branching_Ito}, for all nonpositive smooth function $\psi$ on the torus, the process
			\begin{equation*}
			\exp\left( \frac{1}{\nu}\left\{ I[\nabla \psi]_t + \sum_{k \neq 1} J^k[(k-1)\psi] - \int_0^t \left\cg \frac{1}{2} |\nabla \psi|^2 + \Psi^*_{\nu, \hat{\boldsymbol q}(s)}(\psi)+ \hat v(s) \cdot \nabla \psi  , M_s \right\cd \D s\right\}\right), \quad t \in [0,1],
			\end{equation*}
			is a $P$-local martingale. Let us call for all $t \in [0,1]$
			\begin{equation*}
			Z^\psi_t := \exp\left( \frac{1}{\nu}\left\{ I[\nabla \psi]_t + \sum_{k \neq 1} J^k[(k-1)\psi] - \int_0^t \left\cg \frac{1}{2} |\nabla \psi|^2 + \Psi^*_{\nu, \tilde{\boldsymbol q}(s)}(\psi)+ \tilde v(s) \cdot \nabla \psi  , M_s \right\cd \D s\right\}\right),
			\end{equation*}
			the process defined in the same way but with tildas instead of hats. This is a $P$-local martingale as well due to the previous step. We deduce that for all nonpositive smooth $\psi$, the process
			\begin{equation*}
			Z^\psi_t \times \exp\left( \frac{1}{\nu}\int_0^t \left\cg (\tilde v(s) - \hat v(s))\cdot \nabla \psi + \sum_{k\neq 1} (\tilde q_k(s) - \hat q_k(s))\left\{ \exp\left( (k-1)\frac{\psi}{\nu}\right) - 1 \right\} , M_s \right\cd \D s \right), \quad t \in [0,1],
			\end{equation*}
			is a $P$-local martingale. By Lemma~\ref{lem:mg_times_finite_var} this is possible if and only if for all nonpositive smooth $\psi$, $P$-\emph{a.s.}\ for almost every $t \in [0,1]$,
			\begin{equation*}
			\left\cg (\tilde v(t) - \hat v(t))\cdot \nabla \psi + \sum_{k\neq 1} (\tilde q_k(t) - \hat q_k(t))\left\{ \exp\left( (k-1)\frac{\psi}{\nu}\right) - 1 \right\} , M_t \right\cd = 0.
			\end{equation*}
			By standard density and easy localization arguments, this implies that $\Pi_P$-almost everywhere, for all nonpositive smooth $\psi$,
			\begin{equation*}
			(\tilde v - \hat v)\cdot \nabla \psi + \sum_{k\neq 1} (\tilde q_k - \hat q_k)\left\{ \exp\left( (k-1)\frac{\psi}{\nu}\right) - 1 \right\}=0.
			\end{equation*}
			Let $(\omega,t,x) \in \XX$ a point where this identity is true. By testing it with constant nonpositive functions $\psi: z \in \T^d \mapsto \nu c \in \R_-$, we get:
			\begin{equation*}
			\sum_{k\neq 1} (\tilde q_k(\omega,t,x) - \hat q_k(\omega,t,x))\left\{ \exp\big( (k-1)c\big) - 1 \right\}, \quad \forall c \in \R_-.
			\end{equation*}
			This clearly implies $\hat q_k(\omega,t,x) = \tilde q_k(\omega,t,x)$, for all $k \neq 1$. then, taking for $\psi$ any nonpositive smooth function whose gradient at $x$ is $\tilde v(\omega,t,x) - \hat v(\omega,t,x)$, we see that $\tilde v(\omega,t,x) = \hat v(\omega,t,x)$. The announced uniqueness property follows.
		\end{stepd}
		
		\begin{stepd}{Non-deterministic initial laws}
			\label{step:non_deterministic_intial_girsanov}
			
			Here, we show that we can relax the condition that $M_0$ is deterministic under $R$. To do this, let us consider $R_0 \in \P(\M_\delta(\T^d))$, $R\sim\BBM(\nu, \boldsymbol q, R_0)$, and $P$ with $H(P|R)<+\infty$. We start from the additivity property of the relative entropy~\eqref{eq:disintegration_entropy}.
			
			Because $H(P|R)$ is finite and $H(P_0|R_0)$ is nonnegative, formula~\eqref{eq:disintegration_entropy} ensures that for $P_0$-almost every $\mu \in \M_\delta(\T^d)$, $H(P^\mu|R^\mu)$ is also finite. Hence, we can apply the results of the previous steps, and find predictable fields $\tilde v^\mu$ and $\tilde{\boldsymbol q}^\mu$ meeting the requirements of the theorem applied to the laws $R^\mu$ and $P^\mu$. These are defined $\Pi_{P^\mu}$-\emph{a.e.} 
			
			Moreover, by the measurability property given by the disintegration theorem, one can check that the families of linear forms
			\begin{equation*}
			w \mapsto \E_{P^\mu}[I[w]_1] \qquad \mbox{and} \qquad 	a \mapsto \E_{P^\mu}[J^k[a]_1], \quad k \in \N\backslash\{1\},
			\end{equation*}
			depend on $\mu$ in a measurable way. Therefore, by definition of the drift an branching mechanism given at steps~\ref{step:getting_v} and~\ref{step:getting_q}, we deduce that $\tilde v^\mu$ and $\tilde{\boldsymbol q}^\mu$ depend on $\mu$ in a measurable way.
			
			The formula
			\begin{equation*}
			\tilde v(\omega, t, x) := \tilde v^{M_0(\omega)}(\omega, t, x)
			\end{equation*}
			provides a definition of the field $\tilde v$, for $\Pi_P := P\otimes \D t \otimes M_t$ almost every $(\omega,t,x) \in \XX$. The field $\tilde{\boldsymbol q}$ is built correspondingly.
			
			Let us check the properties announced in the statement of the theorem. These fields are clearly predictable (as $(\omega,t,x) \in \XX \mapsto M_0(\omega)\in \mathcal M_+(\T^d)$ is predictable). Inequality~\eqref{eq:RUOT_smaller_than_entropy} is a direct consequence of~\eqref{eq:disintegration_entropy}, Step~\ref{step:ineq_entropy} and $\E_{P}[ \E_{P^{M_0}}[\ \cdot \ ]] = \E_P[\ \cdot \ ]$. The local martingale property of the process defined in formula~\eqref{eq:def_modified_BBM} follows from the general fact that for any process $X$, $X$ is a local martingale under $P$ if and only if for $P_0$-almost every $\mu \in \M_\delta(\T^d)$, $X$ is a local martingale under $P^\mu$. 
			
			Concerning uniqueness, Step~\ref{step:uniqueness} provides uniqueness of the fields up to $\Pi_{P^\mu}$-negligible sets, for $P_0$-almost every $\mu$. This is the same as uniqueness up to $\Pi_P$-negligible sets, as $\Pi_P$ is easily seen to be $\E_{P_0}[\Pi_{P^{M_0}}]$.
			
		\end{stepd}
	\end{proof}
	
	We close this section and this chapter with a remark generalizing~\eqref{eq:charact_v} and~\eqref{eq:charact_q} in the case when $R_0$ has an exponential bound, that is, under~\eqref{eq:exponential_moment_R_0}. 
	\begin{Rem}
		\label{rem:general_charact}
		Having a close look at this proof, we observe that except at Step~\ref{step:non_deterministic_intial_girsanov} where we get the term $H(P_0|R_0)$ in the bound~\eqref{eq:RUOT_smaller_than_entropy}, the only reason for which we used $R^\mu$, $\mu \in \M_\delta(\T^d)$, instead of $R$ is that contrarily to $R$, $R^\mu$ satisfies the exponential bound on the initial law~\eqref{eq:exponential_moment_R_0}. Therefore, if we had assumed in the first place that $R$ satisfies this bound, we could have written everything from Step~\ref{step:finite_Pi} to Step~\ref{step:uniqueness} in the exact same way for $R$ directly.
		Therefore, under this additional assumption, we see that~\eqref{eq:charact_v} and~\eqref{eq:charact_q} rewrite
		\begin{align*}
		&\forall w \in L^\infty(\XX, \tilde{\mathcal G}, \Pi_R),   &\E_P\Big[ I[w]_1  \Big] &= \E_P\left[  \int_0^1 \cg \tilde v(t) \cdot w(t) , M_t \cd \D t  \right],\\
		&\forall a \in L^\infty(\XX, \tilde{\mathcal G}, \Pi_R), \, \forall k \in \N\backslash\{1\},&\E_P\Big[ J^k[a]_1 \Big] &= \E_P\left[ \int_0^1 \cg \tilde q_k(t) a(t), M_t \cd \D t \right].
		\end{align*}
	\end{Rem}
	
	\chapter{Equivalence of the competitors}
	\label{chap:equivalence_competitors}

	The goal of this chapter is to use the results of Chapter~\ref{chap:characterization_finite_entropy} to build a correspondence between the competitors of our two problems, namely, of the RUOT problem with $\Psi$ defined through its Legendre transform by~\eqref{eq:def_Psi*}, and the branching Schrödinger problem associated with the reference law $R\sim\BBM(\nu, \boldsymbol{q}, R_0)$. This will shed a new light on Theorem~\ref{thm:equality_values_dyn}, for which we provide another proof by constructing competitors.
	
	The main idea will be to consider the density $\tilde \rho_t := \E_P[M_t]$, $t \in [0,1]$, the drift $\tilde v$ and the growth rate
	\begin{equation*}
	\tilde r := \sum_k (k-1) \tilde q_k
	\end{equation*}
	of a competitor $P$ for the branching Schrödinger problem, defined thanks to Theorem~\ref{thm:representation_competitor_SchBr}, as a competitor for the RUOT problem. The drift and growth rate are not deterministic in general, so one of the points will be to average them. In the other direction, given $(\rho, v, r)$ a competitor for the RUOT problem, we will seek for a competitor $P$ for the branching Schrödinger problem having this density, this drift and this growth rate.
	
	A crucial observation will be to identify $\Psi_{\nu, \boldsymbol q}$ as the optimal value of the entropic cost of changing the branching mechanism when the growth rate is fixed (see Lemma~\ref{lem:Psi_compromise}). In that way, the term involving $h(\tilde{\boldsymbol q}| \boldsymbol q)$ from formulas~\eqref{eq:entropy_smaller_than_RUOT} and~\eqref{eq:RUOT_smaller_than_entropy} is seen to correspond with the one involving $\Psi$ in formula~\eqref{eq:def_ruot}.
	
	In this setting, the fact that the two problems are not strictly equivalent (in the sense that the RUOT problem is only a lower semi-continuous relaxation of the branching Schrödinger problem, as we already saw in Chapter~\ref{chap:duality}) will be visible in the following way. On the one hand, it will always be possible to associate to a competitor $P$ for the branching Schrödinger problem a competitor $(\rho,v,r)$ for the corresponding RUOT problem, with lower value of the objective functional. On the other hand, starting from a competitor $(\rho,v,r)$ for the RUOT problem, we will be able to build a corresponding competitor $P$ for the branching Schrödinger problem only up to regularizing $(\rho,v,r)$. This will be the case both for the static problem (Definition~\ref{def:init}) and for the dynamical one (Definition~\ref{def:BrSch}).
	
	We organize this chapter as follows. In Section~\ref{sec:statement_correspondence}, we state Theorem~\ref{thm:main_result_correspondence}, which is the main result of this chapter. It asserts that there exists a correspondence between the competitors of the RUOT problem and the branching Schrödinger problem. Then, in Section~\ref{sec:proof_main_result_using_equivalence_competitors}, we show how Theorem~\ref{thm:main_result_correspondence} provides a more tractable proof of Theorem~\ref{thm:equality_values_dyn} than the one given in Chapter~\ref{chap:duality}. The rest of the chapter will be devoted to the proof of Theorem~\ref{thm:main_result_correspondence}. In Section~\ref{sec:lemma_Psi}, we show that $\Psi_{\nu, \boldsymbol q}$ appears as the minimal entropic cost of modifying the branching mechanism of the reference BBM at fixed growth rate. In Section~\ref{sec:from_BrSch_to_RUOT}, we give a proof of the easy implication of Theorem~\ref{thm:main_result_correspondence}, namely, that a competitor of the branching Schrödinger problem always provides a competitor of the RUOT problem, with lower value of the objective functional. Finally, the other and harder implication is divided into two parts. In Section~\ref{sec:from_RUOT_to_BrSch_static}, we treat the static case, meaning that we study the equivalence of competitors for the minimization problem of Definition~\ref{def:init}. And lastly in Section~\ref{sec:from_RUOT_to_BrSch_dynamic},  we explain how to regularize a competitor for the RUOT problem in order to find a corresponding competitor for the branching Schrödinger problem.
	
	In the whole Chapter, we will assume both bounds from Assumption~\ref{ass:exponential_bounds}. Therefore, we give a more precise description of the equivalence between the two problems, but assuming a bit more on the parameters of these problem w.r.t.\ what was done with duality arguments in Chapter~\ref{chap:duality}.
	
	\section{Statement of the main result}
	\label{sec:statement_correspondence}
	
	The main result of this section writes as follows. We use the notations of Subsection~\ref{sec:presentation_RUOT}, so that we work in terms of momentums $(\rho, m, \zeta)$ and not in terms of velocity and growth rate $(\rho,v,r)$. Recall that $E_{\Psi}$ denotes the energy functional in RUOT, and that for a given initial law $R_0$, $L^*_{R_0}$ is the Legendre transform of the log-Laplace transform of $R_0$, defined at formula~\eqref{eq:def_L*}.
	
	\begin{Thm}
		\label{thm:main_result_correspondence}
		Let $\nu>0$ be a diffusivity parameter, $\boldsymbol q$ be a branching mechanism, $R_0 \in \P(\M_\delta(\T^d))$ and $R \sim \BBM(\nu,\boldsymbol q,R_0)$. We assume that both condition~\eqref{eq:exponential_moment_q} and condition~\eqref{eq:exponential_moment_R_0} from Assumption~\ref{ass:exponential_bounds} hold, and that $R_0 \neq \delta_0$. Let $\Psi = \Psi_{\nu, \boldsymbol q}$ be defined through its Legendre transform by formula~\eqref{eq:def_Psi*}. Let us call $\mathsf T$ the map that associates to each $P$ with finite entropy w.r.t.\ $R$ the triple 
		\begin{equation*}
		\mathsf T(P) := (\rho, m, \zeta) \in \M_+([0,1] \times \T^d)\times \M([0,1] \times \T^d)^d\times\M([0,1] \times \T^d)
		\end{equation*}
		defined by $\rho := \D t \otimes \rho_t$, $m := \D t \otimes m_t$ and $\zeta := \D t \otimes \zeta_t$, where for almost all $t \in [0,1]$:
		\begin{equation}
		\label{eq:def_rho_m_zeta}
		\rho_t := \E_P[M_t], \quad m_t := \E_P\Big[\tilde v(t, \cdot) M_t\Big], \quad \zeta_t := \E_P\left[  \sum_{k \neq 1} (k-1) \tilde q_k(t, \cdot) M_t \right],
		\end{equation}
		where $\tilde v$ is the drift and $\tilde{\boldsymbol q}$ is the branching mechanism of $P$, obtained using Theorem~\ref{thm:representation_competitor_SchBr}.

		Then, for all $P$ with finite entropy w.r.t.\ $R$, $\mathsf T(P)$ is well defined and it satisfies the equation~\eqref{eq:continuity_linear} with boundary conditions $\rho_0 := \E_P[M_0]$ and $\rho_1:= \E_P[M_1]$ in the sense of Definition~\ref{def:CE}.
		
		Moreover, let us also call $\CE$ the set of $(\rho, m, \zeta)$ satisfying the equation~\eqref{eq:continuity_linear} in a weak sense, $\iota_{\CE}$ the $0/+\infty$ indicator of such set, and recall that $E_\Psi$ is defined in Definition~\ref{def:energy_UOT}. Then the l.s.c.\ envelope of the convex functional
		\begin{equation*}
		(\rho,m,\zeta) \longmapsto \inf\Big\{ \nu H(P|R) \ : \ P \in \P(\Omega) \mbox{ s.t.\ } \mathsf{T}(P) = (\rho, m, \zeta) \Big\},
		\end{equation*}
		(where $\inf \emptyset$ is set to $+\infty$) is $(\rho, m, \zeta) \to \nu L^*(\rho_0) + E_\Psi(\rho,m,\zeta) + \iota_{\CE}(\rho, m, \zeta)$.
	\end{Thm}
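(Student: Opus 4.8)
\textbf{Proof plan for Theorem~\ref{thm:main_result_correspondence}.}
The plan is to split the statement into three claims and verify each using the machinery already developed. \emph{First}, I would establish that $\mathsf T(P)$ is well defined for every $P$ with $H(P|R) < +\infty$. Thanks to Assumption~\ref{ass:exponential_bounds}, Proposition~\ref{prop:exponential_bounds} gives $\sup_t \E_P[M_t(\T^d)] \leq C(1+H(P|R)) < +\infty$, so $\rho_t = \E_P[M_t]$ is a finite measure for each $t$ and $\rho = \D t \otimes \rho_t$ is a finite measure on $[0,1]\times\T^d$. For $m$ and $\zeta$ I would invoke Theorem~\ref{thm:representation_competitor_SchBr} to obtain the predictable drift $\tilde v$ and predictable branching mechanism $\tilde{\boldsymbol q}$, together with the integrability bound~\eqref{eq:integrability_bound_girsanov}; Cauchy--Schwarz applied to $\tilde v$ against $M_t$ and the elementary bound $|\sum_k (k-1)\tilde q_k| \le \sum_k(k+1)\tilde q_k \lesssim h(\tilde{\boldsymbol q}|\boldsymbol q) + \lambda_{\boldsymbol q}$ (via~\eqref{eq:convex_inequality_l}, as in~\eqref{eq:tilde_lambda_bounded} and Remark~\ref{rem:representation_competitor_BrSch}) show that $m_t$ and $\zeta_t$ are finite measures with $\int_0^1 |m_t|(\T^d)\,\D t < +\infty$ and $\int_0^1 |\zeta_t|(\T^d)\,\D t<+\infty$, using~\eqref{eq:unif_bound_density} to control $\rho_t(\T^d)$.

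\emph{Second}, I would prove that $\mathsf T(P)$ solves the continuity equation~\eqref{eq:continuity_linear} with boundary data $\rho_0=\E_P[M_0]$, $\rho_1=\E_P[M_1]$. Fix a smooth $\phi:[0,1]\times\T^d\to\R$ and apply the extended Itô formula, Theorem~\ref{thm:branching_Ito}, to $\varphi = \phi$: this writes $\cg\phi(t),M_t\cd$ as $\cg\phi(0),M_0\cd$ plus $I[\nabla\phi]_t$, plus $\sum_{k\neq 1} J^k[(k-1)\phi]_t$, plus $\int_0^t \cg \partial_s\phi(s) + \frac{\nu}{2}\Delta\phi(s), M_s\cd\,\D s$. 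Taking expectations under $P$ at $t=1$, the generalized characterization of $\tilde v$ and $\tilde q_k$ (the formulas~\eqref{eq:charact_v} and~\eqref{eq:charact_q}, or their $R_0$-exponential-moment version in Remark~\ref{rem:general_charact}, applicable here because Assumption~\ref{ass:exponential_bounds} is in force) turns $\E_P[I[\nabla\phi]_1]$ into $\E_P[\int_0^1 \cg \tilde v(s)\cdot\nabla\phi(s),M_s\cd\,\D s] = \langle \nabla\phi, m\rangle$ and $\E_P[\sum_k J^k[(k-1)\phi]_1]$ into $\E_P[\int_0^1 \cg \sum_k (k-1)\tilde q_k(s)\phi(s),M_s\cd\,\D s] = \langle \phi, \zeta\rangle$. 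Rearranging yields exactly the weak formulation~\eqref{eq:continuity_weak_form}; one must also check the $L^1$-integrability needed to take the expectation of each term, which follows from the bounds in the first step plus a localization argument as in the proof of Proposition~\ref{prop:loc_exp_mart_BBM}. This gives $\mathsf T(P)\in\CE$ with the stated boundary conditions.

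\emph{Third and hardest}, I would identify the l.s.c.\ envelope of $G(\rho,m,\zeta):=\inf\{\nu H(P|R): \mathsf T(P)=(\rho,m,\zeta)\}$ with $(\rho,m,\zeta)\mapsto \nu L^*_{R_0}(\rho_0)+E_\Psi(\rho,m,\zeta)+\iota_{\CE}(\rho,m,\zeta)$. The inequality $\overline G \ge \nu L^* + E_\Psi + \iota_{\CE}$ (and hence $G\ge$ this, since the right side is already l.s.c.\ by Theorem~\ref{thm:existence_ruot} and the remark after it, together with the l.s.c.\ of $L^*_{R_0}$) is the ``easy'' direction: given $P$ with $\mathsf T(P)=(\rho,m,\zeta)$, the additivity~\eqref{eq:disintegration_entropy} splits $\nu H(P|R) = \nu H(P_0|R_0) + \nu\E_{P_0}[H(P^M|R^M)]$; Theorem~\ref{thm:initial_lsc_envelope} gives $H(P_0|R_0)\ge L^*_{R_0}(\rho_0)$, while Theorem~\ref{thm:representation_competitor_SchBr}'s inequality~\eqref{eq:RUOT_smaller_than_entropy} bounds the dynamical part below by $\E_P[\int_0^1 \cg \frac{|\tilde v|^2}{2}+\nu h(\tilde{\boldsymbol q}(t)|\boldsymbol q),M_t\cd\,\D t]$; one then uses Jensen's inequality (convexity of $(\rho,m)\mapsto |m|^2/\rho$ and of $(\rho,\zeta)\mapsto\Psi(\zeta/\rho)\rho$) together with the key Lemma~\ref{lem:Psi_compromise} identity $\Psi_{\nu,\boldsymbol q}(r) = \inf\{\nu h(\tilde{\boldsymbol q}|\boldsymbol q): \sum_k(k-1)\tilde q_k = r\}$ to push the average inside and recover $E_\Psi(\rho,m,\zeta)$ via the formula~\eqref{eq:energy_UOT}; this is where the main obstacle lies, since one must handle the possibly singular part $\zeta^s$ of $\zeta$ relative to $\rho$ when $\Psi$ is only linear at $\pm\infty$, and argue that averaging a predictable branching mechanism produces the correct horizon-function contributions $L^\pm_\Psi$. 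For the reverse inequality $\overline G \le \nu L^* + E_\Psi + \iota_{\CE}$, I would take an arbitrary $(\rho,m,\zeta)\in\CE$ with finite energy and, using the mollification Lemma~\ref{lem:mollification}, produce smooth approximants $(\rho^\eps,m^\eps,\zeta^\eps)$ converging weakly with $\limsup E_\Psi(\rho^\eps,m^\eps,\zeta^\eps)\le E_\Psi(\rho,m,\zeta)$; for each smooth $(\rho^\eps,v^\eps,r^\eps)$ one chooses the pointwise-optimal branching mechanism $\tilde{\boldsymbol q}^\eps$ attaining the infimum in Lemma~\ref{lem:Psi_compromise} and applies Theorem~\ref{thm:formula_RN_derivative} (with $P_0^\eps$ the near-optimizer of $\Init_{R_0}$, exploiting Theorem~\ref{thm:initial_lsc_envelope}) to build $P^\eps$ with $\mathsf T(P^\eps)=(\rho^\eps,m^\eps,\zeta^\eps)$ and $\nu H(P^\eps|R)\le \nu H(P_0^\eps|R_0)+\E_{P^\eps}[\int_0^1\cg\frac{|v^\eps|^2}{2}+\nu h(\tilde{\boldsymbol q}^\eps|\boldsymbol q),M_t\cd\,\D t]$; since $v^\eps, r^\eps$ are deterministic this last expectation is exactly $\int_0^1\int[\frac12|v^\eps|^2+\Psi(r^\eps)]\,\D\rho^\eps$, giving $G(\rho^\eps,m^\eps,\zeta^\eps)\le \nu L^*_{R_0}(\rho_0^\eps) + E_\Psi(\rho^\eps,m^\eps,\zeta^\eps)$. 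Passing to the limsup along $\eps\to 0$ and using l.s.c.\ of $L^*_{R_0}$ and weak continuity of boundary data (Lemma~\ref{lem:existence_boundary_CE}) yields $\overline G(\rho,m,\zeta)\le \nu L^*_{R_0}(\rho_0)+E_\Psi(\rho,m,\zeta)$, completing the identification. (The exclusion $R_0\neq\delta_0$ is needed so that Theorem~\ref{thm:initial_lsc_envelope} can be combined with Proposition~\ref{prop:ruot_existence_competitor} to guarantee finiteness in the approximation, exactly as in Remark~\ref{rem:R0=delta0}.)
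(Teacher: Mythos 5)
Your plan is structurally aligned with the paper's strategy (which factors through Propositions~\ref{prop:from_BrSch_to_RUOT} and~\ref{prop:from_RUOT_to_BrSch}), but it has a genuine gap in the recovery step and one misplaced concern.

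First, the misplaced concern: in the lower-bound direction you flag a ``possibly singular part of $\zeta$ relative to $\rho$'' as the main obstacle. But for $(\rho,m,\zeta) = \mathsf T(P)$ with $H(P|R)<+\infty$ this issue does not arise: since $\tilde q_k(t,\cdot) M_t \ll M_t$, the measure $\zeta_t = \E_P\big[\sum_k (k-1)\tilde q_k(t,\cdot)M_t\big]$ is automatically absolutely continuous with respect to $\rho_t = \E_P[M_t]$, hence $\zeta \ll \rho$. Singular parts of $\zeta$ are precisely what can \emph{not} be produced by a law $P$; they are the reason the l.s.c.\ envelope can differ from $G$, and they only matter in the upper-bound direction. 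The paper's proof of Proposition~\ref{prop:from_BrSch_to_RUOT} uses the dual formulation of $E_\Psi$ (testing with $(a,b,c)$ satisfying $a + |b|^2/2 + \Psi^*(c)\leq 0$), which sidesteps any explicit decomposition.

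Second, and this is the real gap: in the recovery direction you propose to take ``$P_0^\eps$ a near-optimizer of $\Init_{R_0}$, exploiting Theorem~\ref{thm:initial_lsc_envelope}.'' This does not close the argument. Theorem~\ref{thm:initial_lsc_envelope} only says $L^*_{R_0}$ is the l.s.c.\ \emph{envelope} of $\Init_{R_0}$; it does not say $\Init_{R_0}(\rho_0^\eps)$ is close to $L^*_{R_0}(\rho_0)$ (or even to $L^*_{R_0}(\rho_0^\eps)$) for the specific mollified density $\rho_0^\eps = \tau_{\nu\eps}*((1-\eps)\rho_0+\eps\bar\rho_0)$ imposed by Lemma~\ref{lem:mollification}, Point~5. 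Under the hypothesis of the theorem, $R_0$ is only assumed to have one exponential moment~\eqref{eq:exponential_moment_R_0}, not moments of all orders, so Theorem~\ref{thm:dual_initial_entropic_minimization} is not available to conclude $\Init_{R_0} = L^*_{R_0}$; Subsection~\ref{subsec:counterexamples} shows this gap can be strict. Building $P^\eps$ from such a near-optimizer would give $\nu H(P^\eps|R) \leq \nu\,\Init_{R_0}(\rho_0^\eps) + \dots$, and the $\limsup$ of $\Init_{R_0}(\rho_0^\eps)$ need not be $L^*_{R_0}(\rho_0)$. The paper's workaround is Proposition~\ref{prop:construction_initial_competitor}: one conditions $R$ on having no branching on $[0,\eps]$, writes $\bar R_\eps$ for the law of $M_\eps$ under this conditional law, and proves the delicate estimate $\Init_{\bar R_\eps}(\tau_{\nu\eps}*\rho_0) \leq L^*_{R_0}(\rho_0) + o(1)$; the admissible law $P^\eps$ is then built by running the modified BBM only on $[\eps,1]$ and pasting it back onto the conditioned BBM via the truncation map $\Theta_\eps$ (see Step~2 of the proof of Proposition~\ref{prop:from_RUOT_to_BrSch}). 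Without this ingredient, your upper bound on $\overline G$ would only yield the weaker inequality involving the l.s.c.\ envelope of $\Init_{R_0}$ restricted to the mollified family, which is not obviously $L^*_{R_0}(\rho_0)$.

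Finally, a smaller imprecision: your bound $|\sum_k (k-1)\tilde q_k| \leq \sum_k(k+1)\tilde q_k$ together with~\eqref{eq:tilde_lambda_bounded} controls $\lambda_{\tilde{\boldsymbol q}}=\sum_k \tilde q_k$ but not $\sum_k k\tilde q_k$; to bound the latter you need to use the exponential-moment hypothesis~\eqref{eq:exponential_moment_q} on $\boldsymbol q$, not just~\eqref{eq:convex_inequality_l}. The paper does this by a convex-conjugate estimate $|\tilde r| \leq \kappa^{-1}(\Psi_{\nu,\boldsymbol q}(\tilde r)+K)$, valid when $\Psi^*_{\nu,\boldsymbol q}(\pm\kappa)<+\infty$.
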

	\begin{Rem}
		\begin{itemize}
			\item As already explained in Proposition~\ref{prop:exponential_bounds}, Assumption~\ref{ass:exponential_bounds} is there two ensure that to each competitor for $\BrSch$, we can associate a density.
			\item We already explained at Remark~\ref{rem:R0=delta0} why we have to exlude the case $R_0 = \delta_0$. 
			\item Observe that the initial density $\rho_0$ of a solution $(\rho,m,\zeta)$ of~\eqref{eq:continuity_linear} with finite energy $E_{\Psi}$ is well defined, as a consequence of Lemma~\ref{lem:existence_boundary_CE}.
			\item In the case when $(\rho,m,\zeta)$ comes from a law $P$ with finite entropy w.r.t.\ $R$ through~\eqref{eq:def_rho_m_zeta}, $t \mapsto \rho_t$ needs to be continuous (see Remark~\ref{rem:rho_continuous}), or otherwise stated, $\zeta$ needs to be absolutely continuous w.r.t.\ $\rho$. As developed in Section~\ref{sec:presentation_RUOT}, this is not the case for general solutions of the continuity equation~\eqref{eq:continuity_linear}, even under a finite energy condition, as soon as $\Psi$ is not superlinear at $+\infty$ or $-\infty$. But this is possible for $\Psi = \Psi_{\nu, \boldsymbol q}$ at $+\infty$, even under Assumption~\ref{ass:exponential_bounds}, see Section~\ref{sec:exp_moment}. This is one of the reasons why in the theorem above, the second functional is not equal to the first one, but only its l.s.c.\ envelope.
			\item Given $P$ with $H(P|R) < + \infty$, Remark~\ref{rem:general_charact} provides crucial formulas involving $m$ and $\zeta$ defined in~\eqref{eq:def_rho_m_zeta}, that is, 
			\begin{align}
			\label{eq:charact_m}
			&\forall \xi \in C([0,1]\times \T^d; \R^d),   &\cg \xi,m \cd &= \E_P[I[\xi]_1],\\
			\label{eq:charact_zeta}&\forall \varphi \in C([0,1]\times \T^d),  &\cg \varphi, \zeta \cd &= \E_P\left[ \sum_{k\neq 1} (k-1) J^k[\varphi]_1 \right].
			\end{align}
			(For the latter, we just use the decomposition $\varphi = \varphi_+ - \varphi_-$ to commute the expectation w.r.t.\ $P$ and the sum w.r.t.\ $k$.)
		\end{itemize}
	\end{Rem}
	
	This theorem is a straightforward consequence of Propositions~\ref{prop:from_BrSch_to_RUOT} and~\ref{prop:from_RUOT_to_BrSch} below, so that we decided not to write the proof down completely.
	
	\section{Another proof of Theorem~\ref{thm:equality_values_dyn} under additional assumptions}
	\label{sec:proof_main_result_using_equivalence_competitors}
	Before entering the proof of Theorem~\ref{thm:main_result_correspondence}, let us show that it implies Theorem~\ref{thm:equality_values_dyn} in a straightforward way. Recall that we work in the slightly more restricted case where both bounds from Assumption~\ref{ass:exponential_bounds} hold. In the proof below, we write $\Psi$ as a shortcut for $\Psi_{\nu, \boldsymbol q}$.
	
	\begin{proof}[A proof of Theorem~\ref{thm:equality_values_dyn} using Theorem~\ref{thm:main_result_correspondence}]
		
		First, we prove that for all $\rho_0, \rho_1 \in \M_+(\T^d)$, 
		\begin{equation*}
		\nu L_{R_0}^*(\rho_0) + \ruot_{\nu, \Psi}(\rho_0,\rho_1) \leq \nu \BrSch_{\nu, \boldsymbol q, R_0}(\rho_0,\rho_1).
		\end{equation*}	
		So we take $\rho_0,\rho_1 \in \M_+(\T^d)$, and we assume that $\BrSch_{\nu, \boldsymbol q, R_0}(\rho_0,\rho_1) < + \infty$ (else, there is nothing to prove). In particular, there exists $P$ a competitor for the branching Schrödinger problem, that is a law $P \in \P(\Omega)$ that satisfies $\E_P[M_0] = \rho_0$, $\E_P[M_1] = \rho_1$ and $H(P|R)< + \infty$. Let us call $(\rho,m,\zeta) := \mathsf T(P)$ as defined in Theorem~\ref{thm:main_result_correspondence}. It is a competitor for $\RUOT_{\nu, \Psi}(\rho_0, \rho_1)$. Therefore, we have
		\begin{equation*}
		\nu L_{R_0}^*(\rho_0) + \ruot_{\nu, \Psi}(\rho_0,\rho_1) \leq \nu L_{R_0}^*(\rho_0) + E_\Psi(\rho,m,\zeta) \leq \nu H(P|R),
		\end{equation*}
		where the second inequality is deduced from Theorem~\ref{thm:main_result_correspondence}. Our inequality follows from taking the infimum among all competitors of the branching Schrödinger problem in the r.h.s.
		
		Now, let us show that for all $\rho_0,\rho_1 \in \M_+(\T^d)$, we can find $\rho_0^n \rightharpoonup \rho_0$ and $\rho_1^n \rightharpoonup \rho_1$ such that
		\begin{equation*}
		\limsup_{n \to + \infty} \BrSch_{\nu, \boldsymbol q, R_0}(\rho_0^n, \rho_1^n) \leq \nu L_{R_0}^*(\rho_0) + \ruot_{\nu, \Psi}(\rho_0,\rho_1). 
		\end{equation*}
		So let us take $\rho_0,\rho_1 \in \M_+(\T^d)$, and let us assume that $\nu L^*_{R_0}(\rho_0) + \ruot_{\nu, \Psi}(\rho_0,\rho_1)<+\infty$ (else, there is nothing to prove). Let us consider $(\rho,m,\zeta)$ the solution of $\RUOT_{\nu, \Psi}(\rho_0, \rho_1)$, as given by Theorem~\ref{thm:existence_ruot}. By Theorem~\ref{thm:main_result_correspondence}, we can find $(\rho^n,m^n,\zeta^n)_{n \in \N}$ and $(P_n)_{n\in\N}$ such that $(\rho^n,m^n,\zeta^n) \rightharpoonup (\rho,m,\zeta)$, for all $n \in \N$, $\mathsf T(P_n) = (\rho^n, m^n, \zeta^n)$, and such that
		\begin{equation*}
		\limsup_{n \to + \infty}\nu H(P_n | R) \leq \nu L^*_{R_0}(\rho_0) + E_\Psi(\rho,m,\zeta) = \nu L_{R_0}^*(\rho_0) + \ruot_{\nu, \Psi}(\rho_0,\rho_1).
		\end{equation*}
		For a given $n \in \N$, let us call $\rho_0^n, \rho_1^n$ the boundary conditions of $(\rho^n,m^n,\zeta^n)$. For all $n \in \N$, $P_n$ is a competitor for the branching Schrödinger problem between $\rho_0^n$ and $\rho_1^n$. Hence, we deduce:
		\begin{equation*}
		\limsup_{n \to + \infty}\BrSch_{\nu, \boldsymbol q, R_0}(\rho_0^n, \rho_1^n) \leq \nu L_{R_0}^*(\rho_0) + \ruot_{\nu, \Psi}(\rho_0,\rho_1).
		\end{equation*}
		
		It remains to prove the convergence of $\rho_0^n,\rho^n_1$ towards $\rho_0,\rho_1$, but this is a direct consequence of Lemma~\ref{lem:existence_boundary_CE}.
	\end{proof}

	\section{Optimizing the entropic cost at fixed growth rate}
	\label{sec:lemma_Psi}

	As announced, in this section we show how $\Psi = \Psi_{\nu, \boldsymbol q}$ appears as the optimal entropic cost at fixed growth rate. The result writes as follows.							
	\begin{Lem}
		\label{lem:Psi_compromise}
		Let $\nu>0$, $\boldsymbol q$ be a branching mechanism, and $\Psi_{\nu, \boldsymbol q}$ be defined through its Legendre transform by formula~\eqref{eq:def_Psi*}. We have for all $r \in \R$
		\begin{equation}
		\label{eq:formula_Psi}
		\Psi_{\nu, \boldsymbol q}(r) = \inf\left\{ \nu h(\tilde{\boldsymbol q} | \boldsymbol q)  \ : \  \tilde{\boldsymbol q} \in \M_+(\N), \  \sum_{k} (k-1) \tilde q_k = r \right\} \in \R_+ \cup\{+ \infty \}.
		\end{equation}
	\end{Lem}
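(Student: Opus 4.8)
The plan is to recognize this as a finite-dimensional (well, $\N$-dimensional) convex optimization problem whose value, as a function of the linear constraint level $r$, is computed via the Legendre–Fenchel machinery, and then to check it matches the definition \eqref{eq:def_Psi*}. Denote by $G(r)$ the right-hand side of \eqref{eq:formula_Psi}, i.e.\ $G(r) = \inf\{\nu h(\tilde{\boldsymbol q}|\boldsymbol q) : \tilde{\boldsymbol q} \in \M_+(\N),\ \sum_k (k-1)\tilde q_k = r\}$. Since $\Psi_{\nu,\boldsymbol q}$ is by definition the Legendre transform of $\Psi^*_{\nu,\boldsymbol q}$, and since $\Psi^*_{\nu,\boldsymbol q}$ is proper convex l.s.c., it suffices to show that $G$ is convex, l.s.c., proper, and that its Legendre transform equals $\Psi^*_{\nu,\boldsymbol q}$ — then biconjugation gives $G = (G^*)^* = (\Psi^*_{\nu,\boldsymbol q})^* = \Psi_{\nu,\boldsymbol q}$. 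Convexity of $G$ in $r$ is standard: it is the infimal projection (marginal) of the jointly convex function $(\tilde{\boldsymbol q}, r) \mapsto \nu h(\tilde{\boldsymbol q}|\boldsymbol q) + \iota\{\sum_k(k-1)\tilde q_k = r\}$ along the $\tilde{\boldsymbol q}$ variable, hence convex.

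The core computation is the conjugate. For $s \in \R$ I would write
\begin{align*}
G^*(s) &= \sup_{r \in \R}\left\{ rs - \inf_{\substack{\tilde{\boldsymbol q}\in\M_+(\N)\\ \sum_k(k-1)\tilde q_k = r}} \nu h(\tilde{\boldsymbol q}|\boldsymbol q)\right\}
= \sup_{\tilde{\boldsymbol q}\in\M_+(\N)}\left\{ s\sum_k(k-1)\tilde q_k - \nu h(\tilde{\boldsymbol q}|\boldsymbol q)\right\}\\
&= \sup_{\tilde{\boldsymbol q}\in\M_+(\N)} \sum_k\left\{ (k-1)s\, \tilde q_k - \nu\left( \tilde q_k\log\frac{\tilde q_k}{q_k} + q_k - \tilde q_k\right)\right\}.
\end{align*}
This separates over $k$, and for each $k$ the one-variable maximization of $t \mapsto (k-1)s\,t - \nu(t\log(t/q_k) + q_k - t)$ over $t \ge 0$ has a clean optimizer $t_k^\star = q_k \exp((k-1)s/\nu)$, with optimal value $\nu q_k(\exp((k-1)s/\nu) - 1)$. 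Summing over $k$ yields $G^*(s) = \nu\sum_k q_k\{\exp((k-1)s/\nu) - 1\} = \Psi^*_{\nu,\boldsymbol q}(s)$, exactly the second expression in \eqref{eq:def_Psi*}. (When $q_k = 0$ the $k$-th term contributes $0$ with the convention that the only admissible $\tilde q_k$ is $0$; when the sum over $k$ diverges one gets $+\infty$, consistently on both sides.) The interchange of $\sup_r\inf_{\tilde{\boldsymbol q}}$ into $\sup_{\tilde{\boldsymbol q}}$ is immediate since $r$ appears only through the constraint. Then $G = (G^*)^*$ by Fenchel–Moreau — here one must note $G$ is l.s.c.\ and proper: properness because $G(\bar r) = 0$ for $\bar r := \sum_k(k-1)q_k$ (take $\tilde{\boldsymbol q} = \boldsymbol q$, using $h(\boldsymbol q|\boldsymbol q) = 0$) and $G \ge 0$ since $h \ge 0$; l.s.c.\ because $G$ is convex with $G^{**} = G^*$-conjugate finite somewhere (at $s=0$), so $G$ coincides with its l.s.c.\ hull, and one checks directly the hull cannot drop the value at the finitely-attained points.

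The main obstacle is the potential lack of l.s.c.\ of the infimal-projection $G$: infimal projections of l.s.c.\ convex functions need not be l.s.c.\ without a coercivity or closedness hypothesis, and $h(\cdot|\boldsymbol q)$ is only superlinear coordinate-wise, not uniformly, so the constraint set $\{\sum_k(k-1)\tilde q_k = r\}$ intersected with a sublevel set of $h$ is not obviously weakly compact (this is precisely the $\Psi_{\nu,\boldsymbol q}$-only-linear-at-$+\infty$ phenomenon flagged in Appendix~\ref{app:plots}). I would sidestep this entirely by \emph{not} proving l.s.c.\ of $G$ directly: instead compute $G^*$ as above (which needs no regularity of $G$, only the pointwise formula), observe $G \ge G^{**} = (\Psi^*_{\nu,\boldsymbol q})^* = \Psi_{\nu,\boldsymbol q}$ always, and then prove the reverse inequality $G(r) \le \Psi_{\nu,\boldsymbol q}(r)$ by exhibiting for each $r$ with $\Psi_{\nu,\boldsymbol q}(r) < +\infty$ a near-optimal competitor $\tilde{\boldsymbol q}$ — namely, when the sup defining $\Psi_{\nu,\boldsymbol q}(r) = \sup_s\{rs - \Psi^*_{\nu,\boldsymbol q}(s)\}$ is attained at some $s^\star$ in the interior of the domain of $\Psi^*_{\nu,\boldsymbol q}$, take $\tilde q_k := q_k\exp((k-1)s^\star/\nu)$, verify $\sum_k(k-1)\tilde q_k = r$ (first-order condition, using $(\Psi^*_{\nu,\boldsymbol q})'(s^\star) = r$) and $\nu h(\tilde{\boldsymbol q}|\boldsymbol q) = rs^\star - \Psi^*_{\nu,\boldsymbol q}(s^\star) = \Psi_{\nu,\boldsymbol q}(r)$ by direct substitution; the boundary cases ($s^\star$ at the edge of the domain, corresponding to the linear-growth regime of $\Psi_{\nu,\boldsymbol q}$) are handled by an approximation argument letting $s \nearrow L^+_\Psi$ or $s \searrow -L^-_\Psi$. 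Combining $G \ge \Psi_{\nu,\boldsymbol q}$ and $G \le \Psi_{\nu,\boldsymbol q}$ finishes the proof, and as a byproduct identifies the optimal $\tilde{\boldsymbol q}$ explicitly, which is what is actually used downstream in Section~\ref{sec:from_RUOT_to_BrSch_dynamic}.
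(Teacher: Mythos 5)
Your conjugate computation $G^* = \Psi^*_{\nu,\boldsymbol q}$ is exactly the core of the paper's argument (the paper also separates over $k$ and applies \eqref{eq:convex_inequality_l} coordinate-wise, with the same truncation remark for the possibly non-summable optimizer). The two proofs then diverge in how they upgrade $(G^*)^* = \Psi_{\nu,\boldsymbol q}$ to $G = \Psi_{\nu,\boldsymbol q}$. The paper proves directly that $G$ is l.s.c.: it first pins down $\operatorname{dom}G$ as one of $\R$, $(-\infty,0]$, $\{0\}$, $[0,+\infty)$ by testing with rescaled competitors $a\tilde{\boldsymbol q}$, so convexity gives continuity in the interior, and then handles the single possible finite boundary point $r=0$ by a compactness-plus-Fatou argument using weak convergence of near-minimizing $\tilde{\boldsymbol q}^n$ and lower semi-continuity of $h$. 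You instead want to show $G\le\Psi_{\nu,\boldsymbol q}$ by exhibiting a near-optimal competitor for each $r$.

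There is a genuine gap in your boundary case, and the proposed repair does not work. Suppose $\boldsymbol q$ has a finite exponential abscissa $\theta := L^+_\Psi < +\infty$ with $\Psi^*_{\nu,\boldsymbol q}(\theta) < +\infty$ and $L := (\Psi^*_{\nu,\boldsymbol q})'(\theta-) < +\infty$ (the regime displayed in Figure~\ref{fig:finite_derivative_at_smax}). Take $r > L$. Then $\Psi_{\nu,\boldsymbol q}(r) = r\theta - \Psi^*_{\nu,\boldsymbol q}(\theta)$, and the sup is attained at $s^\star = \theta$, the boundary of $\operatorname{dom}\Psi^*_{\nu,\boldsymbol q}$. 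The candidate $\tilde q_k := q_k e^{(k-1)\theta/\nu}$ is a perfectly good element of $\M_+(\N)$, but it has $\sum_k(k-1)\tilde q_k = L < r$, so it is \emph{not admissible}. Letting $s\nearrow\theta$ as you suggest only produces competitors whose constraint value tends to $L$, never to $r$; you are solving the wrong problem. To make your route work one has to genuinely add mass beyond the tilted measure — e.g., put an extra $\delta$ at a single high index $K$ with $(K-1)\delta = r - L$ and check the entropy surplus $\nu\delta^2/\bar q_K$ vanishes for a suitable diagonal choice of $K$ — and this requires care about the decay of $q_K e^{(K-1)\theta/\nu}$. Alternatively one can invoke the general fact that a convex function coincides with its closure on the interior of its effective domain (so $G = G^{**}$ there automatically), reducing to the boundary point $r=0$ exactly as in the paper; but you do not invoke this, and the "check directly the hull cannot drop the value" remark in your first paragraph is not a substitute. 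As written the linear-growth cases of Appendix~\ref{sec:exp_moment} escape your construction.
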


	\begin{proof}
		Let us call $\Psi$ the function in the r.h.s.\ of~\eqref{eq:formula_Psi}, it is easy to check that it is convex thanks to the convexity of $h$. We claim that it is also l.s.c. Note that if $\tilde{\boldsymbol q}$ is such that $h(\tilde{\boldsymbol q} | \boldsymbol q) < + \infty$, then using $a \tilde{\boldsymbol q}$ as a competitor for $a \geq 0$, one gets
		\begin{equation*}
		\Psi(ar) \leq a h(\tilde{\boldsymbol q} | \boldsymbol q) + a \log a \lambda_{\tilde{\boldsymbol q}}  + (1-a) \lambda_{\boldsymbol q}.    
		\end{equation*}
		It shows that if $\Psi(r)$ is finite for some $r > 0$ (resp. $r < 0$) then $\Psi$ is finite on $[0, + \infty)$ (resp. $( - \infty, 0]$). Thus the domain where $\Psi$ is finite is either $\R$, $(- \infty, 0]$, $\{ 0 \}$ or $[0, + \infty)$. A convex function is continuous in the interior of the domain where it is finite. Thus, and only in the case where the domain is $(- \infty, 0]$ or $[0, + \infty)$, we also need to check that $\Psi$ is l.s.c.\ at $0$. We will only explain why when the domain is $[0,+\infty)$, the other case being symmetric. Indeed this case is equivalent to $q_0 = 0$. Let us take a sequence $(r_n)_{n \in \N}$ converging to $0$. Without loss of generality we assume $\liminf_n \Psi(r_n) < + \infty$, otherwise there is nothing to prove. For any $n \in \N$, let us find $\tilde{\boldsymbol q}^n$ such that $\Psi(r_n) + 1/n \geq \nu h(\tilde{\boldsymbol q}^n | \boldsymbol q)$ and $\sum_{k} (k-1) \tilde q^n_k = r_n$. In particular the sequence $(h(\tilde{\boldsymbol q}^n | \boldsymbol q))_{n \in \N}$ is bounded, thus up to extraction the sequence of measures $(\tilde{\boldsymbol q^n})_{n \in \N}$ converges weakly to a limit $\tilde{\boldsymbol q}$. As $q_0 = 0$, there holds $\tilde q^n_0 = 0$ for all $n \in \N$ and also $\tilde q_0 = 0$. By Fatou $\sum_{k} (k-1) \tilde q_k \leq 0$, thus it must be equal to $0$. We conclude by l.s.c.\ of $h$ that
		\begin{equation*}
		\Psi(0) \leq \nu h(\tilde{\boldsymbol q} | \boldsymbol q) \leq \liminf_{n \to + \infty}  \nu h(\tilde{\boldsymbol q}^n | \boldsymbol q) \leq \liminf_{n \to + \infty} \Psi(r_n).  
		\end{equation*}
		As a consequence, $\Psi$ is convex and l.s.c.\ thus it is equal to the Legendre transform of its Legendre transform \cite[Proposition 4.1]{ekeland1999convex}, and we only need to show that $\Psi^* = \Psi_{\nu, \boldsymbol q}^*$.
		
		By definition we can rewrite $\Psi^*$ as:
		\begin{equation*}
		\Psi^*(s) = \sup_{r \in \R} rs - \Psi(r) = \sup_{\tilde{ \boldsymbol q} \in \M_+(\N)} s \left( \sum_{k} (k-1) \tilde q_k \right) - \nu h(\tilde{\boldsymbol q} | \boldsymbol q) = \nu \sup_{\tilde{ \boldsymbol q} \in \M_+(\N)} \sum_k q_k \left( \frac{s}{\nu} (k-1) \frac{\tilde q_k}{q_k} - l \left( \frac{\tilde q_k}{q_k} \right) \right).       
		\end{equation*}
		Next, for each $k$ the optimization can be performed directly with equality \eqref{eq:convex_inequality_l}: it yields directly that
		\begin{equation*}
		\sup_{\tilde q_k \geq 0} \left( \frac{s}{\nu} (k-1) \frac{\tilde q_k}{q_k} - l \left( \frac{\tilde q_k}{q_k} \right) \right) = \exp \left(  \frac{s}{\nu} (k-1) \right) -1,
		\end{equation*}
		with equality if and only if $\tilde q_k = q_k \exp((k-1) s/ \nu)$. This $\tilde{\boldsymbol q}$ may not be admissible as $\sum_k (k-1)  q_k$ may be infinite, but we can always use $\tilde q_k = q_k \exp((k-1) s/ \nu) \1_{k \leq K}$ and let $K \to + \infty$. It enables us to sum the suprema in $k$ and get $\Psi^* = \Psi_{\nu, \boldsymbol q}^*$. 
	\end{proof}
	
	\section{From branching Schrödinger to RUOT}
	\label{sec:from_BrSch_to_RUOT}
	In this section, we prove the easy part of Theorem~\ref{thm:main_result_correspondence}, namely that the map $\mathsf T$ sends competitors of the branching Schrödinger problem onto competitors of the RUOT problem with same boundary conditions, and that the cost in the branching Schrödinger problem is greater than the cost in RUOT, up to the functional $L^*_{R_0}$ of the initial datum. This will be a direct consequence of Theorem~\ref{thm:representation_competitor_SchBr} and Proposition~\ref{prop:from_BrSch_to_RUOT}. Here, we do not have to assume that $R_0 \neq \delta_0$.
	\begin{Prop}
		\label{prop:from_BrSch_to_RUOT}
		Let $\nu,\boldsymbol{q},R_0,R,\Psi = \Psi_{\nu, \boldsymbol q}$ be as in Theorem~\ref{thm:main_result_correspondence} (in particular, assume both bounds from Assumption~\ref{ass:exponential_bounds}), and $P \in \P(\Omega)$ be a law with finite entropy w.r.t.\ $R$.
		
		Let us consider $(\rho, m, \zeta) := \mathsf T(P)$ as defined in Theorem~\ref{thm:main_result_correspondence}. Then $(\rho,m,\zeta)$ is well defined, it satisfies equation~\eqref{eq:continuity_linear} with boundary conditions $\rho_0 := \E_P[M_0]$ and $\rho_1 := \E_P[M_1]$, and
		\begin{equation}
		\label{eq:BrSch_to_RUOT}
		\nu L_{R_0}^*(\rho_0) + E_\Psi(\rho,m,\zeta) \leq \nu H(P|R).
		\end{equation}
	\end{Prop}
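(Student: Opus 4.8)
The plan is to start from a law $P$ with $\nu H(P|R) < +\infty$ and invoke Theorem~\ref{thm:representation_competitor_SchBr} to obtain the predictable drift $\tilde v$ and the predictable field of branching mechanisms $\tilde{\boldsymbol q}$ representing $P$, together with the integrability bound~\eqref{eq:integrability_bound_girsanov} and the energy inequality~\eqref{eq:RUOT_smaller_than_entropy}. Since both bounds from Assumption~\ref{ass:exponential_bounds} hold, Proposition~\ref{prop:exponential_bounds} guarantees that $\rho_t := \E_P[M_t]$ is a well-defined element of $\M_+(\T^d)$ for every $t$, so $\rho := \D t \otimes \rho_t$ makes sense. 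For $m$ and $\zeta$, the point is that $\tilde v(t,\cdot) \in L^2(\Pi_P)$ and $h(\tilde{\boldsymbol q}(t)|\boldsymbol q) \in L^1(\Pi_P)$ by~\eqref{eq:integrability_bound_girsanov}; hence $m_t := \E_P[\tilde v(t,\cdot) M_t]$ and $\zeta_t := \E_P[\sum_{k\neq 1}(k-1)\tilde q_k(t,\cdot) M_t]$ define finite vector/scalar measures for a.e.\ $t$, and integrating in time yields $m, \zeta$ as finite measures on $[0,1]\times\T^d$. I would record the dual characterizations $\cg \xi, m\cd = \E_P[I[\xi]_1]$ and $\cg\varphi,\zeta\cd = \E_P[\sum_{k\neq 1}(k-1)J^k[\varphi]_1]$ from Remark~\ref{rem:general_charact}, as these are the bridge to the continuity equation.

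Next I would verify that $(\rho, m, \zeta)$ solves~\eqref{eq:continuity_linear} with boundary conditions $\rho_0 = \E_P[M_0]$, $\rho_1 = \E_P[M_1]$ in the weak sense of Definition~\ref{def:CE}. The natural route is the extended Itô formula, Theorem~\ref{thm:branching_Ito}: for a smooth test function $\phi$ on $[0,1]\times\T^d$, writing out $\cg\phi(t),M_t\cd$ via~\eqref{eq:branching_Ito}, applying the martingale-problem characterization of $P$ as a BBM with drift $\tilde v$ and mechanism $\tilde{\boldsymbol q}$ (Definition~\ref{def:modified_BBM}), and then taking $\E_P$ of the decomposition at $t=1$, the martingale terms $I[\nabla\phi]$ and $\sum_{k\neq 1} J^k[(k-1)\phi]$ contribute $\cg\nabla\phi, m\cd$ and $\cg\phi,\zeta\cd$ respectively through the formulas of Remark~\ref{rem:general_charact}, while the bounded-variation term gives $\int_0^1\cg\partial_t\phi + \tfrac{\nu}{2}\Delta\phi, \rho_t\cd \D t$, and the endpoint terms give $\cg\phi(1),\rho_1\cd - \cg\phi(0),\rho_0\cd$. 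Rearranging produces exactly~\eqref{eq:continuity_weak_form}. One should be a little careful about integrability of the stochastic integrals at $t=1$ so that their expectations vanish or equal the claimed quantities; this is handled by the $L^1/L^2$ bounds obtained in the proof of Theorem~\ref{thm:representation_competitor_SchBr} (e.g.\ using the $\F_t$-conditioning trick as in Step~\ref{step:interpretation_tilde_v} there), and by the exponential moment bound on $\sup_t M_t(\T^d)$.

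Then comes the energy estimate~\eqref{eq:BrSch_to_RUOT}. Here the key algebraic input is Lemma~\ref{lem:Psi_compromise}: for each $(\omega,t,x)$, since $\tilde r(\omega,t,x) := \sum_k (k-1)\tilde q_k(\omega,t,x)$ is precisely the "constraint" appearing in~\eqref{eq:formula_Psi}, we have the pointwise bound $\Psi_{\nu,\boldsymbol q}(\tilde r) \leq \nu\, h(\tilde{\boldsymbol q} | \boldsymbol q)$. Now I would estimate $E_\Psi(\rho,m,\zeta)$ using its representation from Proposition~\ref{prop:energy_UOT}. Write $v := \D m / \D \rho$; by Jensen's inequality applied to the (jointly convex) map $(\rho,m)\mapsto |m|^2/\rho$ and the disintegration $m_t = \E_P[\tilde v(t)M_t]$, $\rho_t = \E_P[M_t]$, one gets $\int \tfrac12|v|^2 \D\rho \leq \E_P[\int_0^1 \cg \tfrac12|\tilde v(t)|^2, M_t\cd \D t]$. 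Similarly, since $\zeta_t = \E_P[\tilde r(t) M_t]$ and since $r\mapsto \Psi_{\nu,\boldsymbol q}(r)$ is convex, Jensen on the convex map $(\rho,\zeta)\mapsto \Psi_{\nu,\boldsymbol q}(\zeta/\rho)\rho$ gives $\int \Psi_{\nu,\boldsymbol q}(\zeta/\rho)\D\rho \leq \E_P[\int_0^1 \cg \Psi_{\nu,\boldsymbol q}(\tilde r(t)), M_t\cd \D t] \leq \nu\, \E_P[\int_0^1 \cg h(\tilde{\boldsymbol q}(t)|\boldsymbol q), M_t\cd \D t]$, using the pointwise bound from the Lemma. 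Because $\zeta$ here is absolutely continuous w.r.t.\ $\rho$ (as $t\mapsto\rho_t$ is continuous, Remark~\ref{rem:rho_continuous}), the singular terms $L^\pm_\Psi \zeta^s_\pm$ in~\eqref{eq:energy_UOT} vanish. Combining, $E_\Psi(\rho,m,\zeta) \leq \E_P[\int_0^1 \cg \tfrac12|\tilde v(t)|^2 + \nu h(\tilde{\boldsymbol q}(t)|\boldsymbol q), M_t\cd \D t]$; adding $\nu L^*_{R_0}(\rho_0) \leq \nu H(P_0|R_0)$ (which holds by Theorem~\ref{thm:initial_lsc_envelope}, as $L^*_{R_0}$ is the l.s.c.\ envelope of $\Init_{R_0}$ and $\delta_{\cdot}$-pushforward bounds $\Init_{R_0}$ below... actually more directly $L^*_{R_0}\le\Init_{R_0}\le H(P_0|R_0)$) and then using~\eqref{eq:RUOT_smaller_than_entropy} to absorb $\nu H(P_0|R_0) + \E_P[\cdots] \leq \nu H(P|R)$ finishes~\eqref{eq:BrSch_to_RUOT}.

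I expect the main obstacle to be the rigorous justification that the expectation of the stochastic-integral and pure-jump parts at time $t=1$ in the Itô decomposition equal the claimed measure pairings, rather than picking up spurious boundary contributions or failing to be integrable — in other words, carefully transferring the $P$-local-martingale statements of Definition~\ref{def:modified_BBM} into genuine expectation identities. This requires invoking the $L^1$ estimates established inside the proof of Theorem~\ref{thm:representation_competitor_SchBr} (obtained there via~\eqref{eq:convex_ineq_entropy} applied to exponential supermartingales) and the uniform exponential moment of $\sup_t M_t(\T^d)$ from Proposition~\ref{prop:exponential_bounds}. A secondary technical point, already flagged in the remarks, is the measurability of $t\mapsto (m_t,\zeta_t)$ and the fact that $\rho = \D t \otimes \rho_t$ with $\rho_t$ weakly continuous, so that $m \ll \rho$ and $\zeta \ll \rho$ and the representation~\eqref{eq:energy_UOT} applies without singular terms; this follows from the disintegration structure and Lemma~\ref{lem:desintegration_rho_ruot} together with Remark~\ref{rem:rho_continuous}.
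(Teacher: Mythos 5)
Your architecture is essentially the paper's: represent $P$ via Theorem~\ref{thm:representation_competitor_SchBr}, verify the continuity equation via the extended Itô formula~\eqref{eq:branching_Ito} together with the dual characterizations~\eqref{eq:charact_m}--\eqref{eq:charact_zeta}, and control the energy using Lemma~\ref{lem:Psi_compromise} plus~\eqref{eq:RUOT_smaller_than_entropy}. The one stylistic difference is in bounding $E_\Psi(\rho,m,\zeta)$: you work with the primal representation of Proposition~\ref{prop:energy_UOT} and apply Jensen's inequality to the perspective maps $(\rho,m)\mapsto|m|^2/\rho$ and $(\rho,\zeta)\mapsto\Psi(\zeta/\rho)\rho$, whereas the paper uses the variational definition of $E_\Psi$ directly (Definition~\ref{def:energy_UOT}): pick a continuous test triple $(a,b,c)$ with $a+|b|^2/2+\Psi^*(c)\le 0$, pair it against $(\rho,m,\zeta)$ via~\eqref{eq:def_rho_m_zeta}, and use the pointwise Fenchel--Young inequality under the expectation before taking the supremum over $(a,b,c)$. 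Both routes are valid; the dual route sidesteps having to first show $m\ll\rho$, $\zeta\ll\rho$ and to set up the conditional Jensen carefully, while the Jensen route is arguably more transparent but requires the absolute-continuity remarks you append at the end.

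There is one genuine gap in your well-definedness step. You assert that $h(\tilde{\boldsymbol q}(t)|\boldsymbol q)\in L^1(\Pi_P)$ from~\eqref{eq:integrability_bound_girsanov} ``hence'' $\zeta_t:=\E_P[\tilde r(t,\cdot)M_t]$ defines a finite measure with $\tilde r=\sum_k(k-1)\tilde q_k$. That implication is not automatic: $h(\tilde{\boldsymbol q}|\boldsymbol q)\in L^1$ controls $\lambda_{\tilde{\boldsymbol q}}=\sum_k\tilde q_k$ (via the elementary bound~\eqref{eq:tilde_lambda_bounded}), and hence the negative part $\tilde q_0\le\lambda_{\tilde{\boldsymbol q}}$ of $\tilde r$, but it does \emph{not} by itself control the positive part $\sum_{k\ge2}(k-1)\tilde q_k$. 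To pass from $\nu h(\tilde{\boldsymbol q}|\boldsymbol q)\ge\Psi_{\nu,\boldsymbol q}(\tilde r)$ to integrability of $|\tilde r|$ you must show a linear lower bound $|\tilde r|\le(\Psi_{\nu,\boldsymbol q}(\tilde r)+K)/\kappa$ for some $\kappa,K>0$, and this is precisely where the exponential moment~\eqref{eq:exponential_moment_q} enters: take $\kappa=\theta_{\boldsymbol q}$ so that $\Psi^*_{\nu,\boldsymbol q}(\pm\kappa)<+\infty$ and deduce the bound from Fenchel--Young. Without this assumption $\Psi_{\nu,\boldsymbol q}$ may vanish identically on a half-line (Appendix~\ref{sec:no_exp_moment}) and $\tilde r$ can fail to be integrable even when $h(\tilde{\boldsymbol q}|\boldsymbol q)$ is. This sub-step needs to be written out; the rest of the proposal is sound.
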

	\begin{proof}
		Let $\tilde v$, $\tilde{\boldsymbol q}$ be the drift and the branching mechanism of $P$ (they are obtained using Theorem~\ref{thm:representation_competitor_SchBr} thanks to the exponential bound~\eqref{eq:exponential_moment_q} on $\boldsymbol{q}$).
		
		We start by showing that $(\rho,m,\zeta)$ defined by formula~\eqref{eq:def_rho_m_zeta} is well defined in the set
		\begin{equation*}
		\M_+([0,1] \times \T^d)\times \M([0,1] \times \T^d)^d\times\M([0,1] \times \T^d).
		\end{equation*}
		We already observed in Proposition~\ref{prop:exponential_bounds} that $\rho_t = \E_P[M_t]$ is well defined and bounded uniformly in $t$. In particular, $\rho := \D t\otimes \rho_t \in \M_+([0,1]\times \T^d)$ is well defined. Now, let us recall~\eqref{eq:RUOT_smaller_than_entropy}:
		\begin{equation*}
		\nu H(P_0|R_0) + \E_P\left[ \int_0^1 \left\cg \frac{|\tilde v(t)|^2}{2} + \nu h(\tilde{\boldsymbol q}(t)| \boldsymbol q), M_t \right\cd \D t  \right] \leq \nu H(P|R).
		\end{equation*}
		Calling $\tilde r := \sum_k (k-1) \tilde q_k$, we deduce using~\eqref{eq:formula_Psi}:
		\begin{equation}
		\label{eq:ineq_before_average}
		\nu H(P_0|R_0) + \E_P\left[ \int_0^1 \left\cg \frac{|\tilde v(t)|^2}{2} + \Psi_{\nu, \boldsymbol q}(\tilde r), M_t \right\cd \D t  \right] \leq \nu H(P|R).
		\end{equation}
		Let us show that $ m := \D t \otimes m_t \in \M([0,1]\times \T^d)^d$ is well defined. By definition, for all $t$, we clearly have
		\begin{equation*}
		|m_t|(\T^d) \leq \E_P\Big[\cg|\tilde v(t)|,  M_t \cd\Big] \leq \E_P\left[\left\cg\frac{|\tilde v(t)|^2}{2} + \frac{1}{2},  M_t \right\cd\right]  = \E_P\left[\left\cg\frac{|\tilde v(t)|^2}{2} ,  M_t \right\cd\right] + \frac{\rho_t(\T^d)}{2}.
		\end{equation*}
		We get the result by integrating w.r.t.\ $t$, by using~\eqref{eq:ineq_before_average} to bound the first term, and by using the good definition of $\rho$ to bound the second one. The same proof applies in the case of $\zeta$ provided we can find $\kappa>0$ small enough and $K>0$ large enough, depending on the different parameters of the problem, such that for all $\tilde r \in \R$,
		\begin{equation*}
		|\tilde r| \leq \frac{\Psi_{\nu, \boldsymbol q}(r) + K}{\kappa}.
		\end{equation*} 
		To prove this estimate, first take $\kappa:= \kappa_{\boldsymbol q}$ as given by the bound~\ref{eq:exponential_moment_q}. In that way, we have $\Psi^*_{\nu, \boldsymbol q}(\kappa)<+\infty$. Then, observe that by convexity, we have for all $\tilde r$:
		\begin{equation*}
		\kappa \tilde r_+ \leq \Psi_{\nu,\boldsymbol q}(\tilde r_+) + \Psi^*_{\nu, \boldsymbol q}(\kappa) \qquad \mbox{and} \qquad \kappa \tilde r_- \leq \Psi_{\nu,\boldsymbol q}(-\tilde r_-) + \Psi^*_{\nu, \boldsymbol q}(-\kappa).
		\end{equation*}
		Adding up these two estimates, dividing by $\kappa$, and observing that $\Psi_{\nu,\boldsymbol q}(\tilde r_+) + \Psi_{\nu,\boldsymbol q}(-\tilde r_-) = \Psi_{\nu, \boldsymbol q}(r) + \Psi_{\nu, \boldsymbol q}(0)$, we end up with
		\begin{equation*}
		|\tilde r| \leq \frac{\Psi_{\nu, \boldsymbol q}(r) + \Psi_{\nu, \boldsymbol q}(0) + \Psi^*_{\nu, \boldsymbol q}(\kappa) + \Psi_{\nu, \boldsymbol q}^*(-\kappa)}{\kappa}.
		\end{equation*}
		Therefore, the conclusion follows with $K := \Psi_{\nu, \boldsymbol q}(0) + \Psi^*_{\nu, \boldsymbol q}(\kappa) + \Psi_{\nu, \boldsymbol q}^*(-\kappa)$, observing that by definition~\eqref{eq:def_Psi*} of $\Psi_{\nu, \boldsymbol q}^*$, $\Psi_{\nu, \boldsymbol q}^*(-\kappa)< + \infty$, by our choice of $\kappa$, $ \Psi^*_{\nu, \boldsymbol q}(\kappa)<+\infty$, and by a direct computation, $\Psi_{\nu, \boldsymbol q}(0) = - \inf \Psi_{\nu, \boldsymbol q}^* \leq \nu \lambda_{\boldsymbol q}<+\infty$.
		
		Then, we prove~\eqref{eq:BrSch_to_RUOT}. To achieve that, we start from~\eqref{eq:ineq_before_average}, and we first prove that if $\E_{P_0}[M] = \rho_0$, then $L_{R_0}^*(\rho_0) \leq H(P_0|R_0)$. A direct application of~\eqref{eq:convex_ineq_entropy} gives for all $\varphi \in C(\T^d)$:
		\begin{equation*}
		\cg \varphi, \rho_0 \cd = \E_{P_0}[\cg \varphi, M \cd] \leq H(P_0 | R_0) + \log \E_{R_0}\Big[ \exp\Big( \cg \varphi, M \cd \Big) \Big].
		\end{equation*}
		We get the result by putting the log in the l.h.s.\ and by taking the supremum in $\varphi$.
		Finally, let us prove that
		\begin{equation*}
		E_\Psi(\rho,m,\zeta) \leq \E_P\left[ \int_0^1 \left\cg \frac{|\tilde v(t)|^2}{2} + \Psi_{\nu, \boldsymbol q}(\tilde r), M_t \right\cd \D t  \right].
		\end{equation*}
		By Definition~\ref{def:energy_UOT}, it suffices to prove that if $a = a(t,x) \in \R$, $b = b(t,x) \in \R^d$ and $c = c(t,x) \in \R$ are continuous functions satisfying $a + |b|^2/2 + \Psi_{\nu, \boldsymbol q}^*(c) \leq 0$, then
		\begin{equation*}
		\cg a, \rho \cd + \cg b, m \cd + \cg c, \zeta \cd \leq \E_P\left[ \int_0^1 \left\cg \frac{|\tilde v(t)|^2}{2} + \Psi_{\nu, \boldsymbol q}(\tilde r), M_t \right\cd \D t  \right].
		\end{equation*}
		By formula~\eqref{eq:def_rho_m_zeta} defining $(\rho,m,\zeta)$, we have for such $a$, $b$ and $c$:
		\begin{align*}
		\cg a, \rho \cd + \cg b, m \cd + \cg c, \zeta \cd &= \E_P\left[ \int_0^1 \Big\cg a(t) + b(t) \cdot \tilde v(t) + c(t) \tilde r(t) , M_t \Big\cd \D t \right]\\
		&\leq \E_P\left[ \int_0^1 \left\cg  b(t) \cdot \tilde v(t) - \frac{|b(t)|^2}{2} + c(t) \tilde r(t) - \Psi_{\nu, \boldsymbol q}^*(c(t)) , M_t \right\cd \D t \right]\\
		&\leq \E_P\left[ \int_0^1 \left\cg \frac{|\tilde v(t)|^2}{2} + \Psi_{\nu, \boldsymbol q}(\tilde r), M_t \right\cd \D t  \right],
		\end{align*}
		where the second line is obtained using the constraint on $a,b,c$.
		
		It just remains to prove that $(\rho,m,\zeta)$ satisfies~\eqref{eq:continuity_linear} with boundary conditions $\rho_0:= \E_P[M_0]$ and $\rho_1 := \E_P[M_1]$. Let $\varphi = \varphi(t,x)$ be a smooth test function on $[0,1]\times\T^d$. Given the definition of $(\rho,m,\zeta)$ and the weak formulation~\eqref{eq:continuity_weak_form} of~\eqref{eq:continuity_linear}, we need to check
		\begin{equation*}
		\E_P\Big[\cg \varphi(1),M_1\cd\Big] = \E_P\Big[ \cg \varphi(0), M_0\cd \Big] + \E_P\left[ \int_0^1 \left\cg \partial_t \varphi(t) + \frac{\nu}{2} \Delta \varphi(t) + \tilde v(t) \cdot \nabla \varphi(t) + \tilde r(t) \varphi(t) , M_t \right\cd \D t \right]. 	
		\end{equation*}
		But using the Itô-type formula~\eqref{eq:branching_Ito}, we see that we only need to prove
		\begin{equation*}
		\E_P\Big[ I[\nabla \varphi]_1 \Big] = \E_P\left[  \int_0^1 \cg \tilde v(t) \cdot \nabla \varphi(t) , M_t \cd \D t \right] \quad \mbox{and} \quad \E_P\left[ \sum_{k \neq 1} J^k[(k-1)\varphi]_t \right] = \E_P\left[  \int_0^1 \cg \tilde r(t) \varphi(t) , M_t \cd \D t \right].
		\end{equation*}
		But these identities are direct consequences of the formulas~\eqref{eq:charact_m} and~\eqref{eq:charact_zeta}.
	\end{proof}
	
	\section{From RUOT to branching Schrödinger: the static case}
	\label{sec:from_RUOT_to_BrSch_static}
	
	In Theorem~\ref{thm:initial_lsc_envelope} and Section~\ref{sec:static_legendre_transform}, we show that the functional $L^*_{R_0}$ defined by formula~\eqref{eq:def_L*} is the l.s.c.\ envelope of the value of the initial problem given in Definition~\ref{def:init} for the topology of weak convergence. The result that we would like to have in this section would be to find, given a measure $\rho_0 \in \M_+(\T^d)$ satisfying $L^*_{R_0}(\rho_0) < + \infty$, a sequence of measures $(\rho_0^n)_{n \in \N}$ converging weakly towards $\rho_0$ as $n \to + \infty$, and with
	\begin{equation*}
	\limsup_{n \to + \infty} \Init_{R_0}(\rho_0^n) \leq L^*_{R_0}(\rho_0).
	\end{equation*}
	This would be a direct way to prove Theorem~\ref{thm:initial_lsc_envelope}, and good competitors for the problem $\Init_{R_0}(\rho_0^n)$ could be used as initial laws for the approximated competitors for the branching Schrödinger problem that we aim to build out of a competitor of the RUOT problem in the next section.
	
	Yet, the proof we found does not develop exactly in that way. Instead, we prove the following, involving a full BBM, with its dynamical parameters $\nu$ and $\boldsymbol q$, and not only its initial law $R_0$.
	\begin{Prop}
		\label{prop:construction_initial_competitor}
		Let $R \sim \BBM(\nu, \boldsymbol{q},R_0)$ be a BBM and $(\tau_s)_{s>0}$ be the heat kernel. For all $\rho_0 \in \M_+(\T^d)$ and $t \in (0,1]$,
		\begin{equation*}
		\Init_{\bar R_t}(\tau_{\nu t} \ast \rho_0) \leq L^*_{R_0}(\rho_0) + \log R(S(0) > t) + \lambda_{\boldsymbol q} t \rho_0(\T^d),
		\end{equation*}
		where $S(0)$ is the time of the first branching event, defined in Definition~\ref{def:N_S}, and $\bar R_t \in \P(\M_\delta(\T^d))$ is the law of $M_t$ under the conditional law $R(\ \cdot\ | S(0)>t)$.
	\end{Prop}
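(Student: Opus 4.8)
The plan is to bound $\Init_{\bar R_t}(\tau_{\nu t}\ast\rho_0)$ by constructing an explicit competitor. I would start from a near-optimal competitor for the static problem associated with $R_0$: given $\rho_0$ with $L^*_{R_0}(\rho_0)<+\infty$, the proof of Proposition~\ref{prop:legendre_transform_init} and Theorem~\ref{thm:initial_lsc_envelope} show that $L^*_{R_0}$ is the l.s.c.\ envelope of $\Init_{R_0}$, and by the very definition~\eqref{eq:def_L*} via a Legendre transform one can find (for each $\eps>0$) a continuous $\varphi$ with $\langle\varphi,\rho_0\rangle-\log\E_{R_0}[\exp(\langle\varphi,M\rangle)]\geq L^*_{R_0}(\rho_0)-\eps$. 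Tilting $R_0$ by $\exp(\langle\varphi,M\rangle)$ (truncated in $M(\T^d)$ if necessary, exactly as in Proposition~\ref{prop:legendre_transform_init}) gives a law $P_0$ on $\M_\delta(\T^d)$ with $H(P_0|R_0)$ close to $L^*_{R_0}(\rho_0)$ and with $\E_{P_0}[M]$ close to $\rho_0$; with some care (convexity of the static problem, lower semicontinuity of $\Init$, and the freedom to rescale) one can arrange $\E_{P_0}[M]=\rho_0$ exactly at the cost of only an $\eps$ in the entropy, or alternatively carry the small discrepancy through and pass to the limit at the end.

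Next I would run $P_0$ forward: under $R(\,\cdot\,|S(0)>t)$, no branching event has occurred before time $t$, so each particle has simply performed an independent Brownian motion of diffusivity $\nu$; hence if $M_0$ has law $P_0$ and we condition on $\{S(0)>t\}$ we obtain at time $t$ a point process $\bar P_t$ whose intensity is $\tau_{\nu t}\ast\rho_0$ (using that the intensity of a superposition of independent particles pushes forward linearly, and $\E$ of a single Brownian particle at time $t$ started from $x$ is $\tau_{\nu t}\ast\delta_x$). The key computation is then to control $H(\bar P_t|\bar R_t)$. Writing $P_0^t$ for the law of $M_t$ obtained from $P_0$ by running the (non-branching-conditioned) heat flow and $R_0^t$ correspondingly from $R_0$, one has by the data-processing inequality (pushing forward along the measurable map $\mu_0\mapsto$ law of the conditioned flow, a deterministic kernel identical under both measures) that $H(\bar P_t|\bar R_t)\leq H(P_0|R_0)$. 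The extra terms $\log R(S(0)>t)+\lambda_{\boldsymbol q}t\rho_0(\T^d)$ must come from comparing the conditioned BBM with the unconditioned one: conditioning on $\{S(0)>t\}$ changes the reference measure, and the Radon--Nikodym cost is exactly governed by $-\log R(S(0)>t)$ together with a correction proportional to the expected number of particles, which under the tilted law is $\rho_0(\T^d)$ — since the rate of the first branching event, started from $\mu$ particles, is $\lambda_{\boldsymbol q}\mu(\T^d)$, the probability of no branching in $[0,t]$ is $\E_{P_0}[\exp(-\lambda_{\boldsymbol q}t M(\T^d))]$-type, and a convexity/Jensen bound produces the linear term $\lambda_{\boldsymbol q}t\rho_0(\T^d)$.

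Concretely, the cleanest route is: (i) bound $\Init_{\bar R_t}(\tau_{\nu t}\ast\rho_0)\leq H(\bar P_t|\bar R_t)$ since $\bar P_t$ is admissible with the right intensity; (ii) use the disintegration/conditioning identity for relative entropy, analogous to~\eqref{eq:disintegration_entropy}, to write $H(\bar P_t|\bar R_t)$ in terms of $H(P_0|R_0)$ plus the log-likelihood ratio between $R$ and $R(\,\cdot\,|S(0)>t)$, integrated against $\bar P_t$; (iii) evaluate that log-likelihood ratio — it is $-\log R(S(0)>t\,|\,M_0)$ which for a deterministic configuration $\mu$ equals $\lambda_{\boldsymbol q}t\,\mu(\T^d)$, and then take the $P_0$-expectation of $\mu(\T^d)$, which is $\rho_0(\T^d)$, to get $\lambda_{\boldsymbol q}t\rho_0(\T^d)$; the $\log R(S(0)>t)$ term appears when one replaces the $M_0$-conditional probability by its unconditional value. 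Finally let $\eps\to0$. I expect the main obstacle to be step (ii)--(iii): making rigorous the decomposition of $H(\bar P_t|\bar R_t)$ and correctly accounting for the conditioning on the rare event $\{S(0)>t\}$ — in particular showing that the chosen competitor $P_0$ (or its forward image) is still absolutely continuous with respect to the conditioned reference and that the extra entropy cost is exactly the elementary $-\log R(S(0)>t)+\lambda_{\boldsymbol q}t\rho_0(\T^d)$ rather than something larger; using the branching property and the explicit exponential-clock structure of Subsection~\ref{subsec:def_BBM} should make this tractable.
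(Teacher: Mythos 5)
Your plan is a constructive one (build an explicit competitor $\bar P_t$ and bound $H(\bar P_t|\bar R_t)$), whereas the paper's proof is entirely duality-based: it introduces particle-number-truncated problems $I^n_t$ and $L^n_t$, invokes the no-duality-gap result (Theorem~\ref{thm:dual_initial_entropic_minimization}) on the truncated reference, passes a concavity inequality through the heat kernel, and then carries out a delicate $\Gamma$-liminf argument to remove the truncation. The entropy decomposition you describe in steps~(ii)--(iii) is actually correct and \emph{not} the main obstacle: the map from a point configuration at time $0$ to a configuration at time $t$ under the conditional law $R(\,\cdot\,|S(0)>t)$ is a Markov kernel (independent Brownian displacements), so the data-processing inequality together with
\begin{equation*}
\frac{\D R_0(\,\cdot\,|S(0)>t)}{\D R_0}(\mu) = \frac{e^{-\lambda_{\boldsymbol q}t\,\mu(\T^d)}}{R(S(0)>t)}
\end{equation*}
gives cleanly $H(\hat P_t|\bar R_t)\leq H(P_0|R_0)+\lambda_{\boldsymbol q}t\,\E_{P_0}[M(\T^d)]+\log R(S(0)>t)$. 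This part of your argument works as you hoped.

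The real gap is in step~(i). You assert that ``one can arrange $\E_{P_0}[M]=\rho_0$ exactly at the cost of only an $\eps$ in the entropy'' and invoke ``lower semicontinuity of $\Init$''. But $\Init_{R_0}$ is \emph{not} lower semicontinuous in general --- that is precisely the content of Subsection~\ref{subsec:counterexamples} and the reason $L^*_{R_0}$ is only its l.s.c.\ envelope, not $\Init_{R_0}$ itself. If you could always find such a $P_0$, you would have proved $\Init_{R_0}(\rho_0)\leq L^*_{R_0}(\rho_0)$, which is false: the paper exhibits $R_0$ and $\rho_0$ with a strict gap. Consequently, your argument only yields
\begin{equation*}
\Init_{\bar R_t}(\tau_{\nu t}\ast\rho_0) \leq \Init_{R_0}(\rho_0) + \log R(S(0)>t)+\lambda_{\boldsymbol q}t\,\rho_0(\T^d),
\end{equation*}
which is genuinely weaker than the claimed bound with $L^*_{R_0}(\rho_0)$. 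The smoothing by $\tau_{\nu t}$ and the conditioning are not merely cosmetic: they are exactly what allows one to undershoot $\Init_{R_0}(\rho_0)$, and your direct forward-running construction does not exploit them.

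The fallback you mention (``carry the small discrepancy through and pass to the limit'') also does not work out of the box. Starting from the truncated $P^n_0$ of Proposition~\ref{prop:legendre_transform_init}, you only have $\rho^n:=\E_{P^n_0}[M]\rightharpoonup\rho_0$, not equality; to transfer a bound on $\Init_{\bar R_t}(\tau_{\nu t}\ast\rho^n)$ to one on $\Init_{\bar R_t}(\tau_{\nu t}\ast\rho_0)$ you would need a form of upper semicontinuity of $\Init_{\bar R_t}$ along this sequence, which is not available without some regularization of the reference itself. This is exactly where the paper truncates the number of particles in $\bar R_t$ to get $\bar R^n_t$, for which Theorem~\ref{thm:dual_initial_entropic_minimization} gives a genuine optimizer, and then sends $n\to+\infty$ through the non-trivial Step~\ref{step:Gamma_cv}. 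If you wanted to push your constructive route through, you would need to insert an analogous truncation and a limiting argument there --- but at that point the proof essentially becomes the paper's.
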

	Note that we can explicit $R(S(0) > t)$ as it is $\E_{R_0}[ \exp( - t \lambda_{\boldsymbol q} M_0(\T^d) )]$, but we will only use that this quantity converges to $1$ as $t \to 0$. As one can see, not only we regularize $\rho_0$, but we also change slightly the problem that we are looking at: we replace $R_0$ by the law of $M_t$ conditionally on the event $\{S(0)>t\}$. Still, we show that the value of this problem can be controlled by $L^*_{R_0}(\rho_0)$, the actual initial l.s.c.\ relaxation. In Section~\ref{sec:from_RUOT_to_BrSch_dynamic}, we will use the good competitors for the problem $\Init_{\bar R_t}(\tau_{\nu t} \ast \rho_0)$ as laws of the competitors for the branching Schrödinger problem that we build at time $t$, for small value of $t$ and not at time $0$.
	
	We divide the proof of Proposition~\ref{prop:construction_initial_competitor} in two parts, written in separated subsections. In the first part, we show that under a stronger assumption on $R_0$ (namely, the existence of exponential moments at any order), then the functionals $L^*_{R_0}$ and $\Init_{R_0}$ actually coincide. We explain our construction of good competitors for the problem $\Init_{\bar R_t}(\tau_{\nu t} \ast \rho_0)$ in a second part.
	
	\subsection{Equality under an additional assumption}
	\label{subsec:static_no_duality_gap}
	Here, we assume that
	\begin{equation}
	\label{eq:assumption_all_exponential_moments}
	\E_{R_0}\Big[\exp \big(\kappa M(\T^d)\big)\Big] < + \infty, \qquad \forall \kappa>0.
	\end{equation}
	We prove that in that case, $\Init_{R_0} = L^*_{R_0}$ and the infimum in~\eqref{eq:def_init} is achieved. In particular, in that case, $\Init_{R_0}$ is l.s.c. 
	\begin{Thm}
		\label{thm:dual_initial_entropic_minimization}
		Let $R_0 \in \P(\M_\delta(\T^d))$ satisfying~\eqref{eq:assumption_all_exponential_moments}. Then, $\Init_{R_0} = L^*_{R_0}$. Furthermore, for all $\rho_0 \in \M_+(\T^d)$ satisfying $L^*_{R_0}(\rho_0)< +\infty$, the infimum in~\eqref{eq:def_init} is achieved.
	\end{Thm}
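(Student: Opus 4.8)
The plan is to prove Theorem~\ref{thm:dual_initial_entropic_minimization} by combining the general lower semi-continuity result of Theorem~\ref{thm:initial_lsc_envelope} with a direct construction of an optimizer whose entropy matches $L^*_{R_0}(\rho_0)$. Since Theorem~\ref{thm:initial_lsc_envelope} already gives that $L^*_{R_0}$ is the l.s.c.\ envelope of $\Init_{R_0}$, and since obviously $L^*_{R_0} \leq \Init_{R_0}$, it suffices to show that under~\eqref{eq:assumption_all_exponential_moments} we have $\Init_{R_0}(\rho_0) \leq L^*_{R_0}(\rho_0)$ for every $\rho_0 \in \M_+(\T^d)$; equality then forces $\Init_{R_0}$ to coincide with its own l.s.c.\ envelope. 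The natural candidate competitor is the tilted measure of the form $P_0 = \frac{\exp(\cg \varphi, M\cd)}{\E_{R_0}[\exp(\cg \varphi, M\cd)]} \cdot R_0$, exactly as in the proof of Proposition~\ref{prop:legendre_transform_init}, where $\varphi$ is chosen so that $\E_{P_0}[M] = \rho_0$. For such a $P_0$, equality holds in~\eqref{eq:convex_ineq_entropy} with $Y = \cg \varphi, M\cd$, so that $H(P_0|R_0) = \cg \varphi, \rho_0 \cd - \log \E_{R_0}[\exp(\cg\varphi, M\cd)] \leq L^*_{R_0}(\rho_0)$, giving both the achievement of the infimum and the desired inequality simultaneously.

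The first issue is that $\rho_0$ need not be such that the supremum defining $L^*_{R_0}(\rho_0)$ is attained at a \emph{continuous} $\varphi$; moreover the tilted law $P_0$ must have finite intensity ($\E_{P_0}[M(\T^d)] < +\infty$) to be an admissible competitor in~\eqref{eq:def_init}. This is where assumption~\eqref{eq:assumption_all_exponential_moments} enters: it guarantees that $\log \E_{R_0}[\exp(\cg\varphi,M\cd)]$ is finite whenever $\varphi$ is bounded above (in fact for all bounded $\varphi$), that the function $\varphi \mapsto \log \E_{R_0}[\exp(\cg\varphi,M\cd)]$ is differentiable (in a suitable Gateaux sense) with gradient the intensity of the tilted law, and that the tilted law has all exponential moments as well, hence in particular a finite intensity. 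So the plan is: first treat the case $L^*_{R_0}(\rho_0) = +\infty$ trivially; then, assuming $L^*_{R_0}(\rho_0) < +\infty$, use a concavity/coercivity argument on the concave functional $\varphi \mapsto \cg \varphi, \rho_0\cd - \log\E_{R_0}[\exp(\cg\varphi,M\cd)]$ over $C(\T^d)$ to produce either a maximizer $\bar\varphi$ or a maximizing sequence, and in either case build the corresponding tilted measure (truncating in $M(\T^d)$ if necessary, as in Proposition~\ref{prop:legendre_transform_init}, to keep the intensity finite along the way) and pass to the limit.

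I expect the main obstacle to be the existence of a maximizer for the dual problem, i.e.\ showing that the supremum $\sup_{\varphi \in C(\T^d)} \cg \varphi, \rho_0\cd - \log\E_{R_0}[\exp(\cg\varphi,M\cd)]$ is attained and that the attaining $\varphi$ yields a tilted law with intensity exactly $\rho_0$. Concavity gives uniqueness of the gradient, and~\eqref{eq:assumption_all_exponential_moments} gives the needed smoothness and finiteness, but compactness for the maximizing sequence is delicate in the infinite-dimensional space $C(\T^d)$; one workaround is to show directly that the truncated tilted measures $P_0^n$ from Proposition~\ref{prop:legendre_transform_init} (built from a maximizing sequence $\varphi_n$) converge, using the uniform exponential moment bounds to get tightness and to control $H(P_0^n|R_0)$ via lower semi-continuity of the entropy, and then identify the limit as an optimizer of $\Init_{R_0}(\rho_0)$ with the correct intensity. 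Alternatively, since we only strictly need the inequality $\Init_{R_0}(\rho_0) \leq L^*_{R_0}(\rho_0)$ to conclude the first assertion, one can run the argument of the ``$\geq$'' direction in Proposition~\ref{prop:legendre_transform_init} with the uniform bound~\eqref{eq:assumption_all_exponential_moments} ensuring that no truncation is lost in the limit, and then deduce attainment of the infimum in~\eqref{eq:def_init} from the now-established identity $\Init_{R_0} = L^*_{R_0}$ together with a separate compactness argument. I would present the truncation-and-limit route as the cleanest, postponing any heavy convex-analytic machinery.
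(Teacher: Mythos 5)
The paper takes a genuinely different route, and the gap in your plan is real and significant.

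Your central idea is to take a maximizing sequence $\varphi_n$ for the dual problem $\sup_{\varphi \in C(\T^d)} \langle\varphi,\rho_0\rangle - \log\E_{R_0}[\exp(\langle\varphi,M\rangle)]$ and pass to the limit in the tilted measures $P_0^{\varphi_n}$. You are right that~\eqref{eq:assumption_all_exponential_moments} makes the tilted measures well defined without truncation (bounded $\varphi$ gives $\E_{R_0}[\exp(\langle\varphi,M\rangle)] \leq \E_{R_0}[\exp(\|\varphi\|_\infty M(\T^d))] < +\infty$, and the tilted intensity is also finite), and you correctly identify uniform integrability of $M(\T^d)$ as the mechanism that makes the intensity constraint pass to weak limits. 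But there is a gap earlier: the intensity of $P_0^{\varphi_n}$ is \emph{not} $\rho_0$; it is $\mu_n := \E_{P_0^{\varphi_n}}[M]$, the Gateaux gradient of $G := \log\E_{R_0}[\exp(\langle\cdot,M\rangle)]$ at $\varphi_n$. Equality in~\eqref{eq:convex_ineq_entropy} gives $H(P_0^{\varphi_n}|R_0) = L^*_{R_0}(\mu_n)$, not $L^*_{R_0}(\rho_0)$. Writing things out, one finds
\begin{equation*}
L^*_{R_0}(\rho_0) - L^*_{R_0}(\mu_n) - \langle\varphi_n, \rho_0 - \mu_n\rangle \to 0,
\end{equation*}
which is a vanishing Bregman-type gap for $L^*_{R_0}$; but without strong convexity of $L^*_{R_0}$, or some compactness in the dual variable, this does \emph{not} force $\mu_n \rightharpoonup \rho_0$, nor does it force $\limsup_n H(P_0^{\varphi_n}|R_0) \leq L^*_{R_0}(\rho_0)$. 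So the control on the entropy and on the intensity that you need for the limit argument is precisely the attainment of the dual supremum at a regular point, which is what you flagged as the obstacle — the workaround does not actually dispose of it. Your alternative last sentence (running the ``$\geq$'' direction of Proposition~\ref{prop:legendre_transform_init}) only recovers the l.s.c.\ envelope of $\Init_{R_0}$, which is Theorem~\ref{thm:initial_lsc_envelope} again; it does not directly give $\Init_{R_0}(\rho_0) \leq L^*_{R_0}(\rho_0)$.

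The paper avoids the dual attainment problem entirely. Instead of seeking an optimal $\varphi$ and tilting, it extends the linear form $\sigma \mapsto \langle\sigma,\rho_0\rangle$ from the subspace $\mathcal{W} := \{\langle\sigma, M\rangle : \sigma \in C(\T^d)\}$ to the whole Orlicz-type space $\mathcal{V}$ of functions with exponential moments at every order, dominated by the sublinear functional $\Gamma(f) = \inf_{\kappa>0}\kappa^{-1}(C + \log\E_{R_0}[\exp(\kappa f)])$ with $C := L^*_{R_0}(\rho_0)$; the Hahn–Banach theorem makes this possible, and the ad hoc Riesz representation result (Corollary~\ref{cor:riesz}) then produces a probability $P_0 \ll R_0$ with $H(P_0|R_0) \leq C$ and $\E_{P_0}[\langle\sigma,M\rangle] = \langle\sigma,\rho_0\rangle$ for every continuous $\sigma$, i.e.\ $\E_{P_0}[M] = \rho_0$. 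This yields the optimizer directly in the primal, without ever solving or even approximating the dual maximization. Assumption~\eqref{eq:assumption_all_exponential_moments} enters to guarantee that $\mathcal{W} \subset \mathcal{V}$ and that $\Gamma$ is finite-valued on $\mathcal{V}$. If you want to pursue your route, you would need either a topology on the dual in which maximizing sequences are precompact (unclear in $C(\T^d)$), or a strict-convexity argument for $L^*_{R_0}$ strong enough to upgrade the vanishing Bregman gap into weak convergence of the intensities; neither is done in the proposal, and the paper's functional-analytic detour is precisely designed to avoid having to do either.
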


	\begin{Rem}
		Here, we prove that the static initial problem, which once again is nothing but an inverse problem in grand canonical classical statistical mechanics, has no duality gap. On the other hand, in~\cite{chayes1984validity}, the authors prove the stronger statement that the dual problem admits a solution (see Remark~\ref{rem:grand_canonical}). To do so, in addition to the specific form~\eqref{eq:grand_canonical_probability} of the reference law, they add the assumption, called ``H-stability'', that there exists $B>0$ such that for all $N \in \N^*$, $w_N \geq -B N$. In particular, as in their setting, the probability to observe exactly $N \in \N^*$ particles is proportional to
		\begin{equation*}
		\frac{1}{N!}\int e^{-w_N} \D \mu^{\otimes N} \leq \frac{e^{BN}}{N!},
		\end{equation*}
		it is easy to see that condition~\eqref{eq:assumption_all_exponential_moments} holds true. Hence, our condition is a generalization of theirs. 
		
		The overall goal of the whole section is to go further and to only assume that $R_0$ has some exponential moments, but not at any order. Up to our knowledge, there is no result for this optimization problem at this level of generality.
	\end{Rem}
	\begin{proof}
		The proof of $L^*_{R_0} \leq \Init_{R_0}$ follows the same lines as the beginning of the proof of Proposition~\ref{prop:legendre_transform_init}.
		
		The other inequality as well as the existence of a minimizer will be an application of the Hahn-Banach theorem in its analytic form, which is the very first result in~\cite{brezis2011functional}, and of a Riesz type theorem in non-homogeneous spaces that we provide in Appendix~\ref{app:riesz}. In this proof and in this proof only, we call $\XX:= \M_+(\T^d)$ and $\G$ its Borel $\sigma$-algebra w.r.t.\ the topology of weak convergence of measures. Notice that it is countably generated.
		
		If $L^*_{R_0}(\rho_0)$ is infinite, then there is nothing to prove. So let us assume that it is finite, and let us call it $C$, so that
		\begin{equation}
		\label{eq:def_C}
		C := \sup_{\sigma \in C(\T^d)} \langle \sigma, \rho_0 \rangle -  \log \E_{R_0} \left[ \exp( \langle \sigma, M \rangle ) \right].
		\end{equation}
		Our goal is to find a probability measure $P_0$ that satisfies:
		\begin{equation}
		\label{eq:condition_P_HB}
		H(P_0|R_0) \leq C \qquad \mbox{and} \qquad  \forall \sigma\in C(\T^d), \quad 	\E_{P_0}\big[ \cg \sigma, M \cd \big] = \cg \sigma, \rho_0 \cd .
		\end{equation}
		For this purpose, let us call $\mathcal{V}\subset L^1(\XX, \G, R_0)$ the following vector space (as in Corollary~\ref{cor:riesz}, with $\pi := R_0$):
		\begin{equation*}
		\mathcal{V} := \Big\{ f:\XX \to \R, \quad \G\mbox{-measurable, s.t. } \forall \kappa >0 \ \E_{R_0}\big[ \exp\big( \kappa |f| \big) \big] < + \infty \Big\},
		\end{equation*}
		and notice that because of~\eqref{eq:assumption_all_exponential_moments}, the vector space
		\begin{equation*}
		\mathcal{W} := \Big\{ f: \XX \to \R \mbox{ of the form } f = \cg \sigma, M \cd  \, : \, \sigma \in C(\T^d) \Big\}.
		\end{equation*}
		is a subspace of $\mathcal{V}$.
		
		As a result of Corollary~\ref{cor:riesz}, finding $P_0$ satisfying~\eqref{eq:condition_P_HB} is the same as finding a linear functional $\Lambda$ on $\mathcal{V}$ that satisfies:
		\begin{itemize}
			\item For all $f \in \mathcal{V}$ the following inequality holds:
			\begin{equation}
			\label{eq:ineq_HB}
			\Lambda(f) \leq \inf_{\kappa > 0} \frac{1}{\kappa} \Big( C + \log \E_{R_0}\big[ \exp\big( \kappa f \big) \big] \Big).
			\end{equation}
			Let us call $\Gamma(f)$ the functional in the r.h.s.
			\item For all $f = \cg \sigma, M \cd  \in \mathcal{W}$, 
			\begin{equation}
			\label{eq:restricted_form}
			\Lambda(f) = \cg \sigma, \rho_0\cd.
			\end{equation}
		\end{itemize} 
		In other terms, we want to extend the linear functional~\eqref{eq:restricted_form} defined on $\mathcal{W}$ to the whole space $\mathcal{V}$ in such a way that inequality \eqref{eq:ineq_HB} holds. By virtue of the Hahn-Banach theorem, this is possible provided inequality~\eqref{eq:ineq_HB} holds on $\mathcal{W}$ and provided $\Gamma$ is real valued, positively homogeneous and subadditive.
		
		Let us check~\eqref{eq:ineq_HB} on $\mathcal{W}$. By definition~\eqref{eq:def_C} of $C$, for all $f = \cg \sigma, M \cd \in \mathcal{W}$ and $\kappa>0$, we have:
		\begin{equation*}
		\cg \sigma, \rho_0\cd = \frac{\cg \kappa\sigma, \rho_0\cd}{\kappa} \leq \frac{1}{\kappa} \Big( C + \log \E_{R_0} \Big[ \exp\Big(\kappa  \cg \sigma, M \cd  \Big)\Big] \Big).
		\end{equation*}
		Taking the infimum among $\kappa$ in the r.h.s. we see that inequality~\eqref{eq:ineq_HB} holds on $\mathcal{W}$.
		
		Let us check the properties of $\Gamma$. By definition of $\mathcal{V}$, we see that for $f \in\mathcal{V}$, $\Gamma(f) < +\infty$, and because for all $\kappa>0$, 
		\begin{equation*}
		\log \E_{R_0}[\exp(\kappa f)] \geq \E_{R_0}[\kappa f ] \geq - \kappa \E_{R_0}[|f|] \geq -\kappa \E_{R_0}\big[\exp\big( |f|\big)\big],
		\end{equation*}
		we also have $\Gamma(f)> - \infty$, so that $\Gamma$ is real valued. The positive homogeneity is obvious. Finally, the subadditivity is a consequence of the convexity of the log-Laplace transform: for all $f_1, f_2 \in \mathcal{V}$, $\kappa_1, \kappa_2>0$, calling $\kappa := \kappa_1 \kappa_2/(\kappa_1 + \kappa_2)$,
		\begin{equation*}
		\frac{1}{\kappa} \Big( C + \log \E_{R_0}\big[ \exp\big( \kappa (f_1 + f_2) \big) \big] \Big) \leq \frac{1}{\kappa_1} \Big( C + \log \E_{R_0}\big[ \exp\big( \kappa_1 f_1 \big) \big] \Big) + \frac{1}{\kappa_2} \Big( C + \log \E_{R_0}\big[ \exp\big( \kappa_2 f \big) \big] \Big).
		\end{equation*}
		Hence, we get the subadditivity of $\Gamma$ by taking the infimum among $\kappa_1,\kappa_2$ in the r.h.s.\ and the result follows.
	\end{proof}
	
	\subsection{Construction of approximated static laws}
	This subsection is devoted to the proof of Proposition~\ref{prop:construction_initial_competitor}. So let us give ourselves a diffusivity parameter $\nu>0$, a branching mechanism $\boldsymbol q$, an initial law $R_0$, the corresponding BBM $R \sim \BBM(\nu, \boldsymbol q, R_0)$ and a measure $\rho_0 \in \M_+(\T^d)$ satisfying $L^*_{R_0}(\rho_0)< + \infty$. Recall that $\bar R_t$ is the law of $M_t$ under the conditional law $R(\ \cdot\ | S(0)>t)$.
	
	\begin{proof}[Proof of Proposition~\ref{prop:construction_initial_competitor}] 
		
		\begin{stepe}{Introduction of auxiliary functionals and structure of the proof}
			In the proof, we will need the following approximated functionals, defined for $t \in (0,1]$ and $n \in \N\cup\{\infty\}$:
			\begin{gather}
			\notag I_t^n(\rho_0) := \inf \Big\{ H(P_t | \bar R_t) \ : \ P_t \in \P(\M_\delta(\T^d)), \ \E_{P_t}[M] = \rho_0 \mbox{ and } M(\T^d) \leq n, \ P_t\mbox{-a.s.} \Big\}, \quad \rho_0 \in \M_+(\T^d),\\
			\label{eq:def_Ltn}
			L_t^n(\alpha) := \log \E_{R_0}\Big[ \exp\cg \log (\tau_{\nu t} \ast \alpha) ,M\cd \1_{M(\T^d) \leq n}\Big],\quad \alpha \in \M_+(\T^d)\backslash\{0\}.
			\end{gather}
			Namely, the main idea of this proof is to restrict ourselves to looking for laws exhibititing a possibly large but uniformly bounded number of particles. As far as the functionals $(L^n_t)_{t,n}$ are concerned, it is convenient for compactness reasons to consider $\alpha$ in a set of measures and not in a set of functions, see Step~\ref{step:Gamma_cv} below. However, we will often mix measure and function notations, so that $\alpha = 1$ and $\alpha = \Leb$ correspond to the same object.  
			
			Given $\rho_0 \in \M_+(\T^d)$ our proof consists in showing the following chain of inequalities:
			\begin{equation}
			\label{eq:chain_of_inequalities}
			\begin{aligned}
			\Init_{\bar R_t}(\tau_{\nu t} \ast \rho_0) &\leq \inf_{n \in \N} I_t^n(\tau_{\nu t} \ast \rho_0) \\
			&\leq \inf_{n \in \N} \sup_{\alpha \in \M_+(\T^d)\backslash\{0\}} \cg \log (\tau_{\nu t} \ast \alpha) , \rho_0 \cd - L_t^n(\alpha) + \log R(S(0)>t) +\lambda_{\boldsymbol q} t \rho_0(\T^d) \\
			&\leq \sup_{\alpha \in \M_+(\T^d)\backslash\{0\}} \cg \log (\tau_{\nu t} \ast \alpha) , \rho_0 \cd - L_t^\infty(\alpha) + \log R(S(0)>t) + \lambda_{\boldsymbol q} t \rho_0(\T^d) \\
			&\leq L_{R_0}^*(\rho_0) + \log R(S(0)>t)+ \lambda_{\boldsymbol q} t \rho_0(\T^d).
			\end{aligned}
			\end{equation}
			The first one is a direct consequence of the trivial inequality $\Init_{\bar R_t} = I^\infty_t \leq I^n_t$, for all $t \in (0,1]$, $n \in \N$. The last one is also direct by definition~\eqref{eq:def_L*} of $L^*_{R_0}$, remarking that for all $\alpha \in \M_+(\T^d) \backslash\{0\}$, $\log (\tau_{\nu t} \ast \alpha) \in C(\T^d)$. The second inequality, proved at Step~\ref{step:no_duality_approximated_problem}, is a consequence of the result proved in Subsection~\ref{subsec:static_no_duality_gap} and of the properties of the branching Brownian motion. Finally, the third inequality, proved at Step~\ref{step:Gamma_cv}, is of $\Gamma$-liminf type, with the subtlety that there is no uniform coercivity in general.
		\end{stepe}
		
		\begin{stepe}{No duality gap and estimate for the approximated problems}
			\label{step:no_duality_approximated_problem}
			Let us fix $t \in (0,1]$, $n \in \N$ large enough for $R_0(M(\T^d) \leq n)$ to be nonzero,  and $\rho_0 \in \M_+(\T^d)$. We prove
			\begin{equation*}
			I_t^n(\tau_{\nu t} \ast \rho_0) \leq \sup_{\substack{\psi \in C(\T^d)\\ \psi>0}} \cg \log (\tau_{\nu t} \ast \psi) , \rho_0 \cd - L_t^n(\psi) + \log R(S(0)>t) +\lambda_{\boldsymbol q} t \rho_0(\T^d).
			\end{equation*}
			The second inequality in~\eqref{eq:chain_of_inequalities} obviously follows. The first thing to observe is
			\begin{equation*}
			I_t^n(\tau_{\nu t} \ast \rho_0) = \inf \Big\{ H(P_t | \bar R_t^n) \ : \ P_t \in \P(\M_\delta(\T^d)), \ \E_{P_t}[M] = \tau_{\nu t} \ast \rho_0 \Big\} - \log R_0(M(\T^d) \leq n),
			\end{equation*}
			where $\bar R_t^n$ stands for the conditional law $\bar R_t(\ \cdot \ | M(\T^d) \leq n) = \bar R_t \1_{M(\T^d) \leq n} / R_0(M(\T^d) \leq n)$ (as there holds $\bar R_t(M(\T^d) \leq n) = R_0(M(\T^d)=n)$). This is just because for all $P_t \in \M_\delta(\T^d)$ whose support is in $\{M(\T^d) \leq n\}$, $H(P_t | \bar R_t^n) = H(P_t | \bar R_t) + \log R_0(M(\T^d) \leq n)$. Therefore, as a consequence of Theorem~\ref{thm:dual_initial_entropic_minimization}, we have
			\begin{align*}
			I_t^n(\tau_{\nu t} \ast \rho_0) &= \sup_{\varphi \in C(\T^d)} \cg \varphi, \tau_{\nu t} \ast \rho_0 \cd - \log \E_{\bar R_t^n}\Big[ \exp\cg \varphi, M\cd \Big] - \log R_0(M(\T^d) \leq n)  \\
			&= \sup_{\varphi \in C(\T^d)} \cg \tau_{\nu t} \ast \varphi , \rho_0 \cd - \log \E_{\bar R_t}\Big[ \exp\cg \varphi, M\cd \1_{M(\T^d) \leq n}\Big].
			\end{align*}
			Now, we change the variables according to $\psi = \exp \varphi$. As $\log$ is concave, $\tau_{\nu t} \ast \varphi \leq \log (\tau_{\nu t} \ast \psi)$, so that $\cg \tau_{\nu t} \ast \varphi , \rho_0 \cd \leq \cg \log(\tau_{\nu t} \ast \psi) , \rho_0 \cd$. We end up with
			\begin{align*}
			I_t^n(\tau_{\nu t} \ast \rho_0) &\leq \sup_{\substack{\psi \in C(\T^d) \\\psi>0 }} \cg \log (\tau_{\nu t} \ast \psi) , \rho_0 \cd -\log \E_{\bar R_t}\Big[ \exp\cg \log \psi, M\cd \1_{M(\T^d) \leq n}\Big] \\
			&= \sup_{\substack{\psi \in C(\T^d) \\\psi>0 }} \cg \log (\tau_{\nu t} \ast \psi) , \rho_0 \cd -\log \E_{\bar R_t}\Big[ \exp\cg \log (e^{\lambda_{\boldsymbol q} t}\psi), M\cd \1_{M(\T^d) \leq n}\Big] + \lambda_{\boldsymbol q} t \rho_0(\T^d),
			\end{align*}
			where the second line is obtained changing the variables according to $\psi \mapsfrom e^{-\lambda_{\boldsymbol q} t}\psi$
			
			Hence, to conclude, we jut need to prove that for all $\psi\in C(\T^d)$ with $\psi>0$, calling $\varphi :=  \log (e^{\lambda_{\boldsymbol q} t}\psi)$,
			\begin{equation}
			\label{eq:conditioning_to_not_branching}
			\E_{\bar R_t}\Big[ \exp\cg \varphi, M\cd \1_{M(\T^d) \leq n}\Big] = \frac{1}{R(S(0)>t)} \E_{R_0}\Big[ \exp\cg \log(\tau_{\nu t} \ast \psi), M\cd \1_{M(\T^d) \leq n}\Big] . 
			\end{equation}
			To do so, we use Definitions~\ref{def:BBM_deterministic_R0} and~\ref{def:BBM_general_R0} of the branching Brownian motion. We have
			\begin{align*}
			\E_{\bar R_t}\Big[ \exp\cg  \varphi, M\cd \1_{M(\T^d) \leq n}\Big] &= \E_R\Big[ \exp\cg \varphi, M_t\cd \1_{M_t(\T^d) \leq n}\Big| S(0)>t\Big]\\
			&= \frac{1}{R(S(0)>t)}\E_R\Big[ \exp\cg \varphi, M_t\cd \1_{M_t(\T^d) \leq n}\1_{S(0)>t}\Big]\\
			&= \frac{1}{R(S(0)>t)} \E_{R_0}\Big[ \Em_M\Big[ \exp\cg \varphi, M_t\cd \1_{M_0(\T^d) \leq n}\1_{S(0)>t}\Big] \Big]\\
			&= \frac{1}{R(S(0)>t)} \E_{R_0}\Big[ \Em_M\Big[ \exp\cg \varphi, M_t\cd \1_{S(0)>t}\Big]\1_{M(\T^d) \leq n}\Big].
			\end{align*}
			(In the third inequality, we used the $R$-\emph{a.s.}\ equality $\1_{M_t(\T^d) \leq n}\1_{S(0)>t} = \1_{M_0(\T^d) \leq n}\1_{S(0)>t}$.) Hence, identity~\eqref{eq:conditioning_to_not_branching} follows from the following identity, valid for all $\mu = \delta_{x_1} + \dots + \delta_{x_p} \in \M_\delta(\T^d)$, asserting that conditionally on the event that there is no branching, $R^\mu$ is the law of a population of independent Brownian particles:
			\begin{align*}
			\Em_\mu\Big[ \exp\cg \varphi, M_t\cd \1_{S(0)>t}\Big] &= R^\mu\big(S(0)>t\big)\Em_\mu\Big[ \exp\cg \varphi, M_t\cd \Big|S(0)>t\Big] \\
			&= \exp(-p \lambda_{\boldsymbol q} t) \prod_{i=1}^p \tau_{\nu t} \ast \big(\exp(\varphi)\big)(x_i) \\
			&= \prod_{i=1}^p \tau_{\nu t} \ast \psi(x_i)=\exp\Big\cg \log(\tau_{\nu t} \ast \psi), \mu\Big\cd .
			\end{align*}
		\end{stepe}
		\begin{stepe}{Taking the limit $n \to + \infty$}
			\label{step:Gamma_cv}
			Here, we prove the third inequality in~\eqref{eq:chain_of_inequalities}, that is, reformulating it, for all $t \in (0,1]$ and $\rho_0 \in \M_+(\T^d)$:
			\begin{equation}
			\label{eq:gamma_liminf}
			\inf_{\alpha \in \M_+(\T^d) \backslash \{ 0 \}}L_t^\infty(\alpha) - \cg \log (\tau_{\nu t} \ast \alpha) , \rho_0 \cd\leq \sup_{n \in \N} \inf_{\alpha \in \M_+(\T^d)\backslash\{0\}} L_t^n(\alpha) -\cg \log (\tau_{\nu t} \ast \alpha) , \rho_0 \cd.
			\end{equation}
			
			As the sequence $(L_t^n)_n$ is nondecreasing, the $\sup$ in the r.h.s.\ is a nondecreasing limit, and we can use a diagonal argument to find a sequence of measures $(\alpha_n)_{n \in \N}$ such that
			\begin{equation*}
			\lim_{n \to + \infty} L_t^n(\alpha_n) - \cg \log (\tau_{\nu t} \ast \alpha_n) , \rho_0 \cd = \sup_{n \in \N} \inf_{\alpha \in \M_+(\T^d)\backslash\{0\}} L_t^n(\alpha) -\cg \log (\tau_{\nu t} \ast \alpha) , \rho_0 \cd.
			\end{equation*}
			Note that for all $n \in \N$, if $\alpha = 1$, $L_t^n(\alpha) = \cg \log (\tau_{\nu t} \ast \alpha), \rho_0 \cd = 0$. Hence, the r.h.s.\ in \eqref{eq:gamma_liminf} is nonpositive, and up to redefining $\alpha_n$ to be $1$ when this is not the case, we can assume that for all $n \in \N$,
			\begin{equation}
			\label{eq:nonpositive_limit}
			L_t^n(\alpha_n) \leq \cg \log (\tau_{\nu t} \ast \alpha_n), \rho_0 \cd.
			\end{equation}
			
			Up to extraction, we can suppose that we are in one of the following cases: (i) $\alpha_n$ converges weakly to some $\alpha \in \M_+(\T^d)$ with $\alpha(\T^d) > 0$, (ii) $\alpha_n(\T^d) \to 0$, and (iii) $\alpha_n(\T^d) \to + \infty$. Let us treat these cases one by one.
			
			The convergence in case (i), ensures that $\log (\tau_{\nu t} \ast \alpha_n)$ converges uniformly towards $\log (\tau_{\nu t} \ast \alpha)$. Consequently, $\cg \log (\tau_{\nu t} \ast \alpha_n) , \rho_0 \cd$ converges towards $\cg \log (\tau_{\nu t} \ast \alpha) , \rho_0 \cd$ and by Fatou's lemma and the definition~\eqref{eq:def_Ltn} of $L^n_t$, $L_t^\infty (\alpha) \leq \liminf_{n} L_t^n(\alpha_n)$. Inequality~\eqref{eq:gamma_liminf} follows.
			
			Let us treat case (ii).	In that case, we use also $\alpha_n$ as competitors in the l.h.s.\ in~\eqref{eq:gamma_liminf}:
			\begin{align*}
			\inf_{\alpha \in \M_+(\T^d) \backslash \{ 0 \}}L_t^\infty(\alpha) - &\cg \log (\tau_{\nu t} \ast \alpha) , \rho_0 \cd\\ &\leq \limsup_{n \to + \infty}L_t^\infty(\alpha_n) - \cg \log (\tau_{\nu t} \ast \alpha_n) , \rho_0 \cd \\
			&\leq \lim_{n \to + \infty}  L_t^n(\alpha_n) - \cg \log (\tau_{\nu t} \ast \alpha_n) , \rho_0 \cd + \limsup_{n \to + \infty} L_t^\infty(\alpha_n) - L_t^n(\alpha_n)\\
			&= \sup_{n \in \N} \inf_{\alpha \in \M_+(\T^d)\backslash\{0\}} L_t^n(\alpha) -\cg \log (\tau_{\nu t} \ast \alpha) , \rho_0 \cd + \limsup_{n \to + \infty} L_t^\infty(\alpha_n) - L_t^n(\alpha_n).
			\end{align*}
			Hence, it suffices to prove that $\limsup_n L_t^\infty(\alpha_n) - L_t^n(\alpha_n) \leq 0$. We call $c_n := \log \alpha_n(\T^d)$, $n \in \N$. By assumption, this sequence tends to $-\infty$ as $n \to + \infty$. Let $K>0$ be such that pointwise, $e^{-K}\leq \tau_{\nu t} \leq e^K$. Observe that $c_n - K\leq \log(\tau_{\nu t} \ast \alpha_n) \leq c_n + K$.
			
			Let us go back to the definition~\eqref{eq:def_Ltn} of $L_t^n$. Let $n_0$ be such that $R_0(M(\T^d)\leq n_0)>0$. For $n \geq n_0$ sufficiently large to have $c_n + K \leq 0$, using $\log b - \log a \leq (b-a)/a$, we have
			\begin{align*}
			L_t^\infty(\alpha_n) - L_t^n(\alpha_n) 
			&\leq \frac{\E_{R_0}\Big[ \exp\Big\cg \log (\tau_{\nu t} \ast \alpha_n) ,M\Big\cd \1_{M(\T^d) > n}\Big]}{\E_{R_0}\Big[ \exp\Big\cg \log (\tau_{\nu t} \ast \alpha_n) ,M\Big\cd \1_{M(\T^d) \leq n}\Big]} \\
			& \leq \frac{\E_{R_0}\Big[ \exp\Big((c_n + K) M(\T^d)\Big) \1_{M(\T^d) > n}\Big]}{\E_{R_0}\Big[ \exp\Big((c_n - K) M(\T^d)\Big) \1_{M(\T^d) \leq n}\Big]} \\
			&\leq \frac{\exp\Big( n(c_n+K) \Big) }{R_0(M(\T^d) = n_0) \exp\Big( n_0(c_n-K) \Big)} \\
			&= \frac{\exp(2n_0 K)}{R_0(M(\T^d) = n_0) } \exp\Big( (n-n_0) (c_n + K) \Big) \underset{n \to + \infty}{\longrightarrow} 0,
			\end{align*}
			and the result follows.
			
			It remains to treat case (iii). Let us take $n_0 \in \N^*$ larger that $\rho_0(\T^d)$. We will prove that $R_0(M(\T^d) \geq n_0) = 0$. If this is true, $L^\infty_t = L^{n_0}_t$, so that~\eqref{eq:gamma_liminf} is obvious. 
			
			We define $c_n$, $n \in \N$ and $K$ as in case (ii), and we exploit~\eqref{eq:nonpositive_limit}. This time $c_n \to + \infty$. Let $n \geq n_0$ be sufficiently large to have $c_n - K \geq 0$. Direct estimates provide 
			\begin{equation*}
			\log \Big(R_0\Big(n_0\leq M(\T^d) \leq n\Big) \Big) + n_0(c_n - K)  \leq L_t^n(\alpha_n) \leq  \cg \log (\tau_{\nu t} \ast \alpha_n), \rho_0 \cd \leq \rho_0(\T^d)(c_n + K). 
			\end{equation*}
			This inequality implies
			\begin{equation*}
			\log \Big(  R_0\Big(n_0 \leq M(\T^d) \leq n\Big) \Big) \underset{n \to + \infty}{\longrightarrow} - \infty,
			\end{equation*}
			so that $R_0(M(\T^d) \geq n_0) = 0$, as announced.	
		\end{stepe}
	\end{proof}
	
	\section{From RUOT to branching Schrödinger: the dynamical case}
	\label{sec:from_RUOT_to_BrSch_dynamic}
	
	In this section, we conclude the proof of Theorem~\ref{thm:main_result_correspondence} by proving that for all competitor $(\rho, m,\zeta)$ of the RUOT problem, one can associate a sequence of competitors for the branching Schrödinger problem whose moments converge towards $(\rho,m,\zeta)$, and with convergence of the corresponding functionals. This is the meaning of the following proposition.
	\begin{Prop}
		\label{prop:from_RUOT_to_BrSch}
		Let $\nu,\boldsymbol q,R_0,R,\Psi = \Psi_{\nu, \boldsymbol q}$ be as in Theorem~\ref{thm:main_result_correspondence} (in particular, assume both bounds from Assumption~\ref{ass:exponential_bounds}, and suppose that $R_0 \neq \delta_0$), and let $(\rho,m, \zeta)$ be a solution of equation~\eqref{eq:continuity_linear} with $\nu L_{R_0}^*(\rho_0) + E_\Psi(\rho,m, \zeta)< + \infty$. Then there exist laws $(P^\eps)_{\eps >0}$ in $\P(\Omega)$ satisfying
		\begin{equation}
		\label{eq:Peps_recovery}
		\mathsf T(P^\eps) \underset{\eps \to 0}{\rightharpoonup} (\rho, m, \zeta) \qquad \mbox{and}\qquad \limsup_{\eps \to 0}\nu H(P^\eps | R) \leq \nu L_{R_0}^*(\rho_0) + E_{\Psi}(\rho,m,\zeta).
		\end{equation}
	\end{Prop}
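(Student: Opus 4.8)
\textbf{Proof proposal for Proposition~\ref{prop:from_RUOT_to_BrSch}.}

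The plan is to build the approximating laws $P^\eps$ in two stages: first regularize the competitor $(\rho,m,\zeta)$ of the RUOT problem using Lemma~\ref{lem:mollification}, then use the regularized competitor to construct an explicit modified branching Brownian motion via Theorem~\ref{thm:formula_RN_derivative}, after having chosen a good initial law through Proposition~\ref{prop:construction_initial_competitor}. Concretely, I would apply Lemma~\ref{lem:mollification} with $\bar\nu = \nu$ and some fixed $\bar\rho_0 \in \M_+(\T^d)\backslash\{0\}$ (for instance $\bar\rho_0 = \Leb$) to obtain a family $(\rho^\eps, m^\eps, \zeta^\eps)$ of smooth competitors with $\rho^\eps$ uniformly bounded below by a positive constant, with smooth fields $v^\eps = m^\eps/\rho^\eps$ and $r^\eps = \zeta^\eps/\rho^\eps$ supported in $[0,1-\eps]\times\T^d$, with $\Psi(r^\eps)$ bounded, with $\rho^\eps_0 = \tau_{\nu\eps}\ast((1-\eps)\rho_0 + \eps\bar\rho_0)$, and with $\limsup_\eps E_\Psi(\rho^\eps, m^\eps,\zeta^\eps) \leq E_\Psi(\rho,m,\zeta)$.

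Next I would turn the smooth growth rate $r^\eps$ into a predictable (in fact deterministic smooth) branching mechanism $\tilde{\boldsymbol q}^\eps$ by taking, at each $(t,x)$, the minimizer of $h(\cdot | \boldsymbol q)$ subject to $\sum_k (k-1)\tilde q_k = r^\eps(t,x)$; Lemma~\ref{lem:Psi_compromise} guarantees this infimum equals $\Psi_{\nu,\boldsymbol q}(r^\eps(t,x))$, and the explicit form of the optimizer ($\tilde q_k = q_k \exp((k-1)s/\nu)$ up to truncation, with $s = (\Psi_{\nu,\boldsymbol q})'(r^\eps)$) shows it is smooth in $(t,x)$ and satisfies the exponential bound, hence $h(\tilde{\boldsymbol q}^\eps(t,x)|\boldsymbol q)$ is bounded uniformly. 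Together with the bounded smooth drift $\tilde v^\eps := v^\eps$, these fields satisfy the hypotheses of Theorem~\ref{thm:formula_RN_derivative}. For the initial law, I would not start at $t=0$: instead, fix a small time $t_\eps \to 0$ (with $t_\eps$ chosen so that the source terms are inactive near $t_\eps$, which holds since $m^\eps,\zeta^\eps$ are supported in $[0,1-\eps]$, so on $[0,\eps^2]$ the triple follows a pure heat flow with initial datum $\rho^\eps_0$), and use Proposition~\ref{prop:construction_initial_competitor} applied to $\rho_0$ to obtain a law $P_0^{t_\eps}$ on $\M_\delta(\T^d)$ with $\E_{P_0^{t_\eps}}[M]$ close to $\tau_{\nu t_\eps}\ast\rho_0 \approx \rho^\eps_0$ and $H(P_0^{t_\eps}|\bar R_{t_\eps}) \leq L^*_{R_0}(\rho_0) + o(1)$. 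Then I would define $P^\eps$ to be: the heat-flow-only evolution on $[0,t_\eps]$ with law $P_0^{t_\eps}$, concatenated (via the Markov property) with the modified BBM of drift $\tilde v^\eps$ and branching mechanism $\tilde{\boldsymbol q}^\eps$ on $[t_\eps,1]$ built by Theorem~\ref{thm:formula_RN_derivative}. The entropy bound~\eqref{eq:entropy_smaller_than_RUOT}, the disintegration formula~\eqref{eq:disintegration_entropy}, and Lemma~\ref{lem:Psi_compromise} then give
\begin{equation*}
\nu H(P^\eps|R) \leq \nu H(P_0^{t_\eps}|\bar R_{t_\eps}) + \E_{P^\eps}\left[\int_0^1 \left\cg \frac{|\tilde v^\eps(t)|^2}{2} + \Psi_{\nu,\boldsymbol q}(r^\eps(t)), M_t\right\cd\D t\right] + o(1),
\end{equation*}
and the second term equals $E_\Psi(\rho^\eps,m^\eps,\zeta^\eps)$ because $\E_{P^\eps}[M_t] = \rho^\eps_t$ (the chosen drift and branching mechanism are precisely those whose averaged effect reproduces $(\rho^\eps,m^\eps,\zeta^\eps)$, by Corollary~\ref{cor:evolution_density} and the linearity of the continuity equation). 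Taking $\limsup_\eps$ gives the entropy inequality in~\eqref{eq:Peps_recovery}.

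For the convergence $\mathsf T(P^\eps) \rightharpoonup (\rho,m,\zeta)$, I would observe that since $P^\eps$ is itself a modified BBM with the deterministic fields $\tilde v^\eps, \tilde{\boldsymbol q}^\eps$, its uniqueness/characterization yields that the drift and branching mechanism extracted by Theorem~\ref{thm:representation_competitor_SchBr} are $\Pi_{P^\eps}$-a.e.\ equal to $\tilde v^\eps$ and $\tilde{\boldsymbol q}^\eps$, so by construction $\mathsf T(P^\eps) = (\rho^\eps, m^\eps, \zeta^\eps)$ (up to the correction near $t_\eps$, which is $o(1)$ in mass); then Point~\ref{item:convergence} of Lemma~\ref{lem:mollification} gives $(\rho^\eps,m^\eps,\zeta^\eps) \rightharpoonup (\rho,m,\zeta)$. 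The main obstacle I anticipate is the bookkeeping around the initial layer $[0,t_\eps]$: one must check that splicing the static competitor $P_0^{t_\eps}$ (whose intensity is $\tau_{\nu t_\eps}\ast\rho_0$, not exactly $\rho^\eps_0 = \tau_{\nu\eps}\ast((1-\eps)\rho_0+\eps\bar\rho_0)$) onto the heat flow and then onto the modified BBM produces a genuine element of $\P(\Omega)$ with the right marginals and that the mismatch between $t_\eps$, $\eps$, and the two different mollification parameters can be absorbed into the $o(1)$; this requires choosing $t_\eps$ (e.g.\ $t_\eps = \eps$) and tracking that $\rho^\eps_0 - \tau_{\nu t_\eps}\ast\rho_0 \to 0$ weakly while the boundary terms in $L^*_{R_0}$ remain continuous, plus verifying the integrability condition~\eqref{eq:integrability_condition_def} holds under $P^\eps$ (which follows from the boundedness of $\tilde v^\eps$ and $\lambda_{\tilde{\boldsymbol q}^\eps}$ together with Proposition~\ref{prop:exponential_bounds}). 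A secondary technical point is ensuring $\E_{P^\eps}[M_t] = \rho^\eps_t$ exactly rather than approximately, which I would get from the Itô formula~\eqref{eq:branching_Ito} and formulas~\eqref{eq:charact_m}--\eqref{eq:charact_zeta} as in the proof of Proposition~\ref{prop:from_BrSch_to_RUOT}, noting that the deterministic fields make the averaging trivial.
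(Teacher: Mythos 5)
Your overall strategy coincides with the paper's: regularize via Lemma~\ref{lem:mollification}, use Lemma~\ref{lem:Psi_compromise} to convert $r^\eps$ into an almost-optimal branching mechanism, build the modified BBM via Theorem~\ref{thm:formula_RN_derivative} starting from a static competitor provided by Proposition~\ref{prop:construction_initial_competitor}, and splice a no-branching initial layer $[0,\eps]$. However, there is one genuine gap and one construction mismatch that would together make the entropy bound fail.

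The gap is your choice $\bar\rho_0 = \Leb$ in the mollification. After applying Lemma~\ref{lem:mollification} and Proposition~\ref{prop:construction_initial_competitor}, the initial entropy is controlled by a term of the form $L^*_{R_0}\big((1-\eps)\rho_0 + \eps\bar\rho_0\big)$. To obtain $\limsup_\eps \leq L^*_{R_0}(\rho_0)$, one uses convexity of $L^*_{R_0}$ together with $L^*_{R_0}(\bar\rho_0) = 0$. This last equality holds only for $\bar\rho_0 := \E_{R_0}[M]$ (the unique minimizer of the nonnegative functional $L^*_{R_0}$). With $\bar\rho_0 = \Leb$, in general $L^*_{R_0}(\Leb)$ need not be finite, let alone zero — e.g.\ if $R_0 = \delta_\mu$ with $\mu\in\M_\delta(\T^d)$, then $L^*_{R_0}(\Leb) = +\infty$ — so the convexity estimate breaks and the entropy bound in \eqref{eq:Peps_recovery} is not obtained.

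The construction mismatch is that you apply Proposition~\ref{prop:construction_initial_competitor} directly to $\rho_0$, producing an initial law with intensity $\tau_{\nu t_\eps}\ast\rho_0$, while Lemma~\ref{lem:mollification} hands you $\rho^\eps_0 = \tau_{\nu\eps}\ast\big((1-\eps)\rho_0 + \eps\bar\rho_0\big)$; you flag this yourself but do not resolve it. The clean way is to apply Proposition~\ref{prop:construction_initial_competitor} to $(1-\eps)\rho_0 + \eps\bar\rho_0$ (with $t=\eps$), so the intensity of the static competitor equals $\rho^\eps_0$ exactly and there is nothing to absorb into the $o(1)$. Moreover, rather than informally concatenating via the Markov property, it is cleaner (and makes the estimate for $\zeta$ near $t=0$ trivial) to build $P^\eps$ with density with respect to the conditional law $\bar R^\eps = R(\,\cdot\,|S(0)>\eps)$: since $P^\eps\ll\bar R^\eps$, $P^\eps$-a.s.\ there is no branching event on $[0,\eps]$, which is exactly what makes the boundary contribution to $\zeta^\eps$ vanish rather than merely ``be $o(1)$ in mass''. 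This is the step where your sketch is imprecise, because the $\zeta$ piece is the one where a plain mass bound does not yield weak convergence (see Remark~\ref{rem:coercivity}): without the conditioning trick one cannot control $|\tilde\zeta^\eps|([0,t_\eps]\times\T^d)$ from the energy alone.

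Finally, a minor point: you write that $\tilde{\boldsymbol q}^\eps(t,x)$ is ``the minimizer'' of $h(\cdot|\boldsymbol q)$ at fixed growth rate; in general this infimum is not attained (Remark~\ref{rem:from_RUOT_to_BrSch}), so one must use an almost-minimizer. You do mention truncation, but the explicit formula $\tilde q_k = q_k\exp((k-1)s/\nu)$ with $s = (\Psi_{\nu,\boldsymbol q})'(r^\eps)$ should not be invoked as if it always yielded an admissible $\boldsymbol q^\eps$; instead one truncates to finitely supported mechanisms and chooses $n$ measurably as a function of $r$, as this is what guarantees both measurability and the bound $\nu h(\boldsymbol q^\eps|\boldsymbol q)\leq\Psi_{\nu,\boldsymbol q}(r^\eps)+\eps$ uniformly.
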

	
	\begin{Rem}
		\label{rem:from_RUOT_to_BrSch}
		Clearly, what we would like to do to prove this proposition would be to define as a competitor $P$ the law of a BBM starting from the optimizer of~\eqref{eq:def_init}, of drift $v = m / \rho$, and of branching branching mechanism given by the optimizer of~\eqref{eq:formula_Psi} with $r = \zeta/\rho$. But there are several obstacles to achieve this:
		\begin{itemize}
			\item There is no optimizer for~\eqref{eq:def_init} in general, and there is possibly a duality gap. Therefore, we exploit Proposition~\ref{prop:construction_initial_competitor}.
			\item There is no optimizer for~\eqref{eq:formula_Psi} in general, so we have to consider parameters that are almost optimal.
			\item The drift and branching mechanism obtained by the procedure just described do not satisfy the bounds required in Theorem~\ref{thm:formula_RN_derivative} to define a corresponding branching Brownian motion. Hence, we have to mollify them using Lemma~\ref{lem:mollification}.
		\end{itemize}
	\end{Rem}
	\begin{proof}
		\begin{stepf}{Definition of approximated fields}
			We consider a unique regularization parameter $\eps>0$, that will take into account the three obstacles described above in Remark~\ref{rem:from_RUOT_to_BrSch}. 
			
			First, we consider the family $(\rho^\eps, m^\eps, \zeta^\eps)_{\eps>0}$ as given by Lemma~\ref{lem:mollification}, with $\bar \rho_0 := \E_{R_0}[M]$ (which is nonzero because we assumed $R_0 \neq \delta_0$), $\bar r := \sum_k (k-1) q_k$ and $\bar \nu = \nu$. Observe that with these choices, we have indeed $\Psi(\bar r) = 0$ (for instance by choosing $\tilde{\boldsymbol q} = \boldsymbol q$ in~\eqref{eq:formula_Psi}). The fourth point of Lemma~\ref{lem:mollification} provides, with the notations of Proposition~\ref{prop:energy_UOT},
			\begin{equation}
			\label{eq:ineq_mollified}
			\limsup_{\eps \to 0} E_\Psi(\rho^\eps, m^\eps, \zeta^\eps) = \limsup_{\eps \to 0} \int_0^1\hspace{-5pt} \int \left\{ \frac{|v^\eps(t,x)|^2}{2} + \Psi_{\nu, \boldsymbol q}(r^\eps(t,x)) \right\} \D\rho^\eps(t,x) \leq E(\rho,m,\zeta).
			\end{equation}
			
			Now, we define branching mechanisms corresponding to these mollified fields. First, for a given $(t,x)$, we aim to define $\boldsymbol q^\eps(t,x)$ a branching mechanism such that:
			\begin{equation}
			\label{eq:choice_of_q}
			\sum_{k} (k-1) q_k^\eps(t,x) = r^\eps(t,x), \qquad  \nu h( \boldsymbol q^\eps(t,x) |\boldsymbol q) \leq \Psi_{\nu, \boldsymbol q}(r^\eps(t,x))  + \eps.
			\end{equation}
			Because of Lemma~\ref{lem:Psi_compromise}, such a $\boldsymbol q^\eps(t,x)$ is easy to find for all $t$ and $x$, but the subtlety is that we need to do it in a measurable way w.r.t.\ $t$ and $x$. We do it by taking $\boldsymbol q^\eps(t,x) := \hat{\boldsymbol q}(r(t,x))$, where $\hat{\boldsymbol q} = \hat{\boldsymbol q}(r)$ is a measurable function of $r$ that we define now. For a given $r \in \R$ and $n \in \N$, we define:
			\begin{equation*}
			\hat{\boldsymbol q}^n(r) := \argmin\bigg\{ h(\tilde{\boldsymbol q} | \boldsymbol q) \ : \ \tilde{\boldsymbol q} \in \M_+(\N), \, \sum_{k} (k-1)\tilde q_k = r \mbox{ and }\tilde q_k = 0, \, \forall k >n \bigg\}.
			\end{equation*}
			(Such minimizers exists as soon as there exists $k \leq n$ with $q_k \neq 0$.) It is easy to check that $r \mapsto \hat{\boldsymbol q}^n(r)$ is measurable, and exploiting Lemma~\ref{lem:Psi_compromise}, we can see that for all $r \in \R$, $h(\hat{\boldsymbol q}^n(r) | \boldsymbol q)$ is a nonincreasing sequence of limit $\Psi_{\nu, \boldsymbol q}(r)$. Now, for a given $r$, let $n^\eps(r)$ be the integer defined by:
			\begin{equation*}
			n^\eps(r) := \min \Big\{ n \in \N \ : \ h(\hat{\boldsymbol q}^n(r) | \boldsymbol q) \leq \Psi_{\nu, \boldsymbol q}(r)  + \eps\Big\}.
			\end{equation*}
			Once again, $r \mapsto n^\eps(r)$ is measurable. Therefore, we conclude by taking $\hat{\boldsymbol q}(r) := \hat{\boldsymbol q}^{n^\eps(r)}(r)$.

			Observe that the second inequality in~\eqref{eq:choice_of_q} together with the third point of Lemma~\ref{lem:mollification} guarantee that $\boldsymbol{q}^\eps$ satisfies the assumptions of Theorem~\ref{thm:formula_RN_derivative}. As $v^\eps$ is bounded (still by the third point of Lemma~\ref{lem:mollification}), by Theorem~\ref{thm:formula_RN_derivative}, we know how to define BBMs with drift $v^\eps$ and branching mechanism $\boldsymbol q^\eps$.
		\end{stepf}
		
		\begin{stepf}{Definition of corresponding laws}
			Observe that by the fifth point of Lemma~\ref{lem:mollification}, we have $\rho_0^\eps = \tau_{\nu\eps} \ast((1-\eps) \rho_0 + \eps \bar \rho_0)$, for all $\eps >0$. For a given $\eps \in (0,1)$, we call $\bar R^\eps$ the conditional law $R(\ . \ | S(0)>\eps)$, and $\bar R_\eps$ the law of $M_\eps$ under $\bar R^\eps$. According to Proposition~\ref{prop:construction_initial_competitor}, there exists $P^\eps_\eps \in \P(\M_+(\T^d))$ such that
			\begin{gather}
			\label{eq:initial_intensity_Qeps}\E_{P^\eps_\eps}[M] = \rho^\eps_0 = \tau_{\nu \eps} \ast \big((1-\eps)\rho_0 + \eps \bar \rho_0 \big) , \\
			\label{eq:estimate_entropy_initial_law} H(P^\eps_\eps | \bar R_\eps) \leq L_{R_0}^* \big( (1-\eps)\rho_0 + \eps \bar \rho_0 \big) + \log R(S(0)>\eps) + \lambda \eps \rho_0(\T^d) + \eps \leq L^*_{R_0}(\rho_0) + \underset{\eps \to 0}{o}(1),
			\end{gather}
			where the second inequality is obtained by convexity of $L^*_{R_0}$, by the trivial equality $L^*_{R_0}(\bar \rho_0) = 0$ for our choice of $\bar \rho_0 := \E_{R_0}[M]$, and by $R(S(0) > \eps) \to 1$ as $\eps \to 0$. 
			
			Let us call 
			\begin{equation*}
			\begin{aligned}
			\Theta_\eps: \cadlag([0,1]; \M_+(\T^d)) &\longrightarrow \cadlag([0, 1-\eps]; \M_+(\T^d))\\
			\Big(t \in [0,1] \mapsto \omega_t\Big) & \longmapsto \Big( t \in [0, 1-\eps] \mapsto \omega_{t + \eps} \Big)
			\end{aligned}
			\end{equation*}
			the truncation functional, and $\hat R^\eps := \Theta_\eps \pf \bar R^\eps$. Adapting slightly Theorem~\ref{thm:formula_RN_derivative} in order to allow different initial and final times than $0$ and $1$, we can find a BBM $Q^\eps \in \P(\cadlag([0, 1-\eps]; \M_+(\T^d)))$ starting from $P^\eps_\eps$, of drift $v^\eps$ and branching mechanism $\boldsymbol{q}^\eps$ (restricted to times $[0, 1-\eps]$), satisfying the inequality
			\begin{equation}
			\label{eq:entropy_Qeps}
			\nu H(Q^\eps | \hat R^\eps) \leq \nu H(P^\eps_\eps | \bar R^\eps) + \E_{Q^\eps}\left[ \int_0^{1-\eps} \left\cg \frac{| v^\eps(t)|^2}{2} + \nu h(\boldsymbol q^\eps(t)|\boldsymbol q) , M_t \right\cd \D t  \right].
			\end{equation}
			Finally, we define $P^\eps$ as
			\begin{equation}
			\label{eq:def_Peps}
			P^\eps := \left( \frac{\D Q^\eps}{\D \hat R^\eps} \circ \Theta_\eps \right)\cdot \bar R^\eps.
			\end{equation}
			To give an idea of what this law is, one could check that $P^\eps$ is the minimizer of $H(P|\bar R^\eps)$ under constraint $\Theta_\eps \pf P = Q^\eps$.
			
			From now on, the goal is to show~\eqref{eq:Peps_recovery}. We start by estimating the relative entropy of $(P^\eps)_{\eps>0}$, and prove the weak convergence of $(\mathsf T(P^\eps))_{\eps>0}$ towards $(\rho, m, \zeta)$ in further steps. 
		\end{stepf}
		\begin{stepf}{Estimate of the entropic cost}
			\label{step:entropy_estimate}
			A quick computation shows that $H(P^\eps|R) = H(P^\eps | \bar R^\eps) - \log R(S(0)>\eps) = H(Q^\eps | \hat R^\eps) + o_{\eps \to 0}(1)$. So in what follows, we will only be interested in estimating $H(Q^\eps | \hat R^\eps)$, $\eps>0$.
			
			Observe that for all $t \in [0, 1-\eps]$, the density of $Q^\eps$ at time $t$, namely, $\hat \rho^\eps_t :=\E_{Q^\eps}[M_t]$, coincides with $\rho^\eps_t$. The reason is that as the drift and branching mechanism of a process are proved to be unique in Theorem~\ref{thm:representation_competitor_SchBr}, just as $\rho^\eps$, $\hat \rho^\eps$ solves distributionally
			\begin{equation*}
			\left\{
			\begin{gathered}
			\partial_t \hat\rho^\eps + \Div (\hat\rho^\eps v^\eps) = \frac{\nu}{2} \Delta \hat\rho^\eps + \hat\rho^\eps r^\eps,\\
			\hat \rho^\eps_t|_{t = 0} = \rho^\eps_0,
			\end{gathered}
			\right.
			\end{equation*}
			for which there is uniqueness when $v^\eps$ and $r^\eps$ are smooth (the initial condition comes from~\eqref{eq:initial_intensity_Qeps}). In particular, inequality~\eqref{eq:entropy_Qeps} rewrites
			\begin{equation*}
			\nu H(Q^\eps | \hat R^\eps) \leq \nu H(P^\eps_\eps | \bar R^\eps) + \int_0^{1-\eps}\hspace{-5pt}\int \left\{ \frac{| v^\eps(t)|^2}{2} + \nu  h(\boldsymbol{q}^\eps(t) | \boldsymbol{q})\right\} \D \rho^\eps_t \D t .
			\end{equation*}
			Plugging~\eqref{eq:choice_of_q} and~\eqref{eq:estimate_entropy_initial_law} into this last inequality, and as $H(P^\eps|R) = H(Q^\eps|\hat R^\eps)+ \underset{\eps \to 0}{o}(1)$, we get
			\begin{equation*}
			\nu H(P^\eps | R) \leq \nu L^*_{R_0}(\rho_0) + \int_0^{1-\eps}\hspace{-5pt}\int \left\{ \frac{| v^\eps(t)|^2}{2} +  \Big( \Psi_{\nu, \boldsymbol q}(r^\eps(t)) + \eps\Big) \right\} \D \rho^\eps_t \D t + \underset{\eps \to 0}{o}(1) .
			\end{equation*}
			Now, we can use the estimate of the action of mollified fields~\eqref{eq:ineq_mollified}, and get
			\begin{equation*}
			\nu H(P^\eps | R) \leq  \nu L^*_{R_0}(\rho_0) + \int_0^{1-\eps}\hspace{-5pt}\int \left\{ \frac{| v(t)|^2}{2} +   \Psi_{\nu, \boldsymbol q}(r(t))  \right\} \D \rho_t \D t + \eps \sup_{t} \rho^\eps_t(\T^d)+ \underset{\eps \to 0}{o}(1).
			\end{equation*}
			But we can estimate $\sup_{t} \rho^\eps_t(\T^d)$ using Proposition~\ref{prop:control_mass_from_energy} and~\eqref{eq:initial_intensity_Qeps}, so we deduce the inequality in~\eqref{eq:Peps_recovery}.
		\end{stepf}
		\begin{stepf}{Computation of $\mathsf T(P^\eps)$, $\eps>0$}
			We already mentioned that for all $\eps \in (0,1)$, $\Theta_\eps \pf P^\eps = Q_\eps$. Let us prove it. Let us take $\eps \in (0,1)$, and $Z$ be a bounded random variable on $\cadlag([0, 1-\eps]; \M_+(\T^d))$. We have
			\begin{equation*}
			\E_{\Theta_\eps \pf P^\eps}[Z] = \E_{P^\eps}[Z\circ \Theta_\eps] = \E_{\bar R^\eps}\left[ Z\circ \Theta_\eps \frac{\D Q^\eps}{\D \hat R^\eps} \circ \Theta_\eps \right] = \E_{\Theta_\eps \pf \bar R^\eps}\left[ Z \frac{\D Q^\eps}{\D \hat R^\eps} \right] = \E_{\hat R^\eps}\left[ Z \frac{\D Q^\eps}{\D \hat R^\eps} \right] = \E_{Q_\eps}[Z],
			\end{equation*}
			where the second equality follows from the definition~\eqref{eq:def_Peps} of $P^\eps$, and the third one follows from the definition $\hat R^\eps := \Theta_{\eps} \pf \bar R^\eps$. 
			
			On the other hand, it is clear from the definition of $Q_\eps$ and from the observation that we already made about the density associated with $Q_\eps$ that the solution of~\eqref{eq:continuity_linear} associated with $Q^\eps$ by formula~\eqref{eq:def_rho_m_zeta} (with $t \in [0,1-\eps]$ instead of $[0,1]$), and that we still call $\mathsf T(Q^\eps)$, is nothing but $(\rho^\eps, m^\eps, \zeta^\eps)$.
			
			Let us deduce from these two points that calling $\mathsf T(P^\eps) = (\tilde \rho^\eps, \tilde m^\eps, \tilde \zeta^\eps) = \D t \otimes(\tilde \rho^\eps_t, \tilde m^\eps_t, \tilde \zeta^\eps_t)$, we have for all $t \in [\eps,1]$:
			\begin{equation}
			\label{eq:shift}
			(\tilde \rho^\eps_t, \tilde m^\eps_t, \tilde \zeta^\eps_t) = (\rho^\eps_{t-\eps}, m^\eps_{t-\eps}, \zeta^\eps_{t-\eps}).
			\end{equation}
			In the case of the densities, this is direct using the fact that with slightly abusive notations, for all $t \in [\eps, 1]$, $M_{t} = M_{t-\eps} \circ \Theta_{\eps}$. To treat the case of the net source of mass, let us consider $\varphi \in C([0,1]\times \T^d; \R^d)$. Let us call $\varphi^\eps(t,x) := \varphi(t+\eps,x)$ for all $\eps \in (0,1)$, $t \in [0, 1-\eps]$, $x \in \T^d$. It is easy to see that $R$-\emph{a.s.}, and hence $P$-\emph{a.s.}, 
			\begin{equation}
			\label{eq:shift_J}
			\sum_{k \neq 1} J^k[(k-1) \varphi]_1 - \sum_{k\neq 1} J^k[(k-1)\varphi]_\eps = \sum_{k\neq 1} J^k[\xi^\eps]_{1-\eps}\circ \Theta_\eps.
			\end{equation} 
			Therefore, if $\varphi$ has its support in $[\eps, 1]\times \T^d$, integrating the previous formula w.r.t.\ $P$ and making use of~\eqref{eq:charact_zeta}, we find
			\begin{equation*}
			\cg \varphi, \zeta \cd = \E_P\left[ \sum_{k\neq 1} J^k[\xi^\eps]_{1-\eps}\circ \Theta_\eps \right] = \E_{Q^\eps}\left[ \sum_{k\neq 1} J^k[\varphi^\eps]_{1-\eps} \right] = \cg \varphi^\eps, \tilde \zeta^\eps\cd,
			\end{equation*}
			and the result follows. Finally, the case of the momentum is treated in a similar way, even though in this case, the formula corresponding to~\eqref{eq:shift_J} is more delicate to prove (one needs for instance to use approximations similar to the ones in the proof of Proposition~\ref{prop:I_adapted_restricted} below, but we leave the details to the reader.)
			
			Lastly, because of~\eqref{eq:ineq_mollified}, the entropy estimates that we got at Step~\ref{step:entropy_estimate} and Proposition~\ref{prop:from_BrSch_to_RUOT}, we have
			\begin{equation}
			\label{eq:limsup_energy}
			\limsup_{\eps \to 0}E_\Psi( \rho^\eps,  m^\eps, \zeta^\eps) < + \infty \qquad \mbox{and} \qquad
			\limsup_{\eps \to 0}E_\Psi(\tilde \rho^\eps, \tilde m^\eps, \tilde \zeta^\eps) < + \infty.
			\end{equation}
			
			We will see that these finiteness properties together with~\eqref{eq:shift} and the coercivity estimates from Proposition~\ref{prop:control_mass_from_energy} will be enough to prove the convergence of $(\tilde \rho^\eps, \tilde m^\eps, \tilde \zeta^\eps)$ towards $( \rho^\eps,  m^\eps, \zeta^\eps)$. Note that in virtue of Remark~\ref{rem:Psi_nu_q_grows_at_m_infty}, $\Psi = \Psi_{\nu, \boldsymbol q}$ satisfies Assumption~\ref{ass:ruot_weak}.
		\end{stepf}
		
		\begin{stepf}{Convergence}
			In order to prove the convergence of $(\tilde \rho^\eps, \tilde m^\eps, \tilde \zeta^\eps)$ towards $(\rho,m,\zeta)$, the first thing to observe is that both $\limsup_{\eps \to 0} \rho^\eps_1(\T^d)$ and $\limsup_{\eps \to 0} \tilde \rho^\eps_1(\T^d)$ are finite so that all the bounds from Proposition~\ref{prop:control_mass_from_energy} apply. In the case of $(\rho^\eps_1)_{\eps>0}$, this is for instance because of the continuity property stated in Lemma~\ref{lem:existence_boundary_CE}, and in the case of $(\tilde \rho^\eps_1)_{\eps>0}$, this is for instance because of Proposition~\ref{prop:exponential_bounds}.
			
			We start by proving that $\tilde \rho^\eps \rightharpoonup \rho$. For this let us take $\varphi \in C([0,1] \times \T^d)$ and $\eps \in (0,1)$. We have, using the notation $\varphi^\eps(t,x) := \varphi(t+\eps, x)$, $t \in [0, 1-\eps]$ and $x \in \T^d$, and formula~\eqref{eq:shift}
			\begin{equation*}
			\int_0^1 \hspace{-5pt} \int \varphi \D \tilde \rho^\eps = \int_0^1\hspace{-5pt} \int \varphi \D \rho^\eps + \int_0^\eps \hspace{-5pt} \int \varphi \D \tilde \rho^\eps + \int_0^{1-\eps}\hspace{-5pt} \int \{ \varphi^\eps - \varphi \} \D \rho^\eps - \int_{1-\eps}^1  \int \varphi \D \rho^\eps.
			\end{equation*}
			The first term in the r.h.s.\ converges towards $\iint \varphi \D \rho$ by definition of $\rho^\eps$. All the other ones converge towards~$0$, because of the estimate~\eqref{eq:uniform_bound_density} of densities in terms of energies, these energies being bounded thanks to~\eqref{eq:limsup_energy}.
			
			Then, we prove $\tilde m^\eps \rightharpoonup m$. So we take  $\xi \in C([0,1] \times \T^d; \R^d)$, and as before, $\xi^\eps(t,x) := \xi(t+\eps, x)$ for all $\eps \in (0,1)$, $t \in [0, 1-\eps]$ and $x \in \T^d$. By formula~\eqref{eq:shift}, we have for all $\eps \in (0,1)$
			\begin{equation*}
			\int_0^1 \hspace{-5pt} \int \xi \cdot \D \tilde m^\eps = \int_0^1\hspace{-5pt} \int \xi\cdot \D m^\eps + \int_0^\eps \hspace{-5pt} \int \xi\cdot \D \tilde m^\eps + \int_0^{1-\eps}\hspace{-5pt} \int \{ \xi^\eps - \xi \}\cdot \D m^\eps - \int_{1-\eps}^1  \int \xi\cdot \D m^\eps.
			\end{equation*}
			The first term in the r.h.s.\ converges towards $\iint \xi \cdot \D m$ by definition of $m^\eps$, and all the other ones converge towards $0$ because of the estimate~\eqref{eq:bound_velocity_momentum} of the density momentums in terms of energies, that are bounded thanks to~\eqref{eq:limsup_energy}.
			
			Finally, in the case of the net source of mass $\zeta^\eps \rightharpoonup \zeta$, we cannot use~\eqref{eq:bound_growth_momentum_new} instead of~\eqref{eq:bound_velocity_momentum} for the boundary terms, as doing so, we lose the uniform control in $(t_1 - t_0)$ of the corresponding mass (see Remark~\ref{rem:coercivity}). So we will rely on our regularization procedure and our construction. As before, for a given $\varphi \in C([0,1]\times \T^d)$ and $\eps \in (0,1)$, by formula~\eqref{eq:shift}
			\begin{equation*}
			\int_0^1 \hspace{-5pt} \int \varphi \D \tilde \zeta^\eps = \int_0^1\hspace{-5pt} \int \varphi \D \zeta^\eps + \int_0^\eps \hspace{-5pt} \int \varphi \D \tilde \zeta^\eps + \int_0^{1-\eps}\hspace{-5pt} \int \{ \varphi^\eps - \varphi \} \D \zeta^\eps - \int_{1-\eps}^1  \int \varphi \D \zeta^\eps.
			\end{equation*}
			The first term in the r.h.s.\ converges towards $\iint \varphi \D \zeta$ by definition of $\zeta^\eps$, the third one converges towards~$0$ because of the estimate~\eqref{eq:bound_growth_momentum_new} of the net sources of mass in terms of energies, that are bounded thanks to~\eqref{eq:limsup_energy}. The last one cancels as a consequence of Point~\ref{item:fields} from Lemma~\ref{lem:mollification}: we restricted $\zeta^\eps$ to the set of times $[0,1-\eps]$ precisely for that reason. Therefore, our last task is to show that the second term in the r.h.s.\ tends to $0$. To do this, let us observe that because of~\eqref{eq:charact_zeta},
			\begin{equation*}
			\int_0^\eps \hspace{-5pt} \int \varphi \D \tilde \zeta^\eps = \E_{P^\eps}\left[ \sum_{k\neq 1} (k-1) J^k[\varphi]_\eps \right].
			\end{equation*}
			But we were smart enough to choose $P^\eps \ll \bar R^\eps$, which is a law with no branching event up to time $\eps$. Hence, $P^\eps$-\emph{a.s.} for all $k \in \N \backslash\{1\}$, $J^k[\varphi]_\eps=0$, so that actually, the quantity above cancels. The result follows, which closes our proof.
		\end{stepf}
	\end{proof}

	\chapter{Additional results: asymptotics, generalizations, numerics}
	\label{chap:additional_results}

	In this chapter we present results which are not directly about the link between RUOT and the Branching Schrödinger problem but that we still think are interesting and that are rather easy to present given the theory we built above. Specifically we will cover: a small noise asymptotic (Section~\ref{sec:smallNoise}), the generalization to more general measure-valued branching Markov processes (Section~\ref{sec:general_superproc}), and numerical aspects (Section~\ref{sec:numerics}). All the three sections can be read independently. 
	
	\section{Small noise limit}
	\label{sec:smallNoise}
	
	In this section we investigate what happens when one reduces the ``noise intensity'' of the system, that is, the diffusivity as well as the branching rate. To that end, we work only on the optimal transport formulation. We first identify with heuristic arguments the regime of parameters in which we expect a limit with both transport and growth, and then we proceed to prove a rigorous $\Gamma$-limit at the level of the optimal transport problem. We refer the reader to \cite{braides2002gamma} for an introduction to the theory of $\Gamma$-convergence, which deals with convergence of problems of calculus of variations. 
	
	We recall that in the rest of the present work we work with $(q_k)_{k \in \N}$, the temporal rates of having a branching event with $k$ offsprings. To investigate the limit, we switch back to write $q_k = \lambda p_k$ where $\boldsymbol{p} = (p_k)_{k \in \N} \in \P(\N)$ is a probability distribution on $\N$, while $\lambda = \lambda_{\boldsymbol q} > 0$ is the rate of having a branching event. The reason is that $\lambda$ becomes a parameter which will vary, while $\boldsymbol p$ is kept fixed.  
	
	\subsection{Heuristic derivation}
	
In the non-branching case, the small noise limit has been investigated extensively by various authors~\cite{mikami2004monge,leonard2012schrodinger,carlier2017convergence,baradat2020small}, and in the limit $\nu \to 0$, the problem converges to plain optimal transport. In the pure branching case \emph{i.e.}\ in the case where there is no spatial trajectories, the limit $\lambda \to 0$ is less classical, but could be deduced using the arguments developed in~\cite{leonard2016lazy}, and the growth penalization would become linear (here, the analogy is formal, see footnote~\ref{footnote:lazy}). Here, as already said in the introduction, in order to have a small noise limit where both transport and growth play a role, one has to let both $\nu$ and $\lambda$ go to $0$, at a controlled rate. Because of the linear growth penalization, the limiting problem is the partial optimal transport~\cite{caffarelli2010free,figalli2010optimal,chizat2018unbalanced}. A careful study of the aforementioned bibliography provides the regime to consider, as explained in the following remark. In what follows, we think of $\lambda$ as a function of $\nu$, and we analyze the limit $\nu \to 0$.

\begin{Rem}
		\label{rem:regime_small_noise}
		Having a close look into the two papers~\cite{leonard2012schrodinger} and~\cite{leonard2016lazy} we see that:
		\begin{itemize}
			\item If $R^\nu$ is the law of a Brownian motion of diffusivity $\nu>0$, the right quantity to consider to study the limit $\nu \to 0$ is $\nu H( \cdot | R^\nu)$, see~\cite[Corrolary~3.2]{leonard2012schrodinger} with $\nu = 1/k$. In other terms, the entropic cost for perturbing $R^\nu$ in order to modify its final marginal at order $1$ is like $1/\nu$.
			\item If now $R^\lambda$ is the law of a pure branching process of branching rate $\lambda>0$, seen as a pure-jump process on the discrete set $\N$, the right quantity to consider when studying the limit $\lambda \to 0$ is $H(\cdot|R^\lambda)/(-\log \lambda)$, see~\cite[Theorem~2.7]{leonard2016lazy} with $\lambda = 1/k$. This time\footnote{\label{footnote:lazy} Let us stress that the analogy is not perfect, as the results of~\cite{leonard2016lazy} cannot be applied directly for pure branching processes, for instance because for such processes, the graph structure on $\N$ would need to be directed, which is not included in the assumptions of~\cite{leonard2016lazy}. However, most of the arguments are easily adapted, and the analogy is striking: for instance, condition (2.2) from there translates naturally into Assumption~\ref{ass:small_noise} for us.}, the entropic cost for perturbing $R^\lambda$ in order to modify its final marginal at order $1$ is like $-\log \lambda$.
		\end{itemize}
	Therefore, as in our case, we want the entropic cost of perturbing the Brownian trajectories and of changing the branching mechanism to be at the same order, we need to take $\lambda$ and $\nu$ in such a way that $1/\nu \propto -\log \lambda$, that is, we want $\lambda$ to be of the form $\exp(-\varpi / \nu)$, for some $\varpi >0$. 
	\end{Rem}

Let us recover this regime analytically. Let us keep $\boldsymbol{p}$ fixed, call $\Phi_{\boldsymbol{p}}$ the generating function of $\boldsymbol{p}$ and recall that the Legendre transform of the growth penalization $\Psi = \Psi_{\lambda,\nu}$ reads, once one expresses it in terms of $\boldsymbol{p}$ instead of $\boldsymbol{q}$,
\begin{equation*}
\Psi^*_{\lambda,\nu}(s) = \nu \lambda \Big( \Phi_{\boldsymbol p} \big(e^{s/\nu}\big) e^{-s/\nu} - 1 \Big) =  \nu \lambda \left( \sum_{k=0}^{+ \infty}  p_k \exp \left\{ \frac{(k-1)s}{\nu} \right\}  - 1 \right) .
\end{equation*}
We stick to a simple probability distribution $\boldsymbol{p}$ by assuming that $p_0 > 0$ and $\sum_{k \geq 2} p_k > 0$: particles can die and split. In this case, one sees that:
\begin{itemize}
	\item If for any $\varpi>0$, $\nu \lambda e^{\varpi/\nu}$ does not converge towards zero as $\nu \to 0$ (thus it converges to $+ \infty$), then for all $s \neq 0$, 
	\begin{equation*}
	\lim_{\nu \to 0, \lambda \to 0} \Psi^*_{\lambda,\nu}(s) = + \infty. 
	\end{equation*} 
	We expect growth to be free in this limit.
	\item On the other hand, if for any $\varpi>0$, $\nu \lambda e^{\varpi/\nu}$ converges towards zero as $\nu \to 0$, then for all $s \in \R$,
	\begin{equation*}
	\lim_{\nu \to 0, \lambda \to 0} \Psi^*_{\lambda,\nu}(s) = 0.
	\end{equation*}
	We expect growth to be forbidden in this limit.
\end{itemize}	
	Thus, at least heuristically, the interesting regime is the one where $\nu \lambda \sim e^{-\varpi/\nu}$ for some $\varpi > 0$. Let us denote by $k_l$ the largest integer such that $p_{k_l} > 0$ and assume that $k_l < + \infty$. In this case, as represented in Figure~\ref{fig:bar_Psi}
	\begin{equation}
	\label{eq:small_noise_Psi*} 
	\lim_{\nu \to 0} \Psi^*_{\nu^{-1}\exp(-\varpi/\nu),\nu}(s) = \bar{\Psi}^*(s) = 
	\begin{cases}
	+\infty & \text{if } s < -\varpi,\\
	p_0 & \text{if } s = -\varpi,\\
	0 & \text{if } -\varpi < s < \frac{\varpi}{k_l-1},\\
	p_{k_l} & \text{if } s = \frac{\varpi}{k_l-1},\\
	+\infty &\text{if } s > \frac{\varpi}{k_l-1}.
	\end{cases} 
	\end{equation}	
	\begin{figure}
		\centering
		\begin{tikzpicture}
		\draw[->] (-3.3,0) -- (2,0)node[below right]{$s$};
		\draw[->] (0,-.3) -- (0,2.5)node[right]{$\bar \Psi^*(s)$};
		\draw (1.5, -2pt) -- (1.5, 2pt) node[below=5pt]{$\frac{\varpi}{k_l-1}$};
		\draw (-3, -2pt) -- (-3, 2pt) node[below=5pt]{$-\varpi$};
		\draw (-2pt, 1.5) -- (2pt, 1.5) node[left]{$p_{k_l}$};
		\draw (-2pt, .8) -- (2pt, .8) node[right]{$p_0$};
		\draw[very thick, blue] (-3,0) -- (1.5,0);
		\fill[blue] (1.5,1.5) circle (3pt);
		\fill[blue] (-3,.8) circle (3pt);
		\draw[thick, dashed, blue] (-3,0) -- (-3,.8) -- (0,.8);
		\draw[thick, dashed, blue] (1.5,0) -- (1.5,1.5) -- (0,1.5);
		\draw (0,-3pt) node[below left]{$0$};
		\end{tikzpicture}
		\hspace{50pt}
		\begin{tikzpicture}
		\draw[->] (-2.3,0) -- (2.3,0)node[below right]{$r$};
		\draw[->] (0,-.3) -- (0,2.5)node[right]{$(\bar \Psi^*)^*(r)$};
		\draw (2, -2pt) -- (2, 2pt) node[below=5pt]{$1$};
		\draw (-2, -2pt) -- (-2, 2pt) node[below=5pt]{$-1$};
		\draw (-2pt, 1) -- (2pt, 1) node[left]{$\frac{\varpi}{k_l-1}$};
		\draw (-2pt, 2) -- (2pt, 2) node[right]{$\varpi$};
		\draw[very thick, blue] (-2.2,2.2) -- (0,0) -- (2.2,1.1);
		\draw[thick, dashed, blue] (-2,0) -- (-2,2) -- (0,2);
		\draw[thick, dashed, blue] (2,0) -- (2,1) -- (0,1);
		\draw (0,-3pt) node[below left]{$0$};
		\end{tikzpicture}
		\caption{\label{fig:bar_Psi} Representation of the limiting $\bar \Psi^*$ on the left, and of its Legendre transform $(\bar\Psi^*)^*$ on the right.}
	\end{figure}
	As the reader can see, this function is not lower semi-continuous. However, as proved in Lemma \ref{lem:cv_Psi} below, what is important to understand the limit of $\Psi$ is to compute the $\Gamma$-limit of $\Psi^*_{\nu^{-1}\exp(-\varpi/\nu),\nu}$, and the latter is the lower semi continuous envelope of the function above. In any case, the Legendre transform of the limit function $\bar{\Psi}^*$ is 
	\begin{equation*}
	(\bar{\Psi}^*)^* : r \mapsto  \begin{cases}
	\frac{\varpi}{k_l-1} r & \text{if } r \geq 0, \\
	- \varpi r & \text{if } r \leq 0.
	\end{cases}
	\end{equation*}	
	In particular, if $k_l = 2$, that is if $p_0, p_2 > 0$ but $p_k = 0$ for $k \geq 3$, then there holds $(\bar{\Psi}^*)^*(r) = \varpi |r|$. 
	
	Sticking to this case $k_l = 2$, we expect that in the limit $\nu \to 0$ and $\lambda = \frac{1}{\nu}\exp(-\varpi/\nu)$ the dynamical formulation of RUOT should converge to 	
	\begin{equation*}
	\min_{\rho, v, r} \left\{ \int_0^1 \hspace{-5pt} \int \left[ \frac{1}{2} |v(t,x)|^2  +  \varpi|r(t,x)| \right] \rho(t,x) \D x \D t \ : \ \dr_t \rho + \Div(\rho v) =  r \rho  \right\},
	\end{equation*}	
	that is a (unregularized) model of unbalanced optimal transport. We emphasize that this is \emph{not} unbalanced optimal transport with quadratic penalization of the growth rate. On the other hand, such model with a $L^1$ penalization of $r$ is equivalent to \emph{partial optimal transport} \cite[Section 5.1]{chizat2018unbalanced}.
	
	In the rest of this section	we prove rigorously the $\Gamma$-convergence of the dynamical formulation of RUOT to the limit formulation that we have just guessed when $\lambda =\frac{1}{\nu} \exp(-\varpi/\nu)$. It is unclear for us if one can make sense of the convergence at a probabilistic level like what was done in the balanced case in \cite{baradat2020small}, thus we stick to the PDE formulation.

	\subsection{Statement of the convergence result}
	
	In the rest of this section we stick to the following assumptions.
	
	\begin{Ass}
		\label{ass:small_noise}
		We assume that the law $\boldsymbol p = (p_k)_{k \in \N}$ of the offsprings, which is a probability distribution over $\N$, has \emph{bounded support}. We assume that $p_0 > 0$, we denote by $k_l$ the largest integer $k$ such that $p_{k_l} > 0$, and we assume that $k_l \geq 2$.    
	\end{Ass}
	
	We fix $\varpi > 0$ and a sequence $(\nu_n)_{n \in \N}$ decreasing to $0$, and we define for all $n \in \N$ the branching rate $\lambda_n := \nu_n^{-1} \exp(-\varpi/\nu_n)$. Let us define on $\R$ the growth penalization $\Psi_n = ( \Psi_n^* )^*$ by
	\begin{equation*}
	\Psi_n^* : s \mapsto \nu_n \lambda_n \left( \Phi_{\boldsymbol p}(e^{s/\nu_n}) e^{-s/\nu_n} - 1 \right) = \sum_{k=0}^{k_l} p_k \left( \exp \left\{ \frac{(k-1)s - \varpi}{\nu_n} \right\} - \exp \left\{ - \frac{ - \varpi}{\nu_n} \right\} \right).
	\end{equation*}
	We have to identify the limit. The pointwise computation has already been done in~\eqref{eq:small_noise_Psi*}. 
	
	\begin{Lem}
		\label{lem:cv_Psi_star}
		Let $\Psi_\infty^* : \R \to [0, + \infty]$ be the convex l.s.c.\ function defined by 
		\begin{equation*}
		\Psi^*_\infty(s) = \begin{cases}
		0 & \text{if } s \in \displaystyle{\left[ -\varpi, \frac{\varpi}{k_l - 1} \right]}, \\
		+ \infty & \text{otherwise}.
		\end{cases}
		\end{equation*}
		Then there holds 
		\begin{equation*}
		\Gamma-\lim_{n \to + \infty} \Psi^*_n = \Psi^*_\infty.
		\end{equation*}
		Moreover, the convergence of $(\Psi_n^*)_{n \in \N}$ is uniform on any compact included in the interior of $\{ \Psi_\infty^* = 0 \}$.
	\end{Lem}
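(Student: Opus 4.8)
The plan is to read everything off the explicit formula
\[
\Psi_n^*(s) = \sum_{k=0}^{k_l} p_k \left( \exp\left\{\frac{(k-1)s - \varpi}{\nu_n}\right\} - \exp\left\{-\frac{\varpi}{\nu_n}\right\}\right),
\]
verifying in turn the two inequalities in the sequential $\liminf$/$\limsup$ characterisation of $\Gamma$-convergence and then the local uniform convergence. The starting remark, which already does a large part of the work, is the crude uniform lower bound $\Psi_n^*(s) \geq -\exp(-\varpi/\nu_n)$, valid for every $s \in \R$ and every $n$, obtained by bounding each summand below by $-p_k\exp(-\varpi/\nu_n)$ and using $\sum_k p_k = 1$. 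Since the right-hand side tends to $0$, this immediately yields $\liminf_n \Psi_n^*(s_n) \geq 0 = \Psi_\infty^*(s)$ for every $s \in [-\varpi,\varpi/(k_l-1)]$ and every sequence $s_n \to s$, which is the $\Gamma$-$\liminf$ inequality on the effective domain of $\Psi_\infty^*$.

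First I would finish the $\Gamma$-$\liminf$ inequality at points $s \notin [-\varpi,\varpi/(k_l-1)]$, where $\Psi_\infty^*(s) = +\infty$: if $s > \varpi/(k_l-1)$, then for $s_n \to s$ the exponent $((k_l-1)s_n - \varpi)/\nu_n$ is eventually $\geq \eps/\nu_n$ for some fixed $\eps>0$, so the single summand $k=k_l$ is at least $p_{k_l}(\exp(\eps/\nu_n) - 1) \to +\infty$ while all others stay $\geq -\exp(-\varpi/\nu_n)$; the case $s < -\varpi$ is symmetric via the $k=0$ summand. Next, the recovery-sequence ($\Gamma$-$\limsup$) inequality: for $s$ outside the interval there is nothing to do; for $s$ in the open interval, a case split on the sign of $s$ shows $(k-1)s - \varpi < 0$ for every $k$, so the constant sequence $s_n \equiv s$ gives $\Psi_n^*(s) \to 0$. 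The only genuinely non-constant choice is at the two endpoints, where the pointwise limit of $\Psi_n^*$ equals $p_0$ (resp.\ $p_{k_l}$), not $0$: there I would take $s_n := -\varpi + \sqrt{\nu_n}$ (resp.\ $s_n := \varpi/(k_l-1) - \sqrt{\nu_n}$), so that the offending summand behaves like $\exp(-1/\sqrt{\nu_n}) \to 0$ and all the other exponents stay eventually negative.

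Finally, for the local uniform convergence, I would fix a compact $K \subset (-\varpi,\varpi/(k_l-1))$ and $\eps>0$ with $K \subset [-\varpi+\eps,\varpi/(k_l-1)-\eps]$; a single case split on $s\le 0$ versus $s>0$ gives a $K$-uniform bound $(k-1)s - \varpi \leq -c$ with $c := \min(\eps,(k_l-1)\eps,\varpi) > 0$, whence
\[
|\Psi_n^*(s) - \Psi_\infty^*(s)| = |\Psi_n^*(s)| \leq \sum_{k} p_k\left(e^{-c/\nu_n} + e^{-\varpi/\nu_n}\right) \leq e^{-c/\nu_n} + e^{-\varpi/\nu_n} \underset{n \to + \infty}{\longrightarrow} 0
\]
uniformly on $K$. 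The whole argument is elementary; the one point needing a little care is the endpoint case in the recovery-sequence step, where the inward perturbation rate must be slow compared to $\nu_n$ (so that $\exp((-s-\varpi)/\nu_n)$ stays controlled) yet fast enough to vanish in the limit — any rate strictly between $\nu_n$ and a constant works, and $\sqrt{\nu_n}$ is a convenient choice.
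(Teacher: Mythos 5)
Your proof is correct. The paper does not supply a proof of this lemma at all (it explicitly leaves both the $\Gamma$-convergence statement and the local uniform convergence ``as an exercise to the reader''), so there is no argument to compare against; your write-up is a clean and complete filling-in of that gap. All three parts check out: the crude uniform lower bound $\Psi_n^*\geq -e^{-\varpi/\nu_n}$ settles the $\Gamma$-$\liminf$ on the interval for free and the single dominating summand argument settles it outside; the recovery sequence discussion correctly identifies the endpoints as the only delicate case (since the pointwise limit there is $p_0$, resp. $p_{k_l}$, matching equation~\eqref{eq:small_noise_Psi*}, whereas the $\Gamma$-limit is the l.s.c.\ envelope and so is $0$), and the $\sqrt{\nu_n}$ inward perturbation does the job for exactly the rate reason you state; and the uniform estimate $(k-1)s-\varpi\leq -c$ on the compact with $c=\min(\eps,(k_l-1)\eps,\varpi)$ is correct after the $s\le 0$ versus $s>0$ split. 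No issues.
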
 
	
	\noindent We already identified the simple limit of $\Psi_n^*$, so the new parts of this lemma are the statement of $\Gamma$-convergence and the uniform convergence on the compacts included in the interior of $\{ \Psi_\infty^* = 0 \}$. Both are left as an exercise to the reader. From this we can define and identify the limit of the sequence $(\Psi_n)_{n \in \N}$. 
	
	\begin{Lem}
		\label{lem:cv_Psi}
		Let $\Psi_\infty : \R \to [0, + \infty]$ be the function defined by
		\begin{equation*}
		\Psi_\infty(r) = \begin{cases}
		\displaystyle{\frac{\varpi}{k_l - 1} r} & \text{if } r \geq 0, \\
		- \varpi r  & \text{if } r \leq 0.
		\end{cases}
		\end{equation*}
		Then $\Psi_\infty = \left( \Psi_\infty^* \right)^*$, and, $\lim_n \Psi_n = \Psi_\infty$ pointwise.
		In addition, for all sequence $(r_n)_{n \in \N}$ converging $r \in \R$,
		\begin{equation}
		\label{eq:zz_gammalimsup_psi}
		\limsup_{n \to + \infty} \Psi_n(r_n) \leq \Psi_\infty(r).
		\end{equation}
		Eventually, there exists a constant $C > 0$ such that for all $n \geq 1$ and all $r \in \R$,
		\begin{equation}
		\label{eq:control_Psi_n_uniform}
		\frac{1}{C}(|r|-C)_+ \leq \Psi_n(r) \leq C(1 + r^2).
		\end{equation}
	\end{Lem}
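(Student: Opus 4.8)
The plan is to deduce everything from Lemma \ref{lem:cv_Psi_star} together with elementary convex analysis. First I would establish the identity $\Psi_\infty = (\Psi_\infty^*)^*$ by a direct computation: since $\Psi_\infty^*$ is the $0/+\infty$ indicator of the interval $[-\varpi, \varpi/(k_l-1)]$, its Legendre transform is $r \mapsto \sup\{ rs : s \in [-\varpi, \varpi/(k_l-1)] \}$, which equals $\frac{\varpi}{k_l-1} r$ when $r \geq 0$ (the supremum is attained at the right endpoint) and $-\varpi r$ when $r \leq 0$ (attained at the left endpoint). This is exactly the claimed formula for $\Psi_\infty$.

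Next, for the pointwise convergence $\Psi_n \to \Psi_\infty$, I would invoke the standard fact that $\Gamma$-convergence of a sequence of proper convex l.s.c.\ functions is equivalent (in finite dimension, and since all the $\Psi_n^*$ are equi-coercive — indeed $\Psi_n^* \geq 0$ and grows at least linearly away from $0$ for each $n$, which we can make uniform using $p_0 > 0$ and $p_{k_l} > 0$) to pointwise convergence of the Legendre transforms on the interior of the domain of the limit, together with continuity considerations at the boundary. Concretely: Lemma \ref{lem:cv_Psi_star} gives $\Gamma\text{-}\lim \Psi_n^* = \Psi_\infty^*$; by the known duality between $\Gamma$-convergence and Legendre transform for equi-coercive convex functions (see e.g.\ \cite[Theorem 3.9]{braides2002gamma}), this yields $\Gamma\text{-}\lim \Psi_n = (\Psi_\infty^*)^* = \Psi_\infty$. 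Since $\Psi_\infty$ is finite and continuous everywhere on $\R$, $\Gamma$-convergence to a continuous function on $\R$ upgrades to pointwise (even locally uniform) convergence, giving both $\lim_n \Psi_n(r) = \Psi_\infty(r)$ and the $\Gamma$-limsup inequality \eqref{eq:zz_gammalimsup_psi} for any sequence $r_n \to r$.

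For the two-sided uniform bound \eqref{eq:control_Psi_n_uniform}, I would argue directly on the Legendre transforms. The lower bound $\frac{1}{C}(|r|-C)_+ \leq \Psi_n(r)$ is equivalent to an upper bound on $\Psi_n^*$ on a small symmetric interval $[-s_0, s_0]$ (with $C$ depending on $s_0$ and the sup of $\Psi_n^*$ there): since $-\varpi < 0 < \varpi/(k_l-1)$, the interval $[-s_0,s_0]$ with $s_0 := \frac{1}{2}\min(\varpi, \varpi/(k_l-1))$ sits in the interior of $\{\Psi_\infty^* = 0\}$, so by the uniform convergence statement in Lemma \ref{lem:cv_Psi_star} there is $n_0$ and a constant with $\Psi_n^* \leq 1$ on $[-s_0,s_0]$ for $n \geq n_0$; the finitely many remaining $n < n_0$ are handled by enlarging $C$. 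For the upper bound $\Psi_n(r) \leq C(1+r^2)$, equivalently a quadratic lower bound $\Psi_n^*(s) \geq \frac{1}{4C}s^2 - C$, I would use that each $\Psi_n^*$ is a finite sum $\sum_{k=0}^{k_l} p_k(\exp\{((k-1)s-\varpi)/\nu_n\} - \exp\{\varpi/\nu_n\}\cdot\mathbf{1})$; the terms $k=0$ and $k=k_l$ contribute $p_0\exp\{(-s-\varpi)/\nu_n\}$ and $p_{k_l}\exp\{((k_l-1)s-\varpi)/\nu_n\}$ respectively, and since $\nu_n \leq \nu_1$ and $\nu_n \to 0$, these grow at least like $\min(p_0, p_{k_l})e^{c|s|/\nu_1}$ for $|s|$ large — far faster than any quadratic — uniformly in $n$; the subtracted constants $p_k e^{\varpi/\nu_n}$ are bounded as long as... actually they blow up, so one must instead note that the subtracted terms are exactly cancelled by the $k=1$ term having $(k-1)=0$, and group the expression as $\sum_k p_k\exp\{\varpi/\nu_n\}(\exp\{(k-1)s/\nu_n\}-1)$; I would verify the grouping gives a manifestly nonnegative convex function and extract a uniform quadratic-from-below bound from the $k=0$ and $k=k_l$ exponentials after noting $\exp\{\varpi/\nu_n\}\exp\{(k-1)s/\nu_n\} \geq \exp\{((k-1)s+\varpi)/\nu_1\} \geq$ (quadratic) for $n$ with $\nu_n$ small, handling small $n$ and bounded $s$ separately.

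The main obstacle I anticipate is precisely this last uniform quadratic control \eqref{eq:control_Psi_n_uniform}: the functions $\Psi_n^*$ are defined with the parameter $\nu_n$ inside exponentials, and the naive pointwise decomposition mixes a term blowing up like $e^{\varpi/\nu_n}$ with compensating exponentials, so getting a clean uniform-in-$n$ estimate requires carefully regrouping the sum so that each summand is individually nonnegative before estimating, and then treating the finitely many small indices $n$ and the compact range of $s$ by a separate (trivial, since each $\Psi_n$ is a fixed finite convex function) argument. Everything else is either a two-line convex-analysis computation or an immediate consequence of Lemma \ref{lem:cv_Psi_star}.
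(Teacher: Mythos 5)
Your overall plan matches the paper's in its large structure---compute $(\Psi_\infty^*)^*$ directly, get pointwise convergence and the limsup inequality from Lemma~\ref{lem:cv_Psi_star}, and deduce the two-sided bound~\eqref{eq:control_Psi_n_uniform} from uniform two-sided bounds on $\Psi_n^*$---but there is one concrete error you should fix, and one place where you invoke heavier machinery than necessary.

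\paragraph{The sign error in $\Psi_n^*$.} You transcribe
\begin{equation*}
\Psi_n^*(s) = \sum_{k=0}^{k_l} p_k\Big(\exp\big\{((k-1)s-\varpi)/\nu_n\big\} - \exp\{\varpi/\nu_n\}\Big),
\end{equation*}
whereas the correct formula (which follows from $\nu_n\lambda_n = e^{-\varpi/\nu_n}$) has $\exp\{-\varpi/\nu_n\}$ in the subtracted term. That constant is at most $1$ and does \emph{not} blow up; your ``actually they blow up'' worry, and the regrouping you propose to escape it (which again writes $\exp\{\varpi/\nu_n\}$), are artefacts of the sign error. Once the sign is fixed, the quadratic-from-below bound on $\Psi_n^*$ is immediate with no case splits and no separate treatment of small $n$ or bounded $s$: drop the subtracted constants using $\exp\{-\varpi/\nu_n\}\le 1$ (whence $\sum_k p_k\exp\{-\varpi/\nu_n\}\le 1$), keep only the $k=0$ and $k=k_l$ exponentials, use $\nu_n\le\nu_0$, and apply $e^x\ge x^2/2$ for $x\ge 0$. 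Its Legendre transform is then quadratic from above, uniformly in $n$, and this gives the right-hand side of~\eqref{eq:control_Psi_n_uniform}. The paper does exactly this. Your lower-bound argument (bounding $\Psi_n^*$ from above on an interval and dualizing) is correct, though the paper's uniform bound $\Psi_n^*\le 1$ on all of $(-\varpi,\varpi/(k_l-1))$ avoids even the need to treat finitely many $n$ separately.

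\paragraph{The route through $\Gamma$-convergence of $\Psi_n$.} You get pointwise convergence and \eqref{eq:zz_gammalimsup_psi} by first upgrading $\Gamma$-convergence of $\Psi_n^*$ to $\Gamma$-convergence of $\Psi_n$ (via the duality between $\Gamma$-convergence and Legendre transform for convex functions in finite dimension), and then upgrading $\Gamma$-convergence to an everywhere-finite limit to locally uniform convergence. This chain is correct, but both links rely on convexity in an essential way and you should say so: the second step (``$\Gamma$-convergence to a continuous function upgrades to pointwise/locally uniform'') is \emph{false} for general sequences (take $f_n(x)=\sin(nx)$, which $\Gamma$-converges to $-1$ but does not converge pointwise); it is a genuine fact only for convex sequences converging to a finite convex limit. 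The paper takes a more elementary route that sidesteps both theorems: for pointwise convergence it writes $\Psi_n(r)=-\inf_s(\Psi_n^*(s)-rs)$ and uses stability of $\Gamma$-convergence under the continuous perturbation $-rs$ together with equi-coercivity to pass the infimum to the limit; for \eqref{eq:zz_gammalimsup_psi} it picks the optimal $s_n$ with $\Psi_n(r_n)=r_ns_n-\Psi_n^*(s_n)$, uses the quadratic lower bound on $\Psi_n^*$ (i.e.\ \eqref{eq:control_Psi_n_uniform}) to show $(s_n)$ is bounded, extracts a limit point $s$, shows $\Psi_\infty^*(s)=0$ by the $\Gamma$-liminf, and concludes from $\Psi_\infty(r)\ge rs$ after noting $\inf\Psi_n^*\ge -e^{-\varpi/\nu_n}\to 0$. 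What your route buys is compactness of presentation; what the paper's buys is that one never has to verify (or cite carefully) the Mosco/Legendre duality or the convexity-specific upgrade to locally uniform convergence.
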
 
	
	\begin{proof}
		The fact that $\Psi_\infty = \left( \Psi_\infty^* \right)^*$ is a direct computation. 
		
		Then, if $r$ is fixed, note that $\Psi_n(r) = - \inf_{s} ( \Psi_n^*(s) - rs)$. But for a fixed $r$, the functions $s \mapsto \Psi_n^*(s) - rs$ $\Gamma$-converge to $s \mapsto \Psi_\infty^*(s) - rs$ thanks to the previous lemma, thus their infima converge \cite[Section~1.5]{braides2002gamma}. It yields pointwise convergence .
		
		Then, we move first to the bounds~\eqref{eq:control_Psi_n_uniform}. Let us prove that $\Psi_n$ is at most quadratic at infinity, uniformly in $n \in \N$. Observe the following uniform below bound for $\Psi^*_n$, $n\in \N$: 
		\begin{equation*}
		\forall s \in \R, \qquad \Psi_n^*(s) \geq - 1 +
		\begin{cases}
		\displaystyle{p_0 \exp \left( -\frac{s+\varpi}{ \nu_0} \right)} & \text{if } s \leq -\varpi, \\
		0 & \text{if } s \in \displaystyle{\left( -\varpi, \frac{\varpi}{k_l - 1} \right)}, \\
		p_{k_l}  \displaystyle{\exp\left( \frac{(k_l-1)s-\varpi}{ \nu_0} \right)} & \text{if } \displaystyle{s \geq \frac{\varpi}{k_l - 1}},
		\end{cases}
		\end{equation*}	
		Using the fact that for all $x \geq 0$, $\exp x \geq x^2/2$, we find that
		\begin{equation*}
		\forall s \in \R, \qquad\Psi_n^*(s) \geq - 1 +
		\begin{cases}
		\displaystyle{\frac{p_0}{2\nu_0^2} \Big(s+\varpi\Big)^2} & \text{if } s \leq -\varpi, \\
		0 & \text{if } s \in \displaystyle{\left( -\varpi, \frac{\varpi}{k_l - 1} \right)}, \\
		\displaystyle{\frac{p_{k_l}}{2\nu_0^2} \Big((k_l-1)s-\varpi \Big)^2} & \text{if } \displaystyle{s \geq \frac{\varpi}{k_l - 1}},
		\end{cases}
		\end{equation*}	
		The r.h.s.\ is a convex function which does not depend on $n$ and whose Legendre transform is quadratic at infinity. As each $\Psi_n$ is smaller than this Legendre transform, we obtain the upper bound in \eqref{eq:control_Psi_n_uniform}. 
		
		Now we aim to prove the lower bound in~~\eqref{eq:control_Psi_n_uniform}: that $\Psi_n$ is at least linear at infinity, uniformly in $n\in \N$. Consider the following uniform upper bound for $\Psi^*_n$, $n \in \N$:
		\begin{equation*}
		\forall s \in \R, \qquad \Psi_n^*(s) \leq
		\begin{cases}
		+ \infty & \text{if } s \leq -\varpi, \\
		1 & \text{if } s \in \displaystyle{\left( -\varpi, \frac{\varpi}{k_l - 1} \right)}, \\
		+ \infty & \text{if } \displaystyle{s \geq \frac{\varpi}{k_l - 1}}.
		\end{cases}
		\end{equation*}	
		The Legendre transform of the r.h.s.\ grows linearly, thus $\Psi_n$ grows at least linearly, uniformly in $n$. That is, there exists $C > 0$ such that $\Psi_n(r) \geq C^{-1}(|r|-C)$ for all $r$. We then take the positive part in this inequality, by positivity of $\Psi_n$ (as $\Psi_n^*(0) = 0$) and we end up wit the lower bound of~\eqref{eq:control_Psi_n_uniform}.
		
		Eventually, we prove~\eqref{eq:zz_gammalimsup_psi}. Let us take $(r_n)_{n \in \N}$ a sequence converging to $r \in \R$. Thanks to~\eqref{eq:control_Psi_n_uniform} the sequence $(\Psi_n(r_n))_{n \in \N}$ must be bounded. For each $n\in \N$ let us consider a number $s_n \in \R$ such that $\Psi_n(r_n) = r_n s_n - \Psi_n^*(s_n)$: such a number exists (and is unique) thanks to the smoothness of $\Psi_n^*$. The upper bound in~\eqref{eq:control_Psi_n_uniform} implies that for all $n$ and all $s \in \R$, $\Psi_n^*(s) \geq \frac{s^2}{4C} - C$. Therefore, $(s_n)_{n \in \N}$ needs to be bounded, and up to extraction, we assume that it converges to some $s \in \R$. As $\Psi_n^*(s_n) = r_n s_n - \Psi_n(r_n)$, it is also bounded thus  $\Psi^*_{\infty}(s) < + \infty$ by the $\Gamma$-liminf property. Given its expression, it means that $\Psi^*_\infty(s) = 0$. Thus:
		\begin{equation*}
		\Psi_\infty(r) \geq r s = (rs - r_n s_n) + r_n s_n =   (rs - r_n s_n) + \Psi_n(r_n) + \Psi^*_n(s_n) \geq (rs - r_n s_n) + \Psi_n(r_n)  + \inf \Psi_n^*. 
		\end{equation*}
		In the right hand side, the first term goes to $0$ as $n \to + \infty$, and so does the last one as $\inf \Psi_n^* \geq - \exp(- \varpi/\nu_n)$. Thus sending $n \to + \infty$ in the inequality above yields the result.
	\end{proof}
	
	The properties proven in Lemmas \ref{lem:cv_Psi_star} and \ref{lem:cv_Psi} are the only ones needed in the rest of this section. The particular expressions for $\Psi_n$ and $\Psi_n^*$ are not relevant in the sequel. 
	
	\bigskip
	
	Let us introduce some notations in order to state our convergence result. If $n \in \N \cup \{ + \infty \}$ and if $\rho_0, \rho_1 \in \M_+(\T^d)$, we define $\CE_n(\rho_0, \rho_1)$ as the set of all triples $(\rho,m,\zeta)$ satisfying in the weak sense of Definition \ref{def:CE} the continuity equation with diffusivity $\nu_n$ and boundary conditions $\rho_0, \rho_1$. We recall that $\iota_A$ denotes the $0/+\infty$ indicator of a set, see Section~\ref{sec:notations}.

	\begin{Thm}
		\label{Thm:small_noise_limit}
		Let $\rho_0, \rho_1 \in \M_+(\T^d)$ with $h(\rho_1|\Leb) < + \infty$. Under Assumption~\ref{ass:small_noise} and with the notations above, on the space $\M([0,1] \times \T^d)\times \M([0,1] \times \T^d)^d \times \M([0,1] \times \T^d)$ endowed with the topology of weak convergence, 
		\begin{equation*}
		\Gamma-\lim_{n \to + \infty} \left( E_{\Psi_n} + \iota_{\CE_n(\rho_0, \rho_1)} \right) = E_{\Psi_\infty} + \iota_{\CE_\infty(\rho_0, \rho_1)}.
		\end{equation*}
	\end{Thm}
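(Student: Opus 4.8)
The statement is a $\Gamma$-convergence result, so I would establish the two standard halves separately: the $\Gamma$-$\liminf$ inequality (lower bound) and the existence of a recovery sequence (upper bound). Throughout, I would use $n\in\N$ and write $(\rho^n,m^n,\zeta^n)$ for competitors; the key analytic inputs are Lemma~\ref{lem:cv_Psi_star} and Lemma~\ref{lem:cv_Psi}, together with the structural facts about $E_\Psi$ from Proposition~\ref{prop:energy_UOT}, the coercivity estimates in Proposition~\ref{prop:control_mass_from_energy}, the closedness of the constraint set $\CE_n(\rho_0,\rho_1)$ under weak convergence (observed just after Definition~\ref{def:CE}), and the mollification Lemma~\ref{lem:mollification}. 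Note that each $\Psi_n$ satisfies Assumption~\ref{ass:ruot_weak} (uniformly, by~\eqref{eq:control_Psi_n_uniform}) and Assumption~\ref{ass:regularization}, so both propositions apply uniformly in $n$.

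\textbf{$\Gamma$-$\liminf$ inequality.} Take $(\rho^n,m^n,\zeta^n)\rightharpoonup(\rho,m,\zeta)$ with $\liminf_n\big(E_{\Psi_n}(\rho^n,m^n,\zeta^n)+\iota_{\CE_n(\rho_0,\rho_1)}\big)<+\infty$; pass to a subsequence realizing the liminf and such that the energies are uniformly bounded. First, the constraint passes to the limit: since $\nu_n\to\nu_\infty=0$, writing the weak formulation~\eqref{eq:continuity_weak_form} for diffusivity $\nu_n$ and letting $n\to\infty$ (the term $\frac{\nu_n}{2}\langle\Delta\phi,\rho^n\rangle\to 0$ because $\rho^n(\T^d)$ is bounded by Proposition~\ref{prop:control_mass_from_energy}), one gets that $(\rho,m,\zeta)\in\CE_\infty(\rho_0,\rho_1)$. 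Second, the energy: I would use the dual/sup representation of $E_{\Psi_n}$ from Definition~\ref{def:energy_UOT}. Fix any admissible test triple $(a,b,c)$ for the limiting constraint set $\mathcal K_\infty$ associated with $\Psi_\infty^*$, i.e.\ $a+|b|^2/2+\Psi_\infty^*(c)\le 0$ with $c$ taking values in the interior of $\{\Psi_\infty^*=0\}=[-\varpi,\varpi/(k_l-1)]$; by Lemma~\ref{lem:cv_Psi_star} one has $\Psi_n^*(c)\to 0$ uniformly on the range of $c$, so a slightly shrunk competitor $(a-\delta_n,b,c)$ with $\delta_n\to 0$ lies in $\mathcal K_n$ for $n$ large, hence $E_{\Psi_n}(\rho^n,m^n,\zeta^n)\ge\langle a-\delta_n,\rho^n\rangle+\langle b,m^n\rangle+\langle c,\zeta^n\rangle$. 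Passing to the limit ($\delta_n\langle 1,\rho^n\rangle\to 0$ by the uniform mass bound) and taking the supremum over such $(a,b,c)$ gives $\liminf_n E_{\Psi_n}(\rho^n,m^n,\zeta^n)\ge E_{\Psi_\infty}(\rho,m,\zeta)$. (A small technical point: one must check the approximating family of test triples is rich enough to recover $E_{\Psi_\infty}$ exactly; this is where the characterization of $L^\pm_{\Psi_\infty}$ as the endpoints of $\{\Psi_\infty^*<+\infty\}$, from just after~\eqref{eq:def_horizon_Psi_star}, is used to handle the singular parts $\zeta^s_\pm$.)

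\textbf{Recovery sequence.} Given $(\rho,m,\zeta)\in\CE_\infty(\rho_0,\rho_1)$ with $E_{\Psi_\infty}(\rho,m,\zeta)<+\infty$ (otherwise nothing to prove), I would first regularize it via Lemma~\ref{lem:mollification} with some fixed $\bar\nu>0$ and $\bar r=0$ (note $\Psi_\infty(0)=0$), obtaining smooth $(\rho^\eps,m^\eps,\zeta^\eps)$, solutions of the continuity equation with $\nu=0$, with $\rho^\eps$ bounded below, $\Psi_\infty(r^\eps)$ bounded, $\limsup_\eps E_{\Psi_\infty}(\rho^\eps,m^\eps,\zeta^\eps)\le E_{\Psi_\infty}(\rho,m,\zeta)$, and $(\rho^\eps,m^\eps,\zeta^\eps)\rightharpoonup(\rho,m,\zeta)$. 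For fixed $\eps$, I then need to move to diffusivity $\nu_n>0$: set $m^{\eps,n}:=m^\eps-\frac{\nu_n}{2}\nabla\rho^\eps$, $\zeta^{\eps,n}:=\zeta^\eps$, $\rho^{\eps,n}:=\rho^\eps$, which solves the continuity equation with diffusivity $\nu_n$ and the same boundary data; since $\nabla\rho^\eps/\rho^\eps$ is bounded (smoothness plus lower bound), $m^{\eps,n}\to m^\eps$ uniformly as $n\to\infty$, and the energy correction is $O(\nu_n)$. Because $r^\eps$ ranges in a fixed compact set on which, by the second part of Lemma~\ref{lem:cv_Psi}, $\Psi_n(r^\eps)\to\Psi_\infty(r^\eps)$ uniformly (indeed pointwise convergence of convex functions to a finite convex limit is locally uniform), one gets $E_{\Psi_n}(\rho^{\eps,n},m^{\eps,n},\zeta^{\eps,n})\to E_{\Psi_\infty}(\rho^\eps,m^\eps,\zeta^\eps)$ as $n\to\infty$. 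Finally a diagonal argument: choose $\eps=\eps(n)\to 0$ slowly enough that $(\rho^{\eps(n),n},m^{\eps(n),n},\zeta^{\eps(n),n})\rightharpoonup(\rho,m,\zeta)$ and $\limsup_n E_{\Psi_n}\le\limsup_\eps E_{\Psi_\infty}(\rho^\eps,\cdots)\le E_{\Psi_\infty}(\rho,m,\zeta)$; the hypothesis $h(\rho_1|\Leb)<+\infty$ is what makes Lemma~\ref{lem:mollification} applicable (and ensures the forward heat regularization near $t=1$ is harmless). Here is where I expect the main obstacle to lie: one must make sure the diagonal triple still satisfies \emph{exactly} the continuity equation with boundary data $(\rho_0,\rho_1)$ for diffusivity $\nu_n$, so the squeezing-in-time construction inside Lemma~\ref{lem:mollification} (which alters $\rho_0^\eps$) must be tracked carefully — either by absorbing the small discrepancy $\rho_0^\eps-\rho_0$ into a short initial layer evolving by the $\nu_n$-heat flow and controlling its cost via the quadratic upper bound~\eqref{eq:control_Psi_n_uniform}, or by using that $\iota_{\CE_n}$ appears only as a hard constraint and the perturbation can be made to converge weakly to zero. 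Combining the two halves yields the claimed $\Gamma$-limit.
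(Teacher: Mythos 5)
Your $\Gamma$-$\liminf$ argument matches the paper's essentially line by line (shrinking $c$ into the interior of $\{\Psi_\infty^*=0\}$, using uniform convergence from Lemma~\ref{lem:cv_Psi_star}, absorbing the small error into the $a$-slot of the test triple and using the uniform mass bound from Proposition~\ref{prop:control_mass_from_energy}/\eqref{eq:control_Psi_n_uniform}), and it is sound. For the recovery sequence, your tactic of introducing diffusion by adding $\pm\tfrac{\nu_n}{2}\nabla\rho^\eps$ to the momentum (modulo a sign: it should be $+$, not $-$) is a legitimate alternative to the paper's heat-kernel convolution plus temporal squeezing, and is in fact more direct. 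The genuine gap, which you flag yourself but do not close, is the boundary data: Lemma~\ref{lem:mollification} replaces the temporal boundary conditions by $\rho_0^\eps = \tau_{\bar\nu\eps}\ast((1-\eps)\rho_0+\eps\bar\rho_0)$ and by a perturbed $\rho_1^\eps$, and since $\iota_{\CE_n(\rho_0,\rho_1)}$ is a \emph{hard} $0/{+\infty}$ constraint, a triple with the wrong marginals has infinite energy regardless of how small the discrepancy is. Your second proposed fix (``the perturbation converges weakly to zero'') therefore cannot work. Your first fix (``absorb into a short initial layer evolving by the $\nu_n$-heat flow'') is also insufficient as stated: the heat flow from $\rho_0$ does not reach $\rho_0^\eps$ in general, so one needs a genuine RUOT competitor that transports and creates/destroys mass in the boundary layer, not merely a diffusive relaxation. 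This is precisely what the paper supplies via Proposition~\ref{prop:control_ruot}, which explicitly constructs a competitor in $\RUOT_{\nu,\Psi,t_0,t_1}(\alpha,\beta)$ for arbitrary $\alpha,\beta\in\M_+(\T^d)$ with an energy bound in terms of $W_2^2$, a mass-mismatch term, and the entropy $h(\beta|\Leb)$. That last term is also where the hypothesis $h(\rho_1|\Leb)<+\infty$ actually enters: the boundary-layer energy contains $\frac{2\nu_n}{\varepsilon}h(\rho_1|\Leb)$, which vanishes as $n\to\infty$ precisely because $h(\rho_1|\Leb)$ is finite. Your attribution of this hypothesis to Lemma~\ref{lem:mollification} is therefore misplaced (that lemma does not require it). Without the surgery of Proposition~\ref{prop:control_ruot}, or some explicit substitute with comparable estimates for both the $t=0$ and $t=1$ boundary layers, the $\limsup$ half of the argument does not close.
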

	
	\begin{Rem}
		The estimate~\eqref{eq:control_Psi_n_uniform} combined with the coercivity studied in Proposition~\ref{prop:control_mass_from_energy} yields that the functionals $E_{\Psi_n} + \iota_{\CE_n(\rho_0, \rho_1)}$ are equi-coercive for the topology of weak convergence. In particular, the theory of $\Gamma$-convergence yields that if $(\rho^n, m^n, \zeta^n)$ is optimal for $\RUOT_{\nu_n,\Psi_n}(\rho_0, \rho_1)$, then the sequence $(\rho^n, m^n, \zeta^n)_{n \in \N}$ has at least an accumulation point, and these accumulation points are optimal for $\RUOT_{0,\Psi_\infty}(\rho_0, \rho_1)$. 
	\end{Rem}
	
	The rest of this section is devoted to the proof of this theorem. Though the $\Gamma-\liminf$ is rather straightforward, the explicit computation of a competitor for the $\Gamma-\limsup$ requires some additional estimates in particular to handle temporal boundary conditions.

	At some point we will need to consider the RUOT problem defined between an initial and a final instant $t_0 \leq t_1$ which are not $0$ and $1$, as defined in Definition~\ref{def:RUOT}.

\subsection{Proof of the \texorpdfstring{$\Gamma$}{G}-convergence result}
	
	Before diving into the proof of Theorem~\ref{Thm:small_noise_limit}, let us first prove a technical estimate. We will need a competitor for the value of the RUOT problem whose energy is controlled by more standard distances. This is the object of the next result, which can be seen in fact as a collection already known results from regularized (balanced) optimal transport and unbalanced optimal transport. We denote by $W_2$ the quadratic Wasserstein distance on $\P(\T^d)$, see for instance \cite[Chapter 7]{ambrosio2008gradient}. It metrizes the weak convergence of probability measures \cite[Proposition 7.1.5]{ambrosio2008gradient}.
	
	\begin{Prop}
		\label{prop:control_ruot}
		Take $\Psi$ a convex function such that $\Psi(r) \leq  C(1+r^2)$ for some $C > 0$.
		Let $\alpha, \beta \in \M_+(\T^d)$. Then for any $t_0 < t_1$, there exists $(\rho,m,\zeta) \in \M_+([t_0,t_1] \times \T^d) \times \M([t_0,t_1] \times \T^d)^d \times \M([t_0,t_1] \times \T^d)$ a competitor in $\RUOT_{\nu, \Psi,t_0,t_1}(\alpha,\beta)$ such that 
		\begin{multline}
		\label{eq:control_ruot_boundary_energy}
		E_\Psi(\rho,m,\zeta)
		\leq  C(t_1-t_0) \max(\alpha(\T^d), \beta(\T^d)) + \frac{2C}{t_1 - t_0} \left|\alpha(\T^d)^{1/2} - \beta(\T^d)^{1/2} \right|^2 \\ + \frac{2 \beta(\T^d)}{t_1-t_0} W_2^2 \left( \frac{\tau_{\nu(t_1-t_0)/2} \ast \alpha}{\alpha(\T^d)} , \frac{\beta}{\beta(\T^d)} \right) + \frac{2\nu}{t_1-t_0}  h(\beta| \Leb) ,  
		\end{multline}
		with the convention that $\tilde \alpha / \tilde \alpha(\T^d) = \Leb$ in the case where the measure $\tilde \alpha$ is the zero measure. In addition, the measure $\zeta$ satisfies
\begin{equation}
\label{eq:control_ruot_boundary_zeta}
|\zeta|([t_0,t_1] \times \T^d) \leq |\alpha(\T^d) - \beta(\T^d)|. 		
\end{equation}		 
	\end{Prop}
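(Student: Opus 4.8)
The plan is to build the competitor $(\rho,m,\zeta)$ explicitly by concatenating three pieces on subintervals of $[t_0,t_1]$: a first short phase of pure mass creation/destruction (at constant spatial profile, or rather the heat-regularized profile) to adjust the total mass from $\alpha(\T^d)$ to $\beta(\T^d)$ and to regularize $\alpha$; a middle phase of pure transport (zero growth) realizing a Wasserstein-optimal interpolation between the renormalized heat-regularized $\alpha$ and the renormalized $\beta$; and a final phase where one pays the regularized-optimal-transport cost to go from the renormalized $\beta$ to $\beta$ itself, accounting for the $h(\beta|\Leb)$ term via~\eqref{eq:ROT_symmetric}. Since $\Psi(r)\le C(1+r^2)$ is superlinear, we can work with $\zeta\ll\rho$ throughout and use the explicit formula~\eqref{eq:energy_UOT} for $E_\Psi$. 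Concatenation of competitors on adjacent time intervals is legitimate because the continuity equation~\eqref{eq:continuity_linear} is linear and the weak formulation glues (cf.\ the transitivity argument ``$\alpha\to\beta$'' used in the proof of Proposition~\ref{prop:ruot_existence_competitor}).

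First I would split $[t_0,t_1]$ into, say, $[t_0,t_0+\delta_1]$, $[t_0+\delta_1, t_1-\delta_2]$, $[t_1-\delta_2,t_1]$ with $\delta_1=\delta_2=(t_1-t_0)/4$ (the constants only affect the numerical prefactors, and a small margin is needed so the middle transport phase has length $\ge (t_1-t_0)/2$, matching the $\tau_{\nu(t_1-t_0)/2}$ in the statement). On the first interval, take $\rho_t=c(t)\,\tau_{\nu(t-t_0)}\ast\alpha$ with $c(t_0)=1$; choosing $c$ to interpolate $c(t_0)^{1/2}=1$ to $c(t_0+\delta_1)^{1/2}=(\beta(\T^d)/\alpha(\T^d))^{1/2}$ linearly in the square-root variable makes $\int (c'/c)^2 c\,\D t$ (hence $\int\Psi(r)\rho$, up to the $C(1+r^2)$ bound) comparable to $\frac{C}{t_1-t_0}|\alpha(\T^d)^{1/2}-\beta(\T^d)^{1/2}|^2$ plus a $C(t_1-t_0)\max(\alpha(\T^d),\beta(\T^d))$ term from the constant ``$1$'' in $1+r^2$; the diffusion is absorbed into $m$ with no energy cost since here $v=0$ is not the choice—rather we follow the heat flow so $m=0$ on this piece except we must not forget $\rho$ is already evolving by $\frac\nu2\Delta$, which is free. (If $\alpha=0$ one takes $\rho_t$ a polynomially growing multiple of $\Leb$ as in Proposition~\ref{prop:ruot_existence_competitor}, but the hypothesis $\Psi$ at most quadratic makes this harmless; the convention $\tilde\alpha/\tilde\alpha(\T^d)=\Leb$ handles this bookkeeping.) On the middle interval take $\zeta=0$ and let $\rho$ be a geodesic in $(\P(\T^d),W_2)$ rescaled by the constant total mass $\beta(\T^d)$, between $\tau_{\nu(t_1-t_0)/2}\ast\alpha/\alpha(\T^d)$ and $\beta/\beta(\T^d)$: this is the Benamou--Brenier competitor and contributes exactly $\frac{2\beta(\T^d)}{t_1-t_0}W_2^2(\cdots)$ to the kinetic part (note one must also carry the $\frac\nu2\Delta\rho$ term, which again is incorporated into a modified momentum $m=v\rho-\frac\nu2\nabla\rho$ at the price of a Fisher-information term that we control by folding it into the last piece, or more cleanly, by taking the geodesic between the heat-regularized endpoints so that the $\nu\Delta$ correction is of lower order—this is the one delicate bookkeeping point). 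On the last interval use the regularized-OT competitor from~\eqref{eq:ROT_symmetric} / the results of \cite{gentil2017analogy} to pass from $\tau_{\cdot}\ast\beta/\beta(\T^d)$ (say) to $\beta$, whose energy is $\frac{2\nu}{t_1-t_0}$ times (an upper bound for) $h(\beta|\Leb)$, here using $h(\beta|\Leb)<+\infty$.

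The bound~\eqref{eq:control_ruot_boundary_zeta} on $|\zeta|$ is immediate from the construction once $\zeta=0$ on the middle and last phases: then $\zeta$ lives only on the first phase where $\rho_t(\T^d)=c(t)\alpha(\T^d)$ is monotone in $t$ (square-root-linear interpolations are monotone), so $|\zeta|([t_0,t_1]\times\T^d)=|\int (c'\alpha(\T^d))\,\D t|=|\alpha(\T^d)-\beta(\T^d)|$. Assembling the three energy contributions and bounding the ``$1$'' terms crudely by $C(t_1-t_0)\max(\alpha(\T^d),\beta(\T^d))$ gives~\eqref{eq:control_ruot_boundary_energy}, up to harmonizing the constant $C$.

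The main obstacle I anticipate is the interaction between the diffusion term $\frac\nu2\Delta\rho$ and the transport/growth pieces: a naive geodesic or a naive mass-adjustment curve does not solve~\eqref{eq:continuity_linear} on the nose, and the correction momentum $-\frac\nu2\nabla\rho$ carries a Fisher-information energy $\frac12\int|\frac\nu2\nabla\log\rho|^2\rho$ that is not obviously controlled by the quantities on the right-hand side unless one is careful to keep $\rho$ heat-regularized (so that the Li--Yau inequality, as used in Lemma~\ref{lem:mollification}, gives an $O(1/\varepsilon)$ bound that gets absorbed into the $h(\beta|\Leb)$-type term via the identity~\eqref{eq:ROT_symmetric}). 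Making the three phases fit together with matching endpoints while keeping every piece heat-regularized—so the $\nu\Delta$ terms telescope into exactly the advertised $h(\beta|\Leb)$ cost—is the place where the argument needs genuine care; everything else is routine concatenation and the explicit Benamou--Brenier / regularized-OT energy formulas.
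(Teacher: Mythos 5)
Your general blueprint (adjust mass, then transport, then regularize) is close in spirit, but the middle phase as you describe it has a genuine gap, and it is exactly the one you flag as ``delicate bookkeeping'': you cannot control the Fisher-information cost of making a Wasserstein geodesic solve~\eqref{eq:continuity_linear}.

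Concretely, if on the middle interval you set $\zeta=0$ and take $\rho$ to be (a mass-scaled) displacement interpolation between the renormalized $\tau_{\nu(t_1-t_0)/2}\ast\alpha$ and $\beta$, then to satisfy~\eqref{eq:continuity_linear} with diffusivity $\nu$ you must use the momentum $m = v\rho - \tfrac{\nu}{2}\nabla\rho$, and the resulting extra kinetic energy is
\begin{equation*}
\int_{t_0+\delta_1}^{t_1-\delta_2}\hspace{-5pt}\int \frac{1}{2}\left|\frac{\nu}{2}\nabla\log\rho(t,x)\right|^2\rho(t,x)\,\D x\,\D t,
\end{equation*}
which is the time-integrated Fisher information of the geodesic. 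Neither of your two suggested remedies fixes this. Folding it into the last phase has no obvious mechanism: the geodesic's Fisher information lives on the middle interval and cannot be moved. And taking the geodesic between heat-regularized endpoints does not keep the interpolants heat-regularized: displacement interpolation does not commute with the heat semigroup, so the Li--Yau bound does not apply along the geodesic. Worse, the endpoint $\beta$ is only assumed to have $h(\beta|\Leb)<+\infty$; its density may vanish on a large set, and the displacement interpolant from a smooth positive density to such a measure can have unbounded (even infinite) Fisher information at intermediate times. So the middle phase, as written, produces a term that is not controlled by the right-hand side of~\eqref{eq:control_ruot_boundary_energy}.

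The paper avoids this entirely by using only two phases and replacing your ``Wasserstein geodesic + correction + final ROT fix'' by a single regularized-OT competitor on the second half-interval $[t_{1/2},t_1]$, whose energy is bounded via \cite[Lemma~4.1]{baradat2020small}: for probability measures $\tilde\alpha,\tilde\beta$ on $\T^d$, one can find a competitor for balanced regularized OT (in the $w$-variables of~\eqref{eq:ROT_symmetric}) whose energy is at most $W_2^2(\tilde\alpha,\tilde\beta)+\tfrac\nu2(H(\tilde\alpha|\Leb)+H(\tilde\beta|\Leb))$. That lemma is the key ingredient: it directly produces both the $W_2^2$ term and the $h(\beta|\Leb)$ term and encapsulates precisely the Fisher-information bookkeeping that your middle phase leaves uncontrolled (this competitor is, morally, a Schrödinger bridge, not a displacement interpolation). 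The first phase (pure growth, constant in space, $\rho_t = c(t)\tau_{\nu(t-t_0)}\ast\alpha/\alpha(\T^d)$ with $c^{1/2}$ affine) is the same as yours and gives the first two terms of~\eqref{eq:control_ruot_boundary_energy} and the $\zeta$-bound~\eqref{eq:control_ruot_boundary_zeta}. So the correct fix is not a third phase but the substitution of the entropic-OT competitor for the Wasserstein geodesic on the transport phase.
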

	
	\begin{proof}
		Let $t_{1/2} = (t_0+t_1)/2$. We will construct a competitor separately on $[t_0, t_{1/2}]$ and $[t_{1/2}, t_1]$, joining first $\alpha$ to $\frac{\beta(\T^d)}{\alpha(\T^d)} \tau_{\nu(t_1-t_0)/2} \ast \alpha$, and then to $\beta$.
		
		On $[t_0, t_{1/2}]$ we set $m = 0$ (no transport) and consider growth rates $r$ (such that $\zeta = r \rho$) that are constant in space. We then choose our $r$ by taking the one which would be optimal if $\Psi$ were quadratic. Specifically, let
\begin{equation*}
c(t) = \left[ \frac{1}{t_{1/2} - t_0} \left( (t_{1/2} - t) \alpha(\T^d)^{1/2} + (t - t_0) \beta(\T^d)^{1/2} \right) \right]^2.
\end{equation*}
It is a monotone function such that $c(t_0) = \alpha(\T^d)$ and $c(t_{1/2}) = \beta(\T^d)$. We then define:
\begin{equation*}
r(t) = \frac{\dot{c}(t)}{c(t)} = \frac{\beta(\T^d)^{1/2} - \alpha(\T^d)^{1/2}}{(t_{1/2} - t) \alpha(\T^d)^{1/2} + (t - t_0) \beta(\T^d)^{1/2}},
\end{equation*}		
and the competitor $\rho = \D t \otimes \rho_t$ where $\rho_t = c(t) \tau_{\nu(t-t_0)} \ast \frac{\alpha}{\alpha(\T^d)}$. Then it is straightforward that
\begin{equation*}
\dr_t \rho = \frac{\nu}{2} \Delta \rho + r \rho,
\end{equation*}		
and $\rho_{t_0} = \alpha$ while $\rho_{t_{1/2}} = \beta(\T^d) \tau_{\nu(t_1-t_0)/2} \ast \frac{\alpha}{\alpha(\T^d)}$. In addition, when estimating the energy we find
\begin{multline*}
\int_{t_0}^{t_{1/2}} \hspace{-5pt} \int_{\T^d}  \Psi(r(t)) \D \rho_t (x) \D t \leq   C \int_{t_0}^{t_{1/2}} \left( c(t) + \frac{\dot{c}(t)^2}{c(t)} \right) \D t \\
 \leq \frac{C(t_{1} - t_0)}{2} \max(\alpha(\T^d), \beta(\T^d)) + \frac{2C}{t_1 - t_0} \left|\alpha(\T^d)^{1/2} - \beta(\T^d)^{1/2} \right|^2.   
\end{multline*}
Moreover, on this part it is clear that the measure $\zeta = r \rho = \D t \otimes (r(t) \rho_t)$ satisfies, as $c$ is a monotone function,
\begin{equation*}
|\zeta|([t_0,t_{1/2}] \times \T^d) = \int_{t_0}^{t_{1/2}} \hspace{-5pt} \int_{\T^d} |r(t)| \D \rho_t(x) \D t = \int_{t_0}^{t_{1/2}} |\dot{c}(t)| \D t = |\beta(\T^d) - \alpha(\T^d)|
\end{equation*}
		
		On the other hand on $[t_{1/2}, t_1]$ we use $\zeta = 0$, that is, no growth, as we have a balanced problem. To that end, and first with the case $[t_{1/2}, t_1] = [0,1]$, we can use \cite[Lemma 4.1]{baradat2020small} which gives the following estimate: for $\tilde \alpha, \tilde \beta$ probability measures,
		\begin{multline*}
		\min_{\rho, w} \left\{ \int_0^1 \hspace{-5pt} \int_{\T^d} \left[ \frac{1}{2} |w(t,x)|^2  + \frac{1}{2}\left|\frac{\nu}{2} \nabla \log \rho(t,x) \right|^2 \right]\rho(t,x) \D x \D t \ : \ \begin{gathered}\dr_t \rho + \Div(\rho w) = 0  \\ \rho(0) = \tilde \alpha, \, \rho(1) = \tilde \beta\end{gathered}\right\} \\ 
		\leq W_2^2(\tilde \alpha, \tilde \beta) + \frac{\nu}{2}(H(\tilde \alpha|\Leb)+H(\tilde \beta|\Leb)).    
		\end{multline*}
		The left hand side is connected to the regularized optimal transport problem, see \eqref{eq:ROT_symmetric}, thus this estimate reads: we can always find $(\tilde{\rho}, \tilde{m}, 0)$ a competitor in $\RUOT_{\nu, \Psi,0,1}(\tilde \alpha,\tilde \beta)$ such that $\tilde{m} = \tilde{v} \tilde{\rho}$ and
		\begin{equation*}
	  \int_{t_{1/2}}^{t_1} \hspace{-5pt} \int_{\T^d} \frac{1}{2} |\tilde v(t,x)|^2 \D \tilde \rho(t,x)  \leq W_2^2(\tilde \alpha, \tilde \beta) + \nu H(\tilde \beta|\Leb).  
		\end{equation*}
		Then we use for $\tilde \alpha, \tilde \beta$ the normalized version of $\tau_{\nu(t_1-t_0)/2} \ast \alpha$ and $\beta$. Moreover we do a temporal scaling: specifically, writing $\tilde{\rho} = \D t \otimes \tilde{\rho}_t$ and $\tilde{m} = \D t \otimes \tilde{m}_t$, and denoting $s : t \in [t_{1/2}, t_1] \to (t-t_{1/2})/(t_1 - t_{1/2}) \in [0,1]$ the affine change of variables, we use on $[t_{1/2},t_1] \times \T^d$: 
\begin{equation*}
\rho = \beta(\T^d) \D t \otimes \tilde \rho_{s(t)}, \qquad m = s'(t) \beta(\T^d) \D t \otimes \tilde{m}_{s(t)}, 
\end{equation*}		
as well as $\zeta = 0$. Then we estimate, using that $\Psi(0) \leq C$, doing a temporal change of variables and using that $\beta(\T^d) H(\beta / \beta(\T^d)|\Leb) = h(\beta|\Leb) - l(\beta(\T^d)) \leq h(\beta|\Leb)$), that on $[t_{1/2},t_1] \times \T^d$,		
\begin{equation*}
		E_\Psi(\rho,m,\zeta) 
		\leq C \frac{t_1 - t_0}{2} \beta(\T^d)   + \frac{2 \beta(\T^d)}{t_1-t_0} W_2^2 \left( \frac{\tau_{\nu(t_1-t_0)/2} \ast \alpha}{\alpha(\T^d)} , \frac{\beta}{\beta(\T^d)} \right) + \frac{2\nu}{t_1-t_0} h(\beta| \Leb) .   
		\end{equation*}		
    Gluing the two competitors together yields the desired estimates.		
	\end{proof}
	
	We are now in position to prove Theorem \ref{Thm:small_noise_limit}.
	
	\begin{proof}[Proof of Theorem \ref{Thm:small_noise_limit}: the $\Gamma-\liminf$]
		
		This part is the easiest. Let $(\rho^n, m^n, \zeta^n)_n$ be a sequence which converges to $(\rho,m,\zeta)$. We assume without loss of generality that 
		\begin{equation*}
		\liminf_{n \to + \infty}
		\Big\{ E_{\Psi_n}(\rho^n,m^n,\zeta^n) + \iota_{\CE_n(\rho_0, \rho_1)}(\rho^n,m^n,\zeta^n)\Big\} < + \infty.
		\end{equation*}
		
		In particular, for each $n$ the triple $(\rho^n, m^n, \zeta^n)$ satisfies the continuity equation with boundary conditions $\rho_0, \rho_1$ and diffusivity $\nu_n$. That is, if $\phi \in C^2([0,1] \times \T^d)$ is a test function,
		\begin{equation*}
		\left\cg \partial_t \phi + \frac{\nu_n}{2} \Delta \phi, \rho^n \right\cd + \cg \nabla \phi, m^n \cd + \cg \phi, \zeta^n \cd =  \langle \phi(1), \rho_1 \rangle - \langle \phi(0), \rho_0 \rangle.
		\end{equation*}
		Next, we send $n \to + \infty$ and use the weak convergence of the sequence $(\rho^n, m^n, \zeta^n)_n$. As $\phi$ is smooth, it is clear that $\cg \frac{\nu_n}{2} \Delta \phi, \rho^n \cd$ converges to $0$. Thus we are left with
		\begin{equation*}
		\cg \partial_t \phi , \rho \cd + \cg \nabla \phi, m \cd + \cg \phi, \zeta \cd =  \langle \phi(1), \rho_1 \rangle - \langle \phi(0), \rho_0 \rangle.
		\end{equation*}
		This shows that $\iota_{\CE_\infty(\rho_0, \rho_1)}(\rho,m,\zeta) = 0$. 
		
		Next, we want to pass to the limit in the cost function. We write $\mathcal{K}_n$ for the set of triples $(a,b,c)$ of continuous functions such that $a +  |b|^2/2 +  \Psi_n^*(c) \leq 0$, as in Definition~\ref{def:energy_UOT}. We fix $\varepsilon > 0$. Let $(a,b,c) \in \mathcal{K}_\infty$ be almost optimal for $(\rho,m,\zeta)$, that is such that
		\begin{equation*}
		E_{\Psi_\infty}(\rho,m,\zeta) \leq  \cg a, \rho \cd +  \cg b, m \cd +  \cg c, \zeta \cd + \varepsilon.
		\end{equation*}  
		Note that due to the form of $\Psi^*$ given in Lemma~\ref{lem:cv_Psi_star}, it means that $\Psi^*_\infty(c)=0$ everywhere. As we have an $\varepsilon$ of room, up to replacing $c$ by $(1-\eta)c$ for some small $\eta > 0$, we can assume that $c$ is valued in the interior of $\{ \Psi_\infty^* = 0 \}$. Using the uniform convergence described in Lemma \ref{lem:cv_Psi_star}, if we define $S_n = \sup_{[0,1] \times \T^d} \Psi^*_n(c)$, there holds $S_n \to 0$ as $n \to + \infty$. Thus, using $(a-S_n, b,c) \in \mathcal{K}_n$ as a test function in the definition of $E_{\Psi_n}(\rho^n, m^n, \zeta^n)$, we get 
		\begin{equation*}
		E_{\Psi_n}(\rho^n,m^n,\zeta^n) \geq \cg a - S_n, \rho^n \cd +  \cg b, m^n \cd +  \cg c, \zeta^n \cd = - S_n \rho^n([0,1] \times \T^d) + \cg a, \rho^n \cd +  \cg b, m^n \cd +  \cg c, \zeta^n \cd.
		\end{equation*} 
		Using the weak convergence and passing to the limit, 
		\begin{equation*}
		\liminf_{n \to + \infty} E_{\Psi_n}(\rho^n,m^n,\zeta^n) \geq \cg a, \rho \cd +  \cg b, m \cd +  \cg c, \zeta \cd \geq E_{\Psi_\infty}(\rho,m,\zeta) - \varepsilon.
		\end{equation*}
		As $\varepsilon$ is arbitrary, this shows that $\liminf_{n \to + \infty} E_{\Psi_n}(\rho^n,m^n,\zeta^n) \geq  E_{\Psi_\infty}(\rho,m,\zeta)$, which concludes the proof of the $\Gamma-\liminf$.
	\end{proof}
	
	\begin{proof}[Proof of Theorem \ref{Thm:small_noise_limit}: the $\Gamma-\limsup$]	
		
		This part is more involved. Let us fix $(\rho,m,\zeta)$ which satisfies the continuity equation with $\nu = 0$ and boundary conditions $\rho_0, \rho_1$, and such that $E_{\Psi_\infty}(\rho,m,\zeta) < + \infty$. 
		As usual, we identify a measure with its density with respect to the Lebesgue measure, and we use $a(t)$ to denote the function $a(t,\cdot)$, being $a$ defined on $[0,1] \times \T^d$.

		\begin{stepg}{Building a recovery sequence}	
			First, for a given $p \in \N^*$, we define $(\tilde \rho^p, \tilde m^p, \tilde \zeta^p)$ the triple given by Lemma~\ref{lem:mollification} at $\eps:= 1/p$ with $\Psi := \Psi_\infty$, $\nu := 0$, $\bar \nu := 1$ (any $\bar \nu > 0$ would work) and $\bar \rho_0 := \Leb$. Recall that
			\begin{equation}
			\label{eq:limsup_energy_small_noise}
			\limsup_{p \to + \infty} E_{\Psi_\infty}(\tilde \rho^p, \tilde m^p, \tilde \zeta^p) \leq E_{\Psi_\infty}(\rho, m, \zeta).
			\end{equation}	
			In addition, following the first point of Lemma~\ref{lem:mollification} and Lemma~\ref{lem:existence_boundary_CE}, the measures $\tilde \rho^p(0)$ and $\tilde \rho^p(1)$ converge weakly on $\M(\T^d)$ to respectively $\rho_0$ and $\rho_1$.   
			
			Next, we ``squeeze'' in time and do a convolution with the heat kernel to put a Laplacian in the continuity equation. For $\varepsilon \in (0,1/2)$ we define $s_\varepsilon : [\varepsilon, 1-\varepsilon] \to [0,1]$ the affine reparametrization
			\begin{equation*}
			s_\varepsilon(t) = \frac{t - \varepsilon}{1 - 2 \varepsilon}.
			\end{equation*} 
			For $n \in \N \cup \{ + \infty \}$ we define the new triple of competitors on $[\varepsilon, 1-\varepsilon] \times \T^d$ by
			\begin{equation*}
			\hat{\rho}^{n,p,\varepsilon}(t) :=  \tau_{\nu_n t} \ast \tilde \rho^p(s_\varepsilon(t)) , \qquad
			\hat{m}^{n,p,\varepsilon}(t) := \frac{1}{1-2 \varepsilon}  \tau_{\nu_n t} \ast \tilde m^p(s_\varepsilon(t)), \qquad 
			\text{and } \hat{\zeta}^{n,p,\varepsilon}(t) := \frac{1}{1-2 \varepsilon}  \tau_{\nu_n t} \ast \tilde \zeta^p(s_{\varepsilon}(t)) .
			\end{equation*}
			(Notice that this has still a meaning when $n = + \infty$ with the convention $\tau_0 = \delta_0$.)
			
			Given the definition:
			\begin{itemize}
				\item They satisfy the continuity equation $\partial_t \hat\rho^{n,p,\varepsilon} + \Div \hat m^{n,p,\varepsilon} = \frac{\nu_n}{2} \Delta \hat{\rho}^{n,p,\varepsilon} + \hat \zeta^{n,p,\varepsilon}$ on $[\varepsilon, 1-\varepsilon] \times \T^d$. This can be checked by standard calculus, as the effect of the convolution with the time dependent kernel $\tau_{\nu_n t}$ is to add diffusion in the evolution equation. Moreover, $\hat{\rho}^{n,p,\varepsilon}(\eps)$ and $\hat{\rho}^{n,p,\varepsilon}(1-\eps)$ converge respectively to $\rho_0$ and $\rho_1$ when $n\to + \infty$, $p \to + \infty$ and $\eps \to 0$.  
				\item They all have a density with respect to $\D t \otimes \D x$ (that we still denote by $\hat\rho^{n,p,\varepsilon}, \hat m^{n,p,\varepsilon}$ and $\hat\zeta^{n,p,\varepsilon}$), and, on $[\varepsilon, 1-\varepsilon] \times \T^d$, these densities are smooth while $\hat\rho^{n,p,\varepsilon}$ is strictly positive.
				\item For the estimation of the energy we will only need it for the case $n = + \infty$: for that case we start from~\eqref{eq:limsup_energy_small_noise}, do a temporal change of variables $t \leftrightarrow s_\varepsilon(t)$ and use the $1$-homogeneity of $\Psi_\infty$ to find:
				\begin{equation}
				\label{eq:zz_small_noise_energy_np}
				\limsup_{p \to + \infty}\int_{\varepsilon}^{1-\varepsilon} \hspace{-5pt} \int_{\T^d} \left(  \frac{1}{2} \left|\frac{\D \hat m^{\infty,p,\varepsilon}}{\D \hat \rho^{\infty,p,\varepsilon}} \right|^2 + \Psi_\infty \left( \frac{\D \hat \zeta^{\infty,p,\varepsilon}}{\D \hat \rho^{\infty,p,\varepsilon}} \right) \right) \D \hat \rho^{\infty,p,\varepsilon} \leq  \frac{1}{1 - 2 \varepsilon} E_{\Psi_\infty}(\rho,m,\zeta).
				\end{equation}
				
			\end{itemize}
			However, we still have a problem with the boundary conditions, as $\hat \rho^{n,p,\varepsilon}$ does not have the right boundary conditions. So for a given $\varepsilon$ we define $(\rho^{n,p,\varepsilon}, m^{n,p,\varepsilon}, \zeta^{n,p,\varepsilon})$ as follows:
			\begin{itemize}
				\item On $[0,\varepsilon] \times \T^d$ we take the competitor of $\RUOT_{\nu_n, \Psi_n, 0, \varepsilon}(\rho_0, \hat \rho^{n,p,\varepsilon}(\varepsilon))$ given by Proposition~\ref{prop:control_ruot}.
				\item On $[\varepsilon, 1-\varepsilon] \times \T^d$ it coincides with $(\hat \rho^{n,p,\varepsilon},\hat m^{n,p,\varepsilon},\hat \zeta^{n,p,\varepsilon})$.
				\item On $[1-\varepsilon, 1] \times \T^d$ we take the competitor of $\RUOT_{\nu_n, \Psi_n, 1-\varepsilon, 1}(\hat \rho^{n,p,\varepsilon}(1-\varepsilon), \rho_1)$ given by Proposition~\ref{prop:control_ruot}.
			\end{itemize}
			We end up with a competitor $(\rho^{n,p,\varepsilon}, m^{n,p,\varepsilon}, \zeta^{n,p,\varepsilon})$ which belongs to $\CE_n(\rho_0,\rho_1)$.
			Informally, $p$ controls the smoothness, $n$ indicates the level of noise and $\varepsilon$ is the time-lapse of our surgery at the boundary. We will take the limit $n \to + \infty$, followed by $p \to + \infty$ and then $\varepsilon \to 0$. 
			
		\end{stepg}
		
		\begin{stepg}{Estimation of the energy}
			Let us first estimate what happens in the interior of the time interval. We define for $n \in \N \cup \{ + \infty \}$ 
			\begin{equation*}
			v^{n,p,\varepsilon} = \frac{\D m^{n,p,\varepsilon}}{\D \rho^{n,p,\varepsilon}} , \qquad \text{and} \qquad r^{n,p,\varepsilon} = \frac{\D \zeta^{n,p,\varepsilon}}{\D \rho^{n,p,\varepsilon}}.
			\end{equation*}
			Note that $\rho^{p,n,\varepsilon}$, $v^{n,p,\varepsilon}$ and $\zeta^{n,p,\varepsilon}$ are all smooth on $[\varepsilon, 1-\varepsilon] \times \T^d$ uniformly in $n \in \N \cup \{ + \infty \}$, and $\rho^{p,n,\varepsilon}$ strictly bounded from below, again uniformly in $n$ (as we reason for $p$ fixed). Moreover $v^{n,p,\varepsilon}$ (resp. $r^{n,p,\varepsilon}$) converges to $v^{\infty,p,\varepsilon}$ (resp. $r^{\infty,p,\varepsilon}$) for the norm of uniform convergence in $C([\varepsilon,1-\varepsilon] \times \T^d; \R^d)$ (resp. $C([\varepsilon,1-\varepsilon] \times \T^d)$). In particular, thanks to~\eqref{eq:zz_gammalimsup_psi} we see that the $\limsup$ of $\Psi_n(r^{n,p,\varepsilon})$ on $[\varepsilon,1-\varepsilon] \times \T^d$ is less than $\Psi_\infty(r^{\infty,p,\varepsilon})$, while being uniformly bounded from above, in virtue of \eqref{eq:control_Psi_n_uniform}. Combined with the strong convergence of $\rho^{n,p,\varepsilon}$ to $\rho^{\infty,p,\varepsilon}$ as $n \to + \infty$, using for instance dominated convergence theorem, 
			\begin{multline*}
			\limsup_{n \to + \infty}  \int_\varepsilon^{1-\varepsilon} \hspace{-5pt}\int_{\T^d}\left\{ \frac{1}{2} |v^{n,p,\varepsilon}(t,x)|^2  +  \Psi_n(r^{n,p,\varepsilon}(t,x)) \right\} \D \rho^{n,p,\varepsilon}(t,x) \\
			\leq \int_\varepsilon^{1-\varepsilon} \hspace{-5pt}\int_{\T^d} \left\{ \frac{1}{2} |v^{\infty,p,\varepsilon}(t,x)|^2 + \Psi_{\infty}(r^{\infty,p,\varepsilon}(t,x))\right\} \D \rho^{\infty,p,\varepsilon}(t,x).
			\end{multline*}
			Moreover, formula~\eqref{eq:zz_small_noise_energy_np} controls the limit $p \to + \infty$ of the right hand side: 
			\begin{equation}
			\label{eq:zz_aux_gamma_limsup_interior}
			\limsup_{p \to + \infty} \limsup_{n \to + \infty}  \int_\varepsilon^{1-\varepsilon} \hspace{-5pt}\int_{\T^d}\left\{ \frac{1}{2} |v^{n,p,\varepsilon}(t,x)|^2  +  \Psi_n(r^{n,p,\varepsilon}(t,x)) \right\} \D \rho^{n,p,\varepsilon}(t,x)
			\leq \frac{1}{1-2 \varepsilon} E_{\Psi_\infty}(\rho,m,\zeta).
			\end{equation}
			
			It remains to control what happens at the boundary. To that end we rely on Proposition~\ref{prop:control_ruot}: with~\eqref{eq:control_ruot_boundary_energy} we find
			\begin{align*}
			& \int_{[0,\varepsilon] \cup [1-\varepsilon,1]}  \int_{\T^d} \left( \frac{1}{2} \left| \frac{\D m^{n,p,\varepsilon}}{\D \rho^{n,p,\varepsilon}} \right|^2 + \Psi \left( \frac{\D \zeta^{n,p,\varepsilon}}{\D \rho^{n,p,\varepsilon}} \right) \right) \D \rho^{n,p,\varepsilon} \\ 
			& \leq 2 C \varepsilon \left( \rho_0(\T^d) + \rho_1(\T^d) +  \max_{t \in [\varepsilon,1-\varepsilon]}  \rho_t^{n,p,\varepsilon}(\T^d) \right) \\
			& \qquad + \frac{2C}{\varepsilon} \left\{ \left| \rho_0(\T^d)^{1/2} - \rho^{n,p,\varepsilon}(\varepsilon)(\T^d)^{1/2}  \right|^2 + \left| \rho_1(\T^d)^{1/2} - \rho^{n,p,\varepsilon}(1-\varepsilon)(\T^d)^{1/2}  \right|^2 \right\} \\
			& \qquad + \frac{2 }{\varepsilon} \left\{ \rho^{n,p,\varepsilon}(\varepsilon)(\T^d) W_2^2 \left( \frac{\tau_{\nu_n \varepsilon / 2} \ast \rho_0 }{\rho_0(\T^d)} , \frac{\rho^{n,p,\varepsilon}(\varepsilon)}{\rho^{n,p,\varepsilon}(\varepsilon)(\T^d)} \right) + \rho_1(\T^d) W_2^2 \left( \frac{\tau_{\nu_n \varepsilon / 2} \ast \rho^{n,p,\varepsilon}(1-\varepsilon)}{\rho^{n,p,\varepsilon}(1-\varepsilon)(\T^d)} , \frac{\rho_1}{\rho_1(\T^d)} \right) \right\} \\ 
			& \qquad + \frac{2 \nu_n}{\varepsilon} \left\{ h(\rho^{n,p,\varepsilon}(\varepsilon)| \Leb) + h(\rho_1|\Leb)  \right\}
			\end{align*}
			We then take the limit $n \to + \infty$, it makes the terms of the last line vanish: this is where we use the assumption $h(\rho_1|\Leb) < + \infty$. Then we take $p \to + \infty$ and observe that $\rho^{n,p,\varepsilon}(t)$ converges weakly, when $n \to + \infty$ followed by $p \to + \infty$, to respectively $\rho_0$ and $\rho_1$ for $t = \varepsilon$ and $t = 1-\varepsilon$ as observed above. As the Wasserstein distance metrizes the weak convergence, the terms containing this distance vanish in the limit. Moreover, for the first line, the term $\max_{t \in [\varepsilon,1-\varepsilon]}  \rho_t^{n,p,\varepsilon}(\T^d)$ is bounded uniformly in $n,p,\varepsilon$: we have a uniform control of the energy~\eqref{eq:zz_aux_gamma_limsup_interior} and it yields a control of the mass thanks to Proposition~\ref{prop:control_mass_from_energy} and Lemma~\ref{lem:cv_Psi} (see~\eqref{eq:control_Psi_n_uniform}). Let's call $\tilde C$ an upper bound. We end up with
			\begin{equation}
			\label{eq:small_noise_aux_boundary_control}
			\limsup_{p \to + \infty} \limsup_{n \to + \infty} \int_{[0,\varepsilon] \cup [1-\varepsilon,1]}  \int_{\T^d} \left( \frac{1}{2} \left| \frac{\D m^{n,p,\varepsilon}}{\D \rho^{n,p,\varepsilon}} \right|^2 + \Psi \left( \frac{\D \zeta^{n,p,\varepsilon}}{\D \rho^{n,p,\varepsilon}} \right) \right) \D \rho^{n,p,\varepsilon}
			\leq 2 C \tilde C \varepsilon 
			\end{equation}
			which means that the left hand side converges to $0$ once we take the limit $\varepsilon \to 0$.
			
			Gluing this estimate with~\eqref{eq:zz_aux_gamma_limsup_interior}, we conclude that
			\begin{equation}
			\label{eq:small_noise_aux_center_control}
			\limsup_{\varepsilon \to 0}  \limsup_{p \to + \infty} \limsup_{n \to + \infty} E_{\Psi_n}(\rho^{n,p,\varepsilon}, m^{n,p,\varepsilon}, \zeta^{n,p,\varepsilon}) \leq E_{\Psi_\infty}(\rho,m,\zeta).
			\end{equation}
		\end{stepg}
		
		\begin{stepg}{Convergence of the recovery sequence}
			It remains to prove the convergence of $(\rho^{n,p,\varepsilon}, m^{n,p,\varepsilon}, \zeta^{n,p,\varepsilon})$ to $(\rho,m,\zeta)$. Given the way it was defined (and in particular the first point of Lemma~\ref{lem:mollification}), it is clear that $(\hat \rho^{n,p,\varepsilon}, \hat m^{n,p,\varepsilon}, \hat \zeta^{n,p,\varepsilon})$ (that is, the restriction of $(\rho^{n,p,\varepsilon}, m^{n,p,\varepsilon}, \zeta^{n,p,\varepsilon})$ to $[\eps, 1-\eps] \times \T^d$) converges weakly on $[0,1] \times \T^d$ to the original triplet $(\rho,m,\zeta)$ when $n \to + \infty$, followed by $p \to + \infty$ and then $\eps \to 0$.
			
			On the other hand, the energy of the restriction of $(\rho^{n,p,\varepsilon}, m^{n,p,\varepsilon}, \zeta^{n,p,\varepsilon})$ to $\{ [0,\eps] \cup [ 1-\eps] \} \times \T^d$ converges to $0$, this is \eqref{eq:small_noise_aux_boundary_control}. Thus, thanks to Proposition~\ref{prop:control_mass_from_energy} together with the bound~\eqref{eq:control_Psi_n_uniform} (uniform in $n$), one can see that the restriction of $(\rho^{n,p,\varepsilon}, m^{n,p,\varepsilon})$ to $\{ [0,\eps] \cup [ 1-\eps] \} \times \T^d$ converges weakly to the zero measure when $n \to + \infty$, followed by $p \to + \infty$ and then $\eps \to 0$. For the net mass balance $\zeta$ we use \eqref{eq:control_ruot_boundary_zeta} to see that 
\begin{equation*}
|\zeta^{n,p,\varepsilon}|(\{ [0,\eps] \cup [ 1-\eps] \} \times \T^d) \leq  \left| \rho_0(\T^d) - \rho^{n,p,\varepsilon}(\varepsilon)(\T^d)  \right| + \left|  \rho^{n,p,\varepsilon}(1-\varepsilon)(\T^d) - \rho_1(\T^d)  \right|
\end{equation*}			
and the right hand side converges to $0$ when taking $n \to + \infty$, followed by $p \to + \infty$ and $\eps \to 0$.	

			To eventually get a recovery sequence, it is enough to take $p = p_n$ and $\varepsilon = \varepsilon_n$ which go to respectively $+ \infty$ and $0$ slowly enough.
		\end{stepg}	
	\end{proof}

	\section{More general measure-valued branching Markov processes}
	\label{sec:general_superproc}
	
	In this section, we investigate what happens if the reference law $R$ is still a probability distribution on $\cadlag([0,1], \M_+(\T^d))$, but is no longer the law of a BBM. We will not aim at proving rigorous results and we will always assume that all the objects that we manipulate are smooth enough to perform computations. Note, as can be seen in what we have done until now, that this assumption is usually false in most cases. A detailed study would involve some work to justify and expand our findings. However, we still think that the formal computations shed a new light on the problem, and they also explain why the law of the BBM, and not of another measure-valued branching Markov processes, turns out to be the one connected to optimal transport.
	
	So let us take $R \in \P(\cadlag([0,1]; \M_+(\T^d)))$. We assume that there exists a family of differential operator $\L : [0,1] \times  C^\infty(\T^d) \to C^\infty(\T^d)$ such that, for all $\varphi : \T^d \to \R$ and $s \in [0,1)$,
	\begin{equation}
	\label{eq:def_L_superprocess}
	\frac{\D}{\D t+} \E_R \Big[ \exp( \langle \varphi, M_t \rangle ) \Big| \F_s \Big] \Big|_{t=s} = \langle \L[s,\varphi], M_s \rangle  \exp( \langle \varphi, M_s \rangle ).
	\end{equation}
	It amounts essentially to asking that $R$ satisfies the (inhomogeneous) Markov property and the branching property. In the case of the BBM, the family of operators $(\L[t,\cdot])_{t \in [0,1]}$ does not depend on $t$ and coincide, up to a scaling factor $\nu$, with $\L_{\nu, \boldsymbol q }$ as defined in \eqref{eq:operator_L}: this is a consequence of Proposition~\ref{prop:generator} together with the branching property, Proposition~\ref{prop:branching_property}. 
	
	However, this framework encompasses more general cases than the BBM. In particular, as we will explain now in \eqref{eq:exp_mart_generic}, if the process $R$ solves a martingale problem, there is a good chance to identify (at least formally) the generator $\L$.
	
	More precisely and similarly to the case of the BBM (see Proposition~\ref{prop:martingale_problem}), at least formally, $R$ satisfies~\eqref{eq:def_L_superprocess} for all smooth $\varphi$ and $s\in[0,1)$ if and only if for any smooth $\psi : [0,1] \times \T^d \to \R$, the process    
	\begin{equation}
	\label{eq:exp_mart_generic}
	\exp\left(\cg \psi(t) , M_t \cd - \cg \psi(0), M_0 \cd - \int_0^t \big\cg \partial_t \psi(s) + \mathcal \L[s,\psi(s)], M_s \big\cd \D s \right), \qquad t \in [0,1],
	\end{equation}
	is a martingale under $R$. Usually one would restrict to $\psi$ nonpositive in order to guarantee integrability, but this is the kind of technical assumptions that we will not discuss as they depend on the precise process and would likely involve a fine analysis. Note that a corollary is that, if $\psi$ is a solution of the (backward) PDE $\partial_t \psi(s) + \mathcal \L[s,\psi(s)] = 0$, then the process $(\exp(\cg \psi(t), M_t \cd))_{t \in [0,1]}$ is a martingale under $R$.
	
	\begin{Ex}
		The Dawson-Watanabe superprocess \cite[Chapter 1]{etheridge2000introduction} is the limit of the branching Brownian motion when the initial number of particles tends to infinity, the mass is rescaled correspondingly, and the branching rate is proportional to the initial number of particles. More precisely, start from a branching mechanism $\boldsymbol q$ such that:
		\begin{equation*}
		\sum_k k q_k = 1 \qquad \mbox{and} \qquad \sum_k (k-1)^2 q_k = \gamma.
		\end{equation*}
		Then consider $R^n \sim \BBM(\nu,  n \boldsymbol{q}, R^n_0)$. If the law of $M_0/n$ under $R^n$ weakly converges to $R_0 \in \P(\M_+(\T^d))$, then the law of $(M_t/n)_{t \in [0,1]}$ weakly converges to the law of the Dawson-Watanabe superprocess of parameters $\nu$ and $\gamma$, starting from $R_0$. It is characterized by the (time independent) generator $\L : C^\infty(\T^d) \to C^\infty(\T^d)$ which reads (see \cite[Section 1.5]{etheridge2000introduction}):
		\begin{equation}
		\label{eq:operator_DW}
		\mathcal{L}[\phi] =  \frac{\nu}{2} \Delta \phi + \frac{\gamma}{2} \phi^2. 
		\end{equation}
	\end{Ex}
	
	\subsection{Equivalence of the values}
	
	Given $R \in \P(\cadlag([0,1], \M_+(\T^d)))$ characterized by an operator $\L : [0,1] \times  C^\infty(\T^d) \to C^\infty(\T^d)$, we can consider the same entropy minimization problem, the ``General'' branching Schrödinger problem:
	\begin{equation}
	\label{eq:BrSch_general}
	\GBrSch_R(\rho_0,\rho_1) = \inf_{P} \left\{ H(P|R) \ : \ \E_P[M_0] = \rho_0 \text{ and } \E_P[M_1] = \rho_1 \right\},
	\end{equation}
	where now we do not have a scaling factor $\nu$ in front of the entropy: at this level of generality, there is not necessarily a well-defined diffusivity.
	
	Similarly to the informal inf-sup exchange done in the introduction (Section~\ref{sec:duality_informal}) yielding \eqref{eq:dual_generic_R}, and in fact as justified in the proof of Theorem~\ref{thm:equality_values_dyn}, one can write the l.s.c.\ envelope of $\GBrSch_R$ as:
	\begin{equation*}
	\overline{\GBrSch}_R(\rho_0,\rho_1) = \sup_{\sigma,\theta} \cg \sigma, \rho_0 \cd +\cg \theta, \rho_1 \cd  - \log \E_R \left[  \exp \left( \langle \sigma,M_0 \rangle + \langle \theta,M_1 \rangle \right) \right],
	\end{equation*} 
	where now $\sigma, \theta$ are continuous functions on $\T^d$. Still following the formal computations of Section~\ref{sec:duality_informal}, but now using \eqref{eq:exp_mart_generic} by taking $\varphi$ to be the solution of $\partial_t \varphi(t) +  \L[t,\varphi(t)] = 0$ with terminal conditin $\varphi(1) = \theta$, we find that 
	\begin{equation*}
	\overline{\GBrSch}_R(\rho_0,\rho_1) = L_{R_0}^*(\rho_0) + \sup_{\varphi} \left\{ \cg \varphi(1), \rho_1 \cd - \cg \varphi(0), \rho_0 \cd \ : \  \partial_t \varphi(t) +  \L[t,\varphi(t)] = 0 \right\}.
	\end{equation*}
	Note however that establishing this identity in the case of the BBM required some work (see Section~\ref{sec:linkFKPP_BBM} and the proof of Theorem~\ref{thm:equality_values_dyn}) because we had to analyze in details different notions of solutions for the PDE $\partial_t \varphi(t) + \mathcal \L[t,\varphi(t)] = 0$. Establishing it for a different $R$ could be difficult and depends on the structure of $\L$. Next we can try to push further the computation and work on the supremum in $\varphi$. Introducing a Lagrange multipler $\rho : [0,1] \times \T^d \to \R$ for the constraint $\partial_t \varphi(t) + \mathcal \L[t,\varphi(t)] = 0$ and assuming that we can exchange the infimum and the supremum, an integration by parts leads to 
	\begin{align}
	\notag \sup_{\varphi} & \left\{ \cg \varphi(1), \rho_1 \cd - \cg \varphi(0), \rho_0 \cd \ : \  \partial_t \varphi(t) + \mathcal \L[t,\varphi(t)] = 0 \right\} \\
	\notag & = \sup_{\varphi} \inf_{\rho} \left\{ \cg \varphi(1), \rho_1 \cd - \cg \varphi(0), \rho_0 \cd - \int_0^1 \cg \partial_t \varphi(t) + \L[t,\varphi(t)],   \rho(t) \cd \D t \right\} \\
	\label{eq:transport_problem_inf_sup}& = \inf_{\rho} \sup_{\varphi}  \left\{ \cg \varphi(1), \rho_1 - \rho(1) \cd - \cg \varphi(0), \rho_0 - \rho(0) \cd + \int_0^1 \Big( \cg \varphi(t), \partial_t \rho(t) \cd - \cg \mathcal L[t, \varphi(t)], \rho(t) \cd \Big)\D t \right\}.
	\end{align} 
	(Once again, these computations are formal and one should not pay attention to regularity issues to define the different terms in these formulas.) Taking the supremum in $\varphi$ imposes boundary constraint on $\rho$ as well as the evolution equation 
	\begin{equation}
	\label{eq:general_transport_equation}
	\partial_t \rho(t)  = \mathrm{D} \L[t,\varphi(t)]^\top \cdot \rho(t),
	\end{equation}
	where the latter is defined by duality as follows: for all test function $a \in C^\infty(\T^d)$,
	\begin{equation*}
	\cg a, \mathrm{D} \L[t,\varphi(t)]^\top \cdot \rho \cd = \langle \mathrm{D} \L[t,\varphi(t)] \cdot a, \rho \rangle = \frac{\D}{\D s}  \Big\langle \mathrm{D} \L[t,\varphi(t)+ s a] , \rho \Big\rangle \Big|_{s=0}.
	\end{equation*}  
	Now, imagine that for all $t \in [0,1]$ and $\rho: \T^d \to \R_+$, the map $\varphi \mapsto \mathrm D \mathcal L[t, \varphi]^\top \cdot \rho$ is injective. It means that for all curve $t \mapsto \rho(t)$, there is at most one $\varphi = \varphi(t)$ such that~\eqref{eq:general_transport_equation} holds. In that case, provided we add~\eqref{eq:general_transport_equation} as a constraint in~\eqref{eq:transport_problem_inf_sup}, we can replace $\sup_\varphi$ by $\inf_\varphi$, as anyway, there is at most one admissible $\varphi$. We end up with
	\begin{align}
	\notag &\overline{\GBrSch}_R(\rho_0,\rho_1) \\
	\label{eq:equiv_value_BSG}&\qquad = L_{R_0}^*(\rho_0)
	+
	\inf_{\rho, \varphi}  \left\{  \int_0^1 \Big\cg \mathrm{D} \L[t,\varphi(t)] \cdot \varphi(t) - \L[t,\varphi(t)] , \rho(t) \Big\cd \D t \ : \
	\begin{gathered}\partial_t \rho(t) = \mathrm{D} \L[t,\varphi(t)]^\top \cdot \rho(t)\\
	\rho(0) = \rho_0, \; \rho(1) = \rho_1 \end{gathered} \right\}.
	\end{align} 
	Then, one can wonder whether the r.h.s.\ has an interesting interpretation, and the answer depends on the operator $\L$. This can be dealt on a case by case basis. Here, we only comment what one obtains for the Dawson-Watanabe superprocess.
	
	\begin{Ex}
		We illustrate the formula above on the Dawson-Watanabe superprocess, that is when $\L$ is defined through \eqref{eq:operator_DW}. Indeed, in this case one has
		\begin{equation*}
		\mathrm{D} \L[\varphi] \cdot a = \frac{\nu}{2} \Delta a + \gamma a \phi,
		\end{equation*}
		in such a way that $\mathrm{D} \L[\varphi] = \mathrm{D} \L[\varphi]^\top$ and \eqref{eq:equiv_value_BSG} reads 
		\begin{align*}
		&\overline{\GBrSch}_R(\rho_0,\rho_1) \\
		&\qquad= L_{R_0}^*(\rho_0) +
		\inf_{\rho, \varphi}   \Bigg\{ \frac{\gamma}{2} \int_0^1\hspace{-5pt}\int \varphi(t,x)^2  \rho(t,x) \D x \D t  \ : \ \partial_t \rho = \frac{\nu}{2} \Delta \rho + \varphi \rho \text{ and } \rho(0,\cdot) = \rho_0, \; \rho(1,\cdot) = \rho_1 \Bigg\}.
		\end{align*}
		We recognize a problem \emph{without} transport, as there is no velocity field in the evolution equation for $\rho$. On the other hand, $\varphi$ plays here the role of a growth rate, with a quadratic penalization. In the Dawson-Watanabe regime, it is infinitely more expensive to add a drift to each of the infinite number of particles than to create or destroy mass, so that in the limit, transport is forbidden. When $\gamma = 1$ and $\nu \to 0$, we recognize formally the so-called Fischer-Rao metric, also known as Hellinger distance, see \cite{chizat2018unbalanced} and \cite[Chapter 8]{peyre2019computational}. That is, we expect the small noise limit of the value of the entropy minimization with respect to the Dawson-Wanatabe superprocess to be the Fischer-Rao metric. 
	\end{Ex}
	
	\subsection{Structure of the optimizers} 
	
	Still in the case of a measure-valued branching Markov processes characterized by $\L : [0,1] \times C^\infty(\T^d) \to C^\infty(\T^d)$, let us characterize formally the structure of the optimizers.
	
	Actually, from the formal inf-sup exchange that we did in the introduction in order to get~\eqref{eq:dual_generic_R}, we expect the optimal $P$ in~\eqref{eq:BrSch_general} to have a density with respect to $R$ which reads
	\begin{equation*}
	\frac{\D P}{\D R}(M) = \exp \Big( \langle \sigma, M_0 \rangle + \langle \theta, M_1 \rangle \Big)
	\end{equation*}
	for some continuous functions $\sigma, \theta : \T^d \to \R$. As the problem is ill-posed, even in the case of the BBM we have no hope that, for a generic $\rho_0, \rho_1$, one can write $P$ as above with $\sigma, \theta$ continuous. However, for the sake of formal computations we will stick to this case. Actually, we will allow for more freedom by also choosing $\psi : [0,1] \times \T^d \to \R$ and look at the probability measure $P^{\sigma,\psi}$ defined on $\cadlag([0,1], \M_+(\T^d))$ as the one whose density with respect to $R$ is
	\begin{equation}
	\label{eq:shape_solution_general}
	\frac{\D P^{\sigma,\psi}}{\D R}(M) = \exp \left( \langle \sigma, M_0 \rangle + \langle \psi(1), M_1 \rangle - \langle \psi(0), M_0 \rangle - \int_0^1 \left\cg \partial_t \psi(t) + \L[t,\psi(t)], M_t \right\cd \D t \right).
	\end{equation}
	Note that if $\psi$ is a solution of $\partial_t \psi(t) + \L[t,\psi(t)] = 0$ then we retrieve the previous form.
	
	\begin{Rem}
		In the case of the BBM, we rather defined a modified BBM via its Radon-Nikodym density in Theorem~\ref{thm:formula_RN_derivative}. In such a case, we could freely choose a new drift $\tilde{v}$ and a new branching mechanism $\boldsymbol{\tilde{q}}$. If these objects $\tilde{v}$ and $\boldsymbol{\tilde{q}}$ are chosen according to Example~\ref{ex_general_link_v_q_BBM} below, then the Radon-Nikodym derivative in Theorem~\ref{thm:formula_RN_derivative} reduces to \eqref{eq:shape_solution_general}: this a consequence of the extended Itô formula, see Theorem~\ref{thm:branching_Ito}. 
	\end{Rem}
	
	Formally, thanks to the martingale characterization \eqref{eq:exp_mart_generic}, we see that $P^{\sigma,\psi}$ is indeed a probability distribution as soon as $\E_R[\exp(\cg\sigma,M_0\cd)] = 0$. In this case, let us explain why $P^{\sigma,\psi}$ is automatically a solution of the problem:
	\begin{equation*}
	\min_{Q} \left\{ H(P|R) \ : \ \forall t \in [0,1], \, \E_Q[M_t] = \E_{P^{\sigma,\psi}}[M_t] \right\},
	\end{equation*}
	and why in addition, if $\partial_t \psi(t) + \L[t,\psi(t)] = 0$ then $P^{\sigma,\psi}$ is a solution of the problem \eqref{eq:BrSch_general} with its own marginal constraints, that is, $\rho_0=\E_{P^{\sigma,\psi}}[M_0]$ and $\rho_1 = \E_{P^{\sigma,\psi}}[M_1]$. 
	
	Let us take $Q \in \P(\cadlag([0,1], \M_+(\T^d)))$. Denoting by $Z_P$ (resp. $Z_Q$) the density of $P^{\sigma,\psi}$ (resp. $Q$) with respect to $R$, and using the convex inequality $Z_Q \log Z_Q - Z_P \log Z_P \geq ( Z_Q - Z_P) (1+ \log Z_P)$, we see that
	\begin{equation*}
	H(Q|R) - H(P^{\sigma,\psi}|R) \geq (\E_{Q} - \E_{P^{\sigma,\psi}})[1+\log Z_P].
	\end{equation*} 
	Thanks to the explicit expression of $Z_P$ in \eqref{eq:shape_solution_general}, one can see that $\E_{Q}[1+\log Z_P]$ depends only on $\E_Q[M_t]$ for $t \in [0,1]$, and only on $\E_Q[M_0]$ and $\E_Q[M_1]$ if $\partial_t \psi(t) + \L[t,\psi(t)] = 0$. Thus the optimality of $P^{\sigma,\psi}$ follows. 
	
	Again, already in the case of the BBM it is not true that any solution of \eqref{eq:BrSch_general} can be written as \eqref{eq:shape_solution_general} for some smooth $\sigma, \psi$. However, any $P^{\sigma,\psi}$, provided it is actually a probability distribution and $\partial_t \psi(t) + \L[t,\psi(t)] = 0$, is a solution of \eqref{eq:BrSch_general} with its own marginals.
	
	\bigskip
	
	Eventually, we show that, for some smooth $\sigma, \psi$, provided $P^{\sigma,\psi}$ is a probability distribution, it is a measure-valued branching Markov processes characterized by an operator $\L^{\psi}$ in the sense of~\eqref{eq:def_L_superprocess} (or equivalently~\eqref{eq:exp_mart_generic}), which is defined by
	\begin{equation*}
	\varphi \mapsto \L^{\psi}[t,\varphi] = \L[t,\varphi + \psi(t)] - \L[t,\psi(t)].
	\end{equation*}  
	Indeed, for a given smooth $\phi:[0,1] \times \T^d \to \R$ denoting by $\mathcal{E}^\phi_t$ the process
	\begin{equation*}
	\mathcal{E}^\phi_t := \exp\left(\cg \phi(t) , M_t \cd - \cg \phi(0), M_0 \cd - \int_0^t \big\cg \partial_t \phi(s) + \mathcal \L^\psi[s,\phi(s)], M_s \big\cd \D s \right),  \qquad t \in [0,1],  
	\end{equation*}
	we want to prove that $(\mathcal{E}^\phi_t)_{t \in [0,1]}$ is a martingale under $P^{\sigma, \psi}$. With $(Z_t)_{t \in [0,1]}$ the process defined at time $t$ by
	\begin{equation*}
	Z_t = \exp \left( \langle \sigma, M_0 \rangle + \langle \psi(t), M_t \rangle - \langle \psi(0), M_0 \rangle - \int_0^t \left\cg \partial_s \psi(s) + \L[s,\psi(s)], M_s \right\cd \D t \right),    
	\end{equation*}
	which is a martingale under $R$, and whose value at time $1$ gives the Radon-Nikodym density of $P^{\sigma,\psi}$ w.r.t.\ $R$, we only need to show (see for instance the proof of Girsanov's theorem in \cite{legall2016brownian}) that $(Z_t \mathcal{E}^\phi_t)_{t \in [0,1]}$ is a martingale under $R$. But by definition of $\L^\psi$, 
	\begin{equation*}
	Z_t \mathcal{E}^\phi_t = \exp \left( \langle \sigma, M_0 \rangle + \langle  (\psi + \phi)(t), M_t \rangle - \langle (\psi + \phi)(0), M_0 \rangle - \int_0^t \left\cg \partial_s (\psi + \phi)(s) + \L[s,(\psi + \phi)(s)], M_s \right\cd \D t \right)  
	\end{equation*}
	and it is a martingale under $R$ thanks to \eqref{eq:exp_mart_generic} applied to the test function $\psi + \phi$.
	
	\begin{Ex}
		\label{ex_general_link_v_q_BBM}
		To illustrate the relevance of the formal approach, let us stick to the case of BBM. We consider $\L = \L_{\nu, \boldsymbol q}$ the generator given in \eqref{eq:operator_L}, and a function $\psi : [0,1] \times \T^d \to \R$. We look at the measure $P^\psi$ where $R \sim \BBM(\nu, \boldsymbol q, R_0)$. According to the computations above, it is characterized by the operator
		\begin{align}
		\notag
		\varphi \mapsto \L^{\psi}[t,\varphi] &= \L_{\nu, \boldsymbol q}[\varphi + \psi(t)] - \L_{\nu, \boldsymbol q}[t,\psi(t)] \\
		\label{eq:new_L_BBM} &= \frac{1}{2} |\nabla \varphi|^2 + \nabla \varphi \cdot \nabla \psi(t) + \frac{\nu}{2} \Delta \varphi + \Psi^*_{\nu, \boldsymbol q}(\varphi + \psi(t)) - \Psi^*_{\nu, \boldsymbol q}(\psi(t)).
		\end{align} 
		Note that given the definition~\eqref{eq:def_Psi*} of $\Psi_{\nu, \boldsymbol q}^*$ the last part can be rewritten:
		\begin{align*}
		\Psi^*_{\nu, \boldsymbol q}(\varphi + \psi(t)) - \Psi^*_{\nu, \boldsymbol q}(\psi(t)) &= \nu \left[ \Phi_{\boldsymbol q}\left(e^{(\varphi+\psi(t))/\nu}\right)e^{-(\varphi+\psi(t))/\nu} - \Phi_{\boldsymbol q}\left(e^{\psi(t)/\nu}\right)e^{-\psi(t)/\nu} \right] \\ 
		&= \nu \left[ \Phi_{\tilde{\boldsymbol{q}}(t)}\left( e^{\varphi/\nu} \right) e^{- \varphi/\nu} - \Phi_{\tilde{\boldsymbol{q}}(t)}(1) \right] = \Psi^*_{\nu, \tilde{\boldsymbol q}(t)}(\varphi),
		\end{align*}
		provided we define $\tilde{\boldsymbol{q}}$ as the (space-time) dependent $(\tilde{q}_k(t,x))_{k \in \N}$ with
		\begin{equation}
		\label{eq:new_tilde_q_formal}
		\tilde{q}_k(t,x) = q_k \exp \left( (k-1) \frac{\psi(t,x)}{\nu} \right).
		\end{equation}
		Equipped with this, we can reinterpret \eqref{eq:new_L_BBM}. Once $\psi$ is given, $P^\psi$ is characterized by the operator \eqref{eq:new_L_BBM}. In this generator, the term $\nabla \varphi \cdot \nabla \psi(t)$ means that particles experience a drift $\tilde{v} = \nabla \psi$, while the second part yields that the new branching mechanism is $\tilde{\boldsymbol{q}}$ as defined in \eqref{eq:new_tilde_q_formal}. In this case, the corresponding growth rate is $r(t,x) = (\Psi^*_{\nu, \boldsymbol q})'(\psi(t,x))$.
		
		At this point the link has to be made with Chapter 5, where an expression similar to \eqref{eq:new_tilde_q_formal} is found, though it may be less apparent because of technical difficulties. 
	\end{Ex}

	\section{Numerics}
	\label{sec:numerics}
	
	In this section, we discuss numerical methods to solve the RUOT problem or the branching Schrödinger problem. We first argue that, contrarily to the case of the Schrödinger problem, the Sinkhorn algorithm is not available in this case. Note that we do not claim that one cannot solve the branching Schrödinger problem by trying to solve its dual formulation, but rather that it should require additional ideas and techniques compared to the Schrödinger problem. On the other hand, the core of this section explains why solving the RUOT problem via its dynamical formulation does not require additional ideas and techniques compared to solving the dynamical formulation of the OT problem.
	
	\subsection{Obstructions for a Sinkhorn-like algorithm}
	\label{subsec:no_sinkhorn}
	
	Let us now explain why an algorithm similar to Sinkhorn's algorithm commonly used to solve regularized optimal transport is not available. In the case of the Schrödinger problem, Sinkhorn's algorithm is a fast and efficient way to solve numerically the problem \cite{peyre2019computational}. One way to understand it is as a \emph{block update} on the dual problem.

	Let $(\tau_s)_{s > 0}$ be the heat kernel on the torus starting from a Dirac at $0$ whose definition is given by formula~\eqref{eq:def_heat_kernel}.
	Even though we work on the torus for convenience, Sinkhorn's algorithm is usually used with $(\tau_s)$ being the heat kernel on the whole space $\R^d$, whose expression is simpler than the one on the torus.
	
	Let us take $\rho_0, \rho_1 \in \P(\T^d)$ two probability measures. The dual of the Schrödinger problem with diffusivity $\nu$ can be written as the maximization over all $\sigma,\theta : \T^d \to \R$ of the quantity
	\begin{equation}
	\label{eq:dual_schrodinger}
	\langle \sigma, \rho_0 \rangle + \langle \theta, \rho_1 \rangle  - \nu \log \E_W \left[  \exp \left( \frac{1}{\nu} \big\{ \sigma(X_0) + \theta(X_1) \big\}\right) \right],
	\end{equation}
	being $W$ the law of the Brownian motion with diffusivity $\nu$, starting from the Lebesgue measure, that we call the reversible Brownian motion. This formula, already well known in the Schrödinger problem's community, was mentioned in Remark~\ref{rk:duality_Schrodinger_pb}. In particular, we can use the explicit formula for the joint law of $W$ at the instants $0,1$ to rewrite~\eqref{eq:dual_schrodinger} as
	\begin{equation}
	\label{eq:dual_schrodinger_explicit}
	\sup_{\sigma,\theta} \int \sigma(x) \rho_0(x)\D x + \int \theta(x) \rho_1(x) \D x  - \nu \log \left( \iint_{\T^d \times \T^d} \frac{1}{c_\nu} \exp \left( \frac{\sigma(x) + \theta(y)}{\nu} - \log \tau_\nu(x-y) \right) \D x \D y \right),
	\end{equation}
	being $c_\nu$ a normalizing constant that modifies the objective only by an additive constant. Note that in $\R^d$, the expression $\log \tau_\nu(x-y)$ boils down to $|x-y|^2/2\nu$. As one can see, the roles of $\sigma$ and $\theta$ are symmetric, up to the exchange of $\rho_0$ and $\rho_1$. Moreover, for a fixed and smooth $\theta$, there exists a closed formula for the $\sigma$ which maximizes~\eqref{eq:dual_schrodinger_explicit}. This is
	\begin{equation*}
	\sigma(x) = \nu \log \rho_0(x) - \nu \log \left( \int_{\T^d} \exp \left( \frac{\theta(y)}{\nu} - \log \tau_\nu(x-y) \right) \D y \right) + C,
	\end{equation*} 
	where the constant $C$ is chosen so that $(x,y) \mapsto \exp(\{\sigma(x)+\theta(y)\}/\nu - \log \tau_\nu(x-y))$ integrates to $1$ over $\T^d \times \T^d$.
	Moreover, there is a symmetric expression for the $\theta$ which maximizes the objective \eqref{eq:dual_schrodinger} when $\sigma$ is fixed. Sinkhorn's algorithms consists in alternatively updating $\sigma$ and $\theta$ according to these expressions and can be proven to converge to a maximizer of the objective functional \eqref{eq:dual_schrodinger_explicit} with a linear rate (see for instance \cite{chen2016entropic} and references therein).
	
	\bigskip
	
	We have seen in formula~\eqref{eq:dual_generic_R} and in the proof of Theorem~\ref{thm:equality_values_dyn} that in our case, we want the functions $\sigma,\theta :\T^d \to \R$ to maximize, instead of~\eqref{eq:dual_schrodinger}, the quantity
	\begin{equation*}
	\langle \sigma, \rho_0 \rangle + \langle \theta, \rho_1 \rangle - \nu \log \E_R \left[  \exp \left( \frac{1}{\nu} \big\{\langle \sigma,M_0 \rangle + \langle \theta,M_1 \rangle \big\}\right) \right]
	\end{equation*}
	where $R \sim \BBM(\nu, \boldsymbol{q}, R_0)$. There are now two main issues which prevent to use the same alternate maximization as for the Schrödinger problem:
	\begin{itemize}
		\item As we have seen above, the computation of $\E_R \left[  \exp \left( \frac{1}{\nu} \big\{\langle \sigma,M_0 \rangle + \langle \theta,M_1 \rangle \big\}\right) \right]$ amounts to solve the PDE \eqref{eq:constraint_phi_BBM}. However, to the best of our knowledge, there is no closed formula for such a solution. Equivalently, by the change of variables described in Remark \ref{rem:FKPP_log_exp}, there is no general closed formula for a solution of the FKPP equation \eqref{eq:FisherKPP}.
		\item Both the RUOT model and the BBM are not symmetric in time, in sharp contrast with what happens for the ROT model and the reversible Brownian motion. Let us justify a bit more this fact.
		
		First, at the level of optimal transport, to make the problem symmetric with respect to time in the case of \emph{balanced} regularized optimal transport, one changes the variables from $(\rho,v)$ to $(\rho, w) = (\rho, v - \frac{\nu}{2} \nabla \log \rho)$ as seen in formula~\eqref{eq:ROT_symmetric}. Indeed, the differential constraint becomes
		\begin{equation*}
		\dr_t \rho + \Div( \rho w ) = r \rho,
		\end{equation*} 
		which removes the asymmetry. Expanding a square, the objective functional becomes
		\begin{equation*}
		\int_0^1 \hspace{-5pt} \int \left\{ \frac{1}{2} |v|^2 + \Psi(r) \right\} \rho = \int_0^1 \hspace{-5pt} \int \left\{ \frac{1}{2} |w|^2 + \frac{\nu}{2} w \cdot \nabla \log \rho + \frac{\nu^2}{8} |\nabla \log \rho|^2 + \Psi(r) \right\} \rho. 
		\end{equation*}
		To analyze the cross term, notice that, thanks to the continuity equation 
		\begin{equation*}
		\int w \cdot \nabla \log \rho = \left( \frac{\D }{\D t}  \int \rho \log \rho \right) - \int r \rho \log \rho. 
		\end{equation*}
		If $r = 0$, then integrating the left hand side in time gives a term which depends only on the temporal boundary conditions, and we recover~\eqref{eq:ROT_symmetric}. However, because there is now a source term in the continuity equation, this strategy breaks and we see no easy way to make this problem symmetric in time.  
		
		On the other hand, at the level of the stochastic process, the absence of temporal symmetry can be seen as follows: there is no $R_0$ (except $R_0 = \delta_0$) and $\boldsymbol q$ (except $\boldsymbol q = 0$) such that the process $R \sim \BBM(\nu,\boldsymbol{q},R_0)$ is reversible, that is such that $(M_{1-t})_{t \in [0,1]}$ has the same law as $(M_t)_{t \in [0,1]}$ under $R$. To see that, note that for any smooth solution $(\psi_t)_{t \in [0,1]}$ of the ODE $\dot{\psi} + \Psi^*_{\nu,\boldsymbol{q}}(\psi) = 0$, with the martingale property (Proposition~\ref{prop:martingale_problem}), we know that $\E_R[\exp(\psi_1 M_1(\T^d)/\nu)] = \E_R[\exp(\psi_0 M_0(\T^d)/\nu)]$. Thus if $M_0$ has the same law as $M_1$, there holds $\E_R[\exp(\psi_1 M_0(\T^d)/\nu)] = \E_R[\exp(\psi_0 M_0(\T^d)/\nu)]$. This is clearly not possible unless $\psi_1 = \psi_0$, for all $\psi_0$ (which enforces $\boldsymbol q$ to be $0$) or $M_0(\T^d) = 0$ $R$-\emph{a.s.}\ (which enforces $R_0$ to be $\delta_0$). This is in contrast with the law of the reversible Brownian motion.  
	\end{itemize}
	
	In the rest of this section, we explain how still the RUOT problem can be discretized in a very similar way as what is usually done for the OT problem and the UOT problem.

	\subsection{Discretization of the dynamical formulation}
	
	It is still possible to discretize the dynamical formulation of RUOT (Definition \ref{def:RUOT}) and to solve the resulting problem by proximal splitting. Such a method to solve the optimal transport problem (without regularization or unbalanced effects) was started by Benamou and Brenier \cite{benamou2000computational} and several convex optimization methods were proposed and analyzed in \cite{Papadakis2014}. This kind of discretizations were applied successfully to optimal transport on surfaces \cite{Lavenant2018}, on graphs \cite{Erbar2017}, to unbalanced optimal transport \cite{chizat2018unbalanced}, to optimal transport of matrix-valued measures \cite{li2020general}, but also to Wassertsein gradient flows \cite{Benamou2016gradient} and variational Mean Field Games \cite{Benamou2015}.
	
	The discretization and convex optimization proposed below follows closely the works mentioned above and we do not claim new innovative ideas. We rather want to emphasize that such method can be applied to our problem with only minor changes. In particular the two following aspects differ from the usual case. 
	\begin{itemize}
		\item Including a $\nu/2 \Delta \rho$ in the continuity equation can be done in a straightforward way and the problem does not become degenerate as $\nu \to 0$.
		\item Having a growth penalization $\Psi$ which is not quadratic is not a severe issue. One needs only to be able to compute the proximal operator of $\Psi^*$ (or equivalently, of $\Psi$). But as $\Psi^*$ is a one-dimensional function one can always rely on Newton's method in the absence of explicit formula. This leads in practice to computations that are slightly slower than for the quadratic case, but one may remedy this issue by \emph{ad hoc} optimization on a specific $\Psi$.   
	\end{itemize}
	
	For simplicity and to avoid cumbersome formulas, we restrict to $d= 1$, that is one space dimension. Extension to $d > 1$ is tedious but does not contain new conceptual difficulty. As we are simply interested in a ``proof of concept'', we stick to this simple setting. The code to reproduce our figure can be accessed at
	\begin{center}
		\label{adress_code}
		\url{https://github.com/HugoLav/RegUnOT}
	\end{center}
	
	\begin{figure}
		\begin{center}
			\begin{tikzpicture}[scale = 1.9]
			
			\draw[dashed] (-0.5,0) -- (3,0) ;
			\draw[dashed] (-0.5,3) -- (3,3) ;
			\draw[dashed] (0,0) -- (0,3) ;
			\draw[dashed] (2.5,0) -- (2.5,3) ;
			
			\draw[->, line width=0.8pt] (-0.3,-0.5) -- (2.8,-0.5) ;
			\draw (2.8,-0.6) node[below]{$x \in \T^d$} ;
			
			\draw[->, line width=0.8pt] (-0.7,0) -- (-0.7,3.4) ;
			\draw (-0.8,3.4) node[above]{$t \in [0,1]$} ;
			
			\draw[fill = black] (-0.7,0) circle (0.03) ;
			\draw[fill = black] (-0.7,3) circle (0.03) ;
			\draw (-0.8,0) node[left]{\small{$t=0$}} ;
			\draw (-0.8,3) node[left]{\small{$t=1$}} ;
			
			\foreach \i in {0,1,...,2}
			{\foreach \j in  {0,1,...,3}
				{\draw[fill = gray!80] (\i, \j+0.1) -- (\i+0.1, \j-0.1) -- (\i-0.1, \j-0.1) -- cycle  ;}}
			
			\foreach \i in {0,1,...,2}
			{\foreach \j in  {0,1,...,2}
				{\draw[fill = black] (\i+0.5, \j+0.5) circle (0.1);}}
			
			\foreach \i in {0,1,...,2}
			{\foreach \j in  {0,1,...,2}
				{\draw[fill = gray!20] (\i-0.1, \j+0.4) rectangle (\i+0.1, \j+0.6);}}
			
			\foreach \i in {0,1,...,2}
			{\foreach \j in  {0,1,...,3}
				{\draw[fill = white, draw = gray!50] (\i+0.5, \j) circle (0.1);}}
			
			
			\draw[fill = black]  (6.5, 3) circle (0.1) ;
			\draw (3.3,3) node[right]{Fully centered grid $\Gc_t \times \Gc_x$} ;
			\draw (3.5,2.75) node[right]{$\zeta$ and $(\tilde{\rho}, \tilde{m}, \tilde{\zeta})$  are defined on it} ;
			
			\draw[fill = white, draw = gray!50] (6.5, 2) circle (0.1); 
			\draw (3.3,2) node[right]{Fully staggered grid $\Gst_t \times \Gst_x$} ;
			\draw (3.5,1.75) node[right]{Nothing is defined on it} ;
			
			\draw[fill = gray!20] (6.4,0.9) rectangle (6.6, 1.1);
			\draw (3.3,1) node[right]{Centered-Staggered grid $\Gc_t \times \Gst_x$} ;
			\draw (3.5,0.75) node[right]{$m$ is defined on it} ;
			
			\draw[fill = gray!80] (6.5, 0.1) -- (6.6, -0.1) -- (6.4, -0.1) -- cycle  ;
			\draw (3.3,0) node[right]{Staggered-centered grid $\Gst_t \times \Gc_x$} ;
			\draw (3.5,-0.25) node[right]{$\rho$ is defined on it} ;
			
			\end{tikzpicture}
		\end{center}
		\caption{Schematic representations of the different space-time grids over which the different unknowns are discertized in the case $N_t=N_x = 3$.}
		\label{fig:grid}
	\end{figure}
	
	Following \cite{Papadakis2014}, we introduce two kind of grids, the ``centered'' ones $\Gc$ and the ``staggered'' ones $\Gst$. Let $N_t$ and $N_x$ be respectively the number of discretization points in time and space. In space, we define 
	\begin{equation*}
	\Gc_x := \left\{ \frac{j}{N_x} \ : \ 0 \leq j \leq N_x -1 \right\} \subset \T \hspace{1cm} \text{and} \hspace{1cm} 
	\Gst_x := \Gc + \frac{1}{2N_x} \subset \T.
	\end{equation*}  
	As we work on the torus, the grids $\Gst$ and $\Gc$ have the same number of elements ($N_x$) and they are the same up to a translation.  On the other hand, for the temporal interval we define 
	\begin{equation*}
	\Gc_t := \left\{ \frac{i+1/2}{N_x} \ : \ 0 \leq i \leq N_t -1 \right\} \subset [0,1] \hspace{1cm} \text{and} \hspace{1cm} 
	\Gst_x := \left\{ \frac{i}{N_x} \ : \ 0 \leq i \leq N_t \right\} \subset [0,1].
	\end{equation*}
	The centered grid has $N_t$ elements while the staggered one has $N_t+1$. We will also use the notations $\delta_x = 1/N_x$ and $\delta_t = 1/N_t$ for the spatial and temporal steps. In particular, notice that if $x \in \Gc_x$, then $x + \delta_x \in \Gc_x$ while $x + \delta_x/2 \in \Gst_x$, similar statements can be made for the grids in time (modulo some boundary effects around $t=0$ and $t=1$). We emphasize that below, we see $\rho$, $m$ and $\zeta$ as vectors indexed by grid points (and not integers). We refer the reader to Figure \ref{fig:grid} for an example of these grids in the case $N_t=3,N_x=3$. 
	
	We assume that the initial and final measures $\rho_0$ and $\rho_1$ are simply nonnegative functions defined on $\Gc_x$, \emph{i.e.}\ elements of $\R_+^{\Gc_x}$.
	
	\bigskip
	
	The key idea is that the staggered grids will be used to define the continuity equation, while the energy will be defined on the centered grid. Specifically, let
	\begin{equation*}
	\Est = \left\{ U = (\rho,m,\zeta) \in \R^{\Gst_t \times \Gc_x} \times \R^{\Gc_t \times \Gst_x} \times \R^{\Gc_t \times \Gc_x} \right\}
	\end{equation*} 
	the space of ``staggered'' unknowns. The rule of thumb is that, if a derivative is taken with respect to time (resp.\ space) in the continuity equation, then the unknown is defined on the staggered grid in time (resp.\ space). We say that $(\rho,m,\zeta) = U \in \Est$ satisfies the discrete continuity equation with boundary conditions $\rho_0, \rho_1$, and we write $U \in \CE(\rho_0,\rho_1)$ if $\rho_{0,x} = (\rho_0)_x$ and $\rho_{1,x} = (\rho_1)_x$ for all $x \in \Gc_x$ and if for all $(t,x) \in \Gc_t \times \Gc_x$
	\begin{multline*}
	\frac{\rho_{t+\delta_t/2,x} - \rho_{t-\delta_t/2,x}}{\delta_t} + \frac{m_{t,x + \delta_x/2} - m_{t,x - \delta_x/2}}{\delta_x} \\
	= \frac{\nu}{4} \frac{\rho_{t+\delta_t/2,x+\delta_x} + \rho_{t+\delta_t/2,x-\delta_x} - 2 \rho_{t+\delta_t/2,x}}{\delta_x^2} + \frac{\nu}{4} \frac{\rho_{t-\delta_t/2,x+\delta_x} + \rho_{t-\delta_t/2,x-\delta_x} - 2 \rho_{t-\delta_t/2,x}}{\delta_x^2} + \zeta_{t,x}.
	\end{multline*}  
	The only term which is not a clear discretization of $\partial_t \rho + \Div m = \frac{\nu}{2} \Delta \rho + \zeta$ is the one involving the Laplacian. The idea is as follows: as $\rho$ is defined on $\Gst_t \times \Gc_x$, when one takes its discrete space Laplacian, it stays on the grid $\Gst_t \times \Gc_x$. As we want to enforce the continuity equation on $\Gc_t \times \Gc_x$, we do an average in time before plugging it into the discrete continuity equation. Satisfying the continuity equation with the temporal boundary constraints is an affine constraint: that is, it can be rewritten 
	\begin{equation*}
	AU = U_0,
	\end{equation*}
	where $A : \R^{(N_t+1)N_x + 2N_tN_x} \to \R^{N_tN_x + 2N_x}$ is a linear operator (thus it can be represented by a matrix) and $U_0 \in \R^{N_tN_x + 2N_x}$ contains the boundary data $\rho_0,\rho_1$.

	On the other hand we define the ``centered'' space 
	\begin{equation*}
	\Ec = \left\{ V = (\tilde{\rho},\tilde{m},\tilde{\zeta}) \in \R^{\Gc_t \times \Gc_x} \times \R^{\Gc_t \times \Gc_x} \times \R^{\Gc_t \times \Gc_x} \right\}.
	\end{equation*} 
	As everything is defined on the centered grid, the energy can be defined for elements of such a set: if $(\tilde{\rho},\tilde{m},\tilde{\zeta}) = V \in \Ec$ then 
	\begin{equation*}
	E(V) = \sum_{(t,x) \in \Gc_t \times \Gc_x} \delta_t \delta_x \left( \frac{\tilde{m}_{t,x}^2}{\tilde{\rho}_{t,x}} + \Psi \left( \frac{\tilde{\zeta}_{t,x}}{\tilde{\rho}_{t,x}} \right) \tilde{\rho}_{t,x} \right).
	\end{equation*}
	The energy $E$ is a convex function on $\Ec$, which may take the value $+ \infty$. 
	
	Eventually, we need to make a link between the vector $U$ and the vector $V$. Specifically, we enforce $V$ to be the average of $U$. That is, we define the operator $\I : \Est \to \Ec$ by 
	\begin{equation*}
	\I \begin{pmatrix}
	\rho \\ m \\ \zeta 
	\end{pmatrix}_{t,x} = 
	\begin{pmatrix}
	(\rho_{t+\delta t/2,x}+\rho_{t-\delta t/2,x})/2 \\
	(m_{t,x+\delta_x/2}+\rho_{t,x-\delta_x/2})/2 \\
	\zeta_{t,x}
	\end{pmatrix} 
	\end{equation*}
	and we will impose the constraint $\I(U) = V$. 
	
	\begin{Def}
		\label{def:RUOT_discrete}
		With the notations above, if $\rho_0, \rho_1 \in \R_+^{\Gc_x}$, we define the fully discretized regularized unbalanced optimal transport problem as the minimization of $E(V)$ among all $(U,V) \in \Est \times \Ec$ such that $V = \I(U)$ and $U$ satisfies the discrete continuity equation with boundary conditions $\rho_0, \rho_1$. 
	\end{Def}
	
	\noindent As mentioned above, the constraints that $V = \I(U)$ and that $U$ satisfies the discrete continuity equation with boundary conditions $\rho_0, \rho_1$ are ultimately affine constraints in a finite dimensional space. 
	
	\begin{Rem}
		If $\nu=0$, we get exactly the same discretization for unbalanced optimal transport as presented in \cite[Section 5]{chizat2018unbalanced}. If in addition we drop the $\zeta$ variable, then it boils down to the discretization introduced in \cite{Papadakis2014}.
	\end{Rem}  
	
	\begin{Rem}
		A natural question is to understand if the discretization is convergent, that is if one recovers a solution of the original problem in the limit $N_t, N_x \to + \infty$. This is a delicate question already in the case of plain dynamical optimal transport which has been investigated in \cite{Erbar2017} (for the temporal discretization), \cite{gladbach2020scaling} (for the spatial discretization) and the second author in \cite{lavenant2020unconditional} (for both the temporal and spatial discretization). The framework of the latter article has been adapted to a mixed finite element discretization in \cite{natale2020mixed} and extended to matrix-valued unbalanced optimal transport in \cite{li2020general}. As it would take us too far, we will not try to answer this question here. Although adding the unbalanced term should be a straightforward adaptation, the presence of diffusion in the continuity equation seems to be harder to handle, especially for the ``controllability'' arguments \cite[Assumption (A7) and Remark 2.11]{lavenant2020unconditional}.
	\end{Rem}
	
	\subsection{Proximal splitting and the Douglas-Rachford algorithm}
	
	This subsection aims at presenting tools in non-smooth convex optimization that we use to solve the discretized version of RUOT. We work in the framework of a (finite dimensional) Euclidean space $\Hil$ endowed with a scalar product $\langle \cdot, \cdot \rangle$ and the corresponding norm $\| \cdot \|$. 
	
	\begin{Def}
		If $F : \Hil \to \R \cup \{ + \infty \}$ is a proper l.s.c.\ convex functional, we define the ``proximal'' operator $\prox_F : \Hil \to \Hil$ by 
		\begin{equation*}
		\prox_F(h) = \argmin_{h' \in \Hil} \, \frac{\| h - h' \|^2}{2} + F(h').
		\end{equation*}
	\end{Def} 
	
	One can show that this operator is well defined (that is, there exists a unique minimizer), that it is nonexpansive \cite[Theorem 6.42]{beck2017first} and that $\prox_{\gamma F} \to \mathrm{Id}$ when $\gamma \to 0$. The point $h' = \prox_F(h)$ is characterized by the optimality condition 
	\begin{equation*}
	h' - h \in \, - \partial F(h'),
	\end{equation*}
	being $\partial F$ the subdifferential of $F$. This enables to interpret $\prox_{\gamma F}(h)$ as one step of an implicit Euler discretization of the gradient flow $\dot{h}_t \in \, - \nabla F(h_t)$ starting from $h$ with time step $\gamma > 0$.
	
	If $C \subset \Hil$ is a convex set and $\iota_C : \Hil \to \{ 0, + \infty \}$ is the $0/+ \infty$ indicator of $C$, then for all $\gamma > 0$, $\prox_{\gamma \iota_C} = \proj_C$ is the Euclidean projection on $C$.
	
	Eventually, let us mention Moreau's identity (see for instance \cite[Section 2.5]{parikh2014proximal}): if $F^*$ is the Legendre transform of $F$ then 
	\begin{equation*}
	\prox_F + \prox_{F^*} = \mathrm{Id},
	\end{equation*}
	which implies in particular by an easy scaling argument that for all $\gamma > 0$ and $h \in \Hil$, 
	\begin{equation}
	\label{eq:Moreau_general}
	\prox_{\gamma F}(h) = h - \gamma \prox_{F^*/\gamma} \left( \frac{h}{\gamma} \right).
	\end{equation}
	
	The Douglas-Rachford algorithm is an iterative algorithm to compute the minimum of the sum of two convex functions $F$ and $G$, if ones knows how to compute the proximal operators for $F$ and $G$ separately. We directly state the following convergence result which can be found in \cite[Theorem 20]{combettes2007douglas}.
	
	\begin{Prop}
		\label{prop:Douglas_Rachford}
		Let $F,G : \Hil \to \R \cup \{ + \infty \}$ be two proper l.s.c convex functions bounded from below, defined on a (finite dimensional) Euclidean space $\Hil$. Let $\alpha \in (0,2)$ and $\gamma > 0$ two parameters. 
		
		Let $((w_n,z_n))_{n \in \N}$ be a sequence in $\Hil^2$ satisfying the following recurrence relation: for all $n \in \N$,
		\begin{equation*}
		\begin{cases}
		w_{n+1} &= w_n + \alpha \left( \prox_{\gamma F}(2z_n - w_n) - z_n \right), \\
		z_{n+1} & = \prox_{\gamma G}(w_{n+1}).
		\end{cases}
		\end{equation*}
		Then whatever the initial value $(w_0,z_0)$ is, the sequence $(z_n)_{n \in \N}$ converges in $\Hil$ to a point $\bar{z}$ which minimizes the function $F+G$ over $\Hil$.
	\end{Prop}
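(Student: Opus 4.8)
The plan is to reduce the iteration to the repeated application of a single \emph{averaged} nonexpansive operator and then invoke the Krasnoselskii--Mann theory of fixed-point iterations; this is the classical route, so I would only sketch it and refer to~\cite{combettes2007douglas} for the details. Throughout, write $P := \prox_{\gamma F}$ and $Q := \prox_{\gamma G}$, and let $R_F := 2P - \Id$, $R_G := 2Q - \Id$ be the associated reflection operators. The first ingredient is the standard convex-analytic fact that, $F$ and $G$ being proper, l.s.c.\ and convex, $P$ and $Q$ are single-valued, everywhere defined and firmly nonexpansive on $\Hil$, so that $R_F$, $R_G$, and hence the composition $R_F R_G$, are globally $1$-Lipschitz.

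Next comes a short algebraic manipulation. Using $Q(w) = \tfrac12\big(w + R_G(w)\big)$ and $P(y) = \tfrac12\big(y + R_F(y)\big)$ with $y = R_G(w) = 2Q(w) - w$, one checks that
\begin{equation*}
P\big(2Q(w) - w\big) - Q(w) = \tfrac12 R_F R_G(w) - \tfrac12 w ,
\end{equation*}
so that (noting that for $n \geq 1$ one has $z_n = Q(w_n)$, the very first iterate being an immaterial initialization) the recursion becomes $w_{n+1} = T_\alpha(w_n)$ with $T_\alpha := \big(1 - \tfrac{\alpha}{2}\big)\Id + \tfrac{\alpha}{2}\, R_F R_G$. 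Since $R_F R_G$ is nonexpansive and $\tfrac{\alpha}{2} \in (0,1)$, the operator $T_\alpha$ is $\tfrac{\alpha}{2}$-averaged. I would then identify its fixed points: $w = T_\alpha(w) \Leftrightarrow R_F R_G(w) = w \Leftrightarrow P\big(2Q(w) - w\big) = Q(w)$, and writing $z := Q(w)$ and using the optimality characterizations $w - z \in \gamma\,\partial G(z)$ and $(2z - w) - z \in \gamma\,\partial F(z)$, addition yields $0 \in \partial F(z) + \partial G(z) \subseteq \partial(F+G)(z)$, i.e.\ $z$ minimizes $F+G$. Thus $Q$ maps $\mathrm{Fix}(T_\alpha)$ into $\argmin(F+G)$, and conversely a minimizer of $F+G$ produces a fixed point; in particular $\mathrm{Fix}(T_\alpha) \neq \emptyset$ exactly when $F+G$ has a minimizer, which in the application of Definition~\ref{def:RUOT_discrete} is guaranteed by the coercivity coming from the affine constraints.

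Finally, convergence: $T_\alpha$ is $\tfrac{\alpha}{2}$-averaged with $\tfrac{\alpha}{2}\in(0,1)$ and has a nonempty fixed-point set, hence the iterates $(w_n)$ are Fej\'er monotone with respect to $\mathrm{Fix}(T_\alpha)$ and asymptotically regular, i.e.\ $\|w_{n+1} - w_n\| \to 0$. In the finite-dimensional space $\Hil$, Fej\'er monotonicity gives boundedness, every accumulation point of $(w_n)$ is a fixed point of $T_\alpha$ by continuity of $\Id - T_\alpha$, and Fej\'er monotonicity toward any single fixed point forces uniqueness of the accumulation point; therefore $(w_n)$ converges to some $\bar w \in \mathrm{Fix}(T_\alpha)$. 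Then $z_n = Q(w_n) \to Q(\bar w) =: \bar z$ by continuity of $Q$, and $\bar z \in \argmin(F+G)$ by the previous step, which is exactly the assertion. The main obstacle — and the part I would not reprove — is this last step, namely the Krasnoselskii--Mann convergence statement for averaged operators (asymptotic regularity, Fej\'er monotonicity and their combination into convergence of the orbit), for which I would cite~\cite{combettes2007douglas}.
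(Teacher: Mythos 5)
The paper does not actually prove Proposition~\ref{prop:Douglas_Rachford}: it states it and immediately refers to~\cite{combettes2007douglas} for the proof. So there is no in-paper argument to compare yours against. What you supply is the standard derivation from that literature — rewrite the Douglas–Rachford iteration as $w_{n+1} = T_\alpha(w_n)$ with $T_\alpha = (1-\tfrac{\alpha}{2})\Id + \tfrac{\alpha}{2}R_F R_G$ an $\tfrac{\alpha}{2}$-averaged nonexpansive operator, identify $\mathrm{Fix}(T_\alpha)$ via the subdifferential characterization of the proximal maps, and conclude by the Krasnoselskii–Mann / Fej\'er-monotonicity machinery (which you deliberately do not reprove). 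The algebra checks out: $R_G(w)=2Q(w)-w$ and $R_F(y)=2P(y)-y$ give $P(2Q(w)-w)-Q(w)=\tfrac12 R_F R_G(w)-\tfrac12 w$, and the fixed-point analysis $w-z\in\gamma\,\partial G(z)$, $z-w\in\gamma\,\partial F(z)$, hence $0\in\partial F(z)+\partial G(z)\subseteq\partial(F+G)(z)$, is exactly right. This is a faithful reconstruction of what the cited reference does.

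One point deserves a slightly more careful statement. You write that $\mathrm{Fix}(T_\alpha)\neq\emptyset$ ``exactly when $F+G$ has a minimizer.'' The forward direction (fixed point $\Rightarrow$ minimizer of $F+G$) is fine, since $\partial F+\partial G\subseteq\partial(F+G)$ always holds. The converse, though, uses the opposite inclusion: from $0\in\partial(F+G)(z^*)$ you need $0\in\partial F(z^*)+\partial G(z^*)$ in order to manufacture $w^*=z^*+\gamma u$ with $u\in\partial G(z^*)$, $-u\in\partial F(z^*)$. That step requires a constraint qualification (e.g.\ $\mathrm{ri}(\mathrm{dom}\,F)\cap\mathrm{ri}(\mathrm{dom}\,G)\neq\emptyset$); otherwise $F+G$ may have a minimizer while $\mathrm{Fix}(T_\alpha)=\emptyset$ and the orbit $(w_n)$ is unbounded. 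In the same vein, the hypotheses in the proposition as stated (proper, l.s.c., convex, bounded below) do not by themselves guarantee a minimizer exists, so the blanket claim ``whatever the initial value is, $(z_n)$ converges'' implicitly carries the same existence/qualification assumption. You correctly note that in the actual use (Definition~\ref{def:RUOT_discrete}) the fixed-point set is nonempty, which is what makes the conclusion go through; it would be cleaner to phrase the equivalence as $\mathrm{Fix}(T_\alpha)\neq\emptyset \iff \mathrm{zer}(\partial F+\partial G)\neq\emptyset$ and then observe this holds in the application. Apart from that precision, the argument is complete modulo the Krasnoselskii–Mann convergence theorem you cite.
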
 	
	
	This result is striking by its level of generality, as almost no assumptions are made on $F,G$ besides convexity. On the other hand in practice the convergence can be very slow. The parameters $\alpha$ and $\gamma$ can be freely chosen, usually one can tune them to get the fastest possible convergence.

	\bigskip
	
	In our case we follow the ``Asymmetric--DR'' splitting scheme of \cite{Papadakis2014}. Specifically we define $\Hil = \Est \times \Ec$, that is a point $h \in \Hil$ is $h = (U,V)$ with $U \in \Est$ and $V \in \Ec$. For the scalar product, for scaling reasons, it is better to not take the canonical scalar product on $\R^{\mathrm{dim}(\Hil)}$, but to multiply it by the scalar $\delta_t \delta_x$. Let us call $\CE$ the set of $U \in \Est$ that satisfy the discrete continuity equation with boundary conditions $\rho_0,\rho_1$. We define the functional $F$ as 
	\begin{equation*}
	F(U,V) := \iota_{\CE(\rho_0,\rho_1)}(U) + E(V),
	\end{equation*}
	while the functional $G$ is 
	\begin{equation*}
	G(U,V) := \iota_{\I(U) = V}.
	\end{equation*}
	With these notations, it is clear that minimizing $F+G$ over $\Hil$ is the same as solving the discrete regularized unbalanced optimal transport problem as defined in Definition \ref{def:RUOT_discrete}.

	\subsection{Computation of the proximal operators}

	Thus, to be able to solve the problem numerically, using Proposition \ref{prop:Douglas_Rachford} and the decomposition above, we just need to have an efficient way to compute $\prox_{\gamma F}$ and $\prox_{\gamma G}$. The proximal operator of $F$ decomposes as  
	\begin{equation*}
	\prox_{\gamma F}(U,V) = \begin{pmatrix}
	\proj_{\CE(\rho_0,\rho_1)}(U), \\
	\prox_{\gamma E}(V).
	\end{pmatrix}
	\end{equation*}
	Note also that $\prox_{\gamma G} = \proj_{\{ \I(U) = V \}}$. 
	
	\bigskip
	
	The computation of $\proj_{\CE(\rho_0,\rho_1)}(U)$ and $\proj_{\{ \I(U) = V \}}$ amounts to project onto an affine subspace of a Euclidean space. In general, if ones looks at the affine space
	\begin{equation*}
	\Hil_a = \{ h \in \Hil \ : \ Bh = h_0 \}
	\end{equation*}
	for some matrix $B$, then 
	\begin{equation*}
	\proj_{\Hil_a}(h) = h - B^\top (B B^\top )^{-1} (B h - h_0).
	\end{equation*}
	Notice that if $B B^\top$ is not invertible, provided $\Hil_a$ is not empty the vector $B h- h_0$ belongs to the range of $B B^\top$, and one can replace $(B B^\top )^{-1} (B h - h_0)$ by any solution $h'$ of the system $B B^\top h' = Bh- h_0$. In short, one needs only to be able to invert the matrix $B B^\top$. 
	
	In the case of the set $\I(U) = V$, the matrix $B$ is nothing else than 
	\begin{equation*}
	\begin{pmatrix}
	\I & 0 \\
	0 & - \mathrm{Id}
	\end{pmatrix},
	\end{equation*}
	and this matrix is well conditioned independently of $N_t$ and $N_x$, and can be factorized independently on the temporal and the spatial dimension.
	
	The marginal constraint $U \in \CE(\rho_0, \rho_1)$ can be ultimately expressed as $AU = U_0$ for some matrix $A \in\R^{[N_tN_x + 2N_x] \times [(N_t+1)N_x + 2N_tN_x]}$ and some vector $U_0 \in \R^{N_tN_x + 2N_x}$. Moreover, the matrix $A A^\top$ corresponds to the discretization of the space-time elliptic operator 
	\begin{equation*}
	\phi \mapsto - \partial^2_{tt} \phi + \frac{\nu^2}{4} \Delta^2 \phi - \Delta \phi + \phi  
	\end{equation*}
	with Neumann temporal boundary conditions. In particular, as one can see, the system is not degenerate if $\nu \to 0$. The only issue is the conditioning number of $A A^\top$ that we expect to behave like $O(N_x^2 + \nu^2 N_x^4 + N_t^2)$. 
	
	For both constraints $\I(U) = V$ and $U \in \CE(\rho_0, \rho_1)$, we compute a Cholesky factorization of the relevant matrices before the Douglas-Rachford iterations.

	\bigskip
	
	The less trivial case may be the one of $\prox_{\gamma E}$. First, we define $e : \R^3 \to \R$ as 
	\begin{equation*}
	e(\rho,m,\zeta) = \frac{|m|^2}{2 \rho} + \Psi \left( \frac{\zeta}{\rho} \right) \rho,
	\end{equation*}
	in such a way that if $(\tilde{\rho},\tilde{m},\tilde{\zeta}) = V \in \Ec$ then 
	\begin{equation*}
	E(V) = \sum_{(t,x) \in \Gc_t \times \Gc_x} \delta_t \delta_x e \left( \tilde{\rho}_{t,x}, \tilde{m}_{t,x}, \tilde{z}_{t,x} \right).
	\end{equation*}
	As the scalar product is weighted by $\delta_t \delta_x$, using first an additive decomposition and then Moreau's identity~\eqref{eq:Moreau_general}, for every $(t,x) \in \Gc_t \times \Gc_x$ 
	\begin{equation*}
	\prox_{\gamma E}(V)_{t,x} = \prox_{\gamma e}\left( \tilde{\rho}_{t,x}, \tilde{m}_{t,x}, \tilde{\zeta}_{t,x} \right) = \left( \tilde{\rho}_{t,x}, \tilde{m}_{t,x}, \tilde{\zeta}_{t,x} \right) - \gamma \prox_{e^* / \gamma} \left( \frac{1}{\gamma} \tilde{\rho}_{t,x}, \frac{1}{\gamma}\tilde{m}_{t,x}, \frac{1}{\gamma}\tilde{\zeta}_{t,x} \right) .
	\end{equation*}
	On the other hand, $e^*$ is the $0/+ \infty$ indicator of the set $\mathcal{K} \subset \R^3$ defined by 
	\begin{equation*}
	\mathcal{K} = \left\{ (a,b,c) \in \R^3 \ : \  a + \frac{b^2}{2} + \Psi^*(c) \leq 0 \right\}.
	\end{equation*}
	Thus we just need to be able to project on $\mathcal{K}$.  
	
	\begin{Lem}
		Let $(a,b,c) \notin \mathcal{K}$. We define $r > 0$ as the unique positive solution of 
		\begin{equation}
		\label{eq:prox_k_to_solve}
		a - r + \frac{b^2}{2(1+r)^2} + \Psi^* \left( \prox_{r \Psi^*}(c) \right) = 0 
		\end{equation} 
		Then there holds
		\begin{equation*}
		\proj_{\mathcal{K}}(a,b,c) = \left( a - r, \frac{b}{1+r}, \prox_{r \Psi^*}(c) \right).
		\end{equation*}
	\end{Lem}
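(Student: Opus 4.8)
The plan is to compute the projection onto $\mathcal{K}$ by a Lagrange-multiplier argument, exploiting the fact that $\mathcal{K}$ is a convex set whose boundary is a level set of the smooth, strictly convex function $g(a,b,c) := a + b^2/2 + \Psi^*(c)$. Given $(a,b,c)\notin\mathcal{K}$, the projection $(\bar a,\bar b,\bar c)$ is the unique minimizer of $\tfrac12\|(a,b,c)-(\bar a,\bar b,\bar c)\|^2$ subject to $g(\bar a,\bar b,\bar c)\le 0$, and since the point lies outside $\mathcal{K}$, the constraint is active: $g(\bar a,\bar b,\bar c)=0$. First I would write the KKT conditions. With a multiplier $r\ge 0$ (which will turn out to be strictly positive because the point is outside), the stationarity conditions read
\begin{equation*}
\bar a - a = -r, \qquad \bar b - b = -r\bar b, \qquad \bar c - c \in -r\,\partial \Psi^*(\bar c),
\end{equation*}
where the third inclusion uses the subdifferential of the (possibly non-smooth) convex function $\Psi^*$. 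The first two give immediately $\bar a = a - r$ and $\bar b = b/(1+r)$. The third, rewritten as $c - \bar c \in r\,\partial\Psi^*(\bar c)$, is exactly the optimality condition characterizing $\bar c = \prox_{r\Psi^*}(c)$ (recall from the excerpt that $h'=\prox_F(h)$ is characterized by $h'-h\in-\partial F(h')$, applied here with $F = r\Psi^*$). So the only unknown left is $r$, and the active-constraint equation $g(\bar a,\bar b,\bar c)=0$ becomes precisely \eqref{eq:prox_k_to_solve}:
\begin{equation*}
(a-r) + \frac{1}{2}\left(\frac{b}{1+r}\right)^2 + \Psi^*\!\big(\prox_{r\Psi^*}(c)\big) = 0.
\end{equation*}

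Next I would prove that \eqref{eq:prox_k_to_solve} has a unique positive solution. Call $F(r)$ the left-hand side of \eqref{eq:prox_k_to_solve}, defined for $r>0$ (one must first check the equation makes sense, i.e. that $\prox_{r\Psi^*}$ is well-defined, which holds since $\Psi^*$ is proper l.s.c. convex — it is a Legendre transform, so bounded below by an affine function; actually $\Psi^* \ge \Psi^*(0) = 0$... wait, $\inf\Psi^* = -\inf\Psi \le 0$; in any case it is proper l.s.c. convex, which suffices for $\prox$ to be well-defined). I would show $F$ is continuous and strictly decreasing in $r$: the term $a-r$ is strictly decreasing; $\tfrac12 b^2/(1+r)^2$ is non-increasing; and $r\mapsto \Psi^*(\prox_{r\Psi^*}(c))$ is non-increasing — this last fact follows because, writing $\bar c_r = \prox_{r\Psi^*}(c)$, the map $r\mapsto \tfrac12(c-\bar c_r)^2 + r\Psi^*(\bar c_r)$ is the value of the Moreau envelope $e_{r\Psi^*}(c)$ which is concave and non-decreasing... hmm, I need a cleaner route. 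The clean statement: by the envelope theorem, $\frac{d}{dr}\, e_{r\Psi^*}(c) = \Psi^*(\bar c_r) \ge$ (some bound); and monotonicity of $\bar c_r$ combined with convexity gives that $\Psi^*(\bar c_r)$ moves monotonically. Alternatively and more robustly: I would just argue strict monotonicity of $F$ directly — as $r$ increases, $\bar c_r$ moves monotonically toward a minimizer of $\Psi^*$, hence $\Psi^*(\bar c_r)$ decreases, hence $F$ is (at least) non-increasing, and the $a-r$ term makes it strictly decreasing. For the limits: as $r\to 0^+$, $\prox_{r\Psi^*}(c)\to c$ (stated in the excerpt: $\prox_{\gamma F}\to\Id$), so $F(r)\to a + b^2/2 + \Psi^*(c) = g(a,b,c) > 0$ since $(a,b,c)\notin\mathcal{K}$; as $r\to+\infty$, $a-r\to-\infty$ while the other two terms stay bounded below (the $\prox$ term: $\Psi^*(\bar c_r)\ge \inf\Psi^*> -\infty$ since $\Psi(0)<+\infty$, and actually one should check $\inf\Psi^*$ is finite — it equals $-\Psi(0)$ which is finite under the standing assumptions, though for a fully general $\Psi$ one might need $\Psi(0)<+\infty$; in the paper's setting this holds), so $F(r)\to-\infty$. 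By the intermediate value theorem and strict monotonicity there is a unique $r>0$ with $F(r)=0$.

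Finally I would verify that the candidate point $(\bar a,\bar b,\bar c) = (a-r,\,b/(1+r),\,\prox_{r\Psi^*}(c))$ with this $r$ is genuinely the projection. Since $\mathcal{K}$ is closed and convex and the objective is strictly convex and coercive, the projection exists and is unique, and a point is the projection iff it satisfies the KKT conditions, which by construction it does (with multiplier $r>0$): $\bar a,\bar b$ satisfy the stationarity relations, $\bar c$ satisfies its inclusion by the $\prox$ characterization, primal feasibility $g(\bar a,\bar b,\bar c)=0\le 0$ holds by \eqref{eq:prox_k_to_solve}, dual feasibility $r\ge 0$ holds, and complementary slackness is trivial since the constraint is active. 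Hence $\proj_{\mathcal{K}}(a,b,c) = (\bar a,\bar b,\bar c)$, which is the claim.

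\textbf{Expected main obstacle.} The routine parts (stationarity relations, recognizing the $\prox$ characterization, the KKT verification) are quick. The one step that needs genuine care is establishing strict monotonicity of $F$ on $(0,+\infty)$ together with the correct limiting behavior — in particular handling the term $\Psi^*(\prox_{r\Psi^*}(c))$ for a $\Psi^*$ that may be non-differentiable or take the value $+\infty$ (as happens for $\Psi=\Psi_{\nu,\boldsymbol q}$ when there are no exponential moments, where $\Psi^*$ jumps to $+\infty$). I would either invoke the envelope/Moreau-identity machinery to show $r\mapsto\Psi^*(\bar c_r)$ is non-increasing, or circumvent it by noting that $F$ being lower semicontinuous, strictly dominated by the strictly decreasing affine piece $a-r$ plus non-increasing terms, is enough: any continuous (or even just l.s.c. monotone) function passing from a positive limit at $0^+$ to $-\infty$ must cross zero exactly once once strict monotonicity is in hand. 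A brief lemma on monotonicity of $r\mapsto\Psi^*(\prox_{r\Psi^*}(c))$ — essentially that the proximal point moves toward the minimizer set of $\Psi^*$ as the step size grows — is the technical heart and the place I would spend the most words.
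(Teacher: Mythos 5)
Your proof is correct and follows essentially the same route as the paper's: both characterize $\proj_{\mathcal{K}}(a,b,c)$ by the active-constraint condition plus the normal-cone (Lagrange) relation, read off $\bar a = a-r$, $\bar b = b/(1+r)$, identify $\bar c = \prox_{r\Psi^*}(c)$, and reduce to the scalar equation~\eqref{eq:prox_k_to_solve}, using monotonicity of $r\mapsto\Psi^*(\prox_{r\Psi^*}(c))$ for uniqueness. The only notable differences are cosmetic — you phrase the stationarity conditions via KKT and $\partial\Psi^*$ where the paper writes $(\Psi^*)'$ assuming smoothness, and you spell out the existence of a root via the limits $r\to0^+$ and $r\to+\infty$, which the paper leaves implicit — but the key lemma and structure are identical.
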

	
	\begin{proof}
		Let $(\overline{a},\overline{b},\overline{c})$ the projection of $(a,b,c)$ on $\mathcal{K}$. This projection is characterized by $(\overline{a},\overline{b},\overline{c}) \in \dr \mathcal{K}$, and in addition the vector $(a-\overline{a},b-\overline{b},c-\overline{c})$ is colinear to the outward normal to $\mathcal{K}$ at $(\overline{a},\overline{b},\overline{c})$. Calling $r > 0$ the coefficient of colinearity, we are looking at the system
		\begin{equation*}
		\overline{a} + \frac{\overline{b}^2}{2} + \Psi^*(\overline{c}) = 0  \hspace{1cm} \text{and} \hspace{1cm}
		\begin{cases}
		a - \overline{a} & = r, \\
		b - \overline{b} & = r \overline{b}, \\
		c - \overline{c} & = r (\Psi^*)'(\overline{c}).
		\end{cases}
		\end{equation*}  
		Notice that the equation $c - \overline{c}  = r (\Psi^*)'(\overline{c})$ can be rewritten $\overline{c} = (\mathrm{Id} + r (\Psi^*)')^{-1}(c)$, which exactly reads $\overline{c} = \prox_{r \Psi^*}(c)$, as $\Psi^*$ is smooth and convex. Expressing $(\overline{a},\overline{b},\overline{c})$ as a function of $r \in (0, + \infty)$ and plugging it in the equation characterizing $(\overline{a},\overline{b},\overline{c}) \in \dr \mathcal{K}$, we get \eqref{eq:prox_k_to_solve}. 
		
		Eventually, it is left as an easy exercise to the reader to check that $r \mapsto \Psi^*(\prox_{r \Psi^*}(c))$ is nonincreasing (this would work for any convex function $\Psi^*$), thus the left hand side of \eqref{eq:prox_k_to_solve} is a strictly nondecreasing function of $r$. This justifies the uniqueness of the solution $r$ of this equation. 
	\end{proof}
	
	If $\Psi^*$ is quadratic then solving \eqref{eq:prox_k_to_solve} is equivalent to finding the root of a polynomial of degree $3$. In the general case we use Newton's method. Indeed, as soon as $\Psi^*$ is smooth then
	\begin{equation*}
	\frac{\D}{\D r} \Psi^*(\prox_{r \Psi^*}(c)) = - \frac{(\Psi^*)'( \prox_{r \Psi^*}(c) )}{1+ r (\Psi^*)''( \prox_{r \Psi^*}(c) )},
	\end{equation*}
	thus once we have access to $\prox_{r \Psi^*}(c)$ computing the right hand side is cheap. 
	
	\subsection{Experiments}
	
	We implemented the method described above and we tested in the case
	\begin{equation}
	\label{eq:psi_for_sim}
	\Psi^*(s) = C \left( \cosh\left( s \right) - 1 \right).
	\end{equation}
	for some constant $C$ that was chosen in order to have a visually satisfying output. Its proximal operator cannot be computed in closed form, so we resort to Newton's method to compute it. 
	Note that it mimics the case $p_0 = p_2 = 1/2$, that is, when at a branching event, each particle dies or splits with uniform probability. As pointed out in Remark~\ref{rk:psistar_cosh} it would amount to take
	\begin{equation*}
	\Psi^*(s) = \nu \lambda \left( \cosh\left( \frac{s}{\nu} \right) - 1 \right),
	\end{equation*}
	however we found that we such expression, in the range of value of $\nu \sim 10^{-2}$ giving a diffusion effect not too strong nor too light, was leading to a very degenerate function $\Psi^*$ (in some sense we are already in the ``small noise'' regime) thus we preferred to stick to \eqref{eq:psi_for_sim} which is more well behaved. The outputs of the algorithm are plotted in the introduction, see Figure~\ref{fig:numerics}. By running the same code with different values of $C$ and $\nu$, we were able to simulate Optimal Transport, Regularized Optimal Transport, Unbalanced Optimal Transport and Regularized Optimal Transport. 
	
	We chose a grid with $24$ time points and $100$ space point, and the initial and final conditions is a balanced mixture of two Gaussians. We ran a fix number of iterations ($10^4$), and we controlled the accuracy of the solution by looking at the norm of $\gamma^{-1}(z_{n+1} - z_n, w_{n+1} - w_n)$, as well as how much the $U$ component of $z_n$ satisfies the continuity equation. As usual with dynamical optimal transport, computations can be a bit slow and took of the order of a few minutes on a regular desktop with 4-core 3.60GHz Intel i3 processor with 16GB RAM.
	
	We refer the reader to the address displayed Page~\pageref{adress_code} to access the code, where they can find in particular all the parameters used to produce the figures.
	
	\appendix
	
	\chapter{Illustrations and properties of the growth penalization}
	\label{app:plots}
	In this appendix, we provide a few illustrations and describe some of the interesting properties of the growth penalization $\Psi_{\nu, \boldsymbol q}$ that we get in the RUOT problem, depending on the diffusivity coefficient $\nu>0$ and the branching mechanism $\boldsymbol q$ that we choose for the reference BBM in the corresponding branching Schrödinger problem. Recall that calling $\lambda = \lambda_{\boldsymbol q} := \sum_k q_k$ and $\boldsymbol p = \boldsymbol p_{\boldsymbol q} := \boldsymbol q / \lambda_{\boldsymbol q}$, the map $\Psi_{\nu, \boldsymbol q} = \Psi_{\nu, \lambda \boldsymbol p}$ is defined through its Legendre transform $\Psi^*_{\nu, \lambda \boldsymbol p}$, the latter being defined in terms of $\nu$, $\lambda$ and~$\boldsymbol p$ by
	\begin{equation}
	\label{eq:def_Psi*_appendix}
	\Psi^*_{\nu, \lambda \boldsymbol p}(s) := \nu \lambda \sum_k p_k \left\{ e^{(k-1)\frac{s}{\nu}} - 1 \right\}, \qquad s \in \R.
	\end{equation}
	Therefore, the purpose of this section is to plot the graph and to give some properties of $\Psi_{\nu, \lambda \boldsymbol p}$ for different choices of $\nu>0$, $\lambda >0$, and $\boldsymbol p \in \P(\N)$.  
	
	In a first section, we will study the effect of the scaling coefficients $\nu$ and $\lambda$. Then, we will set $\nu = \lambda = 1$, and we will have a look at the different types of penalization that we get depending on $\boldsymbol p$.
	
	To make it short, we will see that:
	\begin{itemize}
		\item  $\Psi_{\nu, \boldsymbol q}$ cancels at $\bar r = r_{\boldsymbol q} := \sum_k (k-1) q_k$.
		\item  When $q_0 = 0$, then $\Psi_{\nu, \boldsymbol q}$ is infinite on $\R_-^*$, and when $\sum_{k\geq 2} q_k = 0$, then $\Psi_{\nu, \boldsymbol q}$ is infinite on $\R_+^*$.
		\item When $\boldsymbol q$ has no exponential moment, then $\Psi_{\nu, \boldsymbol q}$ cancels on $(\bar r, + \infty)$, and hence is not coercive.
		\item When $\boldsymbol q$ has exponential moments up to the order $\theta_{\boldsymbol q}>0$, then $\Psi_{\nu, \boldsymbol q}$ has a linear growth of slope $\theta_{\boldsymbol q}$ at $r \to + \infty$. Hence in this case, it is not superlinear.
	\end{itemize}  
	
	\begin{figure}
		\begin{center}
			\begin{tikzpicture}[scale = .8]
			\begin{axis}[ticks=none,
			axis x line=center,
			axis y line =center,
			xmin=-5, xmax=5,
			ymin = -1, ymax = 11,
			restrict x to domain*=-10:10,
			restrict y to domain*=-2:50,
			xlabel={$s$},ylabel={$\Psi^*(s)$},
			legend pos=north west]
			\addplot[color=violet, densely dotted ,line width = 0.7pt] table [x=s, y=p]{psi_symmetric_nu=0.5.csv};
			\addplot[color=blue,line width = 0.7pt] table [x=s, y=p]{psi_symmetric.csv};
			\addplot[color=red, dashed ,line width = 0.7pt] table [x=s, y=p]{psi_symmetric_nu=2.csv};
			\legend{$\nu = 0.5$, $\nu = 1$, $\nu = 2$};
			\end{axis}
			\end{tikzpicture}
			\hspace{60pt}
			\begin{tikzpicture}[scale = .8]
			\begin{axis}[ticks=none,
			axis x line=center,
			axis y line =center,
			xmin=-11, xmax=11,
			ymin = -1, ymax = 18,
			restrict x to domain*=-50:50,
			restrict y to domain*=-2:25,
			xlabel={$r$},ylabel={$\Psi(r)$},
			legend pos=north west]
			\addplot[color=violet, densely dotted ,line width = 0.7pt] table [x=r, y=q]{psi_symmetric_nu=0.5.csv};
			\addplot[color=blue,line width = 0.7pt] table [x=r, y=q]{psi_symmetric.csv};
			\addplot[color=red, dashed ,line width = 0.7pt] table [x=r, y=q]{psi_symmetric_nu=2.csv};
			\legend{$\nu = 0.5$, $\nu = 1$, $\nu = 2$};
			\end{axis}
			\end{tikzpicture}
		\end{center}
		\caption{
			\label{fig:effect_nu} Illustration of $\Psi^*_{\nu, \lambda \boldsymbol p}$ and $\Psi_{\nu, \lambda \boldsymbol p}$ in the case where $\lambda = 1$, $p_0 = p_2 =1/2$, $p_k = 0$ for $k \neq 0,2$, and $\nu = 1$ (plain line), $\nu = 0.5$ (dotted line) and $\nu = 2$ (dashed line).
		}
	\end{figure}
	\begin{figure}
		\begin{center}
			\begin{tikzpicture}[scale = .8]
			\begin{axis}[ticks=none,
			axis x line=center,
			axis y line =center,
			xmin=-5, xmax=5,
			ymin = -1, ymax = 11,
			restrict x to domain*=-10:10,
			restrict y to domain*=-2:12,
			xlabel={$s$},ylabel={$\Psi^*(s)$},
			legend pos=north west]
			\addplot[color=violet, densely dotted ,line width = 0.7pt] table [x=s, y=p]{psi_symmetric_lambda=0.5.csv};
			\addplot[color=blue,line width = 0.7pt] table [x=s, y=p]{psi_symmetric.csv};
			\addplot[color=red, dashed ,line width = 0.7pt] table [x=s, y=p]{psi_symmetric_lambda=2.csv};
			\legend{$\lambda = 0.5$, $\lambda = 1$, $\lambda = 2$};
			\end{axis}
			\end{tikzpicture}
			\hspace{60pt}
			\begin{tikzpicture}[scale = .8]
			\begin{axis}[ticks=none,
			axis x line=center,
			axis y line =center,
			xmin=-11, xmax=11,
			ymin = -1, ymax = 18,
			restrict x to domain*=-50:50,
			restrict y to domain*=-2:25,
			xlabel={$r$},ylabel={$\Psi(r)$},
			legend pos=north west]
			\addplot[color=violet, densely dotted ,line width = 0.7pt] table [x=r, y=q]{psi_symmetric_lambda=0.5.csv};
			\addplot[color=blue,line width = 0.7pt] table [x=r, y=q]{psi_symmetric.csv};
			\addplot[color=red, dashed ,line width = 0.7pt] table [x=r, y=q]{psi_symmetric_lambda=2.csv};
			\legend{$\lambda = 0.5$, $\lambda = 1$, $\lambda = 2$};
			\end{axis}
			\end{tikzpicture}
		\end{center}
		\caption{
			\label{fig:effect_lambda} Illustration of $\Psi^*_{\nu, \lambda \boldsymbol p}$ and $\Psi_{\nu, \lambda \boldsymbol p}$ in the case where $\nu = 1$, $p_0 = p_2 =1/2$, $p_k = 0$ for $k \neq 0,2$, and $\lambda = 1$ (plain line), $\lambda = 0.5$ (dotted line) and $\lambda = 2$ (dashed line).
		}
	\end{figure}
	\section{The effect of the diffusivity coefficient and of the branching rate}
	
	Remark the following scaling invariance for $\Psi^*_{\nu, \lambda \boldsymbol p}$: for all $\boldsymbol p \in \P(\N)$, $\nu>0$ and $\lambda>0$, we have, thanks to~\eqref{eq:def_Psi*_appendix},
	\begin{equation*}
	\Psi^*_{\nu, \lambda \boldsymbol p}(s) = \nu \lambda \Psi_{1, \boldsymbol p}^*\left( \frac{s}{\nu} \right), \qquad s \in \R.
	\end{equation*}
	Easy computations using this formula and the definition of the Legendre transform show that for all $\boldsymbol p \in \P(\N)$, $\nu>0$ and $\lambda>0$,
	\begin{equation*}
	\Psi_{\nu, \lambda \boldsymbol p}(r) = \nu \lambda \Psi_{1, \boldsymbol p}\left( \frac{r}{\lambda} \right), \qquad r \in \R.
	\end{equation*}
	We illustrate these scaling properties in Figure~\ref{fig:effect_nu} and Figure~\ref{fig:effect_lambda} in the symmetric case where $p_0 = p_2 = 1/2$, and $p_k=0$ for $k \neq 0,2$. In the next sections, we set $\lambda = \nu = 1$, and we only study the dependence of $\Psi_{1, \boldsymbol p}$ w.r.t.\ $\boldsymbol p$.
	
	\begin{figure}
		\begin{center}
			\begin{tikzpicture}[scale = .8]
			\begin{axis}[ticks=none,
			axis x line=center,
			axis y line =center,
			xmin=-3.5, xmax=6.5,
			ymin = -1, ymax = 11,
			restrict x to domain*=-10:10,
			restrict y to domain*=-2:12,
			xlabel={$s$},ylabel={$\Psi^*(s)$}]
			\addplot[color=blue,line width = 0.7pt] table [x=s, y=p]{psi_shift_gauche.csv};
			\draw[dashed, thick] (.893,-1) -- (-2.5,2.8) node[above=-2pt]{$y = \bar r s$};
			\end{axis}
			\end{tikzpicture}
			\hspace{60pt}
			\begin{tikzpicture}[scale = .8]
			\begin{axis}[ticks=none,
			axis x line=center,
			axis y line =center,
			xmin=-6.2, xmax=3.8,
			ymin = -1, ymax = 11,
			restrict x to domain*=-50:50,
			restrict y to domain*=-2:25,
			xlabel={$r$},ylabel={$\Psi(r)$}]
			\addplot[color=blue,line width = 0.7pt] table [x=r, y=q]{psi_shift_gauche.csv};
			\draw[thick] (-1.13,3pt) -- (-1.13,-3pt) node[below]{$\bar r$};
			\end{axis}
			\end{tikzpicture}
		\end{center}
		\caption{\label{fig:shift_left} Illustration of $\Psi^*_{1, \boldsymbol p}$ and $\Psi_{1,\boldsymbol p}$ in the case where $p_0 = 0.95$, $p_2 = 0.05$ and $p_k = 0$ for $k\neq 0,2$.}
	\end{figure}
	\begin{figure}
		\begin{center}
			\begin{tikzpicture}[scale = .8]
			\begin{axis}[ticks=none,
			axis x line=center,
			axis y line =center,
			xmin=-5.5, xmax=1.7,
			ymin = -1, ymax = 11,
			restrict x to domain*=-10:10,
			restrict y to domain*=-2:12,
			xlabel={$s$},ylabel={$\Psi^*(s)$}]
			\addplot[color=blue,line width = 0.7pt] table [x=s, y=p]{psi_shift_droite.csv};
			\draw[dashed, thick] (-0.446,-1) -- (1.2,2.688) node[above=-4pt]{$y = \bar r s$};
			\end{axis}
			\end{tikzpicture}
			\hspace{60pt}
			\begin{tikzpicture}[scale = .8]
			\begin{axis}[ticks=none,
			axis x line=center,
			axis y line =center,
			xmin=-2.5, xmax=17,
			ymin = -.5, ymax = 5.5,
			restrict x to domain*=-50:50,
			restrict y to domain*=-2:25,
			xlabel={$r$},ylabel={$\Psi(r)$}]
			\addplot[color=blue,line width = 0.7pt] table [x=r, y=q]{psi_shift_droite.csv};
			\draw[thick] (2.24,3pt) -- (2.24,-3pt) node[below]{$\bar r$};
			\end{axis}
			\end{tikzpicture}
		\end{center}
		\caption{\label{fig:shift_right} Illustration of $\Psi^*_{1, \boldsymbol p}$ and $\Psi_{1,\boldsymbol p}$ in the case where $p_0 = 0.1$, $p_4 = 0.9$ and $p_k = 0$ for $k\neq 0,4$.}
	\end{figure}
	
	\section{The case where the branching mechanism is non-degenerate}
	So now, $\lambda = \nu = 1$. We say that $\boldsymbol p$ is non-degenerate if:
	\begin{itemize}
		\item The probability of dying with no descendant is positive, that is, $p_0>0$.
		\item The probability of having descendants is positive, that is, $\sum_{k \geq 2} p_k >0$.
		\item $\boldsymbol p$ has exponential moments at any order, so that in particular, $\Psi^*_{1, \boldsymbol p}(s)$ is finite for all $s \in \R$.
	\end{itemize}
	The consequence of dropping one of these assumptions is studied in further sections (when dropping several of them, the effects would simply add up). In the case of a non-degenerate $\boldsymbol p$, it is possible to check that $\Psi_{1, \boldsymbol p}$ is superlinear at $\pm \infty$. Moreover, $\Psi_{1, \boldsymbol p}$ cancels at $r = \bar r := \sum_k (k-1)p_k$: in terms of our optimization problems, it means that it costs nothing to consider a competitor of the RUOT problem having the same growth rate as the reference BBM of the corresponding branching Schrödinger problem. Depending on $\boldsymbol p$, the value of $\bar r$ can be nonpositive (Figure~\ref{fig:shift_left}) or nonnegative (Figure~\ref{fig:shift_right}). Notice that it corresponds to the derivative of $\Psi^*_{1, \boldsymbol p}$ at $s = 0$.
	
	It can be shown that whenever $p_0>0$, as $r \to - \infty$, then $\Psi_{1, \boldsymbol p}(r) \sim |r| \log |r|$. Moreover, in this section, we took for our illustrations $\boldsymbol p$ with bounded support. In that case, as $r \to + \infty$, then $\Psi_{1, \boldsymbol p}(r) \sim |r| \log |r|/ k_l$, where $k_l$ is the largest integer in the support of $\boldsymbol p$. However, the growth can be slower when $\boldsymbol p$ has an unbounded support while being non-degenerate.

	\begin{figure}
		\begin{center}
			\begin{tikzpicture}[scale = .8]
			\begin{axis}[ticks=none,
			axis x line=center,
			axis y line =center,
			xmin=-2, xmax=1.2,
			ymin = -1, ymax = 11,
			restrict x to domain*=-10:10,
			restrict y to domain*=-2:12,
			xlabel={$s$},ylabel={$\Psi^*(s)$}]
			\addplot[color=blue,line width = 0.7pt] table [x=s, y=p]{psi_creation_only.csv};
			\end{axis}
			\end{tikzpicture}
			\hspace{60pt}
			\begin{tikzpicture}[scale = .8]
			\begin{axis}[ticks=none,
			axis x line=center,
			axis y line =center,
			xmin=-2, xmax=11,
			ymin = -.3, ymax = 3.5,
			restrict x to domain*=-50:50,
			restrict y to domain*=-2:25,
			xlabel={$r$},ylabel={$\Psi(r)$}]
			\addplot[color=blue,line width = 0.7pt] table [x=r, y=q]{psi_creation_only.csv};
			\addplot[only marks,mark=*,mark size=2.9pt,color=blue] coordinates {(0,1)};
			\end{axis}
			\end{tikzpicture}
		\end{center}
		\caption{\label{fig:creation_only} Illustration of $\Psi^*_{1, \boldsymbol p}$ and $\Psi_{1,\boldsymbol p}$ in the case where $p_2 = 0.2$, $p_4 = 0.8$, and $p_k = 0$ for $k\neq 2,4$. The value of $\Psi_{1, \boldsymbol p}$ is infinite on $\R_-^*$.}
	\end{figure}
	
	\begin{figure}
		\begin{center}
			\begin{tikzpicture}[scale = .8]
			\begin{axis}[ticks=none,
			axis x line=center,
			axis y line =center,
			xmin=-3, xmax=5,
			ymin = -1, ymax = 11,
			restrict x to domain*=-10:10,
			restrict y to domain*=-2:12,
			xlabel={$s$},ylabel={$\Psi^*(s)$}]
			\addplot[color=blue,line width = 0.7pt] table [x=s, y=p]{psi_death_only.csv};
			\end{axis}
			\end{tikzpicture}
			\hspace{60pt}
			\begin{tikzpicture}[scale = .8]
			\begin{axis}[ticks=none,
			axis x line=center,
			axis y line =center,
			xmin=-7, xmax=.8,
			ymin = -.5, ymax = 7,
			restrict x to domain*=-50:50,
			restrict y to domain*=-2:25,
			xlabel={$r$},ylabel={$\Psi(r)$}]
			\addplot[color=blue,line width = 0.7pt] table [x=r, y=q]{psi_death_only.csv};
			\addplot[only marks,mark=*,mark size=2.9pt,color=blue] coordinates {(0,1)};
			\end{axis}
			\end{tikzpicture}
		\end{center}
		\caption{\label{fig:death_only} Illustration of $\Psi^*_{1, \boldsymbol p}$ and $\Psi_{1,\boldsymbol p}$ in the case where $p_0 = 1$ and $p_k = 0$ for $k\neq 0$. The value of $\Psi_{1, \boldsymbol p}$ is infinite on $\R_+^*$.}
	\end{figure}
	
	\section{The case where there is either never or always descendants at a branching event}
	\label{sec:psi_infinite}
	In this section, we still set $\lambda = \nu = 1$, and we study what happens when $p_0 = 0$ (there are always descendants at a branching event), or when $\sum_{k\geq 2} p_k = 0$ (there is never any descendant at a branching event).
	
	In the first case, $\Psi_{1, \boldsymbol p}$ is infinite on $\R_-^*$ (Figure~\ref{fig:creation_only}): as the reference BBM cannot reduce its number of particles, it is forbidden to consider competitors of the corresponding RUOT problem which lose mass. In the second case, it is the other way around: $\Psi_{1, \boldsymbol p}$ is infinite on $\R_+^*$ (Figure~\ref{fig:death_only}), and as the reference BBM cannot create new particles, it is forbidden to consider competitors of the corresponding RUOT problem which gain mass. In this second case, taking $p_0 = 1$, we can actually carry explicit computations: we find $\Psi_{1, \boldsymbol p} = l(|r|) = |r| \log |r| - |r| + 1$ on $\R_-$, which coincides with formula (33) in \cite{chen2021most}.
	
	\begin{figure}
		\begin{center}
			\begin{tikzpicture}[scale = .8]
			\begin{axis}[ticks=none,
			axis x line=center,
			axis y line =center,
			xmin=-2, xmax=.3,
			ymin = -.3, ymax = .9,
			restrict x to domain*=-10:10,
			restrict y to domain*=-2:12,
			xlabel={$s$},ylabel={$\Psi^*(s)$}]
			\addplot[color=blue,line width = 0.7pt] table [x=s, y=p]{psi_no_exponential_moment.csv};
			\addplot[only marks,mark=*,mark size=2.9pt,color=blue] coordinates {(0,0)};
			\end{axis}
			\end{tikzpicture}
			\hspace{60pt}
			\begin{tikzpicture}[scale = .8]
			\begin{axis}[ticks=none,
			axis x line=center,
			axis y line =center,
			xmin=-3.2, xmax=3.8,
			ymin = -.3, ymax = 3.5,
			restrict x to domain*=-50:50,
			restrict y to domain*=-2:25,
			xlabel={$r$},ylabel={$\Psi(r)$}]
			\addplot[color=blue,line width = 0.7pt] table [x=r, y=q]{psi_no_exponential_moment.csv};
			\draw[thick] (.997,3pt) -- (.997,-3pt) node[below]{$\bar r$};
			\end{axis}
			\end{tikzpicture}
		\end{center}
		\caption{\label{fig:no_exp_moment} Illustration of $\Psi^*_{1, \boldsymbol p}$ and $\Psi_{1,\boldsymbol p}$ in the case where for all $k \geq 2$, $p_k = 1/(k-1)^{2.2}$, and $p_0 = 1 - \sum_{k\geq 2} p_k$. In this case, $\boldsymbol p$ has no exponential moment, that is, condition~\eqref{eq:exponential_moment_q} from Assumption~\ref{ass:exponential_bounds} is not satisfied.}
	\end{figure}
	
	\section{The case where the branching mechanism has no exponential moment}
	\label{sec:no_exp_moment}
	Still in the setting where $\lambda = \nu = 1$, we are here interested to the case where $\boldsymbol p$ has no exponential moment. Of course, in this case, the support of $\boldsymbol p$ is unbounded. This is exactly the case where condition~\eqref{eq:exponential_moment_q} from Assumption~\ref{ass:exponential_bounds} does not hold. In this case, $\Psi_{1, \boldsymbol p}$ cancels at $\bar r := \sum_k(k-1) p_k$ (which is always assumed to be finite, see the Definition~\ref{def:branching_mechanism} of branching mechanism), but also \emph{stays at zero for larger values of} $r$, see Figure~\ref{fig:no_exp_moment}. If the reference BBM has such a branching mechanism, it is free to have arbitrarily large growth rates in the corresponding RUOT problem.
	
	Note that in this case, $\Psi_{1, \boldsymbol p}$ is not coercive, and it is remarkable that we are still able to get a duality results for the RUOT problem (Theorem~\ref{thm:existence_ruot}) and the equivalence of the values between our two problems (Theorem~\ref{thm:equality_values_dyn}) in this context.
	
	\section{The case where the branching mechanism has exponential moments, but not at any order}
	\label{sec:exp_moment}
	Finally, we treat the case where $\lambda = \nu = 1$, and where $\boldsymbol p$ has exponential moments, but not at any order. Of course, as in the previous section, the support of $\boldsymbol p$ needs to be unbounded. In this case, condition~\eqref{eq:exponential_moment_q} from Assumption~\ref{ass:exponential_bounds} holds.
	
	We call $\theta$ the supremum of those $s \in \R$ for which $\Psi^*_{1, \boldsymbol p}(s) < + \infty$. By assumption, we necessarily have $\theta>0$. There are three scenarios for the behavior of $\Psi^*_{1, \boldsymbol p}(s)$ as $s$ tends to $\theta$ from below: $\Psi^*_{1, \boldsymbol p}(s) \to \infty$ (Figure~\ref{fig:infinite_at_smax}), $\Psi^*_{1, \boldsymbol p}(\theta)< + \infty$ but the derivative $(\Psi^*_{1, \boldsymbol p})'(\theta)= + \infty$ (Figure~\ref{fig:infinite_derivative_at_smax}), or $\Psi^*_{1, \boldsymbol p}(\theta)< + \infty$ and $(\Psi^*_{1, \boldsymbol p})'(\theta)< + \infty$ (Figure~\ref{fig:finite_derivative_at_smax}). But in the three cases, we can check that the slope of $\Psi_{1, \boldsymbol p}$ remains lower than $\theta$. In particular, $\Psi_{1, \boldsymbol p}$ is \emph{not superlinar at} $+\infty$. Notice that at $-\infty$ this type of behavior cannot occur.
	
	This fact is one the main reasons why we need to deal with costs that are not superlinear when introducing the RUOT problem in Section~\ref{sec:presentation_RUOT}.
	
	\begin{figure}
		\begin{center}
			\begin{tikzpicture}[scale = .8]
			\begin{axis}[ticks=none,
			axis x line=center,
			axis y line =center,
			xmin=-2.4, xmax=.9,
			ymin = -.3, ymax = 2.2,
			restrict x to domain*=-10:10,
			restrict y to domain*=-2:12,
			xlabel={$s$},ylabel={$\Psi^*(s)$}]
			\addplot[color=blue,line width = 0.7pt] table [x=s, y=p]{psi_infinite_at_smax.csv};
			\draw[thick, dashed] (0.6932,10) -- (0.6932,0) node[below=1pt]{$\theta$};
			\end{axis}
			\end{tikzpicture}
			\hspace{60pt}
			\begin{tikzpicture}[scale = .8]
			\begin{axis}[ticks=none,
			axis x line=center,
			axis y line =center,
			xmin=-3.2, xmax=11,
			ymin = -.3, ymax = 5.5,
			restrict x to domain*=-50:50,
			restrict y to domain*=-2:25,
			xlabel={$r$},ylabel={$\Psi(r)$}]
			\addplot[color=blue,line width = 0.7pt] table [x=r, y=q]{psi_infinite_at_smax.csv};
			\draw[thick, dashed] (-0.5,-.3466) -- (15,10.397);
			\draw (9.3,5.3) node{$y = \theta r$};
			\end{axis}
			\end{tikzpicture}
		\end{center}
		\caption{\label{fig:infinite_at_smax}Illustration of $\Psi^*_{1, \boldsymbol p}$ and $\Psi_{1,\boldsymbol p}$ in the case where for all $k \geq 2$, $p_k = 1/2^{k-1}(k-1)$, and $p_0 = 1 - \sum_{k\geq 2} p_k$. In this case, $\boldsymbol p$ has some exponential moments, $\theta = \log 2$, and $\Psi^*_{1, \boldsymbol p}(\theta) = + \infty$.}
	\end{figure}
	
	\begin{figure}
		\begin{center}
			\begin{tikzpicture}[scale = .8]
			\begin{axis}[ticks=none,
			axis x line=center,
			axis y line =center,
			xmin=-2.1, xmax=1.1,
			ymin = -.3, ymax = 2,
			restrict x to domain*=-10:10,
			restrict y to domain*=-2:12,
			xlabel={$s$},ylabel={$\Psi^*(s)$}]
			\addplot[color=blue,line width = 0.7pt] table [x=s, y=p]{psi_derivative_infinite_at_smax.csv};
			\draw[thick, dashed] (0.6932,10) -- (0.6932,0) node[below=1pt]{$\theta$};
			\addplot[only marks,mark=*,mark size=2.9pt,color=blue] coordinates {(0.693,0.844)};
			\end{axis}
			\end{tikzpicture}
			\hspace{60pt}
			\begin{tikzpicture}[scale = .8]
			\begin{axis}[ticks=none,
			axis x line=center,
			axis y line =center,
			xmin=-5.2, xmax=7,
			ymin = -.3, ymax = 3.6,
			restrict x to domain*=-50:50,
			restrict y to domain*=-2:25,
			xlabel={$r$},ylabel={$\Psi(r)$}]
			\addplot[color=blue,line width = 0.7pt] table [x=r, y=q]{psi_derivative_infinite_at_smax.csv};
			\draw[thick, dashed] (-0.5,-.3466) -- (15,10.397);
			\draw (3.5,3.4) node{$y = \theta r$};
			\end{axis}
			\end{tikzpicture}
		\end{center}
		\caption{\label{fig:infinite_derivative_at_smax}Illustration of $\Psi^*_{1, \boldsymbol p}$ and $\Psi_{1,\boldsymbol p}$ in the case where for all $k \geq 2$, $p_k = 1/2^{k-1}(k-1)^2$, and $p_0 = 1 - \sum_{k\geq 2} p_k$. In this case, $\boldsymbol p$ has some exponential moments, $\theta = \log 2$, $\Psi^*_{1, \boldsymbol p}(\theta) < + \infty$, but $(\Psi^*)'_{1, \boldsymbol p}(\theta) = + \infty$.}
	\end{figure}
	
	\begin{figure}
		\begin{center}
			\begin{tikzpicture}[scale = .8]
			\begin{axis}[ticks=none,
			axis x line=center,
			axis y line =center,
			xmin=-2.1, xmax=1.1,
			ymin = -.3, ymax = 2.2,
			restrict x to domain*=-10:10,
			restrict y to domain*=-2:12,
			xlabel={$s$},ylabel={$\Psi^*(s)$}]
			\addplot[color=blue,line width = 0.7pt] table [x=s, y=p]{psi_derivative_finite_at_smax.csv};
			\draw[thick, dashed] (0.6932,10) -- (0.6932,0) node[below=1pt]{$\theta$};
			\addplot[only marks,mark=*,mark size=2.9pt,color=blue] coordinates {(0.694,0.4346)};
			\end{axis}
			\end{tikzpicture}
			\hspace{60pt}
			\begin{tikzpicture}[scale = .8]
			\begin{axis}[ticks=none,
			axis x line=center,
			axis y line =center,
			xmin=-5.2, xmax=7,
			ymin = -.3, ymax = 3.6,
			restrict x to domain*=-50:50,
			restrict y to domain*=-2:25,
			xlabel={$r$},ylabel={$\Psi(r)$}]
			\addplot[color=blue,line width = 0.7pt] table [x=r, y=q]{psi_derivative_finite_at_smax.csv};
			\draw[thick, dashed] (-0.5,-.3466) -- (15,10.397);
			\draw (3.5,3.4) node{$y = \theta r$};
			\end{axis}
			\end{tikzpicture}
		\end{center}
		\caption{\label{fig:finite_derivative_at_smax}Illustration of $\Psi^*_{1, \boldsymbol p}$ and $\Psi_{1,\boldsymbol p}$ in the case where for all $k \geq 2$, $p_k = 1/2^{k-1}(k-1)^3$, and $p_0 = 1 - \sum_{k\geq 2} p_k$. In this case, $\boldsymbol p$ has some exponential moments, $\theta = \log 2$, $\Psi^*_{1, \boldsymbol p}(\theta) < + \infty$, and $(\Psi^*)'_{1, \boldsymbol p}(\theta) < + \infty$. Here, for all $r \geq (\Psi^*)'_{1, \boldsymbol p}(\theta)$, we have $(\Psi_{1,\boldsymbol p})'(r) = \theta$. This means that for $r$ large enough, the growth is exactly linear at rate $\theta$.}
	\end{figure}

	\chapter{Construction of the processes of Section~\ref{sec:new_processes}}
	\label{app:stochastic_calculus}
	
	The purpose of this appendix is to build the processes introduced in Chapter~\ref{chap:characterization_finite_entropy} to characterize the laws with finite entropy with respect to the branching Brownian motion. Fixing a diffusivity parameter $\nu>0$, a branching mechanism $\boldsymbol{q}$, and a family of initial positions for the particles $\xx = (x_1, \dots, x_p)\in \cup_{k \in \N}(\T^d)^k$ for the whole chapter (except at Section~\ref{sec:large_to_restricted}), we will first define these processes on the large probability space $(\Omega^\xx, \F^\xx, \Prob^\xx)$ introduced to define the BBM in Subsection~\ref{subsec:def_BBM}. In this setting, the heuristic definitions from Section~\ref{sec:new_processes} (in particular the one for $I$, the generalized stochastic integral) can be made rigorous, and the Itô formula from Theorem~\ref{thm:branching_Ito} is easily proved. 
	
	The technical part of the argument will be to show that up to a negligible set, the generalized stochastic integral and the pure jump process that we end-up with are measurable with respect to the $\sigma$-algebra generated by $M^0$, and hence corresponds to processes on the canonical space $\Omega = \cadlag([0,1]; \M_+(\T^d))$, that are well defined $R^\mu$-almost surely, for all $\mu \in \M_\delta(\T^d)$.

	\section{Two different filtrations}
	In this section, we work on the ``large'' probability space $(\Omega^\xx, \F^\xx, \Prob^\xx)$ defined in Subsection~\ref{subsec:def_BBM}, that we endow with the same family of independent random variables $(e_n,L_n,Y_n)_{n \in \mathcal{U}}$ as there. We call $\mu := \delta_{x_1} + \dots +\delta_{x_p}$.
	
	The main difficulty is that we will work with two different filtrations. The first one, that we will call the \emph{large} filtration, is
	\begin{equation*}
	\F^0_t := \sigma \Big\{ \iota(b_n,t),\, \iota(d_n,t),\, X_n(s) \ : \ n \in \mathcal{U}, \, s \in [0,t] \Big\}, \qquad t \in [0,1],
	\end{equation*}
	where $\iota(a,b) = a$ if $a \leq b$ and $+ \infty$ otherwise. In this definition, the conventional choices that we made for those $t$ for which $X_n(t) \notin \mathcal N^0(t)$ in Convention~\ref{conv:trajectory} are important. This filtration contains all the information about the process up to time $t$, including the labeling of the particles. It will be rather easy to define our processes using standard tools on scalar or vector-valued processes (as opposed to measure-valued processes).
	
	The second filtration is:
	\begin{equation}
	\label{eq:def_restricted_filtration}
	\bar \F_t := \sigma \Big( M^0_s \ : \  s \in [0,t] \Big), \qquad t \in [0,1].
	\end{equation}
	We will refer to it as the \emph{restricted} filtration. It only contains the information on the positions of the particles, and not on their labeling. 
	\begin{Rem} 
		\label{rem:large_to_restricted}
		Notice that for all adapted (resp.\ predictable, see Definition~\ref{def:predictable_fields}) field $a$ on the canonical space , composing them to the right with the map
		\begin{equation}
		\label{eq:projection_canonical_space}
		(\omega,t,x) \in \Omega^\xx \times [0,1] \times \T^d \mapsto ((M^0_t(\omega))_{t \in [0,1]}, t, x) \in \XX.
		\end{equation}
		provides a random field $a^0$ on $\Omega^\xx$, that turns out to be adapted (resp.\ predictable) w.r.t.\ the restricted filtration $(\bar \F_t)_{t \in [0,1]}$. In return, for all adapted (resp. predictable) process $Z^0 = (Z^0_t)_{t \in [0,1]}$ on $(\Omega^\xx, (\bar \F_t)_{t \in [0,1]})$, there exists an adapted (resp.\ predictable) process $Z = (Z_t)_{t \in [0,1]}$ on $(\Omega, (\F_t)_{t \in [0,1]})$ such that $Z^0$ is obtained from $Z$ by composing it to the right with the map~\eqref{eq:projection_canonical_space}.
	\end{Rem}
	
	The technical part of this chapter will be to show that the processes that we will define w.r.t.\ the large filtration, will be actually adapted w.r.t.\ the restricted filtration, and hence sent onto the canonical space by the procedure just described. Before getting to that, let us first give some intuition by proving that the large filtration is finer that the restricted one, and by providing some natural measurability properties of the random variables involved in the definition of the BBM.
	\begin{Prop}
		\label{prop:measurability_properties}
		Let $t \in [0,1]$. We have
		\begin{equation*}
		\bar \F_t \subset \F^0_t.
		\end{equation*}
		
		In addition, for all $n \in \mathcal U$, $b_n$ and $d_n$ are $(\F^0_t)_{t \in [0,1]}$-stopping times, and for all $t \in [0,1]$, $\1_{d_n \leq t} L_n$ is $\F^0_t$-measurable.
	\end{Prop}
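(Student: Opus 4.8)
The plan is to prove the two assertions separately, starting from the easier measurability facts about the building blocks $b_n$, $d_n$, $L_n$, and then deducing the filtration inclusion from them.

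First I would establish that each $b_n$ and $d_n$ is an $(\F^0_t)_{t\in[0,1]}$-stopping time. Recall from Definition~\ref{def:canonical_space} and the construction in Subsection~\ref{subsec:def_BBM} that $b_n = \sum_{q<n} e_q$ and $d_n = b_n + e_n$, where $e_q$ is the lifetime of particle $q$. The key observation is that, by the very definition of $\F^0_t$, the random variable $\iota(b_n,t)$ (which equals $b_n$ if $b_n\le t$ and $+\infty$ otherwise) is $\F^0_t$-measurable, and similarly for $\iota(d_n,t)$. Therefore $\{b_n \le t\} = \{\iota(b_n,t) \le t\} \in \F^0_t$ and likewise $\{d_n \le t\}\in\F^0_t$, which is exactly the stopping-time property. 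For the measurability of $\1_{d_n\le t}L_n$: when $d_n\le t$, particle $n$ has died by time $t$, hence its number of offspring $L_n$ is ``recorded'' in the configuration up to time $t$; more precisely, on the event $\{d_n\le t\}$ one has $M^0_{d_n} - M^0_{d_n-} = (L_n-1)\delta_{X_n(d_n)}$, so $L_n$ is read off from the jump of $M^0$ at the (now $\F^0_t$-measurable) time $d_n$, together with the trajectory data $X_n(s)$, $s\le t$, which are $\F^0_t$-measurable by construction. Writing $\1_{d_n\le t}L_n$ as a measurable function of $\iota(d_n,t)$ and of $(X_m(s))_{m,\,s\le t}$ (e.g.\ as $\1_{\iota(d_n,t)\le t}$ times the multiplicity of $X_n(\iota(d_n,t))$ in the jump of the configuration) gives the $\F^0_t$-measurability.

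Next I would prove $\bar\F_t \subset \F^0_t$. Since $\bar\F_t = \sigma(M^0_s : s\in[0,t])$, it suffices to show that for each fixed $s\le t$ the random measure $M^0_s$ is $\F^0_t$-measurable; equivalently, that $\langle\varphi,M^0_s\rangle = \sum_{n\in\mathcal N^0(s)}\varphi(X_n(s))$ is $\F^0_t$-measurable for every $\varphi\in C(\T^d)$. The set of alive particles is $\mathcal N^0(s) = \{n : b_n\le s < d_n\}$, and membership in it is decided by $\iota(b_n,s)$ and $\iota(d_n,s)$, both $\F^0_t$-measurable since $s\le t$; the positions $X_n(s)$ are $\F^0_t$-measurable directly from the definition of $\F^0_t$. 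Because, under the finite-first-moment assumption~\eqref{eq:q_finite_mean}, the number of particles involved is $\Prob^\xx$-a.s.\ finite, the sum is a countable (a.s.\ finite) combination of $\F^0_t$-measurable quantities, hence $\F^0_t$-measurable. Taking the $\sigma$-algebra generated over all $s\le t$ and all $\varphi$ yields $\bar\F_t\subset\F^0_t$.

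The statement is essentially a bookkeeping lemma, so I do not expect a serious obstacle; the only point requiring a little care is making precise ``$L_n$ can be read off the jump of $M^0$ at $d_n$'', since one must handle the null event where two distinct particles branch at the same time (which does not occur a.s.\ because the $e_n$ are independent exponentials, so the branching times are a.s.\ distinct) and the convention of Convention~\ref{conv:trajectory} for positions outside the life interval. With those conventions in force the argument above is complete, and I would simply remark, as the paper does, that the proof is a direct consequence of the definitions.
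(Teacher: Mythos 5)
Your argument for $\bar\F_t\subset\F^0_t$ and for the stopping-time property of $b_n,d_n$ is essentially the paper's: write $M^0_s=\sum_{n\in\mathcal U}\1_{\iota(b_n,t)\le s<\iota(d_n,t)}\delta_{X_n(s)}$ for $s\le t$, and observe that $\{b_n\le t\}=\{\iota(b_n,t)\le t\}\in\F^0_t$ (the paper phrases this as $b_n\wedge t=\iota(b_n,t)\wedge t$ being $\F^0_t$-measurable; same content). Where you diverge, and where a genuine gap opens, is the claim that $\1_{d_n\le t}L_n$ is $\F^0_t$-measurable.

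Your route is to ``read $L_n$ off the jump of $M^0$ at $d_n$.'' Two problems. First, this is out of order: you invoke $\Delta M^0_{d_n}=M^0_{d_n}-M^0_{d_n-}$ before having established $\bar\F_t\subset\F^0_t$, and computing this jump requires not only the positions $X_m(s)$, $s\le t$, but also the birth/death markers $\iota(b_m,t),\iota(d_m,t)$ for all $m$ (so the parenthetical ``measurable function of $\iota(d_n,t)$ and of $(X_m(s))_{m,s\le t}$'' is not accurate). Second, and more seriously, your identification $\Delta M^0_{d_n}=(L_n-1)\delta_{X_n(d_n)}$ holds only off a $\Prob^\xx$-null set (no simultaneous branching events, $X_n(d_n)$ not coinciding with another alive particle, etc.). Since the filtration $(\F^0_t)$ is not completed, ``equal a.s.\ to an $\F^0_t$-measurable variable'' does not yield $\F^0_t$-measurability; you would need a pathwise identity. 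The paper sidesteps all of this with the clean algebraic observation you missed:
\begin{equation*}
\1_{d_n\le t}L_n=\sum_{i\in\N}\1_{b_{ni}\le t}=\sum_{i\in\N}\1_{\iota(b_{ni},t)\le t},
\end{equation*}
which is exact (not merely a.s.): the children $n1,\dots,nL_n$ are born exactly at time $d_n$ and the non-children $ni$, $i>L_n$, satisfy $b_{ni}=+\infty$ by convention. Each term $\1_{\iota(b_{ni},t)\le t}$ is $\F^0_t$-measurable directly from the definition of $\F^0_t$, with no reference to jumps of $M^0$, to almost-sure distinctness of branching times, or to Convention~\ref{conv:trajectory}. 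You should adopt this identity in place of your jump argument.
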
 
	\begin{proof}
		For the first claim, it suffices to prove that  for all $0 \leq s \leq t \leq 1$, $M^0_s$ is measurable w.r.t.\ $\bar \F_t$. This follows from
		\begin{equation*}
		M^0_s = \sum_{n \in \mathcal U} \1_{\iota(b_n,t) \leq s < \iota(d_n,t)} \delta_{X_n(s)}.
		\end{equation*}
		
		For the second claim, we consider $t \in [0,1]$ and $n \in \mathcal U$, and we show that $b_n \wedge t$, $d_n \wedge t$ and $\1_{d_n \leq t} L_n$ are $\F^0_t$-measurable. This comes from:
		\begin{equation*}
		b_n \wedge t = \iota (b_n, t) \wedge t, \quad d_n \wedge t = \iota (d_n, t) \wedge t, \quad \1_{d_n \leq t} L_n = \sum_{i \in \N} \1_{b_{ni} \leq t} = \sum_{i \in \N} \1_{\iota(b_{ni},t) \leq t}.
		\end{equation*}
	\end{proof}
	\section{Generalized stochastic integral at the level of trees}
	
	At the level of the large probability space $\Omega^\xx$, we are able to define our generalized stochastic integral for all random vector field $v^0$ that is predictable in the sense of Definition~\ref{def:predictable_fields} w.r.t.\ the large filtration $(\F^0_t)_{t \in [0,1]}$, provided the following integrability holds:
	\begin{equation}
	\label{eq:integrability_v0}
	\Em_\xx\left[ \int_0^1 \left\cg |v^0(t)|^2, M^0_t \right\cd \D t \right] < + \infty.
	\end{equation}
	So we give ourselves once for all such a random vector field. In a similar way as in Subsection~\ref{subsec:def_I}, we define our generalized stochastic integral (on the large probability space) by the formula:
	\begin{equation}
	\label{eq:def_stochastic_integral_trees}
	\mathcal{I}[v^0]_t := \sum_{n \in \mathcal{U}} \int_0^t \1_{b_n < s \leq d_n} v^0(s, X_n(s))\cdot \D X_n(s), \qquad t \in [0,1].
	\end{equation}
	The following proposition asserts that this definition makes sense, and gives some properties of $\mathcal I$. It can be seen as an analogue of Theorem~\ref{thm:definition_stochastic_integral} at the level of the large probability space $\Omega^\xx$.
	\begin{Prop}
		\label{prop:generalized_stochastic_integral_tree}
		\begin{itemize}
			\item For all $n \in \mathcal U$, the process
			\begin{equation*}
			t \in [0,1] \mapsto \1_{b_n < t \leq d_n} v^0(t, X_n(t))
			\end{equation*}
			is predictable w.r.t.\ to the large filtration $(\F^0_t)_{t \in [0,1]}$.
			\item For all $n \in \mathcal U$, the process $X_n := (X_n(t))_{t \in [0,1]}$ is a continuous $\Prob^\xx$-martingale w.r.t.\ the large filtration $(\F^0_t)_{t \in [0,1]}$. Its quadratic variation satisfies
			\begin{equation}
			\label{eq:quadratic_variation_Xn}
			\mathrm [X_n]_t = \sum_{n' \leq n} \1_{b_{n'} < t \leq d_{n'}}\times \Id \times\nu \D t,
			\end{equation}
			where $\mathrm{Id}$ is the $d$-dimensional identity matrix.
			
			Moreover, if $n, n' \in \mathcal U$, the quadratic covariation of $X_n$ and $X_{n'}$ satisfies:
			\begin{equation}
			\label{eq:formula_bracket_Xn}
			\1_{b_n < t \leq d_n} \1_{b_{n'} < t \leq d_{n'}}\!\times \mathrm d [X_n, X_{n'}]_t = \1_{n = n'} \times  \mathrm{Id} \times \nu \D t, \qquad t \in [0,1].
			\end{equation}
		\end{itemize}
		Consequently, $\mathcal I[v^0]$ from formula~\eqref{eq:def_stochastic_integral_trees} is a well defined continuous martingale w.r.t.\ the large filtration $(\F^0_t)_{t \in [0,1]}$, and its quadratic variation is
		\begin{equation}
		\label{eq:quadratic_variation_I_tree}
		\big[ \mathcal I[v^0] \big]_t = \nu \int_0^t \left\cg |v^0(s)|^2, M^0_s \right\cd \D s, \qquad t \in [0,1].
		\end{equation}
		Finally, for all $0\leq t_0 < t_1 \leq 1$ and for all bounded real-valued random variable $Z^0_{t_0}$ that is $\F^0_{t_0}$-measurable,
		\begin{equation}
		\label{eq:truncation_stochastic_integral_tree}
		\mathcal I[Z^0_{t_0} \1_{(t_0,t_1]} v^0]_t = Z^0_{t_0} \Big( \mathcal I[v^0]_{t_1 \wedge t} - \mathcal I[v^0]_{t_0 \wedge t} \Big), \qquad \forall t \in [0,1].
		\end{equation}
	\end{Prop}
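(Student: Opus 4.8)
The statement collects the basic facts about the generalized stochastic integral $\mathcal{I}[v^0]$ built on the large probability space $\Omega^\xx$; the proof is essentially a careful assembly of classical continuous stochastic calculus applied branch by branch, together with the observation that the branches behave like independent Brownian motions run on disjoint time intervals. The plan is to establish the bullet points in the order they are stated, then deduce the martingale property, the quadratic variation formula, and the truncation identity by summing over $n \in \mathcal{U}$.

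First I would prove predictability of $t \mapsto \1_{b_n < t \leq d_n} v^0(t, X_n(t))$. The process $t \mapsto \1_{b_n < t \leq d_n}$ is left-continuous and adapted by Proposition~\ref{prop:measurability_properties} (since $b_n, d_n$ are $(\F^0_t)$-stopping times, $\{b_n < t \leq d_n\}$ is the intersection of an $\F^0_{t-}$-event and an $\F^0_t$-event — one has to be slightly careful with the half-open interval, but the standard trick of writing the indicator as a left-limit makes it predictable). The map $t \mapsto X_n(t)$ is continuous and adapted by construction and by our conventions in Convention~\ref{conv:trajectory}. Since $v^0$ is a predictable field by hypothesis, the composition $(t,\omega) \mapsto v^0(\omega, t, X_n(\omega, t))$ is predictable (composition of a $\tilde\G$-measurable map with the predictable map $(\omega,t)\mapsto(\omega,t,X_n(\omega,t))$), and multiplying by the predictable indicator preserves predictability. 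Next, for the martingale property of $X_n$: by the independence structure of $(e_{n'}, L_{n'}, Y_{n'})_{n' \in \mathcal{U}}$, on the event $\{b_n < t \leq d_n\}$ the increments of $X_n$ are driven by the Brownian increments $\D Y_n$, which are independent of $\F^0_{b_n}$; on $\{t \leq b_n\}$ the process $X_n$ coincides with that of its biggest living ancestor, which is itself a sum of martingale pieces; and on $\{t > d_n\}$ it is frozen. Assembling these cases (optional stopping at $b_n$ and $d_n$, plus the fact that the driving Brownian motions $Y_{n'}$ are independent across distinct $n'$) gives that $X_n$ is a continuous $\Prob^\xx$-martingale, and that its quadratic variation accumulates at rate $\nu\,\Id$ precisely on the union of the living intervals of $n$ and its ancestors, which is formula~\eqref{eq:quadratic_variation_Xn}. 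The covariation formula~\eqref{eq:formula_bracket_Xn} follows from the same independence: when $n \neq n'$ are both alive at time $t$, their displacements are driven by distinct independent Brownian motions (the most recent branch segments), so the bracket vanishes; when $n = n'$ it reduces to~\eqref{eq:quadratic_variation_Xn} restricted to the living interval.

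Having these, the integral $\mathcal{I}[v^0]_t = \sum_{n} \int_0^t \1_{b_n < s \leq d_n} v^0(s, X_n(s))\cdot \D X_n(s)$ is well defined: each summand is an Itô integral of a predictable integrand against the continuous martingale $X_n$, and the $L^2$-isometry together with~\eqref{eq:formula_bracket_Xn} gives that the cross terms vanish and that $\Em_\xx\big[ \mathcal{I}[v^0]_t^2 \big] = \nu \Em_\xx\big[ \int_0^t \sum_n \1_{b_n<s\leq d_n} |v^0(s,X_n(s))|^2 \D s \big] = \nu \Em_\xx\big[\int_0^t \cg|v^0(s)|^2, M^0_s\cd \D s\big]$, which is finite by~\eqref{eq:integrability_v0}; this simultaneously yields that the sum converges in $L^2$, that $\mathcal{I}[v^0]$ is a continuous martingale (uniform $L^2$-bound plus the fact that only finitely many branches are ever alive, $\Prob^\xx$-a.s., by~\eqref{eq:q_finite_mean}), and formula~\eqref{eq:quadratic_variation_I_tree} for its quadratic variation. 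Finally, the truncation identity~\eqref{eq:truncation_stochastic_integral_tree} is the standard linearity-and-localization property of the Itô integral applied termwise: $\mathcal{I}[Z^0_{t_0}\1_{(t_0,t_1]} v^0]_t = \sum_n \int_0^t Z^0_{t_0}\1_{(t_0,t_1]}(s)\1_{b_n<s\leq d_n} v^0(s,X_n(s))\cdot\D X_n(s)$, and since $Z^0_{t_0}$ is $\F^0_{t_0}$-measurable and bounded it can be pulled out of the stochastic integral started at $t_0$, leaving $Z^0_{t_0}(\mathcal{I}[v^0]_{t_1\wedge t} - \mathcal{I}[v^0]_{t_0\wedge t})$.

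\textbf{Main obstacle.} The conceptually routine but technically delicate point is the bookkeeping around the half-open living intervals $(b_n, d_n]$ and the conventions of Convention~\ref{conv:trajectory}: one must check that the various indicators are genuinely predictable (not merely adapted), that the ancestor-splicing used to define $X_n(t)$ for $t < b_n$ does not create spurious quadratic variation or break the martingale property at the branching times $b_n$ and $d_n$, and that the a.s.-finiteness of the number of branches (from $\sum_k k q_k < \infty$) legitimately turns the countable sum over $\mathcal{U}$ into an a.s.-finite sum so that all interchanges of sum, expectation and stochastic integral are justified. None of this requires new ideas beyond classical continuous martingale theory, but it is where the care lies.
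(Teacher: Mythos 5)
Your proposal is correct and follows essentially the same route as the paper: predictability via left-continuity of the living-interval indicator together with composition of the predictable field $v^0$ with the continuous adapted process $(\omega,t)\mapsto(\omega,t,X_n(t))$; the martingale property of $X_n$ from decomposing it into ancestor segments driven by the independent Brownian motions $Y_{n'}$ (the paper makes this explicit via the formula $X_n(t)=\sum_{l=1}^k \1_{b_{n_l}<t}Y_{n_l}(t\wedge d_{n_l}-b_{n_l})$ proved by induction on depth, whereas you leave it at the verbal level); the bracket identities from independence of the $Y_{n'}$; well-definedness and the quadratic variation of $\mathcal I[v^0]$ from the Itô isometry combined with a.s.\ finiteness of the number of branches; and the truncation identity by termwise pulling-out. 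The one small imprecision is your claim that $\{b_n<t\leq d_n\}$ is ``the intersection of an $\F^0_{t-}$-event and an $\F^0_t$-event'' --- in fact both $\{b_n<t\}$ and $\{t\leq d_n\}$ are already $\F^0_{t-}$-measurable, and the cleaner route (which the paper takes) is simply to note that the whole indicator process is left-continuous and adapted, hence predictable by definition; but this does not affect the validity of the argument.
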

	\begin{proof}
		Let us start with the first point of the statement. Let us take $n \in \mathcal U$. In order to show that $t \mapsto \1_{b_n < t \leq d_n} v^0(t, X_n(t))$ is $(\F^0_t)_{t \in [0,1]}$-predictable, we will show that both $t \mapsto \1_{b_n < t \leq d_n}$ and $t \mapsto v^0(t, X_n(t))$ are $(\F^0_t)_{t \in [0,1]}$-predictable. For the first one, let us observe that it is left continuous, and adapted as a consequence of Proposition~\ref{prop:measurability_properties}, so that the result is a consequence of Definition~\ref{def:predictable_process}. The second one is the composition of the measurable map $v^0: (\XX, \tilde \G) \to (\R^d, \mathcal B(\R^d))$ with the map $(\omega, t) \in (\Omega^\xx \times [0,1], \G) \to (\omega, t, X_n(t))\in (\XX, \tilde \G)$, which is easily seen to be measurable as a consequence of the fact that $X_n$ is adapted w.r.t. $(\F^0_t)_{t \in [0,1]}$ and continuous, and hence predictable.
		
		Let us now prove that for all $n \in \mathcal U$, $X_n$ is a $(\F^0_t)_{t \in [0,1]}$-martingale under $\Prob^\xx$. To do this, we need to be very careful with our choices stated at Convention~\ref{conv:trajectory}. This choices imply the following formula, where we use the notations of Definition~\ref{def:partial_ordering} (in particular, we call $k$ the number such that $n = n(1) \dots n(k)$):
		\begin{equation}
		\label{eq:formula_Xn}
		X_n(t) =  \sum_{l = 1}^k \1_{b_{n_l} < t} Y_{n_l}(t\wedge d_{n_l}-b_{n_l}).
		\end{equation}
		This can for instance be proved by inference over $k$, and by recalling that if $n \notin \mathcal T$, then we choose $b_n = + \infty$. Let us show that all the terms in the r.h.s.\ of formula~\eqref{eq:formula_Xn} is a $(\F^0_t)_{t \in [0,1]}$-martingale under $\Prob^\xx$. Namely, we need to show that for all $n \in \mathcal U$, the process
		\begin{equation*}
		\1_{b_n < t} Y_n(t \wedge d_n - b_n), \qquad t \in [0,1],
		\end{equation*}
		is a $(\F^0_t)_{t \in [0,1]}$-martingale under $\Prob^\xx$. To prove that it is adapted, we just need to observe the following formula, which holds for all $t \in [0,1]$, and which is a consequence of~\eqref{eq:formula_Xn}:
		\begin{equation}
		\label{eq:X_to_Y}
		\1_{b_n < t} Y_n(t \wedge d_n - b_n) = X_n(t) - X_n(t \wedge b_n),
		\end{equation}
		and to recall that $t \wedge b_n$ is $\F^0_t$-measurable. Finally, the martingale property is a consequence of the fact that $Y_n$ is a Brownian motion, that is independent of all the other random variables involved in the definition of the process, and from the fact that both $b_n$ and $d_n$ are $(\F^0_t)_{t \in [0,1]}$-stopping times.	
		
		Formula~\eqref{eq:X_to_Y} easily implies~\eqref{eq:quadratic_variation_Xn} (by inference over $k$, and the fact that the quadratic variation of a Brownian motion $X$ of diffusivity $\nu$ satisfies $\mathrm d[X]_t =\mathrm{Id} \times \nu \D t$). Moreover, it also shows that for all $n \in \mathcal U$ and $t \in [0,1]$,
		\begin{equation*}
		\1_{b_n < t \leq d_n} X_n(t) = \1_{b_n < t \leq d_n}\Big\{ X_n(b_n) + Y_n(t-b_n) \Big\},
		\end{equation*}
		so that~\eqref{eq:formula_bracket_Xn} directly follows from the standard facts that the bracket of two independent semi-martingales cancel.
		
		By the two first points of the statement, for all $n \in \mathcal U$ and all $t \in [0,1]$, the stochastic integral
		\begin{equation*}
		\int_0^t \1_{b_n < s \leq d_n} v^0(s, X_n(s))\cdot \D X_n(s)
		\end{equation*}
		is well d. To deduce that $\mathcal I[v^0]$ defined by formula~\eqref{eq:def_stochastic_integral_trees} is indeed a well-defined martingale, we can use the fact that because of~\eqref{eq:q_finite_mean}, only a finite number of terms do not cancel. Another way that automatically provides~\eqref{eq:quadratic_variation_I_tree} is to use the Hibertian structure of the set of squarred integrable continuous martingales (see \cite[Paragraph~5.1.1]{legall2016brownian}). With this approach, the fact that $\mathcal I[v^0]$ is well defined and formula~\eqref{eq:quadratic_variation_I_tree} are consequences of the following computation, valid for all $t \in [0,1]$:
		\begin{align*}
		\Bigg[ \sum_{n \in \mathcal{U}} \int_0^t \1_{b_n < s \leq d_n} &v^0(t, X_n(s))\cdot \D X_n(s) \Bigg]_t\\
		&= \sum_{n,n' \in \mathcal U} \int_0^t v^0(s, X_n(s)) \cdot v^0(s, X_{n'}(s)) \1_{b_n < s \leq d_n}\1_{b_{n'} < s \leq d_{n'}} \mathrm d [X_n, X_{n'}]_s\\
		&= \nu \int_0^t  \sum_{n\in \mathcal U}  |v^0(s, X_n(s))|^2 \1_{b_n < s \leq d_n}\D s = \nu \int_0^t \left\cg |v^0(s)|^2, M^0_s \right\cd \D s,
		\end{align*}
		where the second equality is obtained using formula~\eqref{eq:formula_bracket_Xn}, and where the last one uses the fact that $M^0$ only undergoes a finite number of discontinuities.
		
		Let us finally prove formula~\eqref{eq:truncation_stochastic_integral_tree}. Let us consider $t_0,t_1, Z_{t_0}^0$ as in the statement and $t \in [0,1]$. Remarking that the vector field $Z^0_{t_0}\1_{(t_0,t_1]}v^0$ is predictable, this is a direct computation using the definition~\eqref{eq:def_stochastic_integral_trees} of $\mathcal I$:
		\begin{align*}
		\mathcal I[Z^0_{t_0}\1_{(t_0,t_1]}v^0]_t &= \sum_{n \in \mathcal{U}} \int_0^t \1_{b_n < s \leq d_n}Z_{t_0}^0 \1_{(t_0,t_1]}(s) v^0(s, X_n(s))\cdot \D X_n(s)\\
		&= Z_{t_0}^0  \sum_{n \in \mathcal{U}} \int_{t \wedge t_0}^{t \wedge t_1} \1_{b_n < s \leq d_n}(s) v^0(s, X_n(s))\cdot \D X_n(s) \\
		&=  Z^0_{t_0} \Big( \mathcal I[v^0]_{t_1 \wedge t} - \mathcal I[v^0]_{t_0 \wedge t} \Big),
		\end{align*}
		as announced.
	\end{proof}

	\section{Simple jump processes at the level of trees}
	The definition of the simple jump processes from Subsection~\ref{subsec:pure_jump} is much easier. At the level of the large probability space $\Omega^\xx$, we will be able to define them for all random scalar fields on $\Omega^\xx$ that are predictable with respect to the large filtration $(\F^0_t)_{t \in [0,1]}$. Given such a field $a^0$ and some $k \in \N\backslash\{1\}$, we define our pure jump process $(\mathcal J_k[a^0]_t)_{t \in [0,1]}$ as follows.
	\begin{Def}
		\label{def:pure_jump_process_tree}
		For all $t \in [0,1]$, we call
		\begin{equation}
		\label{eq:first_formula_J_tree}
		\mathcal J^k[a^0]_t := \sum_{n \in \mathcal U} \1_{d_n \leq t} \1_{L_n = k} a^0\big(d_n,X_n(d_n)\big),
		\end{equation}
		which makes sense because as a consequence of~\eqref{eq:q_finite_mean}, $\Prob^\xx$-\emph{a.s.}, only a finite number of terms is nonzero.
	\end{Def}
	
	Let us check the properties of this pure jump process given at Theorem~\ref{thm:properties_jump_processes}, at the level of the large probability space $\Omega^\xx$. 
	
	\begin{Prop}
		\label{prop:jump_processes_tree}
		Let $(a^0_k)_{k \neq 1}$ be a family of random scalar fields on $\Omega^\xx$, that are all predictable w.r.t.\ the large filtration $(\F^0_t)_{t \in [0,1]}$.
		
		Let us call
		\begin{equation*}
		\mathcal J_t := \sum_{k \neq 1} \mathcal J^k[a^0_k]_t, \qquad t \in [0,1].
		\end{equation*}
		There holds:
		\begin{itemize}
			\item The process $\mathcal J = (\mathcal J_t)_{t \in [0,1]}$ is a well defined simple pure-jump process on the filtered probability space $(\Omega^\xx, \F^\xx, \Prob^\xx,(\F^0_t)_{t \in [0,1]})$.
			\item Its jump measure is
			\begin{equation}
			\label{eq:jump_measure_tree}
			\sum_{k \neq 1} a^0_k(t) \pf\big( q_k M^0_{t-}\big), \qquad t \in [0,1].
			\end{equation}
			\item  Given a function $f \in C_b(\R)$, two times $0 \leq t_0 < t_1 \leq 1$ and a bounded real-valued random variable $Z^0_{t_0}$ that is $\F^0_{t_0}$-measurable, we have for all $t \in [0,1]$,
			\begin{equation}
			\label{eq:truncation_jump_tree}
			Z^0_{t_0} \Big( f(\mathcal J_{t \wedge t_1}) - f(\mathcal J_{t \wedge t_0}) \Big) = \sum_{k \neq 1} \mathcal J^k[b^0_k]_t,
			\end{equation}
			where for all $k \neq 1$ and $t \in [0,1]$, $b^0_k(t) := Z_{t_0}^0\1_{t \in (t_0,t_1]} (f(\mathcal J_{t-} + a^0_k(t)) - f(\mathcal J_{t-}))$.
		\end{itemize} 
	\end{Prop}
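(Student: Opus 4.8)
The statement to prove is Proposition~\ref{prop:jump_processes_tree}, which collects the basic properties of the pure-jump processes $\mathcal J^k[a^0_k]$ built at the level of the large probability space $\Omega^\xx$. The strategy is to argue entirely in terms of the independent data $(e_n, L_n, Y_n)_{n \in \mathcal U}$ that generate $\Omega^\xx$, using that $\Prob^\xx$-\emph{a.s.}\ only finitely many branching events occur before time $1$ (a consequence of~\eqref{eq:q_finite_mean}, cf.\ Proposition~\ref{prop:existence_BBM}), so that $\mathcal J$ is automatically a.s.\ \cadlag, piecewise constant, with finitely many jumps. The bulk of the work is then to identify the jump measure, i.e.\ to check~\eqref{eq:integrability_jump_measure} and, crucially,~\eqref{eq:martingale_property_def_simple_jump_processes}. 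First I would record that $\mathcal J$ is adapted to $(\F^0_t)_{t \in [0,1]}$: indeed, by Proposition~\ref{prop:measurability_properties}, for each $n$ the quantities $d_n \wedge t$, $\1_{d_n \leq t} L_n$ and $X_n(d_n \wedge t)$ are $\F^0_t$-measurable, and $a^0_k$ is predictable hence progressive, so each summand in~\eqref{eq:first_formula_J_tree} is $\F^0_t$-measurable; the sum is a.s.\ finite. Well-definedness of the sum over $k$ is similar since $\sum_{k\neq 1}\mathcal J^k[a^0_k]_t$ only picks up the unique offspring number at each (finitely many) branching event.

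\textbf{The jump measure.} Write $\eta_t := \sum_{k\neq 1} a^0_k(t)\pf(q_k M^0_{t-})$. The bound~\eqref{eq:integrability_jump_measure} is immediate: $\int_0^1 \eta_t(\R)\D t = \int_0^1 \cg \lambda_{\boldsymbol q}, M^0_t\cd \D t \leq \lambda_{\boldsymbol q}\int_0^1 M^0_t(\T^d)\D t$, which is a.s.\ finite for a curve in $\cadlag([0,1];\M_+(\T^d))$. For~\eqref{eq:martingale_property_def_simple_jump_processes}, fix $f \in C_b(\R)$ and set $N_t := f(\mathcal J_t) - \int_0^t\int_\R\{f(\mathcal J_s + y) - f(\mathcal J_s)\}\D\eta_s(y)\D s$; by~\eqref{eq:formula_pf} the compensator term equals $\int_0^t \sum_{k\neq 1} q_k \cg f(\mathcal J_{s-} + a^0_k(s)) - f(\mathcal J_{s-}), M^0_{s-}\cd \D s$. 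I would prove $N$ is a martingale by a direct infinitesimal-generator argument on the large space, conditioning on $\F^0_t$ and letting a small increment $h \to 0$: starting from a configuration of particles alive at time $t$, with probability $q_k h + o(h)$ one given particle at position $x$ undergoes a branching event with $k$ offsprings during $[t, t+h]$, producing a jump of size $a^0_k(t,x)$ in $\mathcal J$ and hence an increment $f(\mathcal J_t + a^0_k(t,x)) - f(\mathcal J_t)$ in $f(\mathcal J)$; summing over particles and over $k\neq 1$ gives exactly the compensator, so $\frac{\D}{\D t+}\Em_\xx[f(\mathcal J_{t+s}) - N_{t+s}|\F^0_t]|_{s=0} = 0$. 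This is the same pattern as the proof of Proposition~\ref{prop:generator}; the point is that the drift in the space variables of the particles contributes nothing at order $h^0$ to $\Em[f(\mathcal J_{t+h})|\F^0_t]$ because $a^0_k$ is left-continuous in time (predictable) so its value just before and just at the branching time agree in the limit. Uniqueness of the compensator among continuous finite-variation predictable processes (a continuous martingale with finite variation is constant) then pins down the jump measure as $\eta$, giving~\eqref{eq:jump_measure_tree}.

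\textbf{The truncation formula.} For~\eqref{eq:truncation_jump_tree}, I would argue pathwise. For a fixed $\omega$, list the branching times $t_0 < \sigma_1 < \sigma_2 < \dots < \sigma_N \leq t_1$ occurring in $(t_0, t_1]$, with offspring numbers $k_1,\dots,k_N$ and positions $z_1,\dots,z_N$; between consecutive branching times $\mathcal J$ is constant. A telescoping sum gives $f(\mathcal J_{t\wedge t_1}) - f(\mathcal J_{t\wedge t_0}) = \sum_{j : \sigma_j \leq t\wedge t_1} \{f(\mathcal J_{\sigma_j-} + a^0_{k_j}(\sigma_j, z_j)) - f(\mathcal J_{\sigma_j-})\}$, since $\mathcal J_{\sigma_j} = \mathcal J_{\sigma_j-} + a^0_{k_j}(\sigma_j, z_j)$. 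Multiplying by the $\F^0_{t_0}$-measurable $Z^0_{t_0}$ and recognising the right-hand side as $\sum_{k\neq 1}\mathcal J^k[b^0_k]_t$ with $b^0_k(t) = Z^0_{t_0}\1_{(t_0,t_1]}(t)(f(\mathcal J_{t-} + a^0_k(t)) - f(\mathcal J_{t-}))$ — which is predictable because $\mathcal J_{t-}$ is left-continuous and adapted, and $a^0_k$ is predictable — finishes the identity; one just needs to note that $b^0_k$ is bounded uniformly in $k$ since $f \in C_b(\R)$.

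\textbf{Main obstacle.} The genuinely delicate point is the martingale property~\eqref{eq:martingale_property_def_simple_jump_processes}, specifically making the infinitesimal-generator heuristic rigorous: one must control the error terms uniformly and handle the conditioning carefully (the number of alive particles $M^0_t(\T^d)$ is random and unbounded, so the $o(h)$ has to be dominated), and one must invoke a localisation (e.g.\ stopping when the number of particles exceeds $n$) to reduce to a bounded situation before passing to the limit. Everything else is bookkeeping on the tree structure and pathwise combinatorics. As with the analogous statements for the stochastic integral $\mathcal I$, an alternative — which I would mention as a remark — is to cite the canonical decomposition of semi-martingales from~\cite{jacod2013limit} (see Remark~\ref{rem:correspondence_semi-martingales}) to get the compensator directly, but the hands-on generator computation is more self-contained and in the same spirit as the rest of Section~\ref{sec:presentation_BBM}.
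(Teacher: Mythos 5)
Your treatment of well-definedness, adaptedness, and the truncation formula~\eqref{eq:truncation_jump_tree} matches the paper's: the telescoping sum over jump times is exactly what the paper does. The integrability~\eqref{eq:integrability_jump_measure} is also checked in the same elementary way. The genuine gap is in your plan for the martingale property~\eqref{eq:martingale_property_def_simple_jump_processes}, which you flag as the hard part but then leave as ``do a generator computation and localise carefully.'' The paper resolves this delicacy by a reduction you do not see: it first uses the truncation formula~\eqref{eq:truncation_jump_tree} (already proved at that point) to reduce the required family of martingales, indexed by $f \in C_b(\R)$, to the \emph{single linear} process $\mathcal J_t - \int_0^t \sum_{k\neq 1} q_k \cg a_k^0(s), M^0_{s-}\cd\D s$ for bounded $(a^0_k)_{k\neq 1}$. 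Then it decouples the different $k$ by linearity, invokes \cite[Theorem~II.1.8]{jacod2013limit} to reduce the martingale property to a single scalar equality of expectations, approximates by test fields of the form $\1_A\1_{(t_0,t_1]}\1_U$, and only at this last stage performs the right-derivative computation in the spirit of Proposition~\ref{prop:generator}. After these reductions the computation is on an unconditional expectation of a simple indicator, so there are no issues with unbounded particle counts, modulus of continuity of $f$, or predictability being weaker than left-continuity.

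Your proposed direct generator argument, applied to $\Em_\xx[f(\mathcal J_{t+s}) - N_{t+s}\mid\F^0_t]$ for general $f\in C_b(\R)$ and general predictable $(a^0_k)$, is not obviously salvageable: (i)~predictability does not give pointwise left-continuity of $a^0_k$, which your heuristic about ``the value just before and just at the branching time agree'' relies on; (ii)~the $o(h)$ terms must be dominated uniformly over a random, unbounded number of particles before one can pass from a vanishing right-derivative to a conditional expectation identity; (iii)~even after the right-derivative vanishes one needs a Gronwall-type argument to conclude the martingale property, and nothing in your plan supplies it. In short, the proposal names the correct ingredients but is missing the key structural move---feeding the truncation formula back into the jump-measure verification---that turns the heuristic into a proof. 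Also note that the uniqueness remark at the end of your jump-measure paragraph (``a continuous martingale with finite variation is constant'') establishes at most one compensator exists; it does not by itself help you verify that $\eta$ is it, so it should not be presented as the closing step.
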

	\begin{proof}
		First, $\mathcal J = (\mathcal J_t)_{t \in [0,1]}$ is well defined, almost surely $\cadlag$, piecewise constant, and undergoes a finite number of jumps. Indeed, by definition, for all $t \in [0,1]$,
		\begin{align*}
		\mathcal J_t &= \sum_{k \neq 1} \sum_{n \in \mathcal U} \1_{d_n \leq t} \1_{L_n = k} a_k^0\big(d_n,X_n(d_n)\big) = \sum_{n \in \mathcal U} \1_{d_n \leq t} a_{L_n}^0\big(d_n,X_n(d_n)\big)\\
		&= \sum_{n \in \mathcal U \ : \ d_n \leq 1} \1_{d_n \leq t} a_{L_n}^0\big(d_n,X_n(d_n)\big).
		\end{align*}	
		Hence, the result is a direct consequence of the fact that $\{ n \in \mathcal U \ : \ d_n \leq 1 \}$ is almost surely finite, as a consequence of~\eqref{eq:q_finite_mean}.
		
		Let us show that $\mathcal J$ is adapted w.r.t.\ the large filtration $(\F^0_t)_{t \in [0,1]}$. Let us fix $t \in [0,1]$, and show that $\mathcal J_t$ is $\F^0_t$-measurable. This is a consequence of the formula
		\begin{equation*}
		\mathcal J_t =  \sum_{n \in \mathcal U } \1_{d_n \leq t} a_{L_n}^0\big(d_n,X_n(d_n)\big) = \sum_{n \in \mathcal U } \1_{d_n \leq t} a_{\1_{d_n\leq t} L_n}^0\big(d_n \wedge t, \1_{d_n \leq t} X_n(d_n\wedge t)\big)
		\end{equation*}
		and of the facts that $a^0$ is predictable, that $d_n$ is a $(\F^0_t)_{t \in [0,1]}$-stopping time, and that $\1_{d_n \leq t} L_n$ is $\F^0_t$-measurable.
		
		Before proving~\eqref{eq:jump_measure_tree}, let us prove~\eqref{eq:truncation_jump_tree}. Let us give ourselves $f,t_0,t_1,Z^0_{t_0}$ as in the statement of the theorem, consider $t \in [0,1]$, and define $(b^0_k)_{k \neq 1}$ as above. We have
		\begin{align*}
		\sum_k \mathcal J^k[b^0_k]_t &= \sum_{n \in \mathcal U} \1_{d_n\leq t} Z_{t_0}^0 \1_{d_n \in (t_0,t_1]}\big( f(\mathcal J_{d_n-} + a^0_{L_n}(d_n,X_n(d_n))) - f(\mathcal J_{d_n-}) \big) \\
		&= Z_{t_0}^0 \sum_{n \in \mathcal U} \1_{d_n \in (t_0 \wedge t, t_1 \wedge t]} \big( f(\mathcal J_{d_n}) - f(\mathcal J_{d_n-}) \big).
		\end{align*}
		Since $\mathcal J$ is $\cadlag$, piecewise constant and with a finite number of jumps, and since $\{ d_n \ : \ n\in\mathcal U \}$ are precisely the set of times when it jumps, we are just summing over all the jumps between $t_0 \wedge t$ and $t_1 \wedge t$ the corresponding jump size of $t \mapsto f(\mathcal J_t)$. We conclude that
		\begin{equation*}
		\sum_{k \neq 1} \mathcal J^k[b^0_k]_t = Z_{t_0}^0 \big(f(\mathcal J_{t_1 \wedge t}) - f(\mathcal J_{t_0 \wedge t})\big),
		\end{equation*} 
		as stated above.
		
		We are left with proving that the $(\F^0_t)_{t \in [0,1]}$-predictable measure-valued field defined by formula~\eqref{eq:jump_measure_tree} is the jump measure of $\mathcal J$. To do so, it is enough to prove that whenever the family $(a^0_k)_{k \neq 1}$ is uniformly bounded,
		\begin{equation}
		\label{eq:one_specific_martingale}
		\mathcal J_t - \int_0^t \sum_{k\neq 1} q_k \left\cg a^0_k(s), M^0_{s-}\right\cd \D s, \qquad t \in [0,1]
		\end{equation}
		is a martingale. Indeed, condition~\eqref{eq:integrability_jump_measure} is straightfoward to check using~\eqref{eq:q_finite_mean}, and it suffices to use formula~\eqref{eq:truncation_jump_tree} to treat the general case of all martingales given in the Definition~\ref{def:simple_pure-jump_processes} of simple pure-jump processes only using martingales of the form~\eqref{eq:one_specific_martingale}. Consequently, we can restrict even more the cases that we need to study. Indeed, now everything is linear with respect to $(a_k)_{k \neq 1}$. Therefore, we can decouple the different values of $k$, and only prove that for all $k \neq 1$, which is kept fixed from now on, the process
		\begin{equation*}
		\mathcal J^k[a^0_k]_t - \int_0^t q_k \left\cg a^0_k(s), M^0_{s-}\right\cd \D s, \qquad t \in [0,1]
		\end{equation*}
		is a martingale. With the terminology of~\cite{jacod2013limit} this martingale property exactly means that we need to prove that the predictable random measure $q_k \D s \otimes M_{s-}$ is the compensator of the optional and integrable random measure $\sum_{n \in \mathcal U} \1_{d_n \leq 1} \1_{L_n = k}\delta_{X_n(d_n)}$ (see~\cite[Definition~II.1.6]{jacod2013limit}). Because of~\cite[Theorem~11.1.8]{jacod2013limit}, this is the same as proving that for all predictable bounded scalar field $a^0_k$,
		\begin{equation*}
		\Em_\xx \Big[\mathcal J^k[a_0^k]_1 \Big] = \Em_\xx \left[  \int_0^1 q_k \left\cg a^0_k(t), M^0_{t-}\right\cd \D t \right].
		\end{equation*}
		By approximation, using the dominated convergence theorem and adapting the proof of~\cite[Proposition~5.3]{legall2016brownian}, we can even restrict ouselves to considering $a^0_k$ of the form $(\omega, t, x) \in \XX \mapsto \1_A(\omega) \1_{(t_0, t_1]}(t)\1_{U}(x)$, where $0 \leq t_0 \leq t_1 \leq 1$, $A \in \F^0_{t_0}$ and $U \subset \T^d$ is an open ball. Therefore, we are left with proving that for all such $t_0,t_1,A,U$, 
		\begin{equation*}
		\Em_\xx\left[\1_A \sum_{n \in \mathcal U} \1_{t_0< d_n \leq t_1} \1_{L_n = k} \1_{U}(X_n(d_n))\right] = \Em_\xx\left[ \1_A\int_{t_0}^{t_1}q_k M^0_{s-}(U) \D s \right].
		\end{equation*}
		A way to prove it is to derivate both sides of this formula w.r.t. $t_1$. On the r.h.s., we get
		\begin{equation*}
		\frac{\D}{\D t_1 +} \Em_\xx\left[ \1_A\int_{t_0}^{t_1}q_k M^0_{s-}(U) \D s \right] = \Em_\xx \Big[ \1_A q_k M^0_{t_1-}(U) \Big].
		\end{equation*}
		On the l.h.s., we have for $h>0$ small enough, using the notations of Definition~\ref{def:N_S}, and calling for all $t \in [0,1)$, $\bar n(t) := \argmin \{ d_n\, : \,  n \in \mathcal U \mbox{ and } d_n > t \}$ and $\bar L(t) := L_{\bar n(t)}$ in the last line:
		\begin{align*}
		\Em_\xx\Bigg[\1_A \sum_{n \in \mathcal U} &\1_{t_0< d_n \leq t_1+h} \1_{L_n = k} \1_{U}(X_n(d_n))\Bigg] - \Em_\xx\left[\1_A \sum_{n \in \mathcal U} \1_{t_0< d_n \leq t_1} \1_{L_n = k} \1_{U}(X_n(d_n))\right]  \\
		&= \Em_\xx\left[\1_A \sum_{n \in \mathcal U} \1_{t_1< d_n \leq t_1+h} \1_{L_n = k} \1_{U}(X_n(d_n))\right]\\
		&=\Em_\xx\bigg[ \1_A \1_{S(M^0,t_1) \leq t_1 + h} \1_{\bar L(t_1) = k}\1_U(X_{\bar n(t_1)}(t_1))\bigg]+ \underset{h \to 0}{o}(h).
		\end{align*}
		For the last equality, one needs to observe that the probabilities
		\begin{gather*}
		\Prob^\xx\Big( \mbox{There is more than one branching event in the set of times }(t_1, t_1 + h] \Big),\\
		\Prob^\xx \Big(X_n(t_1) \in U \mbox{ but } \exists s \in (t_1, t_1 + h] \mbox{ s.t.} \ X_n(s) \notin U\Big),\\
		\Prob^\xx \Big(X_n(t_1) \notin U \mbox{ but } \exists s \in (t_1, t_1 + h] \mbox{ s.t.} \ X_n(s) \in U\Big)
		\end{gather*} 
		are of order $o(h)$ when $h$ is small. 
		
		To conclude, we need to observe, letting the details to the reader, that as a consequence of the definition of the BBM and by standard considerations on exponential times, conditionally on $\mathcal F^0_{t_1}$, the random variables $S(M^0,t_1)$, $\bar n(t_1)$ and $\bar L(t_1)$ are independent, of respective laws the exponential law of parameter $\lambda^{\boldsymbol q} M_{t_1}(\T^d)$, the uniform law on $\mathcal N^0(t_1)$ and the law $\boldsymbol p^{\boldsymbol q}$. Hence, coming back to our computation, we find
		\begin{align*}
		\Em_\xx\bigg[ \1_A \1_{S(M^0,t_1) \leq t_1 + h} \1_{\bar L(t_1) = k}\1_U(X_{\bar n(t_1)}(t_1))\bigg] &=\Em_\xx\bigg[ \1_A \Em_\xx\bigg[\1_{S(M^0,t_1) \leq t_1 + h} \1_{\bar L(t_1) = k}\1_U(X_{\bar n(t_1)}(t_1))\bigg| \mathcal F^0_{t_1} \bigg]\bigg] \\
		&= \Em_\xx\left[ \1_A \Big( 1 - \exp(- \lambda^{\boldsymbol q} M^0(t_1) h)\Big) p^{\boldsymbol q}_k \times \frac{M^0_{t_1}(U)}{M^0_{t_1}(\T^d)} \right]\\
		&= \Em_\xx \Big[ \1_A q_k M^0_{t_1}(U) \Big]h + \underset{h \to 0}{o}(h).
		\end{align*}
		where on the second line, the quantity inside the expectation cancels on the event $\{M^0_{t_1}(\T^d) = 0 \}$. The result follows from the fact that for all $t \in [0,1]$, $\Prob^\xx$-\emph{a.s.}, $M^0_{t-} = M^0_t$.
	\end{proof}
	
	\section{Generalized Itô formula at the level of trees}
	In this section, we prove the following extended Itô formula, at the level of the large probability space~$\Omega^\xx$. Recall that $\mu\in \M_\delta(\T^d)$ is the (deterministic) initial condition of the BBM introduced in Subsection~\ref{subsec:def_BBM}.
	\begin{Thm}
		Let $\varphi:[0,1]\times\T^d \to \R$ be a smooth function. We have $\Prob^\xx$-\emph{a.s.}\ for all $t \in [0,1]$:
		\begin{equation}
		\label{eq:branching_Ito_tree}
		\cg \varphi(t), M^0_t \cd = \cg \varphi(0), \mu\cd + \mathcal I[\nabla \varphi]_t + \sum_{k=0}^{+\infty} \mathcal J^k[(k-1)\varphi]_t + \int_0^t \Big\cg \partial_t \varphi(s) + \frac{\nu}{2} \Delta \varphi(s), M^0_s \Big\cd \D s.
		\end{equation}
	\end{Thm}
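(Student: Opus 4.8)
The strategy is to decompose the sum over particles in $\cg \varphi(t), M^0_t\cd = \sum_{n \in \mathcal N^0(t)} \varphi(t, X_n(t))$ and apply the classical Itô formula to each individual branch. First I would recall that, thanks to the bound~\eqref{eq:q_finite_mean}, $\Prob^\xx$-almost surely only a finite number of branching events occur in $[0,1]$, so all the sums below are finite and no convergence issue arises; in particular, on any time interval on which the configuration of alive particles does not change, the process is a finite sum of smooth functions of continuous semi-martingales. The natural object to track is, for each label $n \in \mathcal U$, the process $t \mapsto \1_{b_n < t \leq d_n}\, \varphi(t, X_n(t))$, which vanishes outside the lifespan of particle $n$. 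Using the fact (established in the proof of Proposition~\ref{prop:generalized_stochastic_integral_tree}, see~\eqref{eq:X_to_Y}) that $\1_{b_n < t \leq d_n} X_n(t)$ equals $\1_{b_n < t \leq d_n}\{X_n(b_n) + Y_n(t-b_n)\}$ with $Y_n$ a Brownian motion of diffusivity $\nu$, the standard Itô formula applied between $b_n$ and $d_n \wedge t$ gives, for $t$ in the lifespan of $n$,
\begin{equation*}
\varphi(t, X_n(t)) = \varphi(b_n, X_n(b_n)) + \int_{b_n}^{t} \nabla \varphi(s, X_n(s)) \cdot \D X_n(s) + \int_{b_n}^{t} \Big( \partial_t \varphi(s,X_n(s)) + \frac{\nu}{2} \Delta \varphi(s, X_n(s)) \Big) \D s.
\end{equation*}

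Second, I would sum this identity over all $n \in \mathcal U$, carefully handling the three boundary terms. The stochastic integral terms sum exactly to $\mathcal I[\nabla \varphi]_t$ by the very definition~\eqref{eq:def_stochastic_integral_trees}, and the drift terms sum to $\int_0^t \cg \partial_t \varphi(s) + \frac{\nu}{2}\Delta\varphi(s), M^0_s\cd \D s$ since at each time $s$ the integrand is summed over the particles alive at $s$, i.e.\ over $\mathcal N^0(s)$. The bookkeeping of the ``initial value of each branch'' terms $\varphi(b_n, X_n(b_n))$ is where the jump part appears: when particle $n$ dies at time $d_n$ giving birth to $L_n$ offsprings $n1, \dots, nL_n$, each offspring is born at position $X_{ni}(b_{ni}) = X_n(d_n)$, so the contribution $\sum_{i=1}^{L_n} \varphi(d_n, X_n(d_n)) - \varphi(d_n, X_n(d_n)) = (L_n - 1)\varphi(d_n, X_n(d_n))$ is precisely the jump of $\cg \varphi, M^0\cd$ at time $d_n$, which by Definition~\ref{def:pure_jump_process_tree} is collected into $\sum_{k} \mathcal J^k[(k-1)\varphi]_t$ (recall the term $k=1$ contributes nothing, consistently with $p_1 = 0$). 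Together with the $p$ original particles contributing $\sum_{i=1}^p \varphi(0, x_i) = \cg \varphi(0), \mu\cd$, this telescoping over the tree structure yields~\eqref{eq:branching_Ito_tree}. The cleanest way to organize this is by induction on the (finite) number of branching events before time $t$, or equivalently by induction on the generations $\mathcal T_k$: on each maximal interval between consecutive branching times the identity holds by the per-branch Itô formula, and at a branching time both sides jump by the same amount, so one glues the pieces together.

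The main obstacle, and the point requiring the most care, is the correct handling of the discontinuities: one must verify that at each branching time $d_n$ all the ``continuous'' terms ($\mathcal I[\nabla\varphi]$, the drift integral) are indeed continuous there — which holds because the displacement of the newborn particles is $o(1)$ near $d_n$ and the Brownian contributions have no jumps — so that the entire jump of the left-hand side is absorbed by $\sum_k \mathcal J^k[(k-1)\varphi]$. A secondary subtlety is making sure the conventions of Convention~\ref{conv:trajectory} for $X_n(t)$ outside $[b_n, d_n)$ do not pollute the sums: the indicator $\1_{b_n < t \leq d_n}$ in~\eqref{eq:def_stochastic_integral_trees} and the indicator $\1_{d_n \leq t}$ in~\eqref{eq:first_formula_J_tree} restrict everything to genuine contributions, but one should double-check that the identity $\1_{b_n < t\leq d_n} X_n(t) = \1_{b_n < t \leq d_n}\{X_n(b_n)+Y_n(t-b_n)\}$, already used in Proposition~\ref{prop:generalized_stochastic_integral_tree}, together with the strong Markov / independence structure of the $(e_n, L_n, Y_n)$, legitimizes applying the classical Itô formula branch by branch. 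Everything else is routine telescoping, so I expect the write-up to be short once the induction on branching events is set up.
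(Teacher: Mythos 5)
Your proposal is correct and follows essentially the same strategy as the paper: apply the classical Itô formula branch by branch on intervals where a given particle is alive, sum over particles to recover $\mathcal I[\nabla\varphi]$ and the drift integral, and account for the discontinuities at branching events via the $(L_n-1)\varphi(d_n,X_n(d_n))$ bookkeeping, which matches $\sum_k\mathcal J^k[(k-1)\varphi]$. The only cosmetic difference is the organization of the telescoping: the paper slices time by the ordered branching times $T_i$ (applying Itô on $[T_i\wedge t,T_{i+1}\wedge t]$ and adding the jump at $T_{i+1}$), whereas you telescope over full particle lifespans $[b_n,d_n\wedge t]$ and match the birth positions of offsprings against the death position of the parent; both are equivalent and the paper's time-sliced version simply makes the indicators constant on each interval, which marginally streamlines the summation over $n$.
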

	\begin{proof}
		Let us introduce the sequence $(T_i)_{i \in \N}$ of $(\F_t^0)_{t \in [0,1]}$ stopping times defined by
		\begin{equation*}
		T_0 := 0, \qquad T_{i + 1} :=S(M^0,T_i), \quad i \in \N.
		\end{equation*}
		They correspond to times of branching events, ordered chronologically.
		
		We start by applying the classical Itô formula~\cite[Theorem~5.10]{legall2016brownian} to the continuous semi-martingale $(\varphi(t,X_n(t)))_{t \in [0,1]}$ for some given smooth function $\varphi$ of $t$ and $x$, some $n \in \mathcal U$, and between the two stopping times $T_{i+1} \wedge t$ and $T_i \wedge t$, $i \in \N$ and $t \in [0,1]$. We get thanks to Theorem~\ref{prop:generalized_stochastic_integral_tree}
		\begin{align*}
		\varphi\big(T_{i+1} &\wedge t,X_n(T_{i+1} \wedge t)\big) - \varphi\big(T_i \wedge t,X_n(T_i \wedge t)\big)\\
		&=  \int_{T_i \wedge t}^{T_{i+1} \wedge t} \hspace{-5pt} \nabla \varphi(s, X_n(s))\D X_n(s) +  \int_{T_i \wedge t}^{T_{i+1} \wedge t} \hspace{-5pt}  \partial_t \varphi(s,X_n(s)) \D s + \frac{1}{2}  \int_{T_i \wedge t}^{T_{i+1} \wedge t} \hspace{-5pt} \mathrm{D}^2 \varphi(s, X_n(s)) \cdot \mathrm{d} [X_n]_s\\
		&= \int_{T_i \wedge t}^{T_{i+1} \wedge t} \hspace{-5pt} \nabla \varphi(s, X_n(s))\D X_n(s) + \int_{T_i \wedge t}^{T_{i+1} \wedge t} \hspace{-5pt}\Big\{ \partial_t \varphi(s,X_n(s)) + \frac{\nu}{2} \sum_{n' \leq n}\1_{ b_{n'} < s \leq d_{n'} } \Delta \varphi(s, X_n(s)) \Big\} \D s.
		\end{align*}
		
		By definition, there is no branching event between $T_i \wedge t$ and $T_{i+1} \wedge t$. It means that for all $s \in [T_i \wedge t, T_{i+1}\wedge t)$, we have $\1_{b_n < s \leq d_n} = \1_{b_n < T_i \wedge t \leq d_n}$. Therefore, multiplying the previous formula by this indicator function and summing it over $n \in \mathcal U$, we get
		\begin{align}
		\notag\cg \varphi(T_{i+1} &\wedge t), M^0_{T_{i+1} \wedge t -} \cd - \cg \varphi(T_i \wedge t) , M^0_{T_i \wedge t} \cd \\
		\notag&= \sum_{n \in \mathcal U} \int_{T_i \wedge t}^{T_{i+1} \wedge t} \1_{b_n < s \leq d_n} \nabla \varphi(s, X_n(s))\D X_n(s) + \int_{T_i \wedge t}^{T_{i+1} \wedge t}\left\cg \partial_t \varphi(s) + \frac{\nu}{2} \Delta \varphi(s) , M^0_s \right\cd \D s\\
		\label{eq:Ito_continuous}&= \mathcal I[\nabla \varphi]_{T_{i+1} \wedge t} - \mathcal I[\nabla \varphi]_{T_{i} \wedge t} + \int_{T_i \wedge t}^{T_{i+1} \wedge t} \hspace{-5pt}\Big\cg \partial_t \varphi(s) + \frac{\nu}{2} \Delta \varphi(s), M_s^0\Big\cd \D s.
		\end{align}
		
		Next, remark that for all $i \in \N$, the set $\{ n \in \mathcal U \ : \ d_n = T_{i+1}\}$ is $\Prob^\xx$-\emph{a.s.}\ a singleton. We call $\bar n_i$ its only element. By definition of the BBM, we have $\Prob^\xx$-\emph{a.s.}\ for all $i \in \N$, 
		\begin{equation*}
		M^0_{T_{i+1} \wedge t} - M^0_{T_{i+1} \wedge t -} = \1_{T_{i+1} \leq t} (L_{\bar n_i} - 1) \delta_{X_{\bar n_i}(T_{i+1})},
		\end{equation*} 
		so that the following formula holds $\Prob^\xx$-\emph{a.s.}\ for all $t \in [0,1]$:
		\begin{equation}
		\label{eq:Ito_jump}
		\cg \varphi(T_{i+1}\wedge t), M^0_{T_{i+1} \wedge t} \cd - \cg \varphi(T_{i+1} \wedge t) , M^0_{T_{i+1} \wedge t-} \cd = \1_{T_{i+1} \leq t} (L_{\bar n_i}-1)\varphi(T_{i+1}, X_{\bar n_i}(T_{i+1})).
		\end{equation}
		
		If we sum the formulas~\eqref{eq:Ito_continuous} and~\eqref{eq:Ito_jump} over $i \in \N$, as we know that $T_i \wedge t = t$ for $i$ random but sufficiently large, and that $\Prob^\xx$-\emph{a.s.}, $M^0_0 = \mu$, we get
		\begin{align*}
		\cg \varphi( t), M^0_{t} \cd - \cg \varphi(0) , \mu \cd &= \mathcal I[\nabla \varphi]_t + \int_{0}^{t} \hspace{-5pt}\Big\cg \partial_t \varphi(s) + \frac{\nu}{2} \Delta \varphi(s), M_s^0\Big\cd \D s + \sum_{i \in \N}  \1_{T_{i+1} \leq t}  (L_{\bar n_i}-1)\varphi(T_{i+1}, X_{\bar n_i}(T_{i+1}))\\
		&= \mathcal I[\nabla \varphi]_t + \int_{0}^{t} \hspace{-5pt}\Big\cg \partial_t \varphi(s) + \frac{\nu}{2} \Delta \varphi(s), M_s^0\Big\cd \D s +  \sum_{n\in \mathcal U} \1_{d_n \leq t}(L_n-1)\varphi(d_n, X_n(d_n)),
		\end{align*}
		where the second line follows from $\{ (n,d_n) \ : \ n \in \mathcal U \mbox{ and } d_n < + \infty \} = \{ (\bar n_i, T_{i+1}) \ : \ i \in \N \}$. But the last term is nothing but $\sum_{k \neq 1} \mathcal J^k[(k-1) \varphi]_t$, and the result follows.
	\end{proof}
	
	\section{Projection on the canonical space}
	
	This subsection is rather technical. The goal is to prove that provided $v^0$ and $a^0$ appearing in the definitions of $\mathcal I$ and $\mathcal J_k$, $k \neq 1$ are predictable w.r.t.\ the restricted filtration $(\bar \F_t)_{t \in [0,1]}$ defined in~\eqref{eq:def_restricted_filtration}, then these processes are adapted with respect to the same restricted filtration $(\bar \F_t)_{t \in [0,1]}$.
	
	Heuristically, it means that these processes do not depend on a specific labeling of the particles, but only on the curve of empirical measures. This result includes some subtle symmetry properties of the BBM: for instance, the value of the generalized stochastic integral of an $(\bar \F_t)_{t \in [0,1]}$ predictablely measurable random vector field is not affected if particles are relabeled when they cross. This kind of behaviors is not easy to handle, as for example, in dimension one or two, whenever two particles cross, they cross an infinite number of times. Our proof uses some approximated processes that are obviously adapted w.r.t.\ $(\bar \F_t)_{t \in [0,1]}$.

	We start with the hardest result, that is, the case of the generalized stochastic integral.
	\begin{Prop}
		\label{prop:I_adapted_restricted}
		Let $v^0$ be a predictable vector field w.r.t.\ the restricted filtration $(\bar \F_t)_{t \in [0,1]}$ defined in~\eqref{eq:def_restricted_filtration}, that satisfies~\eqref{eq:integrability_v0}. Let $(\mathcal{I}[v^0]_t)_{t \in [0,1]}$ be the process defined by formula~\eqref{eq:def_stochastic_integral_trees}. This process is adapted with respect to the filtration $(\bar \F_t)_{t \in [0,1]}$.
	\end{Prop}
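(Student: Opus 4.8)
The plan is to pass from the generalized stochastic integral, which is manifestly adapted to the large filtration $(\F^0_t)_{t\in[0,1]}$, to a form which is visibly adapted to the restricted filtration $(\bar\F_t)_{t\in[0,1]}$, by approximating the Itô integrals along branches by Riemann-type sums. The key point is that a Riemann sum only evaluates $v^0$ and the trajectories $X_n$ at a discrete set of times, and at those times everything can be rewritten in terms of the empirical measure $M^0$ alone, because $v^0$ is $(\bar\F_t)$-predictable and because summing $v^0(t_j,x)$ against the particles present at time $t_j$ only sees $M^0_{t_j}$.

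First I would reduce to the case where $v^0$ is bounded and has a simple predictable form: by a density argument (dominated convergence together with the isometry formula \eqref{eq:quadratic_variation_I_tree}) it suffices to treat $v^0$ of the type $(\omega,t,x)\mapsto Z^0_{t_0}(\omega)\,\1_{(t_0,t_1]}(t)\,g(x)$ with $g\in C(\T^d;\R^d)$ and $Z^0_{t_0}$ a bounded $\bar\F_{t_0}$-measurable random variable — indeed such products generate $\tilde{\bar\G}$ in the sense of Proposition~\ref{prop:predictable_sigma_field}, and by the truncation formula \eqref{eq:truncation_stochastic_integral_tree} together with Remark~\ref{rem:large_to_restricted} one may even assume $Z^0_{t_0}=1$ and $[t_0,t_1]=[0,1]$. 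For such a $v^0$ one has $\mathcal I[v^0]_t=\sum_{n\in\mathcal U}\int_0^t \1_{b_n<s\le d_n}\, g(X_n(s))\cdot\D X_n(s)$. Now for a dyadic subdivision $0=s_0<s_1<\dots<s_N=1$ consider the sum
\begin{equation*}
S^N_t := \sum_{j\,:\,s_{j+1}\le t} \Big\langle g,\, M^0_{s_{j+1}\wedge t} - M^0_{s_j\wedge t}\Big\rangle^{\!g\text{-corrected}},
\end{equation*}
more precisely $S^N_t := \sum_{j} \sum_{n\in\mathcal U} \1_{b_n<s_j\le d_n}\, g(X_n(s_j))\cdot\big(X_n(s_{j+1}\wedge t)-X_n(s_j\wedge t)\big)$. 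The crucial observation is that each increment $X_n(s_{j+1}\wedge t)-X_n(s_j\wedge t)$ over a branching-free subinterval, weighted and summed over $n$, is $\bar\F_{s_{j+1}\wedge t}$-measurable: one can recover $\sum_n \1_{b_n<s_j\le d_n} g(X_n(s_j))\cdot(X_n(s_{j+1})-X_n(s_j))$ from $M^0$ observed on $[s_j,s_{j+1}]$ by decomposing $[s_j,s_{j+1}]$ further into subintervals on which no particle is created or destroyed (there are a.s.\ finitely many branching events, so this is possible after refining the grid), and on such a subinterval the multiset of particle positions evolves continuously, so the pairing against $g$ of the net displacement is a measurable functional of $M^0$. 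Hence $S^N_t$ is $\bar\F_t$-measurable for every $N,t$. By the standard Riemann-sum approximation of stochastic integrals (justified using that $g$ is continuous and bounded and that $X_n$ has the bracket \eqref{eq:quadratic_variation_Xn}), $S^N_t\to\mathcal I[v^0]_t$ in $L^2(\Prob^\xx)$ as $N\to\infty$ along dyadic refinements, and an $L^2$ limit of $\bar\F_t$-measurable random variables is $\bar\F_t$-measurable. This gives the claim for simple $v^0$, and the density argument extends it to all $v^0\in L^2(\XX,\tilde{\bar\G},\Pi_{R^\xx})$ satisfying \eqref{eq:integrability_v0}.

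The analogous statement for the jump processes $\mathcal J^k[a^0]$ is easier and I would treat it along the same lines: starting from \eqref{eq:first_formula_J_tree}, one groups the terms by branching time $T_{i+1}$ and notes that, when a branching event with $k$ offspring occurs at time $s$ and position $x$, one has $M^0_s-M^0_{s-}=(k-1)\delta_x$, so both the indicator $\1_{L_n=k}$ (equivalently $\1_{\Delta M^0_s(\T^d)=k-1}$) and the value $a^0(s,x)$ are recoverable from $M^0$ observed just after $s$, using that $a^0$ is $(\bar\F_t)$-predictable; this recovers exactly the formula from Definition~\ref{def:jump_processes} and shows $(\mathcal J^k[a^0]_t)_{t\in[0,1]}$ is $\bar\F_t$-measurable. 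The main obstacle is the stochastic-integral case above, and within it the delicate point is the measurability of the branching-free net displacements against $g$: one must be careful that although individual particle labels are invisible to $M^0$, the \emph{pairing} $\langle g, \text{displacement}\rangle$ summed over all present particles is not — two particles crossing can always be relabeled at the crossing time without changing $M^0$, and one has to verify that the Riemann-sum functional is insensitive to this ambiguity, which is why the argument is phrased at the level of the multiset-valued continuous evolution on branching-free intervals rather than at the level of individual trajectories. Once Proposition~\ref{prop:I_adapted_restricted} and its jump-process counterpart are established, Remark~\ref{rem:large_to_restricted} transfers the processes to the canonical space $\Omega$, which is what Theorems~\ref{thm:definition_stochastic_integral}, \ref{thm:properties_jump_processes} and \ref{thm:branching_Ito} assert there.
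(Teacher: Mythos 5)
The overall strategy — reduce to simple predictable integrands, discretize in time, and exhibit a $\bar\F_t$-measurable approximant that converges in $L^2$ — is indeed the paper's strategy, and your reduction and convergence steps are fine. However, there is a genuine gap at the central claim, namely that the Riemann sum $S^N_t := \sum_j \sum_n \1_{b_n<s_j\le d_n}\, g(X_n(s_j))\cdot\big(X_n(s_{j+1}\wedge t)-X_n(s_j\wedge t)\big)$ is $\bar\F_t$-measurable. You assert that on a branching-free subinterval the pairing of $g$ against the displacement is ``a measurable functional of $M^0$.'' It is not. Suppose two particles located at $a_j$ and $b_j$ at time $s_j$ move to positions $\{a_{j+1}, b_{j+1}\}$ at time $s_{j+1}$, crossing once inside $(s_j,s_{j+1})$. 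The empirical-measure path $(M^0_s)_{s\in[s_j,s_{j+1}]}$ cannot distinguish the true transport $a_j\to a_{j+1}$, $b_j\to b_{j+1}$ from the relabeled one $a_j\to b_{j+1}$, $b_j\to a_{j+1}$; both yield the same multiset curve. But the two choices give Riemann increments differing by $(g(a_j)-g(b_j))\cdot(a_{j+1}-b_{j+1})$, which is nonzero in general. So the sum is genuinely sensitive to the labeling, and the ambiguity cannot be removed by further refining the grid: once two Brownian particles on $\T^d$, $d\in\{1,2\}$, touch, they touch on a dense set of times, so every subinterval thereafter contains crossings. Decomposing to avoid branching events is possible (finitely many jumps), but decomposing to avoid crossings is not. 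Your closing remark correctly flags this obstacle, but ``phrasing the argument at the level of the multiset-valued continuous evolution'' does not dissolve it — the multiset path simply does not carry the information your Riemann sum requires.

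What the paper does differently, and what is essential, is that it never tries to write the true Riemann increment as a functional of $M^0$. Instead it \emph{defines} an approximate increment $\Delta I^{N,P}_i$ that is manifestly $\bar\F_t$-measurable by construction: each particle at time $t_i^N$ is matched to the nearest particle at time $t_{i+1}^N$ (this is the random field $\xi^N_i$, a function of $M^0_{t_{i+1}^N}$ alone), and the increment is declared to be zero whenever the matching might be wrong, i.e.\ whenever some pair of particles at time $t_i^N$ is within distance $1/P$, a particle lies near $\dr U$, or a branching event occurs in the subinterval (all of these events are $\bar\F_{t_{i}^N}$- or $\bar\F_{t}$-measurable). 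This approximant agrees with the true increment $\Delta\mathcal I^N_i$ on the high-probability event $A^{N,P}$ where the modulus of continuity of the trajectories is below $1/(2P)$ and the index is not exceptional. The burden of proof then shifts to estimating the contributions of the exceptional sets $S_1^N$ and $S_2^{N,P}$, which the paper does via the isometry~\eqref{eq:quadratic_variation_I_tree} and the observation that the time-averaged probabilities $\Prob^\xx(D(\tau)\le 1/P)$ and $\Prob^\xx(M^0_\tau(\overline U_P\backslash\underline U_P)>0)$ tend to $0$ as $P\to\infty$. In short: your proof would work if individual trajectories were $\bar\F$-adapted, but they are not; the paper's device is to replace the true increments by $\bar\F$-measurable surrogates that coincide with them only probabilistically, and to control the discrepancy.
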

	\begin{proof}
		We need to check that for a given $v^0$ as in the statement of the proposition, and a given $t \in [0,1]$, $\mathcal{I}[v^0]_t$ is $\bar \F_t$-measurable. By approximation (adapt once again the proof of~\cite[Proposition~5.3]{legall2016brownian}), we can restrict ourselves to predictable vector fields $v^0$ of the form
		\begin{equation}
		\label{eq:def_v_proof_I_measurable}
		v^0(t,x) = \1_{A} \1_{s < t}\1_{x \in U} w,
		\end{equation}
		where $A \in \bar \F_s$, $s \in [0,1)$ $w \in \R^d$ and $U \subset \T^d$ is an open ball. So from now on, we fix $A$, $s$, $w$ and $U$ as such, and we define $v^0$ through~\eqref{eq:def_v_proof_I_measurable}.
		
		If $t \leq s$, $\I[v^0]_t = 0$ everywhere, so that $\sigma(\I[v^0]_t) = \{ \emptyset, \Omega^0 \} \subset \bar \F_t$.

		If $t>s$, we will approximate $\I[v^0]_t$ by a sequence of $\bar \F_t$-measurable random variables. These random variables will have two parameters: $N \in \N^*$ that will parameterize a discretization in time, and $P \in \N^*$ that will have to do with the modulus of continuity of the trajectories of the particles, up to time $t$. So we give ourselves $N,P \in \N^*$.

		For $i = 0, \dots, 2^N$, we call $t_i^N := s + (t-s) i / 2^N$. For $i= 0, \dots, 2^N-1$, we also call $\xi_i^N(x)$, $x \in \T^d$ the random vector field:
		\begin{equation*}
		\xi_i^N(x) := \left\{ 
		\begin{aligned}
		&\underset{\big\{z \in \R^d \ : \ x + z \in \{ X_n(t_{i+1}^N), \ n \in \mathcal{N}(t_{i+1}^N)\}\big\}}{\argmin}  |z|,  && \mbox{if }\mathcal{N}(t_{i+1}^N) \neq \emptyset \mbox{ and the minimizer is unique}, \\
		&0, && \mbox{else},
		\end{aligned}
		\right.
		\end{equation*}
		where whenever $x \in \T^d$ and $z \in \R^d$, we see $x + z$ as an element of $\T^d$. This vector field is chosen in such a way that with high probability, at least when $N$ is large, for all $n \in \mathcal N(t_{i}^N)$, $X_n(t_{i}^N) + \xi_i^N = X_n(t_{i+1}^N)$. Seen as a function on $\Omega^0 \times \T^d$, we can check that it is $\bar \F_{t_{i+1}^N} \otimes \mathcal{B}(\T^d)\subset \bar \F_t\otimes \mathcal{B}(\T^d)$-measurable. 
		
		We also consider the following random variable, indexed by $\tau \in [s,t]$:
		\begin{equation*}
		D(\tau) := \inf \Big\{ \mathrm{dist}\Big(X_n(\tau), X_{n'}(\tau)\Big) \ : \ n,n' \in \mathcal N(\tau)\Big\},
		\end{equation*}
		where $\mathrm{dist}$ is the geodesic distance on the torus. We can also check that $D(t)$ is $\bar \F_\tau \subset \bar \F_t$-measurable  (it is clearly a measurable function of $M^0_{t})$.
		
		Now, we define the approximated increments for $i = 0, \dots 2^N - 1$ as:
		\begin{equation*}
		\Delta I^{N,P}_i := \left\{  
		\begin{aligned}
		&\1_A w \cdot \cg \1_{\underline U{}_P} \xi_i^N  , M^0_{t_{i}^N} \cd, && \mbox{if }M^0_{\tau}(\T^d) = M^0_{t_i^N}(\T^d), \, \forall \tau \in [t_i^N, t_{i+1}^N] \mbox{ and } D(t_i^N) > \frac{1}{P}, \\
		&0, &&\mbox{else},
		\end{aligned}
		\right.
		\end{equation*}
		where $\underline U{}_P$ is the smaller open ball $\{ x \in \T^d \ : \ \mathrm{dist}(x, \T^d \backslash U) > 1/P \}$. We also define the bigger closed ball $\overline U{}_P := \{ x \in \T^d \ : \ \mathrm{dist}(x, U) \leq 1/P \}$. Finally, we define the approximated generalized stochastic integral as
		\begin{equation*}
		I^{N,P} := \sum_{i=0}^{2^N - 1} \Delta I^{N,P}_i.
		\end{equation*}
		From what we said up to now, $I^{N,P}$ is $\bar \F_t$-measurable. We will show that for all $\eps>0$,
		\begin{equation*}
		\limsup_{P \to + \infty} \limsup_{N \to + \infty}\Prob^\xx\Big( \left|\mathcal I[v^0]_t - I^{N,P}\right| \geq \eps \Big) = 0.
		\end{equation*}
		By a diagonal argument and an extraction, it is sufficient to conclude.
		
		We defined $I^{N,P}$ in such a way that when $N$ and $P$ are large, with large probability, $\Delta \mathcal I_i^{N} = \Delta I_i^{N,P}$ for most of the indices $i$, where $\Delta \mathcal I_i^{N}$ stands for the increment of the generalized stochastic integral, that is, $\Delta \mathcal I_i^{N} := \I[v^0]_{t_{i+1}^N} - \I[v^0]_{t_{i}^N}$. Let us formalize this idea. Let $N,P \in \N^*$. First, we define two (random) sets of exceptional indices:
		\begin{gather*}
		S_1^N := \Big\{ i \in \{ 0, \dots, 2^N-1\} \ : \ \exists \tau \in [t_i^N, t_{i+1}^N] \mbox{ s.t.\ }M_\tau^0(\T^d) \neq M_{t_i^N}^0(\T^d) \Big\},\\
		S_2^{N,P} := \Big\{ i \in \{ 0,\dots,  2^N-1 \}\ : \ D(t_i^N) \leq 1/P \mbox{ or } M^0_{t_i^N} (\overline U{}_P \backslash \underline U{}_P)>0 \Big\}.
		\end{gather*}
		
		The first one is the set of indices $i$ for which there is a branching event in the time interval $(t_i^N, t_{i+1}^N]$, and the second one is the set of indices $i$ for which at time $t_i^N$, at least two particles are closer than $1/P$ or at least one particle is in the small set $\overline U{}_P \backslash \underline U{}_P$.
		
		Then, we define an event of large probability such that on this event, $\Delta \mathcal I_i^{N} = \Delta I_i^{N,P}$, unless $i$ is inside one of the exceptional sets $S_1^N$ or $S^{N,P}_2$. To do this, we define $g: \R_+ \to \R_+$ a (random) uniform modulus of continuity of all the curves of particles alive before time $t$:
		\begin{equation*}
		g(a) = \sup \Big\{ \mathrm{dist}(X_n(t_2) , X_n(t_1)) \ : \ t_1,t_2 \in [0,1],\ |t_2 - t_1| \leq a, \ n \in \mathcal T \mbox{ s.t. } b_n \leq 1. \Big\}, \quad a \in (0,1].
		\end{equation*}
		Almost surely, $g(a)$ tends to zero as $a \to 0$. We call 
		\begin{equation*}
		A^{N,P} := \left\{ g\Big((2^{-N}(t-s)\Big) \leq \frac{1}{2P} \right\}.
		\end{equation*} 
		Observe that for all $P$, $\Prob^\xx (A^{N,P}) \to 1$ as $N \to + \infty$.
		
		Let us show that on $ A^{N,P}$, if $i \notin S_1^N$ and $i\notin S_2^{N,P}$, then $\Delta \mathcal I_i^{N} = \Delta I_i^{N,P}$. Define \begin{equation*}
		Z_{n,i}^N:= \int_{t_{i}^N}^{t_{i+1}^N}\D X_n(\tau),
		\end{equation*}
		which is seen as an element of $\R^d$, as opposed to $\T^d$. Notice that $X_n(t_{i+1}^N) = X_n(t_i^N) + Z_{n,i}^N$. Because we are on $A^{N,P}$, we have for all $n \in \mathcal N(t_i^N)$,
		\begin{equation*}
		|Z_{n,i}^N| \leq g\Big(2^{-N}(t-s)\Big) \leq \frac{1}{2P}.
		\end{equation*}
		On the other hand, whenever $i \notin S_2^{N,P}$, for all $n' \in \mathcal N(t_i^N)$ with $n' \neq n$,
		\begin{equation*}
		\mathrm{dist}(X_n(t_i^N) , X_{n'}(t_{i+1}^N)) \geq \mathrm{dist}(X_n(t_i^N) , X_{n'}(t_{i}^N)) - \mathrm{dist}(X_{n'}(t_i^N) , X_{n'}(t_{i+1}^N)) > \frac{1}{P} - \frac{1}{2P} = \frac{1}{2P}.
		\end{equation*}
		Hence, $\xi_i^N(X_n(t_i^N)) = Z_{n,i}^N$.
		
		Furthermore, whenever $i \notin S_1^N$, no particle dies in the interval of time $(t_i^N, t_{i+1}^N]$, so that
		\begin{equation*}
		\Delta \mathcal I_i^N = \mathcal{I}[v^0]_{t_{i+1}^N} - \mathcal{I}[v^0]_{t_{i}^N} = \1_A w \cdot \sum_{n \in \mathcal{N}(t_i^N)} \int_{t_i^N}^{t_{i+1}^N}\1_U(X_n(t)) \D X_n(t).
		\end{equation*}
		But as we are working on $A^{N,P}$ and with $i \in S_2^{N,P}$, for all $n \in \mathcal N(t_i^N)$, for all $t \in [t_i^N, t_{i+1}^N]$, we clearly have $X_n(t) \in U$ if and only if $X_n(t_i^N)\in \underline U{}_P$, that is, $\1_U(X_n(t)) = \1_{\underline U{}_P}(X_n(t_i^n))$. As a consequence,
		\begin{equation*}
		\Delta \mathcal I_i^N = \1_A w \cdot \sum_{n \in \mathcal{N}(t_i^N)} \1_{\underline U{}_P}(X_n(t_i^n)) Z_{n,i}^N = \1_A w \cdot \sum_{n \in \mathcal{N}(t_i^N)} \1_{\underline U{}_P}(X_n(t_i^n))\xi_i^N(X_n(t_i^N)) = \Delta I^{N,P}_i.
		\end{equation*}
		From these identities, we deduce that on $A^{N,P}$
		\begin{equation*}
		\mathcal I[v^0]_t - I^{N,P} = \sum_{i=0}^{2^N - 1} \Delta \mathcal I^N_i - \Delta I^{N,P}_i = \sum_{i \in S_1^N \cup S_2^{N,P}} \Delta \mathcal I^N_i - \Delta I^{N,P}_i.
		\end{equation*}
		Moreover, $\Delta I^{N,P}_i$ cancels for $i \in S_1^N \cup S^{N,P}_2$ (we defined it as such). So we end up with
		\begin{equation*}
		\mathcal I[v^0]_t - I^{N,P} = \sum_{i \in S_2^{N,P}} \Delta \mathcal I^N_i + \sum_{i \in S_1^N \backslash S_2^{N,P}} \Delta \mathcal I^N_i, \qquad \Prob^\xx\mbox{-\emph{a.s.}\ on }A^{N,P}.
		\end{equation*}
		By a standard trick, we bound our target probability in the following way:
		\begin{equation*}
		\Prob^\xx\Big( \left|\mathcal I[v^0]_t - I^{N,P}\right| \geq \eps \Big) \leq \Prob^\xx\Big(\Omega^0 \backslash A^{N,P}\Big) +  \Prob^\xx\bigg( \bigg|\sum_{i \in S_2^{N,P}} \Delta \mathcal I_i^{N} \bigg| \geq \frac{\eps}{2} \bigg) + \Prob^\xx\bigg( \bigg|\sum_{i \in S_1^N \backslash S_2^{N,P}} \Delta \mathcal I^N_i \bigg| \geq \frac{\eps}{2} \bigg).
		\end{equation*}
		For all $P \in \N^*$, the first term tends to $0$ as $N \to + \infty$. Let us show that the two other terms are small when $N \to + \infty$ and then $P \to + \infty$.
		
		The easy one is the second one, because $P$ plays no role. Indeed, we have
		\begin{equation}
		\label{eq:bound_jump_instants}
		\bigg|\sum_{i \in S_1^N \backslash S_2^{N,P}} \Delta \mathcal I^N_i \bigg| \leq \sum_{i \in S_1^N \backslash S_2^{N,P}} \left|\Delta \mathcal I^N_i\right| \leq \# S_1^N \times \max_{i \in S_1^N}\left|\Delta \mathcal I^N_i\right| .
		\end{equation} 
		But the cardinal of $S_1^N$ is smaller than the total number of jumps,  and hence is $\Prob^\xx$-\emph{a.s.}\ bounded uniformly in $N$, as a consequence of~\eqref{eq:q_finite_mean}. Furthermore, almost surely, $\max_{i \in S_1^N} |\Delta \mathcal I_i^{N}| \leq \max_{i = 0, \dots, 2^N-1} |\Delta \mathcal I_i^{N}|$ tends to $0$ as $N \to + \infty$, by almost sure continuity of $\mathcal{I}[v^0]$. We conclude that for all $P$, the quantity in the l.h.s.\ of~\eqref{eq:bound_jump_instants} converges \emph{a.s.}\ towards $0$ as $N \to + \infty$ so that the probability that it is bigger than $\eps/2$ tends to zero as well.
		
		We are left with proving
		\begin{equation*}
		\limsup_{P \to + \infty} \limsup_{N \to +\infty}\Prob^\xx\bigg( \bigg|\sum_{i \in S_2^{N,P}} \Delta \mathcal I_i^{N} \bigg| \geq \frac{\eps}{2} \bigg) = 0.
		\end{equation*}
		To do this, with start with Markov's inequality:
		\begin{equation*}
		\Prob^\xx\bigg( \bigg|\sum_{i \in S_2^{N,P}} \Delta \mathcal I_i^{N} \bigg| \geq \frac{\eps}{2} \bigg) \leq \frac{4}{\eps^2}\Em_\xx\left[ \left| \sum_{i\in S_2^{N,P}} \Delta \mathcal I_i^{N} \right|^2 \right], 
		\end{equation*}
		so that we want to prove
		\begin{equation*}
		\limsup_{P \to + \infty} \limsup_{N \to + \infty} \Em_\xx\left[ \left| \sum_{i\in S_2^{N,P}} \Delta \mathcal I_i^{N} \right|^2 \right] =0.
		\end{equation*}
		But this quantity can be computed using the properties of the generalized stochastic integral. Indeed, it is easy to check that we have exactly
		\begin{equation*}
		\sum_{i\in S_2^{N,P}} \Delta \mathcal I_i^{N} = \mathcal I [v^{N,P}]_t,
		\end{equation*}
		where $v^{N,P}$ cancels outside of $(s,t]$, and where for all $\tau \in (t_i^N, t_{i+1}^N]$,
		\begin{equation*}
		v^{N,P}(\tau,x) := \1_A \1_{x \in U} \1_{i \in S_2^{N,P}} w.
		\end{equation*}
		(It is easy to check that $\1_{i \in S_2^{N,P}}$ is $\bar \F_{t_i^N}$-measurable, so that $v^{N,P}$ is a predictable bounded vector field, and hence $\mathcal{I}[v^{N,P}]_t$ is well defined.) As a consequence,
		\begin{align*}
		\Em_\xx\left[ \left| \sum_{i\in S_2^{N,P}} \Delta \mathcal I_i^{N} \right|^2 \right] &= \Em_\xx\left[ \left| \mathcal{I}[v^{N,P}]_t \right|^2 \right]=\Em_\xx\left[ \int_s^t \left\cg\left| v^{N,P}(\tau) \right|^2, M^0_\tau \right\cd \D \tau \right]\\ &= |w|^2 \Em_\xx\left[\1_A\sum_{i=0}^{2^N - 1} \int_{t_i^N}^{t_{i+1}^N} \1_{i \in S_2^{N,P}}   M^0_\tau(U)  \D \tau \right] \\
		&\leq |w|^2 \Em_\xx\left[\sup_{\tau \in [0,1]} M^0_\tau(\T^d) \times \frac{t-s}{2^N} \sum_{i=0}^{2^N - 1}  \1_{i \in S_2^{N,P}}   \right]\\
		&\leq |w|^2\times (t-s) \times \sqrt{\Em_\xx \left[ \sup_{\tau \in [0,1]} M^0_\tau(\T^d)^2 \right]}\sqrt{\frac{1}{2^N}\sum_{i=0}^{2^N-1}\Em_\xx\left[  \1_{i \in S^{N,P}_2} \right]}  .
		\end{align*}
		The first square root is easily seen to be finite as a consequence of~\eqref{eq:q_finite_mean}.
		
		Therefore, we need to prove
		\begin{equation*}
		\limsup_{P \to + \infty}\limsup_{N \to + \infty} \frac{1}{2^N}\sum_{i=0}^{2^N-1}\Em_\xx\left[  \1_{i \in S^{N,P}_2} \right] = 0.
		\end{equation*}
		
		Observe the following manipulations
		\begin{align*}
		\frac{1}{2^N}\sum_{i=0}^{2^N-1}\Em_\xx\left[  \1_{i \in S^{N,P}_2} \right] &= \frac{1}{2^N}\sum_{i=0}^{2^N-1}\Prob^\xx\Big( D(t_i^N) \leq 1/P \mbox{ and } M_{t_i^N}^0(\overline U{}_P \backslash \underline U{}_P)>0 \Big)\\
		&\leq \frac{1}{2^N}\sum_{i=0}^{2^N-1}\Prob^\xx\Big( D(t_i^N) \leq 1/P \Big) + \frac{1}{2^N}\sum_{i=0}^{2^N-1}\Prob^\xx\Big(M_{t_i^N}^0(\overline U{}_P \backslash \underline U{}_P)>0  \Big).
		\end{align*}
		It is possible to check that both $\tau \mapsto \Prob^\xx(D(\tau) \leq 1/P)$ and $\tau \mapsto \Prob^\xx(M_{\tau}^0(\overline U{}_P \backslash \underline U{}_P)>0)$ are continuous. Thus, the Riemann approximations converge, and we get for all $P \in \N^*$
		\begin{equation*}
		\limsup_{N \to + \infty} \frac{1}{2^N}\sum_{i=0}^{2^N-1}\Em_\xx\left[  \1_{i \in S^{N,P}_2} \right] \leq \frac{1}{t-s}\int_s^t\left\{ \Prob^\xx\Big( D(\tau)\leq 1/P \Big) + \Prob^\xx\Big( M_{\tau}^0(\overline U{}_P \backslash \underline U{}_P)>0  \Big)\right\}\D \tau.
		\end{equation*}
		As for all $\tau$, almost surely, all particles are at different locations and are not located on $\partial U$, the dominated convergence theorem implies that the r.h.s.\ converges to $0$ as $P \to + \infty$, so that we can conclude.
	\end{proof}
	
	Let us now treat the simpler case of our pure-jump processes.
	\begin{Prop}
		Let $a^0$ be a random scalar field on $\Omega^\xx$ that is predictable with respect to the restricted filtration $(\bar \F_t)_{t \in [0n1]}$ defined in~\eqref{eq:def_restricted_filtration}. Let $k \in \N\backslash\{1\}$. The process $(\mathcal{J}^k[a^0]_t)_{t \in [0,1]}$ defined in Definition~\ref{def:pure_jump_process_tree} satisfies:
		\begin{equation}
		\label{eq:second_formula_J}
		\Prob^\xx\mbox{\emph{a.s.}},\quad \forall t \in [0,1], \qquad \mathcal J^k[a^0]_t :=  \sum_{s \leq t} \1_{\Delta M^0_s(\T^d) = k-1} \left\cg a^0(s), \frac{\Delta M_s}{k-1} \right\cd.
		\end{equation}
		In particular, the process in the r.h.s.\ is a version of the one in the l.h.s.\ that is adapted with respect to the restricted filtration $(\bar \F_t)_{t \in [0,1]}$.
	\end{Prop}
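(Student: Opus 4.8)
�The plan is to show that the two processes appearing in \eqref{eq:first_formula_J_tree} and \eqref{eq:second_formula_J} coincide $\Prob^\xx$-almost surely, path by path. First I would recall the structural facts about the branching Brownian motion established in Subsection~\ref{subsec:def_BBM}: almost surely the forest $\mathcal T$ is finite (this uses the first-moment condition \eqref{eq:q_finite_mean}), so the curve $M^0$ has only finitely many jumps on $[0,1]$, and for every branching event there is a unique label $\bar n$ with $d_{\bar n}$ equal to the jump time. Moreover, almost surely all the death times $(d_n)_{n \in \mathcal T}$ are distinct (the lifetimes $e_n$ are independent exponentials), and at a branching event with label $\bar n$ one has $M^0_{d_{\bar n}} - M^0_{d_{\bar n}-} = (L_{\bar n} - 1)\,\delta_{X_{\bar n}(d_{\bar n})}$ by the very definition of $X_n$ and $M^0$.

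With this in hand, fix an outcome $\omega$ in the full-measure event where these properties hold and fix $t \in [0,1]$. On the right-hand side of \eqref{eq:second_formula_J}, the sum ranges over the finitely many jump times $s \le t$ of $M^0$. For such an $s$, let $\bar n$ be the unique label with $d_{\bar n} = s$; then $\Delta M^0_s(\T^d) = L_{\bar n} - 1$, so the indicator $\1_{\Delta M^0_s(\T^d) = k-1}$ equals $\1_{L_{\bar n} = k}$, and when it is nonzero we have $\Delta M^0_s = (k-1)\delta_{X_{\bar n}(s)}$, hence $\langle a^0(s), \Delta M^0_s/(k-1)\rangle = a^0(s, X_{\bar n}(s)) = a^0(d_{\bar n}, X_{\bar n}(d_{\bar n}))$. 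Conversely, the left-hand side \eqref{eq:first_formula_J_tree} is the sum over $n \in \mathcal U$ with $d_n \le t$ and $L_n = k$ of $a^0(d_n, X_n(d_n))$, and since $\{(n, d_n) : n \in \mathcal U, d_n < +\infty\} = \{(\bar n_i, T_{i+1}) : i \in \N\}$ is exactly the set of (label, jump-time) pairs, the two sums have the same terms. This gives \eqref{eq:second_formula_J}.

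For the final assertion, I note that $a^0$ being predictable with respect to $(\bar\F_t)_{t \in [0,1]}$ means, per Remark~\ref{rem:large_to_restricted}, that it factors through the map \eqref{eq:projection_canonical_space}, i.e.\ $a^0(\omega,s,x)$ depends on $\omega$ only through the curve $(M^0_r(\omega))_{r \le s}$. Since the right-hand side of \eqref{eq:second_formula_J} is built solely from increments $\Delta M^0_s$, the positions encoded in these increments, and evaluations of $a^0$ at those data, the whole process $t \mapsto \sum_{s \le t} \1_{\Delta M^0_s(\T^d) = k-1}\langle a^0(s), \Delta M^0_s/(k-1)\rangle$ is a measurable functional of $(M^0_r)_{r \le t}$; being also càdlàg in $t$, it is adapted to $(\bar\F_t)_{t \in [0,1]}$, hence descends to the canonical space $\Omega$ by Remark~\ref{rem:large_to_restricted}.

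The only subtle point — and it is a mild one compared with Proposition~\ref{prop:I_adapted_restricted} — is making sure the identification of terms is genuinely pathwise and does not smuggle in the labeling: one must check that the map sending a jump time $s$ to the pair $(\Delta M^0_s(\T^d), \Delta M^0_s)$ is well defined from $M^0$ alone (which is immediate) and that no two branching events occur at the same time (distinctness of the $d_n$), since otherwise the increment $\Delta M^0_s$ would be a sum of several Dirac masses and the formula $\Delta M^0_s = (k-1)\delta_x$ would fail. Both are standard consequences of the construction in Subsection~\ref{subsec:def_BBM}, so I expect no real obstacle here; this proposition is essentially a bookkeeping lemma that repackages Definition~\ref{def:pure_jump_process_tree} in label-free form.
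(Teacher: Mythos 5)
Your proof is correct and follows essentially the same route as the paper's: identify, $\Prob^\xx$-almost surely, each jump time $s$ of $M^0$ with the unique label $\bar n$ satisfying $d_{\bar n}=s$, observe $\Delta M^0_s=(L_{\bar n}-1)\delta_{X_{\bar n}(s)}$, and match the two sums term by term. One small imprecision: under~\eqref{eq:q_finite_mean} it is not the full forest $\mathcal T$ that is a.s.\ finite but the set of nodes with $d_n\le 1$ (equivalently the number of jumps of $M^0$ on $[0,1]$), which is what your argument actually uses.
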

	\begin{proof}
		The fact that the first claim implies the second one is obvious, so let us focus on proving formula~\eqref{eq:second_formula_J}. We fix $a^0$ and $k$ as in the statement of the proposition.
		
		As a direct consequence of the definitions of Subsection~\ref{subsec:def_BBM}, up to a $\Prob^\xx$-negligible set, for all $s \in [0,1]$, we have $\1_{\Delta M^0_s(\T^d) = k-1} = \sum_{n \in \mathcal U}\1_{d_n = s} \1_{L_n = k}$, where at most one term of the sum in the r.h.s. does not cancel. Moreover, outside the above mentioned $\Prob^\xx$-negligible set, if $s$ is such that $\Delta M^0_s(\T^d) = k-1$, and if $n \in \mathcal U$ is such that $d_n = s$ and $L_n = k$, we have $\Delta M^0_s = (k-1) \delta_{X_n(d_n)}$. Consequently, \eqref{eq:first_formula_J_tree} implies~\eqref{eq:second_formula_J}.
	\end{proof}
	\section{Conclusion: proofs of Theorem~\ref{thm:definition_stochastic_integral}, \ref{thm:properties_jump_processes} and~\ref{thm:branching_Ito}}
	\label{sec:large_to_restricted}
	In order to close this chapter, we give a few lines to connect the results obtained in the previous sections to the ones presented in Section~\ref{sec:new_processes}. Here, the diffusivity parameter $\nu >0$ and the branching mechanism $\boldsymbol q$ are still fixed, but we also chose 
	a law $R_0 \in \P(\M_{\delta}(\T^d))$, and we call $R\sim \BBM(\nu, \boldsymbol q, R_0)$. We start by writing a proof of Theorem~\ref{thm:definition_stochastic_integral}.
	\begin{proof}[Proof of Theorem~\ref{thm:definition_stochastic_integral}]
		Let $v \in L^2(\XX, \tilde \G, \Pi_R)$ as in the statement of the theorem. We have with the notations of Subsection~\ref{subsec:def_BBM}:
		\begin{equation*}
		\E_{R_0}\left[ \Em_{M_0} \left[ \int_0^1 \cg |v(t)|^2, M_t \cd \D t \right] \right] < + \infty.
		\end{equation*}
		In particular, for $R_0$-almost all $\mu$, we have
		\begin{equation*}
		\Em_{\mu} \left[ \int_0^1 \cg |v(t)|^2, M_t \cd \D t \right] < + \infty.
		\end{equation*}
		Let us fix such a $\mu\in \M_{\delta}(\T^d)$, and $\xx = (x_1, \dots, x_p)$ such that $\mu = \delta_{x_1} + \dots + \delta_{x_p}$. Call $v^0$ the vector field on $\Omega^\xx$ defined for $\Prob^\xx$-almost all $(\omega, t, x) \in \Omega^\xx \times [0,1] \times \T^d$ by $v^0(\omega, t, x) = v(M^0(\omega), t, x)$. By Remark~\ref{rem:large_to_restricted}, $v^0$ is $(\bar \F_t)_{t \in [0,1]}$-predictable on $\Omega^\xx$, and we have
		\begin{equation*}
		\Em_{\xx} \left[ \int_0^1 \cg |v^0(t)|^2, M^0_t \cd \D t \right] = \Em_{\mu} \left[ \int_0^1 \cg |v(t)|^2, M_t \cd \D t \right] < + \infty.
		\end{equation*}
		Therefore, the process $\mathcal I[v^0]$ is well defined $\Prob^\xx$-\emph{a.s.}\ by Theorem~\ref{prop:generalized_stochastic_integral_tree}, and it is adapted w.r.t.\ $(\bar \F_t)_{t \in [0,1]}$ thanks to Proposition~\ref{prop:I_adapted_restricted}. Hence, by Remark~\ref{rem:large_to_restricted}, we deduce that there exists an adapted process on $\Omega$, defined $R^\mu$-\emph{a.s.}, that we call $I[v]$, and that satisfies $\Prob^\xx$-\emph{a.s.}, for all $t \in [0,1]$:
		\begin{equation}
		\label{eq:I_large_to_restricted}
		I[v]_t(M_0) = \mathcal I[v^0]_t .
		\end{equation} 
		Note that this definition only depends on the choice of $\mu$ and not on the choice of $\xx$. This is precisely because $\mathcal I[v^0]$ is adapted w.r.t.\ to the filtration generated by $M^0$, whose law does not depend on $\xx$.
		
		In that way, $I[v]$ is well defined $\R^\mu$-\emph{a.s.}, for $R^0$-almost all $\mu$, and so $R$-\emph{a.s.} The remaining part of the result is a direct consequence of Proposition~\ref{prop:generalized_stochastic_integral_tree} and formula~\eqref{eq:I_large_to_restricted}.
	\end{proof}
	
	Let us perform the same argument for the pure-jump processes, and prove Theorem~\ref{thm:properties_jump_processes}.
	\begin{proof}[Proof of Theorem~\ref{thm:properties_jump_processes}]
		By Definition~\ref{def:jump_processes} and Proposition~\ref{prop:jump_processes_tree}, for $R_0$-almost all $\mu$ and all $\xx = (x_1, \dots, x_p)$ such that $\mu = \delta_{x_1} + \dots + \delta_{x_p}$, we have $\Prob^\xx$-\emph{a.s.}, for all $t \in [0,1]$ and all $k \neq 1$:
		\begin{equation}
		\label{eq:J_large_to_restricted}
		J^k[a_k]_t = \mathcal J^k[a^0_k]_t,
		\end{equation}
		where $a^0_k$ is the $(\bar \F_t)_{t \in [0,1]}$-predictable field on $\Omega^\xx$ defined for all $(\omega, t, x) \in \Omega^\xx \times [0,1] \times \T^d$ by $a^0_k(\omega, t, x) := a(M^0(\omega), t, x)$.
		
		Therefore, \eqref{eq:jump_measure_J} and~\eqref{eq:truncation_jump_process} are direct consequences of~\eqref{eq:J_large_to_restricted}, \eqref{eq:jump_measure_tree} and~\eqref{eq:truncation_jump_tree}.
	\end{proof}
	
	Finally, we prove Theorem~\ref{thm:branching_Ito}.
	\begin{proof}[Proof of Theorem~\ref{thm:branching_Ito}]
		Let us consider $\mu \in \M_{\delta}(\T^d)$ and $\xx = (x_1, \dots, x_p)$ such that $\mu = \delta_{x_1} + \dots + \delta_{x_p}$. Up to a $\Prob^\xx$-negligible set, formulas~\eqref{eq:branching_Ito_tree}, \eqref{eq:I_large_to_restricted} and \eqref{eq:J_large_to_restricted} hold for all $t \in [0,1]$. Therefore, for all $\mu \in \M_{\delta}(\T^d)$, formula~\eqref{eq:branching_Ito} holds $R^\mu$-\emph{a.s.}, for all $t \in [0,1]$. Consequently, it also holds $R$-\emph{a.s.}, for all $t \in [0,1]$.
	\end{proof}

	\chapter{Representation of linear forms in Orlicz spaces}
	\label{app:riesz}
	
	In this section, we state and prove a Riesz type theorem for representing the linear forms on the set of functions having an exponential moment. By no mean we pretend that this result is new, and the book~\cite{rao1991theory} provides an extensive presentation of the general theory of Orlicz spaces, from which our result can be deduced. Let us also mention~\cite{leonard2001convex}, where the author explains how to adapt classical results of the field in the exotic case where the set of functions that we consider is not invariant through a change of sign. Still, we wanted to present a short and elementary proof of exactly what we need. Our proof uses the $L^1$ convergence of uniformly integrable martingales.
	
	The context is the following. Let us consider a measured space $(\XX, \G, \pi)$ and assume:
	\begin{itemize}
		\item The measure $\pi$ is nonnegative and finite.
		\item The $\sigma$-algebra $\G$ is generated by a countable family of sets.
	\end{itemize}
	
	The theorem is the following.
	\begin{Thm}
		\label{thm:riesz}
		Let us consider $(\XX, \G, \pi)$ as above. Let $\Lambda$ be a real linear functional defined on the set $\mathcal{L}^\infty(\XX, \G)$ of bounded measurable functions\footnote{Let us stress that \emph{a priori}, we allow $\Lambda(f)$ to be nonzero even if $f$ cancels $\pi$-almost everywhere. Actually, when~\eqref{eq:ineq_orlicz} holds, this cannot happen.}. Assume that there exists $C>0$ such that for all $f \in \mathcal{L}^\infty(\XX, \G)$,
		\begin{equation}
		\label{eq:ineq_orlicz}
		\Lambda(f) \leq C + \int \big\{ \exp(f) - 1 \big\} \D \pi.
		\end{equation}
		Then, there exists a unique $g \in L^1(\XX, \G, \pi)$ such that for all $f \in \L^\infty(\XX, \G)$,
		\begin{equation*}
		\Lambda(f) = \int fg \D \pi.
		\end{equation*}
		In particular, $\Lambda(f)$ does not depend on the value of $f$ on $\pi$-negligible sets.
		
		Finally, $g$ is nonnegative, and satisfies the following bound:
		\begin{equation}
		\label{eq:estim_g}
		\int \big\{ g \log g + 1 - g \big\}\D \pi \leq C.
		\end{equation}
	\end{Thm}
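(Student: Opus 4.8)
The strategy is the classical Riesz-representation route via a countable approximation by conditional expectations. Since $\G$ is generated by a countable family of sets $(A_n)_{n \in \N}$, set $\G_n := \sigma(A_1, \dots, A_n)$, so that $(\G_n)_{n \in \N}$ is an increasing sequence of finite $\sigma$-algebras with $\bigvee_n \G_n = \G$. On each $\G_n$, the space $\L^\infty(\XX, \G_n)$ is finite-dimensional, spanned by the indicators of the (finitely many) atoms of $\G_n$. The restriction of $\Lambda$ to $\L^\infty(\XX, \G_n)$ is then a linear form on a finite-dimensional space, hence represented by some $g_n$ supported on those atoms: $\Lambda(f) = \int f g_n \D \pi$ for all $f \in \L^\infty(\XX, \G_n)$. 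My first task is to show $g_n \geq 0$: plug $f = -t \1_{A}$ for $A$ an atom and $t > 0$ large into~\eqref{eq:ineq_orlicz}; the right-hand side stays bounded (by $C + \pi(\XX)$) as $t \to + \infty$ while the left-hand side is $-t \int_A g_n \D \pi$, forcing $\int_A g_n \D \pi \geq 0$, hence $g_n \geq 0$ $\pi$-a.e.

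\textbf{The martingale structure and the key estimate.}
The sequence $(g_n)_{n \in \N}$ is a nonnegative martingale with respect to $(\G_n)_n$ under $\pi$: indeed for $f \in \L^\infty(\XX, \G_m)$ with $m \leq n$, we have $\int f g_n \D \pi = \Lambda(f) = \int f g_m \D \pi$, which says exactly $\E_\pi[g_n \mid \G_m] = g_m$ (after normalizing $\pi$ to a probability measure, or just arguing directly with the atoms). To upgrade pointwise/weak convergence to $L^1$ convergence I need uniform integrability, and this is where~\eqref{eq:ineq_orlicz} does the real work. Apply~\eqref{eq:ineq_orlicz} with $f := g_n \wedge K$ for a truncation level $K$, but more cleverly with $f := \log(g_n \wedge K)$ on $\{g_n > 0\}$ (and $f$ extended by some fixed bounded value where $g_n = 0$): then $\exp(f) - 1 \leq g_n - 1$, while $\Lambda(f) = \int f g_n \D \pi = \int g_n \log(g_n \wedge K) \D \pi$ (using $g_n$ is $\G_n$-measurable so $f$ is too). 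This yields, after letting $K \to + \infty$ by monotone convergence on the positive part and controlling the negative part,
\begin{equation*}
\int \big\{ g_n \log g_n + 1 - g_n \big\} \D \pi \leq C
\end{equation*}
uniformly in $n$. Since $z \mapsto z \log z + 1 - z$ is superlinear, this is a de la Vallée–Poussin condition, so $(g_n)_n$ is uniformly integrable.

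\textbf{Passage to the limit and uniqueness.}
By the martingale convergence theorem together with uniform integrability, $g_n \to g$ both $\pi$-a.e.\ and in $L^1(\XX, \G, \pi)$, with $g \geq 0$ and $g$ being $\G$-measurable. For any $f \in \L^\infty(\XX, \G)$, write $f_n := \E_\pi[f \mid \G_n]$; then $f_n \to f$ in $L^1$ and boundedly a.e., and $\Lambda$ applied to $f_n$... here one must be slightly careful because $\Lambda$ is only assumed linear, not continuous, on $\L^\infty(\XX, \G)$ \emph{a priori}. The fix: I do not claim $\Lambda(f_n) \to \Lambda(f)$ directly; instead I \emph{redefine} the candidate functional as $f \mapsto \int f g \D \pi$ and must show it equals $\Lambda$. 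For this, test~\eqref{eq:ineq_orlicz} with $f$ replaced by $t(h - f_n)$ for $h \in \L^\infty(\XX,\G)$, $t \in \R$: since $\Lambda(h) = \Lambda(f_n) + \Lambda(h - f_n)$ is only partly controlled, the cleanest argument uses~\eqref{eq:ineq_orlicz} with $f := t(h - \E_\pi[h \mid \G_n])$ and lets $t \to \pm\infty$ to force $\Lambda(h - \E_\pi[h\mid\G_n]) = 0$, giving $\Lambda(h) = \Lambda(\E_\pi[h\mid\G_n]) = \int \E_\pi[h\mid\G_n] g_n \D\pi = \int h g_n \D\pi \to \int h g \D\pi$. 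Finally the bound~\eqref{eq:estim_g} follows from lower semicontinuity of $g \mapsto \int (g\log g + 1 - g)\D\pi$ (by Fatou, or convexity) applied to $g_n \to g$, combined with the uniform bound above. Uniqueness of $g$ is immediate: if $\int f g \D \pi = \int f g' \D \pi$ for all bounded measurable $f$, then $g = g'$ $\pi$-a.e.

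\textbf{Main obstacle.}
The delicate point is the non-continuity of $\Lambda$ on $\L^\infty(\XX, \G)$: one is \emph{not} given that $\Lambda$ respects $\pi$-a.e.\ equality or $L^1$-limits, so one cannot naively pass to the limit inside $\Lambda$. The resolution — forcing $\Lambda$ to annihilate $h - \E_\pi[h \mid \G_n]$ by exploiting the one-sided bound~\eqref{eq:ineq_orlicz} with arbitrarily large multiples $\pm t$ — is the crux, and it is exactly here that the asymmetry of the hypothesis (a bound from above only, with the Orlicz function $\exp(\cdot) - 1$ having the right growth at $+\infty$ but being flat enough at $-\infty$) must be handled with care. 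A secondary technical nuisance is making rigorous the truncation argument that produces the uniform entropy bound~\eqref{eq:estim_g}, in particular controlling $\int g_n \log(g_n \wedge K) \1_{\{g_n \wedge K < 1\}} \D \pi$ from below uniformly, which is routine since $z\log z \geq -e^{-1}$.
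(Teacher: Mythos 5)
Your setup — countable generation, the finite $\sigma$-algebras $\G_n$, the representing functions $g_n$, the nonnegativity of $g_n$, the martingale structure, and the uniform entropy bound (hence uniform integrability) — is all correct and matches the paper's proof. The error is in the final passage to the limit.

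Your proposed ``fix'' for the continuity issue does not work. You claim that testing~\eqref{eq:ineq_orlicz} with $f := t(h - \E_\pi[h \mid \G_n])$ and letting $t \to \pm\infty$ forces $\Lambda(h - \E_\pi[h\mid\G_n]) = 0$ for a fixed $n$. But the right-hand side of~\eqref{eq:ineq_orlicz} is
\begin{equation*}
C + \int \Big\{ \exp\big(t\big(h - \E_\pi[h\mid\G_n]\big)\big) - 1 \Big\}\D\pi,
\end{equation*}
and unless $h - \E_\pi[h\mid\G_n] = 0$ $\pi$-a.e.\ — which holds only when $h$ already coincides $\pi$-a.e.\ with a $\G_n$-measurable function — this integral blows up to $+\infty$ as $t \to +\infty$ (on the set $\{h > \E_\pi[h\mid\G_n]\}$, which has positive measure) and likewise as $t \to -\infty$. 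So the inequality gives no constraint in the limit and you cannot conclude that $\Lambda$ annihilates $h - \E_\pi[h\mid\G_n]$. Indeed this \emph{should} fail: once the theorem is proved, $\Lambda(h - \E_\pi[h\mid\G_n]) = \int (h - \E_\pi[h\mid\G_n]) g\D\pi$, which is generically nonzero because $g$ need not be $\G_n$-measurable. The $t \to \pm\infty$ trick only shows $\Lambda$ respects $\pi$-a.e.\ equality (which is genuinely useful, e.g.\ to handle $\pi$-null atoms), but that is a weaker statement than what you need here.

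The missing ingredient is a quantitative continuity estimate for $\Lambda$ on bounded subsets of $\L^\infty(\XX, \G)$ with respect to the $L^1(\pi)$ distance, extracted directly from~\eqref{eq:ineq_orlicz} by homogenizing: for any $\eps > 0$ write $\Lambda(h) = (\eps/C)\,\Lambda((C/\eps)h)$, apply~\eqref{eq:ineq_orlicz} to $\pm(C/\eps)h$, and use $\exp x - 1 \leq x\exp x$ to obtain, for $\|h\|_\infty \leq K$,
\begin{equation*}
|\Lambda(h)| \leq \eps + \exp\!\left(\frac{CK}{\eps}\right)\,\E_\pi\big[|h|\big].
\end{equation*}
Optimizing over $\eps$ gives $|\Lambda(h)| \leq F_K(\|h\|_{L^1(\pi)})$ for a modulus $F_K$ with $F_K(0^+) = 0$. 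Applying this to $h := f - \E_\pi[f\mid\G_n]$ (which is uniformly bounded by $2\|f\|_\infty$ and goes to $0$ in $L^1(\pi)$ by the bounded martingale convergence theorem) gives $\Lambda(\E_\pi[f\mid\G_n]) \to \Lambda(f)$, which combined with $\Lambda(\E_\pi[f\mid\G_n]) = \E_\pi[fg_n] \to \E_\pi[fg]$ closes the argument. Without this continuity step your proof has a genuine hole.
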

	\begin{Rem}
		The proof that we present can be easily adapted replacing $(\exp(x) - 1, y \log y + 1 -y)$ by other pairs $(\Psi(x), \Psi^*(y))$ of convex conjugate functions, where $\Psi^*$ is superlinear and $\Psi(0)=0$, under minimal additional assumptions. It can hence be used in greater generality to provide Riesz type theorems in Orlicz spaces. We decided to stick to our case because we wanted to treat its specificities (for instance, $\Psi$ is superlinear when $x \to + \infty$, but remains negative for $x <0$, and $\Psi^*$ is singular at zero) without introducing artificial assumptions.
	\end{Rem}
	
	In the case where $\pi$ is a probability measure, it is sometimes useful to consider the following corollary instead of Theorem~\ref{thm:riesz}. In this case, we call $\E_\pi$ the integral with respect to $\pi$. We do not formulate this result in the same way as Theorem~\ref{thm:riesz} because the context in which we apply it is a bit different, but the spirit is the same.
	\begin{Cor}
		\label{cor:riesz}
		Let us consider $(\XX, \G, \pi)$ as above, and let us assume that $\pi$ is a probability measure, that is, $\pi(\XX)=1$. Let $\Lambda$ be a real linear functional on the vector space of all variables whose absolute value have exponential moments at any order, that is, on
		\begin{equation*}
		\mathcal{V} := \Big\{ f:\XX \to \R, \quad \G\mbox{-measurable, s.t. } \forall \kappa>0,\ \E_\pi\big[ \exp\big( \kappa |f| \big) \big] < + \infty \Big\}.
		\end{equation*}
		Assume that there exists $C>0$ such that for all $f \in \mathcal{V}$, there holds
		\begin{equation}
		\label{eq:ineq_cor_riesz}
		\Lambda(f) \leq \inf_{\kappa > 0} \frac{1}{\kappa} \Big( C + \log \E_\pi\big[ \exp\big( \kappa f \big) \big] \Big).
		\end{equation}
		Then there exists $\pi' \in \P(\XX)$ such that $H(\pi' | \pi) \leq C$ and for all $f \in \mathcal{V}$, $\Lambda(f) = \E_{\pi'}[f]$.
	\end{Cor}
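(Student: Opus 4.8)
The plan is to deduce Corollary~\ref{cor:riesz} from Theorem~\ref{thm:riesz} by a scaling trick. First, I would define an auxiliary linear functional that applies Theorem~\ref{thm:riesz} at a normalized scale, then reconstruct $\pi'$ as the measure with density given by the representing function, and finally check that the entropy bound $H(\pi'|\pi) \le C$ follows from~\eqref{eq:estim_g}.

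\textbf{Step 1: reduction to the unit scale.} The key observation is that the right-hand side of~\eqref{eq:ineq_cor_riesz} is an infimum over $\kappa > 0$, so in particular for every $\kappa > 0$ and every $f \in \mathcal V$ we have $\kappa \Lambda(f) \le C + \log \E_\pi[\exp(\kappa f)]$. Using the elementary inequality $\log u \le u - 1$ valid for $u > 0$, applied to $u = \E_\pi[\exp(\kappa f)]$, this gives $\kappa \Lambda(f) \le C + \E_\pi[\exp(\kappa f)] - 1 = C + \E_\pi[\exp(\kappa f) - 1]$. Replacing $\kappa f$ by a single variable, this says that the linear functional $\Lambda$ on $\mathcal V$ satisfies, for all $h \in \mathcal V$,
\begin{equation*}
\Lambda(h) \le C + \E_\pi\big[\exp(h) - 1\big].
\end{equation*}
In particular, restricting to $h \in \mathcal L^\infty(\XX,\G) \subset \mathcal V$ (since $\pi$ is finite, bounded functions have exponential moments at every order), the restriction $\Lambda|_{\mathcal L^\infty(\XX,\G)}$ satisfies exactly hypothesis~\eqref{eq:ineq_orlicz} of Theorem~\ref{thm:riesz} with the same constant $C$ and with $\pi$ playing the role of the reference measure. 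Note that $\G$ is countably generated by assumption, so Theorem~\ref{thm:riesz} applies.

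\textbf{Step 2: apply Theorem~\ref{thm:riesz} and define $\pi'$.} Theorem~\ref{thm:riesz} produces a unique nonnegative $g \in L^1(\XX,\G,\pi)$ with $\Lambda(f) = \int f g \, \D \pi$ for all $f \in \mathcal L^\infty(\XX,\G)$, and with $\int \{g\log g + 1 - g\}\,\D\pi \le C$, i.e. $h(g\pi \mid \pi) \le C$ in the notation of Definition~\ref{def:KL}. Taking $f \equiv 1$ (which is bounded) and using the linearity of $\Lambda$ together with~\eqref{eq:ineq_cor_riesz} applied to $f = 1$ and $f = -1$, one gets $\int g\,\D\pi = \Lambda(1)$; and the two-sided bound $\pm\Lambda(1) \le \inf_\kappa \kappa^{-1}(C + \log\E_\pi[\exp(\pm\kappa)]) = \inf_\kappa \kappa^{-1}(C \pm \kappa)$ forces $\Lambda(1) = 1$, hence $\int g \, \D\pi = 1$. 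Therefore $\pi' := g\,\pi$ is a probability measure on $(\XX,\G)$, absolutely continuous with respect to $\pi$, and the entropy bound from Theorem~\ref{thm:riesz} reads precisely $H(\pi'|\pi) \le C$ (since $\pi$ is a probability measure, $h(\pi'|\pi) = H(\pi'|\pi)$ by comparing Definition~\ref{def:KL} with~\eqref{eq:def_H}, using $\int g\,\D\pi = 1$).

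\textbf{Step 3: extend the representation from $\mathcal L^\infty$ to $\mathcal V$.} It remains to show $\Lambda(f) = \E_{\pi'}[f] = \int f g\,\D\pi$ for \emph{all} $f \in \mathcal V$, not just bounded ones. For $f \in \mathcal V$, consider the truncations $f_n := (f \wedge n) \vee (-n) \in \mathcal L^\infty(\XX,\G)$. By Step 2, $\Lambda(f_n) = \int f_n g\,\D\pi$. On the right-hand side, $f_n \to f$ $\pi$-a.e. and $|f_n g| \le |f| g$; since $f \in \mathcal V$ has exponential moments of every order and $h(\pi'|\pi) < \infty$, the function $|f| g = |f|\frac{\D\pi'}{\D\pi}$ is $\pi$-integrable by the duality inequality~\eqref{eq:convex_ineq_entropy} (applied with $Y = \kappa|f|$ for suitable $\kappa$, giving $\int |f| \D\pi' \le \kappa^{-1}(H(\pi'|\pi) + \log\E_\pi[\exp(\kappa|f|)]) < \infty$), so dominated convergence yields $\int f_n g\,\D\pi \to \int f g\,\D\pi$. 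For the left-hand side, I would apply~\eqref{eq:ineq_cor_riesz} to $\pm(f - f_n) \in \mathcal V$: this gives $|\Lambda(f) - \Lambda(f_n)| = |\Lambda(f - f_n)| \le \inf_\kappa \kappa^{-1}(C + \log\E_\pi[\exp(\kappa|f - f_n|)])$, and since $|f - f_n| \le |f|\mathbf 1_{|f| > n} \to 0$ $\pi$-a.e. with $\exp(\kappa|f-f_n|) \le \exp(\kappa|f|)$ integrable, dominated convergence gives $\E_\pi[\exp(\kappa|f-f_n|)] \to 1$ for each fixed $\kappa$, hence $\limsup_n |\Lambda(f) - \Lambda(f_n)| \le \inf_\kappa \kappa^{-1} C = 0$. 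Combining, $\Lambda(f) = \int f g\,\D\pi = \E_{\pi'}[f]$, which completes the proof.

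\textbf{Main obstacle.} The genuinely delicate point is Step 3, the passage to the limit on the functional side: one must control $\Lambda(f - f_n)$ using only the one-sided exponential bound~\eqref{eq:ineq_cor_riesz}, and the trick is to exploit that the bound is an infimum over $\kappa$, so the additive constant $C$ can be made negligible after dividing by $\kappa$ once the exponential moment $\E_\pi[\exp(\kappa|f-f_n|)]$ is close to $1$. Care is needed that the order of limits ($n \to \infty$ first, then optimizing in $\kappa$) is legitimate; the clean way is the two-line estimate above where one fixes $\kappa$, takes $n\to\infty$, and only then lets $\kappa$ be arbitrary. Everything else is a routine application of Theorem~\ref{thm:riesz} and the duality inequality~\eqref{eq:convex_ineq_entropy}.
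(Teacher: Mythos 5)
Your proof is correct and follows essentially the same route as the paper's: take $\kappa=1$ to reduce the hypothesis to that of Theorem~\ref{thm:riesz}, obtain the representing density $g$, normalize it via constant test functions, and extend from $\mathcal{L}^\infty$ to $\mathcal{V}$ by truncation, exploiting that the infimum over $\kappa$ makes the additive constant $C$ vanish in the limit. The only cosmetic deviation is in Step~3: the paper splits $f=f_+-f_-$ and uses monotone convergence together with a positivity observation ($\Lambda(h)\geq 0$ for $h\geq 0$), whereas you use two-sided truncation, dominated convergence, and the duality inequality~\eqref{eq:convex_ineq_entropy} for integrability of $|f|g$ --- both are valid.
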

	
	We first prove Theorem~\ref{thm:riesz}, and then Corollary~\ref{cor:riesz}.
	\begin{proof}[Proof of Theorem~\ref{thm:riesz}]
		Before entering into the core of the proof, let us just mention that the uniqueness of $g$ is obvious, so we will not talk about it.
		
		\begin{steph}{Reduction to probability spaces}
			Let us show that it suffices to prove the existence of $g$ in the case where $\pi(\XX) = 1$. Let us assume that the result is true in that case, and let us consider $(\XX, \G, \pi)$ and $\Lambda$ as in the statement of the theorem. By our assumption, the result is true in $(\XX, \G, \tilde \pi)$, where $\tilde \pi = \pi / \pi(\XX)$. Let us call $\tilde \Lambda (f) := \Lambda (f / \pi(\XX))$, $f \in \L^\infty(\XX, \G)$. Dividing~\eqref{eq:ineq_orlicz} by $\pi(\XX)$, we get for all $f \in \L^\infty(\XX, \G)$:
			\begin{equation*}
			\tilde \Lambda(f) \leq \frac{C}{\pi(\XX)} + \int \big\{ \exp(f) - 1 \big\}\D \tilde \pi.
			\end{equation*}
			So using the result in $(\XX, \G, \tilde \pi)$ we find a nonnegative $g \in L^1(\XX, \G, \tilde \pi) = L^1(\XX, \G, \pi)$ such that for all $f \in \L^\infty(\XX, \G)$,
			\begin{equation*}
			\Lambda(f) = \pi(\XX) \tilde \Lambda(f) = \pi(\XX) \int fg \D \tilde \pi = \int fg \D \pi.
			\end{equation*}
			Finally, we have
			\begin{equation*}
			\int \big\{ g \log g + 1 - g \big\}\D \pi = \pi(\XX) \int \big\{ g \log g + 1 - g \big\}\D \tilde \pi \leq \pi(\XX) \frac{C}{\pi(\XX)} = C,
			\end{equation*}
			so that the result also holds in $(\XX, \G, \pi)$. From now on, we assume that $\pi(\XX) = 1$ and we denote by $\E_\pi$ the integral w.r.t.\ $\pi$.
		\end{steph}
		
		\begin{steph}{Conditioning by finite $\sigma$-algebras}
			
			Let $(A_k)_k \in \N^*$ be a countable family of sets generating $\G$, and call $\G_n := \sigma(A_1, \dots, A_n)$, $n \in \N^*$, the $\sigma$-algebra generated by $A_1, \dots, A_n$. For all $n \in \N^*$, $\G_n$ is a finite $\sigma$-algebra. It is hence also generated by a finite partition $B_1^n, \dots, B_{p_n}^n \in \G$, where $p_n \in \N^*$ is the cardinality of this partition.
			
			For $i = 1, \dots, p_n$, let us call $y_i := \Lambda(\1_{B_i^n})$.
			Applying~\eqref{eq:ineq_orlicz} to $f = x \1_{B_i^n}$ for $x \in \R$, leads to
			\begin{equation*}
			x y_i = \Lambda(x \1_{B^n_i}) \leq C + \big\{ \exp(x) - 1 \big\} \pi(B^i_n).
			\end{equation*}
			This inequality has two interesting consequences. First, $y_i \geq 0$, else the inequality would be violated for $x \to - \infty$. Second, if $\pi(B_i^n) = 0$, then $y_i=0$, else the inequality would be violated for $x \to + \infty$. Using these remarks and the convention $0/0 = 0$, we define the following nonnegative $\G_n$-measurable function
			\begin{equation*}
			g_n := \frac{y_1}{\pi(B_1^n)} \1_{B^n_1} + \dots + \frac{y_{p_n}}{\pi(B^n_{p_n})} \1_{B^n_{p_n}}.
			\end{equation*}
			With these definitions we have for all $f \in \L^\infty(\XX, \G)$,
			\begin{equation}
			\label{eq:approx_lambda}
			\Lambda\Big(\E_\pi\big[f\big|\G_n\big]\Big) = \E_\pi\big[f g_n \big].
			\end{equation}
		\end{steph}
		
		\begin{steph}{$(g_n)_{n \in \N}$ is a nonnegative uniformly integrable martingale}
			Let us show that under $\pi$, $(g_n)_{n \in \N}$ is a martingale w.r.t.\ the filtration $(\G_n)_{n \in \N}$. For all $n \in \N^*$, $g_n$ is $\G_n$-measurable, so it is enough to show that for any $f$ that is $\G_{n}$-measurable and bounded, we have $\E_\pi[fg_{n+1}] = \E_\pi[fg_n]$. For such an $f$, we have $\E_\pi[f|\G_{n+1}] = \E_\pi[f|\G_n] = f$. So using \eqref{eq:approx_lambda}, we get
			\begin{equation*}
			\E_\pi[fg_{n+1}] = \Lambda\Big(\E_\pi\big[f\big|\G_{n+1}\big]\Big) = \Lambda\Big(\E_\pi\big[f\big|\G_n\big]\Big) = \E_\pi[fg_n].
			\end{equation*}
			
			We already showed that $(g_n)_{n \in \N^*}$ is nonnegative, let us show that it is uniformly integrable. For this, we just have to use~\eqref{eq:ineq_orlicz} with the $\G_n$ measurable function $f = \log g_n \1_{g_n>0}$, which is bounded. This leads to
			\begin{equation*}
			\E_\pi\big[g_n \log g_n\big]  = \Lambda\Big( \E_\pi\big[ f \big| \G_n \big] \Big) = \Lambda(f) \leq C + \E_\pi\big[ \exp( f ) - 1 \big] = C + \E_\pi\big[g_n -1\big],
			\end{equation*} 
			that we rewrite
			\begin{equation}
			\label{eq:unif_bound_g_n}
			\E_\pi\big[g_n \log g_n + 1 - g_n\big] \leq C,
			\end{equation}
			and which provides uniform integrability.
		\end{steph}
		
		\begin{steph}{Convergence}
			By standard martingale convergence theorems, there is a nonnegative $\G$-measurable function $g$ such that $g_n \to g$ as $n \to + \infty$, $\pi$-\emph{a.e.} and in $L^1(\XX,\G, \pi)$. Inequality~\eqref{eq:estim_g} follows from~\eqref{eq:unif_bound_g_n} by Fatou's lemma. 
			
			We need to show that for all $f \in \L^\infty(\XX, \G)$, we have $\Lambda(f) = \E_\pi[fg]$. We will do it by showing that both side of this equality is the limit of $\Lambda(\E_\pi[f|\G_n])$ as $n \to + \infty$. The r.h.s.\ is a consequence of the $L^1$ convergence:
			\begin{equation*}
			\Lambda\Big(\E_\pi\big[f\big|\G_n\big]\Big) = \E_\pi\big[ f g_n \big] \underset{n \to +_\infty}{\longrightarrow} \E_\pi[fg].
			\end{equation*}
			The l.h.s.\ is a bit more tricky as it amounts to some continuity of $\Lambda$. We will use~\eqref{eq:ineq_orlicz}. For all $h \in \L^\infty(\XX, \G)$ and $\eps>0$, inequality~\eqref{eq:ineq_orlicz} implies
			\begin{equation*}
			\Lambda(h) = \frac{\eps}{C} \Lambda \left( \frac{C}{\eps}h \right) \leq \eps + \frac{\eps}{C}\E_\pi\left[\exp\left( \frac{C}{\eps}h \right) - 1 \right].
			\end{equation*}
			Writing the same inequality for $-h$ instead of $h$ and bounding both $h$ and $-h$ by $|h|$ in the r.h.s.\ leads to
			\begin{equation*}
			\big|\Lambda(h)\big| \leq \eps + \frac{\eps}{C}\E_\pi\left[\exp\left( \frac{C}{\eps}|h| \right) - 1 \right].
			\end{equation*}
			As for all $x \in \R$, $\exp x - 1 \leq x \exp x$, calling $K:= \sup |h|$, we have
			\begin{equation*}
			\big|\Lambda(h)\big| \leq \eps + \E_\pi\left[ \exp\left( \frac{C}{\eps}|h|\right) |h| \right] \leq \eps + \exp\left( \frac{CK}{\eps} \right)\E_\pi\big[ |h| \big].
			\end{equation*}
			This inequality shows that $\Lambda$ is continuous in the $L^1$ distance when restricted to the bounded sets of $\L^\infty(\XX, \G)$. Indeed for all $h$ in the centered ball of $\L^\infty$ of radius $K$, $|\Lambda(h)| \leq F_K(\|h\|_1)$, where 
			\begin{equation*}
			F_K: N \in \R_+ \mapsto \inf_{\eps>0} \left\{ \eps + \exp\left( \frac{CK}{\eps} \right)N \right\}
			\end{equation*}
			is a nonnegative upper semi-continuous concave function canceling at zero. We conclude by applying this continuity property to the convergence $\E_\pi[f|\G_n] \to f$ in $L^1(\XX, \G, \pi)$, for all $f \in \L^\infty(\XX, \G)$.
		\end{steph}
	\end{proof}
	
	\begin{proof}[Proof of Corollary~\ref{cor:riesz}]
		As $\pi$ is finite, we have $\L^\infty(\XX, \G) \subset \mathcal{V}$. Moreover, for all $f \in \L^\infty(\XX, \G)$, taking $\kappa = 1$ in the r.h.s. of~\eqref{eq:ineq_cor_riesz}, exploiting the inequality $\log z \leq z - 1$, $z \in \R_+^*$ and using $\pi(\XX)=1$, we get
		\begin{equation*}
		\Lambda(f) \leq C + \E_\pi\big[ \exp\big( f \big) - 1 \big].
		\end{equation*}
		So Theorem~\ref{thm:riesz} applies, and we get a nonnegative $g \in L^1(\XX, \G, \pi)$ satisfying the bound
		\begin{equation*}
		\E_\pi\big[ g \log g + 1 - g \big] \leq C,
		\end{equation*} 
		and such that for all $f\in \L^\infty(\XX, \G)$, $\Lambda(f) = \E_\pi[fg]$. Applying~\eqref{eq:ineq_cor_riesz} with $\kappa := 1$ and constant functions $f \equiv x \in \R$, we find $\Lambda(f) = x \E_\pi[g] \leq C + x$. This is possible for all $x \in \R$ only provided $\E_\pi[g] = 1$. Therefore, we set $\pi' := g \cdot \pi$, and we conclude that $\pi'$ is such that $H(\pi' | \pi) \leq C$, and satisfies $\Lambda(f) = \E_{\pi'}[f]$, for all $f \in \L^\infty(\XX, \G)$. 
		
		So to close the proof, we just need to show that $\Lambda(f) = \E_{\pi'}[f]$ extends to all $f \in \mathcal{V}$. Using the decomposition $f = f_+ - f_-$ and the linearity of $\Lambda$ and $\E_{\pi'}$, we can assume that $f$ is nonnegative. Let us call $f_n := \min(f,n)$, $n \in \N$. For all $n$, $f_n \in \L^\infty(\XX, \G)$, so $\Lambda(f_n) = \E_{\pi'}[f_n]$, and it remains to show that as $n \to \infty$, $\Lambda(f_n) \to \Lambda(f)$, and $\E_{\pi'}[f_n] \to \E_{\pi'}[f]$. The second limit follows from the monotone convergence theorem. The
		first one follows from the following observations. First, if $h \in
		\mathcal{V}$ is nonnegative, then so is $\Lambda(h)$. Indeed,
		applying~\eqref{eq:ineq_cor_riesz} to $-h$ leads to
		\begin{equation*}
		-\Lambda(h) \leq \lim_{\kappa \to +\infty} \frac{1}{\kappa}\Big(
		C + \log \E_\pi\big[\exp\big( - \kappa h \big)\big] \Big) =0,
		\end{equation*}
		where the limit is obtained using the dominated convergence theorem.
		Second, for $\eps>0$, let us take $\kappa :=C / \eps$. For $n
		\in \N$, applying inequality~\ref{eq:ineq_cor_riesz} to the nonnegative
		function $f - f_n$, we get
		\begin{equation*}
		\limsup_{n \to + \infty}|\Lambda(f) - \Lambda(f_n)| = \limsup_{n \to + \infty}\Lambda(f-f_n) \leq \eps
		+ \frac{\eps}{C} \limsup_{n \to + \infty} \log \E_\pi\left[ \exp\left(
		\frac{C}{\eps}\big\{f-f_n\big\}\right)\right] \leq \eps,
		\end{equation*}
		and the result follows.
	\end{proof}

	\newpage
	
	\addcontentsline{toc}{chapter}{Bibliography}
	
	\bibliographystyle{alpha}
	
	\bibliography{bibliography}
	
\end{document}